\numberwithin{equation}{section}
\theoremstyle{plain}
\newtheorem{theorem}{Theorem}[section]
\newtheorem{lemma}[theorem]{Lemma}
\newtheorem{proposition}[theorem]{Proposition}
\newtheorem{corollary}[theorem]{Corollary}
\newtheorem{claim}[theorem]{Claim}
\newtheorem*{theorema*}{Theorem A}
\newtheorem*{theoremb*}{Theorem B}
\newtheorem*{theoremc*}{Theorem C}
\newtheorem*{theoremd*}{Theorem D}
\newtheorem*{theoreme*}{Theorem E}
\newtheorem*{theoremf*}{Theorem F}
\newtheorem*{theoremg*}{Conjecture G}
\newtheorem{proposition/definition}[theorem]{Proposition/Definition}
\newtheorem{theorem/definition}[theorem]{Theorem/Definition}
\theoremstyle{definition}
\newtheorem{definition}[theorem]{Definition}
\newtheorem{example}[theorem]{Example}
\newtheorem{example/definition}[theorem]{Example/Definition}
\newtheorem{assumption}[theorem]{Assumption}
\newtheorem{notation}[theorem]{Notation}
\theoremstyle{remark}
\newtheorem{remark}[theorem]{Remark}
\DeclareMathOperator{\augnumber}{aug}
\DeclareMathOperator{\cone}{Cone}
\DeclareMathOperator{\End}{End}
\DeclareMathOperator{\GNR}{GNR}
\let\hom\relax\DeclareMathOperator{\hom}{Hom}
\DeclareMathOperator{\rhom}{RHom}
\DeclareMathOperator{\muhom}{\mu hom}
\DeclareMathOperator{\im}{Image}
\DeclareMathOperator{\MC}{MC}
\DeclareMathOperator{\cMC}{\mathcal{MC}}
\DeclareMathOperator{\NZ}{NZ}
\DeclareMathOperator{\GPS}{GPS}
\DeclareMathOperator{\NRSSZ}{NRSSZ}
\DeclareMathOperator{\Perf}{Perf}
\DeclareMathOperator{\NR}{NR}
\DeclareMathOperator{\Star}{Star}
\DeclareMathOperator{\coker}{Coker}
\DeclareMathOperator{\identity}{Id}
\DeclareMathOperator{\Mat}{Mat}
\DeclareMathOperator{\Ob}{{\categoryfont{O}\mathsf{b}}}
\DeclareMathOperator{\res}{res}
\DeclareMathOperator{\Res}{Res^\mathsf{Ng}}
\DeclareMathOperator{\Tot}{Tot}
\DeclareMathOperator{\Gr}{Gr}
\DeclareMathOperator{\cRM}{RM}
\DeclareMathOperator{\Supp}{Supp}
\DeclareMathOperator{\val}{val}
\newcommand{\FF}{\mathbb{F}}
\newcommand{\KK}{\mathbb{K}}
\newcommand{\NN}{\mathbb{N}}
\newcommand{\PP}{\mathbb{P}}
\newcommand{\RR}{\mathbb{R}}
\newcommand{\ZZ}{\mathbb{Z}}
\newcommand{\bfM}{\mathbf{M}}
\newcommand{\bfU}{\mathbf{U}}
\newcommand{\bfa}{\mathbf{a}}
\newcommand{\bfd}{\mathbf{d}}
\newcommand{\bfe}{\mathbf{e}}
\newcommand{\bff}{\mathbf{f}}
\newcommand{\bfi}{\mathbf{i}}
\newcommand{\bfinfty}{\boldsymbol{\infty}}
\newcommand{\bfgraf}{\mathbf{\graf}}
\newcommand{\bfzero}{\mathbf{0}}
\newcommand{\bfmu}{{\bm{\mu}}}
\newcommand{\bfpi}{{\bm{\pi}}}
\newcommand{\cB}{\mathcal{B}}
\newcommand{\cC}{\mathcal{C}}
\newcommand{\cD}{\mathcal{D}}
\newcommand{\cE}{\mathcal{E}}
\newcommand{\cF}{\mathcal{F}}
\newcommand{\cG}{\mathcal{G}}
\newcommand{\cI}{\mathcal{I}}
\newcommand{\cK}{\mathcal{K}}
\newcommand{\cM}{\mathcal{M}}
\newcommand{\cR}{\mathcal{R}}
\newcommand{\cS}{\mathcal{S}}
\newcommand{\cT}{\mathcal{T}}
\newcommand{\cU}{\mathcal{U}}
\newcommand{\cV}{\mathcal{V}}
\newcommand{\cX}{\mathcal{X}}
\newcommand{\scrA}{\mathscr{A}}
\newcommand{\scrT}{\mathscr{T}}
\newcommand{\scrW}{\mathscr{W}}
\newcommand{\sfA}{\mathsf{A}}
\newcommand{\sfC}{\mathsf{C}}
\newcommand{\sfG}{\mathsf{G}}
\newcommand{\sfI}{\mathsf{I}}
\newcommand{\sfM}{\mathsf{M}}
\newcommand{\sfR}{\mathsf{R}}
\newcommand{\sfS}{\mathsf{S}}
\newcommand{\sfT}{\mathsf{T}}
\newcommand{\sfV}{\mathsf{V}}
\DeclareSymbolFont{sfletters}{OT1}{cmss}{m}{n}
\DeclareMathSymbol{\sTheta}{\mathord}{sfletters}{"02}
\newcommand{\CE}{\mathsf{CE}}
\newcommand{\co}{\mathsf{co}}
\newcommand{\pre}{\mathsf{pre}}
\newcommand{\front}{\mathsf{fr}}
\newcommand{\lag}{\mathsf{Lag}}
\newcommand{\ttB}{\mathtt{B}}
\newcommand{\ttE}{\mathtt{E}}
\newcommand{\ttV}{\mathtt{V}}
\newcommand{\ttv}{\mathtt{v}}
\newcommand{\frK}{\mathfrak{K}}
\newcommand{\fd}{\mathfrak{d}}
\newcommand{\fg}{\mathfrak{g}}
\newcommand{\frh}{\mathfrak{h}}
\newcommand{\fri}{\mathfrak{i}}
\newcommand{\fp}{\mathfrak{p}}
\newcommand{\fr}{\mathfrak{r}}
\renewcommand{\bar}{\overline}
\renewcommand{\emptyset}{\varnothing}
\renewcommand{\hat}{\widehat}
\renewcommand{\tilde}{\widetilde}
\newcommand{\Zmod}[1]{\ZZ/#1\ZZ}
\newcommand{\graf}{\Gamma}
\newcommand{\differential}{\partial}
\newcommand{\boundary}{\partial}
\newcommand{\field}{\KK}
\newcommand{\ring}{\field}
\newcommand{\Left}{\mathsf{L}}
\newcommand{\Right}{\mathsf{R}}
\newcommand{\op}{\mathsf{op}}
\newcommand{\unit}{\mathbf{1}}
\newcommand{\Lcirclearrowright}{{\rotatebox[origin=c]{90}{$\circlearrowright$}}}
\newcommand{\grading}{\ZZ}
\newcommand{\vg}{\xi}
\newcommand{\homotopic}{\simeq}
\newcommand{\isomorphic}{\cong}
\newcommand{\relation}{\sim}
\newcommand{\p}[1]{{(#1)}}
\newcommand{\RM}[1]{(\mathsf{#1})}
\DeclareMathOperator{\Mod}{Mod}
\newcommand{\dgafont}{\mathcal}
\newcommand{\algfont}{\mathsf}
\newcommand{\categoryfont}{\mathscr}
\newcommand{\functorfont}{\mathscr}
\newcommand{\alg}{\algfont{A}}
\newcommand{\dga}{\dgafont{A}}
\newcommand{\aug}{\mathsf{Aug}}
\newcommand{\ruling}{\rho}
\newcommand{\Alg}{{\categoryfont{A}\mathsf{lg}}}
\newcommand{\BAlg}{{\categoryfont{BA}\mathsf{lg}}}
\newcommand{\DGA}{\categoryfont{DGA}}
\newcommand{\BDGA}{\categoryfont{BDGA}}
\newcommand{\Fuk}{\categoryfont{F}\mathsf{uk}}
\newcommand{\Aug}{\categoryfont{A}\mathsf{ug}}
\newcommand{\AugSim}{{\Delta_+}}
\newcommand{\LT}{\categoryfont{LG}}
\newcommand{\BLT}{\categoryfont{BLG}}
\newcommand{\catC}{\categoryfont{C}}
\newcommand{\Ch}{{\categoryfont{C}\mathsf{h}}}
\newcommand{\Fun}{\categoryfont{F}\mathsf{un}}
\newcommand{\Loc}{\categoryfont{L}\mathsf{oc}}
\newcommand{\Op}{{\categoryfont{O}\mathsf{p}}}
\newcommand{\Sh}{\categoryfont{S}\mathsf{h}}
\newcommand{\funF}{\functorfont{F}}
\newcommand{\funI}{\functorfont{I}}
\newcommand*{\DecorationScale}{0.25}
\DeclareRobustCommand*{\crossing}{{
\vcenter{\hbox{\begin{tikzpicture}[scale=\DecorationScale]
\draw[thick,-](-1,1)--(1,-1);
\draw[thick,-](-1,-1)--(1,1);
\end{tikzpicture}}}
}}
\DeclareRobustCommand*{\crossingpositive}{{
\vcenter{\hbox{\begin{tikzpicture}[scale=\DecorationScale]
\draw[thick,-](-1,1)--(1,-1);
\draw[thick,-](-1,-1)--(-0.3,-0.3);
\draw[thick,-](0.3,0.3)--(1,1);
\end{tikzpicture}}}
}}
\DeclareRobustCommand*{\rightkink}{{
\vcenter{\hbox{\begin{tikzpicture}[scale=\DecorationScale]
\draw[thick,-](-1,1) -- (0.3, -0.3) arc (-135: 135: 0.4242);
\draw[thick,-](-0.3,-0.3)--(-1,-1);
\end{tikzpicture}}}
}}
\DeclareRobustCommand*{\leftarc}{{
\vcenter{\hbox{\begin{tikzpicture}[scale=\DecorationScale]
\draw[thick,-](0,1) arc ( 90: 275: 1);
\end{tikzpicture}}}
}}
\DeclareRobustCommand*{\Lcusp}{{
\vcenter{\hbox{\begin{tikzpicture}[scale=\DecorationScale]
\draw[thick,-] (1,1) arc (-45:-90:2.414) arc (90:45:2.414);
\end{tikzpicture}}}
}}
\DeclareRobustCommand*{\Rcusp}{{
\vcenter{\hbox{\begin{tikzpicture}[scale=\DecorationScale]
\draw[thick,-] (-1,1) arc (-135:-90:2.414) arc (90:135:2.414);
\end{tikzpicture}}}
}}
\DeclareRobustCommand*{\frontvertex}{{
\vcenter{\hbox{\begin{tikzpicture}[scale=\DecorationScale]
\draw[thick,-] (-1.414,1) arc (-135:-90:2.414) arc (90:135:2.414);
\draw[thick,-] (1.414,1) arc (-45:-90:2.414) arc (90:45:2.414);
\draw[fill] (0,0.293) circle (5pt);
\end{tikzpicture}}}
}}
\title{Augmentations are sheaves for Legendrian graphs}
\author[B. H. An]{Byung Hee An}
\email{anbyhee@ibs.re.kr}
\address{Center for Geometry and Physics, Institute for Basic Science (IBS), Pohang 37673, Korea}
\author[Y. Bae]{Youngjin Bae}
\email{yjbae@kias.re.kr}
\address{Korea Institute for Advanced Study (KIAS), Seoul 02455, Korea}
\author[T. Su]{Tao Su}
\email{taosu@dma.ens.fr}
\address{Department of Mathematics, \'{E}cole Normale Sup\'{e}rieure, Paris, France}
\keywords{Legendrian graphs, Chekanov-Eliashberg DGA, augmentation category, constructible sheaf category}
\subjclass[2010]{Primary: 57R17; Secondary: 57M15, 14F05}
\begin{document}

\begin{abstract}
In this article, associated to a (bordered) Legendrian graph, we study and show the equivalence between two categorical Legendrian isotopy invariants: the augmentation category, a unital $A_{\infty}$-category, which lifts the set of augmentations of the associated Chekanov-Eliashberg DGA, and a DG category of constructible sheaves on the front plane, with micro-support at contact infinity controlled by the (bordered) Legendrian graph. In other words, generalizing \cite{NRSSZ2015}, we prove ``augmentations are sheaves'' in the singular case. 
\end{abstract}

\maketitle

\tableofcontents

\section{Introduction}
Nowadays, it has been an increasingly rich subject, to study the microlocal sheaf theoretic aspects of symplectic topology, or vice versa. This, has already been of interest in the case of a cotangent bundle $T^*M$. Along this direction, one fundamental result is the microlocalization equivalence \cite{Nadler2009,NZ2009}, between the DG category of constructible sheaves on the base $M$ and the infinitesimally wrapped Fukaya category of the cotangent bundle $T^*M$:
\[
\NZ:\Sh(M;\ring)\xrightarrow[]{\sim}\Fuk^{\epsilon}(T^*M;\ring).
\]
A more refined version of the correspondence involves introducing a Legendrian $\Lambda$ contained in $T^{\infty}M$, the co-sphere bundle (equivalently, the set of points at contact infinity) of $M$. On one side, replace $\Sh(M;\ring)$ by a full subcategory $\Sh_{\Lambda}(M;\ring)$ of constructible sheaves with micro-support at infinity contained in $\Lambda$. On the other side,  replace $\Fuk^{\epsilon}(T^*M;\field)$ by a full subcategory of Lagrangian branes asymptotic to $\Lambda$ at infinity. Then, the Nadler-Zaslow correspondence produces an equivalence:
\[
\NZ:\Sh_{\Lambda}(M;\ring)\xrightarrow[]{\sim}\Fuk_{\Lambda}^{\epsilon}(T^*M;\ring),
\]
Here, both sides are categorical Legendrian isotopy invariants for $\Lambda\subset T^{\infty}M$.
More recently, a correspondence \cite{GPS2017,GPS2018a,GPS2018b} has also been established, between the DG category $\Sh_{\Lambda}(M)^c$ of compact objects in the larger category of (weakly) constructible sheaves on $M$, with micro-support contained in $\Lambda$, and a wrapped Fukaya category $\Perf\mathscr{W}(T^*M,\Lambda)^{\op}$ of $T^*M$, whose Lagrangians are disjoint from $\Lambda$ at infinity:
\[
\GPS:\Sh_{\Lambda}(M)^c\simeq\Perf\scrW(T^*M,\Lambda)^{\op}.
\]
Notice that the latter is of central interest in Homological Mirror Symmetry. 

On the other hand, there is another powerful modern Legendrian isotopy invariant for a Legendrian submanifold $\Lambda$ in any contact manifold $V$. It is called the Legendrian contact homology DGA $A^\CE(V,\Lambda)$, obtained by counting of holomorphic disks. More specifically, the generators of this algebra are the Reeb chords of $\Lambda$, and the differential counts holomorphic disks with punctures on the boundary, in the symplectization $\RR_t\times V$, with boundary living on the Lagrangian $\RR_t\times \Lambda$ and approaching the Reeb chords at the punctures. The DGA $A^\CE(V,\Lambda)$ is a Legendrian isotopy invariant up to homotopy equivalence. 

Motivated by the Nadler-Zaslow correspondence, it is expected that certain representation category of $A^\CE(\Lambda)$ is equivalent to the sheaf category $\Sh_{\Lambda}(N\times\RR_z;\ring)$, when $V=J^1N\cong T^{\infty,-}(N\times\RR_z)$ is a one-jet bundle naturally identified with an open contact submanifold of $T^{\infty}(M=N\times\RR_z)$. 

In this article, the case of the major interest to us is when $N=\RR_x$, in which there is a rich interaction with Legendrian knot theory. That is, $\Lambda\subset J^1\RR_x=\RR_{\mathrm{std}}^3$ with a Maslov potential $\mu$ is now a Legendrian knot in the standard contact three space. In this case, ``augmentations are sheaves'' holds.
\begin{theorem}[\cite{NRSSZ2015}]\label{thm:NRSSZ}
There is an $A_{\infty}$-equivalence: 
\[
\NRSSZ:\Aug_+(\Lambda,\mu;\field)\xrightarrow[]{\sim}\cC_1(\Lambda,\mu;\field)
\]
\end{theorem}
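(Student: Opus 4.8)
The plan is to replace both categories by combinatorial models attached to a fixed front diagram, to build the functor by a diagram-local recipe, and then to deduce the equivalence from essential surjectivity together with a hom-level quasi-isomorphism, the latter bootstrapped from elementary tangles via the Legendrian Reidemeister moves. Concretely, one first uses that $\Aug_+(\Lambda,\mu;\field)$ and $\cC_1(\Lambda,\mu;\field)$ are Legendrian-isotopy invariants to fix a generic front for $\Lambda$, say in preferred plat position. On the sheaf side, the micro-support condition, the microlocal-rank-one condition, and the Maslov potential $\mu$ (which fixes the gradings) force an object of $\cC_1$ to be equivalent to the datum of finite-dimensional graded vector spaces on the regions cut out by the front, maps across each strand and two-step complexes across crossings and cusps, subject to acyclicity of the stalk below the diagram; one checks that this combinatorial category is DG-equivalent to $\cC_1(\Lambda,\mu;\field)$. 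On the DGA side, in plat position the generators of $A^\CE(\Lambda,\mu)$ are the crossings and right cusps, an augmentation $\epsilon$ is a solution of $\epsilon\circ\partial=0$, and the hom-spaces and $A_\infty$-operations of $\Aug_+$ are computed from the Chekanov--Eliashberg DGAs of the $(m+1)$-copy links $\Lambda^{(m+1)}$ with mixed Reeb chords, dualized and deformed by the chosen augmentations.

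I would then define $\NRSSZ$ locally on the diagram. An augmentation $\epsilon$ is sent to the sheaf $\cF_\epsilon$ whose region data is assembled from $\epsilon$ exactly as in the passage from augmentations to Morse complex sequences: reading the front left to right, a left cusp creates a rank-one summand, a crossing applies an $\epsilon$-twisted elementary transformation, and a right cusp contracts a summand, with the augmentation values serving as the off-diagonal entries; acyclicity and microlocal rank one then hold by construction. On morphisms, the mixed Reeb chords of the $2$-copy $\Lambda^{(2)}$ biject explicitly with the generators of $\rhom(\cF_{\epsilon_1},\cF_{\epsilon_2})$ in the combinatorial model --- region chords with region maps, chords at crossings and cusps with their correction terms --- and I would define $\NRSSZ$ on hom-complexes by this bijection, extending to the higher $A_\infty$-terms via the multi-copy chords.

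To conclude that $\NRSSZ$ is an $A_\infty$-equivalence I would establish two things. First, essential surjectivity: any object of the combinatorial sheaf category is isomorphic to some $\cF_\epsilon$, because a microlocal-rank-one sheaf with the prescribed micro-support determines, region by region, a normal ruling of the front together with augmentation scalars, well-defined up to sheaf isomorphism. Second, that every chain map $\NRSSZ\colon\hom_{\Aug_+}(\epsilon_1,\epsilon_2)\to\rhom(\cF_{\epsilon_1},\cF_{\epsilon_2})$ is a quasi-isomorphism; for this the cleanest route is induction on the number of crossings and cusps, since both categories are functorial under horizontal stacking of fronts and under the local moves, so one reduces to the elementary tangles (trivial strand, single crossing, single cusp), checks the claim there by hand, and propagates along the diagram by a Mayer--Vietoris / descent argument for horizontal subdivision. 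The same inductive framework handles the $A_\infty$-coherence of the higher products, matching concatenation in the combinatorial sheaf category with disk breaking in $A^\CE(\Lambda^{(m+1)})$.

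The hard part will be precisely this last step, full faithfulness together with $A_\infty$-coherence: one must show not merely that $H^*\rhom(\cF_{\epsilon_1},\cF_{\epsilon_2})$ is abstractly isomorphic to the bilinearized Legendrian contact cohomology $LCH^*_{\epsilon_1,\epsilon_2}$ --- already nontrivial, requiring a dictionary between holomorphic-disk counts and the combinatorial differential --- but that the explicit map $\NRSSZ$ realizes this isomorphism compatibly with all higher operations and with stacking. Two points will need particular care: proving that $\NRSSZ$ is independent of the auxiliary choices (the perturbation making the copies generic, and the chosen front), which is a Legendrian-invariance statement for the functor itself rather than a formal consequence of invariance of the two categories; and controlling signs and gradings through $\mu$ so that the generator bijection is degree-preserving and the two differentials agree on the nose. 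Granting the elementary-tangle computations and these coherence checks, essential surjectivity and the hom-level quasi-isomorphisms together yield the asserted $A_\infty$-equivalence.
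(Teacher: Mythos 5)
Your strategy matches the one in \cite{NRSSZ2015} that the paper cites, and that this paper itself follows when extending the result to graphs in Section \ref{section:augs are sheaves}: reduce to a preferred (plat) front via invariance, replace $\cC_1$ by the legible combinatorial model, establish a sheaf property for both sides over $\RR_x$ so that the problem localizes to elementary bordered tangles, compute the equivalence explicitly for each elementary piece (trivial strands via Morse complex categories, crossings, cusps), and glue via the (co)sheaf property. Nothing in your outline contradicts that proof, and the key computational steps you flag as the hard part --- the hom-level quasi-isomorphisms and $A_\infty$-coherence on elementary pieces, the independence of auxiliary choices, and the sign/grading bookkeeping carried by $\mu$ --- are indeed precisely where the bulk of the work in \cite{NRSSZ2015} lies.
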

Here, $\Aug_+(\Lambda,\mu;\field)$ is a unital $A_{\infty}$-category whose objects are the set of augmentations of the DGA $A^\CE(\Lambda,\mu)$, a representation category of rank $1$ representations of $A^\CE(\Lambda,\mu)$, and $\cC_1(\Lambda,\mu;\field)$ is the full subcategory of $\Sh_{\Lambda}(\RR_{x,z}^2;\field)$ whose objects are microlocal rank $1$ sheaves with acyclic stalks when $z\ll 0$. Concerning ``representations are sheaves'', or the correspondence between higher rank representations and higher microlocal rank sheaves, an equivalence in the cohomological level \cite{CNS2019} was shown for $(2,m)$-torus knots. In the case when $M$ is of higher dimension, the correspondence in question is widely open. However, see \cite{BC2014,CRGG2016,RS2018,RS2019} and \cite[Sec.6.4]{GPS2018b} for related results along this direction.  

Our input originates from the following speculation in the perspective of microlocal sheaf theory: given a constructible sheaf $\cF$ on $M=\RR_{x,z}^2$, the micro-support of $\cF$ at infinity is in general a singular Legendrian, typically a Legendrian graph in $T^{\infty}M$. So it is natural to seek a contact-geometric interpretation of the sheaf category $\Sh_{\Lambda}(M;\field)$ when $\Lambda$ is a Legendrian graph. More generally, it is natural to ask the same question with essentially no more difficulty, when $N=I_x\subset\RR_{x}$ is an open interval, and $T\subset J^1N$ is a bordered Legendrian graph. 
Our main result gives a positive answer to this question.
\begin{theorem}[Theorems \ref{thm:invariance aug}, \ref{theorem:unitality} and \ref{thm:augs are sheaves for Legendrian graphs}]\label{thm:augs are sheaves}
Let $(\cT,\bfmu)$ be a bordered Legendrian graph $\cT\coloneqq(T_\Left\rightarrow T\leftarrow T_\Right)$ in $J^1I_x\isomorphic T^{\infty,-}(I_x\times\RR_z)$ with a Maslov potential $\bfmu=(\mu_\Left,\mu,\mu_\Right)$.
There is a well-defined diagram of unital $A_{\infty}$-categories:
\begin{eqnarray*}
\Aug_+(\cT,\bfmu;\field)\coloneqq(\Aug_+(T_\Left,\mu_\Left;\field)\leftarrow\Aug_+(T,\mu;\field)\rightarrow\Aug_+(T_\Right,\mu_\Right;\field)),
\end{eqnarray*}
where the objects of $\Aug_+(T,\mu;\field)$ are the augmentations of the LCH DGA $A^\CE(T,\mu)$ defined as in \cite{AB2018}. The diagram is invariant under Legendrian isotopy and basepoint moves up to $A_{\infty}$-equivalence. Moreover, there is an $A_{\infty}$-equivalence between two diagrams of unital $A_{\infty}$-categories:
\begin{eqnarray*}
\Aug_+(\cT,\bfmu;\field)\xrightarrow[]{\sim}\cC_1(\cT,\bfmu;\field),
\end{eqnarray*}
where 
\[
\cC_1(\cT,\bfmu;\field)\coloneqq(\cC_1(T_\Left,\mu_\Left;\field)\leftarrow\cC_1(T,\mu;\field)\rightarrow\cC_1(T_\Right,\mu_\Right;\field))
\]
is a digram of DG categories of constructible sheaves. 
\end{theorem}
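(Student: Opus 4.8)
The plan is to adapt the architecture of \cite{NRSSZ2015} to the singular, bordered setting, quarantining the genuinely new input in a local computation at a vertex of the Legendrian graph. After a Legendrian isotopy I would put $(\cT,\bfmu)$ in \emph{generic front position}: the front $\pi(T)\subset I_x\times\RR_z$ has only finitely many non-generic fibers $x_1<\dots<x_{n-1}$, each supporting exactly one elementary local model --- a left or right cusp, a transverse crossing, or a graph vertex --- while over every other value of $x$ the graph is a trivial braid. Choosing regular values $a=x_0<x_1<\dots<x_n=b$ separating the singular fibers and inserting slightly overlapping open windows around the $[x_{i-1},x_i]$ decomposes $\cT$ into composable \emph{elementary bordered Legendrian graphs} $\cT_i$, glued along the trivial-braid slices $T_{(i)}$ (with $T_{(0)}=T_\Left$, $T_{(n)}=T_\Right$).

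\textbf{Descent along the $x$-line.} The next step is to show that both diagrams in the statement satisfy an \'etale-type descent for this decomposition, reducing the equivalence to the elementary pieces. On the sheaf side this is Kashiwara--Schapira gluing of constructible sheaves on a stratified product: for an open window retracting onto a trivial-braid slice, $\cC_1(T_{(i)},\mu_{(i)};\field)$ is the elementary ``point'' category attached to a finite set with Maslov potential, and $\cC_1(T,\mu;\field)$ is the homotopy fiber product of the $\cC_1(\cT_i)$ over the $\cC_1(T_{(i)})$ along corestriction to transverse slices. On the augmentation side the analogue is the cosheaf property of $\Aug_+$: the Chekanov--Eliashberg DGA of a stack of tangles is the amalgamation of the DGAs of the pieces over the boundary DGA (\cite{AB2018}), whence $\Aug_+(T,\mu;\field)$ is the corresponding homotopy fiber product of the $\Aug_+(\cT_i)$ over the $\Aug_+(T_{(i)})$. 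Granting these, it suffices to (i) produce, for each elementary piece, an $A_\infty$-equivalence $\Aug_+(\cT_i)\xrightarrow{\sim}\cC_1(\cT_i)$ compatible up to coherent homotopy with the two restriction functors to the adjacent point categories, together with a fixed equivalence of those point categories, and (ii) check that the resulting data glue to a map of cospan-shaped diagrams $\Aug_+(\cT,\bfmu;\field)\xrightarrow{\sim}\cC_1(\cT,\bfmu;\field)$ (equivalently, that the explicit $\NRSSZ$-type functor built slice by slice is well defined and an equivalence).

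\textbf{Elementary pieces and the graph vertex.} For trivial braids, cusps, and crossings, part (i) is a bordered repackaging of the local analysis of \cite{NRSSZ2015}: in each case one enumerates the finitely many Reeb chords and the local differential, computes the augmentation set and the $A_\infty$ hom-complexes between augmentations via the $m$-copy / link-grading model, and matches these term by term with the hom-complexes of the corresponding local sheaves read off from the front arrangement. The new case is a \emph{graph vertex}: here I would take the local model of $A^\CE$ near a vertex from \cite{AB2018} --- with its distinguished vertex generators and the relations they impose on admissible augmentations --- and compare it with the category of constructible sheaves near the corresponding front singularity (several sheets of the front meeting at a point), subject to microlocal rank $1$ and acyclic stalks for $z\ll 0$. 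The expected outcome is that both local categories are equivalent to an explicit quiver-type $A_\infty$-category determined by the cyclic order of the half-edges at the vertex together with their Maslov potentials, via an equivalence intertwining the restrictions to the left and right boundary point categories. Establishing this with full control of the $A_\infty$-coherences is the technical heart of the paper and the step I expect to be the main obstacle: the vertex DGA is combinatorially heavier than anything in the knot case, and one must track how its augmentations and higher products correspond to the gluing of nearby stalks and the microlocal monodromy around the singular point, and that these identifications are compatible, over all higher homotopies, with the descent of step two.

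\textbf{Assembly, invariance, unitality.} Assembling the elementary equivalences through the descent isomorphisms yields the global $A_\infty$-equivalence of diagrams, provided one verifies that the homotopies produced in step three splice into the coherence data of a map of cospan diagrams. For unitality, the identity endomorphism of a constructible sheaf is a strict unit, so $\cC_1(\cT,\bfmu;\field)$ is unital; one then either transports this along the equivalence, or constructs unit cycles directly as in \cite{NRSSZ2015}, now including the vertex generators, and checks the unital $A_\infty$-axioms on hom-complexes slice by slice. For invariance under Legendrian isotopy and basepoint moves, the Reidemeister-type theorem for Legendrian graphs (the elementary moves of \cite{AB2018} plus basepoint moves) reduces the claim to checking each move; the moves already present in the knot case are \cite{NRSSZ2015} in bordered form, and the vertex moves follow from stable-tame-isomorphism invariance of $A^\CE$ together with the vertex computation above. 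Independence of the generic front position then follows in the same way, so the diagram $\Aug_+(\cT,\bfmu;\field)$, its diagram of restriction functors, and the equivalence to $\cC_1(\cT,\bfmu;\field)$ are all well defined up to $A_\infty$-equivalence.
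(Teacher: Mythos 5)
Your proposal follows essentially the same architecture as the paper: put the front in generic position, decompose along the $x$-axis into elementary bordered pieces, invoke a (co)sheaf/descent property on both sides (the paper uses the isomorphism-lifting property, Lemma \ref{lem:strict fiber product}, to pass from the strict to the honest fiber product, matching your homotopy fiber product description), reduce the smooth local pieces to \cite{NRSSZ2015}, carry out the new local computation at a vertex (your step three is exactly the paper's Lemma \ref{lem:aug cat for a vertex} paired with the legible model of Corollary \ref{cor:legible model for C_1}), and handle invariance and unitality via stable-tame invariance of $A^{\CE}$ and the zig-zags of consistent stabilizations. The only refinement the paper makes that you do not spell out is to first normalize every vertex to type $(0,r)$ (and mark right cusps) using the $\RM{VI}$ and basepoint moves, which considerably simplifies the vertex DGA computation, but this is a detail internal to the same strategy rather than a different route.
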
  

The DG category $\cC_1(T,\mu;\field)$ is the full subcategory of $\Sh_T(I_x\times\RR_z;\field)$ whose objects are microlocal rank $1$ sheaves with acyclic stalks for $z\ll 0$.
In particular, when $T=\Lambda\subset T^{\infty,-}\RR_{x,z}^2$ is a Legendrian graph, we get an $A_{\infty}$-equivalence $\Aug_+(\Lambda,\mu;\field)\xrightarrow[]{\sim}\cC_1(\Lambda,\mu;\field)$, i.e. ``augmentations are sheaves'' holds in the singular case. 

As a consequence of Theorem \ref{thm:augs are sheaves}, up to a normalization, the point-counting of sheaves in $\cC_1(T,\mu;\field)$ (with boundary conditions) over a finite field $\field=\FF_q$, is equivalent to that of augmentations in $\Aug_+(T,\mu;\field)$ (with boundary conditions), called augmentation number and denoted by $\augnumber(\cT,\bfmu;\rho_\Left,\rho_\Right;\FF_q)$. Generalizing the results in \cite{HR2015,Su2017}, our preceding paper \cite{ABS2019count} solves this counting problem.
More precisely, augmentation numbers are computed by ruling polynomials of $T$, defined via the combinatorics of decompositions of the front projection $T$:
\begin{theorem}\cite{ABS2019count}\label{thm:count augmentations}
Let $(\cT,\bfmu)$ be as above. Let $\rho_\Left\in\NR(T_\Left,\mu_\Left)$ and $\rho_\Right\in\NR(T_\Right,\mu_\Right)$ be two boundary conditions (i.e. normal rulings). Then the following two Legendrian isotopy invariants are the same:
\begin{eqnarray*}
\augnumber(\cT,\bfmu;\rho_\Left,\rho_\Right;\FF_q)=q^{-\frac{d+\hat{B}}{2}}z^{\hat{B}}\langle\rho_\Left|R(\cT,\bfmu;q,z)|\rho_\Right\rangle
\end{eqnarray*}
Here, $\langle\rho_\Left|R(\cT,\bfmu;q,z)|\rho_\Right\rangle\in\ZZ[q^{\pm\frac{1}{2}},z^{\pm 1}]$ is the ruling polynomial for $(\cT,\bfmu)$ with boundary conditions $(\rho_\Left,\rho_\Right)$, $d\coloneqq\max\deg_z\langle\rho_\Left|R(\cT,\bfmu;z^2,z)|\rho_\Right\rangle$. In the formula, we take $z=q^{\frac{1}{2}}-q^{-\frac{1}{2}}$, and 
\[
\hat{B}\coloneqq B+\sum_{v\in V(\cT)}\frac{val(v)}{2}
\]
counts the number of ``generalized'' basepoints in $T$.

Moreover, the ruling polynomials satisfy the \emph{gluing property}: If $(\cT,\bfmu)=(\cT^1,\bfmu^1)\circ(\cT^2,\bfmu^2)$ is a composition of two bordered Legendrian graphs, that is, $(T^1_\Right,\mu^1_\Right)=(T^2_\Left,\mu^2_\Left)$, then
\[
\langle\rho_\Left|R(\cT,\bfmu;q,z)|\rho_\Right\rangle=\sum_{\rho_I\in\NR(T^1_\Right,\mu^1_\Right)}\langle\rho_\Left|R(\cT^1,\bfmu^1;q,z)|\rho_I\rangle\langle\rho_I|R(\cT^2,\bfmu^2;q,z)|\rho_\Right\rangle.
\]
\end{theorem}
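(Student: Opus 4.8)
\emph{Proof strategy.} The plan is to compute the left-hand side by a \emph{ruling decomposition} of the augmentation variety of $A^\CE(T,\mu)$, generalizing the arguments of \cite{HR2015,Su2017} from Legendrian knots to Legendrian graphs, and to deduce the gluing property formally from the way the Chekanov--Eliashberg DGA of a \emph{bordered} graph is assembled. The first move is a reduction to a convenient front. By the invariance part of Theorem~\ref{thm:augs are sheaves} the unital $A_\infty$-category $\Aug_+(\cT,\bfmu;\FF_q)$ is a Legendrian isotopy invariant, so its suitably normalized point count $\augnumber(\cT,\bfmu;\rho_\Left,\rho_\Right;\FF_q)$ is well defined and invariant; the ruling polynomial is likewise a combinatorial invariant via the elementary moves. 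Hence it suffices to verify the identity for one representative per isotopy class, which I take in preferred plat-like position: the front of $T$ is sliced into vertical tangle slices, each carrying a single elementary event (a crossing, a left or right cusp, a vertex, or a basepoint), and for such a diagram $A^\CE(T,\mu)$ of \cite{AB2018} has an explicit small model generated by the crossings, the right cusps, the vertex Reeb chords and the invertible basepoint variables, with a slice-by-slice differential.

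I would then set up the \emph{ruling stratification}. To an augmentation $\epsilon$ assign the set of degree-$0$ crossings on which $\epsilon$ is nonzero; reading the quadratic part of $\partial$ slice by slice --- at crossings, at cusps, and, crucially, at vertices --- one checks that this set, together with the strand pairings forced by the cusps and vertices, is a graded normal ruling $\rho(\epsilon)$ of $(\cT,\bfmu)$ restricting to $(\rho_\Left,\rho_\Right)$ on the boundary, and that every such $\rho$ occurs. For fixed $\rho$, the augmentations with $\rho(\epsilon)=\rho$ form the solution set of a triangular (for the slice order) system inside the affine space of $\rho$-switched coefficient assignments, hence a product of affine spaces and copies of $\FF_q^\times$, so that $\#\{\epsilon:\rho(\epsilon)=\rho\}=q^{r(\rho)}(q-1)^{s(\rho)}$, where $r(\rho)$ counts the ``returns'' of $\rho$ and $s(\rho)$ counts the switches together with the basepoint and vertex variables. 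This is the graph analogue of the Morse-complex-family description of \cite{NRSSZ2015,Su2017}, the new input being the local picture at a vertex of arbitrary valence.

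Summing over the rulings compatible with the boundary conditions and inserting the normalization factor built into $\augnumber$ yields $\augnumber(\cT,\bfmu;\rho_\Left,\rho_\Right;\FF_q)=C(q)\sum_\rho q^{r(\rho)}(q-1)^{s(\rho)}$ for an explicit $C(q)$. Two combinatorial identities, both instances of an Euler-characteristic count on the singular surface swept out by the ruling disks --- in which a vertex of valence $\val(v)$ carries $\val(v)/2$ ``generalized basepoints'', explaining the shape of $\hat B=B+\sum_v\val(v)/2$ --- now do the work: $s(\rho)=\hat B+j(\rho)$, where $j(\rho)$ is the $z$-degree of $\rho$ appearing in the ruling polynomial, and $r(\rho)+\tfrac12 s(\rho)$ is independent of $\rho$. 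Substituting $q-1=q^{1/2}z$ and using these identities collapses the per-ruling powers of $q$ to the uniform prefactor $q^{-(d+\hat B)/2}$, with $d$ as defined in the statement, and leaves $\sum_\rho z^{\hat B+j(\rho)}=z^{\hat B}\langle\rho_\Left|R(\cT,\bfmu;q,z)|\rho_\Right\rangle$; this is the claimed formula. For the gluing property: stacking the preferred fronts of $\cT^1$ and $\cT^2$ exhibits $A^\CE(T,\mu)$ as an amalgamated free product of $A^\CE(T^1,\mu^1)$ and $A^\CE(T^2,\mu^2)$ over the purely horizontal DGA of the shared boundary slice, so an augmentation of $\cT$ is a pair of augmentations of $\cT^1,\cT^2$ agreeing there; since the only invariant of an augmentation of a boundary slice is its associated normal ruling $\rho_I\in\NR(T^1_\Right,\mu^1_\Right)$, and the affine fibers and the normalization factor split across the cut, the point count factors as the stated sum over $\rho_I$.

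\emph{Main obstacle.} The hard part will be the vertex analysis, which enters twice: in the ruling stratification one must isolate the correct notion of a ``normal ruling at a vertex of arbitrary valence'' and prove it is precisely the locus cut out by the An--Bae vertex relations, with the resulting stratum still an affine cell; and in the Euler-characteristic bookkeeping one must extract the exact vertex contribution so that it reproduces the $\tfrac12\sum_v\val(v)$ correction in $\hat B$. By comparison, the bordered and basepoint bookkeeping --- keeping $\rho_\Left,\rho_\Right$ fixed throughout and checking that $C(q)$ splits correctly under composition --- is routine.
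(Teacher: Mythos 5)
The paper itself does not prove Theorem~\ref{thm:count augmentations}; it is cited verbatim from the companion paper \cite{ABS2019count}, so there is no in-text argument to compare yours against. On its own merits, your reconstruction of the counting formula follows the route one would expect: a ruling stratification of the augmentation variety with affine-cell fibers, generalizing \cite{HR2015,Su2017}, together with an Euler-characteristic bookkeeping identity that collapses the per-ruling $q$-power, and you are right that the genuinely new ingredient is the vertex-local analysis (it is precisely the description of $\MC(v;\field)=\coprod_{\rho\in\NR(v)}B(v;\field)\cdot d_\rho$ in Lemma~\ref{lem:stratification of aug var at a vertex} that accounts for the extra $(\field^\times)^{\val(v)/2}$ factors making up the correction $\sum_v\val(v)/2$ in $\hat B$).

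Two places want tightening. First, the map $\epsilon\mapsto\rho(\epsilon)$ is not literally the non-vanishing locus of $\epsilon$ on degree-$0$ crossings; it is extracted slice-by-slice from the Barannikov canonical form of the associated Morse complex family, as the $B$-orbit decomposition in Lemma~\ref{lem:ruling stratification for trivial tangle} makes precise. As written, your description of the stratification would not in general produce a normal ruling. Second, the gluing argument as sketched is both indirect and imprecise: the theorem asserts multiplicative gluing for the \emph{ruling polynomial}, which is a purely combinatorial statement (a normal ruling of $\cT$ restricts to a compatible pair, and conversely), and needs no DGA amalgamation or augmentation count at all. Your route instead glues the augmentation count over the boundary slice, but the assertion that ``the only invariant of an augmentation of a boundary slice is its ruling'' is false as stated---there is additional affine data within each boundary stratum---and making the fibers factor requires dividing out by the stratumwise fiber size, which depends on $\rho_I$. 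This can be made to work with the correct normalization, but it is a detour relative to the combinatorial proof one would actually give.
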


\addtocontents{toc}{\protect\setcounter{tocdepth}{1}}
\subsection*{Organization}
The article is organized as follows. 
In Section \ref{section:setup}, we review the basic backgrounds from \cite{ABS2019count} on Legendrian graphs and bordered Legendrian graphs $(\cT,\bfmu)$ with a Maslov potential. The main ingredients include the Legendrian contact homology (LCH) DGAs $A^\CE(\cT,\bfmu)=(A^\CE(T_\Left,\mu_\Left)\rightarrow A^\CE(T,\mu)\leftarrow A^\CE(T_\Right,\mu_\Right))$ for bordered Legendrian graphs.

In Section \ref{section:consistent sequences}, we give the algebraic preliminaries before defining augmentation categories. In particular, we introduce two categories of consistent sequences: consistent sequences of bordered Legendrian graphs, and consistent sequences of DGAs. In Section \ref{section:aug cats}, we define augmentation categories for bordered Legendrian graphs, and then show their unitarity and invariance.

In Section \ref{section:sheaf cats}, we firstly give the preliminaries on the microlocal theory of sheaves. Then we construct the necessary combinatorial tools, which we call legible models for  $\Sh(\cT;\field)=(\Sh(T_\Left;\field)\leftarrow\Sh(T;\field)\rightarrow\Sh(T_\Right;\field))$, the diagram of sheaf categories for a bordered Legendrian graph $\cT$ in $T^{\infty,-}(I_x\times\RR_z)$.  As an application, we prove the invariance of $\Sh(\cT;\field)$ via combinatorics, hence the invariance of $\cC_1(\cT,\bfmu;\field)$, the diagram of full-subcategories of $\Sh(\cT;\field)$ whose objects are microlocal rank $1$ objects with acyclic stalks for $z\ll 0$.

In Section \ref{section:augs are sheaves}, we prove our main result ``augmentations are sheaves'' for bordered Legendrian graphs. The basic idea is as follows: By the invariance results in Section \ref{section:aug cats} and Section \ref{section:sheaf cats}, we can assume the vertices in the front projection $T$ are all of type $(0,r)$ for some $r$, all the left cusps and vertices are to the left of the crossings of $T$, and all the right cusps are to the right of the crossings of $T$. Moreover, we can assume all right cusps are marked. Then both of the two diagrams of $A_{\infty}$-categories satisfy the sheaf property over $I_x$. Hence, by decomposing the front diagram $T$ into the composition of elementary pieces, it suffices to show the theorem for each elementary piece. By the results for Legendrian knots in \cite{NRSSZ2015}, it suffices to show the case of an elementary bordered Legendrian graph $(\cV,\bfmu)$ involving only a vertex. This is done by an explicit description for both of the two diagrams $\Aug_+(\cV,\bfmu;\field)$ and $\cC_1(\cV,\bfmu;\field)$. The augmentation side is done in Lemma \ref{lem:aug cat for a vertex}. The sheaf side is a direct application of the legible model in Section \ref{subsubsec:legible model} for $\cC_1(\cV,\bfmu;\field)$. Then we are done.

\addtocontents{toc}{\protect\setcounter{tocdepth}{1}}
\subsection*{Acknowledgements}
We would like to thank RIMS in Japan, IBS-CGP in South Korea, and ENS Paris - CNRS in France for supporting the visits, where much of this project was developed. 
The first author is supported by IBS-R003-D1.
The second author is supported by Korea Institute for Advanced Study and Japan Society for the Promotion of Science International Research Fellowship Program. He thanks Research Institute for Mathematical Sciences, Kyoto University for its warm hospitality.
The third author is supported by ANR-15-CE40-0007. He would like to thank St\'{e}phane Guillermou for the invitation to visit CRM, Montreal, where part of this project was improved. In addition, he is grateful to Vivek Shende, David Nadler, and Lehnard Ng for the help in his early career.

\addtocontents{toc}{\protect\setcounter{tocdepth}{2}}

\section{Setup}
\label{section:setup}

\begin{notation}
For each $m\ge0$, we denote the set $\{1,\cdots, m\}$ equipped with the natural order by $[m]$.
\end{notation}

\subsection{Bordered Legendrian graphs}
In this section, we briefly review the definition of \emph{bordered Legendrian graphs} defined in \cite[\S2]{ABS2019count}.
A graph is a finite regular one dimensional CW complex, whose 0-cells and closed 1-cells are called vertices and edges.
For each vertex $\ttv$, a \emph{half-edge} at $\ttv$ is a small enough restriction of an edge adjacent to $\ttv$.
Then as usual, the \emph{valency} of $\ttv$ is the number of half-edges at $v$ and denoted by $\val(\ttv)$.

A (\emph{based}) \emph{bordered graph} $\graf=(\ttV,\ttV_\Left,\ttV_\Right,\ttB,\ttE)$ of type $(n_\Left,n_\Right)$ consists of the following data:
\begin{itemize}
\item the pair $(\ttV\amalg\ttV_\Left\amalg\ttV_\Right\amalg\ttB,\ttE)$ defines a graph $|\graf|$,
\item each $b\in\ttB$ of $|\graf|$ is bivalent, and
\item two disjoint subsets $\ttV_\Left$ and $\ttV_\Right$ consist of $n_\Left$ and $n_\Right$ univalent vertices of $|\graf|$.
\end{itemize}

Elements in $\ttV,\ttV_\Left,\ttV_\Right,\ttB$ and $\ttE$ will be called \emph{vertices}, \emph{left borders}, \emph{right borders}, \emph{basepoints} and \emph{edges}, respectively.
The \emph{interior} $\mathring{\graf}$ of $\graf$ is define to be the complement of $\ttV_\Left\amalg\ttV_\Right$.

\begin{notation}
In order to emphasize the border structures, we will denote the bordered graph $\graf$ as
\[
\bfgraf=(\graf_\Left\to \graf \leftarrow \graf_\Right),
\]
where $\graf_\Left$ and $\graf_\Right$ are defined by $\ttV_\Left$ and $\ttV_\Right$ by, respectively, and both arrows are inclusions.

From now on, we mean by a \emph{graph} a bordered graph with empty borders $(\emptyset\to \graf\leftarrow\emptyset)$, which will be denoted simply by $\graf$.
\end{notation}

For a closed interval $U=[x_\Left,x_\Right]\subset \RR_x$, let the \emph{bordered one-jet space} $J^1\bfU=(J^1U_\Left\to J^1U\leftarrow J^1U_\Right)$ be the one-jet space $J^1 U\coloneqq(U\times\RR_{yz}, dz-ydx)\subset J^1\RR_x= (\RR^3_{xyz}, dz-ydx)$ together with two contact submanifolds
\[
J^1 U_\Left\coloneqq \left(\{x_\Left\}\times\RR_z,dz\right)\quad\text{ and }\quad
J^1 U_\Right\coloneqq \left(\{x_\Right\}\times\RR_z,dz\right).
\]

\begin{definition}[bordered Legendrian graphs]\label{definition:bordered Legendrian graph}
A \emph{bordered Legendrian graph} $\scrT=(\sfT_\Left\to\sfT\leftarrow \sfT_\Right)$ of a bordered graph $\bfgraf$ of type $(n_\Left,n_\Right)$ in $J^1\bfU$ is defined as an embedding $\sfT:\graf\to J^1U$ such that 
\begin{enumerate}
\item $\sfT$ is transverse to the boundary $\boundary J^1U=\boundary U\times\RR_{yz}$ and the restrictions on the interior $\mathring{\graf}$ and both borders $\graf_\Left$ and $\graf_\Right$ are contained in $J^1\mathring{U}$, $J^1U_\Left$ and $J^1U_\Right$, respectively.
\begin{align*}
\sfT&\pitchfork \boundary J^1U,&
\sfT_\Left&\coloneqq\sfT(\graf_\Left)\subset J^1U_\Left,&
\sfT_\Right&\coloneqq\sfT(\graf_\Right)\subset J^1U_\Right,&
\mathring{\sfT}&\coloneqq\sfT(\mathring{\graf})\subset J^1\mathring{U}.
\end{align*}
\item $\sfT$ on edges are smooth Legendrian with boundary and pairwise non-tangent to each other at all vertices and basepoints, in particular, two edges adjacent to each basepoint form a smooth arc.
\end{enumerate}

By labeling borders in $\sfT_\Left$ and $\sfT_\Right$ in top-to-bottom ways with respect to $z$-coordinates, we identify the left and right border $\sfT_\Left$ and $\sfT_\Right$ with the set $[n_\Left]=\{1,\dots,n_\Left\}$ and $[n_\Right]=\{1,\dots,n_\Right\}$.
\end{definition}

There are two projections for $J^1\RR_x\isomorphic \RR^3_{xyz}$, called the \emph{front} and \emph{Lagrangian} projections $\pi_{\front}:\RR^3_{xyz}\to\RR^2_{xz}$ and $\pi_{\lag}:\RR^3_{xyz}\to\RR^2_{xy}$, respectively.

\begin{definition}[Regular projections]\label{definition:regular projections}
For a bordered Legendrian graph $\scrT=(\sfT_\Left\to\sfT\leftarrow\sfT_\Right)$, the \emph{front} and \emph{Lagrangian projections} $\cT=(T_\Left\to T\leftarrow T_\Right)\coloneqq \pi_\front(\scrT)$ and $\cT_\lag=(T_{\lag,\Left}\to T_\lag\leftarrow T_{\lag,\Right})\coloneqq \pi_\lag(\scrT)$ are said to be \emph{regular} if in their interiors,
\begin{enumerate}
\item there are only finitely many transverse double points, called \emph{crossings}, and
\item no vertices, basepoints or \emph{$x$-extreme points} are crossings,
\item each edge containing a $x$-maximal point must involve at least one vertex or a basepoint,
\end{enumerate}
where a point in the interior $\mathring{\sfT}$ is said to be \emph{$x$-maximal} or \emph{$x$-minimal} if it is maximal or minimal with respect to the $x$-coordinate, and \emph{$x$-extreme} if it is either $x$-maximal or $x$-minimal.\footnotemark

A bordered Legendrian graph of type $(0,0)$ is called a \emph{Legendrian graph} and we denote the sets of regular front and Lagrangian projections of non-bordered and bordered Legendrian graphs by
$\LT, \LT_\lag$ and $\BLT, \BLT_\lag$, respectively.
\end{definition}
\footnotetext{In the front projection, an $x$-extreme point is either a cusp or a vertex of type $(0,n)$ or $(n,0)$.}

\begin{remark}
Due to the Legendrian property, there are no vertical tangencies and no non-transverse double points in the front projection.
Instead, it contains \emph{cusps}, which is obviously, $x$-extreme.
\end{remark}

\begin{notation}
The front and Lagrangian projection of $\sfT=\sfT(\graf)$ with $\graf=(\ttV,\ttV_\Left, \ttV_\Right, \ttB,\ttE)$ will be denoted by $T=(V,V_\Left, V_\Right, B, E)$ and $T_\lag=(V_\lag,V_{\lag,\Left},V_{\lag,\Right},B_\lag,E_\lag)$, respectively.
\end{notation}

For examples of regular and non-regular projection, see Figure~\ref{figure:non-regular projections}.
To avoid any confusion, we denote vertices and basepoints by small dots and bars, respectively.

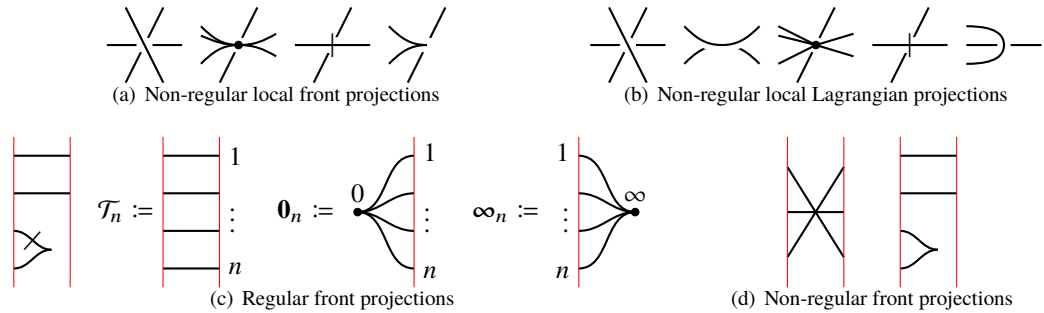
\begin{figure}[ht]
\subfigure[Non-regular local front projections]{\makebox[0.45\textwidth]{$
\begin{tikzpicture}[baseline=-.5ex,scale=0.5]
\begin{scope}
\draw[thick] (-1,0)--(1,0);
\draw[thick] (-.5,-1)--(.5,1);
\draw[white,fill=white] (0,0) circle (0.2);
\draw[thick] (-.5,1)--(.5,-1);
\end{scope}
\begin{scope}[xshift=2.5cm]
\draw[thick] (-.5,-1)--(.5,1);
\draw[white,fill=white] (0,0) circle (0.2);
\draw[thick] (-1,0.5) to[out=-45,in=180] (0,0) to[out=180,in=45] (-1,-0.5);
\draw[thick] (1,0.3) to[out=-150,in=0] (0,0) to[out=0,in=150] (1,-0.3);
\draw[thick] (-1,0.25) to[out=-22.5,in=180] (0,0);
\draw[fill] (0,0) circle (0.1);
\end{scope}
\begin{scope}[xshift=5cm]
\draw[thick] (-.5,-1)--(.5,1);
\draw[white,line width=6] (-1,0) -- (1,0);
\draw[thick] (-1,0)--(0,0) node{$|$} -- (1,0);
\end{scope}
\begin{scope}[xshift=7.5cm]
\draw[thick] (-.5,-1)--(.5,1);
\draw[white,fill=white] (0,0) circle (0.2);
\draw[thick] (-1,0.5) to[out=-45,in=180] (0,0) to[out=180,in=45] (-1,-0.5);
\end{scope}
\end{tikzpicture}
$}}
\subfigure[Non-regular local Lagrangian projections]{\makebox[0.5\textwidth]{$
\begin{tikzpicture}[baseline=-.5ex,scale=0.5]
\begin{scope}
\draw[thick] (-1,0)--(1,0);
\draw[thick] (-.5,-1)--(.5,1);
\draw[white,fill=white] (0,0) circle (0.2);
\draw[thick] (-.5,1)--(.5,-1);
\end{scope}
\begin{scope}[xshift=2.5cm]
\draw[thick] (-1,-0.5) to[out=45,in=180] (0,0) to[out=0,in=135] (1,-0.5);
\draw[white,line width=5] (-1,0.5) to[out=-45,in=180] (0,0) to[out=0,in=-135] (1,0.5);
\draw[thick] (-1,0.5) to[out=-45,in=180] (0,0) to[out=0,in=-135] (1,0.5);
\end{scope}
\begin{scope}[xshift=5cm]
\draw[thick] (-0.5,-1) -- (0.5,1);
\draw[white,fill=white] (0,0) circle (0.2);
\draw[thick] (-1,0.5) -- (0,0) (-1,0.25) -- (0,0) (-1,-0.5) -- (0,0) (1,0.3) -- (0,0) (1,-0.3) -- (0,0);
\draw[fill] (0,0) circle (0.1);
\end{scope}
\begin{scope}[xshift=7.5cm]
\draw[thick] (-.5,-1)--(.5,1);
\draw[white,line width=6] (-1,0) -- (1,0);
\draw[thick] (-1,0)--(0,0) node{$|$} -- (1,0);
\end{scope}
\begin{scope}[xshift=10cm]
\draw[thick] (-1,0) -- (1,0);
\draw[white,line width=5] (-1,-0.5) to[out=0,in=-90] (0,0) to[out=90,in=0] (-1,0.5);
\draw[thick] (-1,-0.5) to[out=0,in=-90] (0,0) to[out=90,in=0] (-1,0.5);
\end{scope}
\end{tikzpicture}
$}}
\subfigure[Regular front projections\label{figure:regular projections}]{\makebox[0.55\textwidth]{$
\begin{tikzpicture}[baseline=-.5ex]
\begin{scope}
\draw[thick] (-0.75,0.75) -- (0,0.75);
\draw[thick] (-0.75,0.25) -- (0,0.25);
\draw[thick](-0.25,-0.5) to[out=180,in=0] node[midway,sloped] {$|$} (-0.75,-0.25);
\draw[thick](-0.25,-0.5) to[out=180,in=0] (-0.75,-0.75);
\draw[red](-0.75,-1)--(-0.75,1);
\draw[red](0,-1)--(0,1);
\end{scope}
\end{tikzpicture}\quad
\cT_n\coloneqq
\begin{tikzpicture}[baseline=-.5ex]
\foreach \i in {-1.5,-0.5,0.5,1.5} {
\draw[thick] (-0.75, {\i*0.5}) -- +(0.75,0);
}
\draw[red](-0.75,-1)--(-0.75,1) (0,-1)--(0,1);
\draw (0,0.75) node[right] {$1$};
\draw (0,0) node[right] {$\vdots$};
\draw (0,-0.75) node[right] {$n$};
\end{tikzpicture}\quad
\bfzero_n\coloneqq
\begin{tikzpicture}[baseline=-.5ex]
\draw[fill](0,0) circle (0.05) node[above] {$0$};
\foreach \i in {0.75, 0.25, -0.25, -0.75} {
\draw[thick, rounded corners](0,0) to[out=0,in=180] (0.75,\i);
}
\draw[red](0.75,-1)--(0.75,1);
\draw (0.75,0.8) node[right] {$1$};
\draw (0.75,0) node[right] {$\vdots$};
\draw (0.75,-0.8) node[right] {$n$};
\end{tikzpicture}\quad
\bfinfty_n\coloneqq
\begin{tikzpicture}[baseline=-.5ex]
\draw[fill](0,0) circle (0.05) node[above] {$\infty$};
\foreach \i in {0.75, 0.25, -0.25, -0.75} {
\draw[thick, rounded corners](0,0) to[out=180,in=0] (-0.75,\i);
}
\draw[red](-0.75,-1)--(-0.75,1);
\draw (-0.75,0.8) node[left] {$1$};
\draw (-0.75,0) node[left] {$\vdots$};
\draw (-0.75,-0.8) node[left] {$n$};
\end{tikzpicture}
$}}
\subfigure[Non-regular front projections]{\makebox[0.4\textwidth]{$
\begin{tikzpicture}[baseline=-.5ex]
\begin{scope}
\draw[thick] (-0.75,0.6) -- (0,-0.6);
\draw[thick] (-0.75,0) -- (0,0);
\draw[thick] (-0.75,-0.6) -- (0,0.6);
\draw[red](-0.75,-1)--(-0.75,1);
\draw[red](0,-1)--(0,1);
\end{scope}
\begin{scope}[xshift=1.5cm]
\draw[thick] (-0.75,0.75) -- (0,0.75);
\draw[thick] (-0.75,0.25) -- (0,0.25);
\draw[thick, rounded corners](-0.25,-0.5) to[out=180,in=0] (-0.75,-0.25);
\draw[thick, rounded corners](-0.25,-0.5) to[out=180,in=0] (-0.75,-0.75);
\draw[red](-0.75,-1)--(-0.75,1);
\draw[red](0,-1)--(0,1);
\end{scope}
\end{tikzpicture}
$}}
\caption{Regular and non-regular projections of bordered Legendrian graphs}
\label{figure:non-regular projections}
\end{figure}

\begin{definition}[Types and orientations]\label{definition:types of vertices}
For a vertex $v$ or a basepoint $b$ of a bordered Legendrian graph, we say that it is of \emph{type $(\ell,r)$} if there are $\ell$ and $r$ half-edges adjacent to $v$ or $b$ on the left and right, respectively. 
We label the set $H_v\coloneqq{\{h_{v,1},\dots,h_{v,n}\}}$ of (small enough) half-edges in front and Lagrangian projections as follows:
\[
\begin{tikzpicture}[baseline=-0.5ex,scale=0.8]
\begin{scope}[xshift=-5cm]
\foreach \i in {4,...,8} {
	\draw[thick] (-1,{(6-\i)/3}) to[out=0,in=180] (0,0);
}
\draw (-1,0.67) node[left] {$h_{v,1}$};
\draw (-1,0.33) node[left] {$h_{v,2}$};
\draw (-1,-0.167) node[left] {$\vdots$};
\draw (-1,-0.67) node[left] {$h_{v,\ell}$};
\foreach \i in {1,2,3} {
	\draw[thick] (1,{(2-\i)/1.5}) to[out=180,in=0] (0,0);
}
\draw (1,0.67) node[right] {$h_{v,\ell+1}$};
\draw (1,0) node[right] {$\vdots$};
\draw (1,-0.67) node[right] {$h_{v,\ell+r}$};
\draw[fill] (0,0) circle (2pt) node[above] {$v$};
\end{scope}
\begin{scope}
\foreach \i in {4,...,8} {
	\draw[thick] (-1,{(\i-6)/3}) -- (0,0);
}
\draw (-1,-0.67) node[left] {$h_{v,1}$};
\draw (-1,-0.33) node[left] {$h_{v,2}$};
\draw (-1,0.167) node[left] {$\vdots$};
\draw (-1,0.67) node[left] {$h_{v,\ell}$};
\foreach \i in {1,2,3} {
	\draw[thick] (1,{(2-\i)/1.5}) -- (0,0);
}
\draw (1,0.67) node[right] {$h_{v,\ell+1}$};
\draw (1,0) node[right] {$\vdots$};
\draw (1,-0.67) node[right] {$h_{v,\ell+r}$};
\draw[fill] (0,0) circle (2pt) node[above] {$v$};
\end{scope}
\begin{scope}[xshift=5cm]
\draw[thick] (-1,0) node[left] {$h_{b,1}$} -- (0,0) node {$|$} node[above] {$b$} -- (1,0) node[right] {$h_{b,2}$};
\end{scope}
\end{tikzpicture}
\quad\cdots
\]

In particular, each basepoint $b\in B$ is assumed to be \emph{oriented} from the half-edge $h_{b,1}$ to $h_{b,2}$ in the above convention.
\end{definition}

\begin{example/definition}[The trivial and vertex bordered Legendrian graphs]\label{example/definition:trivial graph}
Let $n\ge 1$. The front projections of the trivial bordered Legendrian graph $\cT_n=(T_{n,\Left}\to T_n\leftarrow T_{n,\Right})$ of type $(n,n)$ and the vertex bordered graphs $\bfzero_n=(\emptyset\to 0_n\leftarrow 0_{n,\Right})$ and $\bfinfty_n=(\infty_{n,\Left}\to \infty_n\leftarrow\emptyset)$ of type $(0,n)$ and $(n,0)$ as depicted in Figure~\ref{figure:regular projections},
whose left and right borders are points lying on the red lines at the left and right, respectively.

For convenience's sake, we define $\cT_0=\bfzero_0=\bfinfty_0=(\emptyset\to\emptyset\leftarrow\emptyset)$.
\end{example/definition}

\begin{definition}[Equivalences and isomorphisms]\label{definition:isomorphisms}
We say that two bordered Legendrian graphs $\scrT$ and $\scrT'$ are \emph{equivalent} if
they are isotopic, that is, there exists a family of bordered Legendrian graphs
\begin{align*}
\scrT_t&:\graf\times[0,1]\to J^1\bfU_t\subset J^1\RR_x,&
\scrT_0&=\scrT,&
\scrT_1&=\scrT'.
\end{align*}

Two regular front (or Lagrangian) projections $\cT$ (or $\cT_\lag$) and $\cT'$ (or $\cT'_\lag$) of bordered Legendrian graphs $\scrT$ and $\scrT'$ are said to be \emph{isomorphic} if there is an isotopy Lagrangian $\scrT_t$ such that $\cT$ (or $\cT_\lag$) and $\cT'$ (or $\cT'_\lag$) are Lagrangian (or front) projections of $\scrT_0$ and $\scrT_1$ and Lagrangian (or front) projections $\cT_t\coloneqq \pi_\front(\scrT_t)$ (or $\cT_{\lag,t}\coloneqq\pi_\lag(\scrT_t)$) are regular for all $t$.
\end{definition}

\begin{remark}
It is important to note that during the isotopy between two bordered Legendrian graphs, the ambient manifold $J^1\bfU_t$ may changes. For example, any translation along the $x$-axis will give us an equivalence.
\end{remark}

\begin{lemma}
Up to isomorphism, every pair of equivalent front or Lagrangian projections can be connected by a zig-zag sequence of \emph{front} or \emph{Lagrangian Reidemeister moves} depicted in Figures~{\rm\ref{fig:front_RM}} or {\rm\ref{fig:RM}}, respectively.
\end{lemma}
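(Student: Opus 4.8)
The plan is to establish the Reidemeister-type theorem for bordered Legendrian graphs by reducing it to the known classifications in the non-bordered case together with careful bookkeeping near the two border planes $J^1 U_\Left$ and $J^1 U_\Right$. First I would recall the standard genericity argument: given an isotopy $\scrT_t$ between two bordered Legendrian graphs, the projection maps $\pi_\front$ and $\pi_\lag$ are generic for all but finitely many $t$, and at those exceptional times the failure of regularity is, after a small perturbation of the isotopy, of exactly one of the codimension-one types — a Reidemeister-type degeneration away from the vertices, basepoints and borders, or a ``vertex/basepoint event'' where a crossing passes through a vertex or a basepoint, or an ``$x$-extreme event'' where a cusp or an $x$-extreme vertex is created or destroyed or slides past a crossing, or (because the ambient $J^1\bfU_t$ itself varies) a border event where a strand crosses the wall $\{x = x_\Left(t)\}$ or $\{x = x_\Right(t)\}$. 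The moves in Figures~\ref{fig:front_RM} and \ref{fig:RM} are designed to list precisely these local pictures, so the content of the lemma is that this list is complete.

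Next I would make the reduction to the local statement. Using a partition-of-unity/finite-cover argument on the interval $[0,1]$ of isotopy times, one writes the isotopy as a finite concatenation of ``short'' isotopies, each of which is supported (in the $(x,z)$-plane, after the homeomorphism straightening the moving border walls to fixed vertical lines) in a small disk containing exactly one degeneration, and is the identity outside that disk. For a degeneration in the interior away from vertices and basepoints, the classification of singularities of fronts and of Lagrangian projections of smooth Legendrian arcs is classical and already appears in \cite{NRSSZ2015} (front Reidemeister moves I--III plus cusp moves; the analogous list for Lagrangian projections), so nothing new is needed there. For a degeneration involving a vertex or basepoint, one appeals to the local models of vertices of type $(\ell,r)$ from Definition~\ref{definition:types of vertices}: the half-edges at a vertex are pairwise non-tangent, so a one-parameter family through such a configuration is, up to the $\mathrm{Diff}$-action, one of the finitely many moves in which a strand or a cusp is pushed past the vertex, or two half-edges exchange their cyclic order — exactly the vertex moves drawn in the figures. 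The basepoint case is the special case $(\ell,r)=(1,1)$ and is even simpler.

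The border events require a separate but parallel discussion. Because the labelling of $\sfT_\Left$ and $\sfT_\Right$ is by top-to-bottom $z$-order and the borders are required to remain transverse to $\boundary J^1U$, the only codimension-one phenomena at a wall are: a strand of the interior crossing the wall (which, combined with the requirement that it remain transverse, is captured by the ``border slide'' move), and two consecutive borders exchanging their vertical order, which cannot happen without a tangency and hence does not occur for a generic isotopy — so in fact the set of border moves is short. One then checks that each local model in Figures~\ref{fig:front_RM} and \ref{fig:RM} is realized by an actual bordered Legendrian isotopy (this is the easy direction, lifting each planar move to $J^1\bfU$ using the contact form $dz - y\,dx$), which together with completeness gives the zig-zag statement up to isomorphism.

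The main obstacle is the border bookkeeping together with the moving ambient manifold: one must be careful that ``isomorphism of front (or Lagrangian) projections'' in the sense of Definition~\ref{definition:isomorphisms} — which already absorbs regular isotopies — is used to swallow all the non-generic-but-not-list-worthy degenerations, so that the genuinely discrete set of moves is exactly the figures' content. Concretely, I expect the delicate point to be proving that after straightening the walls one can assume the isotopy is piecewise supported in small disks meeting the walls in a controlled way (a single transverse arc, or not at all), which is where a quantitative general-position argument, rather than a soft one, is needed; everything else is a finite case-check against the local models already set up in the definitions above and in \cite{NRSSZ2015, AB2018, ABS2019count}.
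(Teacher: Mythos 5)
The paper itself does not prove this lemma; it simply notes the result is well-known and points to \cite[Prop.~4.2]{BI2009} and \cite[Prop.~2.1]{ABK2018}. Your proposal is a sketch of the standard singularity-theoretic/transversality argument that underlies those references, so at a high level you are outlining a genuine proof where the paper supplies only a citation.

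Two points are worth flagging. First, your treatment of ``border events'' addresses a situation that does not arise under the paper's definitions. A bordered Legendrian graph has a fixed underlying bordered graph $\graf=(\ttV,\ttV_\Left,\ttV_\Right,\ttB,\ttE)$ whose border vertices $\ttV_\Left,\ttV_\Right$ are univalent and whose images are constrained to lie on the walls $J^1U_\Left$, $J^1U_\Right$ for all $t$; the interior is forced into $J^1\mathring U$. An interior strand crossing the wall would change the combinatorial type of $\graf$ and is therefore not an isotopy in the allowed sense, and, as you correctly note, a reordering of borders at the wall would require a tangency that cannot occur because $\sfT_t\pitchfork\boundary J^1U_t$ is imposed throughout. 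So there is no ``border slide'' move to account for — which is consistent with none appearing in Figures~\ref{fig:front_RM} and \ref{fig:RM}. You reach a similar conclusion, but you arrive there by positing an event that cannot occur; the cleaner route is to observe up front that genericity at the walls is automatic and all codimension-one degenerations are confined to the interior, reducing immediately to the non-bordered case after straightening the moving walls.

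Second, the hard content — that the finite list in the figures is complete — is the actual substance of \cite{BI2009}. Your reduction to local models near vertices/basepoints via pairwise non-tangency is the right idea, but the case analysis that shows every codimension-one degeneration of a generic one-parameter family of Lagrangian (resp.\ front) projections near a vertex of type $(\ell,r)$ is one of the listed moves is precisely what needs to be checked and is not a corollary of the smooth-curve classification in \cite{NRSSZ2015}. Given the paper out-sources this, your sketch is acceptable as long as you understand that the appeal to ``local models... — exactly the vertex moves drawn in the figures'' is the part doing the work and must ultimately be discharged against \cite{BI2009}/\cite{ABK2018} rather than by a soft general-position argument.
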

This is well-known and we omit the proof. One may refer \cite[Proposition~4.2]{BI2009} or \cite[Proposition~2.1]{ABK2018}. Notice also that these moves are \emph{not optimal}. Namely, the move $\RM{IV}$ is a special case of $\RM{VI}$.

\begin{notation}
The sets of Lagrangian and front Reidemeister moves will be denoted as follows:
\begin{align*}
\cRM&\coloneqq\left\{\RM{I}, \RM{II}, \RM{III}, \RM{V}, \RM{VI}\right\},&
\cRM_\lag&\coloneqq\left\{\RM{0_a}, \RM{0_b}, \RM{0_c}, \RM{ii}, \RM{iii_a}, \RM{iii_b}, \RM{iv}\right\}.
\end{align*}
\end{notation}

\begin{remark}
All front and Lagrangian Reidemeister moves $\RM{M}\in\cRM$ and $\RM{m}\in\cRM_\lag$ never increase the number of crossings.
\end{remark}

\begin{figure}[ht]
\subfigure[\label{fig:front_RM}Front Reidemeister moves]{$
\begin{tikzcd}[row sep=0pc,ampersand replacement=\&]
\vcenter{\hbox{\includegraphics[scale=0.7]{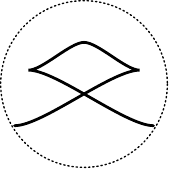}}}\arrow[r,"\RM{I}"]\&
\vcenter{\hbox{\includegraphics[scale=0.7]{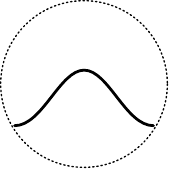}}}
\&
\vcenter{\hbox{\includegraphics[scale=0.7]{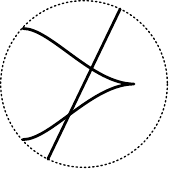}}}\arrow[r,"\RM{II}"]\&
\vcenter{\hbox{\includegraphics[scale=0.7]{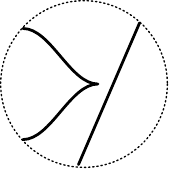}}}
\&
\vcenter{\hbox{\includegraphics[scale=0.7]{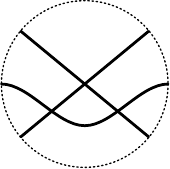}}}\arrow[r,leftrightarrow,"\RM{III}"]\&
\vcenter{\hbox{\includegraphics[scale=0.7]{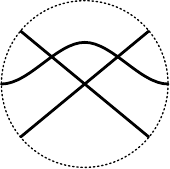}}}
\\
\vcenter{\hbox{\includegraphics[scale=0.7]{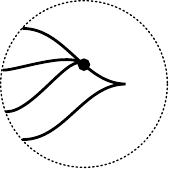}}}\arrow[r,"\RM{IV}"]\&
\vcenter{\hbox{\includegraphics[scale=0.7]{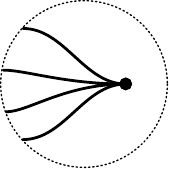}}}
\&
\begin{tikzpicture}[baseline=-.5ex,xscale=-1,scale=0.6]
\draw[densely dotted](0,0) circle (1);
\clip(0,0) circle (1);
\draw[thick] (1,.6) to[out=180,in=0] (-0.5,0);
\draw[thick] (1,.2) to[out=180,in=0] (-0.5,0);
\draw[thick] (1,-.2) to[out=180,in=0] (-0.5,0);
\draw[thick] (1,-.6) to[out=180,in=0] (-0.5,0);
\draw[fill] (-0.5,0) circle (2pt);
\draw[thick] (0,1) -- (0.5,-1);
\end{tikzpicture}\arrow[r,"\RM{V}"]
\&
\begin{tikzpicture}[baseline=-.5ex,xscale=-1,scale=0.6]
\draw[densely dotted](0,0) circle (1);
\clip(0,0) circle (1);
\draw[thick] (1,.6) to[out=180,in=0] (0,0);
\draw[thick] (1,.2) to[out=180,in=0] (0,0);
\draw[thick] (1,-.2) to[out=180,in=0] (0,0);
\draw[thick] (1,-.6) to[out=180,in=0] (0,0);
\draw[fill] (0,0) circle (2pt);
\draw[thick] (-0.5,1) -- (0,-1);
\end{tikzpicture}
\&
\vcenter{\hbox{\includegraphics[scale=0.7]{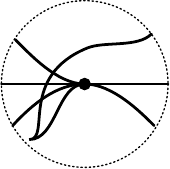}}}\arrow[r,"\RM{VI}"]\&
\vcenter{\hbox{\includegraphics[scale=0.7]{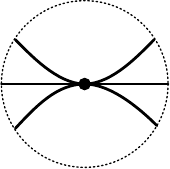}}}
\end{tikzcd}
$}

\subfigure[\label{fig:RM}Lagrangian Reidemeister moves]{$
\begin{tikzcd}[row sep=0pc,ampersand replacement=\&]
\vcenter{\hbox{\includegraphics[scale=0.7]{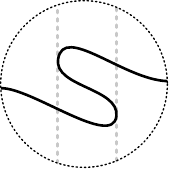}}}\arrow[r,"\RM{0_a}"]\&
\vcenter{\hbox{\includegraphics[scale=0.7]{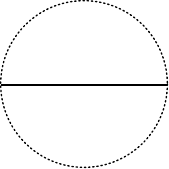}}}
\&
\vcenter{\hbox{\includegraphics[scale=0.7]{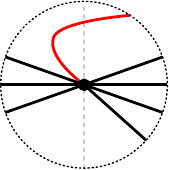}}}\arrow[r,"\RM{0_b}"]\&
\vcenter{\hbox{\includegraphics[scale=0.7]{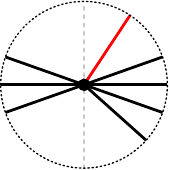}}}
\&
\vcenter{\hbox{\includegraphics[scale=0.7]{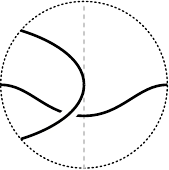}}}\arrow[r,"\RM{0_c}"]\&
\vcenter{\hbox{\includegraphics[scale=0.7]{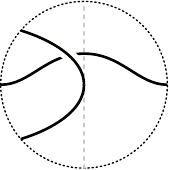}}}
\\
\begin{tikzpicture}[baseline=-.5ex,scale=0.6]
\draw[densely dotted](0,0) circle (1);
\clip(0,0) circle (1);
\draw[thick] (-0.5,-1) -- (0,1);
\draw[white,line width=5] (-1,.6) to[out=0,in=90] (0.5,0) to[out=-90,in=0] (-1,-0.6);
\draw[thick] (-1,.6) to[out=0,in=90] (0.5,0) to[out=-90,in=0] (-1,-0.6);
\end{tikzpicture}\arrow[r,"\RM{ii}"]
\&
\begin{tikzpicture}[baseline=-.5ex,scale=0.6]
\draw[densely dotted](0,0) circle (1);
\clip(0,0) circle (1);
\draw[thick] (0,-1) -- (0.5,1);
\draw[white,line width=5] (-1,.6) to[out=0,in=90] (0,0) to[out=-90,in=0] (-1,-0.6);
\draw[thick] (-1,.6) to[out=0,in=90] (0,0) to[out=-90,in=0] (-1,-0.6);
\end{tikzpicture}
\&
\vcenter{\hbox{\rotatebox[origin=c]{90}{\includegraphics[scale=0.7]{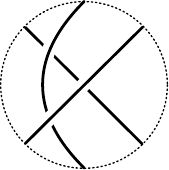}}}}\arrow[r,leftrightarrow,"\RM{iii_a}"]\&
\vcenter{\hbox{\rotatebox[origin=c]{90}{\includegraphics[scale=0.7]{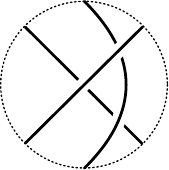}}}}
\&
\vcenter{\hbox{\rotatebox[origin=c]{90}{\includegraphics[scale=0.7]{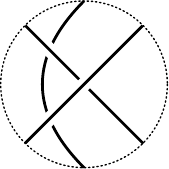}}}}\arrow[r,leftrightarrow,"\RM{iii_b}"]\&
\vcenter{\hbox{\rotatebox[origin=c]{90}{\includegraphics[scale=0.7]{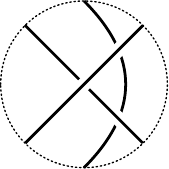}}}}
\\
\begin{tikzpicture}[baseline=-.5ex,scale=0.6]
\draw[densely dotted](0,0) circle (1);
\clip(0,0) circle (1);
\draw[thick] (-0.5,-1) -- (0,1);
\draw[white,line width=5] (-1,.6) -- (0.5,0) (-1,.2) -- (0.5,0) (-1,-.2) -- (0.5,0) (-1,-.6) -- (0.5,0);
\draw[thick] (-1,.6) -- (0.5,0) (-1,.2) -- (0.5,0) (-1,-.2) -- (0.5,0) (-1,-.6) -- (0.5,0);
\draw[fill] (0.5,0) circle (2pt);
\end{tikzpicture}
\arrow[r,"\RM{iv}"]\&
\begin{tikzpicture}[baseline=-.5ex,scale=0.6]
\draw[densely dotted](0,0) circle (1);
\clip(0,0) circle (1);
\draw[thick] (-1,.6) -- (0,0) (-1,.2) -- (0,0) (-1,-.2) -- (0,0) (-1,-.6) -- (0,0);
\draw[fill] (0,0) circle (2pt);
\draw[thick] (0,-1) -- (0.5,1);
\end{tikzpicture}
\end{tikzcd}
$}
\caption{Front and Lagrangian Reidemeister moves: Reflections are possible, the valency of vertex is arbitrary and the vertex can be replaced with a basepoint if it is bivalent.}
\end{figure}

On the other hand, one can consider the weaker equivalence given by the Legendrian isotopy \emph{up to basepoints}. In other words, two bordered Legendrian graphs are Legendrian isotopic after forgetting basepoints.
\begin{lemma}
Two regular front or Lagrangian projections of bordered Legendrian graphs are equivalent up to basepoints if and only if they can be connected by a zig-zag sequence of front or Lagrangian Reidemeister moves together with \emph{basepoint} splittings depicted in Figure~\ref{figure:basepoints}.
\end{lemma}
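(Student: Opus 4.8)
The plan is to reduce everything to the previous Lemma (the basepoint-free statement) by a ``forget basepoints, then lift'' argument.

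\smallskip

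\noindent\textbf{The ``if'' direction.} This I would dispatch first, and it is immediate. Each front or Lagrangian Reidemeister move in Figures~\ref{fig:front_RM} and \ref{fig:RM} is, by construction, the trace of a compactly supported Legendrian isotopy of the bordered Legendrian graph, carrying along whatever lies on the strands involved; forgetting basepoints it is still a Legendrian isotopy, hence an equivalence up to basepoints. A basepoint splitting alters only the subset $\ttB\subset\graf$ of basepoints: the underlying Legendrian embedding $\sfT$, and so its front and Lagrangian projections after forgetting $\ttB$, is literally unchanged, so it too is an equivalence up to basepoints. Composing, any zig-zag of these moves relates projections equivalent up to basepoints.

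\smallskip

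\noindent\textbf{The ``only if'' direction.} Suppose $\cT$ and $\cT'$ are equivalent up to basepoints, and let $\cT^{\circ},\cT'^{\circ}$ be the bordered Legendrian graphs obtained by forgetting all basepoints (so $\ttB=\emptyset$). By hypothesis $\cT^{\circ}$ and $\cT'^{\circ}$ are equivalent, so the previous Lemma supplies a zig-zag $\cT^{\circ}=T_0,T_1,\dots,T_N=\cT'^{\circ}$ of Reidemeister moves, the $i$-th one supported in a small disk $D_i$ in the interior and disjoint from the borders. I would then realize this zig-zag by a Legendrian isotopy $H_t$ of the underlying Legendrian graph which is regular except at finitely many times, at each of which one Reidemeister move occurs inside the corresponding disk. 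After a small generic perturbation of $H_t$ and of the initial positions of the basepoints of $\cT$, the finitely many trajectories traced out by these basepoints avoid all vertices, cusps and crossings for every $t$, and at each exceptional time the basepoints lie outside the move disk $D_i$ (the offending configurations form a subset of positive codimension in the relevant parameter space). Threading the basepoints of $\cT$ along $H_t$ then produces a zig-zag of Reidemeister moves \emph{of bordered Legendrian graphs} from $\cT$ to a bordered Legendrian graph $\cT''$ whose underlying Legendrian graph is $\cT'^{\circ}$ and whose basepoints are the $H_1$-images of those of $\cT$; in particular no basepoint ever enters a move disk, so every move used here is an honest Reidemeister move.

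\smallskip

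\noindent\textbf{Reconciliation, and the main obstacle.} It remains to connect $\cT''$ and $\cT'$, which share the same underlying Legendrian graph but may carry different basepoint sets. Edge by edge this amounts to passing between two finite configurations of marked points on a fixed Legendrian arc; I would argue that the basepoint splitting moves of Figure~\ref{figure:basepoints} — together with a few Reidemeister moves to reposition a basepoint near a vertex, crossing, or cusp when needed — suffice to merge clusters of basepoints, create and destroy basepoints, and slide them along edges and across vertices, and hence to bring the two configurations into agreement. Splicing the two zig-zags then connects $\cT$ to $\cT'$ by Reidemeister moves and basepoint splittings. The hard part will be exactly this last reconciliation: one must verify that the \emph{specific} list of moves drawn in Figure~\ref{figure:basepoints}, rather than some larger ad hoc list, already generates every change of basepoint structure on a fixed Legendrian graph. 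This is a finite, move-by-move check, closely parallel to the analysis of basepoint moves for Legendrian knots, and I would carry it out by enumerating the elementary events along a generic basepoint-only isotopy (a basepoint crossing a vertex, a crossing, or a cusp; two neighbouring basepoints merging; a basepoint being created or deleted) and matching each with a sequence of the listed moves.
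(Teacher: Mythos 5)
The paper's own proof is literally the single line ``This is obvious,'' so there is no detailed argument in the paper to compare against. Your write-up is more informative than the paper's, and its overall three-step outline (\emph{if}: each listed move preserves equivalence up to basepoints; \emph{only if}: reduce to the basepoint-free previous lemma; reconcile basepoint configurations) is the natural fleshing-out. That said, there are two genuine issues worth naming.

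First, and more seriously, the step ``let $\cT^{\circ},\cT'^{\circ}$ be obtained by forgetting all basepoints and apply the previous Lemma'' has a real gap: forgetting basepoints can destroy regularity. Condition~(3) in Definition~\ref{definition:regular projections} requires every edge containing an $x$-maximal point to involve a vertex or a basepoint, and once you strip away all basepoints an edge that used to be anchored by one may now carry a free $x$-maximum (in the Lagrangian case this always happens for a closed component with no vertices). Then $\cT^{\circ}$ is not a regular projection of $\graf^\circ$, and the previous Lemma---which is a statement about \emph{regular projections}---does not directly apply. A clean fix is to not drop all basepoints at once but to work with basepointed projections throughout: the Legendrian isotopy of the underlying (basepoint-free) Legendrian graphs generically crosses finitely many codimension-one walls, each corresponding to one Reidemeister move, and one carries representative basepoints along in the complement of the move disks, inserting or removing auxiliary basepoints via $\RM{B_2}$/$\RM{b_2}$ whenever needed to keep condition~(3). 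Your ``thread the basepoints along $H_t$'' paragraph already contains the right genericity argument; the repair is simply not to discard basepoints before you start.

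Second, the reconciliation step, which you explicitly flag as the hard part, is left as a promise. It is indeed a finite, move-by-move check (basepoint through a crossing via $\RM{B_1}$/$\RM{b_1}$ or Reidemeister moves, past a vertex via $\RM{B_3}$/$\RM{b_3}$, creation/destruction and merging via $\RM{B_2}$/$\RM{b_2}$), and it does close up, but as written the proof does not carry it out. Since the paper itself offers only ``This is obvious,'' I would not count this as a failure against the paper, but you should be aware the argument is not complete as stated.
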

\begin{proof}
This is obvious.
\end{proof}
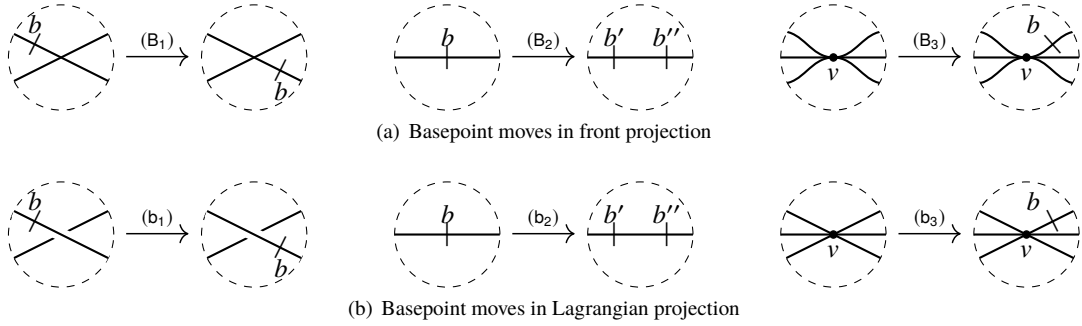
\begin{figure}[ht]
\subfigure[Basepoint moves in front projection]{$
\begin{tikzcd}[ampersand replacement=\&]
\begin{tikzpicture}[baseline=-.5ex,scale=0.7]
\useasboundingbox(-1,-0.5)--(1,0.5);
\draw[dashed](0,0) circle (1);
\clip(0,0) circle (1);
\draw[thick] (-1,0.5)--node[midway,sloped,below=-2ex]{$|$} node[midway,above] {$b$} (0,0) -- (1,-0.5);
\draw[thick] (-1,-0.5) -- (1,0.5);
\end{tikzpicture}\arrow[r,"\RM{B_1}"] \&
\begin{tikzpicture}[baseline=-.5ex,scale=0.7]
\draw[dashed](0,0) circle (1);
\clip(0,0) circle (1);
\draw[thick] (-1,0.5)-- (0,0) --node[midway,sloped,below=-2ex]{$|$} node[midway,below] {$b$} (1,-0.5);
\draw[thick] (-1,-0.5) -- (1,0.5);
\end{tikzpicture}\&
\begin{tikzpicture}[baseline=-.5ex,scale=0.7]
\useasboundingbox(-1,-0.5)--(1,0.5);
\draw[dashed](0,0) circle (1);
\clip(0,0) circle (1);
\draw[thick] (-1,0)-- (0,0) node[below=-2ex]{$|$} node[above] {$b$} -- (1,0);
\end{tikzpicture}\arrow[r,"\RM{B_2}"] \&
\begin{tikzpicture}[baseline=-.5ex,scale=0.7]
\draw[dashed](0,0) circle (1);
\clip(0,0) circle (1);
\draw[thick] (-1,0)-- node[near start, below=-2ex]{$|$}node[near start, above]{$b'$} node[near end, below=-2ex]{$|$} node[near end,above]{$b''$} (1,0);
\end{tikzpicture}\&
\begin{tikzpicture}[baseline=-.5ex,scale=0.7]
\useasboundingbox(-1,-0.5)--(1,0.5);
\draw[dashed](0,0) circle (1);
\clip(0,0) circle (1);
\draw[thick] (-1,-0.5) to[out=0,in=180] (0,0) to[out=0,in=180] (1,0.5);
\draw[thick] (-1,0) to[out=0,in=180] (0,0) to[out=0,in=180] (1,0);
\draw[thick] (-1,0.5) to[out=0,in=180] (0,0) to[out=0,in=180] (1,-0.5);
\draw[fill] (0,0) circle (2pt) node[below] {$v$};
\end{tikzpicture}\arrow[r,"\RM{B_3}"] \&
\begin{tikzpicture}[baseline=-.5ex,scale=0.7]
\draw[dashed](0,0) circle (1);
\clip(0,0) circle (1);
\draw[thick] (-1,-0.5) to[out=0,in=180] (0,0) to[out=0,in=180] node[midway,sloped, below=-2ex] {$|$} node[midway,above left] {$b$} (1,0.5);
\draw[thick] (-1,0) to[out=0,in=180] (0,0) to[out=0,in=180] (1,0);
\draw[thick] (-1,0.5) to[out=0,in=180] (0,0) to[out=0,in=180] (1,-0.5);
\draw[fill] (0,0) circle (2pt) node[below] {$v$};
\end{tikzpicture}
\end{tikzcd}$}
\subfigure[Basepoint moves in Lagrangian projection]{$
\begin{tikzcd}[ampersand replacement=\&]
\begin{tikzpicture}[baseline=-.5ex,scale=0.7]
\useasboundingbox(-1,-0.5)--(1,0.5);
\draw[dashed](0,0) circle (1);
\clip(0,0) circle (1);
\draw[thick] (-1,-0.5) -- (1,0.5);
\draw[white,line width=5] (-1,0.5)--(1,-0.5);
\draw[thick] (-1,0.5)--node[midway,sloped,below=-2ex]{$|$} node[midway,above] {$b$} (0,0) -- (1,-0.5);
\end{tikzpicture}\arrow[r,"\RM{b_1}"] \&
\begin{tikzpicture}[baseline=-.5ex,scale=0.7]
\draw[dashed](0,0) circle (1);
\clip(0,0) circle (1);
\draw[thick] (-1,-0.5) -- (1,0.5);
\draw[white,line width=5] (-1,0.5)--(1,-0.5);
\draw[thick] (-1,0.5)-- (0,0) --node[midway,sloped,below=-2ex]{$|$} node[midway,below] {$b$} (1,-0.5);
\end{tikzpicture}\&
\begin{tikzpicture}[baseline=-.5ex,scale=0.7]
\useasboundingbox(-1,-0.5)--(1,0.5);
\draw[dashed](0,0) circle (1);
\clip(0,0) circle (1);
\draw[thick] (-1,0)-- (0,0) node[below=-2ex]{$|$} node[above] {$b$} -- (1,0);
\end{tikzpicture}\arrow[r,"\RM{b_2}"] \&
\begin{tikzpicture}[baseline=-.5ex,scale=0.7]
\draw[dashed](0,0) circle (1);
\clip(0,0) circle (1);
\draw[thick] (-1,0)-- node[near start, below=-2ex]{$|$}node[near start, above]{$b'$} node[near end, below=-2ex]{$|$} node[near end,above]{$b''$} (1,0);
\end{tikzpicture}\&
\begin{tikzpicture}[baseline=-.5ex,scale=0.7]
\useasboundingbox(-1,-0.5)--(1,0.5);
\draw[dashed](0,0) circle (1);
\clip(0,0) circle (1);
\draw[thick] (-1,-0.5) -- (1,0.5);
\draw[thick] (-1,0)-- (1,0);
\draw[thick] (-1,0.5) -- (1,-0.5);
\draw[fill] (0,0) circle (2pt) node[below] {$v$};
\end{tikzpicture}\arrow[r,"\RM{b_3}"] \&
\begin{tikzpicture}[baseline=-.5ex,scale=0.7]
\draw[dashed](0,0) circle (1);
\clip(0,0) circle (1);
\draw[thick] (-1,-0.5) -- (0,0) -- node[midway,sloped, below=-2ex] {$|$} node[midway,above left] {$b$} (1,0.5);
\draw[thick] (-1,0)-- (1,0);
\draw[thick] (-1,0.5) -- (1,-0.5);
\draw[fill] (0,0) circle (2pt) node[below] {$v$};
\end{tikzpicture}
\end{tikzcd}$}
\caption{Operations on basepoints}
\label{figure:basepoints}
\end{figure}
\begin{remark}
Note that the operations $\RM{B_1}$ and $\RM{b_1}$ that move a basepoint through a crossing below or above can be realized as sequences of front and Lagrangian Reidemeister moves, respectively.
\end{remark}

\begin{definition}[Categories of bordered Legendrian graphs]
We regard $\BLT$ and $\BLT_\lag$ of regular front and Lagrangian projections of isomorphic classes of bordered Legendrian graphs as categories, whose morphisms are \emph{freely generated by} front and Lagrangian Reidemeister moves.
\end{definition}

Therefore, projections \emph{up to zig-zags of morphisms} are the same as those \emph{up to Reidemeister moves}.
Or equivalently, the isomorphism classes in the localized category of $\BLT$ or $\BLT_\lag$ by all Reidemeister moves are the same as the usual equivalent classes of bordered Legendrian graphs.

\begin{example/definition}\label{definition:normal form}
A bordered Legendrian graph $\cT\in\BLT$ is said to be in a \emph{normal form} if 
\begin{enumerate}
\item every vertex is of type $(\val(v),0)$ and on the right above position,
\item every non-vertex $x$-maximum is a basepoint and \textit{vice versa} so that each $x$-maximum or a basepoint looks as follows:
\[
\begin{tikzpicture}[baseline=-.5ex,scale=0.7]
\draw[dashed](0,0) circle (1);
\clip(0,0) circle (1);
\draw[thick] (-1,0.5) to[out=0,in=180] (0.5,0) node {$|$} to[out=180,in=0] (-1,-0.5);
\end{tikzpicture}
\]
\end{enumerate}

Namely, all vertices are lying near $J^1 U_\Right$, moreover, have the larger $z$-coordinate than any point in the right border $T_\Right$ as depicted in Figure~\ref{figure:normal form}
\end{example/definition}

\begin{figure}[ht]
\[
\cT=
\begin{tikzpicture}[baseline=-.5ex,scale=0.8]
\draw[red] (-2,-1)--(-2,1) (2,-1)--(2,1);
\draw (-1.5,-1) rectangle (1,1);
\draw[thick] (-2,0.75) -- +(0.5,0);
\draw[thick] (-2,0.25) -- +(0.5,0) to[out=0,in=180] (1, -0.9);
\draw[thick] (-2,-0.25) -- +(0.5,0);
\draw[thick] (-2,-0.75) -- +(0.5,0) to[out=0,in=180] (1,0.5);
\draw[thick] (-1.5,0.75) to[out=0,in=180] (-0.5, 0.45) node{$|$} to[out=180,in=0] (-1.5,-0.25);
\draw[thick] (2,0) -- +(-1,0);
\draw[thick] (2,-0.3) -- +(-1,0);
\draw[thick] (2,-0.6) -- +(-1,0);
\draw[thick] (2,-0.9) -- +(-1,0);
\draw[fill] (1.75,0.6) circle (2pt);
\draw[thick] (1.75,0.6) to[out=180,in=0] +(-0.75,-0.1);
\draw[thick] (1.75,0.6) to[out=180,in=0] +(-0.75,-0.3) to[out=180,in=0] (0.5, 0) to[out=0,in=180] (1,-0.3);
\draw[thick] (1.75,0.6) to[out=180,in=0] +(-0.75,0.1) to[out=180,in=0] (-1.25, -0.4) to[out=0,in=180] (-0.25, -0.9) to[out=0,in=180] (1,-0.6);
\draw[thick] (1.75,0.6) to[out=180,in=0] +(-0.75,0.3) to[out=180,in=0] (-0.25,0.6) to[out=0,in=180] (0.5, -0.3) to[out=0,in=180] (1, 0);
\end{tikzpicture}\in\BLT
\]
\caption{A bordered Legendrian graph in a normal form}
\label{figure:normal form}
\end{figure}
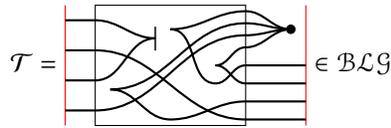

\begin{lemma}\label{lemma:normal form representative}
Let $\cT\in\BLT$ be a bordered Legendrian graph.
Then there exists a sequence of front Reidemeister moves consisting of $\RM{V},\RM{VI}$ together with $\RM{B_*}$'s which transforms a (non-unique) bordered Legendrian graph in a normal form to $\cT$ .
\end{lemma}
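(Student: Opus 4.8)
The plan is, from an arbitrary $\cT\in\BLT$, to construct a normal-form representative $\cT_0$ and to exhibit a sequence of moves $\RM{V},\RM{VI}$ and $\RM{B_*}$'s carrying $\cT_0$ to $\cT$. It is convenient to describe the inverse procedure, namely to \emph{normalize} $\cT$: since in the forward direction $\RM{V}$ and $\RM{VI}$ only decrease the number of crossings (recall that no front Reidemeister move increases the crossing number), a normal form is allowed to be considerably more complicated than $\cT$ itself, and I would obtain $\cT_0$ by running these moves ``in reverse'' on $\cT$; reading the resulting sequence backwards then gives the morphism $\cT_0\to\cT$ required by the statement. Planar isotopies of the front --- which do not change the isomorphism class in $\BLT$ --- are performed freely throughout.

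The first and main step enforces condition (1). List the vertices $v_1,\dots,v_k$ of the front $T$ in order of increasing $x$-coordinate, and process them one at a time in this order. To normalize $v_j$, I first make it of type $(\val(v_j),0)$: using $\RM{V}$ and $\RM{VI}$ one can move any half-edge of $v_j$ from its right side to its left side (at the price of an extra cusp on that half-edge and finitely many new crossings), and after doing this to each right half-edge the vertex $v_j$ becomes of type $(\val(v_j),0)$. Next I drag $v_j$ to the far right through the remainder of $T$, using $\RM{V}$ each time $v_j$ must pass a transverse strand (a crossing counting as two such passes) and $\RM{VI}$ each time it must pass a cusp or the half-edges of an already-normalized vertex $v_i$, and I finish with a planar isotopy that places $v_j$ just below $v_{j-1}$ and above every strand of $T_\Right$, as in Figure~\ref{figure:normal form}. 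After $v_k$ has been processed, condition (1) holds and the vertices sit, stacked, near $J^1U_\Right$.

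The second step enforces condition (2). Using the basepoint moves $\RM{B_*}$ --- in particular $\RM{B_1}$ to slide a basepoint along its edge past crossings, and $\RM{B_2},\RM{B_3}$ to adjust the number of basepoints --- I arrange that every right cusp of $T$, whether original or produced in the first step, carries exactly one basepoint and that no basepoint lies elsewhere; since after the first step the non-vertex $x$-maxima of $T$ are precisely its right cusps, this is condition (2). Hence $\cT_0$ is a normal form, and reversing the entire sequence of moves performed yields the desired sequence of $\RM{V},\RM{VI},\RM{B_*}$-moves from $\cT_0$ to $\cT$. I expect the real work to lie in the first step: one must verify that each vertex-moving operation is realized purely by $\RM{V}$, $\RM{VI}$ and planar isotopy and never covertly requires a classical move $\RM{I}$, $\RM{II}$ or $\RM{III}$, and that the procedure terminates. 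Termination is forced by processing the vertices in a fixed $x$-order and moving the current vertex monotonically rightward, so that a normalized vertex is never disturbed again; and the non-increase of the crossing number under $\RM{V}$ and $\RM{VI}$ guarantees that the reversed sequence applies these moves in their allowed (forward) direction. The configurations that need the most care are a vertex joined by an edge directly to a border and an edge carrying no cusp at all, which are handled by a preliminary planar isotopy together with an additional use of $\RM{VI}$ and $\RM{B_*}$; matching up the basepoints of $\cT$ with the right cusps via $\RM{B_*}$ is routine but fiddly.
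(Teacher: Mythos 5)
Your argument uses the same normalization procedure as the paper's terse two-sentence proof --- apply $\RM{VI}$ (in reverse) to convert every vertex to type $(\val(v),0)$, apply $\RM{V}$ (in reverse) to slide each vertex to the upper-right position, and use $\RM{B_*}$ to align basepoints with the non-vertex $x$-maxima --- and you correctly observe that the forward sequence is obtained by reading this backwards. The only differences are ones of order and detail: you normalize vertices before basepoints whereas the paper does basepoints first (your order is arguably safer, since the reversed $\RM{VI}$-moves can introduce new right cusps that then also need to be marked), and you spell out the termination argument (process vertices by increasing $x$-coordinate, never revisiting a placed vertex) that the paper leaves implicit.
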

\begin{proof}
We first use $\RM{B_*}$ to make each $x$-maximum to be a basepoint and \textit{vice versa}.

For each vertex $v\in V$ of type $(\ell,r)$ with $r>0$, we apply $\RM{VI}$ several times to make $v$ of type $(\ell+r,0)$ in the reverse direction and we move a small neighborhood $U_v$ of each vertex $v$ to the right upward position by applying only front Reidemeister moves $\RM{V}$ in the reverse direction and we are done.
\end{proof}

\subsubsection{Maslov potentials for bordered Legendrian graphs}\label{section:Maslov potentials}
Let $\cT\in\BLT$ and $S=S(T)\subset\mathring{T}$ be the set of $x$-extreme points in its interior.
An \emph{$\grading$-valued Maslov potential} of $T$ is a function $\mu:R\to\grading$ from the set $R\coloneqq\pi_0\left(T\setminus(V\cup S)\right)$ of connected components of the complement of vertices and cusps such that for all $s\in S\setminus V$,
\begin{align}\label{equation:defining equation of Maslov potentials}
\mu(s^+)-\mu(s^-)&=1\in \grading,
\end{align}
where $s^+$ (resp. $s^-$) is the upper (resp. lower) strand near $s$.

For $T_\lag\in\BLT_\lag$, let $S_\lag=S(T_\lag)\subset\mathring{T}_\lag$ be the set of $x$-extreme points.
As before we define the set $R_\lag\coloneqq \pi_0(T_\lag\setminus(V_\lag\cup S_\lag))$ of connected components of the complement of vertices and $x$-extreme points. Then an $\grading$-valued Maslov potential of $T_\lag$ is a function $\mu:R_\lag\to\grading$ satisfying the following condition: for each $s\in S_\lag\setminus V_\lag$,
\begin{align}\label{equation:defining equation of Maslov potentials of Lagrangian projection}
\mu(s^+)-\mu(s^-) = \begin{cases}
1 & s\text{ is $x$-minimal};\\
-1 & s\text{ is $x$-maximal}.
\end{cases}
\end{align}

Diagrammatically, the above definition is depicted in Figures~\ref{figure:defining equation of Maslov potentials for front} and \ref{figure:defining equation of Maslov potentials for Lagrangian}.

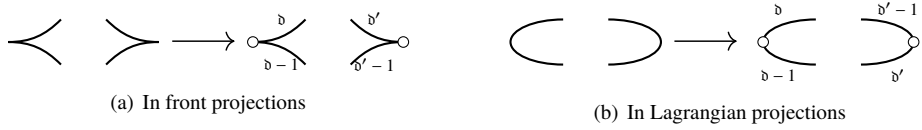
\begin{figure}[ht]
\subfigure[In front projections\label{figure:defining equation of Maslov potentials for front}]{\makebox[0.45\textwidth]{
\begin{tikzcd}[ampersand replacement=\&]
\begin{tikzpicture}[baseline=-.5ex]
\begin{scope}[xshift=-1cm]
\draw[thick] (.7,.3) to[out=225,in=0] (0,0) to[out=0,in=135] (.7,-.3);
\end{scope}
\begin{scope}[xshift=1cm]
\draw[thick] (-.7,.3) to[out=-45,in=180] (0,0) to[out=180,in=45] (-.7,-.3);
\end{scope}
\end{tikzpicture}\arrow[r] \& 
\begin{tikzpicture}[baseline=-.5ex]
\begin{scope}[xshift=-1cm]
\draw[thick] (.7,.3) to[out=225,in=0] node[midway,above] {\tiny$\fd$} (0,0) to[out=0,in=135] node[midway,below] {\tiny$\fd-1$} (.7,-.3);
\draw[fill=white] (0,0) circle (2pt);
\end{scope}
\begin{scope}[xshift=1cm]
\draw[thick] (-.7,.3) to[out=-45,in=180] node[midway,above] {\tiny$\fd'$}  (0,0) to[out=180,in=45] node[midway,below] {\tiny$\fd'-1$} (-.7,-.3);
\draw[fill=white] (0,0) circle (2pt);
\end{scope}
\end{tikzpicture}
\end{tikzcd}
}}
\subfigure[In Lagrangian projections\label{figure:defining equation of Maslov potentials for Lagrangian}]{\makebox[0.45\textwidth]{
\begin{tikzcd}[ampersand replacement=\&]
\begin{tikzpicture}[baseline=-.5ex]
\begin{scope}[xshift=-1cm]
\draw[thick] (.7,.3) arc (90:270:.7 and 0.3);
\end{scope}
\begin{scope}[xshift=1cm]
\draw[thick] (-.7,.3) arc (90:-90:.7 and 0.3);
\end{scope}
\end{tikzpicture}\arrow[r] \& 
\begin{tikzpicture}[baseline=-.5ex]
\begin{scope}[xshift=-1cm]
\draw[thick] (.7,.3) arc (90:180:.7 and 0.3) node[midway,above] {\tiny$\fd$} arc (180:270:.7 and 0.3) node[midway,below] {\tiny$\fd-1$};
\draw[fill=white] (0,0) circle (2pt);
\end{scope}
\begin{scope}[xshift=1cm]
\draw[thick] (-.7,.3) arc (90:0:.7 and 0.3) node[midway,above] {\tiny$\fd'-1$} arc (0:-90:.7 and 0.3) node[midway,below] {\tiny$\fd'$};
\draw[fill=white] (0,0) circle (2pt);
\end{scope}
\end{tikzpicture}
\end{tikzcd}
}}
\caption{Defining diagrams for Maslov potentials}
\end{figure}

Moreover, one can define $\mu_\Left\coloneqq \mu|_{T_\Left}:[n_\Left]\to\grading$ and $\mu_\Right\coloneqq \mu|_{T_\Right}:[n_\Right]\to\grading$ as the restrictions of $\mu$ to the connected components containing $T_\Left$ and $T_\Right$, respectively, under the canonical identifications $T_\Left\isomorphic [n_\Left]$ and $\cT_\Right\isomorphic [n_\Right]$.

\begin{definition}[Maslov potentials for bordered Legendrian graphs]
A Maslov potential for a bordered Legendrian graph $\cT$ is a triple $(\mu_\Left,\mu,\mu_\Right)$ denoted simply by $\bfmu$.

We denote Legendrian graphs with Maslov potentials by using the superscript `$\mu$' such as $\BLT^\mu$.
\end{definition}

\begin{example}\label{example:Maslov potentials for trivial graphs}
Recall the bordered Legendrian graphs $\cT_n$, $\bfzero_n$ and $\bfinfty_n$ defined in Example/Definition~\ref{example/definition:trivial graph}.
Since they have no $x$-extreme points except for a vertex, there are no conditions for Maslov potentials. 
That is, any function $\mu:[n]=\{1,\dots,n\}\to\grading$ can be realized as a Maslov potential for $\cT_n$, $\bfzero_n$ or $\bfinfty_n$.
\end{example}

Then each Lagrangian Reidemeister move induces a unique isotopy between bordered Legendrian graphs with Maslov potentials.
\begin{lemma}\label{lemma:Reidemeister move preserves potential}
Let $\RM{M}:\cT'\to\cT$ and $\RM{m}:\cT'_\lag\to \cT_\lag$ be front and Lagrangian Reidemeister moves.
Then they lift uniquely to $\RM{M}:(\cT',\bfmu')\to(\cT,\bfmu)$ and $\RM{m}:(\cT'_\lag,\bfmu')\to (\cT_\lag,\bfmu)$. 
Namely, for given $\bfmu'$, there is a unique Maslov postential $\bfmu$ on each $\cT$ or $\cT_\lag$ such that 
\[
\RM{M}_{*}\bfmu'=\bfmu\quad\text{ and }\quad
\RM{m}_{*}\bfmu'=\bfmu.
\]
\end{lemma}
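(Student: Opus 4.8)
The plan is to prove Lemma \ref{lemma:Reidemeister move preserves potential} by a careful local analysis: each Reidemeister move is supported in a small disk $D$, outside of which the two projections $\cT'$ and $\cT$ are literally identical. Hence a Maslov potential $\bfmu'$ on $\cT'$ already determines $\bfmu$ on the entire complement of $D$, and the only thing to check is that the values on the finitely many connected components of $R$ (or $R_\lag$) meeting $D$ are \emph{forced} and \emph{consistent}, i.e.\ they satisfy the defining equations \eqref{equation:defining equation of Maslov potentials} (resp. \eqref{equation:defining equation of Maslov potentials of Lagrangian projection}) at every $x$-extreme point and every vertex inside $D$. The existence of $\bfmu$ reduces to the assertion that the system of local constraints has a solution compatible with the boundary data coming from $\bfmu'$; uniqueness reduces to the assertion that this solution is the only one. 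Since the strands entering $D$ match those of $\cT'$, the boundary values are already pinned down, so I really only need to analyze the finitely many move types in $\cRM$ and $\cRM_\lag$ one by one.

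The key steps are as follows. First I would fix a move $\RM{M}\in\cRM$ and identify, in the before-diagram $\cT'$ and the after-diagram $\cT$, the correspondence between the connected components in $R'=\pi_0(T'\setminus(V'\cup S'))$ and those in $R=\pi_0(T\setminus(V\cup S))$: most components extend unchanged through $D$ and inherit $\bfmu'$ directly; the remaining ones are new or have been split/merged inside $D$. Second, for each such move I would write down the local defining relations: e.g.\ for $\RM{I}$ (birth/death of a left cusp) the new upper strand must have potential one more than the lower, which determines the single new value; for $\RM{II}$ (a cusp pair passing a strand) the four local regions are constrained by two equations whose solution in terms of the unchanged boundary strands is unique; for $\RM{III}$ (triple-point/Reidemeister III type move) no $x$-extreme points are created so the components only get relabeled and $\bfmu$ is just the transport of $\bfmu'$; for $\RM{V}$ (a strand passing a vertex) the vertex imposes no grading relation by the definition in \S\ref{section:Maslov potentials}, so again $\bfmu$ is transported; for $\RM{VI}$ (moving half-edges of a vertex from one side to the other) the relevant components are exactly those meeting the vertex, on which $\mu$ is unconstrained, so $\bfmu$ is the obvious transport. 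Third, in each case I would verify that the resulting $\bfmu$ does satisfy \eqref{equation:defining equation of Maslov potentials} at \emph{all} $x$-extreme points of $\cT$, including those just outside $D$ but whose adjacent regions were altered; this is where one uses that the move preserves the cyclic/vertical order of strands along $\partial D$. Fourth, I would record that the restrictions $\mu_\Left,\mu_\Right$ are unchanged because all moves are supported in $\mathring{U}$ away from the borders, so the triple $\bfmu=(\mu_\Left,\mu,\mu_\Right)$ is well defined. The Lagrangian case $\RM{m}\in\cRM_\lag$ is handled identically, using \eqref{equation:defining equation of Maslov potentials of Lagrangian projection} instead; note that in Lagrangian projection the moves $\RM{0_a},\RM{0_b},\RM{0_c}$ and $\RM{iii_a},\RM{iii_b}$ create or destroy no $x$-extreme points, while $\RM{ii}$ behaves like $\RM{II}$ and $\RM{iv}$ like $\RM{V}$/$\RM{VI}$.

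For uniqueness, the argument is uniform: once the boundary strand values are fixed by $\bfmu'$, the local linear system over $\grading$ determined by the defining equations in $D$ has at most one solution. Concretely, the components inside $D$ form a connected subdiagram once we add the border strands, and the defining equations at cusps (each equation relates the two strands meeting at a cusp by a shift of $\pm1$) propagate a determined value to every component; there is no freedom because every new component is separated from an old one by exactly one $x$-extreme point, and is thus pinned. Hence $\RM{M}_*\bfmu'$ and $\RM{m}_*\bfmu'$ are well defined.

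I expect the main obstacle to be purely bookkeeping rather than conceptual: enumerating all variants of each move (including reflections, arbitrary valency of the vertex in $\RM{V},\RM{VI},\RM{iv}$, and the replacement of a vertex by a bivalent basepoint, as indicated in the caption of Figure~\ref{fig:RM}) and checking in each that the local constraint system is both solvable and rigid. The only place requiring genuine care is the interaction of a moved strand with $x$-extreme points lying \emph{just outside} the support disk: one must confirm that their defining equations \eqref{equation:defining equation of Maslov potentials} are still satisfied after transporting $\bfmu'$, which follows because a Reidemeister move never changes the relative vertical position of two strands at a cusp located outside its support. Since each individual check is immediate from the definitions in \S\ref{section:Maslov potentials}, and the number of cases is finite, the lemma follows; I would present the $\RM{II}$ (resp. $\RM{ii}$) and $\RM{VI}$ (resp. $\RM{iv}$) cases in detail and remark that the others are strictly easier.
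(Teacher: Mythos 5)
Your proposal is correct and fills in exactly the routine case-by-case verification that the paper explicitly omits (it cites Theorem~2.21 of \cite{AB2018} for the front moves and declares the extension to Lagrangian moves ``straightforward''). Your reduction to local solvability and rigidity inside the support disk, together with the observation that the defining equations \eqref{equation:defining equation of Maslov potentials} are imposed only at cusps (never at vertices, which is what makes $\RM{V}$, $\RM{VI}$, and $\RM{iv}$ unconstrained), is precisely the intended argument.
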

\begin{proof}
This is an extension of Theorem~2.21 in \cite{AB2018} for Lagrangian Reidemeister moves.
The proof is straight forward and we omit the proof.
\end{proof}

\begin{definition}[Restrictions of Maslov potentials on vertices]
For each vertex $v$ of type $(\ell,r)$ with $\ell+r=n$, the set $H_v$ of half-edges can be identified with $\Zmod{n}$ by Definition~\ref{definition:types of vertices} and we denote the restriction $\mu|_{H_v}:\Zmod{n}\to\grading$ of a Maslov potential to $H_v$ by $\mu_v$.
\end{definition}

\subsubsection{Concatenations of bordered Legendrian graphs}
For $i=1,2$, let $\cT^i\in\BLT$ be a bordered Legendrian graph of type $\left(n^i_\Left, n^i_\Right\right)$.
Suppose that $n^1_\Right=n^2_\Left$.
Then we can naturally define the \emph{concatenation} $\cT=\cT^1\cdot \cT^2=(T_\Left\to T\leftarrow T_\Right)$ such that $T$ is obtained simply by concatenating and identifying $T^1_\Right$ and $T^2_\Left$ up to small isotopy near borders if necessary, and two borders $T_\Left\coloneqq T^1_\Left$ and $T_\Right\coloneqq T^2_\Right$ come naturally from $\cT^1$ and $\cT^2$, respectively.

\begin{remark}
It is important to note that we will not regard the points of concatenations as vertices of $T$.
Therefore $T$ has $n$-less edges than the disjoint union of $T^1$ and $T^2$.
\end{remark}

\begin{definition}[Closure]
For $\cT\in\BLT$ of type $(n_\Left,n_\Right)$, the \emph{closure} $\hat{\cT}$ is defined by the Legendrian graph obtained by the concatenation
\[
\hat {\cT}\coloneqq \bfzero_{n_\Left}\cdot \cT \cdot \bfinfty_{n_\Right}\in\LT
\]
as depicted in Figure~\ref{figure:closure}.
\end{definition}

\begin{figure}[ht]
\[
\begin{tikzcd}
\cT=\begin{tikzpicture}[baseline=-.5ex]
\foreach \i in {0.75, 0.25, -0.25, -0.75} {
\draw[thick, rounded corners] (-1,\i) -- (-0.75,\i) (0.25,\i) -- (0.5,\i);
}
\draw(-0.75,-1) rectangle (0.25,1);
\draw(-0.25,0) node {$T$};
\draw[red](-1,-1)--(-1,1);
\draw[red](0.5,-1)--(0.5,1);
\end{tikzpicture}\arrow[r,"\hat\cdot",mapsto]&
\begin{tikzpicture}[baseline=-.5ex]
\draw[fill](0.25,0) circle (0.05) node[above] {$0$};
\foreach \i in {0.75, 0.25, -0.25, -0.75} {
\draw[thick, rounded corners](0.25,0) to[out=0,in=180] (1,\i);
}
\draw[red](1,-1)--(1,1);
\end{tikzpicture}
\cdot
\begin{tikzpicture}[baseline=-.5ex]
\foreach \i in {0.75, 0.25, -0.25, -0.75} {
\draw[thick, rounded corners] (-1,\i) -- (-0.75,\i) (0.25,\i) -- (0.5,\i);
}
\draw(-0.75,-1) rectangle (0.25,1);
\draw(-0.25,0) node {$T$};
\draw[red](-1,-1)--(-1,1);
\draw[red](0.5,-1)--(0.5,1);
\end{tikzpicture}
\cdot
\begin{tikzpicture}[baseline=-.5ex]
\draw[fill](-0.25,0) circle (0.05) node[above] {$\infty$};
\foreach \i in {0.75, 0.25, -0.25, -0.75} {
\draw[thick, rounded corners](-0.25,0) to[out=180,in=0] (-1,\i);
}
\draw[red](-1,-1)--(-1,1);
\end{tikzpicture}
\arrow[r,equal]&
\begin{tikzpicture}[baseline=-.5ex]
\begin{scope}[xshift=-2cm]
\draw[fill](0.5,0) circle (0.05) node[above] {$0$};
\foreach \i in {0.75, 0.25, -0.25, -0.75} {
\draw[thick, rounded corners](0.5,0) to[out=0,in=180] (1.25,\i);
}
\end{scope}
\draw(-0.75,-1) rectangle (0.25,1);
\draw(-0.25,0) node {$T$};
\begin{scope}[xshift=1.5cm]
\draw[fill](-0.5,0) circle (0.05) node[above] {$\infty$};
\foreach \i in {0.75, 0.25, -0.25, -0.75} {
\draw[thick, rounded corners](-0.5,0) to[out=180,in=0] (-1.25,\i);
}
\end{scope}
\end{tikzpicture}=\hat \cT
\end{tikzcd}
\]
\caption{The closure of the front projection $\cT$}
\label{figure:closure}
\end{figure}
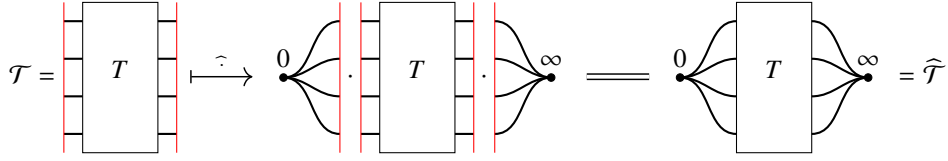

\begin{lemma}\label{lemma:potential extends to the closure}
The closure $\hat\cdot:\BLT^\mu\to \LT^\mu$ is a functor.
\end{lemma}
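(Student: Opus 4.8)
The plan is to use that both $\BLT^\mu$ and $\LT^\mu$ are, by construction, free categories on their generating Reidemeister moves: since $\LT=\BLT$ with empty borders, $\LT^\mu$ is generated by the front moves of $\cRM$, each of which acts uniquely on Maslov potentials by Lemma~\ref{lemma:Reidemeister move preserves potential}. So to produce the functor $\hat\cdot$ it suffices to (i) specify it on objects, (ii) specify it on each generating Reidemeister move, and (iii) check that these two assignments are compatible at the source and target of every move; once that is done, respect for composition and for identities ($\widehat{\id_\cT}=\id_{\hat\cT}$) is automatic from freeness.

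For (i), I would first note that $\hat\cT=\bfzero_{n_\Left}\cdot\cT\cdot\bfinfty_{n_\Right}$ has type $(0,0)$, and that — after the small isotopy near the borders allowed in a concatenation — the only $x$-extreme points it acquires are the two new vertices, so it satisfies the regularity conditions of Definition~\ref{definition:regular projections} and indeed lies in $\LT$; different representatives of the isomorphism class of $(\cT,\bfmu)$, and different such isotopies, yield isomorphic closures, so this is well defined on objects of $\BLT^\mu$. For the potential, the key input is Example~\ref{example:Maslov potentials for trivial graphs}: the caps $\bfzero_{n_\Left}$ and $\bfinfty_{n_\Right}$ impose no constraint, so $\mu_\Left$ and $\mu_\Right$ are themselves Maslov potentials on them; moreover the concatenation only identifies strands at border points, which are never $x$-extreme, so no new instance of the defining relation \eqref{equation:defining equation of Maslov potentials} is created. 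Hence $\mu_\Left$, $\mu$, $\mu_\Right$ glue to a Maslov potential $\hat\bfmu$ on $\hat\cT$, and it is the unique one restricting to $\mu$ on $\mathring T$: on each leg of the two new vertices the value is forced to agree with the value of $\mu$ on the adjacent component of $T$, which is $\mu_\Left$, resp. $\mu_\Right$, by definition.

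For (ii), given a generating front move $\RM{M}\colon(\cT',\bfmu')\to(\cT,\bfmu)$, I would use that it is supported in a disk contained in the interior, hence disjoint from a neighbourhood of $T_\Left\cup T_\Right$, which is the only region where the closure changes the diagram; since $\cT$ and $\cT'$ carry the same border data, their closures moreover agree near the borders, so the identical local picture defines a front move $\hat{\RM{M}}\colon\hat{\cT'}\to\hat\cT$ in $\LT$, and I set $\hat\cdot(\RM{M})\coloneqq\hat{\RM{M}}$. For (iii), I would check $\hat{\RM{M}}_*\hat{\bfmu'}=\hat\bfmu$ by a locality argument: outside the move disk $\hat{\RM{M}}_*$ is the identity and $\hat\bfmu$ restricts to $\hat{\bfmu'}$ (the caps are untouched), while inside the disk, where the caps play no role, both sides are computed by exactly the recipe that yields $\RM{M}_*\bfmu'=\bfmu$; since these two regions cover $\hat\cT$, the potentials coincide. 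Freeness of $\BLT^\mu$ then extends $\hat\cdot$ uniquely to the desired functor.

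I expect the only genuine content to be step (iii): the assertion that taking the closure commutes both with performing an interior Reidemeister move and with the canonical lift of that move to Maslov potentials. As sketched, this is not difficult — it reduces to the locality of the moves (they never meet the caps) together with the uniqueness clause of Lemma~\ref{lemma:Reidemeister move preserves potential} — while verifying regularity of $\hat\cT$ and the gluing of the potential in (i) is routine bookkeeping about concatenation.
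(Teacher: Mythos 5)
Your proof is correct and follows essentially the same route as the paper's (very brief) proof: extend the Maslov potential to the closure using Example~\ref{example:Maslov potentials for trivial graphs}, and observe that each interior Reidemeister move induces the identical local move on closures; the paper leaves the freeness of the morphism categories implicit, while you make it explicit, which is a harmless and clarifying addition. Your locality check in step (iii) is exactly what the paper's phrase ``induces the exactly same move'' is asserting.
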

\begin{proof}
It is obvious that there is a unique way to extend $\bfmu$ on $\cT$ to $\hat \bfmu$ on $\hat \cT$ by definition of the closure, which is well-defined since any function on $[n]$ can be realized as a Maslov potential of $0_n$ or $\infty_n$ as seen in Example~\ref{example:Maslov potentials for trivial graphs}.

Each Reidemeister move $\RM{M}:\cT'\to\cT$ between bordered Legendrian graphs induces the exactly same move $\RM{M}:\hat\cT'\to\hat\cT$ between closures. Therefore it becomes a functor preserving homotopy.
\end{proof}

We introduce a combinatorial way, called the \emph{Ng's resolution} to obtain a regular Lagrangian projection $T_\lag\in\LT_\lag$ for given front projection $T\in\LT$ defining the equivalent Legendrian graphs.
\begin{remark}
This is an extension of the original Ng's resolution for Legendrian knots to Legendrian graphs.
\end{remark}

\begin{definition}[Ng's resolution]\cite[Definition~2.1]{Ng2003}\label{definition:Ng's resolution}
For $T\in\LT$, the \emph{Ng's resolution} $\Res(T)$ is a Lagrangian projection obtained by (combinatorially) replacing the local pieces as follows:
\begin{align*}
\Lcusp&\mapsto \leftarc,&
\Rcusp&\mapsto\rightkink,&
\crossing&\mapsto \crossingpositive,
\end{align*}
and for a vertex $v$ of type $(\ell,r)$ and a basepoint $b$, we take a replacement as follows:
\[
\begin{tikzcd}
\begin{tikzpicture}[baseline=-0.5ex,scale=0.8]
\foreach \i in {4,...,8} {
	\draw[thick] (-1,{(6-\i)/3}) to[out=0,in=180] (0,0);
}
\draw (-1,0.67) node[left] {$h_{v,1}$};
\draw (-1,0.33) node[left] {$h_{v,2}$};
\draw (-1,-0.167) node[left] {$\vdots$};
\draw (-1,-0.67) node[left] {$h_{v,\ell}$};
\foreach \i in {1,2,3} {
	\draw[thick] (1,{(2-\i)/1.5}) to[out=180,in=0] (0,0);
}
\draw (1,0.67) node[right] {$h_{v,\ell+1}$};
\draw (1,0) node[right] {$\vdots$};
\draw (1,-0.67) node[right] {$h_{v,\ell+r}$};
\draw[fill] (0,0) circle (2pt) node[above] {$v$};
\end{tikzpicture}\arrow[r,mapsto] &
\begin{tikzpicture}[baseline=-0.5ex,scale=0.8]
\draw[thick,rounded corners] (-2, -2/3 ) -- (-1.8, -2/3 ) -- (-0.6,2/3) -- (0,0);
\foreach \i in {7,6,5} {
	\draw[white,line width=3,rounded corners] (-2, {(6-\i)/3} ) -- (-1.8, {(6-\i)/3} ) -- ({-1.8+0.3*(8-\i)}, {-2/3} ) -- (-0.7,{(\i-6)/3});
	\draw[thick,rounded corners] (-2, {(6-\i)/3} ) -- (-1.8, {(6-\i)/3} ) -- ({-1.8+0.3*(8-\i)}, {-2/3} ) -- (-0.6,{(\i-6)/3}) --  (0,0);
}
\draw[white,line width=3,rounded corners] (-2, 2/3 ) -- (-1.8, 2/3 ) -- (-0.6,-2/3);
\draw[thick,rounded corners] (-2, 2/3 ) -- (-1.8, 2/3 ) -- (-0.6,-2/3) -- (0,0);
\draw (-2,0.67) node[left] {$h_{v,1}$};
\draw (-2,0.33) node[left] {$h_{v,2}$};
\draw (-2,-0.167) node[left] {$\vdots$};
\draw (-2,-0.67) node[left] {$h_{v,\ell}$};
\foreach \i in {1,2,3} {
	\draw[thick] (1,{(2-\i)/1.5}) -- (0,0);
}
\draw (1,0.67) node[right] {$h_{v,\ell+1}$};
\draw (1,0) node[right] {$\vdots$};
\draw (1,-0.67) node[right] {$h_{v,\ell+r}$};
\draw[fill] (0,0) circle (2pt) node[above] {$v$};
\end{tikzpicture}
\end{tikzcd}
\]
\[
\begin{tikzcd}
\begin{tikzpicture}[baseline=-0.5ex,scale=0.7]
\draw[thick] (-1,0) -- (0,0) node[below=-2ex] {$|$} node[above] {$b$} -- (1,0);
\end{tikzpicture}\arrow[r,mapsto] &
\begin{tikzpicture}[baseline=-0.5ex,scale=0.7]
\draw[thick] (-1,0) -- (0,0) node[below=-2ex] {$|$} node[above] {$b$} -- (1,0);
\end{tikzpicture}&
\begin{tikzpicture}[baseline=-0.5ex,scale=0.7]
\draw[thick] (1,0.5) to[out=180,in=0] (-0.5,0) node[below=-2ex] {$|$} node[above] {$b$} to[out=0,in=180] (1,-0.5);
\end{tikzpicture}\arrow[r,mapsto] &
\begin{tikzpicture}[baseline=-0.5ex,scale=0.7]
\draw[thick] (1,0.5) arc(90:180:1 and 0.5) arc(180:270:1 and 0.5) node[near start,below=-2ex,sloped] {$|$} node[near start, below] {$b$};
\end{tikzpicture}&
\begin{tikzpicture}[baseline=-0.5ex, scale=0.7]
\draw[thick] (-1,0.5) to[out=0,in=180] (0.5,0) node[below=-2ex] {$|$} node[above] {$b$} to[out=180,in=0] (-1,-0.5);
\end{tikzpicture}\arrow[r,mapsto] &
\begin{tikzpicture}[baseline=-0.5ex,scale=0.7]
\draw[thick] (-2, -0.5) to[out=0,in=180] (-1,0.5) arc(90:0:1 and 0.5);
\draw[white,line width=5] (0,0) arc(0:-90:1 and 0.5) to[out=180,in=0] (-2,0.5);
\draw[thick] (0,0) arc(0:-90:1 and 0.5) node[near start,below=-2ex,sloped] {$|$} node[near start, below] {$b$} to[out=180,in=0] (-2,0.5);
\end{tikzpicture}
\end{tikzcd}
\]
\end{definition}

Notice that if we have two equivalent front projections, then the Ng's resolution induces equivalences between resolutions.
Indeed, for each front Reidemeister move $\RM{M}$, we have a sequence of Lagrangian Reidemeister move(s) $\Res\RM{M}$.

One candidate for the choice of $\Res\RM{M}$ for each $\RM{M}\in\cRM$ is given in \cite[Figure~6]{ABS2019count}.

\begin{lemma}\label{lemma:functoriality of Ng's resolution}
The Ng's resolution $\Res:\LT^\mu\to\LT_\lag^\mu$ is a functor.
\end{lemma}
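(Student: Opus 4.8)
The plan is to exploit that the morphisms of $\LT^\mu$ and $\LT_\lag^\mu$ are \emph{freely} generated by front, resp.\ Lagrangian, Reidemeister moves, each of which lifts uniquely to Maslov potentials by Lemma~\ref{lemma:Reidemeister move preserves potential}. Hence, to construct the functor $\Res$ it suffices to define it on objects, compatibly with Maslov potentials, and to send each generating front move to a morphism of $\LT_\lag^\mu$ with matching source and target; by freeness this extends uniquely to a functor and there are no relations to check. The real content is therefore twofold: (a) Ng's resolution carries a Maslov potential on $T$ to a well-defined Maslov potential on $\Res(T)$; and (b) for each front Reidemeister move the associated sequence of Lagrangian moves from \cite[Figure~6]{ABS2019count} respects this assignment of potentials.

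For (a) I would argue locally, following the replacements of Definition~\ref{definition:Ng's resolution}. That $\Res(T)$ is again a regular Lagrangian projection of a Legendrian graph equivalent to the one with front $T$ is asserted just before Definition~\ref{definition:Ng's resolution} (the graph version of \cite[Definition~2.1]{Ng2003}), so only the potential needs attention. Outside small neighbourhoods of cusps, crossings, vertices, and basepoints the resolution is the identity, hence the components of $R=\pi_0(T\setminus(V\cup S))$ are naturally identified with those of the corresponding set $R_\lag$ for $\Res(T)$, once one handles the pieces where an $x$-extreme point is created or destroyed. This occurs only at cusps, and there I match the defining conditions against one another: under $\Lcusp\mapsto\leftarc$ the front left-cusp equation $\mu(s^+)-\mu(s^-)=1$ of \eqref{equation:defining equation of Maslov potentials} becomes verbatim the $x$-minimal Lagrangian equation of \eqref{equation:defining equation of Maslov potentials of Lagrangian projection}; under $\Rcusp\mapsto\rightkink$ the right cusp is traded for a loop whose unique $x$-maximal point satisfies $\mu(s^+)-\mu(s^-)=-1$, which matches the front right-cusp equation after one follows the loop and accounts for the interchange of the two strands. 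At $\crossing\mapsto\crossingpositive$ and at the vertex and basepoint replacements no new $x$-extreme point appears, so no further condition is imposed. This produces a well-defined $\Res_*\mu$, and I set $\Res(T,\mu)\coloneqq(\Res(T),\Res_*\mu)$.

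For (b) I send the front move $\RM{M}\colon T'\to T$ to the sequence $\Res\RM{M}$ of Lagrangian moves $\Res(T')\to\Res(T)$ fixed in \cite[Figure~6]{ABS2019count}; by Lemma~\ref{lemma:Reidemeister move preserves potential} it has a unique Maslov lift $(\Res(T'),\nu')\to(\Res(T),\nu)$, and it remains to identify $\nu'$ with $\Res_*\mu'$ and $\nu$ with $\Res_*\mu$. Since each elementary model in \cite[Figure~6]{ABS2019count} is supported in a small disc and is the identity on the resolved diagram outside it, the potential there is unchanged; as a Maslov potential is determined by its values away from any such disc together with the conditions \eqref{equation:defining equation of Maslov potentials of Lagrangian projection}, this forces $\nu'=\Res_*\mu'$ and $\nu=\Res_*\mu$. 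I expect this last step to be the only place requiring care: a priori $\Res\RM{M}$ could change the potential by a function locally constant on the Maslov cover, so one must use the unchanged part of each elementary piece to pin it down. Granting this, functoriality (compatibility with composition and identities) is immediate from freeness of $\LT^\mu$, and the proof is complete.
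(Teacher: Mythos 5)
Your proposal is correct and follows essentially the same route as the paper: since morphisms in $\LT^\mu$ and $\LT_\lag^\mu$ are freely generated by Reidemeister moves, one defines $\Res$ on objects (showing $\Res_*\mu$ is well-defined) and on each generating move (using the chosen sequences $\Res\RM{M}$), and functoriality is automatic; the paper compresses all of this to a two-line observation, treating the object-level well-definedness as ``well-known.'' Your local check at cusps for (a) is the right computation (at the resolved right cusp, the original upper and lower strands exchange roles as $s^{+}$ and $s^{-}$ at the $x$-maximum, converting the front-cusp equation $\mu(s^+)-\mu(s^-)=1$ into the Lagrangian $x$-max equation $\mu(s^+)-\mu(s^-)=-1$), and your pin-down argument in (b) via the unchanged potential outside the modification disks is the correct way to rule out an overall $\ZZ$-shift. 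Two very minor slips: the basepoint-at-cusp replacements do introduce the same $x$-extreme as the corresponding cusp replacements (so it is not quite that ``no new $x$-extreme appears,'' but there is nothing new to verify since the potential match is identical to the cusp case with a basepoint added); and the equality $\nu'=\Res_*\mu'$ needs no argument, as $\nu'$ is the potential one starts the lift from, so only $\nu=\Res_*\mu$ is at stake. Neither affects the substance.
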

\begin{proof}
The well-definedness is well-known and as observed above, each morphism corresponding to front Reidemeister move $\RM{M}$ will be mapped to the morphism corresponding to the chosen sequence $\Res\RM{M}$ of Lagrangian Reidemeister moves.
Therefore it is functorial.
\end{proof}

\subsection{Bordered Chekanov-Eliashberg DGAs}\label{section:category of bordered DGAs}
Throughout this paper, we mean by a \emph{differential graded algebra} (DGA) a pair $A=(\alg,\differential)$ of a unital associative graded algebra $\alg$ over a field $\ring$ with the unit $\unit\in\ring$.

A DGA $A=(\alg,\differential)$ is said to be \emph{semi-free} and generated by $\sfS$ if its underlying algebra $\alg$ is a tensor algebra of a graded vector space $\ring\langle \sfS \rangle$
\begin{align*}
\alg & = T(\ring\langle \sfS \rangle)=\bigoplus_{\ell\ge0}(\ring\langle \sfS \rangle)^{\otimes\ell}, 
\end{align*}
for a (possibly infinite) \emph{generating set} $\sfS$ with a grading $|\cdot|:\sfS\to\ZZ$.
\begin{remark}
This is the usual notion of the semi-freeness while it is used differently in \cite{NRSSZ2015}.
\end{remark}

We denote the category of semi-free DGAs by $\DGA$.
\begin{assumption}
From now on, we mean by a DGA a semi-free DGA unless mentioned otherwise.
\end{assumption}

\begin{example/definition}[Border DGAs and internal DGAs]\label{example:composable border DGAs}
There are two important examples of DGAs, called the \emph{border DGA} $A_{n}(\mu)=(\alg_{n},\differential_n)$ and the \emph{internal DGA} $I_{n}(\mu)=(\sfI_{n},\differential_n)$ which are edge-graded over $E_n\coloneqq[n]$ and defined as follows: 
\begin{itemize}
\item For a function $\mu:[n]\to\grading$, the algebras $\alg_{n}$ and $\sfI_{n}$ are generated by two sets $K_n$ and $\tilde V_n$
\begin{align*}
K_n&\coloneqq \{k_{ab}\mid 1\le a<b\le n\},&
\tilde V_n&\coloneqq \{ \vg_{a,i}\mid a\in\Zmod{n},i\ge1 \}.
\end{align*}
\item The edge-gradings on generator $k_{ab}$ and $\vg_{a,i}$ are given as $(a,b)$ and $(a,a+i)$, where all indices are modulo $n$.
\item The homological gradings are 
\begin{align*}
|k_{ab}|&\coloneqq \mu(a)-\mu(b)-1,&
|\vg_{a,i}|&\coloneqq 
\mu(a)-\mu(a+i)+N(n,a,i)-1,
\end{align*}
where
\begin{equation}\label{equation:N}
N(n,a,i)=\sum_{j<i} N(n,a+j,1)\quad\text{ and }\quad
N(n,a,1)\coloneqq \begin{cases}
0 & a\neq n;\\
2 & a=n;
\end{cases}
\end{equation}
\item The differentials for $k_{ab}$ and $\vg_{a,i}$ are defined as follows:
\begin{align}
\differential_n(k_{ab})&\coloneqq\sum_{a<c<b}(-1)^{|k_{ac}|-1}k_{ac}k_{cb},\\
\differential_n (\vg_{a,i})&\coloneqq \delta_{i,n}\unit_a+\sum_{i_1+i_2=i}(-1)^{|\vg_{a,i_1}|-1}\vg_{a,i_1}\vg_{a+i_1,i_2}.\label{equation:differential for vertices}
\end{align}
\end{itemize}
\end{example/definition}

Then it is obvious that there is a canonical inclusion $k_{ab}\mapsto \vg_{a,b-a}$ from $A_{n}(\mu)\to I_{n}(\mu)$.
\begin{remark}
We have $A_0(\mu)\isomorphic I_0(\mu)\isomorphic \ring$.
\end{remark}

Especially, when $n=2$ and $\mu$ is constant, say $m$, then the internal DGA $I_{2}(m)=(\sfI_{2}(m),\differential_2)$ is given as
\begin{equation}
\sfI_{2}(m)=\ring\langle \xi_{1,i}, \xi_{2,i}\mid i\ge 1\rangle,
\end{equation}
where $|\xi_{1,i}|=i-1$. Hence, it is independent to the constant $m$ and denoted by $I_{2}$ and moreover, we have the DGA morphism
\begin{align}
I_{2}&\to (\ring[t,t^{-1}],\differential\equiv0)&
|t|&=|t^{-1}|=0,&
b_{a,\ell}&\mapsto\begin{cases}
t & a=1, \ell=1;\\
t^{-1} & a=2, \ell=1;\\
0 & \ell>1.
\end{cases}
\end{align}

\begin{definition}[Tame isomorphisms]
We say that a DGA morphism 
\[
f:A'=(\alg'=T(\ring\langle \sfS\rangle),\differential')\to A=(\alg=T(\ring\langle\sfS\rangle),\differential)
\]
is called an \emph{elementary isomorphism} if for some $g\in\sfR_{ee'}$,
\begin{align*}
f(h)=
\begin{cases}
g+u&\text{ if }h=g\\
h&\text{ if }h\neq g,
\end{cases}
\end{align*}
where $u\in \alg(e,e')$ is a composable word of type $(e,e')$ not containing $g$, and called a \emph{tame isomorphism} if $f$ is a composition of countably many (possibly finite) elementary isomorphisms.
\end{definition}

We consider a \emph{stabilization} in the sense of \cite[Definition~2.16]{EN2015} and a \emph{generalized stabilization} defined in \cite{AB2018} as follows: 
\begin{definition}[Stabilizations]
A \emph{stabilization} of $A=(\alg=T(\ring\langle \sfS\rangle),\differential)\in\DGA$ is a DGA which is tame isomorphic to a DGA $SA=(S\alg,\bar\differential)$ obtained from $A$ by adding a countably many (possibly finite) number of canceling pairs of edge-graded generators $\{\hat e^i, e^i\mid i\in I\}$ for some index set $I$ so that $\hat e^i$ and $e^i$ have the same edge-grading and 
\begin{align*}
S\alg&=T\left(\ring\left\langle\sfS\amalg\{\hat e^i, e^i\mid i\in I\}\right\rangle\right),&
|\hat e^i| &=|e^i| +1,&
\bar\differential(\hat e^i)&= e^i,&
\bar\differential(e^i)&= 0.
\end{align*}
Then the canonical forgetful map $\pi:SA\to A$ sending both $\hat e^i$ and $e^i$ to zero induces the isomorphism on homology groups, whose homotopy inverse is precisely the canonical embedding $\iota:A\to SA$.
\end{definition}

\begin{definition}[Generalized stabilizations]
For $A=(\alg=T(\ring\langle \sfS\rangle), \differential)\in\DGA$ and $\varphi: I_{n}(\mu)\to A$ for some $\mu:\Zmod{n}\to\grading$, the \emph{$\fd$-th positive} or \emph{negative stabilization} of $A$ with respect to $\varphi$ is the DGA $S_\varphi^{\fd \pm}A=\left(S^{\fd}_\varphi\alg,\bar\differential^\pm\right)$, where for $v_{a,i}\coloneqq\varphi(\vg_{a,i})\in A$,
\begin{itemize}
\item the graded algebra $S^{\fd}_\varphi \sfA$ is given as
\begin{align*}
S^{\fd}_\varphi \sfA&\coloneqq T\left(\ring\left\langle\sfS\amalg\{e^1,\dots,e^n\}\right\rangle\right),&
|e^b| &\coloneqq\fd+\sum_{a<b}(|v_{a,1}|+1),
\end{align*}
\item the differentials $\bar\differential^\pm$ for $e^b$ are given as
\begin{align*}
\bar\differential^+ e^b&\coloneqq \sum_{a<b}(-1)^{|e^a|-1}e^a v_{a,b-a},&
\bar\differential^- e^b&\coloneqq \sum_{a<b}v_{n+1-b,b-a} e^a.
\end{align*}
\end{itemize}
\end{definition}

As observed in \cite[Remark~3.8]{AB2018} and proved in \cite[Appendix~A]{ABS2019count}, the generalized stabilization is a composition of a stabilization and a destabilization.
\begin{proposition}\cite[Appendix~A]{ABS2019count}\label{proposition:generalized stabilizations}
There exists a semi-free edge-graded DGA $\tilde S^{\fd\pm}_\varphi A$ which is a common stabilization of both $A$ and $S^{\fd\pm}_\varphi A$.
\end{proposition}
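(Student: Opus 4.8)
The plan is to construct $\tilde S^{\fd\pm}_\varphi A$ by hand, as a single semi-free edge-graded DGA that refines both $A$ and $S^{\fd\pm}_\varphi A$, and then to exhibit it as a stabilization of each. We carry this out for $S^{\fd+}_\varphi A$; the negative case is the mirror construction, replacing each pattern $e^{a}v_{a,b-a}$ below by $v_{n+1-b,\,b-a}\,e^{a}$, exactly as $\bar\differential^{+}$ is traded for $\bar\differential^{-}$ in the definitions.

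Write $v_{a,i}\coloneqq\varphi(\vg_{a,i})\in A$. Since only indices $1\le b-a<n$ appear below, the relations pulled back from $I_{n}(\mu)$ read $\differential v_{a,i}=\sum_{i_{1}+i_{2}=i}(-1)^{|v_{a,i_{1}}|-1}v_{a,i_{1}}v_{a+i_{1},i_{2}}$, with no $\unit_{a}$-term. Let $\tilde S^{\fd+}_\varphi A$ be the algebra obtained from $A$ by adjoining the string $e^{1},\dots,e^{n}$ of $S^{\fd+}_\varphi A$ — with the same edge-gradings and $|e^{b}|=\fd+\sum_{a<b}(|v_{a,1}|+1)$ — together with a second string $\hat e^{1},\dots,\hat e^{n}$, where $\hat e^{b}$ carries the edge-grading of $e^{b}$ and $|\hat e^{b}|=|e^{b}|+1$, equipped with the differential extending $\differential$ on $A$ by
\[
\tilde\differential e^{b}=\sum_{a<b}(-1)^{|e^{a}|-1}e^{a}v_{a,b-a}=\bar\differential^{+}e^{b},\qquad
\tilde\differential\hat e^{b}=e^{b}+\sum_{a<b}(-1)^{|\hat e^{a}|-1}\hat e^{a}v_{a,b-a}.
\]
The first task is to check $\tilde\differential^{2}=0$. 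On $A$ this is the Leibniz rule; on $e^{b}$ it is precisely the identity making $S^{\fd+}_\varphi A$ a DGA (the Leibniz rule turns $\tilde\differential^{2}e^{b}$ into a telescoping sum of triple products $e^{a}v_{a,i_{1}}v_{a+i_{1},i_{2}}$ that cancels by the formula for $\differential v$); on $\hat e^{b}$ the relation $|\hat e^{a}|=|e^{a}|+1$ makes the $e^{a}v_{a,b-a}$-part of each $\tilde\differential\hat e^{a}$ ($a<b$) reassemble, with a global sign flip, into $-\tilde\differential e^{b}$, killing the leading term, and the remaining triple products $\hat e^{a}v_{a,i_{1}}v_{a+i_{1},i_{2}}$ then cancel against $\sum_{a<b}\hat e^{a}\,\differential v_{a,b-a}$. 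The signs match because $|v_{c,1}|+1=\mu(c)-\mu(c+1)+N(n,c,1)$ telescopes against the $N(n,a',i_{1})=\sum_{j<i_{1}}N(n,a'+j,1)$ entering $|v_{a',i_{1}}|$, whence $|\hat e^{a'+i_{1}}|+|\hat e^{a'}|\equiv|v_{a',i_{1}}|-1\pmod 2$.

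Finally one identifies the two stabilization structures. On the $A$-side, carry out for $b=1,\dots,n$ in order the substitution replacing $e^{b}$ by $\bar e^{b}\coloneqq\tilde\differential\hat e^{b}$; this is a composition of elementary isomorphisms, since $\bar e^{1}=e^{1}$ and $\bar e^{b}-e^{b}$ involves only $\hat e^{a}$ with $a<b$ and elements of $A$, hence a tame isomorphism, after which $\tilde\differential\hat e^{b}=\bar e^{b}$, $\tilde\differential\bar e^{b}=0$ and $\differential|_{A}$ is untouched — so $\tilde S^{\fd+}_\varphi A$ is $A$ with the canceling pairs $(\hat e^{b},\bar e^{b})$ adjoined, a stabilization of $A$. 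On the other side, the sub-DGA on $\sfS$ and the $e$-string, with the restricted differential, is literally $S^{\fd+}_\varphi A$ (no $\hat e$'s occur in the $\tilde\differential e^{b}$), so $\tilde S^{\fd+}_\varphi A$ is obtained from $S^{\fd+}_\varphi A$ by adjoining the second string, and one checks by a further sequence of triangular tame isomorphisms that it likewise destabilizes onto $S^{\fd+}_\varphi A$; this is the ``stabilization then destabilization'' picture of \cite[Remark~3.8]{AB2018}. Combining the two statements proves the proposition. I expect the only genuinely non-formal point to be the sign bookkeeping for $\tilde\differential^{2}=0$ (and the entirely analogous, equally sign-heavy check underlying the destabilization onto $S^{\fd+}_\varphi A$); everything else, including edge-gradedness — in each product occurring above the factors compose in exactly one way — is automatic.
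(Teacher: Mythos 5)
Your construction of $\tilde S^{\fd+}_\varphi A$, the verification that $\tilde\differential^{2}=0$ (the sign identity $|\hat e^{a'+i_{1}}|+|\hat e^{a'}|\equiv|v_{a',i_{1}}|-1\pmod 2$ is indeed correct, as is the remark that the $\unit_a$-term in $\differential v_{a,i}$ never enters for $1\le i<n$), and the triangular change of variables $e^{b}\mapsto\tilde\differential\hat e^{b}$ exhibiting $\tilde S^{\fd+}_\varphi A$ as a stabilization of $A$, are all fine. The gap is in the last step: $\tilde S^{\fd+}_\varphi A$ is \emph{not} a stabilization of $S^{\fd+}_\varphi A$, and no ``further sequence of triangular tame isomorphisms'' can make it one.

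The point is purely one of cardinality and degrees. A tame isomorphism fixes the generating set and the (edge- and homological) grading of each generator, so for a semi-free DGA $C$ to be a stabilization of $B$ the degree multiset of the generators of $C$ must equal that of $B$ together with a multiset of canceling pairs $\{d_i,d_i+1\}$. Your $\tilde S^{\fd+}_\varphi A$ adds precisely the $n$ generators $\hat e^{1},\dots,\hat e^{n}$ to $S^{\fd+}_\varphi A$, so (after stripping off the common $\sfS\sqcup\{e^{b}\}$-degrees, which is forced for generic Maslov data) one would need the multiset $\{|e^{1}|+1,\dots,|e^{n}|+1\}$ to decompose into pairs differing by one. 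For $n$ odd this is impossible outright; already for $n=1$ the generalized stabilization of type $+$ adjoins one free generator to $A$, and your $\tilde S$ adjoins exactly one more generator to $S^{\fd+}_\varphi A$, an odd total. For $n$ even, consecutive differences are $|e^{b+1}|-|e^{b}|=|v_{b,1}|+1$, which would have to be $\pm 1$ for essentially all $b$ — a nongeneric condition on the Maslov potential. In short, what you actually produced is a zig-zag
\[
S^{\fd+}_\varphi A\;\hookrightarrow\;\tilde S^{\fd+}_\varphi A\;\twoheadrightarrow\;A,
\]
consisting of an inclusion (which is not a destabilization projection) followed by a genuine stabilization projection. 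That is enough to conclude $A$ and $S^{\fd+}_\varphi A$ are quasi-isomorphic, but it is strictly weaker than the claim of the proposition, which is that there is a single DGA mapping onto both by destabilization. The invariance machinery of the paper (e.g.\ Proposition~\ref{proposition:zig-zags of stabilizations} and the constructions in Section~\ref{section:consistent sequences of LCH DGAs}) relies on the stronger common-stabilization statement. A correct proof must introduce more auxiliary canceling pairs so that the degree and parity constraints above are satisfied; your chain $\hat e^{1},\ldots,\hat e^{n}$ is at best a first layer of such a construction.
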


\subsubsection{Bordered DGAs}
We consider DGAs together with additional structures, called \emph{bordered DGAs}.
\begin{definition}[Bordered DGAs]
A \emph{bordered} DGA $\dga=\left(A_\Left\stackrel{\phi_\Left}\longrightarrow A\stackrel{\phi_\Right}\longleftarrow A_\Right\right)$ of type $\left(n_\Left, n_\Right\right)$ consists of DGAs $A, A_\Left$ and $A_\Right$, and two DGA morphisms $\phi_\Left:A_\Left\to A$ and $\phi_\Right:A_\Right\to A$ such that
\begin{enumerate}
\item $\phi_\Left$ is \emph{injective}, and
\item for some $\mu_\Left:[n_\Left]\to\grading$ and $\mu_\Right:[n_\Right]\to\grading$,
\[
A_\Left\isomorphic A_{n_\Left}(\mu_\Left)\quad\text{ and }\quad
A_\Right\isomorphic A_{n_\Right}(\mu_\Right).
\]
\end{enumerate}

A \emph{bordered morphism} $\bff:\dga'\to\dga$ is a triple $(f_\Left, f, f_\Right)$ of DGA morphisms such that they fit into the following commutative diagram
\begin{align*}
\begin{tikzcd}[column sep=4pc, row sep=2pc, ampersand replacement=\&]
\dga'\arrow[d,"\bff"']\\
\dga
\end{tikzcd}&=\ \left(
\begin{tikzcd}[column sep=4pc, row sep=2pc, ampersand replacement=\&]
A'_\Left\arrow[r,"\phi'_\Left"]\arrow[d,"f_\Left"']
\& A'\arrow[d,"f"'] \& A'_\Right\arrow[l,"\phi'_\Right"']\arrow[d,"f_\Right"]\\
A_\Left\arrow[r,"\phi_\Left"] \& A \& A_\Right\arrow[l,"\phi_\Right"'].
\end{tikzcd}\right)
\end{align*}

We say that $\dga$ is a \emph{cofibrant} if $\phi_\Right$ is also injective.
\end{definition}

\begin{definition}[Stabilizations]
We say that $\dga'=(A'_\Left\to A'\leftarrow A'_\Right)$ is a \emph{(weak) stabilization} of $\dga=(A_\Left\to A\leftarrow A_\Right)$ if $A'$ is a stabilization of $A$ and there exists a canonical projection
\[
\bfpi=(\identity_\Left, \pi, \identity_\Right):\dga'\to\dga,
\]
and is a \emph{strong stabilization} if it is a weak stabilization and there exists the canonical embedding $\bfi:\dga\to\dga'$ as well.
\end{definition}

\begin{definition}[Category of bordered DGAs]
The category of bordered DGAs will be denoted by $\BDGA$, and the full subcategory of $\BDGA$ consisting of cofibrants by $\BDGA^c$.
Then the category $\DGA$ is the full subcategory of $\BDGA^c$ consisting of bordered DGAs of type $(0,0)$.
\end{definition}

\begin{definition}[Cofibrant replacements of bordered DGAs]\label{definition:cofibrant replacement}
Let $\dga=\left(A_\Left\stackrel{\phi_\Left}\longrightarrow A\stackrel{\phi_\Right}\longleftarrow A_\Right\right)\in\BDGA$ be a bordered DGA. The \emph{cofibrant replacement} of $\dga$
\[
\hat\dga\coloneqq\left(A_\Left\stackrel{\hat\phi_\Left}\longrightarrow \hat A \stackrel{\hat\phi_\Right}\longleftarrow A_\Right\right)\in\BDGA^c
\]
is defined by the \emph{mapping cylinder construction} as follows: if $A=(\alg=T(\ring\langle \sfS\rangle),\differential)$,
\begin{itemize}
\item the DGA $\hat A=(\hat\alg,\hat\differential)$ and its graded algebra $\hat \alg$ is defined as
\begin{align*}
\hat\alg&\coloneqq T\left(\ring\left\langle R\amalg K_\Right \amalg \hat K_\Right\right\rangle\right),&
\hat K_\Right&\coloneqq\left\{\hat k_{ab}~\middle|~k_{ab}\in K_\Right\right\},&
\left|\hat k_{ab}\right|&\coloneqq |k_{ab}|+1.
\end{align*}
\item The differential $\hat\differential$ for each $k_{ab}$ is the same as $\differential_\Right(k_{ab})$ and for $\hat k_{ab}$ it is defined as 
\begin{align}\label{equation:differential for mapping cylinder}
\hat\differential(\hat k_{ab})&\coloneqq 
k_{ab}-\phi_\Right(k_{ab})+\sum_{a<c<b}(-1)^{|\hat k_{ac}|-1}\hat k_{ac}\phi_\Right( k_{cb})+ k_{ac}\hat k_{cb}.
\end{align}
\item The morphism $\hat\phi_\Left$ is the composition of $\phi_\Left$ and the canonical inclusion $A\to\hat A$, and $\hat\phi_\Right$ is defined by $\hat\phi_\Right(k_{ab})=k_{ab}\in \hat\alg$.
\end{itemize}
\end{definition}

Let $\hat\pi=(\identity, \hat\pi,\identity):\hat\dga\to\dga$ be the bordered morphism which fixes both border DGAs and sends all $\hat k_{ab}$ to zero and each $k_{ab}$ to $\phi_\Right(k_{ab})$.

\begin{lemma}\label{lemma:mapping cylinders are stabilizations}
The bordered DGA $\hat\dga$ is a weak stabilization.
\end{lemma}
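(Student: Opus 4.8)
The plan is to verify the two clauses in the definition of a weak stabilization: that $\hat A$ is a stabilization of $A$, and that the triple $\hat\pi=(\identity_\Left,\hat\pi,\identity_\Right)$ introduced just above is a well-defined bordered morphism $\hat\dga\to\dga$. The second clause is routine: $\hat\pi$ is already prescribed on generators, it commutes with the differentials by a short generator-by-generator check — on the generators of $A$ and on $k_{ab}\in K_\Right$ one only uses that $\phi_\Right$ is a chain map, while applying $\hat\pi$ to the right-hand side of \eqref{equation:differential for mapping cylinder} gives $\phi_\Right(k_{ab})-\phi_\Right(k_{ab})+0=0$ because every remaining summand contains a factor from $\hat K_\Right$ — and it fits into the bordered square by the very definitions of $\hat\phi_\Left$ and $\hat\phi_\Right$. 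So the substance of the lemma is the first clause.

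For that, I would write down an explicit tame isomorphism $\Psi\colon\hat A\to SA$. Here $SA$ is the DGA obtained from $A$ by adjoining, for each $k_{ab}\in K_\Right$, the canceling pair consisting of $k_{ab}$ itself (degree $|k_{ab}|$, edge-grading $(a,b)$) and $\hat k_{ab}$ (degree $|k_{ab}|+1$, same edge-grading), with stabilization differential $\bar\differential(\hat k_{ab})=k_{ab}$, $\bar\differential(k_{ab})=0$, $\bar\differential|_A=\differential$; since $K_\Right$ is finite, this is a stabilization of $A$ in the sense of the excerpt. Now $\hat A$ and $SA$ have the same underlying graded algebra — the tensor algebra on the generators of $A$ together with $K_\Right\amalg\hat K_\Right$ — and I take $\Psi$ to be the inverse of the algebra automorphism $\Psi^{-1}$ fixing every generator of $A$ and every $\hat k_{ab}$ and sending $k_{ab}\mapsto\hat\differential(\hat k_{ab})$. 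By \eqref{equation:differential for mapping cylinder} one has $\hat\differential(\hat k_{ab})=k_{ab}+u_{ab}$, where $u_{ab}$ is a sum of composable words of type $(a,b)$ built only from the generators of $A$, the elements of $\hat K_\Right$, and the generators $k_{ac}$ with $a<c<b$; in particular $u_{ab}$ does not involve $k_{ab}$, and among the $K_\Right$-generators it involves only ones of strictly smaller ``length'' $c-a<b-a$. Processing the pairs in order of increasing $b-a$ thus presents $\Psi^{-1}$ as a finite composition of edge-graded elementary isomorphisms, so $\Psi$ is a tame isomorphism. Conjugating the differential, $\Psi_*\hat\differential$ equals $\differential$ on the generators of $A$, equals $k_{ab}$ on $\hat k_{ab}$ (since $\Psi(\hat\differential\hat k_{ab})=\Psi(\Psi^{-1}k_{ab})=k_{ab}$), and vanishes on $k_{ab}$ (since $\Psi(\hat\differential(\hat\differential\hat k_{ab}))=0$ by $\hat\differential^2=0$); hence $\Psi_*\hat\differential=\bar\differential$ and $\Psi\colon\hat A\to SA$ is a DGA isomorphism, so $\hat A$ is a stabilization of $A$. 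Moreover, composing $\Psi$ with the canonical forgetful map $SA\to A$ and evaluating on generators returns exactly $\hat\pi$ — each summand of $u_{ab}$ other than $-\phi_\Right(k_{ab})$ carries a factor from $\hat K_\Right$ and dies — so $\hat\pi$ is the canonical projection of this stabilization.

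The step I expect to carry the weight is the assertion that the substitution $k_{ab}\mapsto\hat\differential(\hat k_{ab})$ really assembles into a \emph{tame} isomorphism and not merely a filtered automorphism of the underlying algebra; this is precisely what the ``strictly smaller length'' observation above, together with the finiteness of $K_\Right$ (so that only finitely many elementary factors occur and no convergence is needed), is there to supply — one orders the pairs by $b-a$ and reads off the factors directly. The remainder is bookkeeping: keeping the signs of \eqref{equation:differential for mapping cylinder} and the edge-gradings $(a,b)$ straight so that each factor is literally of the shape permitted in the definition of a tame isomorphism, and noting that $|\hat k_{ab}|=|k_{ab}|+1$ holds by construction of $\hat K_\Right$, which is exactly the degree shift demanded of a canceling pair.
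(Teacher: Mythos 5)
Your proof is correct and follows the same strategy as the paper's: the explicit tame isomorphism $\Psi$ you construct, characterized by $\Psi^{-1}(k_{ab})=\hat\differential(\hat k_{ab})$, is precisely the tame automorphism $\Phi$ that the paper asserts exists without spelling it out, and the conjugated differential $\Psi_*\hat\differential$ is the paper's $\hat\differential_\Phi$. You simply supply the detail the paper omits — the ordering by $b-a$ that exhibits $\Psi^{-1}$ as a finite composition of elementary isomorphisms, and the check that composing $\Psi$ with the forgetful map recovers $\hat\pi$.
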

\begin{proof}
Notice that in each differential $\hat\differential(\hat k_{ab})$ there is one and only one generator $k_{ab}$.
Therefore one may find a tame automorphism $\Phi$ on $\hat A$ that sends $\hat\differential(\hat k_{ab})$ to $k_{ab}$ so that the DGA $(\hat \alg,\hat\differential_\Phi)$ with the twisted differential $\hat\differential_\Phi\coloneqq \Phi\circ\hat\differential\circ\Phi^{-1}$ becomes a stabilization of $A$.
\end{proof}

\begin{remark}
The cofibrant replacement $\hat\dga$ is \emph{not} a strong stabilization.
Indeed, the canonical inclusion $\bfi$ may involve the DGA homotopy.
However it still satisfies the good property, for example, induces an $A_\infty$-quasi-equivalence between augmentation categories. We will see this in Section~\ref{section:augmentation category}.
\end{remark}

\subsubsection{Bordered Chekanov-Eliashberg DGAs}\label{section:CE DGA}
Let $(\cT,\bfmu)\in\BLT^\mu$ be a front projection of a bordered Legendrian graph of type $(n_\Left,n_\Right)$ with a Maslov potential.
We consider the Ng's resolution $\hat\cT_\lag\coloneqq\Res(\hat\cT)$ of the closure of $\cT$, which has two distinguished vertices $0$ and $\infty$ of valency $n_\Left$ and $n_\Right$, respectively. See Figure~\ref{figure:resolution of the closure}.

\begin{figure}[ht]
\[
\Res(\hat \cT)=
\begin{tikzpicture}[baseline=-0.5ex,scale=0.8]
\begin{scope}
\foreach \t in {-2,...,2} {
\draw[thick,rounded corners] (-1,0) -- (-0.5,\t*0.4) -- (0,\t*0.4);
}
\draw (-0.5,0.8) node[left] {$1$};
\draw (-0.5,-0.8) node[left] {$n_\Left$};
\draw[fill](-1,0) circle (2pt) node[left] {$0$};
\end{scope}
\begin{scope}[xshift=3cm]
\draw[thick,rounded corners] (-1,-0.8)--(-0.8,-0.8) -- (0.8, 0.8) --(1.3,0);
\draw[white,line width=5,rounded corners] (-1,-0.4)--(-0.8,-0.4) -- (-.4,-0.8) -- (0.8, 0.4);
\draw[thick,rounded corners] (-1,-0.4)--(-0.8,-0.4) -- (-.4,-0.8) -- (0.8, 0.4)--(1.3,0);
\draw[white,line width=5,rounded corners] (-1,0)--(-0.8,0) -- (0, -0.8) -- (0.8, 0);
\draw[thick,rounded corners] (-1,0)--(-0.8,0) -- (0, -0.8) -- (0.8, 0)--(1.3,0);
\draw[white,line width=5,rounded corners] (-1,0.4)--(-0.8,0.4) -- (.4, -0.8) -- (0.8, -0.4);
\draw[thick,rounded corners] (-1,0.4)--(-0.8,0.4) -- (.4, -0.8) -- (0.8, -0.4) --(1.3,0);
\draw[white,line width=5,rounded corners] (-1,0.8)--(-0.8,0.8) -- (0.8, -0.8);
\draw[thick,rounded corners] (-1,0.8)--(-0.8,0.8) -- (0.8, -0.8)--(1.3,0);
\draw[fill] (1.3,0) circle (2pt) node[right] {$\infty$};
\draw[dashed,red,thick,rounded corners] (-0.9,-1) rectangle (0.8,1);
\draw (0.8,0.8) node[right] {$n_\Right$};
\draw (0.8,-0.8) node[right] {$1$};
\end{scope}
\draw (0,-1) rectangle (2,1);
\draw (1,0) node {$\Res(T)$};
\end{tikzpicture}
\]
\caption{The Ng's resolution of the closure of $\cT$}
\label{figure:resolution of the closure}
\end{figure}
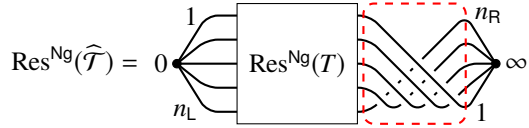

Then it is known that the Chekanov-Eliashberg DGA $A^\CE(\hat\cT_\lag,{\hat\bfmu})$ has the following generating sets:
\begin{enumerate}
\item Crossings $C(\hat\cT_\lag)$;
\item Vertex generators $\tilde V(\hat\cT_\lag)=\{v_{a,i}\mid v\in V(\hat\cT_\lag), a\in\Zmod{\val(v)},i\ge1\}$;
\item Basepoint generators $\tilde B(\hat \cT_\lag)=\{b_{a,\ell}\mid b\in B(\hat \cT_\lag), a=1,2, i\ge 1\}$.
\end{enumerate}
Especially, $C(\hat\cT_\lag)$ has the subset $\hat K_\Right$ of $\binom{n_\Right}2$ crossings coming from the right border by $\hat k_{ab}$ 
\[
\hat K_\Right\coloneqq\left\{\hat k_{ab}~\middle|~ 1\le a<b\le n_\Right\right\},
\]
which are in the dashed box in Figure~\ref{figure:resolution of the closure}.
Similarly, two distinguished vertices $0$ and $\infty$ have the subsets of vertex generators
\[
K_\Left\coloneqq\{k_{a'b'}=0_{a',b'-a'}\mid 1\le a'<b'\le n_\Left\}\quad\text{ and }\quad
K_\Right\coloneqq\{k_{ab}=\infty_{a,b-a}\mid 1\le a<b\le n_\Right\}.
\]

Then it is known that the differential for $\hat k_{ab}$ is given as
\[
\hat\differential(\hat k_{ab})=(-1)^{|\hat k_{ab}|-1}k_{ab}+g_{ab}+\sum_{a<c<b}(-1)^{|\hat k_{ab}|-1}\hat k_{ac} k_{cb}+g_{ac}\hat k_{cb},
\]
which is essentially the same as the equation \eqref{equation:differential for mapping cylinder} by replacing $k_{ab}$ and $g_{ab}$ with $(-1)^{|\hat k_{ab}|-1}\hat k_{ab}$ and $(-1)^{|\hat k_{ab}|}g_{ab}$, respectively.
In particular, the assignment $k_{ab}\mapsto (-1)^{|\hat k_{ab}|}g_{ab}$ defines a DGA morphism.

We define a bordered DGA 
\[
\hat A^\CE(\cT,\bfmu)=\left(
A^\CE(T_\Left,{\mu_\Left})\stackrel{\hat\phi_\Left}\longrightarrow \hat A^\CE(T,\mu)\stackrel{\hat\phi_\Right}\longleftarrow A^\CE(T_\Right,{\mu_\Right})
\right)
\]
associated to $(\cT,\bfmu)$ as follows: two border DGAs are as before and the DGA $\hat A^\CE(T,\mu)=(\hat \alg,\hat\differential)$ is the DG-subalgebra of $A^\CE(\hat\cT_\lag,{\hat\mu})$ generated by all crossings in $\hat\cT_\lag$, vertex generators for all but $0$ and $\infty$ and two sets $K_\Left$ and $K_\Right$.

The \emph{bordered Chekanov-Eliashberg DGA} (CE DGA) or \emph{Legendrian contact homology DGA} (LCH DGA) for $\cT^\mu$ is given as
\[
A^\CE(\cT,\bfmu)\coloneqq\left(A^\CE(T_\Left,{\mu_\Left})\stackrel{\phi_\Left}\longrightarrow A^\CE(T,\mu)\stackrel{\phi_\Right}\longleftarrow A^\CE(T_\Right,{\mu_\Right})\right),
\]
where 
\begin{enumerate}
\item two DGAs $A^\CE(T_\Left,{\mu_\Left})$ and $A^\CE(T_\Right,{\mu_\Right})$  are isomorphic to border DGAs $A_{n_\Left}(\mu_\Left)$ and $A_{n_\Right}(\mu_\Right)$, respectively, whose generating sets $K_\Left$ and $K_\Right$ are identified with
\begin{align*}
K_\Left&\coloneqq\{k_{a'b'}=0_{a',b'-a'}\mid 1\le a'<b'\le n_\Left\},&
K_\Right&\coloneqq\{k_{ab}=\infty_{a,b-a}\mid 1\le a<b\le n_\Right\},
\end{align*}
\item the DGA $A^\CE(T,\mu)$ is the DG-subalgebra generated by crossings, vertex and basepoint generators contained only in $\Res(T)$ together with $K_\Left$, and
\item the DGA morphism $\phi_\Left$ is an obvious inclusion while $\phi_\Right$ is defined as $k_{ab}\mapsto (-1)^{|\hat k_{ab}|}g_{ab}$ as above.
\end{enumerate}

\begin{theorem}\cite[Theorem~3.2.11 and Proposition~3.2.13]{ABS2019count}\label{theorem:zig-zags of stabilizations}
Let $(\cT,\bfmu)\in\BLT^\mu$. The bordered DGA $A^\CE(\cT,\bfmu)$ is well-defined and invariant under Legendrian isotopy up to stabilizations.

Indeed, for each front Reidemeister move $\RM{M}:(\cT',{\bfmu'})\to(\cT,\bfmu)$, there exists a zig-zag
\[
\begin{tikzcd}[column sep=2pc]
A^\CE(\cT',{\bfmu'})\arrow[from=r,"\hat\bfpi'"',->>]& \hat A^\CE(\cT,\bfmu)=\dga_0
\arrow[from=r, "\bfpi_1",->>,yshift=-0.7ex]
\arrow[r,"\bfi_1",yshift=0.7ex,hook]
& \cdots 
\arrow[r, "\bfpi'_{n-1}"',->>,yshift=-0.7ex]
\arrow[from=r,"\bfi_{n-1}'"',yshift=0.7ex,hook']& \dga_n=\hat A^\CE(\cT,{\bfmu}) \arrow[r,"\hat\bfpi'",->>]& A^\CE(\cT',{\bfmu'})
\end{tikzcd}
\]
of stabilizations of bordered DGAs between $A^\CE(\cT',{\bfmu'})$ and $A^\CE(\cT\bfmu)$,
where the bordered DGAs $\hat A^\CE(\cT',{\bfmu'})$ and $\hat A^\CE(\cT,\bfmu)$ are well-defined and the cofibrant replacements of $A^\CE(\cT',{\bfmu'})$ and $A^\CE(\cT,\bfmu)$, respectively, and all but the left- and rightmost are strong stabilizations.
\end{theorem}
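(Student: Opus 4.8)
The plan is to reduce the bordered statement to the already-established invariance of the ordinary Chekanov--Eliashberg DGA of the \emph{closed} Legendrian graph $\hat\cT$, and then to track how the two distinguished vertices $0,\infty$ of $\hat\cT$, the sub-DGA $\hat A^\CE(T,\mu)\subset A^\CE(\hat\cT_\lag,\hat\mu)$, and the border morphisms behave throughout that argument. The two functors $\hat\cdot\colon\BLT^\mu\to\LT^\mu$ and $\Res\colon\LT^\mu\to\LT_\lag^\mu$ of Lemmas~\ref{lemma:potential extends to the closure} and~\ref{lemma:functoriality of Ng's resolution} are what let us pass back and forth between bordered front diagrams and the Lagrangian projection on which the DGA is combinatorially defined.

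First I would settle well-definedness. Since $\hat\cT_\lag=\Res(\hat\cT)$ together with $\hat\mu$ depends functorially on $(\cT,\bfmu)$, the DGA $A^\CE(\hat\cT_\lag,\hat\mu)$ is defined; I would then check combinatorially that the generating sets listed in Section~\ref{section:CE DGA} are closed under the differential, the point being that no rigid disk with a positive puncture inside the box $\Res(T)$ can acquire a negative puncture in the $\hat K_\Right$-braid region or at a non-consecutive corner of $0$ or $\infty$, so that the differential of a generator of $\Res(T)$ or of $K_\Left$ is again a word of the same type. This yields the sub-DGAs $\hat A^\CE(T,\mu)$ and $A^\CE(T,\mu)$. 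Inspection of~\eqref{equation:differential for vertices} identifies the border DGAs with the standard $A_{n_\Left}(\mu_\Left)$ and $A_{n_\Right}(\mu_\Right)$; $\phi_\Left$ is an inclusion of generators, hence injective; and the displayed formula for $\hat\differential(\hat k_{ab})$ shows at once that $k_{ab}\mapsto(-1)^{|\hat k_{ab}|}g_{ab}$ is a DGA morphism (so that $\phi_\Right$ is well defined) and that $\hat\differential(\hat k_{ab})$ is, after the indicated sign change, exactly~\eqref{equation:differential for mapping cylinder}, i.e.\ $\hat A^\CE(\cT,\bfmu)$ is the cofibrant replacement of $A^\CE(\cT,\bfmu)$ of Definition~\ref{definition:cofibrant replacement}. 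Lemma~\ref{lemma:mapping cylinders are stabilizations} then supplies the weak stabilization $\hat\bfpi\colon\hat A^\CE(\cT,\bfmu)\twoheadrightarrow A^\CE(\cT,\bfmu)$ (and similarly $\hat\bfpi'$ for $\cT'$), which will be the two outer maps of the zig-zag.

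For invariance, given a front Reidemeister move $\RM{M}\colon(\cT',\bfmu')\to(\cT,\bfmu)$, I would apply $\hat\cdot$ and $\Res$ to produce a sequence $\Res\RM{M}$ of Lagrangian Reidemeister moves relating $\hat\cT'_\lag$ and $\hat\cT_\lag$ (a concrete choice being fixed in \cite[Figure~6]{ABS2019count}). By the graph version of Chekanov's invariance theorem~\cite{AB2018}, each such move of $\cRM_\lag$ induces on the ambient DGA a tame isomorphism (the triple-point moves $\RM{iii_a},\RM{iii_b}$), an ordinary (de)stabilization (the cusp- and crossing-pair moves $\RM{0_a},\RM{0_b},\RM{0_c},\RM{ii}$), or a generalized (de)stabilization at a vertex (the Lagrangian move $\RM{iv}$, coming from the front vertex moves $\RM{V},\RM{VI}$), the last of which, by Proposition~\ref{proposition:generalized stabilizations}, is a composite of ordinary (de)stabilizations. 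Since $\RM{M}$ is supported in the interior of $T$, away from $0$ and $\infty$, each of these maps fixes the vertex generators of $0,\infty$ and the crossings of $\hat K_\Right$; hence it restricts to the relevant sub-DGA $\hat A^\CE(T,\cdot)$, carries any newly introduced canceling pairs into it, and is automatically compatible with $\hat\phi_\Left$ and $\hat\phi_\Right$. The auxiliary generators of Proposition~\ref{proposition:generalized stabilizations} are likewise disjoint from the border generators, so every intermediate object is genuinely a bordered DGA. This produces a chain $\dga_0=\hat A^\CE(\cT',\bfmu')\rightleftarrows\cdots\rightleftarrows\dga_n=\hat A^\CE(\cT,\bfmu)$ of strong stabilizations of bordered DGAs, and capping it off with $\hat\bfpi'$ and $\hat\bfpi$ gives the claimed zig-zag, weak only at the two ends.

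The hardest part will be the bordered bookkeeping at the endpoints and at the triple-point moves: one must verify that the tame automorphism $\Phi$ used to straighten a stabilization in the proof of Lemma~\ref{lemma:mapping cylinders are stabilizations}, as well as every tame isomorphism induced by $\RM{iii_a},\RM{iii_b}$, never mixes a generator of $\Res(T)$ with a corner of $0$ or $\infty$, so that restriction to $\hat A^\CE(T,\cdot)$ is legitimate, and that the twisted morphism $\phi_\Right=(-1)^{|\hat k_{ab}|}g_{ab}$ is carried to the corresponding morphism for $\cT$. As the disks defining $g_{ab}$ lie near $\infty$ and are unaffected by an interior move, this last point amounts to a careful matching of signs and gradings rather than to any new geometry, but it is where the argument is most delicate. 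Everything else is a routine translation of \cite{AB2018} and \cite[Appendix~A]{ABS2019count} into the bordered language of Section~\ref{section:category of bordered DGAs}.
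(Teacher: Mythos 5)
Your proposal follows essentially the same route the paper itself outlines after the theorem statement (and that the cited reference \cite{ABS2019count} carries out): pass to the closure $\hat\cT$ and its Ng resolution, argue well-definedness of the sub-DGA from the fact that $0$ and $\infty$ face the unbounded region so that no rigid disk can pick up a negative corner there, identify $\hat A^\CE$ with the cofibrant replacement and use Lemma~\ref{lemma:mapping cylinders are stabilizations} for the two weak outer maps, and then run Chekanov's invariance for Legendrian graphs on the closed diagram while tracking that each induced map fixes the border data. One minor inaccuracy: the moves $\RM{0_a},\RM{0_b},\RM{0_c}$ induce isomorphisms rather than ordinary (de)stabilizations (only $\RM{ii}$ does the latter, and $\RM{iv}$ gives the generalized one), but since isomorphisms are degenerate zig-zags this does not affect the argument.
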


We explain briefly the reason why the DG-subalgebra $A^\CE(\Res(\hat\cT,{\hat\bfmu}))$ is well-defined.
Since two vertex $0$ and $\infty$ face the unbounded region in $\RR^2$ especially between two half-edges $h_{0,n_\Left}$ and $h_{0,1}$, and $h_{\infty,n_\Right}$ and $h_{\infty,1}$, there are no crossing generators involving vertex generators at $0$ or $\infty$ which pass these unbounded regions in their differentials.

In other words, the bordered DGA $\hat A^\CE(\cT,\bfmu)$ is the same as the LCH DGA of the Legendrian graph $(\hat\cT,{\hat\bfmu})$ together with two DG-subalgebras $A^\CE(T_\Left,{\mu_\Left})$ and $A^\CE(T_\Right,{\mu_\Right})$ of the distinguished internal DG-subalgebras $I_0$ and $I_\infty$. 
This data is obviously invariant under any front Reidemeister moves fixing two vertices $0$ and $\infty$.

\begin{remark}
The exactly same statement holds for Lagrangian projections. That is,
for each $\RM{m}:(\cT'_\lag,\bfmu')\to(\cT_\lag,\bfmu)$, there is a zig-zag of stabilizations between two bordered LCH DGAs $A^\CE(\cT'_\lag,\bfmu')$ and $A^\CE(\cT_\lag,\bfmu)$.
\end{remark}

More precisely, $\RM{\hat{III}}_*$ is an isomorphism and $\RM{\hat{M}}_*$ is a canonical projection of a stabilization for $\RM{M}\in\{\RM{I}, \RM{II}, \RM{VI}\}$.
However, we have a zig-zag of stabilization for $\RM{VI}$
\[
\begin{tikzcd}[column sep=3pc]
& & \tilde SA\arrow[dl, "\tilde\bfpi'"',->>, yshift=0.7ex] \arrow[from=dl, hook', near end,"\tilde\bfi'"', yshift=-0.7ex]
\arrow[dr, "\tilde\bfpi",->>, yshift=0.7ex] \arrow[from=dr, hook, near end, "\tilde\bfi", yshift=-0.7ex]\\
A^\CE(\cT', \bfmu')\arrow[from=r, "\hat\bfpi'"',->>] & 
\hat\dga' \arrow[rr, "\RM{\hat{VI}}"',->>] & &
\hat\dga \arrow[r, "\hat\bfpi",->>] & 
A^\CE_\co(\cT, \bfmu).
\end{tikzcd}
\]

Similarly, for Lagrangian Reidemeister moves $\RM{m}$, we have an isomorphism $\RM{\hat m}_*$ for $\RM{m}\in\{\RM{0_*}, \RM{iii_*}\}$, a canonical projection of a stabilization for $\RM{ii}$ and a zig-zag of stabilizations for $\RM{iv}$.

\begin{example}[DGAs for bordered Legendrian graphs in a normal form]\label{example:DGAs for graphs in a normal form}
Recall a bordered Legendrian graph $(\cT,\bfmu)\in\BLT^\mu$ in a normal form in Definition~\ref{definition:normal form}.
Let $(\cT_\lag,\bfmu)$ be the Ng's resolution of $(\cT,\bfmu)$.
Then the set of vertex generators at each $v$ can be decomposed into two subsets as follows:
\begin{align*}
\tilde V_{v,\Left}&\coloneqq\left\{v_{a,j}\in \tilde V_v~\middle|~a+j<\val(v)\right\},&
\tilde V_{v,\Lcirclearrowright}&\coloneqq \left\{v_{a,j}\in \tilde V_v~\middle|~a+j\ge\val(v)\right\}
\end{align*}
and we denote their unions by $\tilde V_\Left$ and $\tilde V_{\Lcirclearrowright}$
\begin{align*}
\tilde V_{\Left}&\coloneqq\coprod_{v\in V} \tilde V_{v,\Left},&
\tilde V_{\Lcirclearrowright}&\coloneqq\coprod_{v\in V} \tilde V_{v,\Lcirclearrowright}.
\end{align*}

Notice that the elements in $\tilde V_{v,\Left}$ correspond to the vertex generators which are lying on the left hand side of the vertex $v$:
\begin{align*}
v_{1,2}&=\begin{tikzpicture}[baseline=-.5ex]
\draw[thick,fill] (-1,0) node[left] {$e_2$} -- (0,0) node[above] {$v$} circle (2pt);
\draw[thick] (-1,0.5) node[left] {$e_3$} -- (0,0);
\draw[thick] (-1,-0.5) node[left] {$e_1$} -- (0,0);
\draw[green,-latex',thick] (180+26.56:0.7) arc (180+26.56:180-26.56:0.7);
\end{tikzpicture}\in \tilde V_{v,\Left}&
v_{2,2}&=\begin{tikzpicture}[baseline=-.5ex]
\draw[thick,fill] (-1,0) node[left] {$e_2$} -- (0,0) node[above] {$v$} circle (2pt);
\draw[thick] (-1,0.5) node[left] {$e_3$} -- (0,0);
\draw[thick] (-1,-0.5) node[left] {$e_1$} -- (0,0);
\draw[green,-latex',thick] (180:0.7) arc (180:-180+26.56:0.7);
\end{tikzpicture}\in \tilde V_{v,\Lcirclearrowright}
\end{align*}

Since all vertices are in the upper right position, there are no immersed polygons for differentials involving generators in $\tilde V_{\Lcirclearrowright}$ except for infinitesimal ones.
That is, due to this geometric reason, no generators in $\tilde V_{\Lcirclearrowright}$ appears in differentials for any crossing generator and in the image of the right border generators $k_{ab}$'s under $\phi_\Right$.
\end{example}

Before closing this section, we remark the following.
The LCH DGA $A^\CE(\cT,\bfmu)$ has infinitely many generators $b_{a,\ell}$ for each basepoint $b\in B$ while it usually contributes two generator inverses to each other such as $t_b$ and $t_b^{-1}$.
However, as observed in \cite[Proposition~14]{EL2019}, the internal DGA for each basepoint $I_b$ is quasi-isomorphic to the Laurent polynomial ring $\field[t_b,t_b^{-1}]$.
Therefore one can regard $I_b$ as the free resolution of $\field[t_b, t_b^{-1}]$.

Indeed, $\field[t_b, t_b^{-1}]$ can be obtained by taking a quotient by all $b_{a,\ell}$'s with $\ell>1$, whose degrees are not zero.
Due to this observation, we can get one another consequence such that any DGA morphism from $A^(\CE,\cT,\bfmu)$ to a field $K=(\field,\differential=0)$ factors through the quotient DGA.
Therefore there is an isomorphism between sets\footnote{Indeed, we have an isomorphism between \emph{augmentation varieties}. See \cite{ABS2019count}.} of DGA morphisms.

Now let us consider the effects of operations on basepoints to LCH DGAs.
As seen earlier, there are three more moves on basepoints.
As mentioned earlier, the operation $\RM{b_1}:(\cT'_\lag,\bfmu')\to(\cT_\lag,\bfmu)$ can be viewed as a sequence of front Reidemeister moves. Hence we have a zig-zag of stabilizations which is not easy to describe directly.
\[
\begin{tikzcd}[ampersand replacement=\&]
b_{1,1}c=\begin{tikzpicture}[baseline=-.5ex,scale=0.7]
\useasboundingbox(-1,-0.5)--(1,0.5);
\draw[dashed](0,0) circle (1);
\clip(0,0) circle (1);
\draw[thick] (-1,-0.5) -- (1,0.5);
\draw[white,line width=5] (-1,0.5)--(1,-0.5);
\draw[thick] (-1,0.5)--node[midway,sloped,below=-2ex]{$|$} node[midway,above] {$b$} (0,0) node[below] {$c$} -- (1,-0.5);
\fill[gray, opacity=0.6] (-1,0.5)--(0,0)--(1,0.5)--(1,1) --(-1,1)--cycle;
\end{tikzpicture}\arrow[r,"\RM{b_1}","?"'] \&
c=\begin{tikzpicture}[baseline=-.5ex,scale=0.7]
\draw[dashed](0,0) circle (1);
\clip(0,0) circle (1);
\draw[thick] (-1,-0.5) -- (1,0.5);
\draw[white,line width=5] (-1,0.5)--(1,-0.5);
\draw[thick] (-1,0.5)-- (0,0) node[below] {$c$} --node[midway,sloped,below=-2ex]{$|$} node[midway,below] {$b$} (1,-0.5);
\fill[gray, opacity=0.6] (-1,0.5)--(0,0)--(1,0.5)--(1,1) --(-1,1)--cycle;
\end{tikzpicture}
\end{tikzcd}
\]

On the other hand, two operations $\RM{b_2}$ and $\RM{b_3}$ can be regarded as \emph{tangle replacements} so that they induce DGA morphisms $\RM{b_2}_*, \RM{b_3}_*:A^\CE(\cT',\bfmu')\to A^\CE(\cT,\bfmu)$.
Indeed, for each generator $b_{a,\ell}$ or $v_{a,\ell}$ will be mapped to elements obtained by counting certain polygons in the support of $\RM{b_2}$ or $\RM{b_3}$ in $\cT$.
For example, $b_{1,1}$ or $v_{4,2}$ will be mapped to $b'_{1,1}b''_{1,1}$ and $b_{2,1} v_{4,2}$. See \cite[\S6.5]{AB2018} for detail.
\[
\begin{tikzcd}[ampersand replacement=\&]
b_{1,1}=\begin{tikzpicture}[baseline=-.5ex,scale=0.7]
\useasboundingbox(-1,-0.5)--(1,0.5);
\draw[dashed](0,0) circle (1);
\clip(0,0) circle (1);
\draw[thick] (-1,0)-- (0,0) node[below=-2ex]{$|$} node[below] {$b$} -- (1,0);
\fill[gray,opacity=0.6] (-1,0) -- (1,0) -- (1,1) -- (-1,1) --cycle;
\end{tikzpicture}\arrow[r,"\RM{b_2}"] \&
b'_{1,1}b''_{1,1}=\begin{tikzpicture}[baseline=-.5ex,scale=0.7]
\draw[dashed](0,0) circle (1);
\clip(0,0) circle (1);
\draw[thick] (-1,0)-- node[near start, below=-2ex]{$|$}node[near start, below]{$b'$} node[near end, below=-2ex]{$|$} node[near end,below]{$b''$} (1,0);
\fill[gray,opacity=0.6] (-1,0) -- (1,0) -- (1,1) -- (-1,1) --cycle;
\end{tikzpicture}\&
v_{4,2}=\begin{tikzpicture}[baseline=-.5ex,scale=0.7]
\useasboundingbox(-1,-0.5)--(1,0.5);
\draw[dashed](0,0) circle (1);
\clip(0,0) circle (1);
\draw[thick] (-1,-0.5) -- (1,0.5);
\draw[thick] (-1,0)-- (1,0);
\draw[thick] (-1,0.5) -- (1,-0.5);
\draw[fill] (0,0) circle (2pt) node[below] {$v$};
\fill[gray, opacity=0.6] (1,0.5) -- (0,0) -- (1,-0.5) -- (1,1.5) --cycle;
\end{tikzpicture}\arrow[r,"\RM{b_3}"] \&
b_{2,1}v_{4,2}=\begin{tikzpicture}[baseline=-.5ex,scale=0.7]
\draw[dashed](0,0) circle (1);
\clip(0,0) circle (1);
\draw[thick] (-1,-0.5) -- (0,0) -- node[midway,sloped, below=-2ex] {$|$} node[midway,above left] {$b$} (1,0.5);
\draw[thick] (-1,0)-- (1,0);
\draw[thick] (-1,0.5) -- (1,-0.5);
\draw[fill] (0,0) circle (2pt) node[below] {$v$};
\fill[gray, opacity=0.6] (1,0.5) -- (0,0) -- (1,-0.5) -- (1,1.5) --cycle;
\end{tikzpicture}
\end{tikzcd}
\]

However, when we consider the quotient DGA by all generators $b_{a,\ell}$'s with $\ell>1$, then the move $\RM{b_1}$ induces a DGA isomorphism $\RM{b_1}_*$ which sends $c$ to $t_b^{-1}c$. The induced map $\RM{b_2}_*$ sends $t_b$ to $t_{b'}t_{b''}$ and $\RM{b_3}_*$ sends $v_{a,\ell}$ to either $v_{a,\ell}$, $v_{a,\ell}t_b$ or $t_b^{-1}v_{a,\ell}$ according to where the basepoint $b$ is lying on.

Conversely, one can find a DGA morphism $\RM{b_i^{-1}}_*:A^\CE(\cT_\lag,\bfmu)\to A^\CE(\cT'_\lag,\bfmu')$ for $i=2,3$ which are left inverses of $\RM{b_i}_*$ and defined as
\begin{align*}
\RM{b_2^{-1}}_*(b''_{a,\ell})&\coloneqq\begin{cases}
1 & \ell=1;\\
0 & \ell>1,
\end{cases}&
\RM{b_3^{-1}}_*(b_{a,\ell})&\coloneqq\begin{cases}
1 & \ell=1;\\
0 & \ell>1.
\end{cases}
\end{align*}

In summary, we have the following:
\begin{lemma}\label{lemma:basepoint moves on DGAs}
Let $\RM{b_i}:(\cT'_\lag,\bfmu')\to(\cT_\lag,\bfmu)$ be a basepoint move. Then either
\begin{enumerate}
\item for $i=1$, there exists a zig-zag of stabilizations between $A^\CE(\cT'_\lag,\bfmu')$ and $A^\CE(\cT_\lag,\bfmu)$, or
\item for $i=2$ or $3$, there exists a pair of DGA morphisms
\[
\begin{tikzcd}
A^\CE(\cT'_\lag,\bfmu')\arrow[r, "\RM{b_i}_*", yshift=.7ex]\arrow[from=r, "\RM{b_i^{-1}}_*", yshift=-.7ex] & A^\CE(\cT_\lag,\bfmu),
\end{tikzcd}
\]
where $\RM{b_i^{-1}}_*$ is a left inverse of the DGA morphism $\RM{b_i}_*$ induced from the tangle replacement.
\end{enumerate}
\end{lemma}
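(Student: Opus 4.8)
The plan is to treat the three moves separately, extracting most of the argument from the discussion preceding the statement.

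\emph{The case $i=1$.} By the Remark following Figure~\ref{figure:basepoints}, pushing a basepoint through a crossing is realized by a finite sequence of Lagrangian Reidemeister moves from $\cRM_\lag$ (equivalently, by the front move $\RM{B_1}$ followed by Ng's resolution). By the Lagrangian analogue of Theorem~\ref{theorem:zig-zags of stabilizations} recorded in the Remark after it, each such move produces a zig-zag of stabilizations of bordered DGAs between the associated bordered CE DGAs; concatenating these along the chosen sequence gives the required zig-zag between $A^\CE(\cT'_\lag,\bfmu')$ and $A^\CE(\cT_\lag,\bfmu)$, with Maslov potentials transported uniquely along the way by Lemma~\ref{lemma:Reidemeister move preserves potential}. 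As remarked in the text, this zig-zag is not easy to describe explicitly, but only its existence is asserted.

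\emph{The cases $i=2,3$.} Each of $\RM{b_2}$ and $\RM{b_3}$ alters the Lagrangian projection only inside a small disk, so it is a \emph{tangle replacement} in the sense of \cite[\S6.5]{AB2018} and hence induces a DGA morphism $\RM{b_i}_*:A^\CE(\cT'_\lag,\bfmu')\to A^\CE(\cT_\lag,\bfmu)$ obtained by counting the immersed polygons supported in that disk: concretely it fixes every crossing and vertex generator, identifies the undisturbed basepoint generators with the old ones, and on the affected generators is given by the formulas recorded before the statement (e.g.\ $b_{1,1}\mapsto b'_{1,1}b''_{1,1}$ for $\RM{b_2}$ and $v_{4,2}\mapsto b_{2,1}v_{4,2}$ for $\RM{b_3}$). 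I would then define the candidate left inverse $\RM{b_i^{-1}}_*:A^\CE(\cT_\lag,\bfmu)\to A^\CE(\cT'_\lag,\bfmu')$ to be the identity on all generators coming from $\cT'_\lag$ and, on the new basepoint generators $b''_{a,\ell}$ (for $i=2$), resp.\ the slid basepoint generators $b_{a,\ell}$ (for $i=3$), by the formulas in the statement.

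\emph{Verification.} It remains to check that $\RM{b_i^{-1}}_*$ is a DGA morphism and that $\RM{b_i^{-1}}_*\circ\RM{b_i}_*=\id$. On the sub-DGA generated by the new generators --- a copy of the internal DGA $I_2$ of a basepoint --- the map $\RM{b_i^{-1}}_*$ coincides with the composite $I_2\to\field[t_b,t_b^{-1}]\to\field$, $t_b\mapsto\unit$, each factor being a DGA morphism (the first is the one exhibited in Section~\ref{section:category of bordered DGAs}), so $\RM{b_i^{-1}}_*$ is a chain map there; on the remaining generators it is the identity, and since $\RM{b_i}_*$ fixes those while being a chain map, compatibility of $\RM{b_i^{-1}}_*$ with the differential on all of $A^\CE(\cT_\lag,\bfmu)$ follows once $\RM{b_i^{-1}}_*\circ\RM{b_i}_*=\id$ is established. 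That last identity is a generatorwise computation with the explicit $\RM{b_i}_*$; on the illustrative generators it reads $b_{1,1}\mapsto b'_{1,1}b''_{1,1}\mapsto b_{1,1}\unit$ and $v_{4,2}\mapsto b_{2,1}v_{4,2}\mapsto\unit v_{4,2}$, and the general case is the same bookkeeping. The only genuine work is this matching of the polygon-count formulas of \cite[\S6.5]{AB2018} against the definition of $\RM{b_i^{-1}}_*$; conceptually nothing beyond the fact, already used in the text, that the internal DGA of a basepoint is a free resolution of $\field[t_b,t_b^{-1}]$, is needed.
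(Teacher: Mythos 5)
Your proof is correct and follows the same line of reasoning as the paper, which presents this lemma as a summary of the discussion that immediately precedes it; both approaches treat $\RM{b_1}$ via the factorization into Lagrangian Reidemeister moves and the Lagrangian analogue of Theorem~\ref{theorem:zig-zags of stabilizations}, and treat $\RM{b_2},\RM{b_3}$ via the tangle-replacement morphism of \cite{AB2018} together with the explicit left inverse killing the $I_2$ of the new basepoint. Your verification paragraph makes explicit a chain-map check that the paper leaves implicit, which is a reasonable addition.

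One small inaccuracy worth flagging: in the verification you justify compatibility with the differential on the ``remaining'' generators by asserting that $\RM{b_i}_*$ fixes those generators, but for $\RM{b_3}$ it does not --- as you yourself record, it sends affected vertex generators such as $v_{4,2}$ to $b_{2,1}v_{4,2}$. The argument is nonetheless salvageable with the same ingredients, because $\differential b_{a,1}=0$ in $I_2$ and $\RM{b_i^{-1}}_*(b_{a,1})=\unit$: from $\differential\bigl(\RM{b_3}_*(v)\bigr)=\RM{b_3}_*(\differential v)$ one gets $b_{2,1}\,\differential_A v=\RM{b_3}_*(\differential_{A'} v)$, and applying $\RM{b_3^{-1}}_*$ and using $\RM{b_3^{-1}}_*\circ\RM{b_3}_*=\id$ (as an algebra map) recovers $\RM{b_3^{-1}}_*(\differential_A v)=\differential_{A'} v$. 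So the hypothesis ``$\RM{b_i}_*$ fixes those'' can simply be dropped; only the chain-map property of $\RM{b_i}_*$ and the left-inverse identity are needed.
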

\begin{remark}\label{remark:basepoint move commutativity}
For $i=2$ or $3$, there is no issue on the commutativity of $\RM{b_i^{-1}}_*$with the structure morphisms $\phi_\Left$ and $\phi_\Right$.
\end{remark}

\section{Consistent sequences}\label{section:consistent sequences}
In this section, we first briefly review the definition of a consistent sequence of DGAs and the construction of the $A_\infty$-category associated to the consistent sequence of DGAs and we will basically follow the definition and construction described in \cite[\S3.3]{NRSSZ2015} but may use the different notations or conventions.

Let $\catC$ be a category. An \emph{$m$-component object} $C^{\p m}$ in $\catC$ is a functor from $\Op([m])$ to $\catC$. 
That is, for any $V\subset U\subset[m]$, there exist \emph{restrictions} $C^{\p m}|_U, C^{\p m}|_V$ and a \emph{restriction morphism} $C^{\p m}|_U\to C^{\p m}|_V$ in $\catC$.
A morphism $f^{\p m}:C'^{\p m}\to C^{\p m}$ is a natural transformation between $C'^{\p m}$ and $C^{\p m}$.
We denote the category of $m$-component objects in $\catC$ by $\catC^{\p m}$.

Let $\AugSim$ be the category of all finite ordinals
\begin{align*}
[1]&=\{1\},&
[2]&=\{1,2\},&
[3]&=\{1,2,3\},&\cdots,
\end{align*}
whose morphisms are order-preserving inclusions.

A sequence $C^{\p\bullet}$ of $m$-component objects in $\catC$ is \emph{semi-simplicial} if there is a morphism $C^h|_U:C^{\p n}|_{h(U)}\to C^{\p m}|_U$ in $\catC$ for each $(h:[m]\to[n])\in\AugSim$ and $U\subset[m]$, and is \emph{consistent} if it is semi-simplicial and each morphism $C^h|_U$ is an isomorphism.

For two consistent sequences $C'^{\p\bullet}$ and $C^{\p\bullet}$, a \emph{consistent morphism} $f^\p\bullet:C'^{\p\bullet}\to C^{\p\bullet}$ is a collection of morphisms between $m$-component objects satisfying the \emph{consistency}: for each $(h:[m]\to[n])\in\AugSim$ and $U\subset[m]$, the following diagram is commutative
\[
\begin{tikzcd}[column sep=4pc,row sep=2pc]
C'^{\p n}|_{h(U)}\arrow[d,"f^{\p n}|_{h(U)}"']\arrow[r,"C'^{h}|_U","\isomorphic"'] &C'^{\p m}|_U \arrow[d,"f^{\p m}|_U"]\\
C^{\p n}|_{h(U)}\arrow[r,"C^h|_U","\isomorphic"'] &C^{\p m}|_U.
\end{tikzcd}
\]

The composition of two consistent morphisms $f^{\p\bullet}:C'^{\p\bullet}\to C^{\p\bullet}$ and $g^{\p\bullet}:C''^{\p\bullet}\to C'^{\p\bullet}$ is given as
\[
f^{\p\bullet}\circ g^{\p\bullet}\coloneqq
\left(f^{\p m}\circ g^{\p m}\right)_{m\ge 1}:C''^{\p\bullet}\to C^{\p\bullet}.
\]

\begin{notation}
For each $h:[m]\to [n]$, $U\subset [m]$ and $i\in [m]$, we may denote $C^h|_U$, $C^{\p m}|_U$ and $C^{\p m}|_{\{i\}}$ by $C(h)^U$, $C^U$ and $C^i$, respectively.
We further denote $C^{\p1}$ and $f^{\p1}$ simply by $C$ and $f$.
\end{notation}

\begin{definition}[Category of consistent sequences]
The category of all consistent sequences and morphisms will be denoted by $\catC^\p\bullet$ and called a category of \emph{consistent sequences} in $\catC$.
\end{definition}

\begin{lemma}\label{lemma:isom}
Let $C^{\p\bullet}\in\catC^{\p\bullet}$ be a consistent sequence in $\catC$.
Then for each $m\ge1$ and $i\in[m]$, we have
\[
C^i= C^{\p m}_{i}\isomorphic C^{\p 1}_{1}=C^1. 
\]
\end{lemma}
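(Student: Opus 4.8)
The plan is to read the conclusion off directly from the consistency axiom, applied to the order-preserving injection of $[1]$ into $[m]$ that selects the element $i$. Fix $m\ge 1$ and $i\in[m]$, and let $h_i\colon[1]\to[m]$ be the order-preserving injection determined by $h_i(1)=i$; this is a morphism of $\AugSim$ for every choice of $i$, so the semi-simplicial structure of $C^{\p\bullet}$ applies to it.

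Next I would take $U=\{1\}\subset[1]$ and invoke that structure: it provides a morphism
\[
C^{h_i}|_U\colon C^{\p m}|_{h_i(U)}\longrightarrow C^{\p 1}|_U
\]
in $\catC$. Here the one point requiring care is the direction in the defining formula $C^h|_U\colon C^{\p n}|_{h(U)}\to C^{\p m}|_U$ for $h\colon[m]\to[n]$: the index set of the \emph{target} of $h$ labels the \emph{source} of $C^h|_U$, so one must feed in $h_i\colon[1]\to[m]$ rather than a map $[m]\to[1]$ in order to obtain a comparison between the $i$-th component at level $m$ and the single component at level $1$. Since $C^{\p\bullet}$ is \emph{consistent}, this morphism $C^{h_i}|_U$ is moreover an isomorphism.

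Finally, unwinding notation: $h_i(U)=\{i\}$, so the source of $C^{h_i}|_U$ is $C^{\p m}|_{\{i\}}=C^i=C^{\p m}_i$ and its target is $C^{\p 1}|_{\{1\}}=C^1$; hence $C^i=C^{\p m}_i\isomorphic C^1$, which is exactly the assertion. The statement is essentially immediate once this bookkeeping of source and target in the semi-simplicial structure is set up correctly, and I do not expect any genuine obstacle beyond that.
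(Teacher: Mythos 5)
Your argument is exactly the paper's: the paper dismisses the lemma with the one-liner ``consider $h_i\colon[1]\to[m]$ with $i=h_i(1)$,'' and you have spelled out precisely that instantiation of the consistency axiom, taking care of the source/target bookkeeping in $C^h|_U$. Correct and identical in method.
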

\begin{proof}
This is obvious from the definition by considering $h_i:[1]\to[m]$ with $i=h_i(1)$.
\end{proof}

\subsection{Consistent sequences of bordered Legendrian graphs}
We define categories of consistent sequences of bordered Legendrian graphs given in terms of both Lagrangian and front projections, which are related via Ng's resolution as usual.

For a bordered graph $\bfgraf=(\graf_\Left\to \graf\leftarrow\graf_\Right)$, an \emph{$m$-component bordered graph $\bfgraf^{\p m}$} is a bordered graph defined as
\[
\bfgraf^{\p m}\coloneqq (\graf_\Left^{\p m}\to \graf^{\p m}\leftarrow \graf_\Right^{\p m}),
\]
where the inclusions preserve the label.
Then for each $U\subset [m]$ and $i\in[m]$, let the restriction $\bfgraf^i\coloneqq (\graf^i_\Left\to\graf^i\leftarrow\graf^i_\Right)$ and $\bfgraf^U\coloneqq \coprod_{i\in U} \bfgraf^i$,
where $\graf^i_*=\{i\}\times\graf_*$ for $*=\Left,\Right$ or empty.

For each $m$-component bordered graph $\bfgraf^{\p m}$, its Legendrian embedding defines an $m$-component bordered Legendrian graph $\scrT^{\p m}=\left(\sfT^{\p m}_\Left\to \sfT^{\p m}\leftarrow \sfT^{\p m}_\Right\right)=\left(\bfgraf^{\p m}\to J^1 \bfU\right)$, whose restriction $\scrT^U$ on $U\subset[m]$ is obviously defined as $\scrT^{\p m}|_{\bfgraf^U}$.

We can naturally consider the front and Lagrangian projections of $m$-component bordered Legendrian graphs which will be denoted by 
\[
\cT^{\p m}=(T_\Left^{\p m}\to T^{\p m}\leftarrow T_\Right^{\p m})\coloneqq\pi_\front(\scrT^{\p m})\quad\text{ and }\quad
\cT^{\p m}_\lag=(T_{\lag,\Left}^{\p m}\to T_\lag^{\p m}\leftarrow T_{\lag,\Right}^{\p m})\coloneqq\pi_\lag(\scrT^{\p m})
\]
and their restrictions on $U\subset[m]$ are denoted by $\cT^U$ and $\cT^U_\lag$ as before.

One can equip a Maslov potential $\bfmu^{\p m}$ on $\cT^{\p m}$ or $\cT_\lag^{\p m}$ so that the restriction $\bfmu^U$ is the restriction of $\bfmu^{\p m}$ on $\cT^U$ or $\cT^U_\lag$.

\begin{notation}
We denote the sets of all regular front and Lagrangian projections of $m$-component bordered Legendrian graphs with Maslov potentials by $\BLT^{\mu,\p m}$ and $\BLT_\lag^{\mu,\p m}$, respectively.
\end{notation}

For a sequence $(\cT^{\p m},\bfmu^{\p m})_{m\ge 1}$ of $m$-component front projections $(\cT^{\p m},\bfmu^{\p m})\in\BLT^{\mu,\p m}$, the consistency is as follows: for each $(h:[m]\to[n])\in\AugSim$ and $U\subset[m]$, there is an isomorphism between two front projections in the sense of Definition~\ref{definition:isomorphisms}
\[
(\cT(h)^U,\bfmu(h)^U):(\cT^{h(U)},\bfmu^{h(U)})\stackrel{\isomorphic}\longrightarrow(\cT^U,\bfmu^U).
\]

\begin{definition}[Consistent moves]\label{definition:consistent Reidemeister moves}
A \emph{consistent front (or Lagrangian) Reidemeister move} or \emph{basepoint move} $\RM{M}^{\p \bullet}:(\cT'^{\p\bullet},\bfmu'^{\p\bullet})\to(\cT^{\p\bullet},\bfmu^{\p\bullet})$ between two consistent sequences of front (or Lagrangian) projections is a sequence of sequences of front (or Lagrangian) Reidemeister moves or basepoint moves satisfying the following conditions:
\begin{enumerate}
\item for each $m\ge 1$, $\RM{M}^{\p m}:(\cT'^{\p m},\bfmu'^{\p m})\to(\cT^{\p m},\bfmu^{\p m})$ is a sequence of front (or Lagrangian) Reidemeister moves or basepoint moves and in particular, $\RM{M}^{\p1}=\RM{M}\in\cRM$ is a (possibly empty) front (or Lagrangian) Reidemeister move or a basepoint move;
\item it is compatible with restrictions, i.e., for each $U\subset[m]$, we have a sequence of front (or Lagrangian) Reidemeister moves or basepoint moves
\[
\RM{M}^U:(\cT'^U,\bfmu'^U)\to (\cT^U, \bfmu^U);
\]
\item it satisfies the consistency, i.e., for each $(h:[m]\to[n])\in\AugSim$ and $U\subset[m]$, the following diagram is commutative:
\[
\begin{tikzcd}[column sep=6pc, row sep=2pc,ampersand replacement=\&]
(\cT'^{h(U)},\bfmu'^{h(U)}) \arrow[r,"{(\cT'(h)^U,\bfmu'(h)^U)}"] \arrow[d, "\RM{M}^{h(U)}"'] \& (\cT'^U,\bfmu'^U) \arrow[d,"\RM{M}^U"]\\
(\cT^{h(U)},\bfmu'^{h(U)})\arrow[r,"{(\cT(h)^U,\bfmu(h)^U)}"] \& (\cT^U,\bfmu^U).
\end{tikzcd}
\]
\end{enumerate}

We furthermore say that a consistent move $\RM{M}^{\p \bullet}$ is \emph{elementary} if it consists of Reidemeister moves or basepoint moves of only one type.

Two consistent sequences of front or Lagrangian projections are said to be (consistently) \emph{equivalent} if and only if they are connected by a zig-zag sequence of consistent front or Lagrangian Reidemeister moves.
\end{definition}

\begin{notation}
From now on, we will denote the elementary consistent Reidemeister move or basepoint move by the double arrow `$\Rightarrow$' such as
\[
\RM{M}^{\p\bullet}:\cT'\Longrightarrow \cT.
\]
\end{notation}

\begin{example}[Elementary consistent Reidemeister moves]\label{example:elementary iv}
One example for elementary consistent Reidemeister move is a consistent Lagrangian Reidemeister move as follows: for each $m\ge 1$,
\begin{equation}\label{equation:consistent sequence for iv_b}
\begin{tikzcd}
\begin{tikzpicture}[baseline=-.5ex,scale=0.7]
\draw[dashed] (0,0) circle (1);
\clip (0,0) circle (1);
\draw[thick,black] (-0.7,-1)--(-0.2,1);
\draw[thick,red] (-0.6,-1)--(-0.1,1);
\draw[thick,blue] (-0.5,-1)--(0,1);
\foreach[count=\index] \c in {blue, red, black} {
\begin{scope}[yshift=(\index-2)*0.5 cm, color=\c]
\draw[white,line width=5] (-1,0.15) - - (0,0) (-1,0) -- (0,0) (-1,-0.15) -- (0,0);
\draw[thick] (-1,0.15) - - (0,0) (-1,0) -- (0,0) (-1,-0.15) -- (0,0);
\draw[fill] (0,0) circle (2pt);
\end{scope}
}
\end{tikzpicture}\arrow[rrr, Rightarrow, "m^2\RM{iv}"] \arrow[d, equal]& & &
\begin{tikzpicture}[baseline=-.5ex,scale=0.7]
\draw[dashed] (0,0) circle (1);
\clip (0,0) circle (1);
\draw[thick,black] (0.3,-1)--(0.8,1);
\draw[thick,red] (0.4,-1)--(0.9,1);
\draw[thick,blue] (0.5,-1)--(1,1);
\foreach[count=\index] \c in {blue, red, black} {
\begin{scope}[yshift=(\index-2)*0.5 cm, color=\c]
\draw[white,line width=5] (-1,0.15) - - (0,0) (-1,0) -- (0,0) (-1,-0.15) -- (0,0);
\draw[thick] (-1,0.15) - - (0,0) (-1,0) -- (0,0) (-1,-0.15) -- (0,0);
\draw[fill] (0,0) circle (2pt);
\end{scope}
}
\end{tikzpicture}\arrow[d,equal]\\
\begin{tikzpicture}[baseline=-.5ex,scale=0.7]
\draw[dashed] (0,0) circle (1);
\clip (0,0) circle (1);
\draw[thick,black] (-0.7,-1)--(-0.2,1);
\draw[thick,red] (-0.6,-1)--(-0.1,1);
\draw[thick,blue] (-0.5,-1)--(0,1);
\foreach[count=\index] \c in {blue, red, black} {
\begin{scope}[yshift=(\index-2)*0.5 cm, color=\c]
\draw[white,line width=5] (-1,0.15) - - (0,0) (-1,0) -- (0,0) (-1,-0.15) -- (0,0);
\draw[thick] (-1,0.15) - - (0,0) (-1,0) -- (0,0) (-1,-0.15) -- (0,0);
\draw[fill] (0,0) circle (2pt);
\end{scope}
}
\end{tikzpicture}\arrow[r,"m\RM{iv}"]&
\begin{tikzpicture}[baseline=-.5ex,scale=0.7]
\draw[dashed] (0,0) circle (1);
\clip (0,0) circle (1);
\draw[thick,black] (-0.7,-1)--(-0.2,1);
\draw[thick,red] (-0.6,-1)--(-0.1,1);
\draw[thick,blue] (0.5,-1)--(1,1);
\foreach[count=\index] \c in {blue, red, black} {
\begin{scope}[yshift=(\index-2)*0.5 cm, color=\c]
\draw[white,line width=5] (-1,0.15) - - (0,0) (-1,0) -- (0,0) (-1,-0.15) -- (0,0);
\draw[thick] (-1,0.15) - - (0,0) (-1,0) -- (0,0) (-1,-0.15) -- (0,0);
\draw[fill] (0,0) circle (2pt);
\end{scope}
}
\end{tikzpicture}\arrow[r,"m\RM{iv}"]&\cdots\arrow[r,"m\RM{iv}"]&
\begin{tikzpicture}[baseline=-.5ex,scale=0.7]
\draw[dashed] (0,0) circle (1);
\clip (0,0) circle (1);
\draw[thick,black] (0.3,-1)--(0.8,1);
\draw[thick,red] (0.4,-1)--(0.9,1);
\draw[thick,blue] (0.5,-1)--(1,1);
\foreach[count=\index] \c in {blue, red, black} {
\begin{scope}[yshift=(\index-2)*0.5 cm, color=\c]
\draw[white,line width=5] (-1,0.15) - - (0,0) (-1,0) -- (0,0) (-1,-0.15) -- (0,0);
\draw[thick] (-1,0.15) - - (0,0) (-1,0) -- (0,0) (-1,-0.15) -- (0,0);
\draw[fill] (0,0) circle (2pt);
\end{scope}
}
\end{tikzpicture}
\end{tikzcd}
\end{equation}
\end{example}

\begin{definition}[Categories of consistent sequences of bordered Legendrian graphs]
The categories of consistent sequences and Reidemeister moves of regular front and Lagrangian projections of bordered Legendrian graphs with Maslov potentials will be denoted by $\BLT^{\mu,\p \bullet}$ and $\BLT^{\mu,\p \bullet}_\lag$, and two full subcategories consisting of consistent sequences of non-bordered Legendrian graphs will be denoted by $\LT^{\mu,\p\bullet}$ and $\LT^{\mu,\p\bullet}_\lag$.
\end{definition}

Recall the Ng's resolution $\Res:\BLT^\mu\to\BLT^\mu_\lag$ which is a functor as seen earlier and therefore it preserves isomorphisms.
In other words, each isomorphic pair of front projections will be mapped to an isomorphic pair of Lagrangian projections.
It is obvious that $\Res$ sends each component to the corresponding component, it induces a functor between consistent sequences of front and Lagrangian projections.
\begin{lemma}
The Ng's resolution induces a functor
\begin{align*}
\Res^{\p\bullet}&:\BLT^{\mu,\p\bullet}\to\BLT_\lag^{\mu,\p\bullet}
\end{align*}
which preserves the homotopy relation.
\end{lemma}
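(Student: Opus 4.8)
The plan is to verify directly the three requirements for a functor between categories of consistent sequences: that the component-wise Ng's resolution (i) sends a consistent sequence of front projections to a consistent sequence of Lagrangian projections, (ii) sends a consistent front Reidemeister or basepoint move to a consistent Lagrangian one, and (iii) respects composition, identities and the homotopy relation. The two ingredients I would use repeatedly are that $\Res\colon\BLT^\mu\to\BLT^\mu_\lag$ is already a functor preserving isomorphisms (as recalled above, cf.~Lemma~\ref{lemma:functoriality of Ng's resolution}), and the elementary fact that Ng's resolution is a \emph{purely local} combinatorial substitution on the front diagram, replacing cusps, crossings, vertices and basepoints by fixed local pieces. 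Locality immediately yields that $\Res$ commutes with disjoint unions of components, with restriction to a subset $U\subset[m]$ of components, and with the identifications $\graf^i_\ast=\{i\}\times\graf_\ast$ used to set up $m$-component bordered graphs.

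For (i), given $(\cT^{\p m},\bfmu^{\p m})_{m\ge1}\in\BLT^{\mu,\p\bullet}$ I would put $\Res^{\p m}(\cT^{\p m},\bfmu^{\p m})\coloneqq(\Res(\cT^{\p m}),\bfmu^{\p m})$, transporting the Maslov potential exactly as in the one-component case. By locality the restriction of $\Res(\cT^{\p m})$ to any $U\subset[m]$ is $\Res(\cT^U)$, so the restriction and semi-simplicial data match up. Moreover each consistency isomorphism $(\cT(h)^U,\bfmu(h)^U)\colon(\cT^{h(U)},\bfmu^{h(U)})\xrightarrow{\isomorphic}(\cT^U,\bfmu^U)$, being a regular Legendrian isotopy and hence a zig-zag of front Reidemeister moves, is carried by $\Res$ (applied termwise, via $\Res\RM{M}$) to a zig-zag of Lagrangian Reidemeister moves, i.e.\ to an isomorphism in $\BLT^\mu_\lag$; functoriality of $\Res$ then forces the coherence identities of a consistent sequence to persist, so $\Res^{\p\bullet}(\cT^{\p\bullet},\bfmu^{\p\bullet})$ is a consistent sequence.

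For (ii) and (iii), given a consistent move $\RM{M}^{\p\bullet}$ whose $m$-th term $\RM{M}^{\p m}$ is a sequence of front Reidemeister or basepoint moves, I would let $\Res^{\p\bullet}(\RM{M}^{\p\bullet})$ have $m$-th term $\Res\RM{M}^{\p m}$, the corresponding sequence of Lagrangian moves obtained by replacing each move by its chosen resolution (e.g.\ the one of \cite[Figure~6]{ABS2019count}). Compatibility with restrictions follows from locality and the one-component functoriality; the consistency square for $\Res\RM{M}^{\p\bullet}$ is the image under $\Res$ of the consistency square for $\RM{M}^{\p\bullet}$, hence commutes. Preservation of composition and identities is checked componentwise and reduces to the one-component statement, as does preservation of the homotopy relation: a homotopy of consistent morphisms is componentwise a homotopy of the type handled in the non-consistent setting and compatible with restrictions, so its image under the componentwise $\Res$ is again such a homotopy.

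The only point needing real care is the coherence of the choices. The resolution $\Res\RM{M}$ of a front move is canonical only up to further Lagrangian Reidemeister moves, so to obtain a genuine functor one must fix these choices once and for all, compatibly with every component $m$, every restriction $U\subset[m]$, and every semi-simplicial map $h\colon[m]\to[n]$. This works precisely because the standard choices of \cite[Figure~6]{ABS2019count} are themselves local replacements in the front diagram, and locality forces them to commute automatically with disjoint unions, restrictions, and the isomorphisms $\cT(h)^U$. Thus the argument amounts to unwinding the definitions and observing that a local operation applied to local structure stays consistent; there is no new geometric input beyond the one-component case.
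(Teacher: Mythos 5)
Your proposal is correct and follows essentially the same approach as the paper, which simply cites the one-component functoriality of $\Res$ (Lemma~\ref{lemma:functoriality of Ng's resolution}) together with the surrounding observation that $\Res$ is local and componentwise, so commutes with restrictions and preserves isomorphisms. You spell out the coherence-of-choices issue that the paper leaves implicit, but this is an elaboration of the same argument rather than a different route.
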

\begin{proof}
This is nothing but the Lemma~\ref{lemma:functoriality of Ng's resolution}.
\end{proof}

\subsubsection{Canonical front and Lagrangian parallel copies}\label{section:canonical copies}
We introduce a canonical way to obtain a consistent sequence for a bordered Legendrian graph in terms of front projections or Lagrangian projections.

\begin{definition}[Canonical consistent sequences]
Let $(\cT,\bfmu)\in\BLT^\mu$ and $(\cT_\lag,\bfmu)\in\BLT_\lag^\mu$ be regular front and Lagrangian projections.
Then the \emph{canonical consistent sequences} $(\cT^{\p \bullet},\bfmu^{\p \bullet})\in\BLT^{\mu,\p \bullet}$ and $(\cT^{\p \bullet}_\lag,\bfmu^{\p \bullet})\in\BLT^{\mu, \p\bullet}_\lag$ are given by the $z$- and $y$-translations as depicted in Figures~\ref{figure:front copy} and \ref{figure:Lagrangian copy}, respectively.
\end{definition}

\begin{figure}[ht]
\subfigure[Left border]{\makebox[0.175\textwidth]{$
\begin{tikzpicture}[baseline=-.5ex,scale=0.75]
\useasboundingbox (-0.5,-1.5)--(0.5,1.5);
\draw[red] (0,-1.5)--(0,1.5);
\foreach \y in {1.2,0.2,-0.8} {
	\draw[thick] (0,\y) -- +(0.5,0);
}
\end{tikzpicture}\mapsto
\begin{tikzpicture}[baseline=-.5ex,scale=0.75]
\useasboundingbox (-0.5,-1.5)--(0.5,1.5);
\draw[red] (0,-1.5)--(0,1.5);
\foreach \y in {1.2,0.2,-0.8} {
	\draw[thick] (0,\y) -- +(0.5,0);
	\draw[red,thick] (0,\y-0.2) -- +(0.5,0);
	\draw[blue,thick] (0,\y-0.4) -- +(0.5,0);
}
\end{tikzpicture}$}}
\subfigure[Right border]{\makebox[0.175\textwidth]{$
\begin{tikzpicture}[baseline=-.5ex,scale=0.75]
\useasboundingbox (-0.5,-1.5)--(0.5,1.5);
\draw[red] (0,-1.5)--(0,1.5);
\foreach \y in {1.2,0.2,-0.8} {
	\draw[thick] (0,\y) -- +(-0.5,0);
}
\end{tikzpicture}\mapsto
\begin{tikzpicture}[baseline=-.5ex,scale=0.75]
\useasboundingbox (-0.5,-1.5)--(0.5,1.5);
\draw[red] (0,-1.5)--(0,1.5);
\foreach \y in {1.2,0.2,-0.8} {
	\draw[thick] (0,\y) -- +(-0.5,0);
	\draw[red,thick] (0,\y-0.2) -- +(-0.5,0);
	\draw[blue,thick] (0,\y-0.4) -- +(-0.5,0);
}
\end{tikzpicture}$}}
\subfigure[Basebasepointpoint]{\makebox[0.275\textwidth]{$
\begin{tikzpicture}[baseline=-.5ex,scale=0.5]
\useasboundingbox (-1.5,-1.5) -- (1.5,1.5);
\draw[dashed,gray] (0,0) circle (1.5);
\clip (0,0) circle (1.5);
\draw[thick] (-2,.6) -- node[midway] {$\scriptstyle|$} +(4,0);
\end{tikzpicture}\mapsto
\begin{tikzpicture}[baseline=-.5ex,scale=0.5]
\useasboundingbox (-1.5,-1.5) -- (1.5,1.5);
\draw[dashed,gray] (0,0) circle (1.5);
\clip (0,0) circle (1.5);
\draw[thick] (-2,.6) -- node[midway] {$\scriptstyle|$} +(4,0);
\draw[red,thick] (-2,0) -- node[midway] {$\scriptstyle|$} +(4,0);
\draw[blue,thick] (-2,-.6) -- node[midway] {$\scriptstyle|$} +(4,0);
\end{tikzpicture}$}}
\subfigure[Crossing]{\makebox[0.275\textwidth]{$
\begin{tikzpicture}[baseline=-.5ex,scale=0.5]
\useasboundingbox (-1.5,-1.5) -- (1.5,1.5);
\draw[dashed,gray] (0,0) circle (1.5);
\clip (0,0) circle (1.5);
\draw[thick] (-2,-1.5) -- +(4,4);
\draw[thick] (-2,2.5) -- +(4,-4);
\end{tikzpicture}\mapsto
\begin{tikzpicture}[baseline=-.5ex,scale=0.5]
\useasboundingbox (-1.5,-1.5) -- (1.5,1.5);
\draw[dashed,gray] (0,0) circle (1.5);
\clip (0,0) circle (1.5);
\draw[thick] (-2,-1.5) -- +(4,4);
\draw[red,thick] (-2,-2) -- +(4,4);
\draw[blue,thick] (-2,-2.5) -- +(4,4);
\draw[thick] (-2,2.5) -- +(4,-4);
\draw[red,thick] (-2,2) -- +(4,-4);
\draw[blue,thick] (-2,1.5) -- +(4,-4);
\end{tikzpicture}
$}}
\subfigure[Left cusp]{\makebox[0.3\textwidth]{$
\begin{tikzpicture}[baseline=-.5ex,scale=0.5]
\useasboundingbox (-1.5,-1.5) -- (1.5,1.5);
\draw[dashed,gray] (0,0) circle (1.5);
\clip (0,0) circle (1.5);
\draw[thick] (1.5,-0.2) to[out=180,in=0] (-0.2,0.3) to[out=0,in=180] (1.5,0.8);
\end{tikzpicture}\mapsto
\begin{tikzpicture}[baseline=-.5ex,scale=0.5]
\useasboundingbox (-1.5,-1.5) -- (1.5,1.5);
\draw[dashed,gray] (0,0) circle (1.5);
\clip (0,0) circle (1.5);
\draw[thick] (1.5,-0.2) to[out=180,in=0] (-0.2,0.3) to[out=0,in=180] (1.5,0.8);
\draw[thick,red] (1.5,-0.5) to[out=180,in=0] (-0.2,0) to[out=0,in=180] (1.5,0.5);
\draw[thick,blue] (1.5,-0.8) to[out=180,in=0] (-0.2,-0.3) to[out=0,in=180] (1.5,0.2);
\end{tikzpicture}
$}}
\subfigure[Right cusp]{\makebox[0.3\textwidth]{$
\begin{tikzpicture}[baseline=-.5ex,xscale=-1,scale=0.5]
\useasboundingbox (-1.5,-1.5) -- (1.5,1.5);
\draw[dashed,gray] (0,0) circle (1.5);
\clip (0,0) circle (1.5);
\draw[thick] (1.5,-0.2) to[out=180,in=0] (-0.2,0.3) to[out=0,in=180] (1.5,0.8);
\end{tikzpicture}\mapsto
\begin{tikzpicture}[baseline=-.5ex,xscale=-1,scale=0.5]
\useasboundingbox (-1.5,-1.5) -- (1.5,1.5);
\draw[dashed,gray] (0,0) circle (1.5);
\clip (0,0) circle (1.5);
\draw[thick] (1.5,-0.2) to[out=180,in=0] (-0.2,0.3) to[out=0,in=180] (1.5,0.8);
\draw[thick,red] (1.5,-0.5) to[out=180,in=0] (-0.2,0) to[out=0,in=180] (1.5,0.5);
\draw[thick,blue] (1.5,-0.8) to[out=180,in=0] (-0.2,-0.3) to[out=0,in=180] (1.5,0.2);
\end{tikzpicture}
$}}
\subfigure[Vertex]{\makebox[0.3\textwidth]{$
\begin{tikzpicture}[baseline=-.5ex,scale=0.5]
\useasboundingbox (-1.5,-1.5) -- (1.5,1.5);
\draw[dashed,gray] (0,0) circle (1.5);
\clip (0,0) circle (1.5);
\draw[thick] (0,0.6) to[out=180,in=0] +(-1.5,0.8);
\draw[thick] (0,0.6) to[out=180,in=0] +(-1.5,-0.8);
\draw[thick] (0,0.6) to[out=0,in=180] +(1.5,0.8);
\draw[thick] (0,0.6) to[out=0,in=180] +(1.5,-0.8);
\draw[thick] (-1.5,0.6) -- (1.5,0.6);
\fill (0,0.6) circle (0.1);
\end{tikzpicture}\mapsto
\begin{tikzpicture}[baseline=-.5ex,scale=0.5]
\useasboundingbox (-1.5,-1.5) -- (1.5,1.5);
\draw[dashed,gray] (0,0) circle (1.5);
\clip (0,0) circle (1.5);
\draw[thick] (-1.5,0.6) -- (1.5,0.6);
\draw[red,thick] (-1.5,0.4) -- (1.5,0.4);
\draw[blue,thick] (-1.5, 0.2) to[out=0,in=180] (0,-0.2) to[out=0,in=180] (1.5,0.2);
\draw[thick] (0,0.6) to[out=180,in=0] +(-1.5,-0.8);
\draw[red,thick] (0,0.4) to[out=180,in=0] +(-1.5,-0.8);
\draw[blue,thick] (0,-0.2) to[out=180,in=0] +(-1.5,-0.4);
\draw[thick] (0,0.6) to[out=180,in=0] +(-1.5,0.8);
\draw[red,thick] (0,0.4) to[out=180,in=0] +(-1.5,0.8);
\draw[blue,thick] (0,-0.2) to[out=180,in=0] +(-1.5,1.2);
\draw[thick] (0,0.6) to[out=0,in=180] +(1.5,0.8);
\draw[red,thick] (0,0.4) to[out=0,in=180] +(1.5,0.8);
\draw[blue,thick] (0,-0.2) to[out=0,in=180] +(1.5,1.2);
\draw[thick] (0,0.6) to[out=0,in=180] +(1.5,-0.8);
\draw[red,thick] (0,0.4) to[out=0,in=180] +(1.5,-0.8);
\draw[blue,thick] (0,-0.2) to[out=0,in=180] +(1.5,-0.4);
\fill (0,0.6) circle (0.1);
\fill[red] (0,0.4) circle (0.1);
\fill[blue] (0,-0.2) circle (0.1);
\end{tikzpicture}$}}
\caption{Canonical front parallel copies}
\label{figure:front copy}
\end{figure}
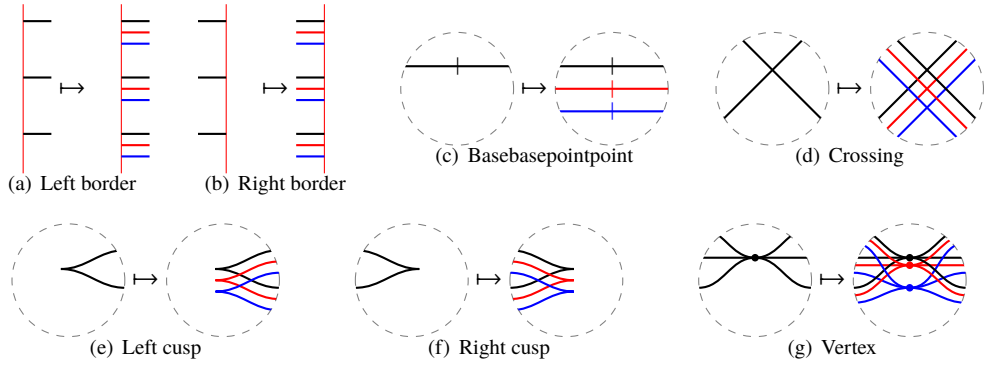

\begin{figure}[ht]
\subfigure[Left border]{\makebox[0.175\textwidth]{$
\begin{tikzpicture}[baseline=-.5ex,scale=0.75]
\useasboundingbox (-0.5,-1.5)--(0.5,1.5);
\draw[red] (0,-1.5)--(0,1.5);
\foreach \y in {1.2,0.2,-0.8} {
	\draw[thick] (0,\y) -- +(0.5,0);
}
\end{tikzpicture}\mapsto
\begin{tikzpicture}[baseline=-.5ex,scale=0.75]
\useasboundingbox (-0.5,-1.5)--(0.5,1.5);
\draw[red] (0,-1.5)--(0,1.5);
\foreach \y in {1.2,0.2,-0.8} {
	\draw[thick] (0,\y) -- +(0.5,0);
	\draw[red,thick] (0,\y-0.2) -- +(0.5,0);
	\draw[blue,thick] (0,\y-0.4) -- +(0.5,0);
}
\end{tikzpicture}$}}
\subfigure[Right border]{\makebox[0.175\textwidth]{$
\begin{tikzpicture}[baseline=-.5ex,scale=0.75]
\useasboundingbox (-0.5,-1.5)--(0.5,1.5);
\draw[red] (0,-1.5)--(0,1.5);
\foreach \y in {1.2,0.2,-0.8} {
	\draw[thick] (0,\y) -- +(-0.5,0);
}
\end{tikzpicture}\mapsto
\begin{tikzpicture}[baseline=-.5ex,scale=0.75]
\useasboundingbox (-0.5,-1.5)--(0.5,1.5);
\draw[red] (0,-1.5)--(0,1.5);
\foreach \y in {1.2,0.2,-0.8} {
	\draw[thick] (0,\y) -- +(-0.5,0);
	\draw[red,thick] (0,\y-0.2) -- +(-0.5,0);
	\draw[blue,thick] (0,\y-0.4) -- +(-0.5,0);
}
\end{tikzpicture}$}}
\subfigure[basepoint]{\makebox[0.275\textwidth]{$
\begin{tikzpicture}[baseline=-.5ex,scale=0.5]
\useasboundingbox (-1.5,-1.5) -- (1.5,1.5);
\draw[dashed,gray] (0,0) circle (1.5);
\clip (0,0) circle (1.5);
\draw[thick] (-2,.6) -- node[midway] {$\scriptstyle|$} +(4,0);
\end{tikzpicture}\mapsto
\begin{tikzpicture}[baseline=-.5ex,scale=0.5]
\useasboundingbox (-1.5,-1.5) -- (1.5,1.5);
\draw[dashed,gray] (0,0) circle (1.5);
\clip (0,0) circle (1.5);
\draw[thick] (-2,.6) -- node[midway] {$\scriptstyle|$} +(4,0);
\draw[red,thick] (-2,0) -- node[midway] {$\scriptstyle|$} +(4,0);
\draw[blue,thick] (-2,-.6) -- node[midway] {$\scriptstyle|$} +(4,0);
\end{tikzpicture}$}}
\subfigure[Crossing]{\makebox[0.275\textwidth]{$
\begin{tikzpicture}[baseline=-.5ex,scale=0.5]
\useasboundingbox (-1.5,-1.5) -- (1.5,1.5);
\draw[dashed,gray] (0,0) circle (1.5);
\clip (0,0) circle (1.5);
\draw[thick] (-2,-1.5) -- +(4,4);
\draw[white,line width=4] (-2,2.5) -- +(4,-4);
\draw[thick] (-2,2.5) -- +(4,-4);
\end{tikzpicture}\mapsto
\begin{tikzpicture}[baseline=-.5ex,scale=0.5]
\useasboundingbox (-1.5,-1.5) -- (1.5,1.5);
\draw[dashed,gray] (0,0) circle (1.5);
\clip (0,0) circle (1.5);
\draw[thick] (-2,-1.5) -- +(4,4);
\draw[red,thick] (-2,-2) -- +(4,4);
\draw[blue,thick] (-2,-2.5) -- +(4,4);
\draw[white,line width=4] (-2,2.5) -- +(4,-4);
\draw[white,line width=4] (-2,2) -- +(4,-4);
\draw[white,line width=4] (-2,1.5) -- +(4,-4);
\draw[thick] (-2,2.5) -- +(4,-4);
\draw[red,thick] (-2,2) -- +(4,-4);
\draw[blue,thick] (-2,1.5) -- +(4,-4);
\end{tikzpicture}
$}}
\subfigure[$x$-minimum]{\makebox[0.3\textwidth]{$
\begin{tikzpicture}[baseline=-.5ex,scale=0.5]
\useasboundingbox (-1.5,-1.5) -- (1.5,1.5);
\draw[dashed,gray] (0,0) circle (1.5);
\clip (0,0) circle (1.5);
\draw[thick] (1.5,0.3)+(90:1.5 and 0.5) arc (90:270:1.5 and 0.5);
\end{tikzpicture}\mapsto
\begin{tikzpicture}[baseline=-.5ex,scale=0.5]
\useasboundingbox (-1.5,-1.5) -- (1.5,1.5);
\draw[dashed,gray] (0,0) circle (1.5);
\clip (0,0) circle (1.5);
\draw[blue,thick] (1.5,-0.3)+(90:1.5 and 0.5) arc (90:270:1.5 and 0.5);
\draw[white,line width=4] (1.5,0)+(90:1.5 and 0.5) arc (90:270:1.5 and 0.5);
\draw[red,thick] (1.5,0)+(90:1.5 and 0.5) arc (90:270:1.5 and 0.5);
\draw[white,line width=4] (1.5,0.3)+(90:1.5 and 0.5) arc (90:270:1.5 and 0.5);
\draw[thick] (1.5,0.3)+(90:1.5 and 0.5) arc (90:270:1.5 and 0.5);
\end{tikzpicture}
$}}
\subfigure[$x$-maximum]{\makebox[0.3\textwidth]{$
\begin{tikzpicture}[baseline=-.5ex,scale=0.5]
\useasboundingbox (-1.5,-1.5) -- (1.5,1.5);
\draw[dashed,gray] (0,0) circle (1.5);
\clip (0,0) circle (1.5);
\draw[thick] (-1.5,0.3)+(90:1.5 and 0.5) arc (90:-90:1.5 and 0.5);
\end{tikzpicture}\mapsto
\begin{tikzpicture}[baseline=-.5ex,scale=0.5]
\useasboundingbox (-1.5,-1.5) -- (1.5,1.5);
\draw[dashed,gray] (0,0) circle (1.5);
\clip (0,0) circle (1.5);
\draw[blue,thick] (-1.5,-0.3)+(90:1.5 and 0.5) arc (90:-90:1.5 and 0.5);
\draw[white,line width=4] (-1.5,0)+(90:1.5 and 0.5) arc (90:-90:1.5 and 0.5);
\draw[red,thick] (-1.5,0)+(90:1.5 and 0.5) arc (90:-90:1.5 and 0.5);
\draw[white,line width=4] (-1.5,0.3)+(90:1.5 and 0.5) arc (90:-90:1.5 and 0.5);
\draw[thick] (-1.5,0.3)+(90:1.5 and 0.5) arc (90:-90:1.5 and 0.5);
\end{tikzpicture}
$}}
\subfigure[Vertex]{\makebox[0.3\textwidth]{$
\begin{tikzpicture}[baseline=-.5ex,scale=0.5]
\useasboundingbox (-1.5,-1.5) -- (1.5,1.5);
\draw[dashed,gray] (0,0) circle (1.5);
\clip (0,0) circle (1.5);
\draw[thick,rounded corners] (-1.5,-.8)--(0,0.2);
\draw[thick,rounded corners] (-1.5,0)--(0,0.2);
\draw[thick,rounded corners] (-1.5,.8)--(0,0.2);
\draw[thick,rounded corners] (0,0.2)--(1.5,-.8);
\draw[thick,rounded corners] (0,0.2)--(1.5,0);
\draw[thick,rounded corners] (0,0.2)--(1.5,.8);
\fill (0,0.2) circle(3pt);
\end{tikzpicture}\mapsto
\begin{tikzpicture}[baseline=-.5ex,scale=0.5]
\useasboundingbox (-1.5,-1.5) -- (1.5,1.5);
\draw[dashed,gray] (0,0) circle (1.5);
\clip (0,0) circle (1.5);
\foreach[count=\index] \c in {blue, red, black} {
\begin{scope}[yshift=(\index-2)*0.2 cm, color=\c]
\draw[white,line width=4,rounded corners] (-1.5,-.8)--(0,{-(\index-3)^2*0.25+0.2});
\draw[white,line width=4,rounded corners] (-1.5,0)--(0,{-(\index-3)^2*0.25+0.2});
\draw[white,line width=4,rounded corners] (-1.5,.8)--(0,{-(\index-3)^2*0.25+0.2});
\draw[white,line width=4,rounded corners] (0,{-(\index-3)^2*0.25+0.2})--(1.5,-.8);
\draw[white,line width=4,rounded corners] (0,{-(\index-3)^2*0.25+0.2})--(1.5,0);
\draw[white,line width=4,rounded corners] (0,{-(\index-3)^2*0.25+0.2})--(1.5,.8);
\draw[thick,rounded corners] (-1.5,-.8)--(0,{-(\index-3)^2*0.25+0.2});
\draw[thick,rounded corners] (-1.5,0)--(0,{-(\index-3)^2*0.25+0.2});
\draw[thick,rounded corners] (-1.5,.8)--(0,{-(\index-3)^2*0.25+0.2});
\draw[thick,rounded corners] (0,{-(\index-3)^2*0.25+0.2})--(1.5,-.8);
\draw[thick,rounded corners] (0,{-(\index-3)^2*0.25+0.2})--(1.5,0);
\draw[thick,rounded corners] (0,{-(\index-3)^2*0.25+0.2})--(1.5,.8);
\fill (0,{-(\index-3)^2*0.25+0.2}) circle(0.1);
\end{scope}
}
\end{tikzpicture}
$}}
\caption{Canonical Lagrangian parallel copies}
\label{figure:Lagrangian copy}
\end{figure}
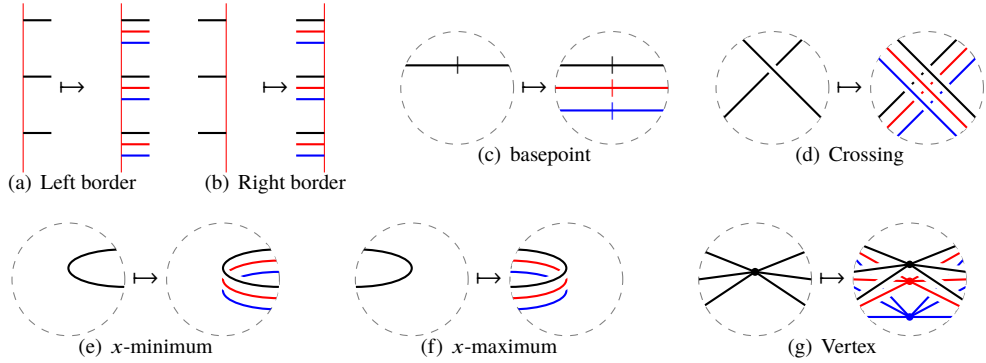

\begin{remark}
For Lagrangian projections, the $y$-translation comes from a strict contactomorphism
\[
(x,y,z) \to (x, y-\epsilon, z+x\epsilon),
\]
whose Lagrangian projection is the desired $y$-translation.
\end{remark}

Notice that it is not obvious if $\cT^{\p m}$ or $\cT^{\p m}_\lag$ is regular again. Indeed, we need to check the regularity holds near each vertex.

There are many ways to achieve the regularity but in this paper, we fix the convention as follows:
let $U_1$ be a neighborhood of the vertex $v$.
If we take a parallel copy and make $\cT^{\p 2}$ or $\cT^{\p 2}_\lag$ by the small enough translation, then all additional crossings are contained in a small neighborhood $U_2$. 
Now we take the third copy such that the newly appeared crossings \emph{avoid} the region $U_2$ and are contained in a larger neighborhood $U_3$ and so on. See Figure~\ref{figure:canonical copies near vertices}. The shaded region represents neighborhoods $U_i$.
Then it is obvious that for each $\cT$ or $\cT^\lag$, the sequence $\cT^{\p\bullet}$ or $\cT^{\p\bullet}_\lag$ of canonical projections becomes a consistent sequence of $m$-component Lagrangian projections.

\begin{remark}
In the canonical $m$-copies near a vertex $v$ of type $(\ell,r)$, there are exactly $\binom{m}2\binom\ell2+\binom{m}2\binom r2$ many additional crossings.
\end{remark}

\begin{figure}[ht]
\subfigure[Canonical Front parallel copies near a vertex]{$
\begin{aligned}
\cT&=
\begin{tikzpicture}[baseline=-.5ex,scale=0.5]
\draw[thick] (-2,1) to[out=0,in=180] (0,0) to[out=0,in=180] (2,-1);
\draw[thick] (-2,0) to[out=0,in=180] (0,0) to[out=0,in=180] (2,0);
\draw[thick] (-2,-1) to[out=0,in=180] (0,0) to[out=0,in=180] (2,1);
\draw[fill] (0,0) circle (2pt);
\draw[fill,gray,opacity=0.3] (0,0) circle (0.2);
\end{tikzpicture}&
\cT^{\p2}&=
\begin{tikzpicture}[baseline=-.5ex,scale=0.5]
\begin{scope}[yshift=-.2cm]
\draw[thick,red] (-2,1) to[out=0,in=180] (0,0) to[out=0,in=180] (2,-1);
\draw[thick,red] (-2,0) to[out=0,in=180] (0,0) to[out=0,in=180] (2,0);
\draw[thick,red] (-2,-1) to[out=0,in=180] (0,0) to[out=0,in=180] (2,1);
\draw[fill,red] (0,0) circle (2pt);
\end{scope}
\begin{scope}
\draw[thick] (-2,1) to[out=0,in=180] (0,0) to[out=0,in=180] (2,-1);
\draw[thick] (-2,0) to[out=0,in=180] (0,0) to[out=0,in=180] (2,0);
\draw[thick] (-2,-1) to[out=0,in=180] (0,0) to[out=0,in=180] (2,1);
\draw[fill] (0,0) circle (2pt);
\end{scope}
\draw[fill,gray,opacity=0.3] (0,0) circle (0.2);
\draw[fill,gray,opacity=0.3] (0,0) ellipse (0.8 and .5);
\end{tikzpicture}&
\cT^{\p3}&=
\begin{tikzpicture}[baseline=-.5ex,scale=0.5]
\begin{scope}[yshift=-.4cm]
\draw[thick,blue] (-2,1) to[out=0,in=180] (0,-0.2) to[out=0,in=180] (2,-1);
\draw[thick,blue] (-2,0) to[out=0,in=180] (0,-0.2) to[out=0,in=180] (2,0);
\draw[thick,blue] (-2,-1) to[out=0,in=180] (0,-0.2) to[out=0,in=180] (2,1);
\draw[fill,blue] (0,-0.2) circle (2pt);
\end{scope}
\begin{scope}[yshift=-.2cm]
\draw[thick,red] (-2,1) to[out=0,in=180] (0,0) to[out=0,in=180] (2,-1);
\draw[thick,red] (-2,0) to[out=0,in=180] (0,0) to[out=0,in=180] (2,0);
\draw[thick,red] (-2,-1) to[out=0,in=180] (0,0) to[out=0,in=180] (2,1);
\draw[fill,red] (0,0) circle (2pt);
\end{scope}
\begin{scope}
\draw[thick] (-2,1) to[out=0,in=180] (0,0) to[out=0,in=180] (2,-1);
\draw[thick] (-2,0) to[out=0,in=180] (0,0) to[out=0,in=180] (2,0);
\draw[thick] (-2,-1) to[out=0,in=180] (0,0) to[out=0,in=180] (2,1);
\draw[fill] (0,0) circle (2pt);
\end{scope}
\draw[fill,gray,opacity=0.3] (0,0) circle (0.2);
\draw[fill,gray,opacity=0.3] (0,0) circle (.8 and .5);
\draw[fill,gray,opacity=0.3] (0,0) circle (1.15);
\end{tikzpicture}&
\cT^{\p4}&=
\begin{tikzpicture}[baseline=-.5ex,scale=0.5]
\begin{scope}[yshift=-.6cm]
\draw[thick,green] (-2,1) to[out=0,in=180] (0,-1.3) to[out=0,in=180] (2,-1);
\draw[thick,green] (-2,0) to[out=0,in=180] (0,-1.3) to[out=0,in=180] (2,0);
\draw[thick,green] (-2,-1) to[out=0,in=180] (0,-1.3) to[out=0,in=180] (2,1);
\draw[fill,green] (0,-1.3) circle (2pt);
\end{scope}
\begin{scope}[yshift=-.4cm]
\draw[thick,blue] (-2,1) to[out=0,in=180] (0,-0.2) to[out=0,in=180] (2,-1);
\draw[thick,blue] (-2,0) to[out=0,in=180] (0,-0.2) to[out=0,in=180] (2,0);
\draw[thick,blue] (-2,-1) to[out=0,in=180] (0,-0.2) to[out=0,in=180] (2,1);
\draw[fill,blue] (0,-0.2) circle (2pt);
\end{scope}
\begin{scope}[yshift=-.2cm]
\draw[thick,red] (-2,1) to[out=0,in=180] (0,0) to[out=0,in=180] (2,-1);
\draw[thick,red] (-2,0) to[out=0,in=180] (0,0) to[out=0,in=180] (2,0);
\draw[thick,red] (-2,-1) to[out=0,in=180] (0,0) to[out=0,in=180] (2,1);
\draw[fill,red] (0,0) circle (2pt);
\end{scope}
\begin{scope}
\draw[thick] (-2,1) to[out=0,in=180] (0,0) to[out=0,in=180] (2,-1);
\draw[thick] (-2,0) to[out=0,in=180] (0,0) to[out=0,in=180] (2,0);
\draw[thick] (-2,-1) to[out=0,in=180] (0,0) to[out=0,in=180] (2,1);
\draw[fill] (0,0) circle (2pt);
\end{scope}
\draw[fill,gray,opacity=0.3] (0,0) circle (0.2);
\draw[fill,gray,opacity=0.3] (0,0) circle (.8 and .5);
\draw[fill,gray,opacity=0.3] (0,0) circle (1.15);
\draw[fill,gray,opacity=0.3] (0,-0.1) circle (1.65);
\end{tikzpicture}
\end{aligned}
$}
\subfigure[Canonical Lagrangian parallel copies near a vertex]{$
\begin{aligned}
\cT_\lag&=
\begin{tikzpicture}[baseline=-.5ex,scale=0.5]
\draw[thick] (-2,1)-- +(4,-2);
\draw[thick] (-2,0)--+(4,0);
\draw[thick] (-2,-1)--+(4,2);
\draw[fill] (0,0) circle (2pt);
\draw[fill,gray,opacity=0.3] (0,0) circle (0.2);
\end{tikzpicture}&
\cT_\lag^{\p2}&=
\begin{tikzpicture}[baseline=-.5ex,scale=0.5]
\begin{scope}[yshift=-.2cm]
\draw[thick,red] (-2,1)-- +(4,-2);
\draw[thick,red] (-2,0)--+(4,0);
\draw[thick,red] (-2,-1)--+(4,2);
\draw[fill,red] (0,0) circle (2pt);
\end{scope}
\begin{scope}
\draw[line width=3,white] (-2,1)-- +(4,-2);
\draw[line width=3,white] (-2,0)--+(4,0);
\draw[line width=3,white] (-2,-1)--+(4,2);
\end{scope}
\begin{scope}
\draw[thick] (-2,1)-- +(4,-2);
\draw[thick] (-2,0)--+(4,0);
\draw[thick] (-2,-1)--+(4,2);
\draw[fill] (0,0) circle (2pt);
\end{scope}
\draw[fill,gray,opacity=0.3] (0,0) circle (0.2);
\draw[fill,gray,opacity=0.3] (0,0) circle (.5);
\end{tikzpicture}&
\cT_\lag^{\p3}&=
\begin{tikzpicture}[baseline=-.5ex,scale=0.5]
\begin{scope}[yshift=-.4cm]
\draw[thick,blue] (-2,1) to[out=-26,in=154] (0,-0.2) to[out=-26,in=154] (2,-1);
\draw[thick,blue] (-2,0) to[out=0,in=180] (0,-0.2) to[out=0,in=180] (2,0);
\draw[thick,blue] (-2,-1) to[out=26,in=206] (0,-0.2) to[out=26,in=206] (2,1);
\draw[fill,blue] (0,-0.2) circle (2pt);
\end{scope}
\begin{scope}[yshift=-.2cm]
\draw[line width=3,white] (-2,1)-- +(4,-2);
\draw[line width=3,white] (-2,0)--+(4,0);
\draw[line width=3,white] (-2,-1)--+(4,2);
\end{scope}
\begin{scope}[yshift=-.2cm]
\draw[thick,red] (-2,1)-- +(4,-2);
\draw[thick,red] (-2,0)--+(4,0);
\draw[thick,red] (-2,-1)--+(4,2);
\draw[fill,red] (0,0) circle (2pt);
\end{scope}
\begin{scope}
\draw[line width=3,white] (-2,1)-- +(4,-2);
\draw[line width=3,white] (-2,0)--+(4,0);
\draw[line width=3,white] (-2,-1)--+(4,2);
\end{scope}
\begin{scope}
\draw[thick] (-2,1)-- +(4,-2);
\draw[thick] (-2,0)--+(4,0);
\draw[thick] (-2,-1)--+(4,2);
\draw[fill] (0,0) circle (2pt);
\end{scope}
\draw[fill,gray,opacity=0.3] (0,0) circle (0.2);
\draw[fill,gray,opacity=0.3] (0,0) circle (.5);
\draw[fill,gray,opacity=0.3] (0,0) circle (1.15);
\end{tikzpicture}&
\cT_\lag^{\p4}&=
\begin{tikzpicture}[baseline=-.5ex,scale=0.5]
\begin{scope}[yshift=-.6cm]
\draw[thick,green] (-2,1) to[out=-26,in=154] (0,-1.2) to[out=-26,in=154] (2,-1);
\draw[thick,green] (-2,0) to[out=0,in=180] (0,-1.2) to[out=0,in=180] (2,0);
\draw[thick,green] (-2,-1) to[out=26,in=206] (0,-1.2) to[out=26,in=206] (2,1);
\draw[fill,green] (0,-1.2) circle (2pt);
\end{scope}
\begin{scope}[yshift=-.4cm]
\draw[line width=3,white] (-2,1) to[out=-26,in=154] (0,-0.2) to[out=-26,in=154] (2,-1);
\draw[line width=3,white] (-2,0) to[out=0,in=180] (0,-0.2) to[out=0,in=180] (2,0);
\draw[line width=3,white] (-2,-1) to[out=26,in=206] (0,-0.2) to[out=26,in=206] (2,1);
\end{scope}
\begin{scope}[yshift=-.4cm]
\draw[thick,blue] (-2,1) to[out=-26,in=154] (0,-0.2) to[out=-26,in=154] (2,-1);
\draw[thick,blue] (-2,0) to[out=0,in=180] (0,-0.2) to[out=0,in=180] (2,0);
\draw[thick,blue] (-2,-1) to[out=26,in=206] (0,-0.2) to[out=26,in=206] (2,1);
\draw[fill,blue] (0,-0.2) circle (2pt);
\end{scope}
\begin{scope}[yshift=-.2cm]
\draw[line width=3,white] (-2,1)-- +(4,-2);
\draw[line width=3,white] (-2,0)--+(4,0);
\draw[line width=3,white] (-2,-1)--+(4,2);
\end{scope}
\begin{scope}[yshift=-.2cm]
\draw[thick,red] (-2,1)-- +(4,-2);
\draw[thick,red] (-2,0)--+(4,0);
\draw[thick,red] (-2,-1)--+(4,2);
\draw[fill,red] (0,0) circle (2pt);
\end{scope}
\begin{scope}
\draw[line width=3,white] (-2,1)-- +(4,-2);
\draw[line width=3,white] (-2,0)--+(4,0);
\draw[line width=3,white] (-2,-1)--+(4,2);
\end{scope}
\begin{scope}
\draw[thick] (-2,1)-- +(4,-2);
\draw[thick] (-2,0)--+(4,0);
\draw[thick] (-2,-1)--+(4,2);
\draw[fill] (0,0) circle (2pt);
\end{scope}
\draw[fill,gray,opacity=0.3] (0,0) circle (0.2);
\draw[fill,gray,opacity=0.3] (0,0) circle (.5);
\draw[fill,gray,opacity=0.3] (0,0) circle (1.15);
\draw[fill,gray,opacity=0.3] (0,-0.1) circle (1.65);
\end{tikzpicture}
\end{aligned}
$}
\caption{Canonical copies near a vertex}
\label{figure:canonical copies near vertices}
\end{figure}
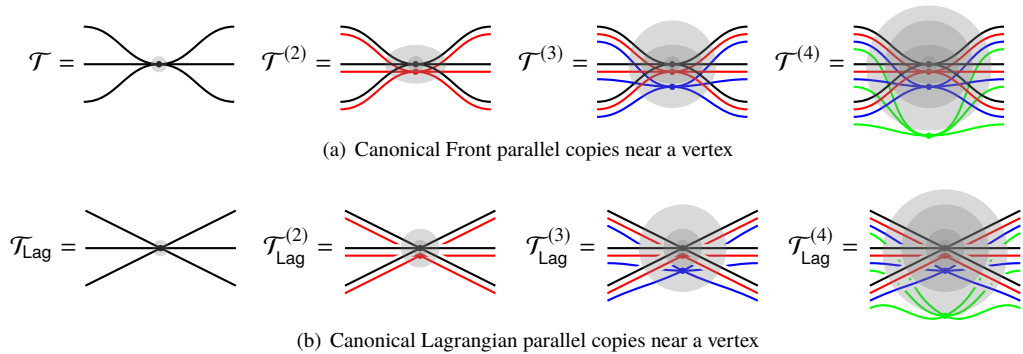

One can choose (zig-zag) sequences of consistent front and Lagrangian Reidemeister moves as follows:
\[
\begin{tikzcd}[column sep=2.5pc]
\vcenter{\hbox{\includegraphics[scale=0.7]{front_RM_I_b.pdf}}}\arrow[rrr,"\RM{I}"] \arrow[d, "(\cdot)^{\p m}"]& & &
\vcenter{\hbox{\includegraphics[scale=0.7]{front_RM_I_a.pdf}}} \arrow[d, "(\cdot)^{\p m}"]\\
\vcenter{\hbox{\includegraphics[scale=0.6]{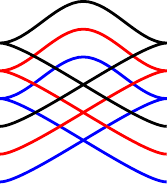}}}\arrow[r, Rightarrow, "N_1\RM{III}"] & 
\vcenter{\hbox{\includegraphics[scale=0.6]{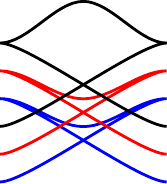}}}\arrow[r, Rightarrow, "m(m-1)\RM{II}"] &
\vcenter{\hbox{\includegraphics[scale=0.6]{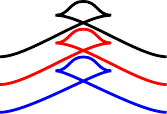}}}\arrow[r, Rightarrow, "m\RM{I}"] &
\vcenter{\hbox{\includegraphics[scale=0.6]{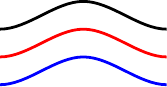}}}
\end{tikzcd}
\qquad
\begin{tikzcd}
\vcenter{\hbox{\includegraphics[scale=0.7]{front_RM_III_b.pdf}}}\arrow[r,leftrightarrow,"\RM{III}"]\arrow[d, "(\cdot)^{\p m}"]&
\vcenter{\hbox{\includegraphics[scale=0.7]{front_RM_III_a.pdf}}}\arrow[d, "(\cdot)^{\p m}"]\\
\vcenter{\hbox{\includegraphics[scale=0.6]{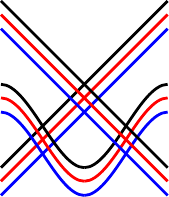}}}\arrow[r, Rightarrow, "m^3\RM{III}"] & 
\vcenter{\hbox{\includegraphics[scale=0.6]{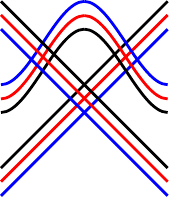}}}
\end{tikzcd}
\]
\[
\begin{tikzcd}
\vcenter{\hbox{\includegraphics[scale=0.7]{front_RM_II_a.pdf}}}\arrow[rr,"\RM{II}"]\arrow[d, "(\cdot)^{\p m}"]& &
\vcenter{\hbox{\includegraphics[scale=0.7]{front_RM_II_b.pdf}}}\arrow[d, "(\cdot)^{\p m}"]\\
\vcenter{\hbox{\includegraphics[scale=0.6]{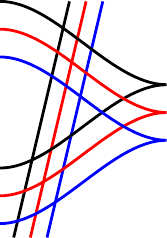}}}\arrow[r, Rightarrow, "m\binom{m}2\RM{III}"] & 
\vcenter{\hbox{\includegraphics[scale=0.6]{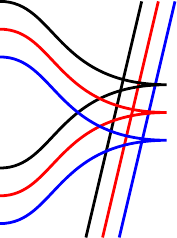}}}\arrow[r, Rightarrow, "m^2\RM{II}"] &
\vcenter{\hbox{\includegraphics[scale=0.6]{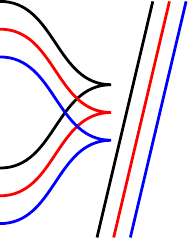}}}
\end{tikzcd}
\qquad
\begin{tikzcd}
\begin{tikzpicture}[baseline=-.5ex,xscale=-1,scale=0.6]
\draw[densely dotted](0,0) circle (1);
\clip(0,0) circle (1);
\draw[thick] (1,.6) to[out=180,in=0] (-0.5,0);
\draw[thick] (1,.2) to[out=180,in=0] (-0.5,0);
\draw[thick] (1,-.2) to[out=180,in=0] (-0.5,0);
\draw[thick] (1,-.6) to[out=180,in=0] (-0.5,0);
\draw[fill] (-0.5,0) circle (2pt);
\draw[thick] (0,1) -- (0.5,-1);
\end{tikzpicture}\arrow[rr,"\RM{V}"]\arrow[d, "(\cdot)^{\p m}"] & &
\begin{tikzpicture}[baseline=-.5ex,xscale=-1,scale=0.6]
\draw[densely dotted](0,0) circle (1);
\clip(0,0) circle (1);
\draw[thick] (1,.6) to[out=180,in=0] (0,0);
\draw[thick] (1,.2) to[out=180,in=0] (0,0);
\draw[thick] (1,-.2) to[out=180,in=0] (0,0);
\draw[thick] (1,-.6) to[out=180,in=0] (0,0);
\draw[fill] (0,0) circle (2pt);
\draw[thick] (-0.5,1) -- (0,-1);
\end{tikzpicture}\arrow[d, "(\cdot)^{\p m}"]\\
\vcenter{\hbox{\includegraphics[scale=0.5]{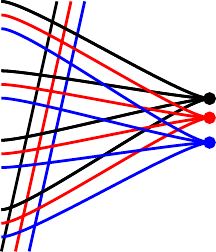}}}\arrow[r, Rightarrow, "N_2\RM{III}"] & 
\vcenter{\hbox{\includegraphics[scale=0.5]{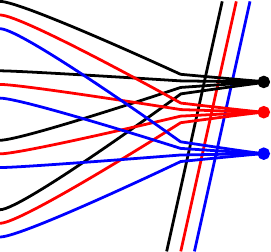}}}\arrow[r, Rightarrow, "m^2\RM{V}"] & 
\vcenter{\hbox{\includegraphics[scale=0.5]{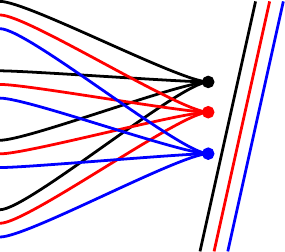}}}
\end{tikzcd}
\]
\[
\begin{tikzcd}[column sep=1.5pc]
\vcenter{\hbox{\includegraphics[scale=0.7]{front_RM_VI_b.pdf}}}\arrow[rrrr,"\RM{VI}"]\arrow[d, "(\cdot)^{\p m}"]& & & &
\vcenter{\hbox{\includegraphics[scale=0.7]{front_RM_VI_a.pdf}}}\arrow[d, "(\cdot)^{\p m}"]\\
\vcenter{\hbox{\includegraphics[scale=0.5,trim={0.5cm 0 0.5cm 0}, clip]{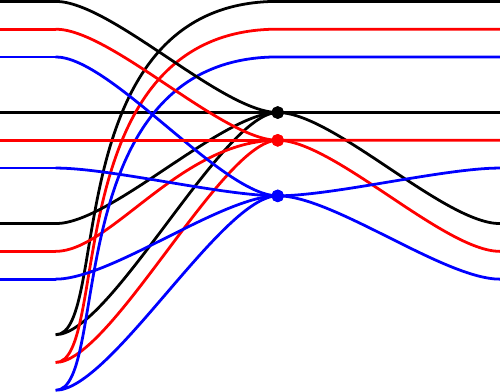}}}\arrow[r, Rightarrow, "N_2\RM{III}"] & 
\vcenter{\hbox{\includegraphics[scale=0.5,trim={0.5cm 0 0.5cm 0}, clip]{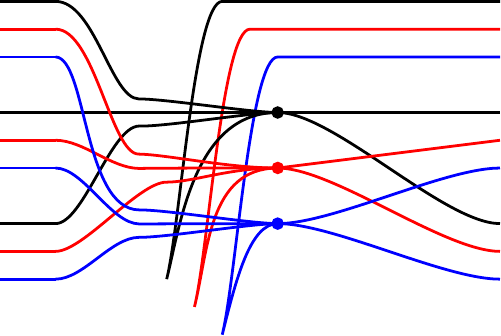}}}\arrow[r, Rightarrow, "N_3\RM{II}"] &
\vcenter{\hbox{\includegraphics[scale=0.5,trim={0.5cm 0 0.5cm 0}, clip]{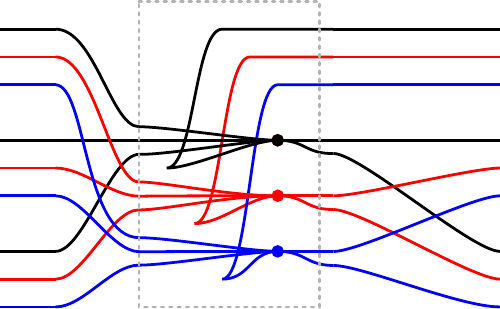}}}\arrow[r, Rightarrow, dashed, "\RM{M}"] & 
\vcenter{\hbox{\includegraphics[scale=0.5,trim={0.5cm 0 0.5cm 0}, clip]{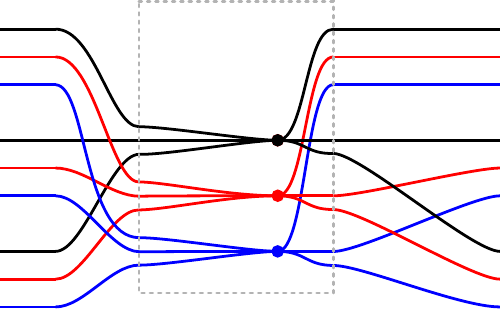}}}\arrow[r, Rightarrow, "N_4\RM{III}"] &
\vcenter{\hbox{\includegraphics[scale=0.5,trim={0.6cm 0 0.4cm 0}, clip]{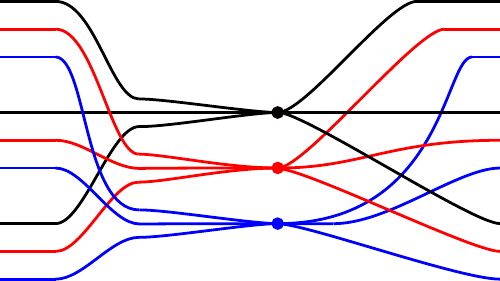}}}\\
\end{tikzcd},
\]
\[
\RM{M}=
\begin{tikzcd}
\vcenter{\hbox{\includegraphics[scale=0.7]{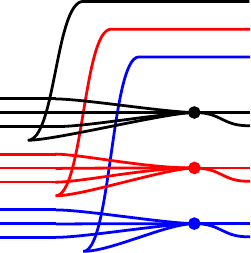}}}\arrow[from=r, Rightarrow, "rm\RM{VI}"'] \arrow[d, dashed, Rightarrow, "\RM{M}"]& 
\vcenter{\hbox{\includegraphics[scale=0.7]{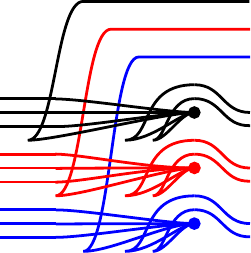}}}\arrow[from=r, Rightarrow, "r\binom{m}2\RM{II}"'] & 
\vcenter{\hbox{\includegraphics[scale=0.7]{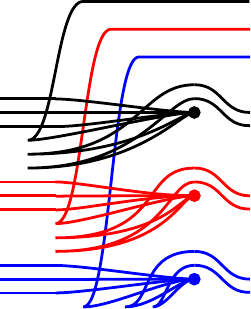}}}\arrow[d, Rightarrow, "N_5\RM{III}"] \\
\vcenter{\hbox{\includegraphics[scale=0.7]{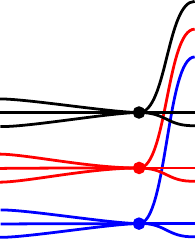}}}\arrow[from=r, Rightarrow, "rm\RM{VI}"] &
\vcenter{\hbox{\includegraphics[scale=0.7]{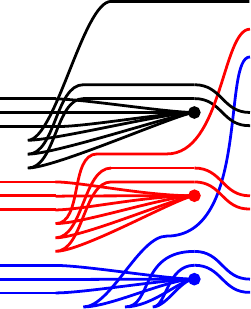}}}\arrow[from=r, Rightarrow, "\binom{m}2\RM{V}"] & 
\vcenter{\hbox{\includegraphics[scale=0.7]{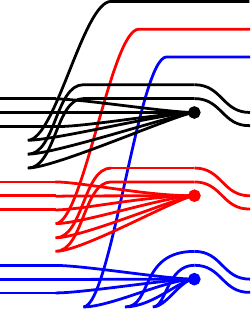}}}
\end{tikzcd}
\]
\[
\begin{tikzcd}
\vcenter{\hbox{\includegraphics[scale=0.7]{RM_0_a_1.pdf}}}\arrow[rrrr,"\RM{0_a}"]\arrow[d, "(\cdot)^{\p m}"]& & & &
\vcenter{\hbox{\includegraphics[scale=0.7]{RM_0_a_2.pdf}}}\arrow[d, "(\cdot)^{\p m}"]\\
\vcenter{\hbox{\includegraphics[scale=0.5]{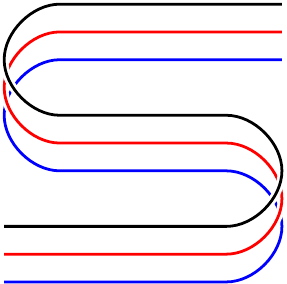}}}\arrow[r, Rightarrow, "2\binom{m}2\RM{0_c}"] &
\vcenter{\hbox{\includegraphics[scale=0.5]{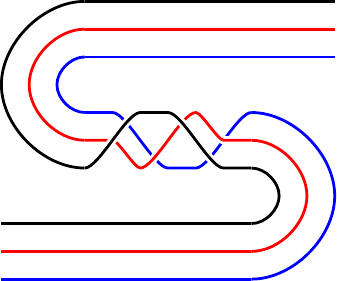}}}\arrow[r, Rightarrow, "\binom{m}3\RM{iii_a}"] &
\vcenter{\hbox{\includegraphics[scale=0.5]{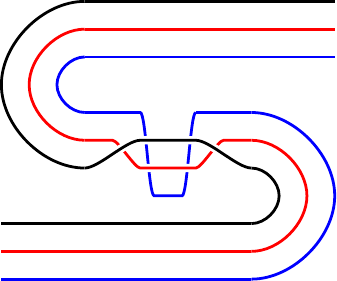}}}\arrow[r, Rightarrow, "\binom{m}2\RM{ii}"] &
\vcenter{\hbox{\includegraphics[scale=0.5]{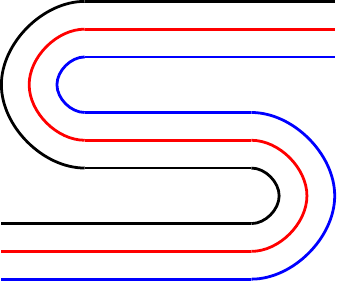}}}\arrow[r, Rightarrow, "m\RM{0_a}"] &
\vcenter{\hbox{\includegraphics[scale=0.5]{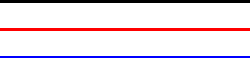}}}
\end{tikzcd}
\]
\[
\begin{tikzcd}
\vcenter{\hbox{\includegraphics[scale=0.7]{RM_0_b_1.pdf}}}\arrow[rrr,"\RM{0_b}"]\arrow[d, "(\cdot)^{\p m}"]& & & 
\vcenter{\hbox{\includegraphics[scale=0.7]{RM_0_b_2.pdf}}}\arrow[ddd, "(\cdot)^{\p m}"]\\
\vcenter{\hbox{\includegraphics[scale=0.5]{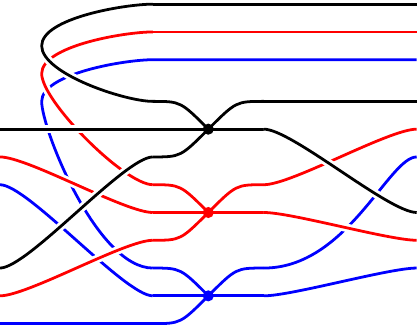}}}\arrow[r, Rightarrow, "\binom{m}2\RM{0_c}"] &
\vcenter{\hbox{\includegraphics[scale=0.5]{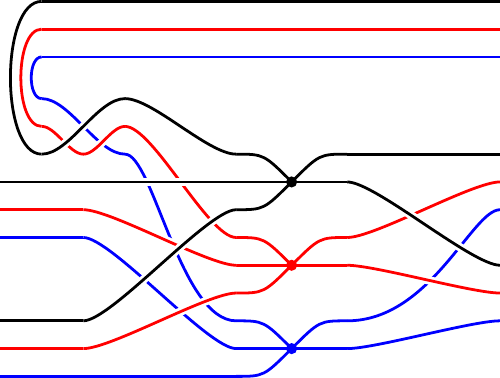}}}\arrow[r, Rightarrow, "N\RM{iii_a}"] &
\vcenter{\hbox{\includegraphics[scale=0.5]{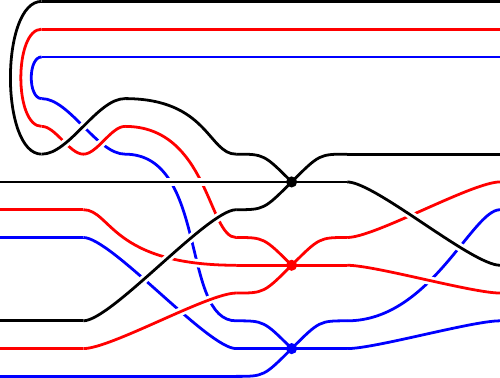}}}\arrow[d, Rightarrow, "N\RM{iii_a}"]\\
\vcenter{\hbox{\includegraphics[scale=0.5]{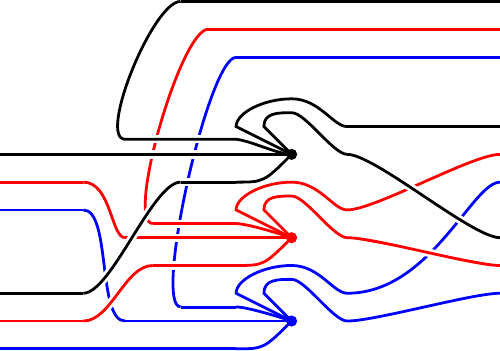}}}\arrow[r, Rightarrow, "mr\RM{iv}"]\arrow[d, Rightarrow, "r\binom{m}2\RM{ii}"] &
\vcenter{\hbox{\includegraphics[scale=0.5]{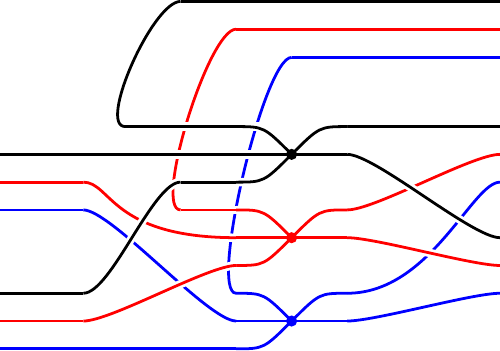}}}\arrow[from=r, Rightarrow, "\RM{0_c}"] &
\vcenter{\hbox{\includegraphics[scale=0.5]{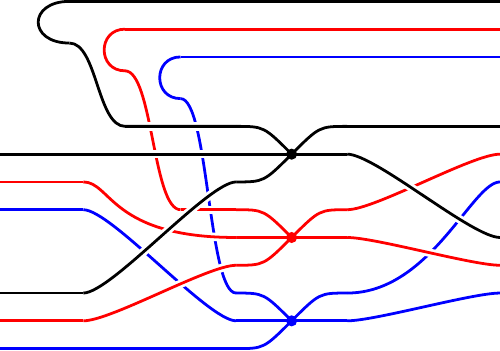}}}\\
\vcenter{\hbox{\includegraphics[scale=0.5]{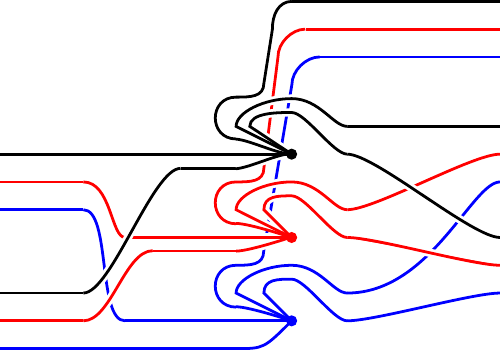}}}\arrow[r, Rightarrow, "\binom{m}2\RM{iv}"] &
\vcenter{\hbox{\includegraphics[scale=0.5]{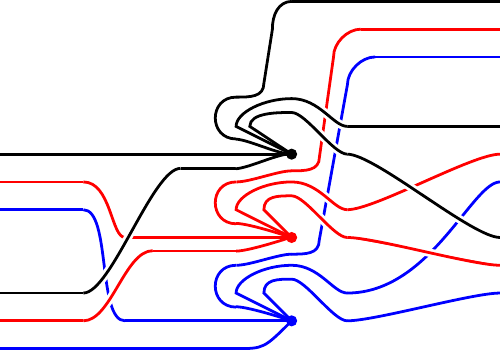}}}\arrow[r, Rightarrow, "(r+1)m\RM{0_b}"] &
\vcenter{\hbox{\includegraphics[scale=0.5]{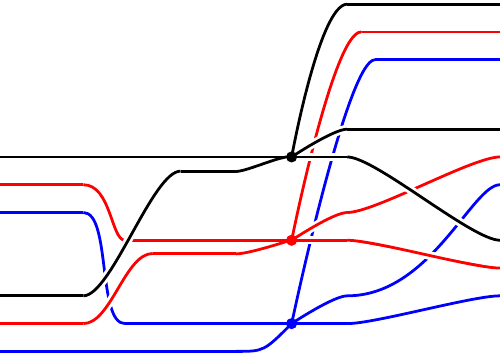}}}\arrow[r, Rightarrow, "\RM{iii_a}"] &
\vcenter{\hbox{\includegraphics[scale=0.5]{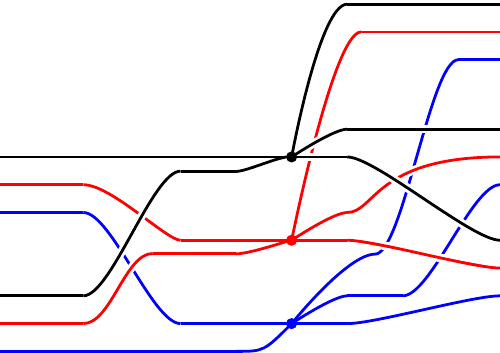}}}
\end{tikzcd}
\]
\[
\begin{tikzcd}
\vcenter{\hbox{\includegraphics[scale=0.7]{RM_0_c_1.pdf}}}\arrow[rrrr,"\RM{0_c}"]\arrow[d, "(\cdot)^{\p m}"]& & & & 
\vcenter{\hbox{\includegraphics[scale=0.7]{RM_0_c_2.pdf}}}\arrow[d, "(\cdot)^{\p m}"]\\
\vcenter{\hbox{\includegraphics[scale=0.5]{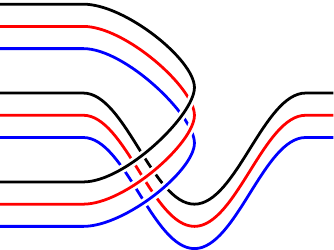}}}\arrow[r, Rightarrow, "\binom{m}2\RM{0_c}"] &
\vcenter{\hbox{\includegraphics[scale=0.5]{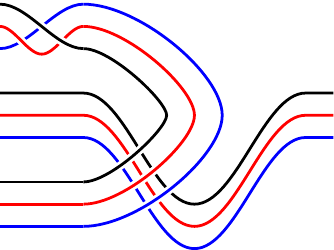}}}\arrow[r, Rightarrow, "m\RM{0_c}"] &
\vcenter{\hbox{\includegraphics[scale=0.5]{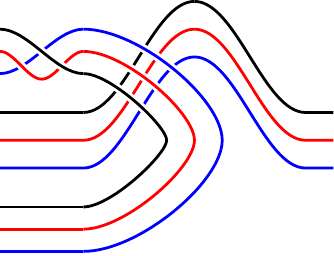}}}\arrow[r, Rightarrow, "m\binom{m}2\RM{iii_a}"] &
\vcenter{\hbox{\includegraphics[scale=0.5]{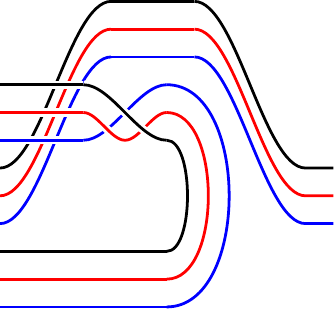}}}\arrow[r, Rightarrow, "\binom{m}2\RM{0_c}"] &
\vcenter{\hbox{\includegraphics[scale=0.5]{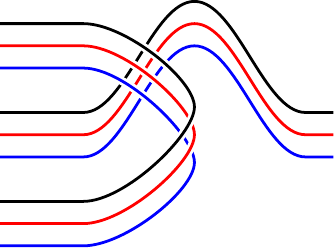}}}
\end{tikzcd}
\]
\[
\begin{tikzcd}
\begin{tikzpicture}[baseline=-.5ex,scale=0.6]
\draw[densely dotted](0,0) circle (1);
\clip(0,0) circle (1);
\draw[thick] (-0.5,-1) -- (0,1);
\draw[white,line width=5] (-1,.6) to[out=0,in=90] (0.5,0) to[out=-90,in=0] (-1,-0.6);
\draw[thick] (-1,.6) to[out=0,in=90] (0.5,0) to[out=-90,in=0] (-1,-0.6);
\end{tikzpicture}\arrow[rr,"\RM{ii}"]\arrow[d,"(\cdot)^{\p m}"] & &
\begin{tikzpicture}[baseline=-.5ex,scale=0.6]
\draw[densely dotted](0,0) circle (1);
\clip(0,0) circle (1);
\draw[thick] (0,-1) -- (0.5,1);
\draw[white,line width=5] (-1,.6) to[out=0,in=90] (0,0) to[out=-90,in=0] (-1,-0.6);
\draw[thick] (-1,.6) to[out=0,in=90] (0,0) to[out=-90,in=0] (-1,-0.6);
\end{tikzpicture}\arrow[d,"(\cdot)^{\p m}"]\\
\vcenter{\hbox{\includegraphics[scale=0.6]{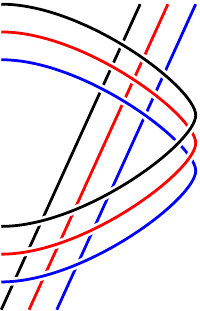}}}\arrow[r, Rightarrow, "m\binom{m}2\RM{iii_b}"] &
\vcenter{\hbox{\includegraphics[scale=0.6]{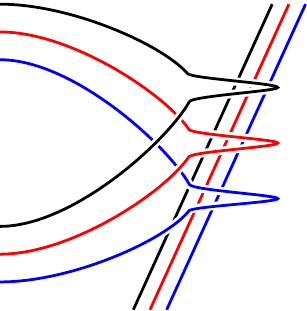}}}\arrow[r, Rightarrow, "m^2\RM{ii}"] &
\vcenter{\hbox{\includegraphics[scale=0.6]{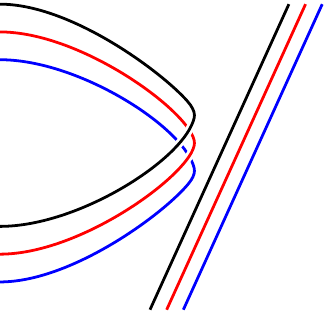}}}
\end{tikzcd}
\]
\[
\begin{tikzcd}
\vcenter{\hbox{\rotatebox[origin=c]{90}{\includegraphics[scale=0.7]{RM_III_b_1.pdf}}}}\arrow[r,"\RM{iii_a}"]\arrow[d, "(\cdot)^{\p m}"]&
\vcenter{\hbox{\rotatebox[origin=c]{90}{\includegraphics[scale=0.7]{RM_III_b_2.pdf}}}}\arrow[d, "(\cdot)^{\p m}"]\\
\vcenter{\hbox{\includegraphics[scale=0.5]{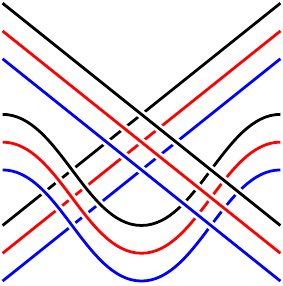}}}\arrow[r,"m^3 \RM{iii_a}"] &
\vcenter{\hbox{\includegraphics[scale=0.5]{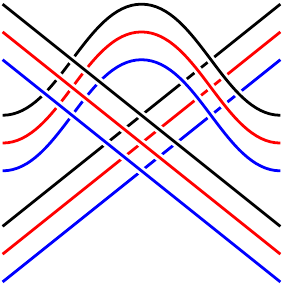}}}
\end{tikzcd}
\qquad
\begin{tikzcd}
\vcenter{\hbox{\rotatebox[origin=c]{90}{\includegraphics[scale=0.7]{RM_III_a_1.pdf}}}}\arrow[r,"\RM{iii_b}"]\arrow[d, "(\cdot)^{\p m}"]&
\vcenter{\hbox{\rotatebox[origin=c]{90}{\includegraphics[scale=0.7]{RM_III_a_2.pdf}}}}\arrow[d, "(\cdot)^{\p m}"]\\
\vcenter{\hbox{\includegraphics[scale=0.5]{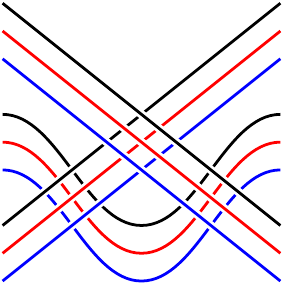}}}\arrow[r,"m^3 \RM{iii_b}"] &
\vcenter{\hbox{\includegraphics[scale=0.5]{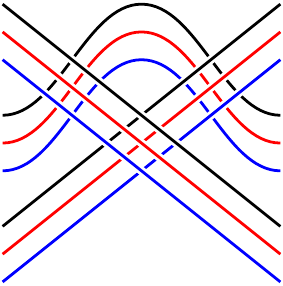}}}
\end{tikzcd}
\]
\[
\begin{tikzcd}[column sep=2.5pc]
\begin{tikzpicture}[baseline=-.5ex,scale=0.6]
\draw[densely dotted](0,0) circle (1);
\clip(0,0) circle (1);
\draw[thick] (-0.5,-1) -- (0,1);
\draw[white,line width=5] (-1,.6) -- (0.5,0) (-1,.2) -- (0.5,0) (-1,-.2) -- (0.5,0) (-1,-.6) -- (0.5,0);
\draw[thick] (-1,.6) -- (0.5,0) (-1,.2) -- (0.5,0) (-1,-.2) -- (0.5,0) (-1,-.6) -- (0.5,0);
\draw[fill] (0.5,0) circle (2pt);
\end{tikzpicture}\arrow[rr,"\RM{iv}"]\arrow[d,"(\cdot)^{\p m}"]& &
\begin{tikzpicture}[baseline=-.5ex,scale=0.6]
\draw[densely dotted](0,0) circle (1);
\clip(0,0) circle (1);
\draw[thick] (-1,.6) -- (0,0) (-1,.2) -- (0,0) (-1,-.2) -- (0,0) (-1,-.6) -- (0,0);
\draw[fill] (0,0) circle (2pt);
\draw[thick] (0,-1) -- (0.5,1);
\end{tikzpicture}\arrow[d,"(\cdot)^{\p m}"]\\
\vcenter{\hbox{\includegraphics[scale=0.5]{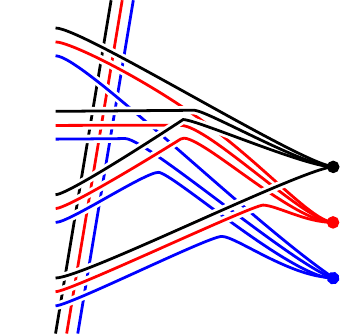}}}\arrow[r, Rightarrow, "m\binom{m}2\binom{n}2 \RM{iii_b}"] &
\vcenter{\hbox{\includegraphics[scale=0.5]{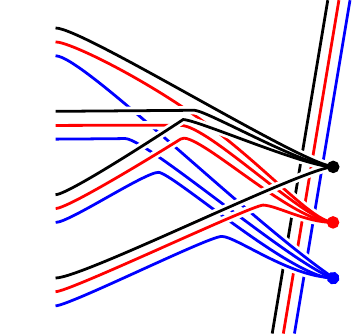}}}\arrow[r, Rightarrow, "m^2 \RM{iv_b}"] &
\vcenter{\hbox{\includegraphics[scale=0.5]{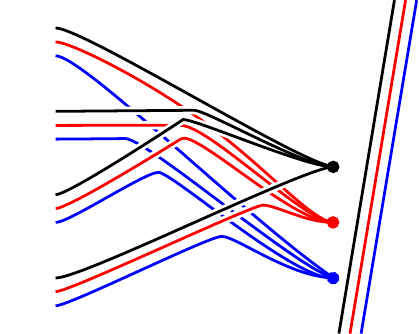}}}
\end{tikzcd}
\]

Here, the numbers $N_i$ are given as
\begin{align*}
N_1&=\frac{m(m-1)(2m-1)}6,&
N_2&=\binom m2\binom \ell 2,&
N_3&=\binom m2(\ell-1),&
N_4&=\binom{m}3\binom{r}2,&
N_5&=\binom m2(2\ell+1),
\end{align*}
where $\ell$ and $r$ are the numbers of half-edges on the left and right, respectively.

In addition, one can choose the consistent morphisms corresponding to the operations $\RM{B_*}$ on basepoints as depicted in Figure~\ref{figure:consistent basepoint moves}.

\begin{figure}[ht]
\[
\begin{tikzcd}[ampersand replacement=\&]
\begin{tikzpicture}[baseline=-.5ex,scale=0.7]
\draw[dashed](0,0) circle (1);
\clip(0,0) circle (1);
\begin{scope}[yshift=0cm]
\draw[thick] (-1,0.5)--node[midway,sloped,below=-2ex]{$|$} (0,0) -- (1,-0.5);
\draw[thick] (-1,-0.5) -- (1,0.5);
\end{scope}
\end{tikzpicture}\arrow[r, "\RM{B_1}"]\arrow[d, "(\cdot)^{\p m}"] \&
\begin{tikzpicture}[baseline=-.5ex,scale=0.7]
\draw[dashed](0,0) circle (1);
\clip(0,0) circle (1);
\begin{scope}[yshift=0cm]
\draw[thick] (-1,0.5)-- (0,0) --node[midway,sloped,below=-2ex]{$|$} (1,-0.5);
\draw[thick] (-1,-0.5) -- (1,0.5);
\end{scope}
\end{tikzpicture}\arrow[d, "(\cdot)^{\p m}"]\\
\begin{tikzpicture}[baseline=-.5ex,scale=0.7]
\begin{scope}[yshift=0.3cm]
\draw[thick] (-1,0.5)--node[pos=0.6,sloped,below=-2ex]{$|$} (0,0) -- (1,-0.5);
\draw[thick] (-1,-0.5) -- (1,0.5);
\end{scope}
\begin{scope}[yshift=0cm]
\draw[thick,red] (-1,0.5)--node[pos=0.4,sloped,below=-2ex]{$|$} (0,0) -- (1,-0.5);
\draw[thick,red] (-1,-0.5) -- (1,0.5);
\end{scope}
\begin{scope}[yshift=-0.3cm]
\draw[thick,blue] (-1,0.5)--node[pos=0.2,sloped,below=-2ex]{$|$} (0,0) -- (1,-0.5);
\draw[thick,blue] (-1,-0.5) -- (1,0.5);
\end{scope}
\end{tikzpicture}\arrow[r, Rightarrow, "m^2\RM{B_1}"] \&
\begin{tikzpicture}[baseline=-.5ex,scale=0.7]
\begin{scope}[yshift=0.3cm]
\draw[thick] (-1,0.5)-- (0,0) --node[pos=0.8,sloped,below=-2ex]{$|$} (1,-0.5);
\draw[thick] (-1,-0.5) -- (1,0.5);
\end{scope}
\begin{scope}[yshift=0cm]
\draw[thick,red] (-1,0.5)-- (0,0) --node[pos=0.6,sloped,below=-2ex]{$|$} (1,-0.5);
\draw[thick,red] (-1,-0.5) -- (1,0.5);
\end{scope}
\begin{scope}[yshift=-0.3cm]
\draw[thick,blue] (-1,0.5)-- (0,0) --node[pos=0.4,sloped,below=-2ex]{$|$} (1,-0.5);
\draw[thick,blue] (-1,-0.5) -- (1,0.5);
\end{scope}
\end{tikzpicture}
\end{tikzcd}
\qquad
\begin{tikzcd}[ampersand replacement=\&]
\begin{tikzpicture}[baseline=-.5ex,scale=0.7]
\draw[dashed](0,0) circle (1);
\clip(0,0) circle (1);
\draw[thick] (-1,0)-- (0,0) node[below=-2ex]{$|$} -- (1,0);
\end{tikzpicture}\arrow[r,"\RM{B_2}"]\arrow[d, "(\cdot)^{\p m}"] \&
\begin{tikzpicture}[baseline=-.5ex,scale=0.7]
\draw[dashed](0,0) circle (1);
\clip(0,0) circle (1);
\draw[thick] (-1,0)-- node[near start, below=-2ex]{$|$} node[near end, below=-2ex]{$|$} (1,0);
\end{tikzpicture}\arrow[d, "(\cdot)^{\p m}"]\\
\begin{tikzpicture}[baseline=-.5ex,scale=0.7]
\begin{scope}[yshift=0.6cm]
\draw[thick] (-1,0)-- (0,0) node[below=-2ex]{$|$} -- (1,0);
\end{scope}
\begin{scope}[yshift=0cm]
\draw[thick,red] (-1,0)-- (0,0) node[below=-2ex]{$|$} -- (1,0);
\end{scope}
\begin{scope}[yshift=-0.6cm]
\draw[thick,blue] (-1,0)-- (0,0) node[below=-2ex]{$|$} -- (1,0);
\end{scope}
\end{tikzpicture}\arrow[r, Rightarrow, "m\RM{B_2}"]\&
\begin{tikzpicture}[baseline=-.5ex,scale=0.7]
\begin{scope}[yshift=0.6cm]
\draw[thick] (-1,0)-- node[near start, below=-2ex]{$|$} node[near end, below=-2ex]{$|$} (1,0);
\end{scope}
\begin{scope}[yshift=0cm]
\draw[thick,red] (-1,0)-- node[near start, below=-2ex]{$|$} node[near end, below=-2ex]{$|$} (1,0);
\end{scope}
\begin{scope}[yshift=-0.6cm]
\draw[thick,blue] (-1,0)-- node[near start, below=-2ex]{$|$} node[near end, below=-2ex]{$|$} (1,0);
\end{scope}
\end{tikzpicture}
\end{tikzcd}
\]
\[
\begin{tikzcd}[ampersand replacement=\&]
\begin{tikzpicture}[baseline=-.5ex,scale=0.7]
\draw[dashed](0,0) circle (1);
\clip(0,0) circle (1);
\draw[thick] (-1,-0.5) to[out=0,in=180] (0,0) to[out=0,in=180] (1,0.5);
\draw[thick] (-1,0) to[out=0,in=180] (0,0) to[out=0,in=180] (1,0);
\draw[thick] (-1,0.5) to[out=0,in=180] (0,0) to[out=0,in=180] (1,-0.5);
\draw[fill] (0,0) circle (2pt);
\end{tikzpicture}\arrow[rr,"\RM{B_3}"]\arrow[d, "(\cdot)^{\p m}"] \& \&
\begin{tikzpicture}[baseline=-.5ex,scale=0.7]
\draw[dashed](0,0) circle (1);
\clip(0,0) circle (1);
\draw[thick] (-1,-0.5) to[out=0,in=180] (0,0) to[out=0,in=180] node[midway,sloped, below=-2ex] {$|$} (1,0.5);
\draw[thick] (-1,0) to[out=0,in=180] (0,0) to[out=0,in=180] (1,0);
\draw[thick] (-1,0.5) to[out=0,in=180] (0,0) to[out=0,in=180] (1,-0.5);
\draw[fill] (0,0) circle (2pt);
\end{tikzpicture}\arrow[d, "(\cdot)^{\p m}"]\\
\begin{tikzpicture}[baseline=-.5ex,scale=0.7]
\begin{scope}[yshift=0.25cm]
\draw[thick] (-1,-0.8) to[out=0,in=180] (0,0.5) to[out=0,in=180] (1,0.8);
\draw[thick] (-1,0) to[out=0,in=180] (0,0.5) to[out=0,in=180] (1,0);
\draw[thick] (-1,0.8) to[out=0,in=180] (0,0.5) to[out=0,in=180] (1,-0.8);
\draw[fill] (0,0.5) circle (2pt);
\end{scope}
\begin{scope}[yshift=0cm, color=red]
\draw[thick] (-1,-0.8) to[out=0,in=180] (0,0.2) to[out=0,in=180] (1,0.8);
\draw[thick] (-1,0) to[out=0,in=180] (0,0.2) to[out=0,in=180] (1,0);
\draw[thick] (-1,0.8) to[out=0,in=180] (0,0.2) to[out=0,in=180] (1,-0.8);
\draw[fill] (0,0.2) circle (2pt);
\end{scope}
\begin{scope}[yshift=-0.25cm, color=blue]
\draw[thick] (-1,-0.8) to[out=0,in=180] (0,-0.6) to[out=0,in=180] (1,0.8);
\draw[thick] (-1,0) to[out=0,in=180] (0,-0.6) to[out=0,in=180] (1,0);
\draw[thick] (-1,0.8) to[out=0,in=180] (0,-0.6) to[out=0,in=180] (1,-0.8);
\draw[fill] (0,-0.6) circle (2pt);
\end{scope}
\end{tikzpicture}\arrow[r, Rightarrow, "m\RM{B_3}"] \&
\begin{tikzpicture}[baseline=-.5ex,scale=0.7]
\begin{scope}[yshift=0.25cm]
\draw[thick] (-2,-0.8) to[out=0,in=180] (0,0.6) to[out=0,in=180] node[pos=0.4,sloped, below=-2ex] {$|$} (2,0.8);
\draw[thick] (-2,0) to[out=0,in=180] (0,0.6) to[out=0,in=180] (2,0);
\draw[thick] (-2,0.8) to[out=0,in=180] (0,0.6) to[out=0,in=180] (2,-0.8);
\draw[fill] (0,0.6) circle (2pt);
\end{scope}
\begin{scope}[yshift=0cm, color=red]
\draw[thick] (-2,-0.8) to[out=0,in=180] (0,0.2) to[out=0,in=180] node[pos=0.35,sloped, below=-2ex] {$|$} (2,0.8);
\draw[thick] (-2,0) to[out=0,in=180] (0,0.2) to[out=0,in=180] (2,0);
\draw[thick] (-2,0.8) to[out=0,in=180] (0,0.2) to[out=0,in=180] (2,-0.8);
\draw[fill] (0,0.2) circle (2pt);
\end{scope}
\begin{scope}[yshift=-0.25cm, color=blue]
\draw[thick] (-2,-0.8) to[out=0,in=180] (0,-0.6) to[out=0,in=180] node[pos=0.35,sloped, below=-2ex] {$|$} (2,0.8);
\draw[thick] (-2,0) to[out=0,in=180] (0,-0.6) to[out=0,in=180] (2,0);
\draw[thick] (-2,0.8) to[out=0,in=180] (0,-0.6) to[out=0,in=180] (2,-0.8);
\draw[fill] (0,-0.6) circle (2pt);
\end{scope}
\end{tikzpicture}\arrow[r, Rightarrow, "r\binom{m}2\RM{B_1}"] \&
\begin{tikzpicture}[baseline=-.5ex,scale=0.7]
\begin{scope}[yshift=0.25cm]
\draw[thick] (-1,-0.8) to[out=0,in=180] (0,0.5) to[out=0,in=180] node[pos=0.9,sloped, below=-2ex] {$|$} (1,0.8);
\draw[thick] (-1,0) to[out=0,in=180] (0,0.5) to[out=0,in=180] (1,0);
\draw[thick] (-1,0.8) to[out=0,in=180] (0,0.5) to[out=0,in=180] (1,-0.8);
\draw[fill] (0,0.5) circle (2pt);
\end{scope}
\begin{scope}[yshift=0cm, color=red]
\draw[thick] (-1,-0.8) to[out=0,in=180] (0,0.2) to[out=0,in=180] node[pos=0.85,sloped, below=-2ex] {$|$} (1,0.8);
\draw[thick] (-1,0) to[out=0,in=180] (0,0.2) to[out=0,in=180] (1,0);
\draw[thick] (-1,0.8) to[out=0,in=180] (0,0.2) to[out=0,in=180] (1,-0.8);
\draw[fill] (0,0.2) circle (2pt);
\end{scope}
\begin{scope}[yshift=-0.25cm, color=blue]
\draw[thick] (-1,-0.8) to[out=0,in=180] (0,-0.6) to[out=0,in=180] node[pos=0.9,sloped, below=-2ex] {$|$} (1,0.8);
\draw[thick] (-1,0) to[out=0,in=180] (0,-0.6) to[out=0,in=180] (1,0);
\draw[thick] (-1,0.8) to[out=0,in=180] (0,-0.6) to[out=0,in=180] (1,-0.8);
\draw[fill] (0,-0.6) circle (2pt);
\end{scope}
\end{tikzpicture}
\end{tikzcd}
\]
\caption{Consistent basepoint moves on the canonical front copies}
\label{figure:consistent basepoint moves}
\end{figure}
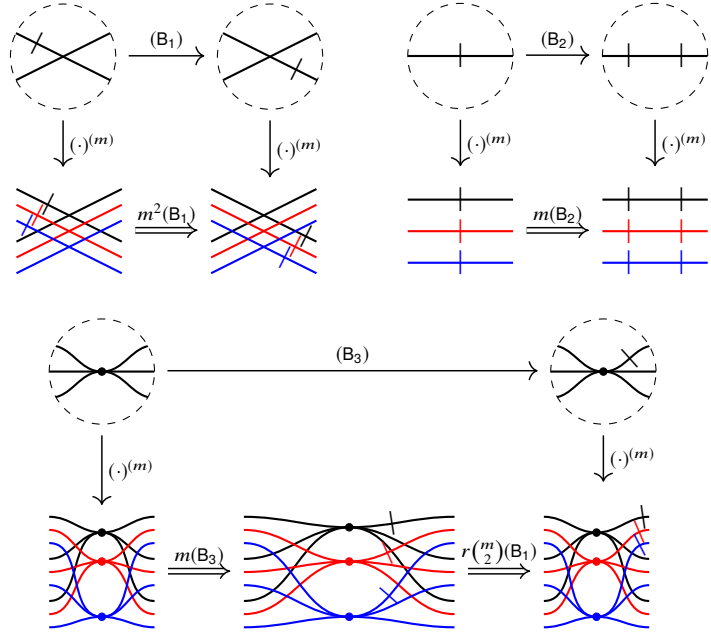

The one important observation is that all parallel copies are consistent and all arrows are elementary.
Therefore in summary, we have the following proposition.
\begin{proposition}\label{proposition:functoriality of canonical Lagrangian copies}
The canonical consistent sequences
\[
\BLT^\mu\to \BLT^{\mu,\p\bullet}\quad\text{ and }\quad
\BLT^\mu_\lag\to \BLT^{\mu,\p\bullet}_\lag
\]
are well-defined and preserves the equivalences.

In particular, each consistent Reidemeister move and basepoint move can be mapped to a zig-zag of elementary consistent Reidemeister moves or basepoint moves.
\end{proposition}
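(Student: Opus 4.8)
The plan is to prove the two assertions in turn: first that the canonical parallel copies genuinely define objects of $\BLT^{\mu,\p\bullet}$ and $\BLT^{\mu,\p\bullet}_\lag$, and then that the assignment carries each generating Reidemeister or basepoint move to a zig-zag of elementary consistent moves, compatibly with composition, which makes it functorial and, in particular, equivalence-preserving.

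For well-definedness I would first check regularity of $\cT^{\p m}$ (resp.\ $\cT^{\p m}_\lag$) for each $m$: away from the vertices a small generic $z$- (resp.\ $y$-) translation of a regular diagram stays regular, and near a vertex $v$ of type $(\ell,r)$ the nested-neighborhood convention fixed above --- choosing the $k$-th copy so that its new double points avoid the neighborhood $U_{k-1}$ already used by the first $k-1$ copies --- makes all $\binom m2\bigl(\binom\ell2+\binom r2\bigr)$ auxiliary crossings transverse and disjoint from vertices, basepoints and $x$-extreme points. The consistency data is then essentially formal: the restriction morphism $\cT^{\p m}|_U\to\cT^{\p m}|_V$ for $V\subset U$ forgets the copies indexed by $U\setminus V$, and for an order-preserving inclusion $h\colon[m]\to[n]$ the structure map $\cT(h)^U$ compares $\cT^{h(U)}$ with $\cT^U$ --- two diagrams that differ only by the vertical spacing of copies listed in the same order, hence identified by a regular rescaling isotopy; the commuting squares of Definition~\ref{definition:consistent Reidemeister moves} reduce to the evident compatibility of these rescalings with each other.

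For functoriality it suffices, since the morphisms of $\BLT^\mu$ and $\BLT^\mu_\lag$ are freely generated by the (basepoint) Reidemeister moves, to exhibit for each generator $\RM{M}$ a zig-zag of elementary consistent moves realizing the $m$-copy of $\RM{M}$, uniformly in $m$. Outside the support of $\RM{M}$ the $m$ copies are untouched; inside an enlarged neighborhood one first separates the $\binom m2$ pairs of parallel strands by triple-point moves ($\RM{III}$, resp.\ $\RM{iii_a},\RM{iii_b}$), then applies $\RM{M}$ to the $m$ copies one at a time, then re-bundles the strands with bigon moves ($\RM{II}$, resp.\ $\RM{ii}$) and cusp moves ($\RM{I},\RM{V}$); this is precisely the content of the displayed diagrams, and the integers $N_i$ record how many triple points it costs to commute one strand past a bundle of parallel copies (a pyramidal sum for $\RM{I}$, the binomials $\binom m2\binom\ell2$, $\binom m2(\ell-1)$, $\binom m2(2\ell+1)$, etc.\ for the vertex moves). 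The basepoint operations $\RM{B_*}$ are handled the same way, sliding the stacked basepoints through the copies successively. Each move appearing in these zig-zags is, by construction, ``apply one local move to the copies in a prescribed order'', so it restricts to the analogous move on $\cT'^U$ for every $U\subset[m]$ and satisfies the consistency squares with the structure maps automatically; composition of moves maps to concatenation of zig-zags and the identity move to the identity, whence functoriality and the preservation of equivalences.

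The main obstacle is the vertex moves $\RM{VI}$ (front) and $\RM{iv}$, $\RM{0_b}$ (Lagrangian), where a strand is dragged through a valency-$n$ vertex. For the $m$-copy, $v$ is replaced by $m$ stacked vertices surrounded by the $\binom m2\bigl(\binom\ell2+\binom r2\bigr)$ auxiliary crossings, and the zig-zag must shepherd the incoming strand past all of them; the inner step (the dashed $\RM{M}$ in the $\RM{VI}$ diagram) is itself a commuting square of $\RM{VI}$-, $\RM{II}$-, $\RM{V}$- and $\RM{III}$-moves, whose content is that the two traversals produce the same ordered $m$-copy configuration. Verifying this commutation --- equivalently, that the cyclic order of the auxiliary crossings around each copy of $v$ is preserved under the prescribed moves and that no further Reidemeister move is forced --- is the one genuinely delicate check; the rest is bookkeeping of the local pictures already drawn.
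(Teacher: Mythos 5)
Your proposal is correct and follows essentially the same route as the paper: the paper in fact gives no separate argument for this proposition but simply presents it as a summary of the explicit local zig-zags displayed immediately beforehand (for each $\RM{M}\in\cRM$, $\RM{m}\in\cRM_\lag$, and each $\RM{B_*}$), together with the observations that the canonical copies are consistent and that each displayed arrow is elementary. Your elaboration of the consistency data (restriction forgets copies, the structure maps $\cT(h)^U$ are regular rescalings) and your reduction of functoriality to the freely generating moves are exactly the unstated justifications behind the paper's "it is obvious" / "in summary" phrasing.

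One small inaccuracy: you describe the inner dashed step $\RM{M}$ in the $\RM{VI}$ zig-zag as "a commuting square \dots whose content is that the two traversals produce the same ordered $m$-copy configuration," as though something were being verified. In the paper's diagram that dashed arrow is being \emph{defined} as the composition going the long way around the square (via $rm\RM{VI}$, $r\binom m2\RM{II}$, $N_5\RM{III}$, $\binom m2\RM{V}$, $rm\RM{VI}$); there is no independent commutation to check, only that this composition has the right source and target and that each constituent arrow is elementary and consistent. This does not affect the substance of your argument.
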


\begin{example}
Due to the definition of the bordered Legendrian graph in a normal form, the canonical consistent sequence of a bordered Legendrian graph in a normal form is a sequence of bordered Legendrian graphs in a normal form.

On the other hand, as seen in Lemma~\ref{lemma:normal form representative}, for each consistent sequence $(\cT^{\p\bullet},\mu^{\p\bullet})\in\BLT^{\mu,\p\bullet}$ of canonical front parallel copies, one can find a representative in a normal form up to consistent front Reidemeister moves. In other words, there are a consistent sequence $(\cT'^{\p\bullet},\mu'^{\p\bullet})\in\BLT^{\mu,\p\bullet}$ of canonical front parallel copies in a normal form and a sequence of consistent front Reidemeister moves between $(\cT'^{\p\bullet},\mu'^{\p\bullet})$ and $(\cT^{\p\bullet},\mu^{\p\bullet})$
\[
\begin{tikzcd}[column sep=3pc]
(\cT'^{\p\bullet},\mu'^{\p\bullet})\arrow[r,"\RM{M_1}^{\p\bullet}"]&\cdots\arrow[r,"\RM{M_k}^{\p\bullet}"]&(\cT^{\p\bullet},\mu^{\p\bullet}).
\end{tikzcd}
\]
Moreover, since the transformation from $\cT'^{\p1}$ to $\cT^{\p1}$ only needs $\RM{V},\RM{V}$ and $\RM{B_*}$, so does the sequence of consistent Reidemeister moves $\RM{M_i}^{\p\bullet}$. That is, for each $i$,
\[
\RM{M_i}^{\p\bullet}\in\left\{\RM{VI}^{\p\bullet},\RM{V}^{\p\bullet}, \RM{B_1}^{\p\bullet}, \RM{B_2}^{\p\bullet}, \RM{B_3}^{\p\bullet}\right\}.
\]
\end{example}

\begin{remark}
It is easy to see that the Ng's resolution does not map canonical parallel copies in front projections onto those in Lagrangian projections.
Indeed, if $\cT$ contains a right cusp or a vertex of type $(\ell,r)$ with $\ell\ge 2$, then the resolution of the canonical front $m$-copies is not the same as the canonical Lagrangian $m$-copies of the resolution.
\end{remark}

However, we can always find a consistent Lagrangian Reidemeister move consisting only of $\RM{ii}$'s and $\RM{iii_a}$'s between the canonical Lagrangian copy of Ng's resolution and Ng's resolutions of the canonical front copy. See Figure~\ref{figure:front and Lagrangian canonical copies}.

\begin{figure}[ht]
\[
\begin{tikzcd}[column sep=1.2pc]
\Res\left(\Rcusp\right)^{\p m}\arrow[d,equal] &
\Res\left(\Rcusp^{\p m}\right)\arrow[d,equal]\\
\begin{tikzpicture}[baseline=-.5ex]
\begin{scope}[yshift=-0.2cm]
\draw[thick] (-0.5,0) to[out=0,in=180] (0.6,0.7);
\draw[red,thick] (-0.5,-0.2) to[out=0,in=180] (0.6,0.5);
\draw[blue,thick] (-0.5,-0.4) to[out=0,in=180] (0.6,0.3);
\draw[white,line width=4] (-0.5,0.4) to[out=0,in=180] (0.6,-0.3) arc (-90:90:0.3 and 0.3);
\draw[blue,thick] (-0.5,0.4) to[out=0,in=180] (0.6,-0.3) arc (-90:90:0.3 and 0.3);
\draw[white,line width=4] (-0.5,0.6) to[out=0,in=180] (0.6,-0.1) arc (-90:90:0.3 and 0.3);
\draw[red,thick] (-0.5,0.6) to[out=0,in=180] (0.6,-0.1) arc (-90:90:0.3 and 0.3);
\draw[white,line width=4] (-0.5,0.8) to[out=0,in=180] (0.6,0.1) arc (-90:90:0.3 and 0.3);
\draw[thick] (-0.5,0.8) to[out=0,in=180] (0.6,0.1) arc (-90:90:0.3 and 0.3);
\end{scope}
\end{tikzpicture}
\arrow[r, Rightarrow, "\binom{m}2\RM{ii}"]&
\begin{tikzpicture}[baseline=-.5ex]
\begin{scope}[yshift=-0.2cm]
\draw[thick] (0.6,0.5) to[out=180,in=0] (-0.5,-0.1);
\draw[red,thick] (0.6,0.2) to[out=180,in=0] (-0.5,-0.3);
\draw[blue,thick] (0.6,-0.1) to[out=180,in=0] (-0.5,-0.5);
\draw[white,line width=4] (-0.5,0.7) to[out=0,in=180] (0.6,0.3) arc (-90:90:0.2 and 0.1);
\draw[thick] (-0.5,0.7) to[out=0,in=180] (0.6,0.3) arc (-90:90:0.2 and 0.1);
\draw[white,line width=4] (-0.5,0.5) to[out=0,in=180] (0.6,0) arc (-90:90:0.2 and 0.1);
\draw[red,thick] (-0.5,0.5) to[out=0,in=180] (0.6,0) arc (-90:90:0.2 and 0.1);
\draw[white,line width=4] (-0.5,0.3) to[out=0,in=180] (0.6,-0.3) arc (-90:90:0.2 and 0.1);
\draw[blue,thick] (-0.5,0.3) to[out=0,in=180] (0.6,-0.3) arc (-90:90:0.2 and 0.1);
\end{scope}
\end{tikzpicture}
\end{tikzcd}
\quad
\begin{tikzcd}[column sep=1.2pc]
\Res\left(\frontvertex\right)^{\p m}\arrow[d,equal] & &
\Res\left(\frontvertex^{\p m}\right)\arrow[d,equal]\\
\begin{tikzpicture}[baseline=-.5ex,scale=0.55]
\begin{scope}[yshift=-0.25cm,color=blue]
\draw[thick] (-1,-0.8) to[out=0,in=180] (1, 0.8) to[out=0,in=135] (2,0);
\end{scope}
\begin{scope}[yshift=0cm,color=red]
\draw[thick] (-1,-0.8) to[out=0,in=180] (1, 0.8) to[out=0,in=135] (2,0);
\end{scope}
\begin{scope}[yshift=0.25cm]
\draw[thick] (-1,-0.8) to[out=0,in=180] (1, 0.8) to[out=0,in=135] (2,0);
\end{scope}
\begin{scope}[yshift=0.25cm]
\draw[white,line width=4] (-1,0) to[out=0,in=180] (0, -0.8) to[out=0,in=180] (1,0) -- (1.85,0);
\draw[thick] (-1,0) to[out=0,in=180] (0, -0.8) to[out=0,in=180] (1,0) -- (2,0);
\end{scope}
\begin{scope}[yshift=0cm,red]
\draw[white,line width=4] (-1,0) to[out=0,in=180] (0, -0.8) to[out=0,in=180] (1,0) -- (1.85,0);
\draw[thick] (-1,0) to[out=0,in=180] (0, -0.8) to[out=0,in=180] (1,0) -- (2,0);
\end{scope}
\begin{scope}[yshift=-0.25cm,blue]
\draw[white,line width=4] (-1,0) to[out=0,in=180] (0, -0.8) to[out=0,in=180] (1,0) -- (1.85,0);
\draw[thick] (-1,0) to[out=0,in=180] (0, -0.8) to[out=0,in=180] (1,0) -- (2,0);
\end{scope}
\begin{scope}[yshift=-0.25cm,color=blue]
\draw[white,line width=4] (-1,0.8) to[out=0,in=180] (1, -0.8) to[out=0,in=225] (1.9,-0.1);
\draw[white,line width=4] (2.1,0.1)--(2.5,0.5) (2.1,-0.1) -- (2.5,-0.5);
\draw[thick] (2,0)--(2.5,0.5) (2,0) -- (2.5,-0.5);
\draw[thick] (-1,0.8) to[out=0,in=180] (1, -0.8) to[out=0,in=225] (2,0);
\draw[fill] (2,0) circle (2pt);
\end{scope}
\begin{scope}[yshift=0cm,color=red]
\draw[white,line width=4] (-1,0.8) to[out=0,in=180] (1, -0.8) to[out=0,in=225] (1.9,-0.1);
\draw[white,line width=4] (2.1,0.1)--(2.5,0.5) (2.1,-0.1) -- (2.5,-0.5);
\draw[thick] (2,0)--(2.5,0.5) (2,0) -- (2.5,-0.5);
\draw[thick] (-1,0.8) to[out=0,in=180] (1, -0.8) to[out=0,in=225] (2,0);
\draw[fill] (2,0) circle (2pt);
\end{scope}
\begin{scope}[yshift=0.25cm]
\draw[white,line width=4] (-1,0.8) to[out=0,in=180] (1, -0.8) to[out=0,in=225] (1.9,-0.1);
\draw[white,line width=4] (2.1,0.1)--(2.5,0.5) (2.1,-0.1) -- (2.5,-0.5);
\draw[thick] (2,0)--(2.5,0.5) (2,0) -- (2.5,-0.5);
\draw[thick] (-1,0.8) to[out=0,in=180] (1, -0.8) to[out=0,in=225] (2,0);
\draw[fill] (2,0) circle (2pt);
\end{scope}
\end{tikzpicture}
\arrow[r, Rightarrow, "N\RM{iii_a}"]&
\begin{tikzpicture}[baseline=-.5ex,scale=0.55]
\draw[thick,blue] (-1, -1.05) -- (-0.5, -1.05) to[out=0,in=180] (1, -0.55);
\draw[white,line width=4] (-1, -0.25) to[out=0,in=180] (0.5, -1.05) to[out=0,in=180] (1, -0.8);
\draw[thick,blue] (-1, -0.25) to[out=0,in=180] (0.5, -1.05) to[out=0,in=180] (1, -0.8);
\draw[white,line width=4] (-1, 0.55) to[out=0,in=180] (1, -1.05);
\draw[thick,blue] (-1, 0.55) to[out=0,in=180] (1, -1.05);
\draw[white,line width=4] (-1, -0.8) -- (-0.5, -0.8) to[out=0,in=180] (1, 0.25);
\draw[thick,red] (-1, -0.8) -- (-0.5, -0.8) to[out=0,in=180] (1, 0.25);
\draw[white,line width=4] (-1, 0) -- (-0.4,0) to[out=0,in=180] (0.5, -0.25) to[out=0,in=180] (1, 0);
\draw[thick,red] (-1, 0) -- (-0.4,0) to[out=0,in=180] (0.5, -0.25) to[out=0,in=180] (1, 0);
\draw[white,line width=4] (-1, 0.8) to[out=0,in=180] (1, -0.25);
\draw[thick,red] (-1, 0.8) to[out=0,in=180] (1, -0.25);
\draw[white,line width=4] (-1, -0.55) -- (-0.5, -0.55) to[out=0,in=180] (1, 1.05);
\draw[thick] (-1, -0.55) -- (-0.5, -0.55) to[out=0,in=180] (1, 1.05);
\draw[white,line width=4] (-1, 0.25) to[out=0,in=180] (0,0.7) to[out=0,in=180] (0.5, 0.55) to[out=0,in=180] (1, 0.8);
\draw[thick] (-1, 0.25) to[out=0,in=180] (0,0.7) to[out=0,in=180] (0.5, 0.55) to[out=0,in=180] (1, 0.8);
\draw[white,line width=4] (-1, 1.05) -- (-0.2, 1.05) to[out=0,in=180] (1, 0.55);
\draw[thick] (-1, 1.05) -- (-0.2, 1.05) to[out=0,in=180] (1, 0.55);
\draw[thick,blue] (1,-0.55) -- (1.2, -0.55) to[out=0, in=180] (1.75, 0.5) to[out=0,in=135] (2.5,-0.4);
\draw[thick,blue] (1,-0.8) -- (1.2, -0.8) to[out=0, in=180] (1.75, -0.2) to[out=0,in=180] (2.5,-0.4);
\draw[thick,blue] (1,-1.05) to[out=0,in=180] (1.85, -1) to[out=0,in=225] (2.5,-0.4);
\draw[thick,blue] (2.5,-0.4) -- (3, 0.1) (2.5,-0.4) -- (3, -0.9);
\draw[white,line width=4] (1,0) -- (2.5,0);
\draw[white,line width=4] (1,-0.25) to[out=0,in=180] (1.75,-0.75) -- (1.85,-0.75) to[out=0,in=225] (2.5,0);
\draw[white,line width=4] (2.5,0) -- (3, 0.5) (2.5,0) -- (3, -0.5);
\draw[thick,red] (1,0.25) to[out=0,in=180] (1.75,0.75) to[out=0,in=135] (2.5,0);
\draw[thick,red] (1,0) -- (2.5,0);
\draw[thick,red] (1,-0.25) to[out=0,in=180] (1.65,-0.75) -- (1.85,-0.75) to[out=0,in=225] (2.5,0);
\draw[thick,red] (2.5,0) -- (3, 0.5) (2.5,0) -- (3, -0.5);
\draw[white,line width=4] (1,0.8) to[out=0,in=180] (1.75,0.2) to[out=0,in=180] (2.5,0.4);
\draw[white,line width=4] (1,0.55) to[out=0,in=180] (1.65,-0.5) -- (1.85,-0.5) to[out=0,in=225] (2.5,0.4);
\draw[white,line width=4] (2.5,0.4) -- (3, 0.9) (2.5,0.4) -- (3, -0.1);
\draw[thick] (1,1.05) to[out=0,in=180] (1.65,1) to[out=0,in=135] (2.5,0.4);
\draw[thick] (1,0.8) to[out=0,in=180] (1.75,0.2) to[out=0,in=180] (2.5,0.4);
\draw[thick] (1,0.55) to[out=0,in=180] (1.65,-0.5) -- (1.85,-0.5) to[out=0,in=225] (2.5,0.4);
\draw[thick] (2.5,0.4) -- (3, 0.9) (2.5,0.4) -- (3, -0.1);
\draw[fill] (2.5,0.4) circle (2pt);
\draw[fill,red] (2.5,0) circle (2pt);
\draw[fill,blue] (2.5,-0.4) circle (2pt);
\end{tikzpicture}\arrow[r, Rightarrow, "N'\RM{ii}"] &
\begin{tikzpicture}[baseline=-.5ex,scale=0.55]
\draw[thick,blue] (-1, -1.05) -- (-0.5, -1.05) to[out=0,in=180] (1, -0.55);
\draw[white,line width=4] (-1, -0.25) to[out=0,in=180] (0.5, -1.05) to[out=0,in=180] (1, -0.8);
\draw[thick,blue] (-1, -0.25) to[out=0,in=180] (0.5, -1.05) to[out=0,in=180] (1, -0.8);
\draw[white,line width=4] (-1, 0.55) to[out=0,in=180] (1, -1.05);
\draw[thick,blue] (-1, 0.55) to[out=0,in=180] (1, -1.05);
\draw[white,line width=4] (-1, -0.8) -- (-0.5, -0.8) to[out=0,in=180] (1, 0.25);
\draw[thick,red] (-1, -0.8) -- (-0.5, -0.8) to[out=0,in=180] (1, 0.25);
\draw[white,line width=4] (-1, 0) -- (-0.4,0) to[out=0,in=180] (0.5, -0.25) to[out=0,in=180] (1, 0);
\draw[thick,red] (-1, 0) -- (-0.4,0) to[out=0,in=180] (0.5, -0.25) to[out=0,in=180] (1, 0);
\draw[white,line width=4] (-1, 0.8) to[out=0,in=180] (1, -0.25);
\draw[thick,red] (-1, 0.8) to[out=0,in=180] (1, -0.25);
\draw[white,line width=4] (-1, -0.55) -- (-0.5, -0.55) to[out=0,in=180] (1, 1.05);
\draw[thick] (-1, -0.55) -- (-0.5, -0.55) to[out=0,in=180] (1, 1.05);
\draw[white,line width=4] (-1, 0.25) to[out=0,in=180] (0,0.7) to[out=0,in=180] (0.5, 0.55) to[out=0,in=180] (1, 0.8);
\draw[thick] (-1, 0.25) to[out=0,in=180] (0,0.7) to[out=0,in=180] (0.5, 0.55) to[out=0,in=180] (1, 0.8);
\draw[white,line width=4] (-1, 1.05) -- (-0.2, 1.05) to[out=0,in=180] (1, 0.55);
\draw[thick] (-1, 1.05) -- (-0.2, 1.05) to[out=0,in=180] (1, 0.55);
\draw[thick,blue] (1,-0.55) to[out=0,in=135] (1.5,-0.8) -- (2,-1.05);
\draw[thick,fill,blue] (1,-0.8) to[out=0,in=0] (1.5,-0.8) circle (2pt);
\draw[thick,blue] (1,-1.05) to[out=0,in=-135] (1.5,-0.8) -- (2, 0.55);;
\draw[white,line width=4] (1.5,0) -- (2,-0.8);
\draw[thick,red] (1,0.25) to[out=0,in=135] (1.5,0) -- (2, -0.8);
\draw[thick,fill,red] (1,0) to[out=0,in=0] (1.5,0) circle (2pt);
\draw[thick,red] (1,-0.25) to[out=0,in=-135] (1.5,0) -- (2, 0.8);
\draw[white,line width=4] (1.5,0.8) -- (2,-0.55);
\draw[thick] (1,1.05) to[out=0,in=135] (1.5,0.8) -- (2, -0.55);
\draw[thick,fill] (1,0.8) to[out=0,in=0] (1.5,0.8) circle (2pt);
\draw[thick] (1,0.55) to[out=0,in=-135] (1.5,0.8) -- (2, 1.05);
\end{tikzpicture}
\end{tikzcd}
\]
\caption{Ng's resolutions of front canonical copies}
\label{figure:front and Lagrangian canonical copies}
\end{figure}
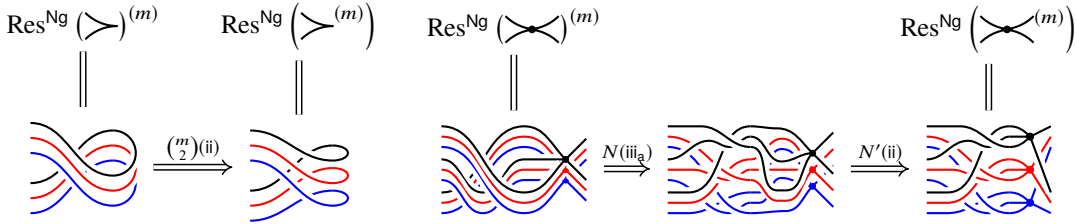

\subsection{Consistent sequences of bordered Chekanov-Eliashberg DGAs}\label{section:consistent sequence of LCH DGAs}
In this section, we will consider consistent sequences of bordered Chekanov-Eliashberg DGAs.

For each $m\ge1$, we define the ring $\ring^{\p m}$ as
\[
\ring^{\p m}\coloneqq\bigoplus_{i\in[m]} \ring^i,
\]
where $\ring^i$ is the copy of $\ring$ with the unit $\unit^i$.
Then $\ring^{\p m}$ has the unit $\unit^{\p m}\coloneqq\sum_{i\in[m]}\unit^i$ and for each subset $U\subset[m]$, there exists a unique ring homomorphism
\[
\ring^{\p m}\to \ring^U\coloneqq \bigoplus_{i\in U} \ring^i.
\]
Therefore $\ring^{\p m}$ becomes an $m$-component object and furthermore it is obviously consistent.

\subsubsection{Link-grading and composable DGAs}\label{section:edge-grading}
We first introduce an $m$-component link-grading on DGAs as described in \cite[\S3.2]{NRSSZ2015}.

\begin{definition}[Link-graded and composable DGAs]
Let $A^{\p m}=(\alg^{\p m},\differential^{\p m})$ be a DGA which is not necessarily semi-free.
We say that $A$ is \emph{$m$-component link-graded} for some $m\ge 1$ if there exist functions $r, c:\alg^{\p m}\to [m]$ such that for each $g\in \alg^{\p m}$ and any word $g_1\dots g_k$ in $\differential(g)$ different from $n\unit$ for any $n\in\ZZ$ is \emph{composable} in the sense that
\begin{align*}
r(g)&=r(g_1),&
c(g)&=c(g_k),&
r(g_i)&=c(g_{i+1})
\end{align*}
for each $1\le i<k$.
We say that a composable word $g_1\dots g_k$ is said to be \emph{of type} $(i,j)$ if $r(g_1)=i$ and $c(g_k)=j$.

An $m$-component link-graded DGA $A^{\p m}=(\alg^{\p m},\differential^{\p m})$ is \emph{composable} over $[m]$ if $\alg^{\p m}$ is decomposed as
\[
\alg^{\p m}=\bigoplus_{(i,j)\in [m]^2}\alg^{\p m}(i,j)
\]
such that it satisfies the following:
\begin{itemize}
\item for each $i,j,k,l\in [m]$, the multiplication on $\alg^{\p m}(i,j)\otimes \alg^{\p m}(k,l)$ does not vanish only if $j=k$. In this case, the result is contained in $\alg^{\p m}(i,l)$.
\item for each $(i,j)$, the restriction $A^{\p m}(i,j)=(\alg^{\p m}(i,j), \differential^{\p m}|_{\alg^{\p m}(i,j)})$ is a subchain complex. In particular, $A^{\p m}(i,i)$ becomes the DG-subalgebra.
\end{itemize}
\end{definition}

One equivalent definition of composable DGAs is as follows:
a DGA $A^{\p m}=(\alg^{\p m},\differential^{\p m})$ is composable over $[m]$ if and only if it is a subalgebra of the matrix algebra so that
\begin{align*}
\alg^{\p m}&\isomorphic\Mat_m\left(\alg^{\p m}(i,j)\right)\coloneqq\left\{
\left(a^{ij}\right)~\middle|~ a^{ij}\in \alg^{\p m}(i,j)
\right\},&
\differential^{\p m}&\isomorphic(\differential^{\p m}|_{\alg(i,j)})
\end{align*}
and also it is a $\ring^{\p m}$-module.

\begin{definition}[Semi-free composable DGAs]
We say that a composable DGA $A^{\p m}=(\alg^{\p m},\differential^{\p m})$ is \emph{semi-free} if it is tensor-algebra-like as follows: there exists a graded set $\sfS^{\p m}$
\[
\sfS^{\p m}=\coprod_{i,j\in[m]^2} \sfS^{ij}
\]
and the underlying graded algebra $\alg^{\p m}$ is the direct sum
\[
\alg^{\p m}=T_\co \sfM^{\p m}\coloneqq \bigoplus_{\ell\ge 0} \left(\sfM^{\p m}\right)^\ell,
\]
where $\sfM^{\p m}=(\sfM^{ij})\subset\alg^{\p m}$ is a $\ring^{\p m}$-module
and each $\sfM^{ij}$ is a free $(\ring^i,\ring^j)$-bimodule generated by $\sfS^{ij}$
\begin{align*}
\sfM^{ij}&\coloneqq \ring^i\otimes \ZZ\langle\sfS^{ij}\rangle\otimes \ring^j,
\end{align*}
and $\left(\sfM^{\p m}\right)^\ell$ is the $\ell$-fold product of $\sfM^{\p m}$ with respect to the matrix multiplication so that $\left(\sfM^{\p m}\right)^0\coloneqq\ring^{\p m}$.

A DGA morphism $f:A'^{\p n}\to A^{\p m}$ between semi-free composable DGAs is said to be an \emph{composable morphism} if there exists $(h:[m]\to[n])\in\AugSim$ such that for each $i',j'\in[n]$,
$f(A'^{\p n}(i',j'))$ is either contained in $A(i,j)$ for $h(i,j)=(i',j')$ or $0$ otherwise.
\end{definition}

Let $\bfi=(i_0,\dots, i_k)$ be a sequence in $[m]$.
We denote $\sfM^\bfi$ by the submodule defined as
\[
\sfM^\bfi\coloneqq
\sfM^{i_0i_1}\sfM^{i_1i_2}\dots\sfM^{i_{k-1}i_k}.
\]
Then it is easy to see that
\[
\alg^{\p m}(i,j)=\bigoplus_{\bfi}\sfM^\bfi,
\]
where $\bfi$ ranges over all sequences starting and ending at $i$ and $j$, respectively.

\begin{notation}
For simplicity, we will use the notation $\sfM^{\p m}=(\ring^{\p m}\langle\sfS^{\p m}\rangle)$ and so
\[
\alg^{\p m}=T_\co (\ring^{\p m}\langle \sfS^{\p m}\rangle).
\]
\end{notation}

\begin{definition}[Category of composable DGAs]
We denote the category of all semi-free composable DGAs and morphisms by $\DGA_\co$. In particular, the full subcategory of $m$-components semi-free composable DGAs is denoted by $\DGA_\co^{\p m}$.
\end{definition}
\begin{remark}
It is obvious that each morphism $f\in\DGA_\co^{\p m}$ preserves the link-grading. Especially, the subcategory $\DGA_\co^{\p1}$ is equivalent to $\DGA$.
\end{remark}

Let $A^{\p m}\in\DGA_\co^{\p m}$. Then for each $U\subset[m]$ with $|U|=k$, there exists a $k$-component semi-free composable DGA $A^U\in\DGA_\co^{\p k}$ which is the image of the quotient map
\[
\res^U:A^{\p m}\to A^U
\]
defined by $\unit^i\mapsto 0$ for each $i\not\in U$.
Therefore each object in $\DGA^{\p m}$ satisfies the condition for $m$-components object.

\begin{example}[Composable DGAs from link-graded semi-free DGAs]\label{example:composable from semi-free}
For each $m$-component link-graded semi-free DGA $A^{\p m}=(\alg^{\p m}=T(\ring\langle\sfS^{\p m}\rangle),\differential^{\p m})$, one can associate an $m$-component composable DGA $A_\co^{\p m}$ as described in \cite[\S3.2]{NRSSZ2015}.
We will briefly review the construction as follows:
\begin{enumerate}
\item Add idempotent elements $\unit^i$ for $i\in[m]$ and declare $\unit^i\unit_j=\delta_{ij}$.
\item For each element $g$ of type $(i,j)$, declare $\unit^i g = g = g\unit^j$.
\item For each element $g$ of type $(i,i)$ having $n\unit$ in $\differential(g)$, replace $n\unit$ with $n\unit^i$.
\item The unit $\unit$ is replaced with $\sum_{i\in[m]}\unit^i$.
\end{enumerate}
Then it is easy to check that 
\[
A^{\p m}_\co=(T_\co(\ring^{\p m}\langle\sfS^{\p m}\rangle), \differential^{\p m})
\]
and therefore it is semi-free composable.
\end{example}

\begin{definition}[Consistent sequences of composable DGAs]
The category of consistent sequences in $\DGA_\co$ will be denoted by $\DGA_\co^{\p \bullet}$.
\end{definition}

More precisely, a sequence $A^{\p\bullet}$ of $m$-component composable DGAs is consistent if and only if for each $(h:[m]\to[n])\in\AugSim$ and $U\subset[m]$, we have an isomorphism
\[
A^{h(U)}\to A^U.
\]

\begin{example/definition}[Consistent sequences of composable border DGAs]
Let  $\mu:[n]\to\grading$ be a function.
Recall the border DGA $A_{n}(\mu)\in\DGA=\DGA_\co^{\p1}$. For each $m\ge 1$, we will define the \emph{$m$-component border DGA} $A_{n}^{\p m}(\mu)\in\DGA_\co^{\p m}$ such that $A^{\p1}_{n}(\mu)=A_{n}(\mu)$.

Let $A_{n}^{\p m}(\mu)=(\alg_n^{\p m},\differential_n^{\p m})$ be an $m$-component link graded DGA defined as follows:
\begin{enumerate}
\item The algebra $\alg_{n}^{\p m}$ is the algebra $T_\co\sfM^{\p m}$ over the ring $\ring^{\p m}$
\begin{align*}
\alg_{n}^{\p m}&\coloneqq T_\co(\ring^{\p m}\langle K^{ij}\amalg Y^{ij}\rangle),&
K_n^{ij}&\coloneqq\{k_{ab}^{ij}\mid 1\le a<b\le n\},&
Y_n^{ij}&\coloneqq
\begin{cases}
\{y_a^{ij}\mid a\in[n]\} & i<j;\\
\emptyset & i\ge j.
\end{cases}
\end{align*}
\item The link-grading for each $k_{ab}^{ij}$ and $y_a^{ij}$ is defined as $(i,j)$ and the (homological) grading of each is defined as 
\[
|k_{ab}^{ij}|\coloneqq|k_{ab}|\quad\text{ and }\quad
|y_a^{ij}|\coloneqq -1.
\]
\item The differential $\differential^{\p m}_n$ is defined as
\begin{align*}
\differential^{\p m}_n(k_{ab}^{ij})&\coloneqq\sum_{\substack{a<c<b\\i<k<j}}(-1)^{|k_{ac}|-1}k_{ac}^{ik}k_{cb}^{kj}+\sum_{k<j}(-1)^{|k_{ab}|-1}k_{ab}^{ik}y_b^{kj}+\sum_{i<k}y_a^{ik}k_{ab}^{kj},\\
\differential^{\p m}_n(y_a^{ij})&\coloneqq\sum_{i<k<j} y_a^{ik}y_a^{kj}.
\end{align*}
\end{enumerate}

Geometrically, generators $k_{ab}^{ij}$ and $y_a^{ij}$ correspond to the Reeb chord from the $b$-th edge in the $j$-th copy $T_n^j$ to the $a$-th edge in the $i$-th copy $T_n^i$ and the Reeb chord from the $a$-th edge of $T_n^i$ to $a$-th edge of $T_n^j$, respectively.

For each $U\subset[m]$, the restriction $A^U_{n}(\mu)$ will be defined as the image of the quotient map sending all $k_{ab}^{ij}$'s and $y_a^{ij}$'s to zero unless $i,j\in U$.
This implies that $A^{\p m}_{n}(\mu)$ satisfies all axioms for $m$-component objects.

Moreover, the sequence $(A^{\p m}_{n}(\mu))_{m\ge1}$ satisfies the consistency as follows: for each $h:[m]\to[m']$ and $U\subset[m]$, there is an obvious isomorphism $A^{h(U)}_{n}(\mu)\to A^U_{n}(\mu)$ sending each $k_{ab}^{i'j'}$ and $y_a^{i'j'}$ to $k_{ab}^{ij}$ and $y_a^{ij}$ if $i'=h(i)$ and $j'=h(j)$.
Therefore we have a consistent sequence of composable DGAs $A^{\p\bullet}_{n}(\mu)\in\DGA_\co^{\p \bullet}$.
\end{example/definition}

\begin{corollary}\label{corollary:composable border DGAs}
The composable DGA $A_n^{\p m}(\mu)$ can be obtained from the semi-free border DGA $A_{mn}(\mu^{\p m})$ via the standard recipe described in Example~\ref{example:composable from semi-free}, where $\mu^{\p m}:[mn]\isomorphic[m]\times[n]\to\grading$ is defined to be
\[
\mu^{\p m}(i,a) = \mu(a).
\]
\end{corollary}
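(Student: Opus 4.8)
The plan is to make the identification $[mn]\isomorphic[m]\times[n]$ completely explicit, equip $A_{mn}(\mu^{\p m})$ with the link-grading it induces, run the recipe of Example~\ref{example:composable from semi-free}, and compare the output with $A_n^{\p m}(\mu)$ generator by generator. First I would order $[m]\times[n]$ by declaring $(i,a)<(j,b)$ if either $a<b$, or $a=b$ and $i<j$; the unique order isomorphism with $[mn]$ is then $\iota(i,a)=(a-1)m+i$, and under it $\mu^{\p m}(\iota(i,a))=\mu(a)$, as required. Let $\pi\colon[mn]\to[m]$, $\pi(\iota(i,a))=i$, record the copy index. Every word occurring in $\differential_{mn}(k_{AB})=\sum_{A<C<B}(-1)^{|k_{AC}|-1}k_{AC}k_{CB}$ has the shape $k_{AC}k_{CB}$ with a single splitting index $C$ satisfying $A<C<B$, so setting $r(k_{AB})=\pi(A)$ and $c(k_{AB})=\pi(B)$ makes $A_{mn}(\mu^{\p m})$ an $m$-component link-graded DGA (there are no $n\unit$ summands to reconcile), and the recipe of Example~\ref{example:composable from semi-free} applies.

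Next I would define the comparison morphism on generators: writing $A=\iota(i,a)$ and $B=\iota(j,b)$ with $A<B$ — so that either $a<b$ with $i,j$ unconstrained, or $a=b$ with $i<j$ — put $\Phi(k_{AB})=k_{ab}^{ij}$ when $a<b$ and $\Phi(k_{AB})=y_a^{ij}$ when $a=b$. A direct count shows $\Phi$ is a bijection from the generating set of $(A_{mn}(\mu^{\p m}))_\co$ onto the generating set of $A_n^{\p m}(\mu)$; it sends a generator of link-grading $(i,j)$ to a generator of link-grading $(i,j)$; and it preserves the homological degree, since in both cases $|k_{AB}|=\mu^{\p m}(A)-\mu^{\p m}(B)-1=\mu(a)-\mu(b)-1$, which equals $|k_{ab}^{ij}|$ in the first case and $-1=|y_a^{ij}|$ in the second. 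Being a degree- and link-grading-preserving bijection of generating sets, $\Phi$ extends uniquely to an isomorphism of $\ring^{\p m}$-linear semi-free composable graded algebras $(A_{mn}(\mu^{\p m}))_\co\xrightarrow{\ \isomorphic\ }A_n^{\p m}(\mu)$.

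It then remains to check that $\Phi$ is a chain map on generators, and this reduces to describing the interval $\{C : A<C<B\}$ inside $[mn]\isomorphic[m]\times[n]$. With $C=\iota(l,c)$: when $a<b$ this interval is the disjoint union of $\{(l,a):l>i\}$, $\{(l,b):l<j\}$, and $\{(l,c):a<c<b,\ l\in[m]\}$, and under $\Phi$ the three corresponding families of quadratic terms $k_{AC}k_{CB}$ become precisely the three sums in $\differential^{\p m}_n(k_{ab}^{ij})$ — the signs agreeing because $|k_{AC}|=|k_{ac}^{il}|$ and because $|y_a^{il}|=-1$ contributes $(-1)^{-2}=+1$; when $a=b$ the interval is $\{(l,a):i<l<j\}$, reproducing $\differential^{\p m}_n(y_a^{ij})=\sum_{i<l<j}y_a^{il}y_a^{lj}$. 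Hence $\Phi$ is an isomorphism of composable DGAs, which proves the corollary (and one checks that $\Phi$ is natural in $m$, so it identifies the consistent sequences as well, though only the single-$m$ statement is needed). The one point that genuinely needs attention is the choice of $\iota$: taking the edge index $a$ as the primary coordinate and the copy index $i$ as the secondary one is forced both by the degree bookkeeping and by the shape of the interval $\{C:A<C<B\}$, and the reversed choice fails on both counts; beyond that, the proof is the routine observation that the recipe of Example~\ref{example:composable from semi-free}, which only inserts idempotents and neither adds generators nor alters differential words, transports the upper-triangular product and differential of $A_{mn}(\mu^{\p m})$ verbatim onto the matrix-type structure of $A_n^{\p m}(\mu)$.
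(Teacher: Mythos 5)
Your approach is exactly the one the paper intends: the published proof is the one-liner ``This follows directly from the definition and we are done,'' and your write-up simply spells out that definitional check. The identification $\iota(i,a)=(a-1)m+i$ (edge index primary, copy index secondary) is the correct one, being the top-to-bottom order of the $mn$ border strands in the canonical front $m$-copy; your degree and sign bookkeeping and your analysis of the intervals $\{C:A<C<B\}$ are sound.

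You should, however, have flagged one mismatch rather than asserting a clean agreement. For $a<b$ your interval analysis correctly gives the third family as $\{(l,c):a<c<b,\ l\in[m]\}$, with the copy index $l$ \emph{unconstrained}; but the first sum displayed in $\differential_n^{\p m}(k_{ab}^{ij})$ in Example/Definition~\ref{example:composable border DGAs} reads $\sum_{a<c<b,\,i<k<j}$, with $k$ restricted to lie strictly between $i$ and $j$. Your claim that the three families ``become precisely the three sums'' is therefore not literally true of that displayed formula. What is going on is that the displayed range is a typo for $1\le k\le m$, which is what your comparison produces and what the paper itself uses later: compare the unified expression $\differential^{\p m}k_{ab}^{ij}=\sum_{a\le c\le b,\,1\le k\le m}(-1)^{|k_{ac}^{ik}|-1}k_{ac}^{ik}k_{cb}^{kj}$ following Assumption~\ref{assumption:indices ranges}, and the formula for $m_1$ in Example~\ref{example:augmentation category for border DGAs}, both of which require the unconstrained sum. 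You should point out the discrepancy and observe that it resolves in favour of your computation; as written, a careful reader would stall at this step. (A small side-slip: the reversed identification fails on the shape of $\{C:A<C<B\}$, but not on the degree bookkeeping, since $\mu^{\p m}$ depends only on the edge index.)
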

\begin{proof}
This follows directly from the definition and we are done.
\end{proof}

Now we consider consistent sequences of bordered DGAs.

\begin{definition}[$m$-component bordered DGAs]\label{definition:m-component borderd DGAs}
An $m$-component bordered DGA of type $(n_\Left, n_\Right)$
\[
\dga^{\p m}=\left(A_\Left^{\p m}\stackrel{\phi^{\p m}_\Left}\longrightarrow A^{\p m} \stackrel{\phi^{\p m}_\Right}\longleftarrow A^{\p m}_\Right\right)
\]
is a bordered DGA satisfying the following conditions:
\begin{enumerate}
\item All DGAs $A_*^{\p m}$ for $*=\Left,\Right$ or empty are in $\DGA_\co^{\p m}$,
\item two DGAs $A_\Left^{\p m}$ and $A_\Right^{\p m}$ are isomorphic to $A^{\p m}_{n_\Left}(\mu_\Left)$ and $A^{\p m}_{n_\Right}(\mu_\Right)$ for some $\mu_\Left$ and $\mu_\Right$,
\item two structure morphisms $\phi_\Left^{\p m}$ and $\phi_\Right^{\p m}$ are composable morphisms.
\end{enumerate}

For each $U\subset[m]$, the restriction $\dga^U$ for $U\subset[m]$ is defined by the image of the quotient map sending all elements in $A^{\p m}(i,j)$ to zero unless $i,j\in U$
\[
\dga^U\coloneqq\left(A_\Left^U\stackrel{\phi^U_\Left}\longrightarrow A^U\stackrel{\phi^U_\Right}\longleftarrow A_\Right^U\right),
\]
where $\phi^U_\Left$ and $\phi^U_\Right$ are the induced maps by $\phi^{\p m}_\Left$ and $\phi^{\p m}_\Right$, respectively.

A bordered composable morphism $\dga'^{\p n}\to \dga^{\p m}$ between $n$ and $m$-components bordered composable DGAs is a collection of bordered DGA morphisms $\left\{\dga'^{h(U)}\to \dga^U\mid U\subset[m]\right\}$ for a uniquely determined $(h:[m]\to[n])\in\AugSim$, which is compatible with the restriction map $\dga^{\p m}\to\dga^U$.

We denote the category of all bordered composable DGAs by $\BDGA_\co$ and its full subcategory of $m$-components by $\BDGA_\co^{\p m}$.
\end{definition}

By definition, the categories $\DGA_\co$ and $\DGA^{\p m}_\co$ are full subcategory of $\BDGA_\co$ and $\BDGA^{\p m}_\co$ of type $(0,0)$, respectively.

Let $\dga^{\p m}\in\BDGA^{\p m}_\co$ be a sequence of $m$-components bordered DGAs.
Then it is consistent if and only if for each $(h:[m]\to[n])\in\AugSim$ and $U\subset[m]$, we have an isomorphism $\dga(h)^U:\dga^{h(U)}\to \dga^U$ between composable bordered DGAs.

Similarly, a sequence $\bff^{\p\bullet}:\dga'^{\p \bullet}\to\dga^{\p \bullet}$ between consistent sequences of bordered DGAs is consistent if and only if for each $m\ge 1$, $\bff^{\p m}$ is a bordered morphism between $m$-component bordered DGAs $\dga'^{\p m}$ and $\dga^{\p m}$, and it satisfies the consistency, i.e., for each $h:[m]\to[n]$ and $U\subset[m]$, the following diagram is commutative:
\[
\begin{tikzcd}[column sep=4pc,row sep=2pc]
\dga'^{h(U)}\arrow[r, "\dga'(h)^U"]\arrow[d,"\bff^{h(U)}"'] & \dga'^U\arrow[d,"\bff^U"]\\
\dga^{h(U)}\arrow[r, "\dga(h)^U"] & \dga^U
\end{tikzcd}
\]

\begin{definition}[Consistent sequences of composable bordered DGAs]
We denote the category of consistent sequences of composable bordered DGAs by $\BDGA^{\p \bullet}_\co$.
\end{definition}

In terms of generators, we have the following observation. Let $A^{\p m}=\left(\alg^{\p m}=T_\co(\ring^{\p m}\langle\sfS^{\p m}\rangle),\differential^{\p m}\right)$ be a composable $m$-component DGA.
Then the restriction on $U$ is now given by the image of the map $\unit_e^i\mapsto 0$ for all $e\in E$ unless $i\in U$.
Moreover, the consistency implies that for each $(h:[m]\to[n])\in\AugSim$, the DGA $A^{\p m}$ is isomorphic to the DGA generated by $\sfS^{i'j'}$'s for $i',j'\in h([m])$.
Therefore we may identify 
\[
\sfS^{ii}\isomorphic \sfS^{11}\quad\text{ and }\quad
\sfS^{ij}\isomorphic \sfS^{12}
\]
by considering $h_i:[1]\to[m]$ and $h_{ij}:[2]\to[m]$ with $h_i(1)=i$ and $h_{ij}(1)=i, h_{ij}(2)=j$.

Finally, the differential $\differential^{\p m}$ is the same as the push-forward of $\differential^{\p n}$ on $A(h):A^{\p n}\to A^{\p m}$.
In other words, for each generator $a^{ij}\in \sfS^{ij}$,
\[
\differential^{\p m}\left(a^{ij}\right)=(A(h)\circ\differential^{\p n}\circ A(h)^{-1})(a^{ij})=A(h)\left(\differential^{\p n}\left(a^{i'j'}\right)\right),
\]
where $i'=h(i), j'=h(j)$ and $a^{i'j'}$ is regarded as an element of $\sfS^{i'j'}\isomorphic \sfS^{ij}$.
In other words, the differential in $A^{\p m}$ can be obtained from the differential in $A^{\p n}$ by removing generators not coming from $A^{\p m}$.

The following is the direct consequence of Lemma~\ref{lemma:isom} and summarizes the above discussion.
\begin{corollary}\cite[Lemma~3.15]{NRSSZ2015}
Let $\dga^{\p\bullet}\in\BDGA^{\p\bullet}_\co$ be a consistent sequence of bordered DGAs. 
Then for each $h_i:[1]\to[m]$ with $i=h_i(1)$ and $h_{ij}:[2]\to[m]$ with $i=h_{ij}(1)<j=h_{ij}(2)$, we have isomorphisms between DGAs
\[
\dga^{ii}\isomorphic \dga^{11}\quad\text{ and }\quad
\dga^{ij}\isomorphic \dga^{12}.
\]
\end{corollary}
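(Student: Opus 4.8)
The plan is to deduce the corollary directly from the consistency hypothesis on $\dga^{\p\bullet}$ together with Lemma~\ref{lemma:isom}; no computation is required, only an unwinding of definitions, so the argument is short.

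First I would check that $h_i\colon[1]\to[m]$ with $h_i(1)=i$, and $h_{ij}\colon[2]\to[m]$ with $h_{ij}(1)=i<j=h_{ij}(2)$, are order-preserving inclusions, hence morphisms in $\AugSim$ — this is immediate from $i\le m$ and $i<j$. Then, applying the defining property of a consistent sequence in $\BDGA^{\p\bullet}_\co$ to the pair $(h_i,\ U=[1])$, and using $h_i([1])=\{i\}$, I get an isomorphism $\dga(h_i)^{[1]}\colon\dga^{ii}\xrightarrow{\ \isomorphic\ }\dga^{11}$ in $\BDGA_\co$. Likewise, applying consistency to $(h_{ij},\ U=[2])$, and using $h_{ij}([2])=\{i,j\}$, I get an isomorphism $\dga(h_{ij})^{[2]}\colon\dga^{ij}\xrightarrow{\ \isomorphic\ }\dga^{12}$ in $\BDGA_\co$. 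Since morphisms in $\BDGA_\co$ are in particular DGA morphisms, both maps are isomorphisms of DGAs, which is exactly what is claimed. Equivalently, the first statement is the $k=1$ case of Lemma~\ref{lemma:isom}, and the second is obtained by running the same argument with two-element subsets.

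To round out the statement I would also record, exactly as in the paragraph preceding the corollary, that under these isomorphisms the link-graded generating sets are carried onto one another, $\sfS^{ii}\isomorphic\sfS^{11}$ and $\sfS^{ij}\isomorphic\sfS^{12}$, and the differentials correspond via pushforward along $\dga(h)$; this makes the isomorphisms completely explicit on generators and is useful for later sections.

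The only genuine subtlety — which I regard as the main point to verify, rather than a real obstacle — is making sure one is entitled to speak of $\dga^{ii}$ and $\dga^{ij}$ as the indicated restrictions, and that the maps produced are honest DGA isomorphisms rather than merely chain or graded-algebra isomorphisms. This is guaranteed by the definitions: the restriction of an $m$-component bordered composable DGA to $U\subset[m]$ is built into Definition~\ref{definition:m-component borderd DGAs} and is compatible with the consistency isomorphisms, and each structure morphism $\dga(h)^U$ of a consistent sequence is by hypothesis an isomorphism in $\BDGA_\co$, which in turn is an isomorphism of the underlying DGAs.
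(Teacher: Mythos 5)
Your proof is correct and follows exactly the route the paper takes: the corollary is a direct unwinding of the consistency condition, applying the structure isomorphism $\dga(h)^U$ to the pairs $(h_i,[1])$ and $(h_{ij},[2])$, and the first half is precisely Lemma~\ref{lemma:isom} specialized to a consistent sequence in $\BDGA^{\p\bullet}_\co$. The generator-level identifications $\sfS^{ii}\isomorphic\sfS^{11}$ and $\sfS^{ij}\isomorphic\sfS^{12}$ that you append, together with the compatibility of differentials under push-forward along $\dga(h)$, are exactly what the paper records in the paragraph immediately preceding the corollary.
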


\begin{definition}[Consistent stabilizations]
A consistent sequence of bordered DGAs $\dga'^{\p\bullet}\in\BDGA^{\p\bullet}_\co$ is a (\emph{weak}) \emph{consistent stabilization} of $\dga^{\p\bullet}\in\BDGA^{\p\bullet}_\co$ if there exists a consistent morphism $\bfpi^{\p\bullet}:\dga'^{\p\bullet}\to\dga^{\p\bullet}$ of canonical projections of stabilization, and is a \emph{strong consistent stabilization} if it is a weak stabilization and there exists a consistent morphism $\bfi^{\p\bullet}:\dga^{\p\bullet}\to\dga'^{\p\bullet}$ of canonical embeddings.
\end{definition}

Let $A'^{\p\bullet}=(\alg'^{\p\bullet}=T_\co(\ring^{\p\bullet}\langle\sfS'^{\p\bullet}\rangle),\differential'^{\p m})$ be a consistent stabilization of $A^{\p\bullet}=(\alg^{\p\bullet}=T_\co(\ring^{\p\bullet}\langle\sfS^{\p\bullet}\rangle),\differential^{\p m})$. 
Then in terms of generating sets, for each $m\ge 1$, there exists an index set $I^{\p m}=(I^{ij})$ such that
\begin{align*}
\sfS'^{ij}&=\sfS^{ij}\amalg\{e_k^{ij},\hat e_k^{ij}\mid k\in I^{ij}\},&
|\hat e_k^{ij}|&=|e_k^{ij}|+1,&
\differential'^{\p m}(\hat e_k^{ij})&=e_k^{ij},&
\differential'^{\p m}(e_k^{ij})&=0.
\end{align*}
Then for each $U\subset\AugSim$, the restriction $A'^U$ is generated by 
\[
\sfS'^U = \sfS^U\amalg\{e_k^{ij},\hat e_k^{ij}\mid k\in I^{ij}, i,j\in U\}
\]
which is the image of the quotient map sending all $e_k^{ij}$'s and $\hat e_k^{ij}$'s to zero unless both $i,j\in U$.
Finally, for each $(h:[m]\to[n])\in\AugSim$, the consistency implies that we have an isomorphism
\[
A'^{h(U)}\to A'^U
\]
sending each generator $e_k^{i'j'}$ and $\hat e_k^{i'j'}$ to $e_k^{ij}$ and $\hat e_k^{ij}$, respectively, for $h(i,j)=(i',j')$.
Therefore it is necessary and sufficient to have the following conditions
\[
I^{ij}\isomorphic I^{kl}\quad\text{ and }\quad|e_k^{ij}|= |e_k^{kl}|
\]
for $(k,l)=(1,1),(1,2)$ or $(2,1)$ according to whether $i=j$, $i<j$ or $i>j$.

As before, the one important example of a consistent stabilization is the cofibrant replacement of a consistent sequence of bordered DGAs defined as follows:
\begin{example/definition}[Consistent cofibrant replacement]\label{example:consistent cofibrant replacement}
Let $\dga^{\p \bullet}\in \BDGA^{\p\bullet}_\co$ be a consistent sequence of bordered composable DGAs.
For each $m\ge 1$, we consider the cofibrant replacement $\hat\dga^{\p m}$ of $\dga^{\p m}$ defined in Definition~\ref{definition:cofibrant replacement}.

For each $U\subset[m]$, the restriction $\hat\dga^U$ will be given as the cofibrant replacement of $\dga^U$ and therefore $\hat\dga^{\p m}$ is an $m$-component bordered DGA as desired.

Finally, the consistency of $\hat\dga^{\p m}$ comes easily from the consistency of $\dga^{\p m}$ as follows:
for each $h:[m]\to[n]$ and $U\subset[m]$, the bordered DGA $\hat\dga^{h(U)}$ is the cofibrant replacement of $\dga^{h(U)}$ as above which is isomorphic to $\dga^U$ due to the consistency of $\dga^{\p \bullet}$.
Since the cofibrant replacement remains the same under isomorphism, we have a desired isomorphism
\[
\hat\dga^{h(U)}\isomorphic \hat\dga^U.
\]
\end{example/definition}

\begin{lemma}\label{lemma:cofibrant replacements of consistent sequences of bordered DGAs}
For each $\dga^{\p \bullet}\in\BDGA^{\p \bullet}_\co$, its cofibrant replacement $\hat\dga^{\p \bullet}\in\BDGA^{\p\bullet}_\co$ is well-defined.
\end{lemma}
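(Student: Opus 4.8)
Example/Definition~\ref{example:consistent cofibrant replacement} already carries out the objectwise construction and records both the behaviour under restriction and the consistency isomorphisms; the plan is therefore just to check that the resulting sequence $\hat\dga^{\p\bullet}$ satisfies the three defining conditions of an object of $\BDGA^{\p\bullet}_\co$, reducing each to facts already established in the non-consistent setting.

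First I would fix $m$ and verify $\hat\dga^{\p m}\in\BDGA^{\p m}_\co$. Applying Definition~\ref{definition:cofibrant replacement} to the composable bordered DGA $\dga^{\p m}$, the algebra $\hat A^{\p m}$ is obtained from $A^{\p m}$ by adjoining mapping-cylinder generators $\hat k_{ab}^{ij}$, to which I assign link-grading $(i,j)$ and homological grading $|k_{ab}^{ij}|+1$. The only words appearing in the componentwise differential~\eqref{equation:differential for mapping cylinder} are $k_{ab}^{ij}$, $\phi_\Right^{\p m}(k_{ab}^{ij})$, $\hat k_{ac}^{ik}\,\phi_\Right^{\p m}(k_{cb}^{kj})$ and $k_{ac}^{ik}\hat k_{cb}^{kj}$; since $\phi_\Right^{\p m}$ is a composable morphism each of these is composable of type $(i,j)$, so $\hat A^{\p m}$ is semi-free composable. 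The two border DGAs are untouched, hence still $A^{\p m}_{n_\Left}(\mu_\Left)$ and $A^{\p m}_{n_\Right}(\mu_\Right)$; $\hat\phi^{\p m}_\Left$ is the composite of the composable morphism $\phi^{\p m}_\Left$ with the composable inclusion $A^{\p m}\hookrightarrow\hat A^{\p m}$, and $\hat\phi^{\p m}_\Right$, defined by $k_{ab}^{ij}\mapsto k_{ab}^{ij}$, is composable relative to the same order-preserving map underlying $\phi^{\p m}_\Right$. By Lemma~\ref{lemma:mapping cylinders are stabilizations} applied componentwise, $\hat\dga^{\p m}$ is moreover a weak stabilization of $\dga^{\p m}$.

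Next I would check the $m$-component object axioms: the quotient $\res^U$ kills exactly the generators supported in bidegrees $(i,j)$ with $\{i,j\}\not\subset U$, in particular the $\hat k_{ab}^{ij}$ with $i\notin U$ or $j\notin U$, and deleting these terms from~\eqref{equation:differential for mapping cylinder} reproduces the mapping-cylinder differential of $\dga^U$; hence $\res^U\hat\dga^{\p m}$ is canonically the cofibrant replacement of $\dga^U$, as stated in Example/Definition~\ref{example:consistent cofibrant replacement}. Finally, for $(h:[m]\to[n])\in\AugSim$ and $U\subset[m]$ I would extend the consistency isomorphism $\dga(h)^U:\dga^{h(U)}\xrightarrow{\isomorphic}\dga^U$ to $\hat\dga^{h(U)}\to\hat\dga^U$ by $\hat k_{ab}^{i'j'}\mapsto\hat k_{ab}^{ij}$ for $(i',j')=(h(i),h(j))$ and check that it is a chain isomorphism, using that $\dga(h)^U$ intertwines the structure morphisms $\phi_\Right$ and that the cofibrant replacement is insensitive to isomorphism; compatibility with the restriction maps $\hat\dga^{\p m}\to\hat\dga^U$ then follows from the previous step, and assembling everything yields $\hat\dga^{\p\bullet}\in\BDGA^{\p\bullet}_\co$.

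The one point that is more than bookkeeping is the last one: one needs the mapping-cylinder construction of Definition~\ref{definition:cofibrant replacement} to be natural with respect to isomorphisms of bordered composable DGAs, which uses the consistency of the structure morphism $\phi^{\p\bullet}_\Right$ in an essential way. I expect this naturality check (which is mild) to be the main obstacle; the rest is a direct transcription of the statements already proved objectwise.
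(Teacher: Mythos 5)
Your proposal is correct and takes essentially the same approach as the paper: the paper's own proof simply defers to Example/Definition~\ref{example:consistent cofibrant replacement}, which builds the consistent cofibrant replacement objectwise, identifies the restriction $\hat\dga^U$ with the cofibrant replacement of $\dga^U$, and produces the consistency isomorphisms from those of $\dga^{\p\bullet}$ using that the mapping-cylinder construction is natural with respect to isomorphisms. Your write-up spells out the link-grading/composability check on $\hat A^{\p m}$ in more detail, but this is the same argument, just made explicit rather than cited.
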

\begin{proof}
This is a summary of discussion above and we omit the proof.
\end{proof}

\subsubsection{Consistent sequences of LCH DGAs}\label{section:consistent sequences of LCH DGAs}
Let $(\cT^{\p\bullet},\bfmu^{\p\bullet})\in\BLT^{\mu,\p \bullet}$ be a consistent sequence of front projections of bordered Legendrian graphs with Maslov potentials.
Then for each $m\ge 1$, we have the bordered LCH DGA $A^\CE(\cT^{\p m},\mu^{\p m})$
\[
A^\CE(\cT^{\p m},\bfmu^{\p m})=
\left(
A^\CE(T^{\p m}_\Left, \mu^{\p m}_\Left) \stackrel{\phi^{\p m}_\Left}\longrightarrow
A^\CE(T^{\p m}, \mu^{\p m}) \stackrel{\phi^{\p m}_\Right}\longleftarrow
A^\CE(T^{\p m}_\Right, \mu^{\p m}_\Right)
\right).
\]

It is obvious that the DGA $A^\CE(T^{\p m}_*,\mu^{\p m}_*)$ for each $*=\Left,\Right$ or empty is $m$-component link-graded but is not composable.
However, by using the standard recipe described in Example~\ref{example:composable from semi-free}, we obtain an $m$-component composable bordered DGA $A^\CE_\co(\cT^{\p m},\bfmu^{\p m})$
\[
A^\CE_\co(\cT^{\p m},\bfmu^{\p m})=
\left(
A^\CE_\co(T^{\p m}_\Left, \mu^{\p m}_\Left) \stackrel{\phi^{\p m}_\Left}\longrightarrow
A^\CE_\co(T^{\p m}, \mu^{\p m}) \stackrel{\phi^{\p m}_\Right}\longleftarrow
A^\CE_\co(T^{\p m}_\Right, \mu^{\p m}_\Right)
\right).
\]
Then it is obvious that two induced morphisms $\phi^{\p m}_\Left$ and $\phi^{\p m}_\Right$ are composable morphisms but it is not yet known to satisfy the axiom for $m$-component objects.

\begin{proposition}\label{proposition:DGA for consistent sequence is consistent}
The bordered composable DGA $A^\CE_\co(\cT^{\p m},\bfmu^{\p m})\in\BDGA^{\p m}_\co$ is an $m$-component bordered composable DGA.
Moreover, if $(\cT^{\p\bullet}, \bfmu^{\p\bullet})\in\BLT^{\mu,\p\bullet}$ is a consistent sequence, then the corresponding sequence 
\[
A^\CE_\co(\cT^{\p \bullet},\bfmu^{\p \bullet})\coloneqq\left(A^\CE_\co(\cT^{\p m},\bfmu^{\p m})\right)_{m\ge 1}
\]
of bordered DGAs is consistent.
\end{proposition}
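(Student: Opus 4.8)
The plan is to reduce everything to the combinatorial structure of the resolution $\Res(\hat\cT^{\p m})$, keeping track, for each generator, of the pair of parallel copies that the corresponding Reeb chord connects; this pair will be exactly the link-grading, and the whole statement will then follow from two elementary facts: the combinatorial differentials only pair up generators whose copy-labels match, and an immersed polygon whose corners all lie on a fixed sub-collection of the copies has its whole boundary on that sub-collection. Concretely, I would first recall from Section~\ref{section:CE DGA} that $A^\CE(T^{\p m},\mu^{\p m})$ is generated by the crossings of $\Res(\hat\cT^{\p m})$, the vertex generators $v_{a,i}$, the basepoint generators $b_{a,\ell}$ and the border sets $K_\Left^{\p m},K_\Right^{\p m}$, and that each generator is a Reeb chord between copies $i$ and $j$ for some $(i,j)\in[m]^2$ (with $i=j$ for the self-crossings of a single copy and for the $\Lcirclearrowright$-type vertex generators). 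Taking $r,c$ to be these two indices, I would check composability: in the crossing differential, in the vertex differential~\eqref{equation:differential for vertices}, in the basepoint differentials and in the mapping-cylinder differential~\eqref{equation:differential for mapping cylinder} for $\hat k_{ab}$, every monomial is a sequence of Reeb chords read along the boundary of an immersed polygon, so consecutive indices agree and the outer indices are those of the differentiated generator. For crossings this is \cite[\S3.2]{NRSSZ2015}; the remaining checks, for the vertex and basepoint generators and for the bordered piece, are immediate from the explicit formulas. Applying the recipe of Example~\ref{example:composable from semi-free} then turns $A^\CE(T^{\p m}_*,\mu^{\p m}_*)$ into semi-free composable DGAs, so $A^\CE_\co(T^{\p m}_*,\mu^{\p m}_*)\in\DGA_\co^{\p m}$.

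Next I would verify the three conditions of Definition~\ref{definition:m-component borderd DGAs}. Condition~(1) is the previous paragraph. For~(2), the $m$-copy of a trivial bordered Legendrian graph is again trivial, so $A^\CE(T^{\p m}_\Left,\mu^{\p m}_\Left)\isomorphic A_{mn_\Left}(\mu_\Left^{\p m})$ as semi-free border DGAs and Corollary~\ref{corollary:composable border DGAs} identifies the associated composable DGA with $A^{\p m}_{n_\Left}(\mu_\Left)$, and similarly on the right. For~(3), $\phi_\Left^{\p m}$ is the obvious copy-label-preserving inclusion and $\phi_\Right^{\p m}$ sends $k_{ab}^{ij}$ to $\pm g_{ab}^{ij}$, the count of polygons between the right-border strands of copies $i$ and $j$; both are composable morphisms, with the $h$ of Definition~\ref{definition:m-component borderd DGAs} the identity on $[m]$. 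It then remains to describe the restrictions: for $U\subset[m]$ the diagram $\cT^U$ is $\cT^{\p m}$ with the copies outside $U$ deleted, its crossing/vertex/basepoint generators are exactly those of $\Res(\hat\cT^{\p m})$ with both link-grading indices in $U$, and — using the local model of the parallel copies near a vertex (Figure~\ref{figure:canonical copies near vertices}), where distinct copies are unlinked away from the vertex — any immersed polygon with corners confined to $U$-copies stays on $U$-copies, so it contributes identically to the differentials of $A^\CE(\cT^U)$ and $A^\CE(\cT^{\p m})$. Thus deleting copies is precisely the quotient map $\res^U$ killing $\unit^i$ for $i\notin U$; the identities $\res^V=\res^V\circ\res^U$ for $V\subset U$ are then obvious, and $A^\CE_\co(\cT^{\p m},\bfmu^{\p m})$ is an $m$-component bordered composable DGA, which is the first assertion.

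For the consistency of the sequence, given $(h:[m]\to[n])\in\AugSim$ and $U\subset[m]$, the consistency of $(\cT^{\p\bullet},\bfmu^{\p\bullet})$ supplies an isomorphism of regular front projections $(\cT(h)^U,\bfmu(h)^U):(\cT^{h(U)},\bfmu^{h(U)})\xrightarrow{\isomorphic}(\cT^U,\bfmu^U)$ in the sense of Definition~\ref{definition:isomorphisms}, i.e. an isotopy through regular front projections along which no Reidemeister move occurs. Along such an isotopy the combinatorial data — crossings, vertices, basepoints, their adjacencies, and the immersed polygons — are carried along continuously, hence identified, so $A^\CE_\co$ of the two projections are \emph{strictly} isomorphic; concretely the isomorphism simply relabels copy-indices by the injection $h|_U$. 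Setting $A^\CE_\co(\cT^h|_U)$ to be $A^\CE_\co$ applied to $(\cT(h)^U,\bfmu(h)^U)$, compatibility with the restriction maps $\res^U$ of the previous paragraph, and the cocycle condition for composites $[m]\to[n]\to[p]$, follow from the fact that $A^\CE_\co$ is a well-defined invariant of regular front projections assigning a strict isomorphism to each isomorphism of regular projections, together with the functoriality of deletion of copies; hence $A^\CE_\co(\cT^{\p\bullet},\bfmu^{\p\bullet})$ is a consistent sequence.

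The main obstacle is the combinatorial bookkeeping underlying the link-grading and the restrictions: one must check that \emph{every} differential appearing in $A^\CE_\co(\cT^{\p m},\bfmu^{\p m})$ — in particular the vertex differential~\eqref{equation:differential for vertices}, the basepoint differentials, and the mapping-cylinder differential~\eqref{equation:differential for mapping cylinder} — respects the copy-labels, and that near a vertex of type $(\ell,r)$ no immersed polygon with corners confined to a sub-collection $U$ of copies runs onto a copy outside $U$. This local analysis of the resolved parallel copies near a vertex (cf. Figure~\ref{figure:canonical copies near vertices}) is exactly the point where the argument goes beyond \cite{NRSSZ2015}, which treats only crossings of Legendrian knots; once it is in place, all the remaining steps are formal.
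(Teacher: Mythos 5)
Your proof is correct and follows essentially the same route as the paper: identify the border DGAs via Corollary~\ref{corollary:composable border DGAs}, define the restriction $A^\CE_\co(T^{\p m},\mu^{\p m})^U$ by the link-grading quotient, observe that it coincides with $A^\CE_\co(\cT^U,\bfmu^U)$ because the monomials that survive correspond exactly to immersed disks lying on $T^U$, and then pull the consistency back from the geometric side. The paper states these steps more tersely; the additional detail you supply (checking composability of the various differentials, and noting that a consistency isomorphism in $\BLT^{\mu,\p\bullet}$ is an isotopy through regular projections and hence induces a strict DGA isomorphism) is implicit in the paper's argument and does not represent a different method.
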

\begin{proof}
We know that two border DGAs $A^\CE_\co(T^{\p m}_\Left, \mu^{\p m}_\Left)$ and $A^\CE_\co(T^{\p m}_\Right, \mu^{\p m}_\Right)$ are isomorphic to the $m$-component border DGAs
\[
A^\CE_\co(T^{\p m}_\Left, \mu^{\p m}_\Left)\isomorphic A_{n_\Left}^{\p m}(\mu_\Left)\quad\text{ and }\quad
A^\CE_\co(T^{\p m}_\Right, \mu^{\p m}_\Right)\isomorphic A_{n_\Right}^{\p m}(\mu_\Right)
\]
by Corollary~\ref{corollary:composable border DGAs}.

For each $U\subset[m]$, we define the restriction $A^\CE_\co(T^{\p m},\mu^{\p m})^U$ as the image of the quotient map sending all generators of type $(i,j)$ to zero unless $i,j\in U$.
Hence $A^\CE_\co(\cT^{\p m},\bfmu^{\p m})$ admits an $m$-component object structure.

Moreover, we have an isomorphism
\begin{equation}\label{equation:restrictions of DGAs are DGAs of restrictions}
A^\CE_\co(\cT^{\p m},\bfmu^{\p m})^U\isomorphic
A^\CE_\co(\cT^U,\bfmu^U)
\end{equation}
since we only consider generators and immersed disks lying on $T^U$ in the left hand side.

Now assume that $(\cT^{\p \bullet},\bfmu^{\p\bullet})$ is consistent and let $A^\CE_\co(\cT^{\p\bullet},\bfmu^{\p\bullet})$ be the sequence of bordered LCH DGAs of $(\cT^{\p\bullet},\bfmu^{\p\bullet})\in\BLT^{\mu,\p\bullet}$.
Then for each $(h:[m]\to[n])\in\AugSim$ and $U\subset[m]$, we have isomorphisms between bordered composable DGAs
\[
A^\CE_\co(\cT^{\p n},\bfmu^{\p n})^{h(U)}\isomorphic A^\CE_\co(\cT^{h(U)},\bfmu^{h(U)})\quad\text{ and }\quad
A^\CE_\co(\cT^{\p m},\bfmu^{\p m})^U\isomorphic A^\CE_\co(\cT^U,\bfmu^U)
\]
by the observation \eqref{equation:restrictions of DGAs are DGAs of restrictions}.
Since two restrictions $(\cT^{h(U)},\bfmu^{h(U)})$ and $(\cT^U,\bfmu^U)$ are isomorphic due to the consistency of $(\cT^{\p\bullet}, \bfmu^{\p\bullet})$, we have a desired isomorphism
\[
A^\CE_\co(\cT^{\p n},\bfmu^{\p n})^{h(U)}\isomorphic
A^\CE_\co(\cT^{\p m},\bfmu^{\p m})^U
\]
which means the consistency of $A^\CE_\co(\cT^{\p\bullet},\mu^{\p\bullet})$.
\end{proof}

Let $\RM{M}^{\p\bullet}:(\cT'^{\p \bullet},\mu'^{\p \bullet})\to(\cT^{\p \bullet},\mu^{\p \bullet})$ be an elementary consistent front Reidemeister move and let us denote their bordered composable LCH DGAs by $\dga'^{\p \bullet}$ and $\dga^{\p \bullet}$
\begin{align*}
\dga'^{\p \bullet}&=\left(
A'^{\p\bullet}_{\Left}\stackrel{\phi'^{\p \bullet}_\Left}\longrightarrow A'^{\p\bullet}
\stackrel{\phi'^{\p \bullet}_\Right}\longleftarrow
A'^{\p\bullet}_{\Right}
\right)\coloneqq A^\CE_\co(\cT'^{\p\bullet},\bfmu'^{\p\bullet}),\\
\dga^{\p \bullet}&=\left(
A^{\p\bullet}_{\Left}\stackrel{\phi^{\p \bullet}_\Left}\longrightarrow A^{\p\bullet}
\stackrel{\phi^{\p \bullet}_\Right}\longleftarrow
A^{\p\bullet}_{\Right}\right)\coloneqq A^\CE_\co(\cT^{\p\bullet},\bfmu^{\p\bullet}).
\end{align*}

Then as before, we will pass through the cofibrant replacement as follows.
By Lemma~\ref{lemma:cofibrant replacements of consistent sequences of bordered DGAs}, the cofibrant replacements 
\[
\hat\dga'^{\p \bullet}=\left(
A'^{\p\bullet}_{\Left}\stackrel{\hat\phi'^{\p \bullet}_\Left}\longrightarrow \hat A'^{\p\bullet}
\stackrel{\hat\phi'^{\p \bullet}_\Right}\longleftarrow
A'^{\p\bullet}_{\Right}
\right)\quad\text{ and }\quad
\hat\dga^{\p \bullet}=\left(
A^{\p\bullet}_{\Left}\stackrel{\hat\phi^{\p \bullet}_\Left}\longrightarrow \hat A^{\p\bullet}
\stackrel{\hat\phi^{\p \bullet}_\Right}\longleftarrow
A^{\p\bullet}_{\Right}\right)
\]
are also consistent and furthermore stabilizations of $\dga'^{\p \bullet}$ and $\dga^{\p \bullet}$, respectively.
That is, there are consistent morphisms $\hat\pi'^{\p \bullet}$ and $\hat\pi^{\p \bullet}$ of canonical projections
\[
\hat\pi'^{\p \bullet}:\hat\dga'^{\p \bullet}\stackrel{\homotopic}\longrightarrow \dga'^{\p \bullet}\quad\text{ and }\quad
\hat\pi^{\p \bullet}:\hat\dga^{\p \bullet}\stackrel{\homotopic}\longrightarrow \dga^{\p \bullet}.
\]

On the other hand, $\RM{M}^{\p\bullet}$ gives us an elementary consistent Reidemeister move 
\[
\RM{M}^{\p\bullet}:(\hat\cT'^{\p\bullet},\hat\bfmu'^{\p\bullet})\to(\hat\cT^{\p\bullet},\hat\bfmu^{\p\bullet}).
\]
Then by the condition (2) of consistent Reidemeister moves in Definition~\ref{definition:consistent Reidemeister moves} and as observed in the discussion after Theorem~\ref{theorem:zig-zags of stabilizations}, for each $m\ge 1$ it induces a zig-zag of stabilizations
\[
\begin{tikzcd}[column sep=3pc]
A^\CE_\co(\cT'^{\p m}, \bfmu'^{\p\bullet})\arrow[from=r, "\hat\bfpi'^{\p m}"',->>] & 
\hat\dga'^{\p m} \arrow[r, "\isomorphic"',"\RM{\hat {III}}^{\p m}"] & 
\hat\dga^{\p m} \arrow[r, "\hat\bfpi^{\p m}",->>] & 
A^\CE_\co(\cT^{\p m}, \bfmu^{\p m}),
\end{tikzcd}
\]
\[
\begin{tikzcd}[column sep=3pc]
A^\CE_\co(\cT'^{\p m}, \bfmu'^{\p\bullet})\arrow[from=r, "\hat\bfpi'^{\p m}"',->>] & 
\hat\dga'^{\p m} \arrow[r, ->>, "\RM{\hat M}^{\p m}=\bfpi^{\p m}",yshift=.5ex]\arrow[from=r,hook, "\bfi^{\p m}",yshift=-.5ex] & 
\hat\dga^{\p m} \arrow[r, "\hat\bfpi^{\p m}",->>] & 
A^\CE_\co(\cT^{\p m}, \bfmu^{\p m})
\end{tikzcd}
\]
for $\RM{M}\in\{\RM{I}, \RM{II}, \RM{VI}\}$, or
\[
\begin{tikzcd}[column sep=3pc]
& & \tilde SA^{\p m}\arrow[dl, "\tilde\bfpi'^{\p m}"',->>, yshift=.5ex] \arrow[from=dl, hook', near end,"\tilde\bfi'^{\p m}"', yshift=-.5ex]
\arrow[dr, "\tilde\bfpi^{\p m}",->>, yshift=.5ex] \arrow[from=dr, hook, near end, "\tilde\bfi^{\p m}", yshift=-.5ex]\\
A^\CE_\co(\cT'^{\p m}, \bfmu'^{\p m})\arrow[from=r, "\hat\bfpi'^{\p m}"',->>] & 
\hat\dga'^{\p m} \arrow[rr, "\RM{\hat {VI}}^{\p m}"',->>] & &
\hat\dga^{\p m} \arrow[r, "\hat\bfpi^{\p m}",->>] & 
A^\CE_\co(\cT^{\p m}, \bfmu^{\p m})
\end{tikzcd}
\]
between $m$-component bordered composable LCH DGAs.
Moreover, the condition (3) in Definition~\ref{definition:consistent Reidemeister moves} implies the consistency of all arrows above and therefore we have a zig-zag of consistent stabilizations corresponding to $\RM{M}^{\p\bullet}$.

Similarly, one can observe the same property for elementary consistent Lagrangian Reidemeister moves as well and we omit the detail.

In summary, we have the following theorem.
\begin{theorem}\label{theorem:tangle to CE algebra}
For $(\cT^{\p\bullet},\bfmu^{\p\bullet})\in\BLT^{\mu,\p\bullet}$ and $(\cT^{\p\bullet}_\lag,\bfmu^{\p\bullet})\in\BLT^{\mu,\p\bullet}_\lag$, the assignments 
\[
(\cT^{\p\bullet},\bfmu^{\p\bullet})\mapsto A^{\CE}_\co\left(\cT^{\p\bullet},\bfmu^{\p\bullet}\right)\in\BDGA_\co^{\p\bullet}\quad\text{ and }\quad
(\cT^{\p\bullet}_\lag,\bfmu^{\p\bullet})\mapsto A^{\CE}_\co\left(\cT^{\p\bullet}_\lag,\bfmu^{\p\bullet}\right)\in\BDGA_\co^{\p\bullet}
\]
are well-defined and each elementary consistent front or Lagrangian Reidemeister move induces a zig-zag of consistent stabilizations.
\end{theorem}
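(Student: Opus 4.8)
The plan is to reduce everything to per-component statements already available and then check naturality in the simplicial variable $m$. There are three pieces: (1) the two assignments land in $\BDGA^{\p\bullet}_\co$, i.e.\ the associated sequences of composable DGAs are consistent; (2) for each elementary consistent front (or Lagrangian) Reidemeister move one builds a zig-zag of stabilizations component by component; (3) every arrow in those zig-zags is a consistent morphism.

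For (1), fixing a consistent sequence $(\cT^{\p\bullet},\bfmu^{\p\bullet})\in\BLT^{\mu,\p\bullet}$, Proposition~\ref{proposition:DGA for consistent sequence is consistent} already shows that each $A^\CE_\co(\cT^{\p m},\bfmu^{\p m})$ is an $m$-component bordered composable DGA with restriction $A^\CE_\co(\cT^{\p m},\bfmu^{\p m})^U\isomorphic A^\CE_\co(\cT^U,\bfmu^U)$ (equation~\eqref{equation:restrictions of DGAs are DGAs of restrictions}), and that the consistency isomorphisms $(\cT(h)^U,\bfmu(h)^U)$ of the graph sequence push forward through this identification to the isomorphisms $A^\CE_\co(\cT^{\p n},\bfmu^{\p n})^{h(U)}\isomorphic A^\CE_\co(\cT^{\p m},\bfmu^{\p m})^U$ required of a consistent sequence. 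The same proof works verbatim for $(\cT^{\p\bullet}_\lag,\bfmu^{\p\bullet})$, so both assignments are well defined.

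For (2) and (3), let $\RM{M}^{\p\bullet}\colon(\cT'^{\p\bullet},\bfmu'^{\p\bullet})\to(\cT^{\p\bullet},\bfmu^{\p\bullet})$ be an elementary consistent front Reidemeister move, and write $\dga'^{\p\bullet}=A^\CE_\co(\cT'^{\p\bullet},\bfmu'^{\p\bullet})$, $\dga^{\p\bullet}=A^\CE_\co(\cT^{\p\bullet},\bfmu^{\p\bullet})$. First I would pass to the consistent cofibrant replacements $\hat\dga'^{\p\bullet}$ and $\hat\dga^{\p\bullet}$, which exist by Lemma~\ref{lemma:cofibrant replacements of consistent sequences of bordered DGAs} and are consistent weak stabilizations by Example/Definition~\ref{example:consistent cofibrant replacement}, giving consistent projections $\hat\bfpi'^{\p\bullet}$ and $\hat\bfpi^{\p\bullet}$. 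Then condition~(2) of Definition~\ref{definition:consistent Reidemeister moves} says that for each $m$ the component $\RM{M}^{\p m}$ is a genuine (sequence of) front Reidemeister move(s), so Theorem~\ref{theorem:zig-zags of stabilizations} and the discussion after it, together with Proposition~\ref{proposition:generalized stabilizations}, produce for every $m$ a zig-zag of stabilizations of bordered composable DGAs between $\dga'^{\p m}$ and $\dga^{\p m}$ running through $\hat\dga'^{\p m}$ and $\hat\dga^{\p m}$: an isomorphism $\RM{\hat {III}}^{\p m}$ when $\RM{M}=\RM{III}$, a strong stabilization $\RM{\hat M}^{\p m}=\bfpi^{\p m}$ together with an embedding $\bfi^{\p m}$ when $\RM{M}\in\{\RM{I},\RM{II},\RM{VI}\}$, and, for $\RM{VI}$, a span through a common stabilization $\tilde S\dga^{\p m}$. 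It then remains to verify that every arrow occurring here is a consistent morphism; this I would deduce from condition~(3) of Definition~\ref{definition:consistent Reidemeister moves} together with the fact that each such arrow is defined by a construction \emph{local} in the front diagram (counting immersed polygons supported in the region of the move, resp.\ adjoining canceling pairs prescribed by the move), so that it automatically commutes with the restrictions $\dga^{\p m}\to\dga^U$ and with the consistency isomorphisms. The Lagrangian assertion is handled identically, using the Remark after Theorem~\ref{theorem:zig-zags of stabilizations}: $\RM{\hat m}^{\p m}$ is an isomorphism for $\RM{m}\in\{\RM{0_*},\RM{iii_*}\}$, a strong stabilization projection for $\RM{ii}$, and a common-stabilization span for $\RM{iv}$.

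The step I expect to be the main obstacle is the consistency check in (3) for the moves $\RM{VI}$ and $\RM{iv}$, where the comparison is not a single (de)stabilization but a span through the auxiliary DGA $\tilde S\dga^{\p m}$: one has to choose the canceling pairs of Proposition~\ref{proposition:generalized stabilizations} uniformly in $m$ and compatibly with restriction to $U\subset[m]$, i.e.\ so that the index sets and gradings satisfy the matching conditions $I^{ij}\isomorphic I^{kl}$ and $|e^{ij}_k|=|e^{kl}_k|$ that characterize a consistent stabilization. This should go through because the generalized stabilization is a purely local operation near a vertex, but making it precise is where the bookkeeping concentrates; the rest of the argument is routine assembly of the per-component statements already established.
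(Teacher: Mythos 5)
Your proposal is correct and follows essentially the same route as the paper: well-definedness via Proposition~\ref{proposition:DGA for consistent sequence is consistent} and equation~\eqref{equation:restrictions of DGAs are DGAs of restrictions}, passage to consistent cofibrant replacements via Lemma~\ref{lemma:cofibrant replacements of consistent sequences of bordered DGAs}, per-component zig-zags from condition~(2) of Definition~\ref{definition:consistent Reidemeister moves} together with Theorem~\ref{theorem:zig-zags of stabilizations}, and consistency of all arrows from condition~(3). Your flagging of $\RM{VI}$/$\RM{iv}$ as the place where bookkeeping matters is accurate; the paper states tersely that condition~(3) of Definition~\ref{definition:consistent Reidemeister moves} forces consistency of every arrow, including the span through $\tilde S\dga^{\p m}$, and your observation that this amounts to choosing the canceling pairs of Proposition~\ref{proposition:generalized stabilizations} uniformly in $m$ and compatibly with restriction is exactly the content the paper leaves implicit.
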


Now let us consider the consistent basepoint move.
For an elementary consistent basepoint move $\RM{B_1}^{\p\bullet}$ or $\RM{b_1}^{\p\bullet}$, we already know that it induces a zig-zag of consistent stabilizations since it is indeed a sequence of elementary consistent Reidemeister moves.

For the moves $\RM{B_i}^{\p\bullet}$ or $\RM{b_i}^{\p\bullet}$ with $i=2$ or $3$, it is obvious that there is a consistent morphisms
\[
\begin{tikzcd}
A^\CE(\cT'^{\p\bullet},\bfmu')\arrow[r, "\RM{B_i}^{\p\bullet}_*", yshift=.7ex]\arrow[from=r, "\RM{B_i^{-1}}^{\p\bullet}_*", yshift=-.7ex] & A^\CE(\cT^{\p\bullet},\bfmu)&
A^\CE(\cT'^{\p\bullet}_\lag,\bfmu')\arrow[r, "\RM{b_i}^{\p\bullet}_*", yshift=.7ex]\arrow[from=r, "\RM{b_i^{-1}}^{\p\bullet}_*", yshift=-.7ex] & A^\CE(\cT^{\p\bullet}_\lag,\bfmu).
\end{tikzcd}
\]
Then as mentioned in Remark~\ref{remark:basepoint move commutativity}, both bordered consistent morphisms $\RM{B_i^{-1}}_*^{\p\bullet}$ and $\RM{b_i^{-1}}_*^{\p\bullet}$ are well-defined and left inverses of consistent morphisms $\RM{B_i}_*^{\p\bullet}$ and $\RM{b_i}_*^{\p\bullet}$, respectively.

\subsubsection{Legendrian graphs in a normal form}\label{section:augmentation category of normal form}
Recall the definition of Legendrian graphs or bordered Legendrian graphs in a normal form defined in Definition~\ref{definition:normal form}.

Let $(\cT,\bfmu)\in\BLT^\mu_\lag$ be a bordered Legendrian graph of type $(n_\Left,n_\Right)$ in a normal form and $(\cT_\lag,\bfmu)\in\BLT^\mu_\lag$ be its image of Ng's resolution.

Let us consider the canonical Lagrangian $m$-copies $\cT^{\p m}_\lag$.
One of the benefit of being in a normal form is indeed that the DGA $A^\CE_\co\left(\cT^{\p m}_\lag,\bfmu^{\p m}\right)$ can be easily described in terms of the data of $\cT_\lag$ and the DGA $A^\CE_\co(\cT_\lag,\bfmu)$.

Recall that the $x$-maximal points in $\cT$ are basepoints and their Ng's resolutions look as depicted in Definition~\ref{definition:Ng's resolution}.
Therefore the canonical Lagrangian $m$-copy near each basepoint $b\in\cT_\lag$ (or equivalently, a $x$-maximum) looks like
\[
\begin{tikzcd}
\begin{tikzpicture}[baseline=-0.5ex, scale=0.8]
\draw[thick] (-1,0.5) to[out=0,in=180] (0.5,0) node[below=-2ex] {$|$} to[out=180,in=0] (-1,-0.5);
\end{tikzpicture}\subset\cT\arrow[r,mapsto] &
\begin{tikzpicture}[baseline=-0.5ex,scale=0.8]
\draw[thick] (-1,0.5) arc(90:0:1 and 0.5) arc(0:-90:1 and 0.5) node[near end,below=-2ex,sloped] {$|$};
\end{tikzpicture}\subset\cT_\lag\arrow[r,"(\cdot)^{\p m}"] &
\begin{tikzpicture}[baseline=-0.5ex,scale=0.8]
\begin{scope}[yshift=-.25cm,color=blue]
\draw[white,line width=5] (-1,0.5) arc(90:0:1 and 0.5) arc(0:-90:1 and 0.5) node[near end,below=-2ex,sloped] {$|$};
\draw[thick] (-1,0.5) arc(90:0:1 and 0.5) arc(0:-90:1 and 0.5) node[near end,below=-2ex,sloped] {$|$};
\end{scope}
\begin{scope}[yshift=0,color=red]
\draw[white,line width=5] (-1,0.5) arc(90:0:1 and 0.5) arc(0:-90:1 and 0.5) node[near end,below=-2ex,sloped] {$|$};
\draw[thick] (-1,0.5) arc(90:0:1 and 0.5) arc(0:-90:1 and 0.5) node[near end,below=-2ex,sloped] {$|$};
\end{scope}
\begin{scope}[yshift=.25cm,color=black]
\draw[white,line width=5] (-1,0.5) arc(90:0:1 and 0.5) arc(0:-90:1 and 0.5) node[near end,below=-2ex,sloped] {$|$};
\draw[thick] (-1,0.5) arc(90:0:1 and 0.5) arc(0:-90:1 and 0.5) node[near end,below=-2ex,sloped] {$|$};
\end{scope}
\end{tikzpicture}\subset\cT_\lag^{\p m}
\end{tikzcd}
\]

Let us consider the set of $x$-minimal points in $\cT_\lag$.
Then it has one-to-one correspondence with the set of connected components of the complement of $V\amalg B$, which is the set of edges.

The set of generators for $A^{\CE}(T_\lag,\mu)$ is the union of $C$, $K_\Left$, $\tilde V$ and $\tilde B$
\begin{align*}
C&=\left\{c~\mid c\text{ is a double point}\right\},\\
K_\Left&=\left\{k_{ab}\mid 1\le a<b\le n_\Left\right\},\\
\tilde V&=\left\{v_{a,\ell}\mid v\in V, 1\le a\le \val(v), \ell\ge 1\right\},\\
\tilde B&=\left\{b_{a,\ell}\mid b\in B, a=1,2, \ell\ge 1\right\}.
\end{align*}

As seen in Definition~\ref{definition:normal form} and Example~\ref{example:DGAs for graphs in a normal form}, we define the sets $\tilde V_{\Left}$ and $\tilde V_{\Lcirclearrowright}$.
Then the generating set for $A^{\CE}\left(T_\lag^{\p m},\mu^{\p m}\right)$ is the union of the following sets:
\[
C^{\p m}\amalg K_{\Left}^{\p m}\amalg \tilde V_{\Left}^{\p m}\amalg \tilde V_{\Lcirclearrowright}^{\p m}\amalg \tilde B^{\p m}\amalg X^{\p m}\amalg Y^{\p m},
\]
where
\begin{align*}
C^\p{m}&\coloneqq\left\{c^{ij}~\middle|~ c\in C, i,j\in[m]\right\},& \left|c^{ij}\right|&\coloneqq|c|,\\
K_{\Left}^{\p m}&\coloneqq \left\{k_{ab}^{ij}~\middle|~1\le a<b\le n_\Left\right\},&\left|k_{ab}^{ij}\right|&\coloneqq\left|k_{ab}\right|,\\
\tilde V_{\Left}^{\p m}&\coloneqq\left\{v^{ij}_{a,\ell}~\middle|~ v_{a,\ell}\in \tilde V_{\Left}, i\le j\in[m]\right\},& \left|v_{a,\ell}^{ij}\right|&\coloneqq \left|v_{a,\ell}\right|,\\
\tilde V_{\Lcirclearrowright}^\p{m}&\coloneqq\left\{v_{a,\ell}^{ii}~\middle|~ v_{a,\ell}\in \tilde V_{\Lcirclearrowright},  i\in[m]\right\},& \left|v_{a,\ell}^{ii}\right|&\coloneqq \left|v_{a,\ell}\right|,\\
\tilde B^{\p m}&\coloneqq\left\{ b_{a,\ell}^{ii}~\middle|~ b_{a,\ell}\in \tilde B, i\in [m]\right\},& |b_{a,\ell}^{ii}|&\coloneqq|b_{a,\ell}|,\\
X^{\p m}&\coloneqq\left\{x_b^{ij}~\middle|~ b\in B,  i<j\in[m]\right\},& \left|x_b^{ij}\right|&\coloneqq 0,\\
Y^{\p m}&\coloneqq\left\{y_e^{ij}~\middle|~ y_e\in E, i< j\in[m]\right\}, &\left|y^{ij}\right|&\coloneqq -1.
\end{align*}

Geometrically, these generating sets corresponding to the crossings and vertex generators of $T^{\p m}$ as follows:
\begin{itemize}
\item Crossing: for each $c\in C$, there are $m^2$-many copies $c^{ij}$ in $C^{\p m}$
\begin{equation}
\begin{tikzcd}
\begin{tikzpicture}[baseline=-.5ex,scale=1]
\useasboundingbox (-1.5,-1.5) -- (1.5,1.5);
\draw[dashed,gray] (0,0) circle (1.5);
\clip (0,0) circle (1.5);
\draw[thick] (-2,-1.5) -- +(4,4);
\draw[white,line width=4] (-2,2.5) -- +(4,-4);
\draw[thick] (-2,2.5) -- +(4,-4);
\draw(0,0.7) node[above] {$c$};
\end{tikzpicture}\arrow[r,"(\cdot)^{\p m}"] &
\begin{tikzpicture}[baseline=-.5ex,scale=1]
\useasboundingbox (-1.5,-1.5) -- (1.5,1.5);
\draw[dashed,gray] (0,0) circle (1.5);
\begin{scope}
\clip (0,0) circle (1.5);
\draw[thick] (-2,-1.5) -- +(4,4);
\draw[red,thick] (-2,-2) -- +(4,4);
\draw[blue,thick] (-2,-2.5) -- +(4,4);
\draw[white,line width=4] (-2,2.5) -- +(4,-4);
\draw[white,line width=4] (-2,2) -- +(4,-4);
\draw[white,line width=4] (-2,1.5) -- +(4,-4);
\draw[thick] (-2,2.5) -- +(4,-4);
\draw[red,thick] (-2,2) -- +(4,-4);
\draw[blue,thick] (-2,1.5) -- +(4,-4);
\end{scope}
\draw(0,0.7) node[above] {$c^{11}$};
\draw(0.6,0) node[right] {$c^{1m}$};
\draw(0,-0.7) node[below] {$c^{mm}$};
\draw(-0.6,0) node[left] {$c^{m1}$};
\end{tikzpicture}
\end{tikzcd}
\end{equation}
\item Vertex: for each $v\in V$, there are two sets of generators $\tilde V_{v,\Left}^{\p m}$ and $\tilde V_{v,\Lcirclearrowright}^{\p m}$
\begin{equation}\label{eqn:m_copy_vertexlabel}
\begin{tikzcd}
\begin{tikzpicture}[baseline=-.5ex,scale=1]
\draw[dashed,gray] (0,0) circle (1.5);
\begin{scope}
\clip (0,0) circle (1.5);
\draw[thick,rounded corners] (-1.5,-.8)--(0,0.2);
\draw[thick,rounded corners] (-1.5,0)--(0,0.2);
\draw[thick,rounded corners] (-1.5,.8)--(0,0.2);
\fill (0,0.2) circle(1.5pt);
\end{scope}
\draw (210:1.5) node[left] {$1$};
\draw (180:1.5) node[left] {$2$};
\draw (150:1.5) node[left] {$3$};
\end{tikzpicture}\arrow[r,"(\cdot)^{\p m}"] &
\begin{tikzpicture}[baseline=-.5ex,scale=1]
\begin{scope}
\draw[dashed,gray] (0,0) circle (1.5);
\clip (0,0) circle (1.5);
\foreach[count=\index] \c in {blue, red, black} {
\begin{scope}[yshift=(\index-2)*0.2 cm, color=\c]
\draw[white,line width=4,rounded corners] (-1.5,-.8)--(0,{(\index-2)*0.3});
\draw[white,line width=4,rounded corners] (-1.5,0)--(0,{(\index-2)*0.3});
\draw[white,line width=4,rounded corners] (-1.5,.8)--(0,{(\index-2)*0.3});
\draw[thick,rounded corners] (-1.5,-.8)--(0,{(\index-2)*0.3});
\draw[thick,rounded corners] (-1.5,0)--(0,{(\index-2)*0.3});
\draw[thick,rounded corners] (-1.5,.8)--(0,{(\index-2)*0.3});
\fill (0,{(\index-2)*0.3}) circle(1.5pt);
\end{scope}
}
\end{scope}
\draw[-latex'] (0, 0.5) ++ (220:0.3) arc (220:155:0.3) node[above] {$v_{1,2}^{11}\in\tilde V^{\p m}_{\Left}$};
\draw[fill,opacity=0.5] (-0.68,-0.36) node (aa) { } circle (0.1);
\draw (0.5,-1.2) node (bb) {$v_{1,1}^{23}\in\tilde V^{\p m}_{\Left}$};
\draw[-latex'] (bb) -- (aa.south);
\draw (200:1.7) node {$1^1$};
\draw[red] (210:1.7) node {$1^2$};
\draw[blue] (185:1.7) node {$2^3$};
\draw (140:1.7) node {$3^1$};
\draw[red,-latex'] (150:0.2) arc (150:-150:0.2);
\draw (0.3,0) node[right] {$v_{3,1}^{22}\in\tilde V_{\Lcirclearrowright}^{\p m}$};
\end{tikzpicture}
\end{tikzcd}
\end{equation}
\item Left border: for each $k_{ab}\in K_\Left$ with $a<b$, there is the set $k_\Left^{\p m}$ of $m^2$ generators $k_{ab}^{ij}$, and for each $i<j$ and $1\le a\le \ell$, there are $\binom{m}2$ generators $y_a^{ij}$,
\begin{equation}\label{equation:m-copies of border}
\begin{tikzcd}
\begin{tikzpicture}[baseline=-.5ex]
\useasboundingbox (-1,-1.5)--(0.5,1.5);
\draw[red] (0,-1.5)--(0,1.5);
\foreach \y in {1.2,0.2,-0.8} {
	\draw[thick] (0,\y) -- +(0.5,0);
}
\draw[thick, <-] (-0.3,1.2)-- node[midway,left] {$k_{ab}$} (-0.3,-0.8);
\end{tikzpicture}\arrow[r,"(\cdot)^{\p m}"] &
\begin{tikzpicture}[baseline=-.5ex]
\useasboundingbox (-1.5,-1.5)--(0.5,1.5);
\draw[red] (0,-1.5)--(0,1.5);
\foreach \y in {1.2,0.2,-0.8} {
	\draw[thick] (0,\y) -- +(0.5,0);
	\draw[red,thick] (0,\y-0.2) -- +(0.5,0);
	\draw[blue,thick] (0,\y-0.4) -- +(0.5,0);
}
\draw[thick, <-] (-0.3,1)-- node[midway, left] {$k_{ab}^{ij}$} (-0.3,-0.8);
\draw[thick, <-] (-0.6,1.2) -- node[midway, left] {$y_a^{ii}$} (-0.6,0.8);
\end{tikzpicture}
\end{tikzcd}
\end{equation}
\item basepoint: for each $b_{a,\ell}\in\tilde B$, we have the set of $m$ generators $b_{a,\ell}^{ii}$
\begin{equation}
\begin{tikzcd}
\begin{tikzpicture}[baseline=-.5ex]
\useasboundingbox (-1.5,-1.5) -- (1.5,1.5);
\draw[dashed,gray] (0,0) circle (1.5);
\clip (0,0) circle (1.5);
\draw[thick] (-1.5,0.6) -- node[midway,below=-2ex] {$|$} node[midway,above] {$b$} ++(3,0);
\draw[thick,-latex'] (0,0.6) + (0:0.3) arc (0:-180:0.3) node[midway,below] {$b_{2,1}$};
\end{tikzpicture}\arrow[r,"(\cdot)^{\p m}"]&
\begin{tikzpicture}[baseline=-.5ex]
\useasboundingbox (-1.5,-1.5) -- (1.5,1.5);
\draw[dashed,gray] (0,0) circle (1.5);
\begin{scope}
\clip (0,0) circle (1.5);
\draw[thick] (-1.5,0.6) -- node[midway,below=-2ex] {$|$} node[midway,above] {$b^1$} ++(3,0);
\draw[red,thick] (-1.5,0) -- node[midway,below=-2ex] {$|$} node[midway,above] {$b^2$} ++(3,0);
\draw[blue,thick] (-1.5,-0.6) -- node[midway,below=-2ex] {$|$} node[midway,above] {$b^3$} ++(3,0);
\end{scope}
\draw[blue,thick,-latex'] (0,-0.6) + (0:0.3) arc (0:-180:0.3) node[midway,below] {$b_{2,1}^{33}$};
\end{tikzpicture}
\end{tikzcd}
\end{equation}
\item $x$-maximum: for each $b\in B$, we have the set of $\binom{m}2$ generators $x_b^{ij}$ for $i<j$
\begin{equation}
\begin{tikzcd}
\begin{tikzpicture}[baseline=-.5ex]
\useasboundingbox (-1.5,-1.5) -- (1.5,1.5);
\draw[dashed,gray] (0,0) circle (1.5);
\clip (0,0) circle (1.5);
\draw[thick] (-1.5,0.3)+(90:1.5 and 0.5) arc (90:-90:1.5 and 0.5) node[pos=0.9,sloped,below=-2ex] {$|$} node[pos=0.9,above] {$b$};
\draw (-1.5,0.3)+(0:1.5 and 0.5); 
\end{tikzpicture}\arrow[r,"(\cdot)^{\p m}"]&
\begin{tikzpicture}[baseline=-.5ex]
\useasboundingbox (-1.5,-1.5) -- (1.5,1.5);
\draw[dashed,gray] (0,0) circle (1.5);
\clip (0,0) circle (1.5);
\draw[blue,thick] (-1.5,-0.3)+(90:1.5 and 0.5) arc (90:-90:1.5 and 0.5) node[pos=0.9,sloped,below=-2ex] {$|$};
\draw[white,line width=4] (-1.5,0)+(90:1.5 and 0.5) arc (90:-90:1.5 and 0.5);
\draw[red,thick] (-1.5,0)+(90:1.5 and 0.5) arc (90:-90:1.5 and 0.5) node[pos=0.9,sloped,below=-2ex] {$|$};
\draw[white,line width=4] (-1.5,0.3)+(90:1.5 and 0.5) arc (90:-90:1.5 and 0.5);
\draw[thick] (-1.5,0.3)+(90:1.5 and 0.5) arc (90:-90:1.5 and 0.5) node[pos=0.9,sloped,below=-2ex] {$|$};
\draw (-1.5,0.3)+(0:1.5 and 0.5) node[right] {$x_b^{ij}$};
\end{tikzpicture}
\end{tikzcd}
\end{equation}
\item $x$-minimum: for each $e\in E$, we have the subset of $\binom{m}2$ generators $y_e^{ij}$ for $i<j$
\begin{equation}
\begin{tikzcd}
\begin{tikzpicture}[baseline=-.5ex]
\useasboundingbox (-1.5,-1.5) -- (1.5,1.5);
\draw[dashed,gray] (0,0) circle (1.5);
\clip (0,0) circle (1.5);
\draw[thick] (1.5,0.3)+(90:1.5 and 0.5) arc (90:270:1.5 and 0.5);
\draw (1.5,0.3)+(180:1.5 and 0.5) node[left] {$e$};
\end{tikzpicture}\arrow[r,"(\cdot)^{\p m}"]&
\begin{tikzpicture}[baseline=-.5ex]
\useasboundingbox (-1.5,-1.5) -- (1.5,1.5);
\draw[dashed,gray] (0,0) circle (1.5);
\clip (0,0) circle (1.5);
\draw[blue,thick] (1.5,-0.3)+(90:1.5 and 0.5) arc (90:270:1.5 and 0.5);
\draw[white,line width=4] (1.5,0)+(90:1.5 and 0.5) arc (90:270:1.5 and 0.5);
\draw[red,thick] (1.5,0)+(90:1.5 and 0.5) arc (90:270:1.5 and 0.5);
\draw[white,line width=4] (1.5,0.3)+(90:1.5 and 0.5) arc (90:270:1.5 and 0.5);
\draw[thick] (1.5,0.3)+(90:1.5 and 0.5) arc (90:270:1.5 and 0.5);
\draw (1.5,0.3)+(180:1.5 and 0.5) node[left] {$y_e^{ij}$};
\end{tikzpicture}
\end{tikzcd}
\end{equation}
\end{itemize}

Note that for each $v_{a,\ell}\in \tilde \sfV_{\Left}$, the generator $v_{a,\ell}^{ij}$ corresponds to a crossing generator between $a$-th and $(a+\ell)$-th edges in the $i$-th and $j$-th component, respectively, if $i<j$.
If $i=j$, then it is the corresponding vertex generator on the $i$-th component.

On the other hand, for $v_{a,\ell}\in\tilde \sfV_{\Lcirclearrowright}$, the generator $v_{a,\ell}^{ii}$ is the corresponding vertex generator as well but there are no generators $v_{a,\ell}^{ij}$ if $i\neq j$.

Let us consider matrices as follows: 
for each $c\in \sfC$, $k_{ab}\in K_\Left^{\p m}$, $v_{a,\ell}\in\tilde V_\Left$, $w_{a,\ell}\in \tilde V_{\Lcirclearrowright}$, $b_{a,\ell}\in \tilde B$, $b\in B$ and $e\in E$,
\begingroup
\allowdisplaybreaks
\begin{align*}
\Phi(c)&\coloneqq \left(
\begin{matrix}
c^{11}& c^{12} &\cdots & c^{1m} \\
c^{21} & c^{22} & \cdots & c^{2m} \\
\vdots & \vdots  & \ddots & \vdots\\
c^{m1} & c^{m2} & \cdots & c^{mm}
\end{matrix}\right),&
\Phi(k_{ab})&\coloneqq \left(
\begin{matrix}
k_{ab}^{11}& k_{ab}^{12} &\cdots & k_{ab}^{1m}\\
k_{ab}^{21} & k_{ab}^{22} & \cdots & k_{ab}^{2m}\\
\vdots & \vdots  & \ddots & \vdots\\
k_{ab}^{m1} & k_{ab}^{m2} & \cdots & k_{ab}^{mm}
\end{matrix}\right),\\
\Phi(v_{a,\ell})&\coloneqq \left(
\begin{matrix}
v_{a,\ell}^{11}& v_{a,\ell}^{12} &\cdots & v_{a,\ell}^{1m} \\
0 & v_{a,\ell}^{22} & \cdots & v_{a,\ell}^{2m} \\
\vdots & \vdots  & \ddots & \vdots\\
0 & 0 & \cdots & v_{a,\ell}^{mm}
\end{matrix}\right),&
\Phi(w_{a,\ell})&\coloneqq \left(
\begin{matrix}
w_{a,\ell}^{11}& 0 &\cdots & 0\\
0 &w_{a,\ell}^{22} & \cdots & 0\\
\vdots & \vdots  & \ddots & \vdots\\
0 & 0 & \cdots & w_{a,\ell}^{mm}
\end{matrix}\right),\\
\Delta(b_{a,\ell})&\coloneqq \left(
\begin{matrix}
b_{a,\ell}^{11} & 0 &\cdots & 0 \\
0 & b_{a,\ell}^{22} & \cdots & 0 \\
\vdots & \vdots  & \ddots & \vdots\\
0 & 0 & \cdots & b_{a,\ell}^{mm}
\end{matrix}\right),&
X_b&\coloneqq \left(
\begin{matrix}
1& x_b^{12} &\cdots & x_b^{1m} \\
0 & 1 & \cdots & x_b^{2m} \\
\vdots & \vdots  & \ddots & \vdots\\
0 & 0 & \cdots & 1
\end{matrix}\right),\\
Y_e&\coloneqq \left(
\begin{matrix}
0& y_e^{12} &\cdots & y_e^{1m} \\
0 &0 & \cdots & y_e^{2m} \\
\vdots & \vdots  & \ddots & \vdots\\
0 & 0 & \cdots & 0
\end{matrix}\right).
\end{align*}
\endgroup

We also define
\[
\Phi(b_{1,1})\coloneqq \Delta(b_{1,1}) X_b\quad\text{ and }\quad
\Phi(b_{2,1})\coloneqq X_b^{-1}\Delta(b_{2,1}).
\]

The differential $\differential^\p{m}$ can be given as follows:
\begin{itemize}
\item for each $c^{ij}\in C^{\p m}$ with upper and lower edges $e, e'\in E$,
\[
\differential^\p{m}\left(c^{ij}\right)\coloneqq\displaystyle \Phi(\differential(c))_{ij}+\sum_{k<j} (-1)^{|c|-1}c^{ik} y_{e'}^{kj} + \sum_{i<k} y_e^{ik} c^{kj}
\]
\item for each $k_{ab}^{ij}\in K_\Left^{\p m}$ with two edges $e$ and $e'$ containing left borders corresponding $a$ and $b$,
\[
\differential^\p{m}\left(k_{ab}^{ij}\right)\coloneqq\displaystyle \Phi(\differential(k_{ab}))_{ij}+\sum_{k<j} (-1)^{|k_{ab}|-1}k_{ab}^{ik} y_{e'}^{kj} + \sum_{i<k} y_e^{ik} k_{ab}^{kj}
\]
\item for each $v_{a,\ell}\in \tilde V_\Left$ with two half-edges $h_{v,a}\subset e\in E$ and $h_{v,a+\ell}\subset e'\in E$,
\[
\differential^\p{m}\left(v_{a,\ell}^{ij}\right)\coloneqq\displaystyle \Phi(\differential(v_{a,\ell}))_{ij}+\sum_{i<k<j} (-1)^{|v_{a,\ell}|-1}v_{a,\ell}^{ik} y_{e'}^{kj} + y_e^{ik} v_{a,\ell}^{kj}
\]
\item for each $b\in B$ with two edges $(e,e')$ adjacent to $b$,
\[
\differential^\p{m}\left(x_b^{ij}\right)\coloneqq\displaystyle \sum_{i<k<j} -x_b^{ik} y_{e'}^{kj} + y_e^{ik} x_b^{kj}
\]
\item for each $e\in E$, $b_{a,\ell}\in \tilde B$ and $w_{a,\ell}\in \tilde V_\Lcirclearrowright$,
\begin{align*}
\differential^\p{m}\left(b_{a,\ell}^{ii}\right)&\coloneqq \Phi(\differential(b_{a,\ell}))_{ii}=\displaystyle\delta_{\ell,2}+
\sum_{\ell_1+\ell_2=\ell} (-1)^{|b_{a,\ell_1}|-1}b_{a,\ell_1}^{ii}b_{a+\ell_1,\ell_2}^{ii},\\
\differential^\p{m}\left(w_{a,\ell}^{ii}\right)&\coloneqq \Phi(\differential(w_{a,\ell}))_{ii}=\displaystyle\delta_{\ell,\val(w)}+
\sum_{\ell_1+\ell_2=\ell} (-1)^{|w_{a,\ell_1}|-1}w_{a,\ell_1}^{ii}w_{a+\ell_1,\ell_2}^{ii},\\
\differential^\p{m}\left(y_e^{ij}\right)&\coloneqq (Y_e)^2_{ij}=\displaystyle\sum_{i<k<j} y_e^{ik}y_e^{kj}
\end{align*}
\end{itemize}

Now consider the structure morphisms $\phi_\Left^{\p m}:A_{n_\Left}^{\p m}\to A^{\p m}$ and $\phi_\Right^{\p m}:A_{n_\Right}^{\p m}\to A^{\p m}$.
Since $\phi_\Left^{\p m}$ is obvious, we need to consider $\phi_\Right^{\p m}$.
Then due to the geometric definition for $\phi_\Right^{\p m}$ which counts immersed polygons with the positive end at each generator for $A_{n_\Right}^{\p m}$, we have the similar description to the differential $\differential^{\p m}$ so that
\begin{align}\label{equation:structure morphism of Legendrian in a normal form}
\phi_\Right^{\p m}(k^{ij}_{a'b'})&=\Phi(\phi_\Right(k_{a'b'}))_{ij},&
\phi_\Right^{\p m}(y^{ij}_{c'})&=y^{ij}_e,
\end{align}
where $1\le a'<b'\le n_\Right$, $c'\in[n_\Right]$ and $e$ is the edge whose one end point is $c'$.

Here we are using implicitly the fact that any generator in $\tilde V_{\Lcirclearrowright}$ never appear in a differential of other types of generators or an image of $\phi_\Right$ as observed in Example~\ref{example:DGAs for graphs in a normal form}.

\section{Augmentation categories}\label{section:aug cats}
In this section, we review the construction of the augmentation categories which are $A_\infty$-categories obtained from consistent sequences of DGAs.

\subsection{Augmentation categories for consistent sequences of bordered DGAs}\label{section:augmentation category}
Let $K=(\field,\differential_\field\equiv0)$ be a DGA consisting of the base field $\field$ with the trivial differential.
An \emph{augmentation} for $A=(\alg=T\sfM,\differential)\in\DGA$ is a DGA morphism 
\[
\epsilon:A\to K.
\]

One can extend naturally the definition of augmentations to $m$-component DGAs.
Let $K^{\p m}\coloneqq\Mat_m(K)$. Then it satisfies the axiom of $m$-component DGAs.

\begin{definition}[Augmentations for $m$-component DGAs]
For $A^{\p m}=(\alg^{\p m}=T\sfM^{\p m},\differential^{\p m})\in\DGA_\co^{\p m}$ with $\sfM^{\p m}=(\sfM^{ij})$, an augmentation of $A^{\p m}$ is a morphism
\[
\epsilon^{\p m}:A^{\p m}\to K^{\p m},
\]
where
\begin{align*}
\epsilon^{\p m}&=\left(\epsilon^{\p m}(i,j)\right),&
\epsilon^{\p m}(i,j)&:A^{\p m}(i,j)\to K,&
\epsilon^{\p m}(i,i)\left(\unit^i_{E}\right)&=\unit\in\field.
\end{align*}

We denote the restriction of $\epsilon^{\p m}$ to the graded submodule $\sfM^{ij}$ by $\epsilon^{ij}\coloneqq \epsilon^{\p m}(i,j)\big|_{\sfM^{ij}}: \sfM^{ij}\to \field$, and we say that $\epsilon^{\p\bullet}$ is \emph{diagonal} if $\epsilon^{ij}\equiv 0$ for $i\neq j$.
Conversely, for a sequence $\bfe=(\epsilon_1,\dots,\epsilon_m)$ of augmentations of $A$, the diagonal augmentation $\epsilon^{\p m}_\bfe\coloneqq(\epsilon^{ij}_{\bfe})$ of $A^{\p m}$ is defined as
\begin{align*}
\epsilon^{ii}_{\bfe}&:\sfM^{ii}\stackrel{\epsilon_i}\longrightarrow \field,&
\epsilon^{ij}_{\bfe}&\equiv0,\quad \forall i\neq j.
\end{align*}

The DGA $A^{\p m}_\bfe=(\alg^{\p m}_\bfe, \differential^{\p m}_\bfe)$ is defined as
\[
\alg^{\p m}_\bfe\coloneqq\alg^{\p m}\quad\text{ and }\quad
\differential^{\p m}_\bfe\coloneqq\phi_\bfe^{\p m}\circ\differential^{\p m}\circ \left(\phi_\bfe^{\p m}\right)^{-1},
\]
where $\phi^{\p m}_\bfe:\alg^{\p m}\to\alg^{\p m}_\bfe$ is an algebra tame isomorphism such that for each generator $s\in R^{\p m}$,
\[
\phi_\bfe^{\p m}(s)\coloneqq s + \epsilon^{\p m}_\bfe(s).
\]
\end{definition}

\begin{lemma}
The $0$-th differential $\differential^{\p m}_{\bfe,0}$ of the length filtration $\differential^{\p m}_{\bfe,\ell}$ vanishes.
\end{lemma}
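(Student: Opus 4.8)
The statement asserts that after twisting the differential $\differential^{\p m}$ by a diagonal augmentation $\epsilon^{\p m}_\bfe$ associated to a sequence $\bfe=(\epsilon_1,\dots,\epsilon_m)$ of augmentations of $A$, the resulting differential $\differential^{\p m}_\bfe$ has vanishing length-zero part $\differential^{\p m}_{\bfe,0}$. Recall that $\differential^{\p m}_{\bfe,\ell}$ denotes the part of $\differential^{\p m}_\bfe$ that is a homogeneous sum of words of length exactly $\ell$ in the generators, so $\differential^{\p m}_{\bfe,0}$ is the ``constant term'', i.e., the composition of $\differential^{\p m}_\bfe$ with the projection to $\ring^{\p m}\subset\alg^{\p m}_\bfe$ followed by evaluation on each generator. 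The plan is to unwind the definition $\differential^{\p m}_\bfe=\phi^{\p m}_\bfe\circ\differential^{\p m}\circ(\phi^{\p m}_\bfe)^{-1}$ and chase what happens to the constant term generator-by-generator.

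First I would fix a generator $s\in R^{\p m}$ of link-grading $(i,j)$ and compute $\differential^{\p m}_\bfe(s)=\phi^{\p m}_\bfe(\differential^{\p m}((\phi^{\p m}_\bfe)^{-1}(s)))$. Since $\phi^{\p m}_\bfe$ is a tame algebra isomorphism with $\phi^{\p m}_\bfe(s)=s+\epsilon^{\p m}_\bfe(s)$ and $\epsilon^{\p m}_\bfe(s)\in\ring^{\p m}$ is a scalar (in fact $0$ unless $i=j$, in which case it is $\epsilon_i(s)\unit^i$), its inverse satisfies $(\phi^{\p m}_\bfe)^{-1}(s)=s-\epsilon^{\p m}_\bfe(s)$. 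The key step is then: the constant term of $\differential^{\p m}_\bfe(s)$ equals the image under the \emph{scalar-extension} of $\epsilon^{\p m}_\bfe$ (i.e., the algebra map $\alg^{\p m}\to\ring^{\p m}$ sending each generator $g$ to $\epsilon^{\p m}_\bfe(g)$) applied to $\differential^{\p m}(s)$ — because applying $\phi^{\p m}_\bfe$ and then extracting the length-zero part, and replacing every generator by its scalar value, coincide (the scalar shifts only contribute constants, and higher-length words of $\differential^{\p m}(s)$ get sent to products of scalars, while the generator $s$ itself is removed by the inverse shift with no effect on the constant term). So $\differential^{\p m}_{\bfe,0}(s)=\tilde\epsilon^{\p m}_\bfe(\differential^{\p m}(s))$, where $\tilde\epsilon^{\p m}_\bfe$ is this algebra map.

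Now I invoke that $\bfe$ really is a sequence of augmentations, hence a chain map $A\to K$: concretely $\tilde\epsilon^{\p m}_\bfe\circ\differential^{\p m}=0$ on $A^{\p m}$. This is not quite immediate for the composable $m$-component algebra, so the one point that needs a short argument is compatibility of the diagonal construction with the differential: for a diagonal augmentation built from genuine augmentations $\epsilon_i$ of $A=A^{\p 1}$, the off-diagonal blocks $\epsilon^{ij}_\bfe\equiv 0$ kill every word of $\differential^{\p m}(g)$ that is genuinely composable through an intermediate index $k\neq i,j$ (since such a word contains at least one off-diagonal factor $g'$ with $\epsilon^{\p m}_\bfe(g')=0$), while the purely diagonal words in block $(i,i)$ are handled by $\epsilon_i\circ\differential=0$. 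Thus $\tilde\epsilon^{\p m}_\bfe(\differential^{\p m}(s))=0$ for every generator $s$, and since both $\differential^{\p m}_{\bfe,0}$ and the zero map are algebra derivations determined by their values on generators, $\differential^{\p m}_{\bfe,0}\equiv 0$.

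The main obstacle I anticipate is purely bookkeeping: making precise the identity ``constant term of $\phi^{\p m}_\bfe\circ\differential^{\p m}\circ(\phi^{\p m}_\bfe)^{-1}$ on a generator $=\tilde\epsilon^{\p m}_\bfe\circ\differential^{\p m}$ on that generator'' in the composable/matrix setting, keeping track of the idempotents $\unit^i$ and the fact that $\epsilon^{\p m}_\bfe$ is only defined via its restrictions $\epsilon^{ij}$ to the bimodules $\sfM^{ij}$. Everything else is a routine induction on word length together with the defining property that an augmentation is a DGA morphism to $(\field,0)$; I would not grind through the sign computations, as signs do not affect the vanishing statement.
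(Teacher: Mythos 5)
Your argument is correct and is exactly the direct computation the paper alludes to (its own proof is simply ``follows from direct computation''): unwinding $\differential^{\p m}_\bfe(s)=\phi^{\p m}_\bfe(\differential^{\p m}(s))$, extracting the constant term gives $\epsilon^{\p m}_\bfe(\differential^{\p m}(s))=0$ because the diagonal augmentation is a DGA morphism to a DGA with trivial differential. Your identification of the only subtlety — that the off-diagonal blocks of $\epsilon^{\p m}_\bfe$ kill all composable words passing through distinct indices, and the remaining diagonal words are handled by $\epsilon_i\circ\differential=0$ via consistency of the restriction $A^{\p m}\to A^{\{i\}}\isomorphic A$ — is exactly the right bookkeeping to make the assertion rigorous.
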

\begin{proof}
The proof follows from the direct computation.
\end{proof}

Let $A^{\p\bullet}\in\DGA_\co^{\p\bullet}$ and $\bfe=(\epsilon_m)$ be a sequence of augmentations of $A$.
Then by Lemma~\ref{lemma:isom}, it is obvious that for any $i<j\in[m]$, there is an isomorphism between $\left(\ring^i_{E}, \ring^j_{E}\right)$-modules
\[
\sfM^{ij}=\alg^{ij}_\bfe\isomorphic \alg^{12}_{(\epsilon_i,\epsilon_j)}.
\]

Let $\sfM_{ij}^\vee\coloneqq \left(\sfM^{ij}\right)^*[-1]$ be the dual space of $\sfM^{ij}$ with grading shift by $-1$.
That is, both $\sfM^{ij}$ and $\sfM_{ij}^\vee$ are decomposed into graded pieces
\begin{align*}
\sfM^{ij}&=\bigoplus_{d\in\ZZ} \left(\sfM^{ij}\right)^d,&
\sfM_{ij}^\vee&=\bigoplus_{d\in\ZZ} \sfM_{ij}^{\vee d},&
\sfM_{ij}^{\vee d+1}&\coloneqq\hom_\field\left(\left(\sfM^{ij}\right)^d,\field\right).
\end{align*}

For each increasing sequence $\bfi\coloneqq(i_1,\cdots, i_{k+1})$ in $[m]$ and a sequence of integers $\bfd=(d_1,\cdots,d_k)\in\ZZ^k$, let us denote the spaces
\begin{align*}
\sfM^\bfi&\coloneqq \sfM^{i_1i_2}\otimes \sfM^{i_2i_3}\otimes\cdots\otimes \sfM^{i_ki_{k+1}},&
\sfM_\bfi^\vee&\coloneqq \sfM_{i_ki_{k+1}}^\vee\otimes \sfM_{i_{k-1}i_k}^\vee\otimes\cdots\otimes \sfM_{i_1i_2}^\vee,\\
\left(\sfM^\bfi\right)^\bfd&\coloneqq \left(\sfM^{i_1i_2}\right)^{d_1}\otimes \left(\sfM^{i_2i_3}\right)^{d_2}\otimes\cdots\otimes \left(\sfM^{i_ki_{k+1}}\right)^{d_k},&
\sfM_\bfi^{\vee\bfd}&\coloneqq \sfM_{i_ki_{k+1}}^{\vee d_k}\otimes \sfM_{i_{k-1}i_k}^{\vee d_{k-1}}\otimes\cdots\otimes \sfM_{i_1i_2}^{\vee d_1},
\end{align*}
whose elements will be denoted as
\begin{align*}
a^\bfi&=a^{i_1i_2}\otimes a^{i_2i_3}\otimes\cdots\otimes a^{i_ki_{k+1}}\in \sfM^\bfi,&  a^{ij}&\in \sfM^{ij},\\
a_\bfi^\vee&=a_{i_ki_{k+1}}^\vee\otimes a_{i_{k-1}i_k}^\vee\otimes\cdots\otimes a_{i_1i_2}^\vee \in \sfM_\bfi^\vee,& a_{ij}^\vee&\in \sfM_{ij}^\vee.
\end{align*}

We equip a grading on $\sfM_\bfi^\vee\otimes \sfM^\bfi$ so that for homogeneous elements $a_\bfi^\vee\in \sfM_\bfi^\vee$ and $a'^\bfi\in \sfM^\bfi$
\[
|a_\bfi^\vee\otimes a'^\bfi|\coloneqq |a_\bfi^\vee|-|a'^\bfi|.
\]

Consider the natural pairing $\langle-,-\rangle_\bfi:\sfM_{\bfi}^\vee\otimes \sfM^{\bfi}\to\field$ between $\sfM_\bfi^\vee$ and $\sfM^\bfi$ which is nonvanishing only on 
\begin{align*}
\bigoplus_{\bfd\in\ZZ^k}& \left(\sfM_\bfi^{\vee \bfd+\mathbf{1}}\otimes \left(\sfM^\bfi\right)^\bfd\right),
\end{align*}
where $\bfd+\mathbf{1}\coloneqq(d_1+1,d_2+1,\dots,d_k+1)$.
Since each evaluation $\langle-,-\rangle_{ij}:\sfM_{ij}^{\vee d+1}\otimes \left(\sfM^{ij}\right)^d\to \field$ is of degree $-1$, the pairing $\langle-,-\rangle_\bfi$ is of grading $-k$.

Now let us define a composition map $m_\bfi:\sfM_\bfi^\vee\to \sfM_{i_1i_{k+1}}^\vee$ as follows:
for each $a_\bfi^\vee\in \sfM_\bfi^\vee$, we require that $m_\bfi\left(a_\bfi^\vee\right)$ satisfies that for each $a^{i_1i_{k+1}}\in \sfM^{i_1i_{k+1}}$,
\begin{align}\label{equation:composition}
\left\langle m_\bfi\left(a_\bfi^\vee\right), a^{i_1i_{k+1}}\right\rangle_{i_1i_{k+1}}= (-1)^\sigma
\left\langle a_\bfi^\vee, \differential^{\p m}_\bfe\left(a^{i_1i_{k+1}}\right)\right\rangle_\bfi,
\end{align}
where $\differential^{\p m}_\bfe$ is the twisted differential on $A^{\p m}_\bfe$.

\begin{definition}\label{def:sign for m_k}
For any $k\geq 1$, and any sequence $(x_1,x_2,\ldots,x_k)$ with grading $|x_i|\in\ZZ$, define
\begin{equation}\label{equation:sigma}
\sigma_k(x_1,x_2,\dots,x_k)\coloneqq k(k-1)/2+\sum_{i<j}|x_i||x_j|+\sum_{1\le j\le k} (j-1)|x_{j}|
\end{equation}
In particular, $\sigma_1=0$ and $\sigma_2(x_1,x_2)=1+|x_1||x_2|+|x_2|$.
\end{definition}

Then $\sigma$ in \eqref{equation:composition} is given as
\[
\sigma=\sigma_k\left(a_{i_ki_{k+1}}^\vee,\dots,a_{i_1i_2}^\vee\right)
\]
and $m_\bfi\left(a_\bfi^\vee\right)$ is completely determined by this equation.

The degree of $m_\bfi$ can be computed as follows: since the degrees of pairings on the left and right are $-1$ and $-k$, respectively, we have
\[
\left|m_\bfi\left(a_\bfi^\vee\right)\otimes a^{i_1i_{k+1}}\right|+\left| \langle-,-\rangle_{i_1i_{k+1}}\right|=0=
\left|a_\bfi^\vee\otimes \differential^{\p m}_\bfe\left(a^{i_1i_{k+1}}\right)\right|+\left|\langle-,-\rangle_\bfi\right|.
\]
This is equivalent to
\begin{align*}
\left|m_\bfi\left(a_\bfi^\vee\right)\right|- \left|a_\bfi^\vee\right| &= 1 - k +\left|a^{i_1i_{k+1}}\right|-\left|\differential^{\p m}_\bfe\left(a^{i_1i_{k+1}}\right)\right|=2-k,
\end{align*}
which implies that the composition map $m_\bfi$ is of degree $2-k$.

\begin{definition}
The \emph{augmentation category} $\Aug_+\left(A^\p\bullet;K\right)$ is an $A_\infty$-category defined as follows:
\begin{itemize}
\item The objects are augmentations $\epsilon:A=A^{11}\to K$;
\item The morphisms are graded vector spaces
\[
\hom_{\Aug_+}(\epsilon_1,\epsilon_2)\coloneqq\left(A^{12}_{(\epsilon_1,\epsilon_2)}\right)^\vee\isomorphic \ring\left\langle \sfS^{12}\right\rangle^\vee;
\]
\item For $k\ge 1$, the composition map
\[
m_k:\hom_{\Aug_+}(\epsilon_k,\epsilon_{k+1})\otimes\cdots\otimes\hom_{\Aug_+}(\epsilon_1,\epsilon_2)\to
\hom_{\Aug_+}(\epsilon_1,\epsilon_{k+1})
\]
is defined as $m_\bfi$ with the sequence $\bfi=(1,2,\cdots,k+1)$.
\end{itemize}
\end{definition}

\begin{proposition}[Functoriality of $\Aug_+$\cite{NRSSZ2015}]\label{proposition:functoriality}
The assignment $A^\p\bullet\mapsto \Aug_+\left(A^\p\bullet;K\right)$ defines a contravariant functor from the category of consistent sequences of DGAs onto the category $\Alg_\infty$ of $A_\infty$-categories.
\end{proposition}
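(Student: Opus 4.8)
The statement is \cite[\S3.3--3.4]{NRSSZ2015}; the plan is to transcribe that argument to the present setting, so I will only lay out the skeleton and point to where the work sits. There are three things to establish: that $\Aug_+(A^{\p\bullet};K)$ is an $A_\infty$-category for every $A^{\p\bullet}\in\DGA_\co^{\p\bullet}$; that a consistent morphism $f^{\p\bullet}\colon A'^{\p\bullet}\to A^{\p\bullet}$ induces an $A_\infty$-functor $\Aug_+(f^{\p\bullet})\colon\Aug_+(A^{\p\bullet};K)\to\Aug_+(A'^{\p\bullet};K)$; and that $\Aug_+$ carries identities to identities and composites to order-reversed composites. The organizing principle is that the entire $A_\infty$-structure is \emph{dual} to the twisted composable differential: each relation to be checked will be produced by pairing either $(\differential^{\p m}_\bfe)^2=0$, or the chain-map and multiplicativity properties of $f$, against tensors of dual generators, and then reconciling signs.

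For the first point I would fix augmentations $\epsilon_1,\dots,\epsilon_{k+2}$ of $A=A^{11}$, form the $(k+2)$-component composable DGA $A^{\p{k+2}}$ furnished by the consistent sequence, and twist it by the diagonal augmentation $\epsilon^{\p{k+2}}_\bfe$ with $\bfe=(\epsilon_1,\dots,\epsilon_{k+2})$. Since the length-zero part of $\differential^{\p{k+2}}_\bfe$ vanishes (the lemma above on the vanishing of $\differential^{\p m}_{\bfe,0}$), pairing $(\differential^{\p{k+2}}_\bfe)^2(b)=0$ against dual generators involves only the operations $m_\bfj$ with $j\ge1$. Taking $\bfi=(1,2,\dots,k+2)$, using Lemma~\ref{lemma:isom} to identify $\sfM^{i,i+1}\isomorphic A^{12}_{(\epsilon_i,\epsilon_{i+1})}$, letting $b$ range over $\sfM^{1,k+2}$ and $a_\bfi^\vee$ over the corresponding dual tensors, the identity $\langle a_\bfi^\vee,(\differential^{\p{k+2}}_\bfe)^2 b\rangle=0$ unwinds precisely into the $A_\infty$-associativity relation among $m_1,\dots,m_{k+1}$. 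The degree count is already in the text ($m_k$ has degree $2-k$); what remains is to confirm that the Koszul signs produced by expanding $\differential^2=0$ match the signs dictated by the $\sigma_k$ of Definition~\ref{def:sign for m_k}, an induction on $k$ carried out as in \cite{NRSSZ2015}.

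For the second point I would set $\Aug_+(f^{\p\bullet})(\epsilon)\coloneqq\epsilon\circ f^{11}\colon A'^{11}\to K$ on objects (a unital DGA morphism), and construct the higher components from the observation that $f^{\p{k+1}}$ is composable over $\identity_{[k+1]}$, hence maps the type-$(1,k+1)$ part of $A'^{\p{k+1}}$ into $\bigoplus_\bfj\sfM^\bfj$. After twisting both sides by the relevant diagonal augmentations, dualizing the word-length components $\sfM'^{1,k+1}\to\sfM^\bfj$ along $\bfj=(1,\dots,k+1)$ and summing yields a map $\bigotimes_j\hom_{\Aug_+(A)}(\epsilon_j,\epsilon_{j+1})\to\hom_{\Aug_+(A')}\big(\Aug_+(f^{\p\bullet})\epsilon_1,\Aug_+(f^{\p\bullet})\epsilon_{k+1}\big)$, contravariant as required. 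The $A_\infty$-functor equations for this family are the dual shadow of the two defining properties of a DGA morphism — being a chain map and an algebra morphism — again after twisting, and I would verify them by the same sign bookkeeping. Functoriality then follows: $\Aug_+(\identity)=\identity$ because the identity has only a length-one word component, whose dual is the identity; and for $A''^{\p\bullet}\xrightarrow{g^{\p\bullet}}A'^{\p\bullet}\xrightarrow{f^{\p\bullet}}A^{\p\bullet}$ the word-length decomposition of $f^{\p\bullet}\circ g^{\p\bullet}$ is the cobar-type composition of those of $g^{\p\bullet}$ and $f^{\p\bullet}$, which dualizes to $\Aug_+(g^{\p\bullet})\circ\Aug_+(f^{\p\bullet})$. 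Compatibility with the restriction maps $(-)^U$ and with the structure maps of consistent sequences is immediate from the constructions, so everything descends to $\DGA_\co^{\p\bullet}$.

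The hard part will be the sign verification in the second and third paragraphs: matching, essentially term by term as in \cite{NRSSZ2015}, the Koszul signs generated by $\differential^2=0$ (respectively $f\differential'=\differential f$ together with multiplicativity) against the prescribed $\sigma_k$'s; this is where all the care goes. A minor point of rigor to record is well-definedness of the dual maps — for each homogeneous generator the differential is a finite combination of composable words and $f$ sends each generator to a finite such combination, so every $m_k$ and every component of $\Aug_+(f^{\p\bullet})$ lands in the graded dual as claimed. No geometry enters; the passage from Legendrian knots to bordered Legendrian graphs is invisible to this purely homological-algebraic argument, which is precisely why \cite{NRSSZ2015} applies verbatim.
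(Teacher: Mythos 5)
Your proposal matches the paper's approach: the paper itself proves the proposition by citing Propositions~3.17 and 3.20 of \cite{NRSSZ2015} and omitting the argument, and your outline correctly reconstructs exactly those two ingredients — the $A_\infty$-relations arising by dualizing $(\differential^{\p{k+2}}_{\bfe})^2=0$ after the length filtration, and the induced contravariant $A_\infty$-functor arising by dualizing the twisted DGA morphism $f^{\p{k+1}}_{\bfe}$ — together with the observation that the whole argument is purely homological and insensitive to the underlying geometry. You also correctly flag the genuine labor (the $\sigma_k$ sign reconciliation) as the point where care is required, consistent with how the cited source and the present paper treat it.
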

\begin{proof}
This is just a combination of Propositions~3.17 and 3.20 in \cite{NRSSZ2015} and so we omit the proof.
\end{proof}

We briefly review the construction of the $A_\infty$-functor 
\[
\funF^{\p\bullet}=\left(\funF^{\p {k}}\right)_{k\ge1}:\Aug_+\left(A^{\p\bullet};K\right)\to\Aug_+\left(A'^{\p\bullet};K\right)
\]
induced from the consistent morphism $f^\p\bullet:A'^{\p\bullet}\to A^{\p\bullet}$
\begin{align*}
\funF^{\p0}&:\Ob\left(\Aug_+\right)\to\Ob\left(\Aug_+'\right),\\
\funF^{\p k}&:\hom_{\Aug_+}\left(\epsilon_k,\epsilon_{k+1}\right)\otimes\cdots\otimes\hom_{\Aug_+}\left(\epsilon_1,\epsilon_2\right)\to
\hom_{\Aug_+'}\left(\funF^{\p0}\left(\epsilon_1\right), \funF^{\p0}\left(\epsilon_{k+1}\right)\right),
\end{align*}
where $\bfe=\left(\epsilon_1,\cdots,\epsilon_{k+1}\right)$ is a sequence of augmentations for $A$, and 
\begin{align*}
\Aug_+&\coloneqq \Aug_+\left(A^{\p\bullet};K\right),&
\Aug_+'&\coloneqq \Aug_+\left(A'^{\p\bullet};K\right).
\end{align*}

Let $\funF^{\p0}\left(\epsilon_i\right)\coloneqq \epsilon_i^f$ be the pull-back for each $i$ by $\epsilon_i$, which is an augmentation for $A$.
\begin{align*}
\bfe^f&\coloneqq\left(\epsilon_1^f,\cdots,\epsilon_{k+1}^f\right),&
\epsilon_i^f(a)&\coloneqq \epsilon_i(f(a)).
\end{align*}

Recall the DGAs $A^{\p{k+1}}_\bfe=\left(\alg^{\p{k+1}}_\bfe, \differential^{\p{k+1}}_\bfe\right)$ and $A'^{\p{k+1}}_\bfe=\left(\alg'^{\p{k+1}}_{\bfe^f}, \differential'^{\p{k+1}}_{\bfe^f}\right)$ twisted by the algebra automorphisms $\phi_\bfe^{\p{k+1}}$ and $\phi_{\bfe^f}^{\p{k+1}}$ on $A^\p{k+1}$ and $A'^{\p{k+1}}$, respectively.
Then the composition 
\[
f^{\p{k+1}}_{\bfe}\coloneqq\phi^\p{k+1}_{\bfe}\circ f^\p{k+1}\circ \left(\phi^\p{k+1}_{\bfe^f}\right)^{-1}:A'^{\p{k+1}}_{\bfe^f} \to A_{\bfe}^{\p{k+1}}
\]
becomes a DGA morphism since
\begin{align*}
f^\p{k+1}_{\bfe}\circ\differential'^{\p{k+1}}_{\bfe^f}&=
\left(\phi^{\p{k+1}}_{\bfe}\circ f^{\p{k+1}}\circ \left(\phi^{\p{k+1}}_{\bfe^f}\right)^{-1}\right)\circ\differential'^{\p{k+1}}_{\bfe^f}\\
&=\phi^{\p{k+1}}_{\bfe}\circ f^\p{k+1}\circ \differential'^{\p{k+1}} \circ \left(\phi^{\p{k+1}}_{\bfe^f}\right)^{-1}\\
&=\phi^\p{k+1}_{\bfe}\circ \differential^\p{k+1} \circ  f^\p{k+1}\circ \left(\phi^{\p{k+1}}_{\bfe^f}\right)^{-1}\\
&=\differential_\bfe^\p{k+1} \circ \left(\phi^\p{k+1}_{\bfe}\circ f^\p{k+1}\circ \left(\phi^{\p{k+1}}_{\bfe^f}\right)^{-1}\right)\\
&=\differential_\bfe^\p{k+1} \circ f^\p{k+1}_{\bfe}.
\end{align*}
\begin{notation}
The $\ell$-th length filtration of $f^{\p {k+1}}_{\bfe}$ will be denoted by $f^{\p{k+1}}_{\bfe,\ell}$.
\end{notation}

The $A_\infty$-functor $\funF^{\p\bullet}$ will be defined by dualizing the composition $f^\p{k+1}_{\bfe}$.
More precisely,
\[
\funF^{\p k}:\hom_{\Aug_+}(\epsilon_k,\epsilon_{k+1})\otimes\cdots\otimes\hom_{\Aug_+}(\epsilon_1,\epsilon_2)\to \hom_{\Aug_+}\left(\epsilon_1^f,\epsilon_{k+1}^f\right)
\]
is defined as follows: for each $a_\bfi^\vee\coloneqq a_{k,k+1}^\vee\otimes\cdots\otimes a_{1,2}^\vee$ with $a_{i,i+1}^\vee\in \left(A^{12}_{(\epsilon_i,\epsilon_{i+1})}\right)^\vee$, 
\[
\left\langle\funF^{\p k}\left(a_\bfi^\vee\right), a'^{1,k+1}\right\rangle_{1,k+1}=
(-1)^\sigma\left\langle a_\bfi^\vee, f^\p{k+1}_{\bfe}\left(a'^{1,k+1}\right)\right\rangle'_\bfi,
\]
where $\sigma$ is the same as \eqref{equation:sigma} in Definition~\ref{def:sign for m_k}.

\begin{example}[Augmentation category for border DGAs]\label{example:augmentation category for border DGAs}
Let $\mu:[n]\to\grading$ be a function and $A_n^{\p \bullet}(\mu)$ be the consistent sequence of border DGAs defined in Section~\ref{section:consistent sequence of LCH DGAs}.
For simplicity, we denote $\Aug_+(A_n^{\p\bullet}(\mu);\field)$ by $\Aug_+$.

For each $m\ge 1$, the algebra $\alg_n^{\p\bullet}$ has the generating sets
\[
K_n\coloneqq\left\{k_{ab}^{ij}~\middle|~ a<b, 1\le i,j\le m\right\}\quad\text{ and }\quad
Y_n\coloneqq\left\{y_a^{ij}~\middle|~1\le a\le n, 1\le i<j\le m\right\},
\]
where the grading is given as
\begin{align*}
|k_{ab}^{ij}|&\coloneqq \mu(a)-\mu(b)-1,&
|y_a^{ij}|&\coloneqq -1.
\end{align*}
The differential for each generator is given as follows:
\begin{align*}
\differential^{\p m}_n(k_{ab}^{ij})&\coloneqq\sum_{\substack{a<c<b\\i<k<j}}(-1)^{|k_{ac}|-1}k_{ac}^{ik}k_{cb}^{kj}+\sum_{k<j}(-1)^{|k_{ab}|-1}k_{ab}^{ik}y_b^{kj}+\sum_{i<k}y_a^{ik}k_{ab}^{kj},\\
\differential^{\p m}_n(y_a^{ij})&\coloneqq\sum_{i<k<j} y_a^{ik}y_a^{kj}.
\end{align*}

The set of objects of $\Aug_+$ is the augmentation variety for $A_n(\mu)$:
\begin{equation}
\Ob\left(\Aug_+\right)=\aug(A_n(\mu);\field)\coloneqq\{\epsilon:A_n(\mu)\to \field\},
\end{equation}
and for any $\epsilon_1,\epsilon_2\in\Aug_+$, the set $\hom_{\Aug_+}(\epsilon_1,\epsilon_2)$ of morphisms is
\begin{equation}
\hom_{\Aug_+}(\epsilon_1,\epsilon_2)=\sfM^{12\vee}=\field\left\langle k_{ab}^{12\vee}, y_c^{12\vee}~\middle|~1\leq a< b\leq n, c\in[n]\right\rangle.
\end{equation}

The composition map $m_k$ is given as follows:
\begin{itemize} 
\item For $\epsilon_1,\epsilon_2\in\Aug_+$, the map 
\[
m_1:\hom_{\Aug_+}(\epsilon_1,\epsilon_2)\to\hom_{\Aug_+}(\epsilon_1,\epsilon_2)
\]
is defined as
\begin{align*}
m_1\left(k_{ab}^{12\vee}\right)&=-\sum_{c<a}\epsilon_1(k_{ca})k_{cb}^{12\vee} +\sum_{b<d}(-1)^{|k_{ab}^{12\vee}|}k_{ad}^{12\vee}\epsilon_2(k_{bd}),\\
m_1\left(y_c^{12\vee}\right)&=-\sum_{a<c}\epsilon_1(k_{ac})k_{ac}^{12\vee} +\sum_{c<d}k_{cd}^{12\vee}\epsilon_2(k_{cd}).
\end{align*}
\item For $\epsilon_1,\epsilon_2,\epsilon_3\in\Aug_+$, the map 
\[
m_2:\hom_{\Aug_+}(\epsilon_2,\epsilon_3)\otimes \hom_{\Aug_+}(\epsilon_1,\epsilon_2)\to \hom_{\Aug_+}(\epsilon_1,\epsilon_3)
\]
is defined as
\begin{align*}
m_2(k_{cd}^{12\vee}\otimes k_{ab}^{12\vee})&=\delta_{bc}(-1)^{|k_{ab}||k_{cd}|+|k_{ab}|+|k_{cd}|}k_{ad}^{12\vee},\\
m_2(y_{c}^{12\vee}\otimes k_{ab}^{12\vee})&=-\delta_{bc} k_{ab}^{12\vee},\\
m_2(k_{cd}^{12\vee}\otimes y_{a}^{12\vee})&=-\delta_{ac} k_{cd}^{12\vee},\\
m_2(y_{c}^{12\vee}\otimes y_{a}^{12\vee})&=-\delta_{ac} y_{a}^{12\vee}.
\end{align*}
\item For $m_k$ with $k\geq 3$, the higher composition $m_k$ vanishes.
\end{itemize}

In particular, the $A_{\infty}$-category $\Aug_+(A_n^{\p m}(\mu);\field)$ is in fact a DG category.
\end{example}

Notice that for each $\epsilon\in \Aug_+(A_n^{\p \bullet}(\mu);K)$, the element defined as 
\begin{equation}\label{equation:unit}
-y\coloneqq -\sum_{c\in[n]} y_c^{12\vee} \in \hom_{\Aug_+}(\epsilon,\epsilon)
\end{equation}
becomes a cocycle since
\[
m_1(-y)=\sum_{c\in[n]}\left(-\sum_{a<c}\epsilon(k_{ac})k_{ac}^{12\vee} +\sum_{c<d}k_{cd}^{12\vee}\epsilon(k_{cd})\right)=0.
\]

\begin{corollary}\label{corollary:unitality of border DGAs}
For each $\epsilon\in\Aug_+(A_n^{\p \bullet}(\mu);K)$, the element $-y \in \hom_{\Aug_+}(\epsilon,\epsilon)$ defined in \eqref{equation:unit} becomes the unit of the augmentation category $\Aug_+(A_n^{\p \bullet}(\mu);K)$.
\end{corollary}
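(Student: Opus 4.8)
The plan is to check directly that $-y$ meets the axioms of a unit in the $A_\infty$-category $\Aug_+(A_n^{\p\bullet}(\mu);K)$, using the explicit formulas for the composition maps recorded in Example~\ref{example:augmentation category for border DGAs}. The first simplification is that, by that same example, $\Aug_+(A_n^{\p\bullet}(\mu);K)$ is in fact a DG category, i.e. $m_k\equiv 0$ for every $k\ge 3$. Consequently the only conditions to verify are: (i) $m_1(-y)=0$; (ii) for every pair of objects $\epsilon_1,\epsilon_2$ and every $a\in\hom_{\Aug_+}(\epsilon_1,\epsilon_2)$ one has $m_2(-y,a)=a$ with $-y\in\hom_{\Aug_+}(\epsilon_2,\epsilon_2)$, and $m_2(a,-y)=a$ with $-y\in\hom_{\Aug_+}(\epsilon_1,\epsilon_1)$. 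Condition (i) is precisely the computation performed immediately before the statement, so the work is concentrated in (ii).

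For (ii) I would use bilinearity of $m_2$ to reduce to the case where $a$ ranges over the generating set $\{k_{ab}^{12\vee},\, y_c^{12\vee}\}$ of $\hom_{\Aug_+}(\epsilon_1,\epsilon_2)$. The key point making this painless is that, in contrast to $m_1$, the $m_2$-formulas of Example~\ref{example:augmentation category for border DGAs} do not mention the augmentations at all, so a single calculation settles all objects $\epsilon_1,\epsilon_2$ at once. Substituting $-y=-\sum_{c\in[n]}y_c^{12\vee}$ into the first argument and invoking $m_2(y_c^{12\vee}\otimes k_{ab}^{12\vee})=-\delta_{bc}k_{ab}^{12\vee}$ and $m_2(y_c^{12\vee}\otimes y_a^{12\vee})=-\delta_{ac}y_a^{12\vee}$, the Kronecker delta leaves exactly one surviving term and the two minus signs cancel, giving $m_2(-y,k_{ab}^{12\vee})=k_{ab}^{12\vee}$ and $m_2(-y,y_a^{12\vee})=y_a^{12\vee}$. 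Symmetrically, substituting $-y=-\sum_{a\in[n]}y_a^{12\vee}$ into the second argument and using $m_2(k_{cd}^{12\vee}\otimes y_a^{12\vee})=-\delta_{ac}k_{cd}^{12\vee}$ together with $m_2(y_c^{12\vee}\otimes y_a^{12\vee})=-\delta_{ac}y_a^{12\vee}$ yields $m_2(k_{cd}^{12\vee},-y)=k_{cd}^{12\vee}$ and $m_2(y_c^{12\vee},-y)=y_c^{12\vee}$. Thus both left- and right-composition with $-y$ act as the identity on all of $\hom_{\Aug_+}$, so $-y$ is a strict two-sided unit; as a sanity check one also gets $m_2(-y,-y)=-y$ from the same formula, consistent with all higher $m_k$ vanishing.

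I do not expect a genuine obstacle here: the substantive content — the closed forms of $m_1$ and $m_2$ and the vanishing of $m_k$ for $k\ge3$ — is already supplied by Example~\ref{example:augmentation category for border DGAs}, which we are entitled to assume, and what remains is a short, sign-careful bookkeeping computation. The only mild subtleties to keep in mind are first to pin down that the relevant notion of unitality is the strict (DG) one, and second to note explicitly that, because the $m_2$-formulas are independent of the chosen augmentations, the single computation above is valid uniformly over every object of $\Aug_+(A_n^{\p\bullet}(\mu);K)$.
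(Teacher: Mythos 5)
Your proposal is correct and takes essentially the same approach as the paper, which simply invokes the computation in Example~\ref{example:augmentation category for border DGAs} together with $m_1(-y)=0$; you just carry out the $m_2$-bookkeeping that the paper leaves implicit.
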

\begin{proof}
This is a direct consequence of the above computation.
\end{proof}

\begin{proposition}\label{proposition:stabilization}
The augmentation category $\Aug_+\left(A'^{\p\bullet};K\right)$ of a stabilization $A'^{\p\bullet}$ of $A^{\p\bullet}$ is $A_\infty$-quasi-equivalent to the augmentation category $\Aug_+\left(A^{\p\bullet};K\right)$ of $A^{\p\bullet}$.
\end{proposition}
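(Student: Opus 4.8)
The plan is to realize the equivalence through the consistent projection $\bfpi^{\p\bullet}\colon A'^{\p\bullet}\to A^{\p\bullet}$ that, by hypothesis, accompanies the stabilization. Applying the contravariant functoriality of $\Aug_+$ (Proposition~\ref{proposition:functoriality}) to $\bfpi^{\p\bullet}$ produces an $A_\infty$-functor
\[
\funF_\pi^{\p\bullet}\colon\Aug_+\p{A^{\p\bullet};K}\longrightarrow\Aug_+\p{A'^{\p\bullet};K},\qquad \funF_\pi^{\p0}(\epsilon)=\epsilon\circ\pi=:\epsilon^\pi,
\]
and it remains to check that $\funF_\pi^{\p\bullet}$ is an $A_\infty$-quasi-equivalence, i.e.\ that (i) every linear part $\funF_\pi^{\p1}\colon\hom_{\Aug_+}(\epsilon_1,\epsilon_2)\to\hom_{\Aug_+'}(\epsilon_1^\pi,\epsilon_2^\pi)$ is a quasi-isomorphism, and (ii) $\funF_\pi^{\p\bullet}$ is essentially surjective on the level of $H^0$.

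For (i), recall that the stabilization adjoins in each link-grading class $(i,j)$ a (possibly infinite) set of canceling pairs $\{e_k^{ij},\hat e_k^{ij}\mid k\in I^{ij}\}$ with $\bar\differential'\hat e_k^{ij}=e_k^{ij}$, $\bar\differential' e_k^{ij}=0$, and that $\pi$ fixes $\sfS^{ij}$ and kills all $e_k^{ij},\hat e_k^{ij}$; hence $\sfS'^{12}=\sfS^{12}\amalg\{e_k^{12},\hat e_k^{12}\}$ and the linear part of $\pi^{12}$ is the projection onto $\ring\langle\sfS^{12}\rangle$. Dualizing, $\funF_\pi^{\p1}$ is, up to the sign twist and the augmentation reparametrizations $\phi_\bfe$, the inclusion of $\hom_{\Aug_+}(\epsilon_1,\epsilon_2)=\ring\langle\sfS^{12}\rangle^\vee$ into $\hom_{\Aug_+'}(\epsilon_1^\pi,\epsilon_2^\pi)=\ring\langle\sfS'^{12}\rangle^\vee$; being dual to a surjection it is injective, so the claim reduces to the acyclicity of the quotient complex $Q=\prod_{k\in I^{12}}\bigl(\ring\,(e_k^{12})^\vee\oplus\ring\,(\hat e_k^{12})^\vee\bigr)$. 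Since the only generator whose length-one differential involves $e_k^{12}$ is $\hat e_k^{12}$ — this survives the twist by $\bfe$, which off the diagonal only adds words of length $\ge 2$ — the induced $m_1$ on $Q$ sends $(e_k^{12})^\vee$ to $\pm(\hat e_k^{12})^\vee$ and kills $(\hat e_k^{12})^\vee$; each two-term factor is acyclic, hence so is the product $Q$, and $\funF_\pi^{\p1}$ is a quasi-isomorphism.

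For (ii), use that the underlying stabilization also carries the canonical embedding $\iota\colon A\to A'$ with $\pi\circ\iota=\id_A$ and $\iota\circ\pi\homotopic\id_{A'}$ (only its $1$-component part is needed). Given an augmentation $\epsilon'$ of $A'$, set $\epsilon:=\epsilon'\circ\iota$; then $\epsilon^\pi=\epsilon'\circ(\iota\circ\pi)$ is DGA-homotopic to $\epsilon'$, the homotopy being $\epsilon'$ post-composed with the chosen homotopy $\iota\pi\homotopic\id_{A'}$. One then invokes the standard fact (from \cite{NRSSZ2015}, whose proof I would adapt) that DGA-homotopic augmentations are isomorphic objects of $\Aug_+$: a DGA homotopy from $\epsilon'$ to $\epsilon^\pi$ yields a degree-$0$ $m_1$-cocycle in $\hom_{\Aug_+'}(\epsilon',\epsilon^\pi)$ whose class is invertible in $H^0\Aug_+\p{A'^{\p\bullet};K}$, the inverse coming from the reverse homotopy together with the unitality of the augmentation category (Corollary~\ref{corollary:unitality of border DGAs} and the general unitality result). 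Hence $\epsilon'\isomorphic\epsilon^\pi=\funF_\pi^{\p0}(\epsilon)$ in cohomology, giving essential surjectivity.

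The main obstacle is step (ii): establishing "DGA-homotopic augmentations are isomorphic in $\Aug_+$", and deducing invertibility of the associated $H^0$-class, needs the unitality of the augmentation category and a careful analysis of the cocycle built from a homotopy, and one must ensure this is compatible with the composable/consistent structure (choosing $\iota$ and the homotopy to respect link-gradings and restrictions). By contrast, step (i) is a purely formal acyclicity computation once the stabilized differential after the augmentation twist is recorded precisely; the only mild subtlety is that the complementary complex is an infinite \emph{product} of two-term acyclic complexes rather than a direct sum, which is nevertheless acyclic over a field.
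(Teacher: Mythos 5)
Your quasi-fully-faithfulness argument (step (i)) is correct, but step (ii) has a genuine gap which, as you correctly sense, is the crux. You realize the equivalence via the projection $\bfpi^{\p\bullet}$, so that objects of $\Aug_+(A^{\p\bullet};K)$ are mapped to a strict subset of objects of $\Aug_+(A'^{\p\bullet};K)$, and you must then show every $\epsilon'$ is isomorphic in $H^0$ to some $\epsilon^{\pi}$. For this you appeal to ``DGA-homotopic augmentations are isomorphic in $\Aug_+$,'' which requires a unit (or at least a homological unit) of $\Aug_+(A'^{\p\bullet};K)$ to invert the cocycle. But at the point where this proposition is stated, the (homological) unitality of $\Aug_+$ for a general consistent sequence of DGAs is not available: the Corollary you cite only covers border DGAs $A_n(\mu)$, and the general unitality result (Theorem~\ref{theorem:unitality}) is proved \emph{downstream} using the very stabilization-invariance you are trying to prove. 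The paper even flags this explicitly in a remark right after the proposition: the $A_\infty$-functor induced by $\pi^{\p\bullet}$ is an equivalence only once unitality is known, which is then recorded separately in Corollary~\ref{corollary:unitality along projection}.

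The paper avoids this by going the other way: it uses the canonical inclusion $\iota^{\p\bullet}:A^{\p\bullet}\to A'^{\p\bullet}$, whose induced $A_\infty$-functor $\funI^{\p\bullet}:\Aug_+(A'^{\p\bullet};K)\to\Aug_+(A^{\p\bullet};K)$ is \emph{surjective on the nose} (any $\epsilon$ extends by zero to $\epsilon'$ with $\epsilon'\circ\iota=\epsilon$), so no unitality or notion of isomorphism of objects is needed at all, and the fully-faithfulness is the same acyclic-complement computation as in your step (i), now with $\funI^{\p1}$ a projection with acyclic kernel rather than an inclusion with acyclic cokernel. To repair your proof you would either need to replace $\pi$ by $\iota$ as the paper does, or restrict the proposition to the special case where unitality is already secured independently (e.g.\ after Theorem~\ref{theorem:unitality of augmentation category of LG in a normal form}), at which point your argument via $\pi$ is legitimate and is exactly the content of Corollary~\ref{corollary:unitality along projection}.
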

\begin{proof}
We first identify $A'^{\p m}$ with a stabilization of $A^{\p m}$ for each $m\ge 1$ and denote generators for stabilizations by $e$'s and $\hat e$'s.
For simplicity, we will denote augmentation categories of $A^{\p\bullet}$ and $A'^{\p\bullet}$ by $\Aug_+$ and $\Aug_+'$, respectively.

Let us consider two consistent morphisms $\iota^{\p\bullet}$ and $\pi^{\p\bullet}$
\begin{align*}
\iota^{\p\bullet}&:A^{\p\bullet}\to A'^{\p\bullet},&
\pi^{\p\bullet}&:A'^{\p\bullet}\to A^{\p\bullet},
\end{align*}
where $\iota^{\p m}$ is the canonical inclusion of $A^{\p m}\to A'^{\p m}$ and $\pi^{\p m}$ sends each $e$ and $\hat e$ in $A'^{\p m}$ to zero.
Then it is obvious that they are homotopy-inverses to each other so that
\begin{align*}
\pi^{\p\bullet}\circ \iota^{\p\bullet} &= \identity^{\p\bullet}_A,&
\iota^{\p\bullet}\circ \pi^{\p\bullet} &\stackrel{H^{\p\bullet}}\homotopic \identity^{\p\bullet}_{A'},
\end{align*}
where the sequence $H^{\p\bullet}$ of homotopies is given by
\begin{align}\label{equation:consistent homotopy}
H^{\p m}(s)\coloneqq
\begin{cases}
s & s\in A\subset A';\\
\hat e & s = e\in A';\\
0 & s=\hat e\in A'.
\end{cases}
\end{align}

It is well-known that an $A_\infty$-functor is an $A_\infty$-(quasi-)equivalence if it satisfies two condition, (i) essentially (quasi-)surjective, and (ii) (quasi-)fully faithful.
In other words, we need to show the following: let $\funI^{\p\bullet}$ be the $A_\infty$-functor induced from $\iota^{\p\bullet}$.
\begin{itemize}
\item for each $\epsilon\in\Aug_+$, there exists $\epsilon'\in\Aug_+'$ such that $\funI^{\p0}(\epsilon')\coloneqq \iota^*\epsilon'$ and $\epsilon$ are isomorphic up to homotopy,
\item for $\epsilon_1',\epsilon_2'\in\Aug_+'$, the induced chain map
\[
\funI^{\p1}:\hom_{\Aug_+'}(\epsilon_1',\epsilon_2')\to\hom_{\Aug_+}\left(\funI^{\p0}(\epsilon_1'),\funI^{\p0}(\epsilon_2')\right)
\]
is a quasi-isomorphism, i.e., an isomorphism between their cohomology groups.
\end{itemize}

The essential quasi-surjectivity is obvious. Indeed, for any $\epsilon\in\Aug_+$, there is an augmentation $\epsilon'\in\Aug_+$ extended from $\epsilon$ as
\begin{align*}
\epsilon'(s)\coloneqq\begin{cases}
\epsilon(s) & s\in A\subset A';\\
0 & \text{otherwise},
\end{cases}
\end{align*}
such that the pull-back of $\epsilon'$ is precisely $\epsilon$
\[
\funI^{\p 0}(\epsilon') = \epsilon'\circ \iota = \epsilon,
\]
and therefore it is indeed surjective.

Let $\epsilon_1',\epsilon_2'\in\Aug_+'$ be two augmentations.
Then by the identification of $A'$, we have 
\begin{equation}\label{equation:direct sum of hom-set of stabilization}
\hom_{\Aug_+'}(\epsilon_1',\epsilon_2')\isomorphic
\hom_{\Aug_+}\left(\funI^{\p0}(\epsilon_1'), \funI^{\p0}(\epsilon_2')\right)\oplus\left(\bigoplus_{i\in I^{12}}
\ring_\alg\left\langle e_{12}^{i\vee}, \hat e_{12}^{i\vee}\right\rangle, m_1'
\right),
\end{equation}
for some index set $I^{12}$, where
\begin{equation}\label{equation:acyclic for stabilizations}
m_1'(e_{12}^{i\vee})=\hat e_{12}^{i\vee}\quad\text{ and }\quad
m_1'(\hat e_{12}^{i\vee})=0.
\end{equation}
Then the induced chain map $\funI^{\p1}$ is the projection onto $\hom_{\Aug_+}\left(\funI^{\p0}(\epsilon_1'), \funI^{\p0}(\epsilon_2')\right)$, which is surjective with the kernel
\[
\ker\left(\funI^{\p1}\right)\isomorphic\left(\bigoplus_{i\in I^{12}}
\ring_\alg\left\langle e_{12}^{i\vee}, \hat e_{12}^{i\vee}\right\rangle,m_1'\right).
\]
Since the kernel is acyclic as seen in \eqref{equation:acyclic for stabilizations}, $\funI^{\p1}$ is a quasi-isomorphism, which implies the quasi-fully faithfulness, and we are done.
\end{proof}

\begin{corollary}\label{corollary:unitality along inclusion}
If $\Aug_+'$ is homologically unital, then so is $\Aug_+$.
\end{corollary}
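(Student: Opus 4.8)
The plan is to deduce the corollary directly from the proof of Proposition~\ref{proposition:stabilization}. There we produced, from the consistent inclusion $\iota^{\p\bullet}\colon A^{\p\bullet}\to A'^{\p\bullet}$, an $A_\infty$-functor
\[
\funI^{\p\bullet}\colon \Aug_+'\longrightarrow \Aug_+ ,
\]
and showed it is an $A_\infty$-quasi-equivalence; moreover the verification of essential quasi-surjectivity there in fact establishes that $\funI^{\p0}$ is \emph{surjective} on objects (every $\epsilon\in\Aug_+$ is the pull-back $\funI^{\p0}(\epsilon')$ of the augmentation $\epsilon'$ extending $\epsilon$ by zero), and that $\funI^{\p1}$ is a quasi-isomorphism on each $\hom$-complex. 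It therefore suffices to observe that homological unitality transfers along such a functor; I will spell this out rather than invoke a black box.

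Concretely, I would argue as follows. The pair $(\funI^{\p0},\funI^{\p1})$ induces a functor $H^*(\funI)$ between the (non-unital, graded) cohomology categories $H^*(\Aug_+')$ and $H^*(\Aug_+)$: this is the standard consequence of the $A_\infty$-functor relation in lengths $\le 2$, which on cocycles $a_1,a_2$ reads
\[
\funI^{\p1}\bigl(m_2(a_2,a_1)\bigr)=\pm\, m_2\bigl(\funI^{\p1}a_2,\funI^{\p1}a_1\bigr)+m_1\bigl(\funI^{\p2}(a_2,a_1)\bigr),
\]
so that $H^*(\funI)$ is strictly compatible with $m_2$ on cohomology. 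Now fix $\epsilon\in\Ob(\Aug_+)$ and choose $\epsilon'\in\Ob(\Aug_+')$ with $\funI^{\p0}(\epsilon')=\epsilon$. Since $\Aug_+'$ is homologically unital, $H^0\bigl(\hom_{\Aug_+'}(\epsilon',\epsilon')\bigr)$ contains a unit $[e_{\epsilon'}]$; set $[e_\epsilon]\coloneqq H^*(\funI)[e_{\epsilon'}]\in H^0\bigl(\hom_{\Aug_+}(\epsilon,\epsilon)\bigr)$. For any $\delta\in\Ob(\Aug_+)$ pick $\delta'$ with $\funI^{\p0}(\delta')=\delta$; since $\funI^{\p1}$ is quasi-fully faithful, every class $[a]\in H^*\bigl(\hom_{\Aug_+}(\epsilon,\delta)\bigr)$ is $H^*(\funI)[b]$ for some $[b]\in H^*\bigl(\hom_{\Aug_+'}(\epsilon',\delta')\bigr)$, whence
\[
m_2([a],[e_\epsilon])=m_2\bigl(H^*(\funI)[b],H^*(\funI)[e_{\epsilon'}]\bigr)=H^*(\funI)\,m_2([b],[e_{\epsilon'}])=H^*(\funI)[b]=[a],
\]
and symmetrically $m_2([e_\epsilon],[a'])=[a']$ for $[a']\in H^*\bigl(\hom_{\Aug_+}(\gamma,\epsilon)\bigr)$. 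Thus $[e_\epsilon]$ is a two-sided identity for $\epsilon$ in $H^*(\Aug_+)$, so $\Aug_+$ is homologically unital.

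The only step that needs any care — hence the ``hard part'' — is bookkeeping the signs in the length-$2$ $A_\infty$-functor relation under the conventions of Section~\ref{section:augmentation category} (the sign $\sigma_k$ of Definition~\ref{def:sign for m_k}), so that $H^*(\funI)$ is genuinely compatible with $m_2$ on cohomology; this is the same routine computation underlying Proposition~\ref{proposition:functoriality}, and everything else in the proof is formal. Alternatively one may simply cite the general fact that an $A_\infty$-quasi-equivalence reflects homological unitality, observing that in our situation essential surjectivity is witnessed on the nose by $\funI^{\p0}$.
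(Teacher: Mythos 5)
Your proposal is correct and follows essentially the same reasoning as the paper, whose entire proof of this corollary is the single sentence ``Since the induced $A_\infty$ functor $\funI^{\p \bullet}$ is surjective, $\Aug_+$ is homologically unital if so is $\Aug_+'$.'' What you have done is unpack that remark: the surjectivity of $\funI^{\p0}$ on objects together with the quasi-isomorphism property of $\funI^{\p1}$ (both established in the proof of Proposition~\ref{proposition:stabilization}) are exactly what make the push-forward of units along $H^*(\funI)$ work, and you have spelled out the cohomology-level computation that the paper leaves implicit.
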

\begin{proof}
Since the induced $A_\infty$ functor $\funI^{\p \bullet}$ is surjective, $\Aug_+$ is homologically unital if so is $\Aug_+'$.
\end{proof}

\begin{remark}
One may expect that the $A_\infty$ functor $\Pi^{\p\bullet}:\Aug_+\to\Aug_+'$ induced from $\pi^{\p\bullet}$ is again an $A_\infty$-quasi-equivalence but the essential quasi-surjectivity is not obvious unless $\Aug_+$ is unital.
\end{remark}

\begin{corollary}\label{corollary:unitality along projection}
If $\Aug_+$ is homologically unital, $\Pi^{\p\bullet}$ is an $A_\infty$-quasi-equivalence and so $\Aug_+'$ is homologically unital.
\end{corollary}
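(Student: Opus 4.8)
The plan is to derive everything formally from two facts already established: the $A_\infty$-functor $\funI^{\p\bullet}$ induced by the inclusion $\iota^{\p\bullet}$ is quasi-fully faithful and essentially quasi-surjective (this is exactly what the proof of Proposition~\ref{proposition:stabilization} shows), and $\funI^{\p\bullet}\circ\Pi^{\p\bullet}=\identity_{\Aug_+}$, which follows from $\pi^{\p\bullet}\circ\iota^{\p\bullet}=\identity^{\p\bullet}_{A^{\p\bullet}}$ together with the contravariant functoriality of $\Aug_+$ (Proposition~\ref{proposition:functoriality}). First I would check that $\Pi^{\p\bullet}$ is quasi-fully faithful: for $\epsilon_1,\epsilon_2\in\Aug_+$ the composite $\funI^{\p1}\circ\Pi^{\p1}$ is the identity of $\hom_{\Aug_+}(\epsilon_1,\epsilon_2)$, while $\funI^{\p1}\colon\hom_{\Aug_+'}(\Pi^{\p0}\epsilon_1,\Pi^{\p0}\epsilon_2)\to\hom_{\Aug_+}(\epsilon_1,\epsilon_2)$ is a quasi-isomorphism (quasi-full faithfulness of $\funI^{\p\bullet}$ applied to the objects $\Pi^{\p0}\epsilon_i$); by the two-out-of-three property for quasi-isomorphisms of chain complexes, $\Pi^{\p1}$ is a quasi-isomorphism.

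Next I would establish that $\Aug_+'$ is homologically unital — the step the preceding remark flags as the crux, and the one that uses the hypothesis on $\Aug_+$. Write $[m_2]$ for the composition induced on cohomology. For $\epsilon'\in\Aug_+'$, the map $H^0(\funI^{\p1})\colon H^0\hom_{\Aug_+'}(\epsilon',\epsilon')\to H^0\hom_{\Aug_+}(\funI^{\p0}\epsilon',\funI^{\p0}\epsilon')$ is an isomorphism, so let $e_{\epsilon'}$ be the preimage of the homological unit $\mathbf 1_{\funI^{\p0}\epsilon'}$ supplied by the hypothesis. Since $H^{\ast}(\funI^{\p1})$ intertwines $[m_2]$ with $[m_2]$ and is injective on each $\hom$-space, one gets $[m_2](f,e_{\epsilon'})=f$ and $[m_2](e_{\epsilon'},g)=g$ for all composable $f,g$ in $H^{\ast}(\Aug_+')$; thus $e_{\epsilon'}$ is a homological unit, and $\Aug_+'$ is homologically unital.

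Finally, with $\Aug_+'$ homologically unital both $H^{0}(\Aug_+')$ and $H^{0}(\Aug_+)$ are genuine unital categories, the induced functor $H^{0}(\funI^{\p\bullet})$ is fully faithful, and it is surjective on objects because $\funI^{\p\bullet}\circ\Pi^{\p\bullet}=\identity$; hence $H^{0}(\funI^{\p\bullet})$ is an equivalence of categories. A one-sided inverse of an equivalence is a quasi-inverse, so $H^{0}(\Pi^{\p\bullet})$ — which satisfies $H^{0}(\funI^{\p\bullet})\circ H^{0}(\Pi^{\p\bullet})=\identity$ — is an equivalence, in particular essentially surjective; this is precisely the essential quasi-surjectivity of $\Pi^{\p\bullet}$. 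Combined with the first paragraph, $\Pi^{\p\bullet}$ is an $A_\infty$-quasi-equivalence, and $\Aug_+'$ is homologically unital as shown. The only real obstacle is the middle paragraph: essential quasi-surjectivity of $\Pi^{\p\bullet}$ genuinely fails in general, and the content of the corollary is that homological unitality of $\Aug_+$ forces homological unitality of $\Aug_+'$, after which the equivalence-of-categories bookkeeping is automatic.
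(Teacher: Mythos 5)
Your proof is correct, and it takes a genuinely different route from the paper's. The paper argues both halves of the quasi-equivalence directly from the explicit structure of the stabilization: for essential quasi-surjectivity it writes $\hom_{\Aug_+'}(\epsilon',\epsilon^{\pi})\isomorphic\hom_{\Aug_+}(\epsilon,\epsilon)\oplus(\text{acyclic})$ — the same kind of splitting used for $\funI^{\p1}$ in Proposition~\ref{proposition:stabilization}, with $\epsilon\coloneqq\epsilon'|_{A}$ — and then transports the unit of $H^*\hom_{\Aug_+}(\epsilon,\epsilon)$ across this splitting to get an isomorphism $\epsilon'\homotopic\epsilon^{\pi}$; quasi-full faithfulness of $\Pi^{\p1}$ is then obtained by noting it is an injection with acyclic cokernel. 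You instead never look inside the stabilization again: you use only the two formal facts that $\funI^{\p\bullet}$ is a quasi-equivalence and that $\funI^{\p\bullet}\circ\Pi^{\p\bullet}=\identity$ by contravariant functoriality applied to $\pi^{\p\bullet}\circ\iota^{\p\bullet}=\identity$. From these you deduce quasi-full faithfulness of $\Pi^{\p\bullet}$ by two-out-of-three, transfer homological units from $\Aug_+$ to $\Aug_+'$ along the bijections $H^*(\funI^{\p1})$ (which carry $[m_2]$ to $[m_2]$), and then close the argument with pure $H^0$-category bookkeeping. What your approach buys is robustness: you need no knowledge of how the extra generators $e^i,\hat e^i$ enter the hom-complexes, and the step producing the essential isomorphism $\epsilon'\homotopic\Pi^{\p0}\funI^{\p0}(\epsilon')$ is justified by general facts about fully faithful functors between unital categories rather than by inspecting a particular splitting — arguably a cleaner account of why the candidate degree-zero class is actually invertible, a point the paper's proof leaves somewhat implicit. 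What the paper's approach buys is that it stays parallel to the proof of Proposition~\ref{proposition:stabilization} and does not rely on the strict functoriality $\funI^{\p\bullet}\circ\Pi^{\p\bullet}=\identity$ of the assignment $f\mapsto\funF$; you should make sure you are comfortable that this identity holds strictly (not merely up to natural quasi-isomorphism), which does follow from the explicit composition formulae in Proposition~\ref{proposition:functoriality} together with $\pi\circ\iota=\identity$ on the nose. Your closing remark correctly identifies where the hypothesis on $\Aug_+$ enters and why the remaining category-theoretic steps are automatic once $\Aug_+'$ is seen to be unital.
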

\begin{proof}
Let us consider the essential quasi-surjectivity first.
For each augmentation $\epsilon'\in\Aug_+'$, one can find an augmentation $\epsilon\in\Aug_+$ such that $\epsilon'$ and $\epsilon^\pi$ agree with each other on $A$ but may have different values for some $\hat e$.

Similar to \eqref{equation:direct sum of hom-set of stabilization}, the chain complex $\hom_{\Aug_+'}\left(\epsilon',\epsilon^\pi\right)$ is obviously a direct sum
\[
\hom_{\Aug_+'}\left(\epsilon',\epsilon^\pi\right)\isomorphic
\hom_{\Aug_+}\left(\epsilon,\epsilon\right)
\oplus
\left(\bigoplus_{i\in I^{12}} \ring_\alg\left\langle e_{12}^{i\vee}, \hat e_{12}^{i\vee}\right\rangle, m_1'\right).
\]
As before, the second summand is acyclic and therefore the existence of the unit in $\Aug_+$ implies the existence of an isomorphism in $\hom_{\Aug_+'}\left(\epsilon',\epsilon^\pi\right)$.

For two augmentations $\epsilon_1, \epsilon_2\in\Aug_+$, the induced chain map $\Pi^{\p 1}$ sends all $a_{12}^\vee$ to themselves
\begin{align*}
\Pi^{\p1}&:\hom_{\Aug_+}(\epsilon_1,\epsilon_2)\to\hom_{\Aug_+'}\left(\epsilon_1^\pi,\epsilon_2^\pi\right),&
\Pi^{\p1}\left(a_{12}^\vee\right)&=a_{12}^\vee.
\end{align*}
Therefore it is injective and its cokernel $\coker\Pi^{\p1}$ is isomorphic to the chain complex
\[
\coker\left(\Pi^{\p1}\right)\isomorphic\left(\bigoplus_{i\in I^{12}} \ring_\alg\left\langle e_{12}^{i\vee}, \hat e_{12}^{i\vee}\right\rangle, m_1'\right)
\]
which is acyclic and so it is quasi-fully faithful as desired.
\end{proof}

In summary, we have the following proposition:
\begin{proposition}\label{proposition:zig-zags of stabilizations}
Suppose that there is a zig-zag of stabilizations
\[
\begin{tikzcd}[column sep=3pc]
A^{\p\bullet}_0 \arrow[r, "\iota_1", hook, yshift=.7ex] \arrow[from=r, "\pi_1", ->>, yshift=-.7ex] &
A^{\p\bullet}_1 \arrow[from=r, "\iota_1'"', hook', yshift=.7ex] \arrow[r, "\pi_1'"', ->>, yshift=-.7ex] & 
\cdots \arrow[from=r, "\iota_{n-1}'"', hook', yshift=.7ex] \arrow[r, "\pi_{n-1}'"', ->>, yshift=-.7ex] &
A^{\p\bullet}_{n-1} \arrow[from=r, "\iota_{n-1}'"', hook', yshift=.7ex] \arrow[r, "\pi_{n-1}'"', ->>, yshift=-.7ex] & 
A^{\p\bullet}_n
\end{tikzcd}
\]
and $\Aug_+\left(A_0^{\p\bullet};K\right)$ is homologically unital.
Then $\Aug_+\left(A^{\p\bullet}_i;K\right)$ for each $i$ is homologically unital and $A_\infty$-quasi-equivalent to $\Aug_+\left(A_0^{\p\bullet};K\right)$.
\end{proposition}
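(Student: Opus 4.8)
The plan is to argue by induction along the zig-zag, feeding the three preceding results into one another. Write $\cB_i\coloneqq\Aug_+\!\left(A_i^{\p\bullet};K\right)$ for $0\le i\le n$, and maintain as inductive hypothesis that $\cB_i$ is homologically unital and $A_\infty$-quasi-equivalent to $\cB_0$. The base case $i=0$ is exactly the assumption of the proposition, so everything reduces to the inductive step, which in turn reduces to analysing a single edge of the zig-zag.

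First I would isolate the one-step picture. At the $i$-th edge, by definition of a zig-zag of stabilizations exactly one of $A_i^{\p\bullet}$, $A_{i+1}^{\p\bullet}$ is a stabilization of the other, and both the canonical inclusion $\iota^{\p\bullet}$ and the canonical projection $\pi^{\p\bullet}$ between them are present, inducing $A_\infty$-functors $\funI^{\p\bullet}$ and $\Pi^{\p\bullet}$ between $\cB_i$ and $\cB_{i+1}$. Here Proposition~\ref{proposition:stabilization} already tells us that the functor induced by the inclusion is always an $A_\infty$-quasi-equivalence, so $\cB_i\homotopic\cB_{i+1}$ in $\Alg_\infty$ regardless, and the quasi-equivalence to $\cB_0$ then propagates by composition. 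The content that must be transported by hand is therefore homological unitality, and the direction matters: Corollary~\ref{corollary:unitality along inclusion} pushes unitality from a stabilization \emph{down} to its base, while Corollary~\ref{corollary:unitality along projection} pushes unitality from a base \emph{up} to its stabilization. Since the induction always supplies unitality at the step already reached, whichever of the two DGAs is the stabilization the appropriate corollary applies: if $A_{i+1}^{\p\bullet}$ is a stabilization of $A_i^{\p\bullet}$ I would invoke Corollary~\ref{corollary:unitality along projection} with $\Aug_+=\cB_i$ and $\Aug_+'=\cB_{i+1}$ (which also identifies $\Pi^{\p\bullet}$ as the quasi-equivalence); if instead $A_i^{\p\bullet}$ is a stabilization of $A_{i+1}^{\p\bullet}$ I would invoke Corollary~\ref{corollary:unitality along inclusion} with $\Aug_+'=\cB_i$, $\Aug_+=\cB_{i+1}$, the quasi-equivalence then being $\funI^{\p\bullet}$ from Proposition~\ref{proposition:stabilization}. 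In both cases $\cB_{i+1}$ is homologically unital and $A_\infty$-quasi-equivalent to $\cB_i$, hence to $\cB_0$; this closes the induction and, running $i$ from $0$ to $n$, proves the statement.

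I do not expect a genuine obstacle: all the analytic work is already packaged in Proposition~\ref{proposition:stabilization} and Corollaries~\ref{corollary:unitality along inclusion}, \ref{corollary:unitality along projection}, and what remains is bookkeeping. The one point that needs care — and the structural reason the induction must carry homological unitality rather than merely quasi-equivalence — is that for a stabilization the quasi-equivalence of augmentation categories is only known a priori in the direction induced by the inclusion; the reverse direction, induced by $\pi^{\p\bullet}$, is guaranteed only once the base category is known to be homologically unital (this is precisely the hypothesis of Corollary~\ref{corollary:unitality along projection}). A secondary minor verification is that the maps $\iota^{\p\bullet}$ and $\pi^{\p\bullet}$ appearing in the zig-zag really are the consistent inclusions and projections to which these three results apply, which is immediate from the definition of a strong consistent stabilization.
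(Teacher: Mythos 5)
Your proof is correct and follows exactly the route the paper intends: the paper introduces the proposition with ``In summary, we have the following proposition,'' signaling that it is the direct consequence of Proposition~\ref{proposition:stabilization} and Corollaries~\ref{corollary:unitality along inclusion}, \ref{corollary:unitality along projection}, applied edge-by-edge along the zig-zag exactly as you do. Your additional observation — that the $A_\infty$-quasi-equivalence propagates unconditionally via the inclusion-induced functor while homological unitality must be carried along the induction because the projection direction is conditional on the unitality of the base — is the right structural reading and matches the paper's organization of those auxiliary results.
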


Now we consider the bordered version of augmentation categories.
\begin{definition}[Augmentations for bordered DGAs]
Let $K=(\field,\differential_\field\equiv0)$ be a DGA consisting of the base field $\field$ with the trivial differential.
An augmentation for $\dga=(A_\Left\to A \leftarrow A_\Right)$ is a bordered DGA morphism 
\[
\varepsilon=(\epsilon_\Left, \epsilon, \epsilon_\Right):\dga\to\cK\coloneqq(K = K = K),
\]
which makes the following diagram commutative:
\begin{align*}
\begin{tikzcd}[column sep=4pc, row sep=2pc,ampersand replacement=\&]
\dga\arrow[d,"\varepsilon"']\\
\cK
\end{tikzcd}
&=\ \left(
\begin{tikzcd}[column sep=4pc, row sep=2pc,ampersand replacement=\&]
A_\Left \arrow[r]\arrow[d, "\epsilon_\Left"']  \& A\arrow[d, "\epsilon"] \& A_\Right\arrow[l]\arrow[d, "\epsilon_\Right"]\\
K\arrow[r,equal] \& K \& K\arrow[l,equal]
\end{tikzcd}
\right)
\end{align*}
\end{definition}

\begin{definition}
A \emph{bordered} $A_\infty$-category $\left(\scrA_\Left\leftarrow \scrA \rightarrow \scrA_\Right\right)$ of type $(n_\Left,n_\Right)$ consists of $A_\infty$-categories, $\scrA_\Left, \scrA$ and $\scrA_\Right$ and two $A_\infty$-functors $\scrA\to\scrA_\Left$ and $\scrA\to\scrA_\Right$ such that both $\scrA_\Left$ and $\scrA_\Right$ are $A_\infty$-equivalent to augmentation categories 
\[
\scrA_\Left\isomorphic\Aug(A_{n_\Left}^{\p \bullet}(\mu_\Left);K)\quad\text{ and }\quad
\scrA_\Right\isomorphic\Aug(A_{n_\Right}^{\p \bullet}(\mu_\Right);K)
\]
for some $\mu_\Left:[n_\Left]\to\grading$ and $\mu_\Right:[n_\Right]\to\grading$, respectively.

The morphism $\cF^{\p\bullet}$ between two bordered $A_\infty$-categories is a triple $\left(\funF^{\p\bullet}_\Left, \funF^{\p\bullet}, \funF^{\p\bullet}_\Right\right)$ of $A_\infty$-functors making the following diagram commutative:
\[
\cF^{\p\bullet}=\left(
\begin{tikzcd}[column sep=4pc, row sep=2pc]
\scrA_\Left\arrow[from=r]\arrow[d,"\funF^{\p\bullet}_\Left"'] & \scrA\arrow[d,"\funF^{\p\bullet}"] & \scrA_\Right\arrow[from=l]\arrow[d,"\funF^{\p\bullet}_\Right"]
\\
\scrA'_\Left\arrow[from=r] & \scrA' & \scrA'_\Right\arrow[from=l]
\end{tikzcd}\right).
\]
We say that $\cF^{\p\bullet}$ is an $A_\infty$-(quasi)-equivalence if so are $\funF^{\p\bullet}_\Left$, $\funF^{\p\bullet}$ and $\funF^{\p\bullet}_\Right$.

We denote the category of bordered $A_\infty$-categories by $\BAlg_\infty$.
\end{definition}

Indeed, for each consistent sequence $\dga^{\p\bullet}=\left(A_\Left^{\p\bullet}\stackrel{\phi_\Left^{\p\bullet}}\longrightarrow A^{\p\bullet}\stackrel{\phi_\Right^{\p\bullet}}\longleftarrow A_\Right^{\p\bullet}\right)$ of bordered DGAs, we have an associated bordered augmentation $A_\infty$-category
\[
\Aug_+(\dga^{\p\bullet};\cK)\coloneqq
\left(
\begin{tikzcd}[column sep=3pc]
\Aug_+(A_\Left^{\p\bullet};K)\arrow[from=r,"\Aug_+(\phi_\Left^{\p\bullet})"'] &
\Aug_+(A^{\p\bullet};K) \arrow[r, "\Aug_+(\phi_\Right^{\p\bullet})"] & \Aug_+(A_\Right^{\p\bullet};K)
\end{tikzcd}
\right).
\]

\begin{corollary}\label{corollary:well-definedness of bordered augmentation categories}
The contravariant functor $\Aug_+(-;\cK):\BDGA^{\p\bullet}_\co\to\BAlg_\infty$ is well-defined.
\end{corollary}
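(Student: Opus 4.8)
This is a formal consequence of the functoriality of the non-bordered augmentation functor on consistent sequences of composable DGAs (Proposition~\ref{proposition:functoriality}), and the plan is to run that construction in parallel on the three DGAs of a bordered composable DGA and check that the outcome, together with the induced diagram of $A_\infty$-functors, is an object (resp.\ a morphism) of $\BAlg_\infty$.

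\emph{On objects.} Given $\dga^{\p\bullet}=\left(A_\Left^{\p\bullet}\xrightarrow{\phi_\Left^{\p\bullet}}A^{\p\bullet}\xleftarrow{\phi_\Right^{\p\bullet}}A_\Right^{\p\bullet}\right)\in\BDGA^{\p\bullet}_\co$, I would first note that each of $A_\Left^{\p\bullet}$, $A^{\p\bullet}$, $A_\Right^{\p\bullet}$ is a consistent sequence in $\DGA_\co$, so Proposition~\ref{proposition:functoriality} yields the $A_\infty$-categories $\Aug_+(A_\Left^{\p\bullet};K)$, $\Aug_+(A^{\p\bullet};K)$, $\Aug_+(A_\Right^{\p\bullet};K)$, while the structure maps $\phi_\Left^{\p\bullet}$, $\phi_\Right^{\p\bullet}$, being consistent composable morphisms, induce (contravariantly) the $A_\infty$-functors $\Aug_+(\phi_\Left^{\p\bullet})\colon\Aug_+(A^{\p\bullet};K)\to\Aug_+(A_\Left^{\p\bullet};K)$ and similarly for $\Right$. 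To place the triple in $\BAlg_\infty$ I then need the two border categories to be $A_\infty$-equivalent to $\Aug_+\!\left(A_{n_\Left}^{\p\bullet}(\mu_\Left);K\right)$ and $\Aug_+\!\left(A_{n_\Right}^{\p\bullet}(\mu_\Right);K\right)$: by condition~(2) of Definition~\ref{definition:m-component borderd DGAs} together with Corollary~\ref{corollary:composable border DGAs}, the sequences $A_\Left^{\p\bullet}$, $A_\Right^{\p\bullet}$ are isomorphic to the border sequences, and $\Aug_+$ carries such an isomorphism to an isomorphism --- hence in particular an $A_\infty$-equivalence --- of $A_\infty$-categories; alternatively one may simply observe that both sides are the explicit DG categories of Example~\ref{example:augmentation category for border DGAs}.

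\emph{On morphisms and functoriality.} A morphism $\bff^{\p\bullet}=(f_\Left^{\p\bullet},f^{\p\bullet},f_\Right^{\p\bullet})\colon\dga'^{\p\bullet}\to\dga^{\p\bullet}$ in $\BDGA^{\p\bullet}_\co$ is a consistent family of bordered morphisms fitting into commutative squares with the structure maps; I would apply $\Aug_+$ componentwise to $f_\Left^{\p\bullet}$, $f^{\p\bullet}$, $f_\Right^{\p\bullet}$, obtaining $A_\infty$-functors $\funF^{\p\bullet}_\Left$, $\funF^{\p\bullet}$, $\funF^{\p\bullet}_\Right$. Since $\Aug_+$ is a functor, $\Aug_+(g\circ h)=\Aug_+(h)\circ\Aug_+(g)$ holds on the nose, so those commutative squares of DGA morphisms are carried to strictly commutative squares of $A_\infty$-functors, and $\cF^{\p\bullet}=(\funF^{\p\bullet}_\Left,\funF^{\p\bullet},\funF^{\p\bullet}_\Right)$ is a morphism in $\BAlg_\infty$ --- contravariance of $\Aug_+$ being exactly what turns the structure inclusions $\phi_*^{\p\bullet}$ into the functors $\scrA\to\scrA_*$ required in the definition of a bordered $A_\infty$-category. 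Preservation of identities and of composition is then inherited component by component from Proposition~\ref{proposition:functoriality}.

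\emph{Main obstacle.} There is no genuinely new $A_\infty$-algebra to carry out here; the bordered augmentation category is by construction three copies of the ordinary one glued along morphisms to which Proposition~\ref{proposition:functoriality} already applies. The one point that asks for a small argument rather than pure bookkeeping is that the level-wise isomorphisms $A_*^{\p m}\isomorphic A_{n_*}^{\p m}(\mu_*)$ of condition~(2) of Definition~\ref{definition:m-component borderd DGAs} can be arranged compatibly with the restriction maps $C(h)^U$ --- at least at the levels $m=1,2$ that $\Aug_+$ actually sees --- so that they assemble into a map of consistent sequences and $\Aug_+$ delivers an honest $A_\infty$-equivalence of the border categories rather than merely a level-wise isomorphism.
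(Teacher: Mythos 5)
Your argument is correct and takes the same route as the paper: the paper's entire proof is the single sentence "This is a corollary of Proposition~\ref{proposition:functoriality}," and your proposal simply unpacks what that reduction involves, applying $\Aug_+$ componentwise to the three consistent sequences and to the structure maps. The subtlety you flag at the end --- that the level-wise isomorphisms $A_*^{\p m}\isomorphic A_{n_*}^{\p m}(\mu_*)$ should be chosen compatibly with the restriction maps so as to give an $A_\infty$-equivalence of border categories rather than merely level-wise isomorphisms --- is a fair observation that the paper glosses over, but it does not change the fact that both proofs rest on the same reduction to Proposition~\ref{proposition:functoriality}.
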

\begin{proof}
This is a corollary of Proposition~\ref{proposition:functoriality}.
\end{proof}

\begin{proposition}\label{proposition:natural equivalence of mapping cylinder}
Let $\dga'^{\p \bullet}$ be a stabilization of $\dga^{\p\bullet}$.
Then two bordered augmentation categories $\Aug_+(\dga'^{\p\bullet};\cK)$ and $\Aug_+(\dga^{\p\bullet};\cK)$ are $A_\infty$-quasi-equivalent.
\end{proposition}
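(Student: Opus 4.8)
The plan is to reduce the bordered statement to the non‑bordered comparison already available from Proposition~\ref{proposition:stabilization}, using the fact that a weak consistent stabilization $\dga'^{\p\bullet}$ of $\dga^{\p\bullet}$ leaves the two border DGAs untouched and changes the middle DGA and the two structure morphisms only up to DGA homotopy. First I would unwind the data: write $\dga^{\p\bullet}=(A_\Left^{\p\bullet}\to A^{\p\bullet}\leftarrow A_\Right^{\p\bullet})$ with structure maps $\phi_\Left^{\p\bullet},\phi_\Right^{\p\bullet}$, and likewise $\dga'^{\p\bullet}=(A_\Left^{\p\bullet}\to A'^{\p\bullet}\leftarrow A_\Right^{\p\bullet})$ with $\phi'^{\p\bullet}_\Left,\phi'^{\p\bullet}_\Right$. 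By definition of a weak consistent stabilization there is a consistent projection $\bfpi^{\p\bullet}=(\identity,\pi^{\p\bullet},\identity)\colon\dga'^{\p\bullet}\to\dga^{\p\bullet}$ with $A'^{\p\bullet}$ a consistent stabilization of $A^{\p\bullet}$, so $\pi^{\p\bullet}\circ\phi'^{\p\bullet}_\Left=\phi_\Left^{\p\bullet}$ and $\pi^{\p\bullet}\circ\phi'^{\p\bullet}_\Right=\phi_\Right^{\p\bullet}$. Let $\iota^{\p\bullet}\colon A^{\p\bullet}\to A'^{\p\bullet}$ be the canonical (consistent) stabilization inclusion and $H^{\p\bullet}$ the explicit consistent homotopy of \eqref{equation:consistent homotopy}, so $\pi^{\p\bullet}\circ\iota^{\p\bullet}=\identity$ and $\iota^{\p\bullet}\circ\pi^{\p\bullet}\simeq\identity$ via $H^{\p\bullet}$. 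Precomposing this last homotopy with $\phi'^{\p\bullet}_\Left$ and $\phi'^{\p\bullet}_\Right$ gives consistent DGA homotopies $\phi'^{\p\bullet}_\Left\simeq\iota^{\p\bullet}\circ\phi_\Left^{\p\bullet}$ and $\phi'^{\p\bullet}_\Right\simeq\iota^{\p\bullet}\circ\phi_\Right^{\p\bullet}$; note that using $\iota^{\p\bullet}$ here rather than $\pi^{\p\bullet}$ is what will let us avoid any unitality hypothesis.

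Next I would apply the contravariant functor $\Aug_+(-;K)$, let $\funI^{\p\bullet}\colon\Aug_+(A'^{\p\bullet};K)\to\Aug_+(A^{\p\bullet};K)$ be the $A_\infty$‑functor induced by $\iota^{\p\bullet}$, and take $\cF^{\p\bullet}\coloneqq(\identity,\funI^{\p\bullet},\identity)$ as the candidate morphism $\Aug_+(\dga'^{\p\bullet};\cK)\to\Aug_+(\dga^{\p\bullet};\cK)$ in $\BAlg_\infty$. To check that the two squares of $\cF^{\p\bullet}$ commute up to $A_\infty$‑natural equivalence, I would use functoriality (Proposition~\ref{proposition:functoriality}, Corollary~\ref{corollary:well-definedness of bordered augmentation categories}) to get $\Aug_+(\phi_\Left^{\p\bullet})\circ\funI^{\p\bullet}=\Aug_+(\iota^{\p\bullet}\circ\phi_\Left^{\p\bullet})$ and $\Aug_+(\phi_\Right^{\p\bullet})\circ\funI^{\p\bullet}=\Aug_+(\iota^{\p\bullet}\circ\phi_\Right^{\p\bullet})$, and then combine this with the DGA homotopies from the first step and with the homotopy invariance of $\Aug_+$ — namely that DGA‑homotopic consistent morphisms induce $A_\infty$‑naturally equivalent $A_\infty$‑functors, the principle already underlying Proposition~\ref{proposition:stabilization} and proved as in \cite[\S3.3]{NRSSZ2015} by encoding a DGA homotopy as a morphism out of a cylinder DGA. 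This produces $A_\infty$‑natural equivalences $\Aug_+(\phi_\Left^{\p\bullet})\circ\funI^{\p\bullet}\simeq\Aug_+(\phi'^{\p\bullet}_\Left)$ and $\Aug_+(\phi_\Right^{\p\bullet})\circ\funI^{\p\bullet}\simeq\Aug_+(\phi'^{\p\bullet}_\Right)$, i.e.\ the compatibility of $\cF^{\p\bullet}$ with the structure functors.

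Finally, Proposition~\ref{proposition:stabilization} gives that the middle component $\funI^{\p\bullet}$ is an $A_\infty$‑quasi‑equivalence, and the two outer components of $\cF^{\p\bullet}$ are identities, so $\cF^{\p\bullet}$ is an $A_\infty$‑quasi‑equivalence of bordered $A_\infty$‑categories; this is also what was promised in the remark after Lemma~\ref{lemma:mapping cylinders are stabilizations}, since even though the cofibrant replacement carries no strict bordered inclusion, the inclusion of the middle DGAs still suffices. The step I expect to be the main obstacle is the one invoked in the middle paragraph: one must verify that $\Aug_+$ genuinely carries DGA‑homotopic \emph{consistent} morphisms to $A_\infty$‑naturally equivalent functors, and that all homotopies and induced $A_\infty$‑natural transformations respect the $m$‑component and restriction structure (so the squares commute up to consistent natural equivalences); once that is in place, the remainder is bookkeeping. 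It is worth recording explicitly that we could not instead use the strictly bordered functor $\Aug_+(\bfpi^{\p\bullet})$, whose middle component is a quasi‑equivalence only when $\Aug_+(A^{\p\bullet};K)$ is homologically unital (Corollary~\ref{corollary:unitality along projection}), which is not yet available at this point.
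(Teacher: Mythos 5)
Your argument takes a genuinely different route from the paper, and the difference is worth comparing. The paper works with the honest bordered morphism $\Pi^{\p\bullet}=\Aug_+(\bfpi^{\p\bullet})$, whose two border components are identities (since $\bfpi^{\p\bullet}=(\identity,\pi^{\p\bullet},\identity)$), and then argues that the middle component $\Aug_+(\pi^{\p\bullet})$ is a quasi-equivalence by exhibiting it as a quasi-inverse to $\funI^{\p\bullet}=\Aug_+(\iota^{\p\bullet})$: one direction $\funI^{\p\bullet}\circ\Aug_+(\pi^{\p\bullet})=\Aug_+(\pi^{\p\bullet}\circ\iota^{\p\bullet})=\identity$ is strict, and the other, $\Aug_+(\pi^{\p\bullet})\circ\funI^{\p\bullet}=\Aug_+(\iota^{\p\bullet}\circ\pi^{\p\bullet})\simeq\identity$, follows from the DGA homotopy $H^{\p\bullet}$ once one accepts that DGA-homotopic consistent morphisms induce $A_\infty$-naturally equivalent functors. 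You instead make the middle component $\funI^{\p\bullet}$ itself and pay for the fact that $\iota^{\p\bullet}$ is not a bordered morphism by showing the two squares commute up to $A_\infty$-natural equivalence, again via pre-composing $H^{\p\bullet}$ with $\phi'^{\p\bullet}_\Left$ and $\phi'^{\p\bullet}_\Right$. Both routes therefore rest on exactly the same homotopical ingredient (DGA homotopy $\Rightarrow$ $A_\infty$-natural equivalence, in the consistent setting), so your construction buys you nothing new in that regard.

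Two points you should adjust. First, your closing remark that ``we could not instead use the strictly bordered functor $\Aug_+(\bfpi^{\p\bullet})$'' is not accurate: the paper does precisely this, and it does not invoke Corollary~\ref{corollary:unitality along projection}. That corollary is about proving essential surjectivity of $\Pi^{\p\bullet}$ \emph{directly}, which is where unitality enters; the proposition's proof instead identifies $\Aug_+(\pi^{\p\bullet})$ as a quasi-inverse to the already-established quasi-equivalence $\funI^{\p\bullet}$, which needs no unitality. Second, and more significantly for your approach, the triple $\cF^{\p\bullet}=(\identity,\funI^{\p\bullet},\identity)$ is \emph{not} a morphism in $\BAlg_\infty$ according to the paper's own definition, which requires the squares to commute strictly; you only achieve commutativity up to a specified natural equivalence. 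This is fine if one interprets the proposition as asserting equivalence of diagrams up to coherent natural transformation, but it is a deviation from the ambient category the paper works in, whereas the paper's $\Aug_+(\bfpi^{\p\bullet})$ is a literal bordered morphism. So the paper's route is both correct and, given the definitions in force, the cleaner one; your route is valid reasoning but lands slightly outside the stated framework.
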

\begin{proof}
Let $\Pi^{\p\bullet}\coloneqq\Aug_+(\bfpi^{\p\bullet}):\Aug_+(\dga^{\p\bullet};\cK)\to\Aug_+(\dga'^{\p\bullet};\cK)$ be the induced $A_\infty$ functor from the canonical projection $\bfpi:\dga'^{\p\bullet}\to\dga^{\p\bullet}$.
Since $\dga'^{\p\bullet}$ is a stabilization, two induced functors $\Aug_+(\pi_\Left)$ and $\Aug_+(\pi_\Right)$ on borders are equivalences. Hence it suffices to prove the $A_\infty$-quasi-equivalence for $\Aug_+(\pi^{\p\bullet})$.

Due to Proposition~\ref{proposition:zig-zags of stabilizations}, it is obvious that the $A_\infty$ functor $\funI^{\p\bullet}:\Aug_+(A'^{\p\bullet};K)\to\Aug_+(A^{\p\bullet};K)$ induced from $\iota^{\p\bullet}:A^{\p\bullet}\to A'^{\p\bullet}$ is an $A_\infty$-quasi-equivalence whose quasi-inverse is precisely $\Aug_+(\pi^{\p\bullet}):\Aug_+(A^{\p\bullet};K)\to \Aug_+(A'^{\p\bullet};K)$ and is also an $A_\infty$-quasi-equivalence as desired.
\end{proof}

\subsection{Augmentation categories for bordered Legendrian graphs}

Let $(\cT^{\p\bullet},\bfmu^{\p\bullet})\in\BLT^{\p\bullet}$ be a consistent sequence of bordered Legendrian graphs.
Then by taking $A^{\CE}_\co$, we have a consistent sequence of DGAs $A^{\CE}_\co\left(\cT^{\p\bullet},\bfmu^{\p\bullet}\right)$ as seen in Theorem~\ref{theorem:tangle to CE algebra}.

\begin{definition}[Augmentation categories for consistent sequences of bordered Legendrian graphs]
Let $(\cT^{\p\bullet},\bfmu^{\p\bullet})\in\BDGA^{\mu,\p\bullet}_\co$.
The \emph{bordered augmentation category} for $(\cT^{\p\bullet},\bfmu^{\p\bullet})$ is the composition
\begin{align*}
\Aug_+\left(\cT^{\p\bullet},\bfmu^{\p\bullet};\cK\right)&=
\left(
\Aug_+\left(T^{\p\bullet}_\Left,\mu^{\p\bullet}_\Left;K\right) \leftarrow
\Aug_+\left(T^{\p\bullet},\mu^{\p\bullet};K\right) \rightarrow
\Aug_+\left(T^{\p\bullet}_\Right,\mu^{\p\bullet}_\Right;K\right)
\right)\\
&\coloneqq \Aug_+(A^{\CE}_\co(\cT^{\p\bullet},\bfmu^{\p\bullet});\cK),
\end{align*}
where for $*=\Left, \Right$ or empty,
\[
\Aug_+\left(T^{\p\bullet}_*,\mu^{\p\bullet}_*;K\right)\coloneqq
\Aug_+(A^\CE_\co(T^{\p\bullet}_*,\mu^{\p\bullet}_*);K).
\]

In particular, if $\cT^{\p\bullet}$ is the consistent sequence of canonical front copies of $\cT$, then we denote simply by
\[
\Aug_+\left(\cT,\bfmu;\cK\right)\coloneqq
\left(
\Aug_+\left(T_\Left,\mu_\Left;K\right) \leftarrow
\Aug_+\left(T,\mu;K\right) \rightarrow
\Aug_+\left(T_\Right,\mu_\Right;K\right)
\right).
\]
\end{definition}

For an example of computing the augmentation category, see Section \ref{sec:example augmentation category}.

\begin{theorem}[Invariance theorem]\label{thm:invariance aug}
The assignment $(\cT,\bfmu)\to \Aug_+\left(\cT,\bfmu;\cK\right)$ is well-defined and invariant under Legendrian isotopy and basepoint moves up to zig-zags of $A_\infty$-quasi-equivalences.
\end{theorem}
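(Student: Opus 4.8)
The plan is to pass $(\cT,\bfmu)$ through the composite $\BLT^\mu\to\BLT^{\mu,\p\bullet}\xrightarrow{\,A^\CE_\co\,}\BDGA^{\p\bullet}_\co\xrightarrow{\,\Aug_+(-;\cK)\,}\BAlg_\infty$ and to upgrade the DGA-level invariance of Theorem~\ref{theorem:tangle to CE algebra} and Lemma~\ref{lemma:basepoint moves on DGAs} to the $A_\infty$-level by means of Proposition~\ref{proposition:natural equivalence of mapping cylinder}. Concretely, I would first fix the canonical consistent sequence $(\cT^{\p\bullet},\bfmu^{\p\bullet})\in\BLT^{\mu,\p\bullet}$ of front parallel copies (with the regularization convention near vertices of Section~\ref{section:canonical copies}) and declare $\Aug_+(\cT,\bfmu;\cK)\coloneqq\Aug_+\bigl(A^\CE_\co(\cT^{\p\bullet},\bfmu^{\p\bullet});\cK\bigr)$. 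This is a bordered $A_\infty$-category: $A^\CE_\co(\cT^{\p\bullet},\bfmu^{\p\bullet})$ is a consistent sequence of bordered composable DGAs by Proposition~\ref{proposition:DGA for consistent sequence is consistent}, and $\Aug_+(-;\cK)$ is a well-defined contravariant functor on $\BDGA^{\p\bullet}_\co$ by Corollary~\ref{corollary:well-definedness of bordered augmentation categories}. Two admissible regularizations of the canonical copies near the vertices differ by consistent front Reidemeister moves supported near the vertices, so independence of that choice is subsumed by the isotopy invariance below.

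For invariance under Legendrian isotopy, an isotopy from $(\cT,\bfmu)$ to $(\cT',\bfmu')$ is witnessed by a zig-zag of front Reidemeister moves in $\cRM$; by Proposition~\ref{proposition:functoriality of canonical Lagrangian copies} the canonical-copy construction turns each such move into a zig-zag of elementary consistent front Reidemeister moves $\RM{M}^{\p\bullet}$. By Theorem~\ref{theorem:tangle to CE algebra}, applying $A^\CE_\co$ turns each $\RM{M}^{\p\bullet}$ into a zig-zag of consistent stabilizations of bordered composable DGAs, passing through cofibrant replacements (an isomorphism for $\RM{III}$, a single stabilization for $\RM{I},\RM{II},\RM{VI}$, and for $\RM{VI}$ a length-two zig-zag through a common stabilization $\tilde SA^{\p\bullet}$). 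Applying the contravariant functor $\Aug_+(-;\cK)$ and invoking Proposition~\ref{proposition:natural equivalence of mapping cylinder}, each edge of this zig-zag of consistent stabilizations — an inclusion $\bfi^{\p\bullet}$ or a projection $\bfpi^{\p\bullet}$ alike — is sent to an $A_\infty$-quasi-equivalence of bordered $A_\infty$-categories. Concatenating along the whole zig-zag yields the desired zig-zag of $A_\infty$-quasi-equivalences between $\Aug_+(\cT,\bfmu;\cK)$ and $\Aug_+(\cT',\bfmu';\cK)$; the same argument applies on the Lagrangian side using $\cRM_\lag$ and Ng's resolution.

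For invariance under basepoint moves, $\RM{B_1}$ (and $\RM{b_1}$) is a sequence of Reidemeister moves, so it reduces to the previous case. For $\RM{B_2},\RM{B_3}$ (and $\RM{b_2},\RM{b_3}$), Lemma~\ref{lemma:basepoint moves on DGAs} and its consistent upgrade in Section~\ref{section:consistent sequences of LCH DGAs} supply a consistent composable bordered morphism $\RM{B_i}^{\p\bullet}_*$ together with a strict left inverse $\RM{B_i^{-1}}^{\p\bullet}_*$, both fixing the two border DGAs (Remark~\ref{remark:basepoint move commutativity}). Applying $\Aug_+(-;\cK)$ gives $A_\infty$-functors with $\Aug_+(\RM{B_i}^{\p\bullet}_*)\circ\Aug_+(\RM{B_i^{-1}}^{\p\bullet}_*)=\id$; I would then show $\Aug_+(\RM{B_i}^{\p\bullet}_*)$ is an $A_\infty$-quasi-equivalence by the mechanism of Proposition~\ref{proposition:stabilization}: the tangle replacement only introduces the new basepoint's generators together with one new edge generator, so each morphism complex of the larger category splits as the morphism complex of the smaller one plus a complex supported on the new off-diagonal generators which is acyclic under $m_1$ after twisting by the augmentations, with essential surjectivity immediate from $\RM{B_i^{-1}}^{\p\bullet}_*$. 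This yields invariance, hence well-definedness, up to zig-zags of $A_\infty$-quasi-equivalences.

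The main obstacle is precisely this last step: since $\RM{B_2},\RM{B_3}$ are not stabilizations, Proposition~\ref{proposition:natural equivalence of mapping cylinder} does not apply off the shelf, and one must execute by hand the splitting of the off-diagonal morphism complexes into the old complex plus an acyclic summand — the bookkeeping of the extra generators, of the $y$-generator for the newly created edge, and of the twisted differential $m_1$, while tracking consistency of every induced map and the untouched bordered structure, is the delicate part. A secondary point requiring care is that the length-two zig-zag produced for $\RM{VI}$ forces the argument to accommodate both arrow directions, which is why the direction-insensitive Proposition~\ref{proposition:natural equivalence of mapping cylinder} (rather than only the inclusion-induced functor of Proposition~\ref{proposition:stabilization}) is used throughout.
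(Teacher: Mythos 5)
Your proposal follows the paper's proof quite closely in its overall architecture: canonical front copies $\to A^\CE_\co \to \Aug_+(-;\cK)$, invariance under Reidemeister moves via Theorem~\ref{theorem:tangle to CE algebra} and the zig-zag of consistent stabilizations, and Proposition~\ref{proposition:natural equivalence of mapping cylinder} converting each stabilization edge (both the inclusion and the projection, covering the $\RM{VI}$ case) into an $A_\infty$-quasi-equivalence. That part matches.

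Where you depart from the paper is in the argument for $\RM{B_2},\RM{B_3}$, and here your account is not just more complicated than necessary — it posits structure that isn't there. You describe a splitting of each morphism complex as ``the morphism complex of the smaller one plus a complex supported on the new off-diagonal generators'' and treat bookkeeping of ``the extra generators, of the $y$-generator for the newly created edge'' as the delicate step. But $\RM{B_2}$ and $\RM{B_3}$ are local tangle replacements on horizontal arcs (or near a vertex) that \emph{create no new crossings at all}: in the canonical $m$-copy the parallel strands near the basepoint do not intersect, and the new basepoint only adds generators of link-grading $(i,i)$, which live in $\alg^{\p m}(i,i)$ and therefore never appear in $\sfM^{12}$. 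The $\hom$-space $\hom_{\Aug_+}(\epsilon_1,\epsilon_2)$ is spanned by exactly the same off-diagonal generators before and after the move. There is no new edge-generator $y_e^{12}$ contributing off-diagonally either — the basepoint split introduces no $x$-minimum. So the induced chain map is already an \emph{isomorphism of graded vector spaces} on the nose. Combined with surjectivity on objects (coming from the left inverse $\RM{B_i^{-1}}_*^{\p\bullet}$, as you note), this gives the $A_\infty$-quasi-equivalence immediately; no analysis of an acyclic summand and no reduction to the mechanism of Proposition~\ref{proposition:stabilization} is needed. If you tried to execute your splitting argument you would simply find the extra summand is zero, so the proposal would ultimately go through, but the identification of this step as ``the main obstacle'' reflects a missed simplification that the paper exploits.
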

\begin{proof}
The well-definedness is obvious.
Indeed, the canonical front copy is well-defined and every morphism is a zig-zag of elementary morphisms by Proposition~\ref{proposition:functoriality of canonical Lagrangian copies}, it induces a well-defined consistent sequence of bordered LCH DGAs and elementary morphisms correspond to stabilizations by Theorem~\ref{theorem:tangle to CE algebra}.

Now as always, we pass through the bordered augmentation categories of cofibrant replacements. Then we have a zig-zag of stabilizations of consistent bordered DGAs
\[
\begin{tikzcd}[column sep=3pc]
\dga'\arrow[from=r,"\hat\bfpi'^{\p\bullet}"',->>]& \hat \dga'=\dga_{0}^{\p\bullet}
\arrow[from=r, "\bfpi_1^{\p\bullet}",->>,yshift=-0.7ex]
\arrow[r,"\bfi_1^{\p\bullet}",yshift=0.7ex,hook]
& \cdots 
\arrow[r, "\bfpi'^{\p\bullet}_{n-1}"',->>,yshift=-0.7ex]
\arrow[from=r,"\bfi'^{\p\bullet}_{n-1}"',yshift=0.7ex,hook']& \dga^{\p\bullet}_{n}=\hat \dga\arrow[r,"\hat\bfpi'^{\p\bullet}",->>]& \dga,
\end{tikzcd}
\]
where $\dga'=A^\CE_\co(\cT'^{\p\bullet},\bfmu'^{\p\bullet})$ and $\dga=A^\CE_\co(\cT^{\p\bullet},\bfmu^{\p\bullet})$ and $\hat\dga'$ and $\hat\dga$ are cofibrant replacements, respectively.

Due to Corollary~\ref{corollary:well-definedness of bordered augmentation categories}, the bordered augmentation categories for these zig-zags are well-defined and by Proposition~\ref{proposition:natural equivalence of mapping cylinder}, every stabilization gives us an $A_\infty$-quasi-equivalences.

We assume that $(\cT',\bfmu')$ and $(\cT,\bfmu)$ are related by a basepoint move $\RM{B_i}$.
For $\RM{B_1}$, we are done since $\RM{B_1}^{\p\bullet}$ induces a zig-zag of stabilizations as seen already in Section~\ref{section:consistent sequences of LCH DGAs}.

For $\RM{B_i}$ with $i=2$ or $3$, the induced consistent basepoint move on the canonical front copies is a sequence of a elementary basepoint move and a Reidemeister move as seen in Figure~\ref{figure:consistent basepoint moves}.
Now suppose that two consistent sequences $(\cT'^{\p\bullet},\bfmu'^{\p\bullet})$ and $(\cT^{\p\bullet},\bfmu^{\p\bullet})$ are related with an elementary consistent basepoint change move $\RM{B_i}^{\p\bullet}$.

Due to the discussion after Theorem~\ref{theorem:tangle to CE algebra}, we have the induced consistent morphism $\RM{B_i}_*^{\p\bullet}$ and its left inverse $\RM{B_i^{-1}}_*^{\p\bullet}$.
Therefore we have a surjective $A_\infty$-functor
\[
\Aug_+(\RM{B_i}^{\p\bullet}):\Aug_+(\cT^{\p\bullet},\bfmu^{\p\bullet};\cK)\to \Aug_+(\cT'^{\p\bullet},\bfmu'^{\p\bullet};\cK).
\]

For any pair of augmentation $\epsilon_1, \epsilon_2\in\Aug_+(\cT^{\p\bullet},\bfmu^{\p\bullet};\cK)$, we have a chain map
\[
\Aug_+(\RM{B_i}^{\p2}):\hom_+(\epsilon_1,\epsilon_2)\to\hom_+(\epsilon_1',\epsilon_2'),
\]
where $\epsilon_j'\in\Aug_+(\cT'^{\p\bullet},\bfmu'^{\p\bullet};\cK)$ is an induced augmentation via $\RM{B_i}_*$.
Notice that the basepoint move $\RM{B_i}$ for $i=2$ or $3$ does not alter the crossings, especially crossings between the 1st and 2nd copies of $\cT'^{\p2}$. Therefore the above chain map is an isomorphism between graded vector spaces which implies the fully faithfulness of $\Aug_+(\RM{B_i}^{\p\bullet})$ and we are done.
\end{proof}

The following theorem is a generalization of Proposition~3.25 in \cite{NRSSZ2015}.
\begin{theorem}\label{theorem:unitality of augmentation category of LG in a normal form}
Let $(\cT,\bfmu)\in\BLT^\mu$ be a bordered Legendrian graph in a normal form.
Then its augmentation category $\Aug_+(\cT,\bfmu;\cK)$ is strictly unital. That is,
the $A_\infty$ category $\Aug_+(T_*,\mu_*;K)$ for each $*=\Left,\Right$ and empty is strictly unital and two $A_\infty$-functors are unit-preserving.
\end{theorem}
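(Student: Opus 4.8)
The strategy is to reduce the statement to an explicit verification on the distinguished vertex components. By Theorem~\ref{theorem:zig-zags of stabilizations} and Proposition~\ref{proposition:zig-zags of stabilizations}, strict unitality, or at least homological unitality, is preserved along stabilizations of consistent sequences, so it suffices to work with the bordered composable LCH DGA $A^\CE_\co(\cT^{\p\bullet},\bfmu^{\p\bullet})$ and its cofibrant replacement directly. First I would handle the two border $A_\infty$-categories $\Aug_+(T_\Left,\mu_\Left;K)$ and $\Aug_+(T_\Right,\mu_\Right;K)$: by the identification $A^\CE_\co(T^{\p\bullet}_*,\mu^{\p\bullet}_*)\isomorphic A^{\p\bullet}_{n_*}(\mu_*)$ from Corollary~\ref{corollary:composable border DGAs}, these are exactly the augmentation categories of border DGAs computed in Example~\ref{example:augmentation category for border DGAs}, and Corollary~\ref{corollary:unitality of border DGAs} says the element $-y=-\sum_c y_c^{12\vee}$ is a strict unit. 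So the border cases are already done.

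For the middle category $\Aug_+(T,\mu;K)$, the plan is to write down a candidate strict unit in $\hom_{\Aug_+}(\epsilon,\epsilon)$ for each augmentation $\epsilon$, namely the negative sum $-y = -\sum_{e\in E} y_e^{12\vee}$ over the $x$-minimal points (equivalently, edges) of the normal-form diagram, in direct analogy with \eqref{equation:unit}. Using the explicit description of the differential $\differential^{\p m}$ for a Legendrian graph in a normal form given at the end of Section~\ref{section:consistent sequences of LCH DGAs}---in particular the formulas for $\differential^\p{m}(c^{ij})$, $\differential^\p{m}(v_{a,\ell}^{ij})$, $\differential^\p{m}(k_{ab}^{ij})$, $\differential^\p{m}(x_b^{ij})$ and $\differential^\p{m}(y_e^{ij})$, each of which has the shape $\Phi(\differential(\cdot))_{ij}$ plus two ``edge-correction'' terms $\pm(\cdot)y_{e'}^{kj}+y_e^{ik}(\cdot)$---I would check that dualizing gives $m_1(-y)=0$ and, more importantly, that $m_2(-y\otimes a^\vee)=(-1)^{|a|}a^\vee$ and $m_2(a^\vee\otimes -y)=a^\vee$ and that all higher products $m_k$ with one input equal to $-y$ vanish. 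The key point making this work is that in the normal form every $x$-maximum is a marked basepoint and every vertex is of type $(\val(v),0)$ in the upper-right position, so the generators in $\tilde V_{\Lcirclearrowright}$ never enter differentials of other generators nor the image of $\phi_\Right$ (Example~\ref{example:DGAs for graphs in a normal form}); this means the only contributions to $m_k$ involving a $y_e^{12\vee}$ factor come from the linear edge-correction terms, which is precisely the structure that forces strict unitality just as in the border case.

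Next I would check that the two structure $A_\infty$-functors $\Aug_+(\phi_\Left)$ and $\Aug_+(\phi_\Right)$ are unit-preserving. For $\phi_\Left$ this is immediate since it is the obvious inclusion, which sends $y_e^{ij}$ for a left-border edge to the corresponding border generator $y_a^{ij}$, so the dual functor sends $-y$ on the border side to (part of) $-y$ on the middle side---here one uses that $\funF^{\p 0}$ composes $\epsilon$ with $\phi_\Left$ and that $\funF^{\p 1}$ is the dual of the length-$\le 1$ part of $\phi_\Left$, together with the fact that a unit-preserving property only needs to be checked on $H^0$. For $\phi_\Right$ the geometric formula \eqref{equation:structure morphism of Legendrian in a normal form}, $\phi_\Right^{\p m}(y^{ij}_{c'})=y^{ij}_e$, shows $\phi_\Right$ again matches the $y$-generators correctly, so the same argument applies. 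I expect the main obstacle to be the sign bookkeeping in the $m_2$ computation: the pairing $\langle-,-\rangle_\bfi$ carries degree $-k$, the composition \eqref{equation:composition} introduces the Koszul-type sign $\sigma_k$ from Definition~\ref{def:sign for m_k}, and one must track how the degree-$(-1)$ generators $y_e^{12\vee}$ interact with these signs to land on exactly $(-1)^{|a|}a^\vee$ and $a^\vee$ rather than something off by a sign; getting this uniformly right across crossing, vertex, border and basepoint generators is the delicate part, though it is a finite check entirely parallel to the proof of \cite[Proposition~3.25]{NRSSZ2015}. Finally, I would remark that homological unitality of the whole diagram, hence the unitality claim in the bordered sense, follows by combining strict unitality with Corollaries~\ref{corollary:unitality along inclusion} and~\ref{corollary:unitality along projection} to propagate the property along the zig-zags of stabilizations.
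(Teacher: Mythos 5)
Your proposal follows essentially the same approach as the paper: define the candidate strict unit $-y=-\sum_{e} y^{e\vee}_{12}$, use the normal-form constraint (vertices of type $(\val(v),0)$ in the upper-right, $x$-maxima marked, so $\tilde V_{\Lcirclearrowright}$ generators never feed into other differentials or into $\phi_\Right$) to reduce the unitality check to the border-DGA edge-correction terms of Example~\ref{example:augmentation category for border DGAs}, and then verify that $\Aug_+(\phi_\Left)$ and $\Aug_+(\phi_\Right)$ are unit-preserving via the explicit formulas \eqref{equation:structure morphism of Legendrian in a normal form}. Two small remarks: for this normal-form theorem the opening appeal to stabilizations/cofibrant replacements is unnecessary (the paper works directly with $A^\CE_\co(\cT^{\p\bullet},\bfmu^{\p\bullet})$; the stabilization argument is reserved for the general unitality Theorem~\ref{theorem:unitality}), and the $m_2$ signs in fact land as $m_2(-y\otimes s^\vee)=s^\vee=m_2(s^\vee\otimes -y)$ without the extra $(-1)^{|s|}$, matching Corollary~\ref{corollary:unitality of border DGAs}.
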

\begin{proof}
Let $\epsilon\in\Aug_+(\cT,\bfmu;\cK)$. We need to show that there exists an element in $\hom_{\Aug_+}(\epsilon,\epsilon)$ which plays the role of the identity.

Recall the generating sets for $A^{\CE}_\co(\cT^{\p\bullet},\bfmu^{\p\bullet})$ described in Section~\ref{section:augmentation category of normal form}.
We define an element $-y^\vee_{12}\in\hom_{\Aug_+}(\epsilon,\epsilon)$ as follows:
\[
-y^\vee_{12}\coloneqq \sum_{e\in \bar E} -y^{e\vee}_{12} \in \hom_{\Aug_+}(\epsilon,\epsilon),
\]
where $y^{e\vee}_{12}$ is the dual of $y_{e}^{12}$.

Due to the formula of the differential $\differential^{\p m}$ given in Section~\ref{section:augmentation category of normal form}, the only possibility for $s^{13}$ containing $y_e^{12}$ or $y_e'^{23}$ in its differential is either
\[
(-1)^{|s|-1}s^{12}y^{23}_{e'}\quad \text{ or }\quad
y^{12}_e s^{23},
\]
respectively. Note that when $s^{13}=y^{13}_e$ for some $e$, then these two terms coincide.
Moreover, the situation is essentially the same as the augmentation category for border DGAs as computed in Example~\ref{example:augmentation category for border DGAs}.

Therefore, one and only one generator appears in both $m_2\left(-y^\vee_{12}\otimes s^{12}\right)$ and $m_2\left(s^{12}\otimes -y^\vee_{12}\right)$ which is precisely $s^{12}$ itself
\[
m_2\left(-y^\vee_{12}\otimes s^{12}\right)=s^{12}=m_2\left(s^{12}\otimes -y^\vee_{12}\right).
\]

Finally, the absence of terms of length at least 3 in any differential containing $y$ implies that the higher composition $m_k$ will vanish whenever it contains $-y^\vee_{12}$.

Now we prove that two $A_\infty$ functors $\Aug_+\left(\phi_*^{\p\bullet}\right):\Aug_+(T,\mu;K)\to\Aug_+(T_*,\mu_*;K)$ for $*=\Left$ and $\Right$ are unit-preserving.
Due to the definition of the induced $A_\infty$-functor briefly reviewed in Section~\ref{section:aug cats} after Proposition~\ref{proposition:functoriality}, we have for each augmentation $\epsilon\in\Aug_+(T,\mu;K)$
\begin{align*}
\Aug_+\left(\phi_*^{\p\bullet}\right)(-y^\vee_{12})&=
\sum_{s^{12}\in R^{12}} (-1)^{\sigma_1}\left\langle -y^\vee_{12}, \phi_{*,\epsilon}^{\p2}(s^{12})\right\rangle s^\vee_{12},
\end{align*}
where $\sigma_1=0$ as seen in Definition~\ref{def:sign for m_k} and $\phi_{*,\bfe}^{\p2}$ is a twisted DGA morphism by using the diagonal augmentation $\bfe=(\epsilon,\epsilon)$.

More precisely, since $\phi_\Left^{\p 2}$ identifies each generators in $A_{\co,n_\Left}^{\p 2}\isomorphic A^\CE_\co(T_\Left^{\p2},\mu_\Left^{\p2})$ with the corresponding generator in $A^\CE_\co(T^{\p2},\mu^{\p2})$, 
\[
\phi_{\Left,\bfe}^{\p2}(k_{ab}^{12})=k_{ab}^{12}\quad\text{ and }\quad
\phi_{\Left,\bfe}^{\p2}(y_{c}^{12})=y_c^{12}
\]
and therefore the image of $\phi_\Left^{\p2}$ has a nontrivial pairing only for $y_c^{12}$.
In other words, 
\begin{align*}
\Aug_+\left(\phi_\Left^{\p\bullet}\right)(-y^\vee_{12})&=
\sum_{c\in[n_\Left]} \left\langle -y^\vee_{12}, \phi_{\Left,\epsilon}^{\p2}(y_c^{12})\right\rangle y^{c\vee}_{12}=\sum_{c\in[n_\Left]}(-y^{c\vee}_{12})\in\hom_{\Aug_{+,\Left}}(\phi_\Left^*(\epsilon),\phi_\Left^*(\epsilon)),
\end{align*}
which is the unit as desired.

On the other hand, for the right border, we recall the map $\phi_\Right^{\p m}$ described in \eqref{equation:structure morphism of Legendrian in a normal form}. That is, for $m=2$, we have
\begin{align}
\phi_\Right^{\p 2}(k^{12}_{a'b'})&=\Phi(\phi_\Right(k_{a'b'}))_{12},&
\phi_\Right^{\p 2}(y^{12}_{c'})&=y^{12}_e.
\end{align}
Then $\phi_\Right^{\p2}(k^{12}_{a'b'})$ does not involve any $y^{12}_e$'s as observed before and therefore the pairing survives only for $y^{12}_{c'}$'s. 
In that case $\phi_{\Right,\bfe}^{\p2}(y^{12}_{c'})=y^{12}_{c'}$ and we have
\begin{align*}
\Aug_+\left(\phi_\Right^{\p\bullet}\right)(-y^\vee_{12})&=
\sum_{c'\in[n_\Right]} \left\langle -y^\vee_{12}, \phi_{\Right,\bfe}^{\p2}(y_{c'}^{12})\right\rangle y^{c'\vee}_{12}=\sum_{c'\in[n_\Right]}(-y^{c'\vee}_{12})\in\hom_{\Aug_{+,\Right}}(\phi_\Right^*(\epsilon),\phi_\Right^*(\epsilon)),
\end{align*}
which is the unit as well. This completes the proof.
\end{proof}

\begin{theorem}[Unitality]\label{theorem:unitality}
For any bordered Legendrian graph $(\cT,\bfmu)\in \BLT^\mu$, the augmentation category $\Aug_+(\cT,\bfmu;\cK)$ is homologically unital.
\end{theorem}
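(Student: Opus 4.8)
The plan is to bootstrap from the strictly unital case of a bordered Legendrian graph in a normal form, supplied by Theorem~\ref{theorem:unitality of augmentation category of LG in a normal form}, and then to carry homological unitality along the invariance zig-zag of Theorem~\ref{thm:invariance aug}. To begin, I would dispose of the two border components: by construction the $A_\infty$-categories $\Aug_+(T_\Left,\mu_\Left;K)$ and $\Aug_+(T_\Right,\mu_\Right;K)$ are the augmentation categories of the border DGAs $A^\CE_\co(T_\Left,\mu_\Left)\isomorphic A_{n_\Left}(\mu_\Left)$ and $A^\CE_\co(T_\Right,\mu_\Right)\isomorphic A_{n_\Right}(\mu_\Right)$, hence each is already strictly unital with unit the cocycle $-y$ of \eqref{equation:unit} by Corollary~\ref{corollary:unitality of border DGAs}. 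So the only content is homological unitality of the middle component $\Aug_+(T,\mu;K)$, together with the unit-preserving property (up to homotopy) of the two structure $A_\infty$-functors; both of these are invariant under $A_\infty$-quasi-equivalences of bordered $A_\infty$-categories.

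Next I would invoke Lemma~\ref{lemma:normal form representative}: there is a bordered Legendrian graph $(\cT_0,\bfmu_0)\in\BLT^\mu$ in a normal form and a finite sequence of front Reidemeister moves of types $\RM{V},\RM{VI}$ together with basepoint moves $\RM{B_*}$ transforming $(\cT_0,\bfmu_0)$ into $(\cT,\bfmu)$. By Theorem~\ref{theorem:unitality of augmentation category of LG in a normal form} the bordered category $\Aug_+(\cT_0,\bfmu_0;\cK)$ is strictly unital, hence homologically unital. It remains to propagate homological unitality along this sequence. By Theorem~\ref{theorem:zig-zags of stabilizations} and the discussion following it, each move $\RM{V}$ or $\RM{VI}$ (and $\RM{B_1}$, which is a composite of front Reidemeister moves) lifts, after passing to cofibrant replacements, to a zig-zag of stabilizations of consistent bordered composable DGAs; whereas each of $\RM{B_2},\RM{B_3}$ induces, as in the proof of Theorem~\ref{thm:invariance aug}, a surjective and fully faithful $A_\infty$-functor on augmentation categories. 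Starting from the homologically unital end $\Aug_+(\cT_0,\bfmu_0;\cK)$ and moving step by step towards $\Aug_+(\cT,\bfmu;\cK)$, at each stabilization arrow I would apply Corollary~\ref{corollary:unitality along inclusion} or Corollary~\ref{corollary:unitality along projection} (combined with Proposition~\ref{proposition:zig-zags of stabilizations} to treat a whole run of stabilizations at once, and with Proposition~\ref{proposition:natural equivalence of mapping cylinder} at the cofibrant-replacement ends), while at each $\RM{B_2},\RM{B_3}$ arrow the induced $A_\infty$-functor is a quasi-equivalence and so transports units automatically. This yields homological unitality of $\Aug_+(\cT,\bfmu;\cK)$.

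The main obstacle — and the reason the argument must be organized as a one-directional bootstrap from the normal form rather than a naive appeal to ``homological unitality is a quasi-isomorphism invariant'' — is that the canonical projection $\bfpi$ of a stabilization need not induce an essentially quasi-surjective, hence quasi-equivalent, $A_\infty$-functor unless its \emph{domain} is already known to be homologically unital (cf.\ the remark after Proposition~\ref{proposition:stabilization}). The precise statement that repairs this is Corollary~\ref{corollary:unitality along projection}: once the domain is homologically unital, $\Aug_+(\bfpi^{\p\bullet})$ becomes an $A_\infty$-quasi-equivalence and homological unitality passes across. Consequently one must push unitality forward along the zig-zag in the correct order, anchored at the normal-form representative of Theorem~\ref{theorem:unitality of augmentation category of LG in a normal form}; beyond this ordering issue there is no further serious difficulty, the remaining checks being the routine bookkeeping of signs and of the compatibility of the unit $-y$ with the two structure functors, which was already carried out in the normal-form case.
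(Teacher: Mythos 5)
Your argument is correct and follows essentially the same route as the paper's: anchor at a normal-form representative via Lemma~\ref{lemma:normal form representative} and Theorem~\ref{theorem:unitality of augmentation category of LG in a normal form}, then propagate homological unitality along the zig-zag of stabilizations using Proposition~\ref{proposition:zig-zags of stabilizations}, treating basepoint moves $\RM{B_2},\RM{B_3}$ via the $A_\infty$-equivalence from the proof of Theorem~\ref{thm:invariance aug}. The directionality point you raise is a valid and useful clarification, but it is already built into the hypothesis of Proposition~\ref{proposition:zig-zags of stabilizations} (which assumes unitality at one end of the zig-zag precisely so that Corollary~\ref{corollary:unitality along projection} applies), so your proposal unwinds rather than departs from the paper's argument.
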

\begin{proof}
Due to Lemma~\ref{lemma:normal form representative}, one can obtain $\cT$ from a Legendrian graph $\cT'$ in a normal form up to Legendrian isotopy and basepoint moves.
Then by Theorem~\ref{theorem:unitality of augmentation category of LG in a normal form}, the augmentation category $\Aug_+(\cT',\bfmu';\cK)$ is strictly unital.

As seen in the proof of Theorem~\ref{thm:invariance aug}, for each Legendrian isotopy, we have a zig-zag of stabilizations between consistent sequences of DGAs for $(\cT',\bfmu')$ and $(\cT,\bfmu)$.
By Proposition~\ref{proposition:zig-zags of stabilizations}, we have the homological unitality for $\Aug_+(T,\mu;K)$ which implies that so is $\Aug_+(\cT,\bfmu;\cK)$ as desired.

For each basepoint move, we have either a zig-zag of stabilizations for $\RM{B_1}$ where the above argument is applicable, or an $A_\infty$-equivalence for $\RM{B_i}$ with $i=2$ or $3$ by the proof of Theorem~\ref{thm:invariance aug} again, which preserves of course the (homological) unitality.
\end{proof}

\begin{proposition}
Let $(\cT,\bfmu)\in\BLT$ and $(\cT_\lag,\bfmu)\coloneqq\Res(\cT)$.
Suppose that $(\cT^{\p \bullet},\bfmu^{\p\bullet})$ and $(\cT_\lag^{\p\bullet},\bfmu^{\p\bullet})$ are consistent sequences of canonical front and Lagrangian copies, respectively.
Then two augmentation categories $\Aug_+(\cT^{\p \bullet},\bfmu^{\p\bullet};\cK)$ and $\Aug_+(\cT_\lag^{\p \bullet},\bfmu^{\p\bullet};\cK)$ are $A_\infty$-quasi-equivalent.
\end{proposition}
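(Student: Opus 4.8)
The plan is to exhibit an explicit zig-zag of consistent stabilizations connecting the bordered composable DGAs $A^{\CE}_\co(\cT^{\p\bullet},\bfmu^{\p\bullet})$ and $A^{\CE}_\co(\cT_\lag^{\p\bullet},\bfmu^{\p\bullet})$, and then invoke Proposition~\ref{proposition:natural equivalence of mapping cylinder} together with Theorem~\ref{theorem:unitality} to upgrade this to an $A_\infty$-quasi-equivalence of the associated bordered augmentation categories. The key point is that the non-bordered statement, namely that $A^\CE(\hat\cT,\hat\bfmu)$ computed from the front and from its Ng resolution agree up to stabilization, is exactly the content recalled in Theorem~\ref{theorem:zig-zags of stabilizations} and the surrounding discussion; the only new work is to make this compatible with the parallel-copy (consistent) structure and with the border DGAs.

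First I would recall that, by the Remark following the definition of canonical copies, Ng's resolution does \emph{not} send canonical front $m$-copies to canonical Lagrangian $m$-copies near right cusps and vertices of type $(\ell,r)$ with $\ell\ge 2$; however, as stated there and illustrated in Figure~\ref{figure:front and Lagrangian canonical copies}, one can always find a consistent Lagrangian Reidemeister move built only from $\RM{ii}$'s and $\RM{iii_a}$'s connecting $\Res^{\p\bullet}(\cT^{\p\bullet})$ to the canonical Lagrangian copies $\cT_\lag^{\p\bullet}=\Res(\cT)^{\p\bullet}$. By Theorem~\ref{theorem:tangle to CE algebra}, such an elementary consistent Lagrangian move induces a zig-zag of consistent stabilizations of bordered composable DGAs — indeed $\RM{iii_a}$ induces an isomorphism and $\RM{ii}$ the canonical projection of a stabilization. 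So it suffices to compare $A^\CE_\co$ of $\cT^{\p\bullet}$ (front) and of $\Res^{\p\bullet}(\cT^{\p\bullet})$ (Lagrangian). Next, for a single front projection this is precisely the classical fact underlying the definition of $A^\CE(T,\mu)$ via Ng's resolution: there is a zig-zag of stabilizations between $A^\CE(\cT,\bfmu)$ interpreted directly and via $\Res$; one passes through cofibrant replacements as in Theorem~\ref{theorem:zig-zags of stabilizations} and checks border-compatibility using that $\phi_\Left$ is an inclusion of a border DGA and $\phi_\Right$ is given by the universal formula $k_{ab}\mapsto(-1)^{|\hat k_{ab}|}g_{ab}$, which is defined the same way on either model.

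The remaining step is the consistency of all these stabilizations over the semi-simplicial structure. Here I would argue exactly as in Proposition~\ref{proposition:DGA for consistent sequence is consistent}: the relevant maps are all defined by counting immersed disks (or the combinatorial analogue), and restriction to $U\subset[m]$ only forgets generators and disks not supported on the $U$-components, so each isomorphism/projection/embedding in the zig-zag commutes with the restriction maps $\res^U$ and with the consistency isomorphisms $\dga(h)^U$. Assembling, we obtain a zig-zag of consistent stabilizations
\[
A^\CE_\co(\cT^{\p\bullet},\bfmu^{\p\bullet})\ \rightsquigarrow\ A^\CE_\co(\Res^{\p\bullet}(\cT^{\p\bullet}),\bfmu^{\p\bullet})\ \rightsquigarrow\ A^\CE_\co(\cT_\lag^{\p\bullet},\bfmu^{\p\bullet}),
\]
each leg being a finite composition of isomorphisms, canonical projections, and canonical embeddings of consistent stabilizations. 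Applying $\Aug_+(-;\cK)$ and Proposition~\ref{proposition:natural equivalence of mapping cylinder} to each leg, and using Theorem~\ref{theorem:unitality} to guarantee homological unitality (so that the projection legs are also $A_\infty$-quasi-equivalences, cf.\ Corollary~\ref{corollary:unitality along projection} and Proposition~\ref{proposition:zig-zags of stabilizations}), yields the desired $A_\infty$-quasi-equivalence $\Aug_+(\cT^{\p\bullet},\bfmu^{\p\bullet};\cK)\simeq\Aug_+(\cT_\lag^{\p\bullet},\bfmu^{\p\bullet};\cK)$.

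The main obstacle I anticipate is bookkeeping the non-canonical intermediate move near multivalent vertices and right cusps: one must verify that the $\RM{ii}/\RM{iii_a}$ sequence of Figure~\ref{figure:front and Lagrangian canonical copies} can genuinely be chosen consistently in $m$ (i.e.\ compatibly with all restrictions and face maps), and that it fixes the border strands so that it is a bordered move inducing a bordered consistent stabilization. Everything else — cofibrant replacements, border-compatibility of $\phi_\Left,\phi_\Right$, and the passage from DGA-level stabilizations to $A_\infty$-quasi-equivalences — is already packaged in the cited results and should go through routinely.
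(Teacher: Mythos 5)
Your proof follows exactly the paper's strategy and is correct in substance; I'll flag one small piece of redundancy. The middle leg you insert — a zig-zag between $A^{\CE}_\co(\cT^{\p\bullet},\bfmu^{\p\bullet})$ ``interpreted directly'' and $A^{\CE}_\co(\Res^{\p\bullet}(\cT^{\p\bullet}),\bfmu^{\p\bullet})$ — is vacuous: in this paper there is no independent ``direct'' LCH DGA attached to a front projection. By construction (Section~\ref{section:CE DGA} and Section~\ref{section:consistent sequences of LCH DGAs}), the bordered CE DGA of a front diagram is \emph{defined} through Ng's resolution, so $A^{\CE}_\co(\cT^{\p\bullet},\bfmu^{\p\bullet})=A^{\CE}_\co(\Res^{\p\bullet}(\cT^{\p\bullet}),\bfmu^{\p\bullet})$ as an equality, not a quasi-equivalence needing a separate argument or cofibrant replacements. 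The only genuine content is precisely the comparison you handle first: $\Res^{\p\bullet}(\cT^{\p\bullet})$ versus $\cT_\lag^{\p\bullet}$ via the consistent $\RM{ii},\RM{iii_a}$ sequence of Figure~\ref{figure:front and Lagrangian canonical copies}, which Theorem~\ref{theorem:tangle to CE algebra} turns into a zig-zag of consistent stabilizations, after which Proposition~\ref{proposition:natural equivalence of mapping cylinder} (together with the unitality input you correctly flag, needed to invert the backwards legs) finishes. So the ``main obstacle'' you anticipate — making the $\RM{ii}/\RM{iii_a}$ move consistent in $m$ and border-preserving — is indeed the one real geometric check, and the rest collapses by definition rather than by a further stabilization argument.
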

\begin{proof}
As seen in Section~\ref{section:canonical copies} and Figure~\ref{figure:front and Lagrangian canonical copies}, there exists a zig-zag of elementary Lagrangian Reidemeister moves between $\Res(\cT^{\p\bullet})$ and $\cT_\lag^{\p\bullet}$.
By Theorem~\ref{theorem:tangle to CE algebra}, we have a zig-zag of consistent stabilizations between
\[
A^\CE(\cT^{\p \bullet},\bfmu^{\p\bullet})\coloneqq
A^\CE(\Res(\cT^{\p \bullet}),\bfmu^{\p\bullet})\quad\text{ and }\quad
A^\CE(\cT_\lag^{\p \bullet},\bfmu^{\p\bullet}).
\]
Finally, Proposition~\ref{proposition:natural equivalence of mapping cylinder} completes the proof.
\end{proof}

\section{Sheaf categories for Legendrian graphs}\label{section:sheaf cats}

In this section we give the preliminaries on the microlocal theory of sheaves, the main reference is \cite{KS1994}. We also establish the necessary combinatorial tools for constructible sheaves, which will be used in the proof of the augmentation-sheaf correspondence in the next section. 

\subsection{Micro-support and constructible sheaves}
For the moment, let $M$ be any smooth manifold, and $\field$ be a base field. 
We use the same notations as in \cite{STZ2017}. Let $\Sh(M;\field)$ to be the abelian category of sheaves of $\field$-modules. Let $\Sh_{\text{naive}}(M;\field)$ to be the triangulated DG category of complexes of sheaves of $\field$-modules on $M$ whose cohomology sheaves are constructible (that is, locally constant with perfect stalks on each stratum) with respect to some nice stratification (e.g. Whitney stratification). Let $\Sh(M;\field)$ be the DG quotient \cite{Drinfeld2004} of the DG category $\Sh_{\text{naive}}(M;\field)$ with respect to acyclic complexes. Given a Whitney stratification $\cS$ of $M$, \emph{define} $\Sh_{\cS}(M;\field)$ to be the full subcategory of $\Sh(M;\field)$ consisting of objects whose cohomology sheaves are constructible with respect to $\cS$.

We firstly recall the notion of micro-support introduced by Kashiwara and Schapira:

\begin{proposition/definition}[Micro-support]\cite[Prop.5.1.1, Def.5.1.2]{KS1994}\label{def/prop:Micro-support}
Let $\cF$ be a sheaf
\footnote{Whenever we say a sheaf, we mean a complex of sheaves of $\field$-modules.} on $M$, and $p=(x_0,\xi_0)\in T^*M$. 
We say $p\notin SS(\cF)$ if one of the following equivalent conditions holds:

\begin{enumerate}
\item
There exists a neighborhood $U$ of $p$, such that for any $x_1\in M$ and any $C^1$-function $\varphi$ on a neighborhood of $x_1$ satisfying $(x_1,d\varphi(x_1))\in U$ and $\varphi(x_1)=0$,  we have
\[
R\Gamma_{\{\varphi(x)\geq 0\}}(\cF)_{x_1}\simeq 0.
\]
Equivalently, by the distinguished triangle $R\Gamma_{\{\varphi(x)\geq 0\}}(\cF)\rightarrow\cF\rightarrow R\Gamma_{\{\varphi(x)<0\}}(\cF)\xrightarrow[]{+1}$, we get a quasi-isomorphism 
\[
\cF_{x_1}\xrightarrow[]{\sim} R\Gamma_{\{\varphi(x)<0\}}(\cF)_{x_1}.
\]

\item
Up to taking an open chart near $x_0$, we can assume $M$ is an open subset in a vector space $E$. Then there exists a neighborhood $U$ of $x_0$, an $\epsilon>0$, and a proper closed convex cone $\gamma$ in $E$ with $0\in\gamma$, satisfying $\gamma\setminus\{0\}\subset \{v\mid\langle v,\xi_0\rangle<0\}$, such that if we set
\[
H\coloneqq \{x\mid\langle x-x_0,\xi_0\rangle\geq -\epsilon\}\quad\text{ and }\quad
L\coloneqq \{x\mid\langle x-x_0,\xi_0\rangle= -\epsilon\},
\]
then $H\cap (U+\gamma)\subset M$ and we have the natural isomorphism:
\[
R\Gamma(H\cap(x+\gamma);\cF)\xrightarrow[]{\sim} R\Gamma(L\cap(x+\gamma);\cF)
\]
for all $x\in U$. Recall that a cone is called proper if it contains no lines.
\end{enumerate}

The set $SS(\cF)$ is called the \emph{micro-support} (or \emph{singular support}) of $\cF$. 
\end{proposition/definition}

The micro-support satisfies the following properties:
\begin{enumerate}
\item $SS(\cF)\cap 0_M=\Supp(\cF)$ is the support of $\cF$, where $0_M$ the zero section of $T^*M$.

\item For any sheaf $\cF$, $SS(\cF)$ is a conical (i.e. invariant under the action of $\RR_+$ which scales the cotangent fibers) and closed co-isotropic subset of $T^*M$.

\item If $\cF$ is a constructible sheaf with respect to a Whitney stratification $\cS$, then $SS(\cF)$ is a conical Lagrangian (i.e. Lagrangian wherever it is smooth) subset of $T^*M$ with $SS(\cF)\subset \bigcup_{i\in\cS}T^*_{S_i}M$.

\item(Triangular inequality)
If $\cF_1\rightarrow \cF_2\rightarrow \cF_3\xrightarrow[]{+1}$ is an exact triangle in $\Sh(M;\field)$, then $SS(\cF_i)\subset SS(\cF_j)\cup SS(\cF_k)$ for all distinct $i,j,k\in\{1,2,3\}$.

\item(Microlocal Morse lemma)
If $f:M\rightarrow \RR$ is a smooth function such that $(x,df(x))\notin SS(\cF)$ for all $x\in f^{-1}([a,b])$, and $f$ is proper on the support of $\cF$. Then the restriction map is a quasi-isomorphism:
\[
R\Gamma(f^{-1}(-\infty,b);\cF)\xrightarrow[]{\sim} R\Gamma(f^{-1}(-\infty,a);\cF)
\]  
\end{enumerate}

From now on, let $M\coloneqq I_x\times \RR_z$ be the base manifold, where $I_x=(x_\Left,x_\Right)$ with $-\infty\leq x_\Left<x_\Right\leq \infty$ is an open interval in $\RR_x$. Let $\cT=(T_\Left\leftarrow T\rightarrow T_\Right)$ be a bordered Legendrian graph in $J^1I_x=T^{\infty,-}M$.

\begin{definition}
Given a possibly singular Legendrian $T\subset T^{\infty,-}M$, we define $\Sh(T;\field)=\Sh_T(M;\field)$ to be the full subcategory of $\Sh(M;\field)$ consisting of those objects $\cF$, whose micro-support at infinity is contained in $T$ (i.e. $SS(\cF)\subset 0_M\cup \RR_{>0}T$). Furthermore, let $\Sh(T;\field)_0=\Sh_T(M:\field)_0$ be the full subcategory of $\Sh_T(M;\field)$ whose objects are those $\cF$ with acyclic stalks for $z\ll 0$. 

In particular, given a bordered Legendrian graph $\cT=(T_\Left\rightarrow T\leftarrow T_\Right)$ in $J^1I_x$, by the obvious restriction of sheaves, we obtain a diagram of constructible sheaf categories 
\[
\Sh(\cT;\field)\coloneqq (\Sh(T_\Left;\field)\leftarrow \Sh(T;\field)\rightarrow \Sh(T_\Right;\field))
\]
and define $\Sh(\cT;\field)_0$ similarly.

As in \cite[\S2.2.1]{STZ2017}, let $\cT^+$ be the extended Legendrian in $T^{\infty,-}M$ as follows: For each crossing $c$ of $\cT$, $\cT$ meets the semicircle $T_c^{\infty,-}M$ in exactly two points, connected by a unique arc in $T_c^{\infty,-}M$. Then $\cT^+$ is the union of $\cT$ with all these arcs induced by the crossings. One can similarly define the sheaf categories $\Sh(\cT^+;\field)$ and $\Sh(\cT^+;\field)_0$.
\end{definition}

\begin{theorem}\label{thm:inv of sh cat}
The diagram of DG categories 
\[
\Sh(\cT;\field)=(\Sh(T_\Left;\field)\leftarrow \Sh(T;\field)\to\Sh(T_\Right;\field))
\]
is a Legendrian isotopy invariant, up to DG equivalence. That is, for any Legendrian isotopy between two bordered Legendrian graphs $T,T'$, there is an equivalence between two diagrams of DG categories taking the form:
\[
\begin{tikzcd}[column sep=4pc, row sep=2pc]
\Sh(T_\Left;\field)\arrow[equal,d,"\identity"] & \Sh(T;\field)\arrow[l,"\mathrm{r}_\Left"']\arrow[d,"\simeq","\frK"']\arrow[r,"\mathrm{r}_\Right"]\arrow[equal,dl,"\sim"',"\frh_\Left"]\arrow[equal,dr,"\sim","\frh_\Right"'] &\Sh(T_\Right;\field)\arrow[equal,d,"\identity"]\\
\Sh(T_\Left';\field) & \Sh(T';\field)\arrow[l,"\mathrm{r}_\Left'"]\arrow[r,"\mathrm{r}_\Right'"'] &\Sh(T_\Right';\field)
\end{tikzcd}
\]
where the arrow $\frK$ in the middle is an equivalence of DG categories which, under restrictions, commutes with the identity functor $\identity:\Sh(T_\Left;\field)\xrightarrow[]{\sim}\Sh(T_\Left';\field)$ (resp.  $\identity:\Sh(T_\Right;\field)\xrightarrow[]{\sim}\Sh(T_\Right';\field)$) up to specified natural isomorphism $\frh_\Left: \mathrm{r}_\Left\xRightarrow[]{\sim}\mathrm{r}_\Left'\circ \frK$ (resp. $\frh_\Right: \mathrm{r}_\Right\xRightarrow[]{\sim}\mathrm{r}_\Right'\circ \frK$). Also, the same holds for $\Sh(\cT;\field)_0$.  
\end{theorem}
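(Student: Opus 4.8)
The plan is to reduce the assertion to an explicit local computation on a combinatorial model of $\Sh(T;\field)$, carried out one Reidemeister move at a time, and then to check that the resulting equivalences respect both the restriction functors to the borders and the condition defining $\Sh(\cT;\field)_0$.

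\emph{Step 1: legible models.} First I would replace the abstract category $\Sh(\cT^+;\field)$ by its \emph{legible model}. Stratifying $M=I_x\times\RR_z$ by the front $\cT$ together with the extra arcs of $\cT^+$ --- into open regions, strand edges, and the $0$-dimensional strata coming from crossings, cusps, vertices and basepoints --- an object of $\Sh(\cT^+;\field)$ amounts to the data of a complex of $\field$-modules attached to each stratum together with its generization maps, subject to purely local microlocal constraints: at each edge, cusp and crossing one imposes the Shende--Treumann--Zaslow conditions \cite{STZ2017}, and at each vertex of type $(0,r)$ one imposes the combinatorial vertex condition established in this section. This produces a DG equivalence between $\Sh(\cT^+;\field)$ and the category of such legible models, under which $\Sh(\cT;\field)$ is identified with the full subcategory on which the co-orientation across every extra arc imposes no condition, the restriction functors $\mathrm{r}_\Left,\mathrm{r}_\Right$ become the forgetful functors remembering only the strata abutting the left, respectively right, end, and $\Sh(\cT;\field)_0$ is cut out by requiring the complex on the bottom region ($z\ll0$) to be acyclic. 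It then suffices to establish the analogous invariance statement for the legible models.

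\emph{Step 2: reduction to a single move and the local dictionary.} After the standard reduction, any Legendrian isotopy between bordered Legendrian graphs is (up to canonical equivalences induced by reparametrizing the ambient spaces) a zig-zag of front Reidemeister moves $\RM I,\RM{II},\RM{III},\RM V,\RM{VI}$ of Figure~\ref{fig:front_RM}, each supported in a disk $D$ contained in the interior of $T$ and disjoint from the borders; so it is enough to treat one such move. Since $T$ and $T'$ agree outside $D$, a legible ${\cT}^+$-model restricts outside $D$ to a legible ${\cT'}^{+}$-model and conversely, and inside $D$ I would write out by hand the dictionary between the two finite packets of local data carrying prescribed boundary values along $\partial D$; the two systems of microlocal constraints are equivalent by a finite diagram chase in each case (the cusp and crossing computations of the smooth knot case \cite{STZ2017,NRSSZ2015} for $\RM I,\RM{II},\RM{III}$, and the vertex normal form of Step 1 for $\RM V,\RM{VI}$). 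Verifying that this assignment is a quasi-equivalence of DG categories and is the identity on every stratum outside $D$ yields the middle equivalence $\frK$; the identity of data outside $D$ then forces $\mathrm{r}_\Left=\mathrm{r}_\Left'\circ\frK$ and $\mathrm{r}_\Right=\mathrm{r}_\Right'\circ\frK$ strictly, so for a single interior move $\frh_\Left$ and $\frh_\Right$ may be taken to be identity natural transformations, and for a general isotopy $\frK,\frh_\Left,\frh_\Right$ are the composites produced along the zig-zag. Finally, because $\frK$ fixes the bottom region it preserves acyclicity of stalks for $z\ll0$, hence restricts to an equivalence of the diagrams $\Sh(\cT;\field)_0$ with the same compatibility data, giving the last clause.

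\emph{Main obstacle.} The genuinely new and hardest input is the vertex. The local structure of a constructible sheaf near a vertex of type $(0,r)$ of a Legendrian graph is not covered by \cite{STZ2017}, so one must first pin down the correct microlocal normal form there --- which half-edge-to-half-edge comparison maps are free and which iterated cones must vanish, so that the vertex differential of the augmentation side is matched --- before the moves $\RM V$ and $\RM{VI}$, which create or annihilate vertex--cusp and vertex--crossing configurations, can be verified. A secondary difficulty is coherence: one must know that the zig-zag of local equivalences assembles into a functor on $\Sh(\cT;\field)$ that is independent, up to natural isomorphism, of the chosen Reidemeister decomposition, which follows from the uniqueness of sheaf quantizations over the smooth locus together with the rigidity of the legible vertex model supplied in Step 1.
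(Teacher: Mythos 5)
Your plan is essentially the paper's alternative combinatorial proof: pass to the legible quiver model $\Fun_T(\cR(\cS),\field)$ via Lemma~\ref{lem:comb model} and Proposition~\ref{prop:legible model}, then verify each Reidemeister move locally by an explicit adjunction $(i,\pi)$ between functor categories on the two quivers (using the projective/injective replacement of Proposition~\ref{prop:(co)fibrant replacement} to check essential surjectivity), with the genuinely new input being the local vertex condition needed for $\RM V$ and $\RM{VI}$. The paper also records a one-line proof via Guillermou--Kashiwara--Schapira and, by treating cusps as 2-valent vertices, reduces to only the three moves $\RM{III},\RM V,\RM{VI}$; also note the coherence worry you raise at the end (independence of the chosen Reidemeister decomposition) is not required by the statement and is not addressed in the paper's proof.
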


\begin{proof}
The first proof is a direct consequence of the results of Guillermou-Kashiwara-Schapira \cite{GKS2012}. For the more related details, see \cite[Thm.4.1, Rmk.4.2,4.3]{STZ2017}.
\end{proof}

In the rest of this section, we will give a combinatorial description of the sheaf categories. In particular, we will use this description to give an alternative proof of the invariance theorem. The combinatorial description will also be needed in proving our main result ``augmentations are sheaves".

\subsection{Combinatorial description for constructible sheaves}\label{subsubsec:comb model}
As before, let $M=I_x\times \RR_z$ be the base manifold, and $\cT$ be a bordered Legendrian graph in $J^1I_x=T^{\infty,-}M$.

We can always assume the front projection $T$ is regular so that the only \emph{singularities} of $T$ are crossings, cusps, and vertices. This induces a \emph{Whitney stratification} $\cS_{T}$ of $M$ whose 0-dimensional strata are the singularities, 1-dimensional strata are the \emph{arcs}---the connected components of $T\setminus\{\text{singularities}\}$, and 2-dimensional strata are the \emph{regions}---connected components of $M\setminus T$. By definition, $\Sh(T;\field)\subset\Sh(T^+;\field)$ are full subcategories of $\Sh_{\cS_T}(M;\field)$. 

Given a stratification $\cS$, the \emph{star} of a stratum $S\in\cS$ is the union of strata whose closure contains $S$, denoted by $\Star(S)$. Given 2 strata $S$ and $S'$, we denote by $S\leq S'$ and define an arrow $S\rightarrow S'$ if $S\subset \overline{S'}$, or equivalently, $\Star(S)\supset \Star(S')$. 
This defines $\cS$ as a poset category. We say that $\cS$ is a \emph{regular cell complex} if every stratum is contractible as well as the star of each stratum is contractible. As in \cite{STZ2017}, we can choose a regular cell complex $\cS$ refining the stratification $\cS_T$ with the additional 1-dimensional strata away from the crossings of $T$ if necessary.

\begin{assumption}\label{ass:regular cell complex}
For the regular cell complex $\cS$ refining the stratification $\cS_T$ induced by $T$, we assume that each additional 1-dimensional strata contains no vertical tangent and 
\begin{enumerate}
\item it is tangent to the existing arcs at the singularity if it has an end at a cusp or a vertex, or 
\item it is transverse to the boundary if it has an end at the boundary $\boundary M=\{(x,z)\mid x=x_\Left, x_\Right\}$.
\end{enumerate} 
\end{assumption}
The assumption can always be satisfied by choosing $\cS$ appropriately.

\begin{definition}\label{def:costandard objects}
For any regular cell complex $\cS$ of an oriented manifold $M$ and any stratum $S\in\cS$, let us define a \emph{co-standard object} $\omega_{S}$ of $\Sh_{\cS}(M;\field)$ as
\[
\omega_{S}\coloneqq \field_S[\dim S]=R(j_S)_!(\field[\dim S])\in\Sh_{\cS}(M;\field),
\]
where $j_S:S\rightarrow M$ is the inclusion and $\field$ is regarded as the constant sheaf on $S$.
\end{definition}

\begin{lemma}[{\cite[Lem.2.3.2]{Nadler2009}}]\label{lem:envelope}
The triangulated DG category $\Sh_{\cS}(M;\field)$ is the triangulated envelope of the co-standard objects. 
\end{lemma}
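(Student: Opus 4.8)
The statement is Lemma~\ref{lem:envelope}: the triangulated DG category $\Sh_{\cS}(M;\field)$ is generated, as a triangulated category, by the co-standard objects $\{\omega_S\}_{S\in\cS}$. Since this is quoted from \cite[Lem.2.3.2]{Nadler2009}, I would give the short self-contained argument rather than reprove it from scratch. The plan is to show that every object $\cF\in\Sh_{\cS}(M;\field)$ lies in the smallest full triangulated subcategory $\langle\omega_S : S\in\cS\rangle$ containing the co-standard objects and closed under shifts, cones, and quasi-isomorphism.

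First I would recall the basic \emph{cellular} description: because $\cS$ is a regular cell complex, an object $\cF$ constructible with respect to $\cS$ is equivalent data to a representation of the poset category $\cS$, i.e. a functor assigning to each stratum $S$ the stalk complex $\cF_S$ (equivalently, by contractibility of $\Star(S)$, the sections $R\Gamma(\Star(S);\cF)$) together with generization maps $\cF_S\to\cF_{S'}$ for $S\leq S'$. I would then induct on the dimension of the support, or more precisely on a filtration of $\cS$ by skeleta $\cS^{(\leq k)}=\{S:\dim S\leq k\}$, which are closed unions of strata. Let $U_k$ be the open star-neighborhood of the $k$-skeleton minus the $(k-1)$-skeleton; the key step is that for each top-dimensional-in-its-stratum-type situation one has a distinguished triangle relating the restriction of $\cF$ to successive open pieces, whose ``graded piece'' supported on the locally closed union of $k$-dimensional strata is a direct sum of local systems on those strata, each shifted by $[-k]$; on a contractible stratum $S$ a local system with perfect stalk is a finite iterated extension of copies of $\field_S$, hence the graded piece is built from finitely many copies of $\field_S[\dim S]\cdot[\text{shift}]=\omega_S[\text{shift}]$ by cones. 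Assembling these triangles up the skeletal filtration expresses $\cF$ as an iterated cone of shifts of co-standard objects, which is exactly membership in $\langle\omega_S\rangle$. Conversely each $\omega_S=R(j_S)_!(\field[\dim S])$ is manifestly in $\Sh_{\cS}(M;\field)$ since $S$ is a union of strata's closure complement — a stratum — so the subcategory they generate sits inside $\Sh_{\cS}(M;\field)$, giving the reverse inclusion and hence the claimed envelope description.

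Concretely, the inductive step uses the open-closed decompositions $j:U\hookrightarrow M\hookleftarrow i:Z$ with $Z$ a skeleton and the triangle $i_*i^!\cF\to\cF\to Rj_*j^*\cF\xrightarrow{+1}$, or dually $j_!j^*\cF\to\cF\to i_*i^*\cF\xrightarrow{+1}$; iterating over the finitely many strata and using that $Rj_!$ of a constant sheaf on an open union of cells is built from the $\omega_S$ by cones (this is where Assumption~\ref{ass:regular cell complex} and regularity of $\cS$ — contractibility of strata and stars — are essential, to kill all higher cohomology and reduce local systems to constant sheaves) completes the argument.

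\textbf{Main obstacle.} The only real subtlety is the bookkeeping that shows the locally-closed graded pieces are genuinely finite iterated extensions of the $\omega_S$ rather than something more complicated: one must use that each stratum $S$ is contractible (so a rank-$r$ local system is trivial and a perfect-complex stalk is a finite extension of shifts of $\field$) and that $\field_S[\dim S]=Rj_{S!}(\field[\dim S])$ fits into the triangles coming from the inclusion of $\bar S$'s strata correctly — i.e. that the ``!''-extension is compatible with the skeletal filtration. Since $\cS$ is a regular cell complex all of this is standard and I would cite \cite[Lem.2.3.2]{Nadler2009} for the details; I expect to spend at most a paragraph spelling out the triangle and the induction, and otherwise defer.
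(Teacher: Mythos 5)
The paper does not give a proof of this lemma at all---it simply cites \cite[Lem.~2.3.2]{Nadler2009}, and the nearest internal analogue (the region-by-region filtration in Lemma~\ref{lem:sheaf filtration}) is used only later and under the stronger Assumption~\ref{ass:legible model}. Your sketch correctly reproduces the standard skeletal-filtration argument underlying Nadler's lemma: filter $\cS$ by dimension, use the open/closed recollement triangle $j_!j^*\cF\to\cF\to i_*i^*\cF\xrightarrow{+1}$ at each step (this is the right one of the two triangles you list for generating with $!$-extensions), observe that the locally closed graded piece supported on a contractible $k$-cell $S$ is a finite iterated extension of shifts of $\field_S=\omega_S[-\dim S]$ because the stalk is perfect and the local system is trivial, and close up by finiteness of $\cS$. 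The converse inclusion is as you say. So the proposal is correct and is essentially the argument the cited reference gives; nothing further is needed.
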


An immediate corollary is as follows:
\begin{corollary}\label{cor:quasi-isomorphism}
For any $\cF\in\Sh_{\cS}(M;\field)$ and $x\in w\in\cS$, we have natural quasi-isomorphisms 
\[
\begin{tikzcd}
\cF_x\arrow[r,"\homotopic"] & (R\Gamma(\cF))_x\arrow[from=r, "\homotopic"']& R\Gamma(\Star(w);\cF).
\end{tikzcd}
\]
\end{corollary}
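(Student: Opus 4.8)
\textbf{Proof proposal for Corollary~\ref{cor:quasi-isomorphism}.}

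The plan is to deduce the two quasi-isomorphisms from Lemma~\ref{lem:envelope} by reducing to the case of a single co-standard object and then exploiting the local structure of a regular cell complex. First I would recall the standard fact that for any sheaf $\cF$ the stalk functor $\cF\mapsto\cF_x$ and the sections functor $\cF\mapsto R\Gamma(\cF)$ computed on an appropriately chosen open set agree up to natural quasi-isomorphism whenever $x$ has a fundamental system of neighborhoods on which $\cF$ has no higher cohomology; in the constructible setting the relevant open set is the star $\Star(w)$ of the unique stratum $w$ containing $x$. Indeed, because $\cS$ is a regular cell complex, $\Star(w)$ is an open, contractible neighborhood of $x$, and for each stratum $S$ with $S\le w$ (equivalently $S\subset\overline{w}$... here I mean $w\le S$, i.e. $w\subset\overline S$, so $S$ appears in $\Star(w)$) the co-standard sheaf $\omega_S=\field_S[\dim S]$ restricted to $\Star(w)$ is still of the form $R(j_{S\cap\Star(w)})_!\field[\dim S]$. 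The key computation is that $R\Gamma(\Star(w);\omega_S)\xrightarrow{\homotopic}(\omega_S)_x$ for every such $S$: both sides vanish unless $w\le S$, and when $w\le S$ the left side is $R\Gamma_c$-type cohomology of $S\cap\Star(w)$ which, by contractibility of the star and the cell being a cell, reduces to $\field[\dim S]$, matching the stalk.

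Second, having established the natural quasi-isomorphism $R\Gamma(\Star(w);-)\xrightarrow{\homotopic}(-)_x$ on all co-standard objects $\omega_S$, I would invoke Lemma~\ref{lem:envelope}: every object $\cF\in\Sh_{\cS}(M;\field)$ is obtained from the $\omega_S$'s by finitely many shifts and cones. Since both functors $R\Gamma(\Star(w);-)$ and $(-)_x$ are exact (triangulated) functors on $\Sh_{\cS}(M;\field)$—the former because $\Star(w)$ is a fixed open set and $R\Gamma$ of an open set is exact, the latter because stalks are exact—and the natural transformation between them is an isomorphism on a generating set of objects, a standard dévissage shows it is an isomorphism on the whole triangulated envelope, hence on all of $\Sh_{\cS}(M;\field)$. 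This gives the right-hand quasi-isomorphism $R\Gamma(\Star(w);\cF)\xleftarrow{\homotopic}\cF_x$ of the statement. The left-hand quasi-isomorphism $\cF_x\xrightarrow{\homotopic}(R\Gamma(\cF))_x$ is then essentially formal: $R\Gamma(\cF)$ here denotes the Godement-type (flasque) resolution functor applied objectwise, and taking the stalk of a flasque resolution at $x$ recovers the stalk complex up to quasi-isomorphism; alternatively, one identifies $(R\Gamma(\cF))_x$ with $\varinjlim_{U\ni x}R\Gamma(U;\cF)$ and notes that $\Star(w)$ is cofinal among such $U$ for constructible $\cF$, so this colimit is already computed by $R\Gamma(\Star(w);\cF)$.

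The main obstacle I anticipate is making the first step—the co-standard computation $R\Gamma(\Star(w);\omega_S)\simeq(\omega_S)_x$—fully rigorous, since it hinges on the precise combinatorics of Assumption~\ref{ass:regular cell complex} (that the star of each stratum is contractible and that $\Star(w)\cap S$ is again contractible, in fact an open cell, for $w\le S$). One must check that $\Star(w)\cap\overline S$ deformation retracts onto $w$ compatibly so that the compactly-supported cohomology behaves as expected; this is where the regularity of the cell complex is genuinely used, and a careless choice of $\cS$ would break it. Everything after that—the exactness of the two functors and the triangulated dévissage—is routine. I would therefore organize the write-up so that the co-standard case is isolated as the one substantive lemma, with the passage to general $\cF$ being a one-line appeal to Lemma~\ref{lem:envelope} and exactness.
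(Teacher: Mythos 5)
Your proposal follows the same route as the paper's proof: verify the claim for the generating co-standard objects $\omega_S$, then dévissage through shifts, cones, and quasi-isomorphisms via Lemma \ref{lem:envelope}, exploiting that $\cF\mapsto\cF_x$ and $\cF\mapsto R\Gamma(\Star(w);\cF)$ are both triangulated functors. The paper's proof is essentially a compressed version of this.

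However, the co-standard computation as you write it is incorrect. You assert that both sides of $R\Gamma(\Star(w);\omega_S)\simeq(\omega_S)_x$ vanish unless $w\le S$, and that when $w\le S$ both reduce to $\field[\dim S]$. The correct dichotomy is finer: both sides vanish unless $w=S$. The stalk $(\omega_S)_x$ is zero whenever $x\notin S$, i.e.\ whenever $w\neq S$ (strata are disjoint); under your claim, for $w<S$ strictly the stalk would be $\field[\dim S]\neq 0$, which is false. The real content you need to extract from the regularity assumption is that $R\Gamma(\Star(w);\omega_S)$ is \emph{also} acyclic when $w<S$ strictly — this is the nontrivial step, since $S$ does lie inside $\Star(w)$. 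The mechanism is the exact triangle
\[
k_!\field_S\longrightarrow\field_{\overline{S}\cap\Star(w)}\longrightarrow\field_{(\overline{S}\setminus S)\cap\Star(w)}\xrightarrow{+1}
\]
on the closed subset $\overline{S}\cap\Star(w)$ of $\Star(w)$, where $k:S\hookrightarrow\overline{S}\cap\Star(w)$ is the open inclusion. When $w<S$ strictly, both $\overline{S}\cap\Star(w)$ and $(\overline{S}\setminus S)\cap\Star(w)$ are nonempty (both contain $w$) and contractible by regularity, so the restriction map between the last two terms is a quasi-isomorphism of copies of $\field$ and the fiber is acyclic; when $w=S$, the closed complement $(\overline{S}\setminus S)\cap\Star(w)$ is empty (strata in $\overline{S}$ are $\le S$, strata in $\Star(S)$ are $\ge S$, so their intersection is $\{S\}$ alone), and the triangle yields $R\Gamma(\Star(w);\omega_S)\simeq\field[\dim S]$. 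Your closing paragraph rightly singles out the co-standard case as the substantive lemma, but the deformation retraction you envision ($\overline{S}\cap\Star(w)$ onto $w$) is not the shape of the needed argument; what matters is the contractibility of the two closed pieces above, which makes the cone collapse. The remainder of the proposal --- the triangulated dévissage and the identification of $(R\Gamma(\cF))_x$ with the stalk of a flasque resolution, or equivalently with $\varinjlim_{U\ni x}R\Gamma(U;\cF)$ --- is sound.
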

\begin{proof}
 A direct computation shows that this holds for all the co-standard objects. Moreover, the above property of $\cF$ is preserved under taking quasi-isomorphisms, shifts, and cones. Hence by the lemma above, we are done.
\end{proof}

Now we come back to our setting where $M=I_x\times\RR_z$.
By definition, $\Sh_{\cS}(M;\field)$ contains $\Sh_{\cS_T}(M;\field)$, hence $\Sh(T^+;\field)$ and $\Sh(T;\field)$ as full subcategories as well.
For an open subinterval $J_x$ of $I_x$, we denote $M|_{J_x}\coloneqq J_x\times \RR_z$ and define $\cS|_{J_x}$ to be the stratification of $M|_{J_x}$ induced by $\cS$, whose strata are the connected components of $S\cap (M|_{J_x})$ for all $S\in\cS$.
Then $\cS|_{J_x}$ is a Whitney stratification of $M|_{J_x}$ refining the stratification induced by $T|_{J_x}$ and we obtain a natural dg functor $\mathrm{r}:\Sh_{\cS}(M;\field)\rightarrow \Sh_{\cS|_{J_x}}(M|_{J_x};\field)$ coming from the restriction.

Given a regular cell complex $\cS$ of $M$, there is a combinatorial description of $\Sh_{S}(M;\field)$ as follows: Denote the induced poset category by $\cS$ again and denote the category of cochain complexes of $\field$-modules with cochain maps by $\Ch(\field)$. 

\begin{definition}\label{def:rep cat}
For any poset category $\cS$, let $\Fun_{\text{naive}}(\cS,\field)$ be the DG category of functors from $\cS$ to $\Ch(\field)$, which are valued on perfect complexes, that is, complexes which are quasi-isomorphic to a bounded complex of finite projective $\field$-modules.
We define $\Fun(\cS,\field)$ to be the DG quotient (see \cite{Drinfeld2004}) of $\Fun_{\text{naive}}(\cS,\field)$ with respect to the thick subcategory of objects taking values in acyclic complexes. 
\end{definition}

\begin{notation}\label{rem:from abelian to dg category}
We denote the abelian category of functors from $\cS$ to the abelian category $\field-\Mod$ of $\field$-modules by $Fun(\cS,\field)$.

We denote by $\Ch_{\text{naive}}(\cS,\field)$ and $\Ch_{\text{dg}}(\cS,\field)$ the DG category of cochain complexes of objects in the abelian category $Fun(\cS,\field)$ with morphisms the usual complexes of maps between complexes and its DG quotient of $\Ch_{\text{naive}}(\cS,\field)$ by the full subcategory of acyclic objects, respectively.
Then $\Fun_{\text{naive}}(\cS,\field)$ and $\Fun(\cS,\field)$ are full subcategories of $\Ch_{\text{naive}}(\cS,\field)$ and $\Ch_{\text{dg}}(\cS,\field)$ respectively.  
\end{notation}

Observe that we obtain a functor of poset categories $i:\cS|_{J_x}\rightarrow \cS$ by inclusion of strata, which then induces a natural DG functor $i^*:\Fun(\cS,\field)\rightarrow \Fun(\cS|_{J_x};\field)$.

\begin{proposition/definition}\label{def:comb model to sheaf}
There is a functor $i_{\cS}:Fun(\cS,\field)\rightarrow \Sh(M;\field)$ defined as follows: let $F\in Fun(\cS,\field)$ and $w\in\cS$.
\begin{enumerate}
\item The stalk $i_{\cS}(F)|_w$ is $F(w)$ viewed as a constant sheaf. In particular, $i_{\cS}(F)$ is constructible with respect to $\cS$.
\item Let $U_w$ be any contractible open subset of $\Star(w)$ such that $\cS|_{U_w}$ is a regular cell complex and the map $\cS|_{U_w}\xrightarrow[]{\sim}\cS|_{\Star(w)}$ of partially ordered sets of strata is a bijection.
Then $\Gamma(U_w;i_{\cS}(F))=F(w)$ and the restriction map $\Gamma(\Star(w);i_{\cS}(F))\rightarrow \Gamma(U_w;i_{\cS}(F))$ is the identity. In particular, $(i_{\cS}(F))_x=F(w)$ for all $x\in w$ and it follows that $i_{\cS}$ is exact. 
\item The restriction map $\Gamma(\Star(w_1);i_{\cS}(F))\rightarrow \Gamma(\Star(w_2);i_{\cS}(F))$ is $F(w_1\rightarrow w_2)$ for all arrows $w_1\rightarrow w_2$ in $\cS$. 
\end{enumerate}

Let $\gamma_{\cS}: Sh(M;\field)\rightarrow Fun(\cS,\field)$ be a functor defined as
\[
\cF\mapsto [S\mapsto \Gamma(\Star(S);\cF)].
\]
Then $\gamma_{\cS}\circ i_{\cS}=\identity$ and $(i_{\cS},\gamma_{S})$ is an adjoint pair and so $\gamma_{\cS}$ is left exact.
As a consequence, we obtain an adjoint pair $(i_{\cS},\Gamma_{\cS})$ in the DG lifting:
\begin{align}
i_{\cS}:\Fun(\cS,\field)\rightleftarrows \Sh(M;\field):\Gamma_{\cS}=R\gamma_{\cS}
\end{align}
 and in fact the essential image of $i_{\cS}$ is contained in $\Sh_{\cS}(M;\field)$. 
More explicitly, $\Gamma_{\cS}=R\gamma_{\cS}$ is given by $\cF^{\bullet}\mapsto [w\mapsto R\Gamma(\Star(w);\cF^{\bullet})]$, and for any $F^{\bullet}\in\Fun(\cS,\field)$ and $\cG^{\bullet}\in\Sh(M;\field)$, we have a natural quasi-isomorphism:
\begin{equation}\label{eqn:adjoint pair for comb model}
\rhom^{\bullet}(i_{\cS}(F^{\bullet}),\cG^{\bullet})\simeq \rhom^{\bullet}(F^{\bullet},\Gamma_{\cS}(\cG^{\bullet}))
\end{equation}
Moreover, we get a natural isomorphism $\beta:\identity\overset{\sim}\Rightarrow \Gamma_{\cS}\circ i_{\cS}$. 
\end{proposition/definition}
\begin{proof}
Firstly, let us show that for any $F\in Fun(\cS,\field)$, $i_{\cS}(F)$ indeed defines a sheaf on $M$. By \cite[Thm.2.7.1]{Vakil2013}, it suffices to define $i_{\cS}(F)$ as a sheaf on a base of $M$. 

We take a base $\cB=\{B_i\}$ for the topology of $M$ such that each $B_i$ is of the form $U_w$ as in (2). 
Let $\cF^{\pre}$ be a presheaf on $\cB$ defined as follows: for each $B_i=U_w$ with $w\in\cS$, $\cF^{\pre}(B_i)\coloneqq F(w)$ and for each inclusion $B_j=V_{w_2}\hookrightarrow B_i=U_{w_1}$, we require that $\Star(w_2)\subset \Star(w_1)$. 
In other words, if $w_1\rightarrow w_2$ is an arrow in $\cS$, then the restriction map $\Gamma(U_{w_1};\cF^{\pre})\rightarrow \Gamma(V_{w_2};\cF^{\pre})$ is defined to be $F(w_1\rightarrow w_2)$. Clearly, this is a well-defined presheaf on $\cB$. 
Then $i_{\cS}(F)\coloneqq \cF$ is defined to be the sheafification of $\cF^{\pre}$. 
Recall that for any open subset $U$ in $M$, we have 
\begin{align*}
\Gamma(U;\cF)=\{(f_p\in \cF^{\pre}_p)_{p\in U}\mid~&\text{For any $p$, there exists a pair $(V_S,s)$ such that $V_S\subset U$ is a neighborhood of $p$}\\ 
&\text{and $V_S\in\cB$ for some $S\in\cS$, and $s\in\cF^{\pre}(V_S)=F(S)$ with $s_q=f_q$ on $V_S$}.
\end{align*}
Also, for all $S\in\cS$ with $S\cap U\neq\emptyset$, each $p\in S\cap U$ has a system of neighborhoods in $\cB$ of the form $U_S$. Then by definition, we have $\Gamma(U_S;\cF^{\pre})=F(S)\isomorphic\cF^{\pre}_p$. Hence, it follows that any section of $\Gamma(U;\cF)$ is locally constant on $S\cap U$ with values in $F(S)$. In particular, for any connected component $w$ in $S\cap U$, we have $\Gamma(w;\cF)=F(S)$. Thus $\cF|_S$ is $F(S)$ viewed as a (locally) constant sheaf. This shows (1), hence $\cF\in\Sh_{\cS}(M;\field)$.

Moreover, we can rewrite the definition as follows. Let us denote by $\cS|_U$ the stratification of $U$ consisting of the connected components of $S\cap U$ for all $S\in\cS$ and let $\tau_U:\cS|_U\rightarrow \cS$ be the map of partially ordered sets induced by inclusion of strata. 
Then the previous definition can be translated into 
\begin{align}\label{eqn:section formula}
\Gamma(U;\cF)=\varprojlim_{w\in\cS|_U} (F\circ\tau_U)(w).
\end{align} 
In other words, 
\[
\Gamma(U;\cF)=\left\{(f(w))_w\in\prod_{w\in\cS|_U}(F\circ\tau_U(w))~\middle|~w_1\leq w_2\Rightarrow (F\circ\tau_U)(w_1\rightarrow w_2)(f(w_1))=f(w_2)\right\}.
\]
Besides, for any inclusion $V\hookrightarrow U$, there is an induced map of partially ordered sets $\tau_{U,V}:\cS|_{V}\rightarrow \cS|_U$ via inclusions of strata. Then $\tau_V=\tau_U\circ \tau_{U,V}$, and the restriction map is just $\tau_{U,V}^*:\Gamma(U;\cF)\rightarrow \Gamma(V;\cF)$ via the pullback of functions.
That is, for any $f\in\Gamma(U;\cF)\subset\prod_{w\in\cS|_U}(F\circ\tau_U)(w)$, we have $(\tau_{U,V}^*f)(w)=f(\tau_{U,V}(w))$ for all $w\in\cS|_V$. 

Now, let $U=U_w$ (including $\Star(w)$) as in (2). We obtain $\Gamma(U;\cF)\isomorphic F(\tau(w\cap U))=F(w)$ as $w\cap U$ is the unique minimum in $\cS|_U$.
For any inclusion $V_{w_2}\hookrightarrow U_{w_1}$ of two open subsets as in (2) with $w_1,w_2\in\cS$, we also know $w_2\leq w_1$ and $\tau_{U_{w_1},V_{w_2}}(w_2\cap V_{w_2})=w_2\cap U_{w_1}\leq w_1\cap U_{w_1}$ in $\cS|_{U_{w_1}}$.
Then the restriction map $\Gamma(U_{w_1};\cF)\isomorphic F(w_1)\rightarrow \Gamma(V_{w_2};\cF)\isomorphic F(w_2)$ is identified with $F(w_1\rightarrow w_2)$ as follows:
\[
f\isomorphic f(w_1\cap U)\mapsto \tau_{U_{w_1},V_{w_2}}^*f\isomorphic(\tau_{U_{w_1},V_{w_2}}^*f)(w_2\cap V)=f(w_2\cap U)=F(w_1\rightarrow w_2)(f(w_1\cap U)).
\]
This shows (2) and (3) and in fact $\cF^{\pre}\isomorphic\cF$ is already a sheaf on $\cB$. 

The functoriality of $i_{\cS}$ follows directly from the definition of $\cF$ above. By (1), the essential image of $i_{\cS}$ is contained in $\Sh_{\cS}(M;\field)$ and it follows immediately from (2) that $\gamma_{\cS}\circ i_\cS=\identity$.

By \cite[Ex.2.7.C]{Vakil2013}, morphisms of sheaves correspond to morphisms of sheaves on a base.
By a direct computation, we have an adjunction of $(i_{\cS},\gamma_{\cS})$ which yields the following: In \eqref{eqn:adjoint pair for comb model}, taken an injective resolution of $\cG^{\bullet}$, say $\cG^{\bullet}\xrightarrow[]{\sim}\cI^{\bullet}$, then by adjunction of $(i_{\cS},\gamma_{\cS})$, we have an injective object $\gamma_{\cS}(\cI^{\bullet})$ and it follows that 
\begin{align*}
\rhom^{\bullet}(i_{\cS}(F^{\bullet}),\cG^{\bullet})&\simeq \hom^{\bullet}(i_{\cS}(F^{\bullet}),\cI^{\bullet})\\
&\simeq \hom^{\bullet}(F^{\bullet},\gamma_{\cS}(\cI^{\bullet}))\\
&\simeq \rhom^{\bullet}(F^{\bullet},\Gamma_{\cS}(\cG^{\bullet})).
\end{align*}
where the last quasi-isomorphism follows from the fact that $\Gamma_{\cS}(\cG^{\bullet})\xrightarrow[]{\sim}\Gamma_{\cS}(\cI^{\bullet})=\gamma_{\cS}(\cI^{\bullet})$ is a quasi-isomorphism. 

Finally, for any $F^{\bullet}\in\Fun(\cS,\field)$ and $x\in w\in\cS$, by (2) and Corollary~\ref{cor:quasi-isomorphism}, we have a natural quasi-isomorphism 
\[
F^{\bullet}(w)=\Gamma(\Star(w);i_{\cS}(F^{\bullet}))\xrightarrow[]{\sim}R\Gamma(\Star(w);i_{\cS}(F^{\bullet}))=R\gamma_{\cS}(i_{\cS}(F^{\bullet}))(w)\simeq (i_{\cS}(F^{\bullet}))_x.
\]
Hence, we get a natural isomorphism $\beta:\identity\overset{\sim}\Rightarrow \Gamma_{\cS}\circ i_{\cS}$. This completes the proof.
\end{proof}

We recall the following lemma:
\begin{lemma}\cite[Prop.3.9]{STZ2017}, \cite[Lem.2.3.2]{Nadler2009}\label{lem:Gamma_S}
Let $\cS$ be a regular cell complex for $M$. Then the functor
\[
\Gamma_{\cS}: \Sh_{\cS}(M;\field)\rightarrow \Fun(\cS,\field), 
\quad\cF\mapsto [S\mapsto R\Gamma(\Star(S);\cF)]
\]
is a quasi-equivalence with a quasi-inverse $i_{\cS}$.
Moreover,  $i_{\cS}$ commutes with the functors induced by restriction to $J_x$.
\end{lemma}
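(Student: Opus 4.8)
The statement asserts that $\Gamma_{\cS}:\Sh_{\cS}(M;\field)\to\Fun(\cS,\field)$ is a quasi-equivalence with quasi-inverse $i_{\cS}$, and that $i_{\cS}$ commutes with restriction to an open subinterval $J_x$. Almost all of the structural work has already been set up in Proposition/Definition~\ref{def:comb model to sheaf}: we have an adjoint pair $(i_{\cS},\Gamma_{\cS})$ in the DG setting, a natural isomorphism $\beta:\identity\overset{\sim}\Rightarrow \Gamma_{\cS}\circ i_{\cS}$, and the essential image of $i_{\cS}$ lies in $\Sh_{\cS}(M;\field)$. So the plan is to (i) produce the other natural isomorphism $i_{\cS}\circ\Gamma_{\cS}\overset{\sim}\Rightarrow\identity$ on $\Sh_{\cS}(M;\field)$, (ii) conclude these two natural isomorphisms exhibit a quasi-equivalence, and (iii) check compatibility with restriction.

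\textbf{Step 1: the counit is an isomorphism on $\Sh_{\cS}(M;\field)$.} The adjunction $(i_{\cS},\Gamma_{\cS})$ provides a counit $\varepsilon: i_{\cS}\circ\Gamma_{\cS}\Rightarrow\identity$ of endofunctors of $\Sh(M;\field)$. I would show that for $\cF\in\Sh_{\cS}(M;\field)$ the morphism $\varepsilon_{\cF}: i_{\cS}(\Gamma_{\cS}(\cF))\to\cF$ is a quasi-isomorphism by checking it on stalks. By part (2) of Proposition/Definition~\ref{def:comb model to sheaf}, for $x\in w\in\cS$ we have $(i_{\cS}(\Gamma_{\cS}(\cF)))_x=\Gamma_{\cS}(\cF)(w)=R\Gamma(\Star(w);\cF)$, and by Corollary~\ref{cor:quasi-isomorphism} the latter is naturally quasi-isomorphic to $\cF_x$ (this uses precisely that $\cF$ is $\cS$-constructible and $\cS$ is a regular cell complex, so $\Star(w)$ is contractible with the right combinatorial structure). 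A diagram chase, or invoking Lemma~\ref{lem:envelope} to reduce to the co-standard generators $\omega_S$ where the statement is a direct computation, identifies this stalk quasi-isomorphism with $\varepsilon_{\cF}$. Hence $\varepsilon$ restricts to a natural isomorphism on $\Sh_{\cS}(M;\field)$.

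\textbf{Step 2: assembling the quasi-equivalence.} With $\beta:\identity\overset{\sim}\Rightarrow\Gamma_{\cS}\circ i_{\cS}$ from Proposition/Definition~\ref{def:comb model to sheaf} and $\varepsilon:i_{\cS}\circ\Gamma_{\cS}\overset{\sim}\Rightarrow\identity$ on $\Sh_{\cS}(M;\field)$ from Step 1, both composites of $i_{\cS}$ and $\Gamma_{\cS}$ are isomorphic to the identity in the DG categories $\Fun(\cS,\field)$ and $\Sh_{\cS}(M;\field)$; in particular $\Gamma_{\cS}$ and $i_{\cS}$ are mutually quasi-inverse, so both are quasi-equivalences and $\Gamma_{\cS}$ is essentially surjective and quasi-fully faithful. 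One point to be careful about is that $i_{\cS}$ was defined on the abelian-level functor category $Fun(\cS,\field)$ and then lifted; I would note that the lift to the DG quotients $\Fun(\cS,\field)$ and $\Sh_{\cS}(M;\field)$ is exact and preserves acyclics (since $i_{\cS}$ is exact and $\gamma_{\cS}$ is left exact with $R\gamma_{\cS}=\Gamma_{\cS}$), so the natural isomorphisms descend to the DG quotients. This is essentially the content of \cite[Prop.3.9]{STZ2017} and \cite[Lem.2.3.2]{Nadler2009}, adapted to the bordered/regular-cell-complex setting under Assumption~\ref{ass:regular cell complex}.

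\textbf{Step 3: compatibility with restriction.} For an open subinterval $J_x\subset I_x$ write $\mathrm{r}:\Sh_{\cS}(M;\field)\to\Sh_{\cS|_{J_x}}(M|_{J_x};\field)$ for the restriction functor and $i^*:\Fun(\cS,\field)\to\Fun(\cS|_{J_x},\field)$ for the functor induced by the poset map $i:\cS|_{J_x}\to\cS$. I would check $\mathrm{r}\circ i_{\cS}\cong i_{\cS|_{J_x}}\circ i^*$ directly from the stalk/section description: for $w'\in\cS|_{J_x}$ lying in the stratum $i(w')=w\in\cS$, the formula \eqref{eqn:section formula} for sections over the model neighborhood $U_{w'}\subset M|_{J_x}$ (using Assumption~\ref{ass:regular cell complex}, so that a model neighborhood in $M|_{J_x}$ of a stratum near $\partial(M|_{J_x})$ retracts onto its star inside $\cS|_{J_x}$) gives $\Gamma(U_{w'};(i_{\cS}F)|_{M|_{J_x}})=F(w)=(i^*F)(w')$, and these identifications are compatible with restriction maps, hence define the required natural isomorphism. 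Since $i_{\cS}$ is the quasi-inverse of $\Gamma_{\cS}$ by Steps 1--2, this also shows $\Gamma_{\cS}$ commutes with restriction up to natural isomorphism.

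\textbf{Main obstacle.} The only genuinely delicate point is Step 1 near the boundary $\partial M$, the cusps, and the vertices of $\cT$: there one must know that the chosen regular cell complex $\cS$ has contractible stars even for the extra $1$-dimensional strata, and that the model neighborhoods $U_w$ behave well (this is exactly why Assumption~\ref{ass:regular cell complex} was imposed, requiring the added arcs to be tangent to the front at cusps/vertices and transverse at the boundary). Verifying that $R\Gamma(\Star(w);\omega_S)$ computes $(\omega_S)_x$ correctly for strata $w$ incident to a vertex of $\cT$ is the one calculation I would do carefully rather than cite wholesale, since it is the new feature relative to \cite{STZ2017,Nadler2009}; everything else is a formal consequence of the adjunction already established.
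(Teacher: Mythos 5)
Your proposal is correct, but it takes a genuinely different route from the paper. The paper proves $i_{\cS}$ is a quasi-equivalence by establishing essential surjectivity (via the explicit identification $i_{\cS}(\delta_S)\simeq\omega_S=\field_S[\dim S]$ together with Lemma~\ref{lem:envelope}, which says the $\omega_S$ generate $\Sh_{\cS}(M;\field)$ as a triangulated envelope) and then full faithfulness by a one-line computation using the adjunction and the natural isomorphism $\beta$: $\rhom^{\bullet}(i_{\cS}F,i_{\cS}G)\simeq\rhom^{\bullet}(F,\Gamma_{\cS}i_{\cS}G)\simeq\rhom^{\bullet}(F,G)$. You instead verify that the adjunction \emph{counit} $\varepsilon:i_{\cS}\circ\Gamma_{\cS}\Rightarrow\identity$ is a quasi-isomorphism on $\Sh_{\cS}(M;\field)$ by a stalk computation (where Corollary~\ref{cor:quasi-isomorphism} supplies the bridge $R\Gamma(\Star(w);\cF)\simeq\cF_x$), and conclude from ``unit and counit are both isos'' that the adjunction is an equivalence. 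Both approaches draw exactly the same fuel from Proposition/Definition~\ref{def:comb model to sheaf}, so the difference is cosmetic rather than conceptual; what your argument buys is a slightly more self-contained ``adjoint equivalence'' framing, while the paper's version makes the role of the generating objects $\delta_S$, $\omega_S$ more transparent. One small calibration remark: your ``main obstacle'' about cusps and vertices of $\cT$ is actually not an issue for this lemma as stated, which is about an arbitrary regular cell complex $\cS$ for $M$ with no reference to $\cT$; the constraint from Assumption~\ref{ass:regular cell complex} only becomes relevant at the later step where one matches the micro-support condition with the poset conditions (Lemma~\ref{lem:comb model}). Also, your identification of the stalk quasi-isomorphism with $\varepsilon_{\cF}$ could be made precise without invoking Lemma~\ref{lem:envelope}: unwinding $\varepsilon_{\cF}$ through the construction of $\gamma_{\cS}$ and its derived functor shows its stalk at $x\in w$ is literally the restriction map $R\Gamma(\Star(w);\cF)\to\cF_x$ appearing in Corollary~\ref{cor:quasi-isomorphism}.
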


\begin{proof}
The last statement follows directly from the definition of $i_{\cS}$ and it suffices to show the quasi-equivalence.

For any $S\in\cS$, we define a functor $\delta_S\in\Fun(\cS,\field)$ by $\delta_S(w)\coloneqq 0$ if $w\neq S$, and $\delta_S(S)\coloneqq \field[\dim S]$, and sends all arrows $w_1\rightarrow w_2$ to zero. Clearly, $\Fun(\cS,\field)$ is the triangulated envelope of $\delta_S$'s. 
Moreover by Proposition/Definition \ref{def:comb model to sheaf}, we have $i_{\cS}(\delta_S)\simeq \field_S[\dim S]=\omega_S$. 
Hence, $i_{\cS}$ is essentially surjective by Lemma \ref{lem:envelope} and so is $\Gamma_{\cS}$ by the natural isomorphism $\beta:\identity\xRightarrow[]{\sim}\Gamma_{\cS}\circ i_{\cS}$ in Proposition/Definition \ref{def:comb model to sheaf}.

It suffices to show that $i_{\cS}$ is fully faithful. In fact, for any $F,G\in\Fun(\cS,\field)$, by the adjunction $(i_{\cS},\Gamma_{\cS})$ in Proposition/Definition \ref{def:comb model to sheaf}, we have 
\[
\rhom^{\bullet}(i_{\cS}F,i_{\cS}G)\simeq \rhom^{\bullet}(F,\Gamma_{\cS}(i_{\cS}G))\simeq \rhom^{\bullet}(F,G)
\]
where the last quasi-isomorphism follows from the natural isomorphism $\beta:\identity\overset{\sim}\Rightarrow \Gamma_{\cS}\circ i_{\cS}$, which then implies the quasi-isomorphism $G\simeq \Gamma_{\cS}\circ i_{\cS}(G)$. This finishes the proof.
\end{proof}

\begin{remark}
For the last part of the proof above, we can also finish the argument by showing that $\Gamma_{\cS}$ is fully faithful in a different way as follows:
For all $S, W\in\cS$, a direct computation shows
\[
\Gamma_{\cS}(\field_S[\dim S])(W)\coloneqq R\Gamma(\Star(W);\field_S[\dim S])\simeq (\field_S)_{p}[\dim S]
\]
for any $p\in W$. That is, $\Gamma_{\cS}(\field_S[\dim S])(W)$ is acyclic unless $W=S$ when $\Gamma_{\cS}(\field_S[\dim S])(S)\simeq \field[\dim S]$. In other words, $\Gamma_{\cS}(\field_S[\dim S])\simeq\delta_S$. 
Now, for any $S\in\cS$, we take $F^{\bullet}=\Gamma_{\cS}(\field_S[\dim S])\simeq \delta_S$. Then $i_{\cS}(F^{\bullet})\simeq \field_S[\dim S]$. We apply the adjunction \eqref{eqn:adjoint pair for comb model} to obtain
\[
\rhom^{\bullet}(\field_S[\dim S],\cG^{\bullet})\simeq \rhom^{\bullet}(\Gamma_{\cS}(\field_{S}[\dim S]),\Gamma_{\cS}(\cG^{\bullet})).
\]

By an argument of taking shifts, exact triangles, and quasi-isomorphisms in the place of $\field_S[\dim S]$, we then obtain 
\[
\rhom^{\bullet}(\cF^{\bullet},\cG^{\bullet})\simeq \rhom^{\bullet}(\Gamma_{\cS}(\cF^{\bullet}),\Gamma_{\cS}(\cG^{\bullet}))
\]
for any $\cF^{\bullet},\cG^{\bullet}\in\Sh_{\cS}(M;\field)$. 
This shows the fully faithfulness of $\Gamma_{\cS}$ by a different argument.
\end{remark}

\begin{remark}
By \cite[Lem.2.3.2]{Nadler2009}, we also know that for all $S,W\in\cS$, the morphism complex is given as
\[
\rhom^{\bullet}_{\Sh_{\cS}(M;\field)}(\field_S[\dim S], \field_W[\dim W])\simeq
\begin{cases}
\field & S\leq W,\\
0 & \text{otherwise}.
\end{cases}
\]
Hence so is $\rhom^{\bullet}_{\Fun(\cS,\field)}(\delta_S,\delta_W)$.
\end{remark}

\begin{definition}\cite[Def.3.11]{STZ2017}\label{def:comb model}
Let $\cT=(T_\Left\to T\leftarrow T_\Right)\in\BLT$ be a bordered Legendrian graph in $J^1I_x\isomorphic T^{\infty,-}M$ and $\cS$ be a regular cell complex refining the stratification $\cS_T$ induced by $T$. Let $\Fun_{T^+}(\cS,\field)$ (resp. $\Fun_{T^+}(\cS,\field)_0$) be the full subcategory of $\Fun(\cS,\field)$ of objects $F$ satisfying $(1)$ and $(2)$ (resp. $(1)$--$(3)$) as follows:
\begin{enumerate}
\item
Every arrow from a 0-dimensional stratum which is not a crossing, a cusp, or a vertex, or from a 1-dimensional stratum which is not contained in an arc of $T$, is sent to a quasi-isomorphism. In other words, every arrow from a zero- or one-dimensional stratum contained in $\cS$ but not in $\cS_T$, is sent to a quasi-isomorphism.
\item
If $S,S'\in\cS$, with $S'$ bounds $S$ from above, then $S'\rightarrow S$ is sent to a quasi-isomorphism. In other words, every downward arrow is sent to a quasi-isomorphism.
\item
If $S\in\cS$ is contained in the bottom region of $T$ (i.e. the region contains the points with $z\ll 0$), then $F(S)$ is acyclic.  
\item
(Crossing condition) At each crossing $c$ of $\cT$, which is also a 0-dimensional stratum in $\cS$, there is an induced subcategory of $\cS$ as follows:
\[
\begin{tikzcd}
& & N & &\\
& nw \ar[ur] \ar[dl] & & ne \ar[ul] \ar[dr] & \\
W  & & c \ar[ul] \ar[ur] \ar[dl] \ar[dr] \ar[uu] \ar[dd] \ar[ll] \ar[rr] & & E\\
& sw \ar[ul] \ar[dr] & & se \ar[ur]   \ar[dl] & \\
& & S & &
\end{tikzcd}
\]
All triangles in the diagram are commutative and the total complex of the bicomplex $F(c)\rightarrow F(nw)\oplus F(ne)\rightarrow F(N)$ is acyclic.
\end{enumerate}

We also define $\Fun_T(\cS;\field)$ and $\Fun_T(\cS;\field)_0$ to be the full subcategories of $\Fun_{T^+}(\cS;\field)$ and $\Fun_{T^+}(\cS;\field)_0$, respectively, consisting of functors $F$ satisfying the extra crossing condition $(4)$.
\end{definition}

Then we have the following result similar to \cite[Thm.3.12]{STZ2017}.
\begin{lemma}[Combinatorial model]\label{lem:comb model}
Let $\cT\in\BLT$. For a regular cell complex $\cS$ for $M=I_x\times \RR_z$ obtained by refining the stratification $\cS_T$, the functor $\Gamma_{\cS}$ induces a quasi-equivalence $\Gamma_{\cS}:\Sh(T;\field)\xrightarrow[]{\sim} \Fun_T(\cS;\field)$ with quasi-inverse $i_{\cS}$, and $i_{\cS}$ commutes with restriction to $J_x$. So is $\Gamma_{\cS}:\Sh(T;\field)_0\xrightarrow[]{\sim} \Fun_T(\cS;\field)_0$. The results also hold when $T$ is replaced by $T^+$. 
\end{lemma}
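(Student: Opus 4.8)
The plan is to deduce Lemma~\ref{lem:comb model} from the abstract equivalence $\Gamma_{\cS}\colon \Sh_{\cS}(M;\field)\xrightarrow{\sim}\Fun(\cS,\field)$ of Lemma~\ref{lem:Gamma_S}, by identifying the full subcategories on the two sides under this equivalence. Since $\Gamma_{\cS}$ and $i_{\cS}$ are mutually quasi-inverse, it suffices to check that an object $\cF\in\Sh_{\cS}(M;\field)$ lies in $\Sh(T;\field)$ (resp.\ $\Sh(T;\field)_0$, resp.\ with $T^+$) if and only if the functor $\Gamma_{\cS}(\cF)$ satisfies conditions (1)--(2) (resp.\ (1)--(3), resp.\ drop (4)) of Definition~\ref{def:comb model}; and, symmetrically, that $i_{\cS}(F)$ has the required micro-support whenever $F$ satisfies these conditions. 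The commutation of $i_{\cS}$ with restriction to $J_x$ is already recorded in Lemma~\ref{lem:Gamma_S}, so that requires no further work; the content is entirely the dictionary between micro-support constraints and the combinatorial conditions.

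The main step is the micro-support translation, which I would organize stratum by stratum using Corollary~\ref{cor:quasi-isomorphism} (stalks equal $R\Gamma$ of stars) and the defining characterization of $SS(\cF)$ in Proposition/Definition~\ref{def/prop:Micro-support} together with the microlocal Morse lemma. First I would show: for a $\cS$-constructible $\cF$, the condition $SS(\cF)\subset 0_M\cup\RR_{>0}T$ away from the singular points of $T$ is equivalent to the statement that crossing each arc of $\cS$ that is \emph{not} part of $T$ induces a quasi-isomorphism on nearby stalks, and that crossing an arc of $T$ from below to above is also a quasi-isomorphism because $T\subset T^{\infty,-}M$ lives in the negative (downward) co-sphere bundle — this gives conditions (1) and (2). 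Here the key local computation is that, for a co-oriented smooth front arc with the co-orientation pointing in the $-z$ direction, $SS(\cF)$ containing the corresponding conormal ray forces the ``upward'' variation map (the generization from the region above to the region below, equivalently $S'\to S$ when $S'$ bounds $S$ from above) to be an isomorphism, while the ``downward'' variation may fail to be one. Then near a cusp and near a vertex one checks that the Legendrian condition places no constraint beyond (1)--(2), since the semicircles $T_c^{\infty,-}M$ over cusps/vertices are entirely contained in the co-sphere fiber and are allowed in $T$; this is where Assumption~\ref{ass:regular cell complex} (auxiliary arcs tangent to the front at cusps/vertices, transverse at $\partial M$) is used to guarantee $\cS$ refines $\cS_T$ in a way compatible with the micro-support bookkeeping. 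Near a crossing $c$ I would reproduce the computation of \cite[Thm.3.12]{STZ2017}: $SS(\cF)$ avoiding the two ``diagonal'' conormal directions at $c$ (the ones \emph{not} in $T^+$, i.e.\ the extra arc added to pass from $T$ to $T^+$) is equivalent to acyclicity of the total complex $F(c)\to F(nw)\oplus F(ne)\to F(N)$; allowing those directions (i.e.\ working with $T^+$ instead of $T$) drops exactly condition (4). Finally, condition (3) is the elementary translation of ``acyclic stalks for $z\ll0$'': by the microlocal Morse lemma applied to the function $z$ on the bottom region (where $SS(\cF)$ meets no downward conormal since $T$ is bounded in the $z$-direction on each fiber over a compact $x$-interval... more carefully, since $\cF\in\Sh(T;\field)_0$ means acyclic stalks for $z\ll0$ and stalks are locally constant on the bottom region), $F(S)\simeq\cF_x\simeq 0$ for $S$ in the bottom region.

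With the dictionary in hand, the two inclusions follow formally: if $\cF\in\Sh(T;\field)$ then $\Gamma_{\cS}(\cF)\in\Fun_T(\cS;\field)$ by the stratum-by-stratum analysis, using $\Gamma_{\cS}(\cF)(S)=R\Gamma(\Star(S);\cF)\simeq\cF_x$ for $x\in S$; conversely if $F\in\Fun_T(\cS;\field)$ then $i_{\cS}(F)$ has stalks $F(w)$ on each stratum and variation maps $F(w_1\to w_2)$ by Proposition/Definition~\ref{def:comb model to sheaf}, so the same local computations run in reverse to show $SS(i_{\cS}(F))\subset 0_M\cup\RR_{>0}T$. Thus $\Gamma_{\cS}$ restricts to a quasi-equivalence $\Sh(T;\field)\xrightarrow{\sim}\Fun_T(\cS;\field)$ with quasi-inverse $i_{\cS}$, and likewise for the subscript-$0$ versions and the $T^+$ versions. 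The commutation with restriction to $J_x$ is inherited verbatim from Lemma~\ref{lem:Gamma_S}, once one notes that the combinatorial conditions (1)--(4) are local in $x$ and hence preserved by $i^*\colon\Fun_T(\cS;\field)\to\Fun_{T|_{J_x}}(\cS|_{J_x};\field)$.

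The hard part will be the local micro-support computations at the vertices, since these are the genuinely new feature relative to \cite{STZ2017}: one must verify that an $m$-valent vertex of $T$, whose co-sphere fiber arc connecting the $m$ incoming front strands is included in $T$ by definition of the (singular) Legendrian, imposes exactly the ``quasi-isomorphism for auxiliary arcs and downward arrows'' conditions and nothing more — in particular no analogue of the crossing acyclicity condition (4). I expect this to reduce, after choosing the auxiliary strata as in Assumption~\ref{ass:regular cell complex} so that a small neighborhood of the vertex is cut into sectors each adjacent to the vertex stratum, to checking that $SS(\cF)$ over a punctured neighborhood of the vertex in $M$ already lies in $\RR_{>0}T$ iff the sector-to-sector generization maps through the ``upper'' side are isomorphisms, which is a direct application of Proposition/Definition~\ref{def/prop:Micro-support}(2) with a suitable proper convex cone $\gamma$ adapted to the vertex geometry. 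Everything else is either routine or already done in \cite{STZ2017} and \cite{Nadler2009}.
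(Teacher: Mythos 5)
Your proposal matches the paper's proof in both structure and key steps: reduce to the abstract equivalence $\Gamma_{\cS}\colon\Sh_{\cS}(M;\field)\simeq\Fun(\cS,\field)$ of Lemma \ref{lem:Gamma_S}, translate the micro-support condition into the combinatorial conditions of Definition \ref{def:comb model} locally stratum by stratum, import the arc/cusp/crossing cases from \cite[Thm.3.12]{STZ2017}, treat the vertex as the only new local model, and there invoke criterion (2) of Proposition/Definition \ref{def/prop:Micro-support} with a proper convex cone adapted to the vertex geometry (the paper also uses a linear Morse function for the ``only if'' direction). You correctly identify that the vertex contributes only quasi-isomorphism conditions on downward arrows and no analogue of the crossing acyclicity condition (4), which is exactly the content of the paper's Claim near a vertex.
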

\begin{proof}
The proof is entirely similar to that of \cite[Thm.3.12]{STZ2017}. Both the micro-support condition of a sheaf and the properties in Definition \ref{def:comb model} of functors in $\Fun(\cS,\field)$ can be checked locally. This reduces the proof to match the micro-support condition with the corresponding property in Definition \ref{def:comb model} near an arc, a cusp, a crossing, and a vertex. 
The first three cases have been already covered by \cite[Thm.3.12]{STZ2017} and the only new ingredient is what happens at a vertex. 

\noindent{}\textbf{Local combinatorial model near a vertex}.
Let $v\in T$ be a vertex of type $(\ell,r)$. At first we assume that there are no additional 1-dimensional strata in $\cS$ ending at $v$ and that $y(v)=0$ and $T_v^*M\cap(\RR_{>0}\cdot T)=\RR_{>0}\cdot(-dz)$.

Let $p=\alpha dx+\beta dz\in T_v^*M\setminus\RR_{>0}\cdot T$. Then either $\alpha\neq 0$ or $\alpha=0$ and $\beta>0$. 
Near $v$, we label the region below the arc $i$ by $I_i$ for $1\leq i\leq \ell+r$, and label the regions above $v$ and below $v$ by $N$ and $S$, respectively. 
Then $I_{\ell}=S=I_{\ell+r}$ and the poset subcategory of $\cS$ near $v$ looks as
\[
\begin{tikzpicture}[baseline=-.5ex,execute at begin node=$\scriptstyle, execute at end node=$]
\begin{scope}
\clip (-3,-3) rectangle (3,3);
\fill (0,0) circle (2pt);
\foreach \a [count=\index] in {0.2,0,-0.2} {
\draw[thick,smooth,domain=0:-3,->] plot ({\x}, {\a*\x*\x});
\draw[thick,smooth,domain=0:3,->] plot ({\x}, {\a*\x*\x});
}
\foreach \a [count=\index] in {-1,1} {
\draw[thick,smooth,domain=0:-1.7,->] plot ({\x}, {\a*\x*\x});
\draw[thick,smooth,domain=0:1.7,->] plot ({\x}, {\a*\x*\x});
}
\end{scope}
\draw (0,0) node[above] (v) {v} (-1.8,3.2) node (e1) {1} (-3.2,1.8) node (e2) {2} (-3.2,0) node (e3){3} (-3.2,-1.8) node (e4){\cdots} (-1.8,-3.2) node (e5){\ell};
\draw (1.8,3.2) node (e6){\ell+1} (3.4,1.8) node (e7){\ell+2} (3.4,0) node (e8){\ell+3} (3.4,-1.8) node (e9) {\cdots} (1.8,-3.2) node (e10) {\ell+r};
\draw (0,3.2) node (N) {N} (-3.2,3.2) node (r1) {I_1} (-3.2,0.8) node (r2) {I_2} (-3.2,-0.8) node (r3) {I_3} (-3.2,-3.2) node (r4) {I_{\ell-1}};
\draw (0,-3.2) node (S) {S} (3.4,3.2) node (r5) {I_{\ell+1}} (3.4,0.8) node (r6) {I_{\ell+2}} (3.4,-0.8) node (r7) {I_{\ell+3}} (3.4,-3.2) node (r8) {I_{\ell+r-1}};
\begin{scope}
\clip (3.5,3.5) -- (-3.5,3.5) -- (-3.5,-3.5) -- (3.5,-3.5) -- (3.5,0) -- (0.5,0) arc (0:-359:0.5) -- (3.5,0) -- (3.5,3.5);
\draw[thin, red, ->] (0,0) -- (0,3);
\draw[thin, red, ->] (0,0) -- (0,-3);
\foreach \a [count=\index] in {0.1,-0.1} {
\draw[thin,red,smooth,domain=0:-3,->] plot ({\x}, {\a*\x*\x});
\draw[thin,red,smooth,domain=0:3,->] plot ({\x}, {\a*\x*\x});
}
\foreach \a [count=\index] in {-0.33,0.33} {
\draw[thin,red,smooth,domain=0:-3,->] plot ({\x}, {\a*\x*\x});
\draw[thin,red,smooth,domain=0:3,->] plot ({\x}, {\a*\x*\x});
}
\end{scope}
\draw[->] (e1) -- (N);
\draw[dashed,->] (e1) -- (r1);
\draw[->] (e2) -- (r1);
\draw[dashed,->] (e2) -- (r2);
\draw[->] (e3) -- (r2);
\draw[dashed,->] (e3) -- (r3);
\draw[->] (e4) -- (r3);
\draw[dashed,->] (e4) -- (r4);
\draw[->] (e5) -- (r4);
\draw[dashed,->] (e5) -- (S);
\draw[->] (e6) -- (N);
\draw[dashed,->] (e6) -- (r5);
\draw[->] (e7) -- (r5);
\draw[dashed,->] (e7) -- (r6);
\draw[->] (e8) -- (r6);
\draw[dashed,->] (e8) -- (r7);
\draw[->] (e9) -- (r7);
\draw[dashed,->] (e9) -- (r8);
\draw[->] (e10) -- (r8);
\draw[dashed,->] (e10) -- (S);
\end{tikzpicture}
\]

Observe that the downward arrow $v\rightarrow S$ is the same as the compositions $v\rightarrow \ell+r\rightarrow S$ and $v\rightarrow \ell\rightarrow S$. Thus the lemma in this case is equivalent to the following claim.
\begin{claim}
Let $B_r(v)$ be an $r$-neighborhood of $v$ for small enough $r\ll 1$.
Then for any $\cF\in \Sh_{\cS}(M;\field)$, the restriction $\cF|_{B_r(v)}$ is contained in $\Sh_{T}(B_r(v);\field)$ if and only if the arrows
\begin{align*}
\{i\rightarrow I_i\mid 1\leq i\leq \ell+r\}\amalg\{v\rightarrow \ell, v\rightarrow \ell+r\}
\end{align*}
which are the dotted arrows in the diagram above, are sent to quasi-isomorphisms under $\Gamma_{\cS}(\cF)$.
\end{claim}
\begin{proof}
For simplicity, we may assume that $v$ is at the origin $(0,0)$ and up to a local $C^1$-diffeomorphism near $v$, we can assume the half-edges $1,2,\ldots, \ell$ are modelled on the graphs of the decreasing functions $z=a_i x^2$ on $(-1,0]$ with $a_i=1-\frac i\ell$, and the half-edges $\ell+1,\ldots,\ell+r$ are modelled on the graphs of the increasing functions $z=b_ix^2$ on $[0,1)$ with $b_i=1-\frac{\ell+r-i}r$.

If $\cF|_{B_r(v)}\in \Sh_{T}(B_r(v);\field)$, then by the local model near an arc in the smooth case, we have already known that $\Gamma_{\cS}(\cF)(i\rightarrow I_i)$ is a quasi-isomorphism for all $1\leq i\leq \ell+r$. 

We take a linear Morse function $\varphi=\alpha x+\beta z$ with $\alpha\neq0$, and then $\varphi(v)=0, d\varphi(v)=\alpha dx+\beta dz\notin SS(\cF)$.
By definition, $\cF_v\xrightarrow[]{\sim}R\Gamma_{\{\varphi(x)<0\}}(\cF)_v$ is a quasi-isomorphism, which is equivalent to 
\[
R\Gamma(B_r(v)\cap\varphi^{-1}(-\infty,\epsilon);\cF)\xrightarrow[]{\sim}R\Gamma(B_r(v)\cap \varphi^{-1}(-\infty,-\epsilon);\cF)
\]
for a small enough $0<\epsilon$. We set $Y_{\pm}\coloneqq B_r(v)\cap\varphi^{-1}(-\infty,\pm\epsilon)$.  

Then $Y_+$ and $Y_-$ are the regions which are to the left of the red and blue lines or to the right of those lines according to the sign of $\alpha$:
\[
\vcenter{\hbox{
\begingroup%
  \makeatletter%
  \providecommand\color[2][]{%
    \errmessage{(Inkscape) Color is used for the text in Inkscape, but the package 'color.sty' is not loaded}%
    \renewcommand\color[2][]{}%
  }%
  \providecommand\transparent[1]{%
    \errmessage{(Inkscape) Transparency is used (non-zero) for the text in Inkscape, but the package 'transparent.sty' is not loaded}%
    \renewcommand\transparent[1]{}%
  }%
  \providecommand\rotatebox[2]{#2}%
  \ifx\svgwidth\undefined%
    \setlength{\unitlength}{138.46465164bp}%
    \ifx\svgscale\undefined%
      \relax%
    \else%
      \setlength{\unitlength}{\unitlength * \real{\svgscale}}%
    \fi%
  \else%
    \setlength{\unitlength}{\svgwidth}%
  \fi%
  \global\let\svgwidth\undefined%
  \global\let\svgscale\undefined%
  \makeatother%
  \begin{picture}(1,0.88279992)%
    \put(0,0){\includegraphics[width=\unitlength,page=1]{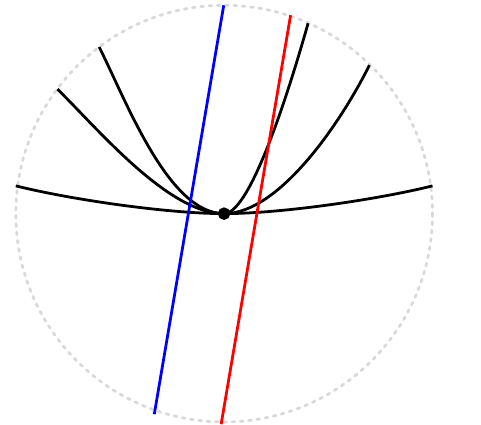}}%
    \put(0.49567874,0.74177679){\color[rgb]{0,0,0}\makebox(0,0)[lt]{\begin{minipage}{0.11555296\unitlength}\raggedright $_{N}$\end{minipage}}}%
    \put(0.37960992,0.29978651){\color[rgb]{0,0,0}\makebox(0,0)[lt]{\begin{minipage}{0.46221183\unitlength}\raggedright $_{I_\ell=S=I_{\ell+r}}$\end{minipage}}}%
    \put(0.21752627,0.65500888){\color[rgb]{0,0,0}\makebox(0,0)[lt]{\begin{minipage}{0.11555296\unitlength}\raggedright $_{I_1}$\end{minipage}}}%
    \put(0,0){\includegraphics[width=\unitlength,page=2]{morse_group_vertex_1.pdf}}%
    \put(0.46101286,0.40667322){\color[rgb]{0,0,0}\makebox(0,0)[lt]{\begin{minipage}{0.11555296\unitlength}\raggedright $_{v}$\end{minipage}}}%
    \put(0.59389876,0.67822267){\color[rgb]{0,0,0}\makebox(0,0)[lt]{\begin{minipage}{0.13814768\unitlength}\raggedright $_{I_{\ell+1}}$\end{minipage}}}%
    \put(0.18368576,0.83421917){\color[rgb]{0,0,0}\makebox(0,0)[lt]{\begin{minipage}{0.11555296\unitlength}\raggedright $_1$\end{minipage}}}%
    \put(0.09124339,0.74177681){\color[rgb]{0,0,0}\makebox(0,0)[lt]{\begin{minipage}{0.11555296\unitlength}\raggedright $_2$\end{minipage}}}%
    \put(-0.00119897,0.52800383){\color[rgb]{0,0,0}\makebox(0,0)[lt]{\begin{minipage}{0.11555296\unitlength}\raggedright $_\ell$\end{minipage}}}%
    \put(0.622787,0.886218){\color[rgb]{0,0,0}\makebox(0,0)[lt]{\begin{minipage}{0.11555296\unitlength}\raggedright $_{\ell+1}$\end{minipage}}}%
    \put(0.7441176,0.81688623){\color[rgb]{0,0,0}\makebox(0,0)[lt]{\begin{minipage}{0.11555296\unitlength}\raggedright $_{\ell+2}$\end{minipage}}}%
    \put(0.91744706,0.52222619){\color[rgb]{0,0,0}\makebox(0,0)[lt]{\begin{minipage}{0.11555296\unitlength}\raggedright $_{\ell+r}$\end{minipage}}}%
  \end{picture}%
\endgroup%
}}
\]

Hence, we have quasi-isomorphisms 
\[
R\Gamma(Y_+;\cF)\simeq\Gamma_{\cS}(\cF)(v)\quad\text{ and }\quad R\Gamma(Y_-;\cF)\simeq \cone\left(\bigoplus_{i=1}^\ell\Gamma_{\cS}(\cF)(i)\xrightarrow[]{d}\bigoplus_{i=1}^{\ell-1}\Gamma_{\cS}(\cF)(I_i)\right)[-1],
\]
where the $i$-th component of $d$ is the difference of the restriction maps induced by $i\rightarrow I_i$ and $i+1\rightarrow I_i$, for $1\leq i\leq l-1$. Under this identification, the restriction map $R\Gamma(Y_+;\cF)\rightarrow R\Gamma(Y_-;\cF)$ is induced by the morphisms $\{v\rightarrow i\mid 1\leq i\leq l\}$ and is a quasi-isomorphism if and only if the total complex of 
\[
\Gamma_{\cS}(\cF)(v)\rightarrow\bigoplus_{i=1}^{\ell}\Gamma_{\cS}(\cF)(i)\rightarrow\bigoplus_{i=1}^{\ell-1}\Gamma_{\cS}(\cF)
\]
is acyclic. As $\Gamma_{\cS}(\cF)(i\rightarrow I_i)$ is a quasi-isomorphism for all $1\leq i\leq \ell-1$, this happens if and only if $\Gamma_{\cS}(\cF)(v\rightarrow \ell)$ is a quasi-isomorphism as well.

Conversely, let $\cF\in\Sh_{\cS}(M;\field)$ be a sheaf such that $\Gamma_{\cS}(\cF)(i\rightarrow I_i)$ for $1\leq i\leq \ell$ and $\Gamma_{\cS}(\cF)(v\rightarrow \ell), \Gamma_{\cS}(\cF)(v\rightarrow \ell+r)$ are all quasi-isomorphisms. 
In order to prove that $\cF|_{B_r(v)}\in\Sh_T(B_r(v);\field)$, it suffices to show that $SS(\cF)\cap (T_v^*M-\{0\})\subset \RR_{>0}\cdot (-dz)$ similar to the smooth case.

For any $p=\alpha dx+\beta dz$ in $T_v^*M$ with $\alpha\neq 0$, the same argument as above applies to any $C^1$-function $\varphi$ with $d\varphi(v)$ sufficiently close to $p$, rather than $\alpha x+\beta z$. Hence, $p\notin SS(\cF)$.

Suppose that $p=(v,\beta dz)\in T_v^*M$ with $\beta>0$. We use $(2)$ in Proposition/Definition \ref{def/prop:Micro-support} to show that $p\notin SS(\cF)$. 
As seen in the picture below
\[
\vcenter{\hbox{\input{morse_group_vertex_2_input.tex}}}
\]
we can take a smaller open ball $U$ around $v$ and a small $\epsilon>0$ defining a line $L\coloneqq \{x\mid \langle x-v,\beta dz\rangle=-\epsilon\}$, and take a proper closed convex cone $\gamma$ in $E=\RR_{xz}^2$ with $0\in\gamma$ so that 
\[
\gamma\setminus\{0\}\subset\{w\mid\langle w,\beta dz\rangle<0\},
\]
which is just the lower half-plane. For example, let $v_1,v_2$ be the two downward unit vectors generating the two green rays as shown above then $\gamma=\{tv_1+sv_2:t,s\geq 0\}$. 
Let $H\coloneqq \{x:\langle x-v,\beta dz\rangle\geq -\epsilon\}$ be the region above $L$ and then we clearly have $H\cap(U+\gamma)\subset B_r(v)$. 
It suffices to show that for all $x\in U$, we have the natural quasi-isomorphism 
\begin{equation}\label{eqn:propagation}
\mathrm{r}:R\Gamma(H\cap(x+\gamma);\cF)\xrightarrow[]{\sim}R\Gamma(L\cap(x+\gamma);\cF)
\end{equation} 
Recall \cite[Rmk.2.6.9]{KS1994} that for any compact subset $Z$ of $B_r(v)$, there exista a quasi-isomorphism 
\[
\varinjlim_{\cU\supset Z}R\Gamma(\cU;\cF)\xrightarrow[]{\sim}R\Gamma(Z;\cF),
\]
where $\cU$ runs over the open neighborhoods of $Z$.

As illustrated above, $H\cap(x+\gamma)$ is the region bounded by a triangle $\Delta_x$ whose sides are parallel to 3 lines $\RR\cdot v_1, \RR\cdot v_2$ and $L$, where $L\cap(x+\gamma)$ is the bottom edge of $\Delta_x$ contained in the region $S$. 
Hence, $R\Gamma(L\cap(x+\gamma);\cF)\simeq\Gamma_{\cS}(\cF)(S)=R\Gamma(S;\cF)$.
Referring the picture above and according to the choice of $x$, we have the following four cases:
\begin{enumerate}
\item If $x=x_1$ so that $H\cap(x_1+\gamma)$ has empty intersection with $T$, then $R\Gamma(H\cap(x+\gamma);\cF)\simeq\Gamma_{\cS}(\cF)(S)$ and so we have the natural isomorphism $\mathrm{r}$ in \eqref{eqn:propagation}.
\item If $x=x_2$ so that $v\in H\cap(x_2+\gamma)$, then $R\Gamma(H\cap(x+\gamma);\cF)\simeq\Gamma_{\cS}(\cF)(v)$ and the map $\mathrm{r}$ is the composition $\Gamma_{\cS}(\cF)(\ell+r\rightarrow S)\circ\Gamma_{\cS}(\cF)(v\rightarrow \ell+r)$. By the hypothesis of $\cF$, each component in the composition is a quasi-isomorphism, hence so is the composition.
\item If $x=x_3$ so that $H\cap(x_3+\gamma)$ has non-empty intersection with $T$ exactly along half-edges $i,i+1,\ldots,\ell$ for some $1\leq i\leq \ell$, then we have
\[
R\Gamma(H\cap(x+\gamma);\cF)\simeq \cone\left(\bigoplus_{k=i}^\ell\Gamma_{\cS}(\cF)(k)\xrightarrow[]{d}\oplus_{k=i}^{\ell-1}\Gamma_{\cS}(\cF)(I_k)\right)[-1],
\]
where the $k$-th component of $d$ is the difference of the restriction maps induced by $k\rightarrow I_k$ and $k+1\rightarrow I_k$ as before. 
Under this identification, the map \eqref{eqn:propagation} is induced by the morphism $\ell\rightarrow S$. Then by the nine lemma (or the $3\times 3$-lemma) for triangulated categories \cite[Lem.2.6]{May2001}, we obtain a diagram in which all squares commute except for the non-displayed one on the bottom right that anti-commutes, and all rows and columns are exact triangles:
\[
\begin{tikzcd}[column sep=2pc, row sep=2pc]
R\Gamma(H\cap(x+\gamma);\cF) \arrow[r] \arrow[d,"\mathrm{r}"] & \bigoplus_{k=i}^\ell\Gamma_{\cS}(\cF)(k) \arrow[r, "d"] \arrow[d,"\mathrm{r}'"] & \bigoplus_{k=i}^{\ell-1}\Gamma_{\cS}(\cF)(I_k)\arrow[r,"{+1}"] \arrow[d] &\ \\
\Gamma_{\cS}(\cF)(S) \arrow[r,"\identity"] \arrow[d] & \Gamma_{\cS}(\cF)(S)\arrow[r] \arrow[d] & 0 \arrow[r,"+1"] \arrow[d] &\ \\
\cone(\mathrm{r})\arrow[r] \arrow[d,"{+1}"] & \bigoplus_{k=i}^{\ell-1}\Gamma_{\cS}(\cF)(k)[1]\arrow[r,"d'"] \arrow[d,"+1"] & \bigoplus_{k=i}^{\ell-1}\Gamma_{\cS}(\cF)(I_k)[1] \arrow[r,"+1"] \arrow[d,"+1"] &\ \\
\ &\ &\ & \ 
\end{tikzcd}
\]
where the map $\mathrm{r}'$ is induced by $\ell\rightarrow S$ and the $k$-th component of $d'$ is the difference of restriction maps induced by $k\rightarrow I_k$ and $k+1\rightarrow I_k$ if $i\leq k<\ell-1$ and is the restriction map induced by $\ell-1\rightarrow I_{\ell-1}$ if $k=\ell-1$. 
By the hypothesis of $\cF$, the map $\Gamma_{\cS}(\cF)(k\rightarrow I_k)$ is a quasi-isomorphism for $k=\ell-1,\ell-2,\ldots, i$, and so is $d'$. The exactness of the third row implies that $\cone(\mathrm{r})$ is acyclic and therefore$\mathrm{r}$ is a quasi-isomorphism.
\item If $x=x_4$ so that $H\cap(x+\gamma)$ has non-empty intersection with $T$ exactly along half-edges $j,j+1,\ldots,\ell+r$, for some $\ell+1\leq j\leq \ell+r$, then the same argument as in the third case holds and we conclude that the map $\mathrm{r}$ in \eqref{eqn:propagation} is a quasi-isomorphism.
\end{enumerate}
This completes the proof of the claim.
\end{proof}

Finally, suppose that there are some additional 1-dimensional strata ending at $v$. 
By our assumption on $\cS$ at the beginning of Section \ref{subsubsec:comb model}, we can regard the extra 1-dimensional strata near $v$ as additional half-edges at $v$ and obtain a new vertex $v'$ with $\ell'$ left half-edges and $r'$ right half-edges for some $\ell'\geq \ell$ and $r'\geq r$. 

We label the half-edges and regions near $v'$ as before.
Then exactly the same argument as above holds and proves the lemma for this case. Equivalently, for all $\cF\in\Sh_{\cS}(M;\field)$, we have $\cF\in\Sh_T(B_r(v);\field)$ if and only if all arrows $\{i\rightarrow I_i\mid 1\leq i\leq \ell'+r'\}\amalg\{v'\rightarrow \ell', v'\rightarrow \ell'+r'\}$ are sent to quasi-isomorphisms under $\Gamma_{\cS}(\cF)$. 
\end{proof}

\subsection{A legible model for constructible sheaves}\label{subsubsec:legible model}
Let $\cT=(T_\Left\to T\leftarrow T_\Right)\in\BLT$ be a bordered Legendrian graph in $J^1I_x=T^{\infty,-}M$ as before and $\cS$ be a stratification refining $\cS_T$.
We can simplify the combinatorial model further under the following stronger assumption:
\begin{assumption}\label{ass:legible model}
The stratification $\cS$ is the induced stratification $\cS_{\tilde T}$ of a bordered Legendrian graph $\tilde \cT=(T_\Left\to\tilde T\leftarrow T_\Right)$ which extends $\cT$ so that $\tilde \cT$ contains no cusps and no vertices with only left or right half-edges.

In particular, this implies that $\cS$ is a regular cell complex and satisfies Assumption \ref{ass:regular cell complex}.

\end{assumption}

\begin{figure}[ht]
\begin{align*}
T&=
\begin{tikzpicture}[baseline=-.5ex,scale=0.8]
\draw[thick] (-2.5,0) to[out=0,in=180] (0,1.5)  to[out=0,in=180] (2.5,0);
\draw[thick] (-2.5,0) to[out=0,in=180] (-1,-1)  to[out=0,in=180] (0,-0.5) to[out=0,in=180] (1,-1) to[out=0,in=180] (2.5,0);
\draw[thick] (-1,0) to[out=0,in=180] (0,0.5)  to[out=0,in=180] (1,0) ;
\draw[thick] (-1,0) to[out=0,in=180] (0,-0.5) to[out=0,in=180] (1,0);
\draw[fill] (0,-0.5) node[below] {$_v$} circle (3pt);
\end{tikzpicture}
&
{\tilde T}&=
\begin{tikzpicture}[baseline=-.5ex,scale=0.8]
\draw[thick] (-2.5,0) to[out=0,in=180] (0,1.5)  to[out=0,in=180] (2.5,0);
\draw[thick] (-2.5,0) to[out=0,in=180] (-1,-1) to[out=0,in=180] (0,-0.5) to[out=0,in=180] (1,-1) to[out=0,in=180] (2.5,0);
\draw[thick] (-1,0) to[out=0,in=180] (0,0.5) to[out=0,in=180] (1,0);
\draw[thick] (-1,0) to[out=0,in=180] (0,-0.5) to[out=0,in=180] (1,0);
\draw[dashed] (-3,0) to (-1,0);
\draw[dashed] (1,0) to (3,0);
\draw[fill] (0,-0.5) node[below] {$_v$}  circle (3pt);
\end{tikzpicture}
\end{align*}
\caption{An example of stratifications satisfying Assumption~\ref{ass:legible model}}
\end{figure}
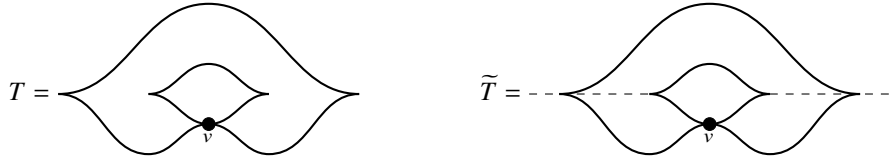

For a stratification $\cS$ satisfying the above assumption, we denote by $G_{\cS}$ the finite set of functions $f$ on $(x_\Left,x_\Right)$ such that $f$ is either $\pm\infty$ or a continuous function whose graph is contained in a union of zero- and one-dimensional strata in $\cS$. 

We observe the following: 
suppose that $\cS$ satisfies Assumption \ref{ass:legible model}. 
Then any region $S$ in $\cS$ is an open disk in $M$ bounded by the graphs of 2 functions $f_L\leq f_U$ in $G_{\cS}$. That is, $S=\{(x,z)|x\in(x_\Left,x_\Right), f_L(x)<z<f_U(x)\}$.\footnote{A function $f_L$ or $f_U$ may be constant at $\pm\infty$ and in this case the region is unbounded.}
We call $(f_L,f_U)$ a \emph{bounding pair} for $S$. The choice of the pair $(f_L,f_U)$ might not be unique. However, since $S$ is contractible, there exist $x_\Left\leq x_{1,S}<x_{2,S}\leq x_\Right$ depending only on $S$ such that $f_L<f_U$ on $(x_{1,S},x_{2,S})$ but $f_L=f_U$ on $(x_\Left,x_{1,S}]\amalg[x_{2,S},x_\Right)$. 
In other words, the graphs of $f_L$ and $f_U$ on $(x_{1,S},x_{2,S})$ are the lower and upper boundary of $S$, which will be denoted by $B_S$ and $A_S$, respectively.

For any two bounding pairs $(f_L,f_U)$ and $(f_L',f_U')$ for $S$, we denote by $(f_L,f_U)\leq (f_L',f_U')$ if $f_L\leq f_L'$ and $f_U\leq f_U'$. In addition, the pair $(\max\{f_L,f_L'\},\max\{f_U,f_U'\})$ becomes also a bounding pair for $S$ which is a common upper bound. It follows that there is a \emph{unique maximal bounding pair} for $S$, which will be denoted by $(l_S,u_S)$.

\begin{definition}\label{def:poset for legible model}
Let $\cS=\cS(\tilde T)$ be a regular cell complex as above. A \emph{poset category} or a \emph{finite (acyclic) quiver with relations}, denoted by $\cR(\cS)$, is defined as follows:
\begin{itemize}
\item
The objects are the 2-dimensional cells of $\cS$;
\item
For any two objects $R$ and $S$ separated by the 1-dimensional stratum $s$ with $S$ below and $R$ above, we assign an arrow $e_s:S\rightarrow R$ that impose $S\leq R$ and generates the partial order on $\cR(\cS)$. 
\item
For any crossing or vertex $v$ of $\tilde T$, there are unique regions $N$ and $S$ immediately above and below $v$ and exactly two directed \emph{paths} $\gamma_\Left(v):S\rightarrow N$ and $\gamma_\Right(v):S\rightarrow N$ which are compositions of arrows, such that $\gamma_\Left(v)$ and $\gamma_\Right(v)$ pass through objects corresponding to the regions in $\Star_{\cS}(v)$ and go around $v$ from the left and right hand side, respectively.
Then we impose the relation $\gamma_\Left(v)\relation\gamma_\Right(v)$. 
\end{itemize}

Moreover, we say that an arrow $e:S\to R$ is \emph{simple} if there are no regions between them. We say that a directed path $\gamma:S\to R$ is simple if it is a composition of simple arrows.

We denote by $\mathrm{A}(\cR(\cS))$ the quotient of the path algebra $\field\langle \cR(\cS)\rangle$ of the acyclic quiver $\cR(\cS)$ by the ideal generated by $(\gamma_\Left(v)-\gamma_\Right(v))$'s
\[
\mathrm{A}(\cR(\cS))\coloneqq \field\langle\cR(\cS)\rangle/(\gamma_\Left(v)-\gamma_\Right(v):v\in V(\cS)).
\]
Here, $V(\cS)$ is the set of 0-dimensional strata of $\cS$. 
\end{definition}

We remark that while the path algebra of an acyclic quiver is of global cohomological dimension $1$, our algebra $\mathrm{A}(\cR(\cS))$ is of global cohomological dimension $2$ in general. 
\begin{proposition}\label{prop:poset for legible model}
The poset category $\cR(\cS)$ is well-defined.
\end{proposition}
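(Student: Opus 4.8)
The claim to establish is Proposition \ref{prop:poset for legible model}: that the poset category $\cR(\cS)$ is well-defined. Unwinding Definition \ref{def:poset for legible model}, there are exactly two things that require checking. First, the arrow assignment $e_s:S\to R$ (for $S$ below and $R$ above a $1$-dimensional stratum $s$) must actually generate a \emph{partial order}, i.e.\ the relation $S\leq R$ it generates must be antisymmetric (there is no genuine directedness issue since we are just taking the transitive closure, but we must rule out oriented cycles). Second, for each crossing or vertex $v$ of $\tilde T$, we must verify that there really do exist \emph{unique} regions $N$ and $S$ immediately above and below $v$, and that there are \emph{exactly two} directed paths $\gamma_\Left(v),\gamma_\Right(v):S\to N$ through $\Star_{\cS}(v)$ going around $v$ on the two sides, so that the relation $\gamma_\Left(v)\relation\gamma_\Right(v)$ is meaningfully imposed.

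The plan is as follows. First I would handle the acyclicity / antisymmetry. Since $\cS=\cS(\tilde T)$ satisfies Assumption \ref{ass:legible model}, every $2$-dimensional region $S$ is an open disk bounded by the graphs of two functions $f_L\le f_U$ in $G_{\cS}$, as recorded right before Definition \ref{def:poset for legible model}. The key point is that the $z$-coordinate gives a consistent ``height'' function: if $s$ separates $S$ (below) from $R$ (above), then a small vertical segment crossing $s$ has strictly larger $z$ on the $R$-side than on the $S$-side, and more globally one can pick a reference value of $x$ and compare the $z$-intervals of the regions over that slice. Concretely, for any $x_0\in(x_\Left,x_\Right)$ not equal to the $x$-coordinate of a $0$-stratum, the regions meeting the slice $\{x=x_0\}$ are linearly ordered by height, and each simple arrow $e_s:S\to R$ with $s$ meeting the slice strictly increases this height index; an arrow whose separating stratum does not meet the slice can be dealt with by a perturbation/continuity argument (the combinatorial structure near a crossing or vertex forces the same monotonicity). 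Hence no composition of arrows can return to its source, so $\cR(\cS)$ is an acyclic quiver and the generated relation is a genuine partial order.

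Next I would verify the local structure at each $0$-dimensional stratum $v$, splitting into the two cases already analyzed elsewhere in the paper: $v$ a crossing, and $v$ a vertex of type $(\ell,r)$. For a crossing, the crossing condition diagram in Definition \ref{def:comb model} exhibits $\Star_{\cS}(v)$ as the poset $\{c,nw,ne,sw,se,N,W,S,E\}$ with $N,S$ the unique regions above and below; the two paths $sw\to nw\to N$ (left) and $se\to ne\to N$ (right) composed after $sw\leftarrow se$ along $S$ realize $\gamma_\Left(v)$ and $\gamma_\Right(v)$, and any directed path from $S$ to $N$ inside $\Star_{\cS}(v)$ is one of these two. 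For a vertex $v$ of type $(\ell,r)$, I would use precisely the local picture drawn in the proof of Lemma \ref{lem:comb model}: the half-edges $1,\dots,\ell+r$ bound regions $I_1,\dots,I_{\ell+r-1}$ together with $N$ above and $S=I_\ell=I_{\ell+r}$ below, and the two ways of going around $v$ — up the left side through $I_1,\dots,I_{\ell-1}$ (entering at half-edge $1$, i.e.\ the downward arrow $v\to \ell$ reversed through the region arrows) versus up the right side through $I_{\ell+1},\dots,I_{\ell+r-1}$ — give exactly $\gamma_\Left(v)$ and $\gamma_\Right(v)$, and again these exhaust the directed paths $S\to N$ in $\Star_{\cS}(v)$ because the poset has the shape displayed there. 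The case where $\cS$ has additional $1$-dimensional strata ending at $v$ is handled exactly as at the end of the proof of Lemma \ref{lem:comb model}: absorb them as extra half-edges, replacing $(\ell,r)$ by $(\ell',r')$, and the same analysis applies verbatim.

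The main obstacle I anticipate is the bookkeeping in the acyclicity step — making the intuitive ``$z$ increases along arrows'' argument airtight when the separating stratum of an arrow does not project onto the chosen slice $\{x=x_0\}$, since then the naive height index is not directly defined. The clean way around this is to avoid a single global slice: instead show directly that $S\le R$ implies $R\not< S$ by noting that $\gamma:S\rightsquigarrow R$ forces, at every region along $\gamma$, that one can follow the $1$-dimensional strata to produce a point of $R$ lying (weakly) above a point of $S$ in the same fiber, using that each region is a graph-bounded disk and that at crossings and vertices the two local paths both go from the region below to the region above (never sideways in a way that could close up). Since $\cR(\cS)$ has finitely many objects, ruling out length-$\ge 1$ cycles is then a finite check reducible to the local models at crossings and vertices together with the global ``below/above'' monotonicity across simple arrows. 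Once acyclicity and the local two-path structure are in hand, well-definedness of $\cR(\cS)$ (and of the relations $\gamma_\Left(v)\relation\gamma_\Right(v)$, hence of $\mathrm{A}(\cR(\cS))$) is immediate.
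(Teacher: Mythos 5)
Your reduction to two claims (acyclicity of the quiver, and the local two-path structure at crossings and vertices) is the right identification of what must be checked, and your treatment of the local structure is fine and matches what the paper treats as evident. The problem is the acyclicity step, and you have in fact correctly flagged where it is weakest. The paper proves antisymmetry as a one-line consequence of Lemma~\ref{lem:poset for legible model}: $R\leq S$ iff $l_R\leq l_S$, where $(l_R,u_R)$ is the unique \emph{maximal} bounding pair, a pair of functions defined on \emph{all} of $(x_\Left,x_\Right)$ (they agree outside the interval over which $R$ is an open disk). Then $R\leq S\leq R$ forces $l_R=l_S$, hence $u_R\leq l_S=l_R\leq u_R$ by $(1)\Leftrightarrow(3)$ of that lemma, so $l_R=u_R$, which contradicts $R$ being a non-empty open region. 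The entire subtlety you are wrestling with --- how to compare two regions along a chain of arrows --- is absorbed into the globally-defined, maximal bounding functions.

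Your intended replacement for the slice-at-$x_0$ argument does not repair the gap, because it is still phrased in terms of producing ``a point of $R$ lying above a point of $S$ in the same fiber.'' That premise can fail outright: if $S\to W\to R$ with the arcs separating $S$ from $W$ and $W$ from $R$ living over disjoint $x$-intervals, then $S$ and $R$ may have disjoint $x$-projections and no common fiber at all, yet $S<R$ in $\cR(\cS)$. The paper circumvents exactly this by extending $l_R$ across the whole interval (so the comparison $l_R\leq l_S$ is pointwise over all of $(x_\Left,x_\Right)$) and by proving, in the $(2)\Rightarrow(3)$ and ``moreover'' directions of Lemma~\ref{lem:poset for legible model}, that these extended functions actually detect the order via a careful induction on the (finite) partially ordered set $G_{\cS}$. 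Without something like that lemma --- or at least an acyclicity argument that does not presuppose a shared fiber for comparable regions --- your proof does not close.
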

\begin{proof}
To show that the definition indeed gives a partial order, we need to check the antisymmetric property, that is, for any two 2-dimensional cells $R,S$, both $R\leq S$ and $S\leq R$ implies that $R=S$.
This follows immediately from the last statement in the lemma below.
\end{proof}

\begin{lemma}\label{lem:poset for legible model}
For any two 2-dimensional cells $R, S$ in $\cS$, the following are all equivalent:
\begin{enumerate}
\item
$u_R\leq l_S$.
\item
$S$ is above the graph of $u_R$.
\item
$R<S$.
\end{enumerate}
Moreover, $R\leq S$ if and only if $l_R\leq l_S$.
\end{lemma}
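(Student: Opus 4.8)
The plan is to prove Lemma~\ref{lem:poset for legible model} by establishing the cycle of implications $(1)\Rightarrow(2)\Rightarrow(3)\Rightarrow(1)$, and then deducing the last statement separately. Throughout, I will repeatedly use the basic structural fact guaranteed by Assumption~\ref{ass:legible model}: each $2$-dimensional cell $S$ is the open region strictly between the graphs of its maximal bounding pair $(l_S,u_S)$ over $(x_{1,S},x_{2,S})$, with $l_S=u_S$ outside that interval, and the graphs of $l_S,u_S$ lie in the $1$-skeleton of $\cS$. I will also use that the partial order on $\cR(\cS)$ is, by Definition~\ref{def:poset for legible model}, the one generated by the simple arrows $e_s:S\to R$ across a $1$-dimensional stratum $s$ with $S$ immediately below and $R$ immediately above; hence $R<S$ means there is a directed chain of such elementary "move up across an arc" steps from $R$ to $S$.

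First I would prove $(1)\Rightarrow(2)$: this is essentially immediate. If $u_R\le l_S$ as functions on $(x_\Left,x_\Right)$, then over the interval $(x_{1,S},x_{2,S})$ where $S$ is "fat" we have $z>l_S(x)\ge u_R(x)$ for every $(x,z)\in S$, so $S$ lies entirely above the graph of $u_R$. The implication $(2)\Rightarrow(3)$ is the geometric heart. Assuming $S$ lies above the graph of $u_R$, I would pick a point $p=(x_0,z_0)\in S$ with $x_0\in(x_{1,S},x_{2,S})$ and consider the vertical ray from $p$ downward to a point $q$ just above the graph of $u_R$ (which bounds $R$ from above over some subinterval — using that $R$ is nonempty and taking $x_0$ in the relevant range, which one can arrange since being "above the graph of $u_R$" forces the $x$-projections to overlap). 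This vertical segment crosses finitely many arcs of $\tilde T$ (and no $0$-dimensional strata, by a generic choice of $x_0$); each crossing is an elementary "up across an arc" step, so reading the cells visited from bottom to top gives a directed path in $\cR(\cS)$ from $R$ up to $S$, hence $R<S$ (strict, since $R\neq S$ as $R$ lies below $u_R$). Here I would need the small technical point that distinct cells are distinguished, i.e.\ the vertical segment really does change cells at each arc crossing — true because crossing an arc of $\tilde T$ changes the micro-local picture, and since $\tilde T$ has no cusps and no one-sided vertices (Assumption~\ref{ass:legible model}), every arc genuinely separates the two adjacent regions.

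The implication $(3)\Rightarrow(1)$ is the one I expect to be the main obstacle, since it must rule out "going up then coming back down." I would argue by induction on the length of a directed chain $R=S_0<S_1<\cdots<S_k=S$ of elementary arrows. For a single arrow $e_s:S_0\to S_1$ across an arc $s$: the arc $s$ is part of the upper boundary $A_{S_0}$ of $S_0$ and part of the lower boundary $B_{S_1}$ of $S_1$ over the $x$-range of $s$; I must upgrade this local inequality to the global inequality $u_{S_0}\le l_{S_1}$ on all of $(x_\Left,x_\Right)$. This is where the maximality of the bounding pairs $(l_S,u_S)$ is used crucially, together with the relations $\gamma_\Left(v)\relation\gamma_\Right(v)$ at vertices and crossings: the function $u_{S_0}$ is the pointwise supremum of lower-boundary candidates, and a careful continuity/monotonicity argument along the graphs — tracking what happens as arcs merge at vertices or cross at crossings — shows $u_{S_0}$ cannot exceed $l_{S_1}$ anywhere without contradicting that $s\subset A_{S_0}\cap B_{S_1}$ locally. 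I would isolate this as a sublemma ("if there is a simple arrow $S\to R$ then $u_S\le l_R$") and prove it by a direct analysis of the finitely many local pictures (arc, crossing, vertex) afforded by Assumption~\ref{ass:legible model}. The inductive step then chains these: $u_R=u_{S_0}\le l_{S_1}\le u_{S_1}\le l_{S_2}\le\cdots\le l_{S_k}=l_S$, using $l_{S_i}\le u_{S_i}$ trivially.

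Finally, for the last statement: the "only if" direction, $R\le S\Rightarrow l_R\le l_S$, follows by the same induction — across a simple arrow $S_0\to S_1$ we have $l_{S_0}\le u_{S_0}\le l_{S_1}$ by the sublemma above (and $l_R\le l_S$ when $R=S$ trivially). For the "if" direction, $l_R\le l_S\Rightarrow R\le S$: if $R=S$ we are done; otherwise $l_R\le l_S$ means the lower boundary of $R$ sits weakly below that of $S$, and combined with $R\ne S$ one shows $R$ must actually lie (weakly) below $S$ in the sense that $u_R\le l_S$ fails only if $R$ and $S$ overlap, but any overlap with $l_R\le l_S$ and $R\ne S$ forces $S$ to be above part of $R$'s upper boundary, landing us in case $(2)$ for the pair $(R,S)$ after possibly shrinking; hence $R<S$ by the already-proven $(2)\Rightarrow(3)$. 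I would state the antisymmetry consequence (needed for Proposition~\ref{prop:poset for legible model}) as the immediate corollary: $R\le S$ and $S\le R$ give $l_R\le l_S$ and $l_S\le l_R$, so $l_R=l_S$, and then the trichotomy from $(1)$–$(3)$ (applied symmetrically) forces $R=S$.
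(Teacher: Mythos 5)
There is a genuine gap in your $(2)\Rightarrow(3)$ step. Your vertical-segment argument relies on being able to choose $x_0$ with $x_0\in(x_{1,R},x_{2,R})\cap(x_{1,S},x_{2,S})$, and you assert parenthetically that ``being `above the graph of $u_R$' forces the $x$-projections to overlap.'' That is exactly the nontrivial point, and it is not directly true: $u_R$ is a function defined on all of $(x_\Left,x_\Right)$, degenerating to $l_R$ outside $[x_{1,R},x_{2,R}]$, so a priori $S$ could lie above the graph of $u_R$ but entirely to the left or right of where $R$ is fat, in which case your vertical segment from $S$ never meets $R$. This is precisely what the paper spends most of its effort on: it does a descent inside the stratification to find a cell $S_N$ whose lower boundary lies in the graph of $u_R$, and only then rules out disjoint $x$-projections by showing they would contradict the \emph{maximality} of $(l_R,u_R)$ (one could otherwise enlarge $u_R$ over $[x_{1,S_N},x_{2,S_N}]$ and get a strictly larger bounding pair). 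Your sketch identifies the danger but does not supply the maximality argument that resolves it.

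Your $(3)\Rightarrow(1)$ is also under-justified at the same point. The sublemma ``a simple arrow $S_0\to S_1$ implies $u_{S_0}\le l_{S_1}$'' is exactly what needs proof, and your appeal to ``the relations $\gamma_\Left(v)\relation\gamma_\Right(v)$'' and ``a direct analysis of the finitely many local pictures'' is a red herring: those commutation relations are algebraic constraints on paths in $\cR(\cS)$ and play no role in this bound. What actually proves the sublemma is again the global maximality argument for $(l_{S_0},u_{S_0})$: if $l_{S_1}(x)<u_{S_0}(x)$ somewhere off the $x$-range of the shared arc, one builds a strictly larger bounding pair for $S_0$, contradicting maximality. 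The paper packages this as $(3)\Rightarrow(2)$ (trivial local observation: $S_1$ contains points above the shared arc, hence above the whole graph of $u_{S_0}$) followed by $(2)\Rightarrow(1)$ (the maximality argument). If you want to pursue your route $(1)\Rightarrow(2)\Rightarrow(3)\Rightarrow(1)$, you still need to prove $(2)\Rightarrow(1)$ as a standalone lemma and use it both to justify the overlap of $x$-projections in $(2)\Rightarrow(3)$ and to settle the single-arrow case in $(3)\Rightarrow(1)$; at that point you have essentially reproduced the paper's argument.

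The rest is fine: $(1)\Rightarrow(2)$ is immediate, the ``only if'' direction of the last statement follows by chaining the sublemma, and the ``if'' direction can be tightened as in the paper — from $l_R\le l_S$ and $R\ne S$, note $S$ lies above the graph of $l_R$, and since $R$ is the unique $2$-cell between $l_R$ and $u_R$, $S$ must actually lie above the graph of $u_R$, putting you in case $(2)$.
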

\begin{proof}$(1)\Rightarrow (2)$ is clear. 

\noindent{}$(2)\Rightarrow (1)$: If $S$ is above the graph of $u_R$, then $l_S\geq u_R$ on $[x_{1,S},x_{2,S}]$. Suppose that $l_S(x)<u_R(x)$ for some $x$, say $x>x_{2,S}$. Then $I_W\coloneqq \{x\in (x_{2,S},x_\Right)|l_S(x)<u_R(x)\}$ is non-empty. Let $x_0\coloneqq \inf(I_W)\geq x_{2,S}$, then $l_S\geq u_R$ on $[x_{1,S},x_0]$, $u_S(x_0)=l_S(x_0)=u_R(x_0)$, and the graph of $u_R$ is strictly above that of $l_S$ on an small open interval $(x_0,x_0+\epsilon)$. Define a pair $(l_S',u_S')$, which coincides with $(l_S,u_S)$ on $(x_\Left,x_0]$, and $l_S'=u_S'=\max\{l_S=u_S,u_R\}$ on $[x_0,x_\Right)$. This defines a bounding pair for $S$, and $(l_S,u_S)<(l_S',u_S')$, contradicting to the maximality of $(l_S,u_S)$. Hence, $l_S(s)\geq u_R(x)$ for all $x\in (x_\Left,x_\Right)$.

\noindent{}$(3)\Rightarrow (2):$ It suffices to show:
If $R$ and $S$ are separated by an 1-dimensional stratum $s$, with $R$ below and $S$ above. Then $S$ is above the graph of $u_R$.

In fact, $s$ is the graph of the function $u_R=l_S$ on an open interval $(x_1,x_2)$. Then, $S$ contains points which are above the graph of $u_R$, hence, the whole disk $S$ is above the graph of $u_R$.

\noindent{}$(2)\Rightarrow (3):$ $S\neq R$ is clear. It suffices to show $R\leq S$.
If the lower boundary of $S_0=S$ is not contained in the graph of $u_R$, then it contains a 1-dimensional stratum $s_1$, which is strictly above the graph of $u_R$. Let $S_1$ be the 2-dimensional cell below $s_1$, then $S_1$ is above the graph of $u_R$ as well. Also by $(3)\Rightarrow (2)$, $S_0$ is above the graph of $u_{S_1}$. By $(2)\Rightarrow (1)$, we then have $u_R\leq l_{S_1}\leq u_{S_1}\leq l_{S_0}\leq u_{S_0}$ in $G_{\cS}$, with $l_{S_0}\neq u_{S_0}, l_{S_1}\neq u_{S_1}$. If the lower boundary of $S_1$ is not contained in the graph of $u_R$, we repeat the procedure above to obtain $s_2, S_2$. In particular, $S_2$ is above the graph of $u_R$, $S_1$ is above the graph of $u_{S_2}$, and $u_R\leq l_{S_2}\leq u_{S_2}\leq l_{S_1}\leq u_{S_1}\leq l_{S_0}\leq u_{S_0}$ in $G_{\cS}$, with $l_{S_i}\neq u_{S_i}$ for $i=0,1,2$. Since $G_{\cS}$ is finite, after repeat the previous procedure finitely many times, we obtain a finite sequence $s_i, S_i$ for $1\leq i\leq N$ for some $N\geq 0$, such that, $s_i$ separates the 2-dimensional cells $S_{i-1}, S_i$ with $S_{i-1}$ above and $S_i$ below, $u_{S_i}\leq l_{S_{i-1}}$, $u_R\leq l_{S_N}$, and the lower boundary of $S_N$ is contained in the graph of $u_R$. The last condition just says that $l_{S_N}=u_R$ on $[x_{1,S_N},x_{2,S_N}]$.

Let us show that there is a 1-dimensional cell $s_{N+1}$ separating $R, S_N$ with $R$ below and $S_N$ above. Otherwise, the open intervals $(x_{1,R},x_{2,R})$ and $(x_{1,S_N},x_{2,S_N})$ have empty intersection. Say, $x_{2,R}\leq x_{1,S_N}$. Then by definition $x_{2,R}$, have $l_R=u_R=l_{S_N}$ on $[x_{1,S_N},x_{2,S_N}]$. Define a pair $(l_R',u_R')$ such that it coincides with $(l_R,u_R)$ outside $[x_{1,S_N},x_{2,S_N}]$, and $l_R'=u_R'=u_{S_N}$ on $[x_{1,S_N},x_{2,S_N}]$. This defines a new bounding pair for $R$ with $(l_R,u_R)<(l_R',u_R')$, contradiction. 
Now, we have shown that $S=S_0\geq S_1\geq\ldots\geq S_N\geq R$, hence $S\geq R$. 

\noindent{}The last statement of the lemma: `$\Rightarrow$' follows from $(3)\Rightarrow (1)$.
\\
$\Leftarrow$: If $R=S$, we are done. Otherwise, assume $l_R\leq l_S$ and $R\neq S$. In particular, $S$ is above the graph of $l_R$. Then $S$ is above the graph of $u_R$ as well, as $R\neq S$ is the only 2-dimensional cell bounded by the graphs of $l_R\leq u_R$. Now $(2)\Rightarrow (3)$ implies $R<S$.

This finishes the proof of the lemma.
\end{proof}

For a region $R\in\cR(\cS)$, we define an open subset of $M$ to be the upper half-space of $l_R(x)$
\[
M_R\coloneqq\{(x,z)|x_\Left<x<x_\Right, z>l_R(x)\}.
\]
Then $M_{R'}\subset M_R$ if and only if $R'\geq R$.

\begin{lemma}\label{lem:lower boundary of a region}
Let $R\in\cR(\cS)$ be fixed. Then for any $R'>R$ in $\cR(\cS)$ with lower boundary $B_{R'}$, the intersection $B_{R'}(R)\coloneqq B_{R'}\cap M_R$ is non-empty and contractible.
\end{lemma}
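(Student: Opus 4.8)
The plan is to reduce the whole statement to an elementary fact about the two functions $l_R$ and $l_{R'}$. First I would record that, by Lemma~\ref{lem:poset for legible model} (applied to $R<R'$), the hypothesis $R'>R$ gives $u_R\leq l_{R'}$ on all of $(x_\Left,x_\Right)$, and since $l_R\leq u_R$ this yields $l_R\leq l_{R'}$ everywhere. Next, straight from the definitions, $M_R=\{(x,z)\mid z>l_R(x)\}$ and $B_{R'}=\{(x,l_{R'}(x))\mid x\in(x_{1,R'},x_{2,R'})\}$, so
\[
B_{R'}(R)=B_{R'}\cap M_R=\{(x,l_{R'}(x))\mid x\in E\},\qquad E\coloneqq\{x\in(x_{1,R'},x_{2,R'})\mid l_{R'}(x)>l_R(x)\}.
\]
Since $l_{R'}$ is continuous, $E$ is open in $\RR$, and $x\mapsto(x,l_{R'}(x))$ is a homeomorphism of $E$ onto $B_{R'}(R)$; hence it suffices to prove that $E$ is a non-empty open interval (degenerate cases such as $l_R\equiv-\infty$, or $l_{R'}\equiv\pm\infty$, being handled directly: e.g.\ if $l_R\equiv-\infty$ then $B_{R'}\subset M_R$, so $B_{R'}(R)=B_{R'}$ is a graph over an interval).

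Both non-emptiness and connectedness I would prove by contradiction with the maximality of the bounding pair $(l_R,u_R)$, in the spirit of the argument $(2)\Rightarrow(3)$ in the proof of Lemma~\ref{lem:poset for legible model}. The common observation is that if $x_0\in(x_{1,R'},x_{2,R'})$ satisfies $l_{R'}(x_0)=l_R(x_0)$, then $l_R(x_0)\leq u_R(x_0)\leq l_{R'}(x_0)=l_R(x_0)$ forces $l_R(x_0)=u_R(x_0)$, so $x_0\notin(x_{1,R},x_{2,R})$, i.e.\ $x_0$ lies in a tail of $R$ (where $l_R\equiv u_R$). If $E=\emptyset$, then $l_{R'}\equiv l_R\equiv u_R$ on the whole interval $(x_{1,R'},x_{2,R'})$, which is therefore disjoint from $(x_{1,R},x_{2,R})$ and hence contained in one tail of $R$; replacing $l_R=u_R$ on $[x_{1,R'},x_{2,R'}]$ by $u_{R'}$ (the endpoint values agree because $l_{R'}=u_{R'}$ at $x_{1,R'}$ and $x_{2,R'}$, and $u_{R'}>l_{R'}=l_R$ in between) produces a bounding pair for $R$ strictly above $(l_R,u_R)$, a contradiction. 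If $E$ is non-empty but disconnected, I would pick two consecutive components and let $[q,r]\subset(x_{1,R'},x_{2,R'})$ be the closed interval between them; then $l_{R'}\equiv l_R\equiv u_R$ on $[q,r]$, so $[q,r]$ is disjoint from $(x_{1,R},x_{2,R})$, hence (say) $x_{2,R}\leq q$, so $[r,x_\Right)$ lies in the right tail of $R$; replacing $l_R=u_R$ by $l_{R'}$ on $[r,x_\Right)$ (the values agree at $r$, $l_{R'}\geq l_R$ everywhere, and $l_{R'}>l_R$ on the next component of $E$, which lies in $[r,x_\Right)$) again yields a bounding pair for $R$ strictly above $(l_R,u_R)$, a contradiction. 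In each case one must also verify the routine points that the modified pair is continuous, has its graphs inside the $1$-skeleton of $\cS$, and still carves out exactly $R$.

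Once $E$ is known to be a non-empty open interval it is contractible, and hence so is $B_{R'}(R)\cong E$. The main obstacle is the connectedness of $E$: the naive surgery "raise the tail of $R$ all the way up to $u_{R'}$" fails at interior points of $(x_{1,R'},x_{2,R'})$, where $l_{R'}<u_{R'}$ and the endpoint values no longer match, so that one is forced into the asymmetric fix above, raising the tail only along $l_{R'}$ itself over a one-sided sub-tail; getting this surgery right, keeping track of exactly which endpoint values match, and organizing the left/right alternatives is where the actual work lies. Everything else is bookkeeping with the maximal bounding pairs $(l_R,u_R)$ and $(l_{R'},u_{R'})$.
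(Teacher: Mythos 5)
Your proof is correct. For the connectedness step your surgery is the same one the paper uses: locate a sub-interval over which $B_{R'}$ touches $\partial M_R$, observe that there $l_{R'}=u_R=l_R$ (so that interval lies in a tail of $R$), and splice $l_{R'}$ onto the one-sided tail of the maximal bounding pair $(l_R,u_R)$ to obtain a strictly larger bounding pair, contradicting maximality. The paper phrases this via a single $1$-cell $s\subset B_{R'}\cap\partial M_R$ with points of $B_{R'}$ on either side; you phrase it as a gap $[q,r]$ between two components of $E$. These are the same argument. Where you genuinely deviate is the non-emptiness: the paper reads it straight off the poset structure — since $R<R'$, the last arrow in a chain $R\to\cdots\to R'$ gives a $1$-cell $s$ separating $R'$ from some $S\geq R$, and $s$ lies in the upper boundary of $S$ over $(x_{1,S},x_{2,S})$, where $u_S>l_S\geq l_R$, so $s$ is already inside $M_R$. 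You instead re-run the same maximality surgery in the degenerate case $E=\emptyset$, raising the whole of $[x_{1,R'},x_{2,R'}]$ to $u_{R'}$. Your route buys uniformity (one mechanism proves both claims) at the cost of some extra bookkeeping — you must argue separately that $[x_{1,R'},x_{2,R'}]$ sits in a single tail of $R$, that the endpoint values of the spliced pair match (which requires passing $l_{R'}=l_R=u_R$ by continuity to the closed endpoints), and that the spliced functions stay in $G_{\cS}$ — whereas the paper's non-emptiness is a two-line constructive observation. Both arguments hinge, as they must, on $u_R\leq l_{R'}$ coming from Lemma~\ref{lem:poset for legible model}, which you correctly invoke first.
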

\begin{proof}
\emph{$B_{R'}(R)$ is non-empty:} Since $R'>R$, by definition, there is a 1-dimensional cell $s$ separating $R'$ and another region $S$, with $R'$ above and $S$ below, such that $S\geq R$. In particular, $S$ is above the graph of $l_R$. It follows that $s$ is contained in $M_R$, hence contained in $B_{R'}(R)=B_{R'}\cap M_R$. 

\noindent{}\emph{$B_{R'}(R)$ is contractible.} We use the notations at the beginning of Section \ref{subsubsec:legible model}. Recall that $B_{R'}$ is simply the graph of $l_{R'}$ over $(x_{1,R'},x_{2,R}')$. Moreover, $l_R<u_R$ on $(x_{1,R}, x_{2,R})$ and $l_R=u_R$ outside $(x_{1,R}, x_{2,R})$. Also, by Lemma \ref{lem:poset for legible model}, we have $u_R\leq l_{R'}$.

If $B_{R'}(R)$ is not contractible, then $B_{R'}$ has a 1-dimensional cell $s$ contained in the graph of $l_R$, such that, $B_{R'}$ contains points both on the left and right of $s$. As $R'$ is above the graph of $u_R$, $s$ has no points in the lower boundary of $R$. In other words, $s$ is contained in the graph of $l_R=u_R$ over $(x_\Left, x_{1,R}]$ or $[x_{2,R}, x_\Right)$, say, the latter. Assume $s$ lives over the interval $(x_1,x_2)$, that is, $s$ is the graph of $l_{R'}=u_R=l_R$ over $(x_1,x_2)$. Then $x_{2,R}\leq x_1$. Moreover, $l_{R'}\neq l_R$ as functions on $[x_2, x_\Right)$, as $B_{R'}$ contains points living over $(x_2,x_\Right)$. 

Now, can define a pair $(\tilde{l}_R,\tilde{u}_R)$ of continuous functions in $G_{\cS}$ such that, it coincides with $(l_R,u_R)$ on $(x_\Left,x_2]$, and $\tilde{l}_R=\tilde{u}_R\coloneqq \max\{l_{R'}, l_R=u_R\}=l_{R'}$ on $[x_2,x_\Right)$. Then $(\tilde{l}_R,\tilde{u}_R)$ is a new bounding pair for $R$, with $(l_R,u_R)<(\tilde{l}_R,\tilde{u}_R)$, contradicting to the maximality of $(l_R,u_R)$. This finishes the proof.
\end{proof}

Now there is an induced functor of poset categories $\rho:\cS\rightarrow \cR(\cS)$, which sends $w$ to $\rho(w)$, the unique 2-dimensional cell `below' $w$. More precisely, we define $\rho(w)$ to be $w$ if $w$ is a 2-dimensional stratum, or to be the 2-dimensional cell immediately below $w$ otherwise.
Then for each region $R\in\cR(\cS)$, the upper half-space $M_R$ is the union of the strata
\begin{equation*}
M_R=\bigcup_{\rho(w)\ge R} w
\end{equation*} 
by Lemma \ref{lem:poset for legible model}.

As usual, we define the abelian category $Fun(\cR(\cS),\field)$, the DG category $\Fun(\cR(\cS);\field)$, and the restriction functors $\Fun(\cR;\field)\rightarrow \Fun(\cR(\cS|_{J_x});\field)$ for an open sub-interval $J_x$ of $I_x$. By pre-composition, we get a functor 
\[
\rho^*:Fun(\cR(\cS);\field)\rightarrow Fun(\cS;\field),
\]
which is clearly exact. We can then use the same letter $\rho^*$ for its DG lifting
\[
\rho^*:\Fun(\cR(\cS);\field)\rightarrow \Fun(\cS;\field).
\]

\begin{proposition/definition}\label{def:adjoint pair for legible model}
Let us define a functor between two abelian categories
\[
\rho_*:Fun(\cS,\field)\rightarrow Fun(\cR(\cS),\field),\quad F\mapsto [R\mapsto \Gamma(M_R;i_{\cS}(F))].
\]
Then $\rho_*\circ \rho^*=\identity$ and $(\rho^*,\rho_*)$ is an adjoint pair
\[
\rho^*:Fun(\cR(\cS),\field)\rightleftarrows Fun(\cS,\field):\rho_*.
\]

As a consequence, we obtain an adjoint pair $(\rho^*,R\rho_*)$ in the DG liftings
\[
\rho^*:\Fun(\cR(\cS),\field)\rightleftarrows \Fun(\cS,\field):R\rho_*
\]
where $R\rho_*$ is given by $F\mapsto [R\mapsto R\Gamma(M_R;i_{\cS}(F))]$.
Moreover, we have a natural isomorphism $\beta:\identity\overset{\sim}\Rightarrow R\rho_*\circ\rho^*$.
\end{proposition/definition}
\begin{proof}
We firstly show that $\rho_*\circ\rho^*=\identity$.
By definition and formula \eqref{eqn:section formula}, for any $F\in Fun(\cR(\cS),\field)$ and $R\in\cR(\cS)$, we have 
\[
(\rho_*\circ\rho^*)(F)(R)=\Gamma(M_R;i_{\cS}(\rho^*F))=\varprojlim_{w\in\cS|_{M_R}}(\rho^*F)(\tau_{M_R}(w)).
\]

Since $\cS|_{M_R}=\{w\in\cS|\rho(w)\geq R\}$ and $\tau_{M_R}$ is just the inclusion map, $(\rho^*F)(\tau_{M_R}(w))=F(\rho(w))$ for all $w\in\cS|_{M_R}$ and it follows that 
\[
(\rho_*\circ\rho^*)(F)(R)=\varprojlim_{\rho(w)\geq R}F(\rho(w))\isomorphic F(R).
\]
Therefore $\rho_*\circ\rho^*=\identity$.

Next, we show that $(\rho^*,\rho_*)$ is an adjoint pair. For any $F\in Fun(\cR(\cS),\field)$ and $G\in Fun(\cS,\field)$, a morphism $f\in\hom(\rho^*F,G)$ is a collection of maps $\{f_w\mid{w\in\cS}\}$ with $f_w:(\rho^*F)(w)=F(\rho(w))\rightarrow G(w)$ such that for any arrow $w_1\rightarrow w_2$, we get a commutative diagram
\[
\begin{tikzcd}
F(\rho(w_1))\arrow[r,"f_{w_1}"]\arrow[d]& G(w_1)\arrow[d]\\
F(\rho(w_2))\arrow[r,"f_{w_2}"] & G(w_2)
\end{tikzcd}
\]

Equivalently, we get a collection of maps $\{f_{R,w}\mid{\rho(w)\geq R}\}$
\[
f_{R,w}: F(R)\rightarrow F(\rho(w))=(\rho^*F)(w)\xrightarrow[]{f_w}G(w)
\]
for any fixed region $R$ in $\cR(\cS)$ and $w\in\cS|_{M_R}$, i.e., $\rho(w)\geq R$ such that we get a commutative diagram
\[
\begin{tikzcd}
F(R_1)\arrow[r,"f_{R_1,w_1}"]\arrow[d] & G(w_1)\arrow[d]\\
F(R_2)\arrow[r,"f_{R_2,w_2}"] & G(w_2)
\end{tikzcd}
\]
for any $R_1\leq R_2$ in $\cR(\cS)$ and $w_1\leq w_2$ in $\cS$ with $\rho(w_i)\geq R_i$.
This is again equivalent to have a collection of maps $\{\tilde{f}_R\mid{R\in\cR(\cS)}\}$ with 
\[
\tilde{f}_R=\varprojlim_{\rho(w)\geq R}f_{R,w}:F(R)\rightarrow \varprojlim_{\rho(w)\geq R}G(w),
\]
such that for any arrow $R_1\rightarrow R_2$ in $\cR(\cS)$, there is a commutative diagram
\[
\begin{tikzcd}
F(R_1)\arrow[r,"\tilde{f}_{R_1}"]\arrow[d] & \displaystyle{\varprojlim_{\rho(w_1)\geq R_1}}G(w_1)\arrow[d]\\
F(R_2)\arrow[r,"\tilde{f}_{R_2}"] &  \displaystyle{\varprojlim_{\rho(w_2)\geq R_2}}G(w_2).
\end{tikzcd}
\]

Since $(\rho_*G)(R)=\Gamma(M_R;i_{\cS}(G))=\varprojlim_{\rho(w)\geq R}G(w)$ by formula \eqref{eqn:section formula}, the above data is identical to a morphism $\tilde{f}$ in $\hom(F,\rho_*G)$. That is, we have a natural isomorphism $\hom(\rho^*F,G)\simeq \hom(F,\rho_*G)$.

Finally, we show the natural isomorphism $\beta:\identity\overset{\sim}\Rightarrow R\rho_*\circ\rho^*$. Indeed, for any $F^{\bullet}\in\Fun(\cR(\cS),\field)$ and any region $R$ in $\cR(\cS)$, we have a natural morphism 
\begin{equation}\label{eqn:quasi-isomorphism beta for legible model}
\beta_{F^{\bullet}}(R):F^{\bullet}(R)\rightarrow (R\rho_*\circ\rho^*F^{\bullet})(R)
\end{equation}
defined by 
\[
F^{\bullet}(R)=(\rho_*\circ\rho^*F^{\bullet})(R)=\Gamma(M_R;i_{\cS}\rho^*F^{\bullet})\rightarrow R\Gamma(M_R;i_{\cS}\rho^*F^{\bullet}).
\]
We want to show that $\beta_{F^{\bullet}}(R)$ is a quasi-isomorphism.

For any region $R'$ in $\cR(\cS)$, let $\delta'_{R'}\in\Fun(\cR(\cS),\field)$ be a functor defined by $\delta'_{R'}(W)=0$ if $W\neq R$, and $\delta'_{R'}(R')=\field$ such that $\delta'_{R'}$ sends all the arrows in $\cR(\cS)$ to zero. 
As seen in the proof of Lemma~\ref{lem:Gamma_S}, $\Fun(\cR(\cS),\field)$ is the triangulated envelope of the objects of the form $\delta'_{R'}$. 
Observe that for fixed $R$, the property of $\beta_{F^{\bullet}}(R)$ in \eqref{eqn:quasi-isomorphism beta for legible model} being a quasi-isomorphism is preserved under taking quasi-isomorphisms, shifts, and cones in the place of $F^{\bullet}$. Thus, it suffices to show that $\beta_{F^{\bullet}}(R)$ is a quasi-isomorphism only when $F^{\bullet}=\delta'_{R'}$ for $R'\in\cR(\cS)$. 

By definition, we see that $i_{\cS}\rho^*\delta'_{R'}=\field_{E_{R'}}$, where $E_{R'}$ is the union of $R'$ and its upper boundary $A_{R'}$. It suffices to show that
\begin{align}\label{eqn:sheaf cohomology of M_R}
 R\Gamma(M_R;\field_{E_{R'}})\simeq
\begin{cases}
\field & R'=R,\\
0 & \textrm{otherwise}.
\end{cases}
\end{align}

If $R'$ is not contained in $M_R$, then neither is $E_{R'}$ ans we have $R\Gamma(M_R;\field_{E_{R'}})\simeq 0$. Otherwise, if $R'\subset M_R$, then so is $E_{R'}$. If $R'\neq R$, by Lemma \ref{lem:lower boundary of a region}, we have $\bar{R'}(R)\coloneqq \bar{R'}\cap M_R=E_{R'}\amalg B_{R'}(R)$, where both of $\bar{R'}(R)$ and $B_{R'}(R)$ are non-empty contractible closed subsets of $M_R$. We then obtain the following exact triangle:
\[
R\Gamma(M_R;\field_{E_{R'}})\rightarrow R\Gamma(M_R;\field_{\overline{R'}(R)})\rightarrow R\Gamma(M_R;\field_{B_{R'}(R)})\xrightarrow[]{+1},
\]
where the second arrow is the same as the quasi-isomorphism $R\Gamma(\overline{R'}(R);\field)\simeq \field\rightarrow R\Gamma(B_{R'}(R);\field)\simeq\field$. Therefore, $R\Gamma(M_R;\field_{E_{R'}})$ is acyclic. Finally, if $R'=R$, then $E_R$ is a contractible closed subset of $M_R$ and it follows that $R\Gamma(M_R;\field_{E_R})\simeq R\Gamma(E_R;\field)\simeq \field$ as desired.
\end{proof}

\begin{definition}\label{def:legible model}
Let $\Fun_{T}(\cR(\cS),\field)$ and $\Fun_{T}(\cR(\cS),\field)_0$ be full subcategories of $\Fun(\cR(\cS),\field)$ consisting of functors $F$ satisfying the conditions (1)--(2) and (1)--(3) described below:
\begin{enumerate} 
\item
For any additional 1-dimensional stratum $s$ of $\cS$ separating two 2-dimensional cells $R_1$ below and $R_2$ above, the morpgism $F(e_s):F(R_1)\xrightarrow[]{\sim}F(R_2)$ is a quasi-isomorphism. 
\item
Around any crossing $c$ of $T$, we label the 4 regions by $N,S,W$ and $E$ as in Definition \ref{def:comb model} and there exists the following commutative diagram
\[
\begin{tikzcd}
 & F(N)&\\
F(W)\arrow[ur] & & F(E)\arrow[ul]\\
 &F(S)\arrow[ul]\arrow[ur]&
\end{tikzcd}
\]
such that the total complex of $F(S)\rightarrow F(W)\oplus F(E)\rightarrow F(N)$ is acyclic.
\item
If $R$ is the bottom region in $\cS$, then $F(R)$ is acyclic.
\end{enumerate}
\end{definition}

\begin{proposition}[Legible model]\label{prop:legible model}
Let $\cS$ be a regular cell complex refining the stratification induced by $\cT$ and satisfying Assumption \ref{ass:legible model}. 
Then
\begin{equation}\label{eqn:legible model}
\rho^*:\Fun(\cR(\cS),\field)\xrightarrow[]{\sim}\Fun_{\tilde T^+}(\cS,\field)
\end{equation}
is a quasi-equivalence with a quasi-inverse $R\rho_*$, and $\rho^*$ is compatible with restriction to an open sub-interval. 
Moreover, the similar statement holds for $\rho^*:\Fun_{T}(\cR(\cS),\field)_{(0)}\xrightarrow[]{\sim}\Fun_{T}(\cS,\field)_{(0)}$.
\end{proposition}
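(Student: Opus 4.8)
The plan is to show that $\rho^{*}$ is a quasi-equivalence by establishing the three standard ingredients: fully faithfulness, containment of the essential image in the target, and essential surjectivity; then to deduce the restriction compatibility and the subcategory statements formally, once the equivalence onto $\Fun_{\tilde T^{+}}(\cS,\field)$ is in hand.

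\textit{Fully faithfulness.} This is the argument used for $\Gamma_{\cS}$ in the proof of Lemma~\ref{lem:Gamma_S}. Using the adjunction $(\rho^{*},R\rho_{*})$ of Proposition/Definition~\ref{def:adjoint pair for legible model} together with the natural isomorphism $\beta:\identity\xRightarrow{\sim}R\rho_{*}\circ\rho^{*}$ established there, for any $F,G\in\Fun(\cR(\cS),\field)$ one gets
\[
\rhom^{\bullet}(\rho^{*}F,\rho^{*}G)\simeq\rhom^{\bullet}(F,R\rho_{*}\rho^{*}G)\simeq\rhom^{\bullet}(F,G).
\]

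\textit{Essential image lies in $\Fun_{\tilde T^{+}}(\cS,\field)$.} Here I would just check conditions (1) and (2) of Definition~\ref{def:comb model} for $\tilde T^{+}$. Since $\cS=\cS_{\tilde T}$ by Assumption~\ref{ass:legible model}, there are no strata in $\cS$ outside $\cS_{\tilde T}$, so condition (1) is vacuous. For condition (2): any downward arrow $w\to w'$ in $\cS$ has $\rho(w)=\rho(w')=w'$ (the unique region immediately below $w$, which is $w'$ itself), so $(\rho^{*}F)(w\to w')=F(\identity_{w'})$; here one uses that $\rho$ is well defined, which holds because $\tilde T$ has no cusps and no vertices with only left or right half-edges (Assumption~\ref{ass:legible model}) and no vertical tangents (Assumption~\ref{ass:regular cell complex}).

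\textit{Essential surjectivity.} This is the heart of the proof. Given $G\in\Fun_{\tilde T^{+}}(\cS,\field)$, put $\cF\coloneqq i_{\cS}G\in\Sh_{\tilde T^{+}}(M;\field)$ by the combinatorial model (Lemma~\ref{lem:comb model}), and consider the counit $\rho^{*}R\rho_{*}G\to G$ of the adjunction $(\rho^{*},R\rho_{*})$. Evaluated at a stratum $w$ with $x\in w$, this is the restriction map
\[
R\Gamma(M_{\rho(w)};\cF)\longrightarrow \cF_{x}\simeq G(w),
\]
where one uses $x\in w\subseteq M_{\rho(w)}$. The claim reduces to: this restriction map is a quasi-isomorphism. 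Since $SS(\cF)\subseteq 0_{M}\cup\RR_{>0}\tilde T^{+}$ and $\tilde T$ is smooth with no cusps and no vertical tangencies, every nonzero covector in $SS(\cF)$ has strictly negative $dz$-component, so sections of $\cF$ propagate downward; one then deforms $M_{\rho(w)}$ non-characteristically onto a contractible neighborhood of $x$ (pushing the boundary graph of $l_{\rho(w)}$ upward while keeping the downward-pointing outward conormals off $SS(\cF)$), exactly as in the local analysis at a vertex in the proof of Lemma~\ref{lem:comb model}. Hence the counit is an isomorphism and $G\simeq\rho^{*}(R\rho_{*}G)$. (Alternatively: the essential image is the triangulated envelope of the $\rho^{*}\delta'_{R}$ with $i_{\cS}\rho^{*}\delta'_{R}=\field_{E_{R}}$ as computed in the proof of Proposition/Definition~\ref{def:adjoint pair for legible model}, and one shows by induction on the number of regions, peeling off a maximal region $R$ via the cone of the map $\rho^{*}(\delta'_{R}\otimes_{\field}G(R))\to G$, that these objects generate $\Fun_{\tilde T^{+}}(\cS,\field)$.)

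\textit{Restriction compatibility and subcategories.} Compatibility of $\rho^{*}$ with restriction to an open sub-interval $J_{x}$ is immediate: the inclusion of strata $\cS|_{J_{x}}\hookrightarrow\cS$ commutes with $\rho$, and $\rho^{*}$ is precomposition with $\rho$; this matches the restriction compatibility of $i_{\cS}$ in Lemma~\ref{lem:comb model}. For the last statement, since $\rho^{*}$ is already an equivalence onto $\Fun_{\tilde T^{+}}(\cS,\field)$, it suffices to match the defining conditions: condition (1) of Definition~\ref{def:legible model} corresponds to condition (1) of Definition~\ref{def:comb model}, because for an additional $1$-stratum $s$ between $R_{1}$ (below) and $R_{2}$ (above) one has $(\rho^{*}F)(s\to R_{2})=F(e_{s})$ while $(\rho^{*}F)(s\to R_{1})$ is the identity; the crossing condition (2) of Definition~\ref{def:legible model} corresponds to the crossing condition (4) of Definition~\ref{def:comb model}, since $\rho$ sends $c,nw,ne,N$ to $S,W,E,N$ and the relation $\gamma_{\Left}(c)\relation\gamma_{\Right}(c)$ forces the triangles to commute and the total complex $F(S)\to F(W)\oplus F(E)\to F(N)$ to be the relevant one; and condition (3) corresponds to condition (3) via the bottom region. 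Hence $\rho^{*}$ restricts to quasi-equivalences $\Fun_{T}(\cR(\cS),\field)_{(0)}\xrightarrow{\sim}\Fun_{T}(\cS,\field)_{(0)}$.

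The main obstacle is the essential surjectivity step, i.e.\ showing that the legible data attached to the regions alone (subject to the quiver relations) recovers every constructible sheaf with micro-support in $\tilde T^{+}$; this is precisely where Assumption~\ref{ass:legible model} is used, and whichever route one takes (the non-characteristic deformation establishing $R\Gamma(M_{R};\cF)\xrightarrow{\sim}\cF_{x}$, or the inductive cone construction generating $\Fun_{\tilde T^{+}}(\cS,\field)$ from the $\rho^{*}\delta'_{R}$) the geometric input is the downward co-orientation of the smooth Legendrian $\tilde T$.
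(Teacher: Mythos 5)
Your skeleton is right, and two of the three ingredients are handled essentially as in the paper. Fully faithfulness via the adjunction $(\rho^*,R\rho_*)$ and the natural isomorphism $\beta$ is verbatim the paper's argument, and the observation that the essential image sits in $\Fun_{\tilde T^+}(\cS,\field)$ because $\cS=\cS_{\tilde T}$ (so condition~(1) is vacuous and condition~(2) follows from $\rho$ collapsing each downward arrow to an identity) is a fine complement to the paper, which does not spell this out.

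The gap is in essential surjectivity, where your primary route does not go through as sketched. You propose to show that the counit $R\Gamma(M_{\rho(w)};\cF)\to\cF_x\simeq G(w)$ is a quasi-isomorphism by a ``non-characteristic deformation that pushes the boundary graph of $l_{\rho(w)}$ upward.'' But $SS(\cF)$ at infinity lies in $T^{\infty,-}M$: every nonzero covector in $SS(\cF)$ is downward-pointing. That means the covector $-dz$ governing upward motion of the lower boundary of $M_{\rho(w)}$ is exactly the direction in which $SS(\cF)$ lives along every arc of $\tilde T$; as the boundary sweeps up through the arcs that lie strictly between $l_{\rho(w)}$ and $\Star(w)$ it is \emph{characteristic}, not non-characteristic, and the microlocal Morse lemma does not apply. (This is different from the local analysis at a vertex inside $B_r(v)$ in the proof of Lemma~\ref{lem:comb model}, where the triangle $\Delta_x$ is small enough that it meets at most one vertex; here $M_{\rho(w)}$ is a global half-space meeting arbitrarily many strata.) Your parenthetical alternative is closer to the paper: the paper's actual proof introduces Lemma~\ref{lem:sheaf filtration}, which produces a filtration $0=\cF_m\to\cdots\to\cF_0=\cF$ by cutting along the open sets $M_{f_i}$ from the bottom up, so that each graded piece is $(A_i)_{E_{R_i}}$ for a perfect complex $A_i$; the quasi-isomorphism $R\Gamma(M_{\rho(w)};\cF)\xrightarrow{\sim}R\Gamma(\Star(w);\cF)$ is then checked on these graded pieces using the contractibility statement of Lemma~\ref{lem:lower boundary of a region} and the computation~\eqref{eqn:sheaf cohomology of M_R}. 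Note that the paper filters by removing the minimal region first, not the maximal one as you suggest; a map $\rho^*(\delta'_R\otimes G(R))\to G$ does not exist naturally for a maximal $R$ because the poset arrows from the lower boundary of $R$ point \emph{into} $R$, not out of it. Your inductive approach can be made to work, but you need to cut from below (quotients), and the heart of the argument is really the statement of Lemma~\ref{lem:sheaf filtration}, which you would need to isolate and prove.
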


For the proof of the proposition, we need the following lemma.
\begin{lemma}\label{lem:sheaf filtration}
For any $\cF\in\Sh_{\tilde T^+}(M;\field)$, there exists a filtration 
\[
0=\cF_m\rightarrow\cF_{m-1}\rightarrow\cdots\rightarrow \cF_0=\cF
\]
such that each of the associated graded pieces $\Gr_i\cF_{\bullet}$ is contained in $\Sh_{\tilde T^+}(M;\field)$ and supported on a single region $R_i$ of $\cS$. 

More precisely, for each $E_i$ which is the union of $R_i$ and its upper boundary, we have $\Gr_i\cF_{\bullet}\homotopic (A_i)_{E_i}$ for some perfect complex of $\field$-modules $A_i$ regarded as a constant sheaf on $M$.  
\end{lemma}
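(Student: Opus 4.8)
The plan is to build the filtration by peeling off one region at a time, starting from the region $R_0$ with the lowest position in the poset $\cR(\cS)$ meeting the support of $\cF$ and ending with the highest. More precisely, let $R_1,\dots,R_m$ be an enumeration of the $2$-dimensional cells of $\cS$ such that $R_i < R_j$ implies $i < j$; since $\cR(\cS)$ is a finite poset (indeed acyclic quiver) by Proposition~\ref{prop:poset for legible model} and Lemma~\ref{lem:poset for legible model}, such a linear extension exists. The idea is to set $\cF_{i}$ to be the subsheaf (in an appropriate DG sense) of sections of $\cF$ supported away from the open subset $M_{R_i}$, so that $\cF_m = 0$, $\cF_0 = \cF$, and the graded piece $\Gr_i\cF_{\bullet} = \mathrm{cone}(\cF_i \to \cF_{i-1})$ is supported exactly on $R_i$ together with its closure.

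First I would make this precise using the legible model. By Lemma~\ref{lem:comb model} and Proposition~\ref{prop:legible model}, we may replace $\Sh_{\tilde T^+}(M;\field)$ by $\Fun(\cR(\cS),\field)$ via the equivalence $\rho^*$ (composed with $i_{\cS}$), so it suffices to construct the analogous filtration for a functor $F\in\Fun(\cR(\cS),\field)$. On the combinatorial side, $\Fun(\cR(\cS),\field)$ is the triangulated envelope of the objects $\delta'_{R}$ (the functor supported on the single region $R$ with value $\field$), as noted in the proof of Proposition/Definition~\ref{def:adjoint pair for legible model}, and $i_{\cS}\rho^*\delta'_{R}\homotopic \field_{E_R}$. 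For a general $F$, I would define $F_i$ to be the functor obtained by truncating: $F_i(R) := F(R)$ if $R \not\geq R_j$ for all $j\leq i$, and $F_i(R) := 0$ otherwise, with the structure maps inherited or set to zero as forced; here one uses crucially that $\{R : R \geq R_j\}$ is an up-set and that the enumeration respects the partial order, so that the complement is a down-set and $F_i$ is a well-defined sub-presheaf-like object. Then $F_i \to F_{i-1}$ is a termwise inclusion and the cone is concentrated on the single region $R_i$, with value the complex $F(R_i)$; translating back through $i_{\cS}\rho^*$, the graded piece becomes $(A_i)_{E_i}$ with $A_i = F(R_i)$, a perfect complex since $F$ takes values in perfect complexes. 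Each $F_i$ again satisfies the defining conditions of $\Fun_{\tilde T^+}(\cR(\cS),\field)$ — the downward-arrow and crossing conditions of Definition~\ref{def:comb model}/Definition~\ref{def:legible model} are preserved because we only zero out values on an up-set, and both $W$, $E$ lie below $N$ at a crossing — so $\Gr_i\cF_{\bullet}\in\Sh_{\tilde T^+}(M;\field)$.

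The main obstacle I anticipate is checking compatibility of the truncation $F\mapsto F_i$ with the relations and with the sheaf conditions at crossings and vertices: one must verify that zeroing out $F$ on an up-set $U$ of $\cR(\cS)$ genuinely yields an object of $Fun(\cR(\cS),\field)$, i.e. that all the relations $\gamma_\Left(v)\relation\gamma_\Right(v)$ still hold (trivially, since both sides become zero or both are unchanged) and that the acyclicity condition on $F(S)\to F(W)\oplus F(E)\to F(N)$ at a crossing $c$ is inherited — which holds because $\{N,W,E,S\}$ is ordered with $S$ minimal and $N$ maximal, so $U$ intersects this local diagram in a sub-up-set, making the truncated total complex either the original one or acyclic (e.g. $0\to 0\to 0$, or $0\to F(W)\oplus 0\to F(N)$ cases must be handled, but these correspond to $U$ being an up-set so in fact only $\{N\}$, $\{N,W,E\}$, etc. occur, each giving an acyclic total complex). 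A second minor point is to confirm the identification $\mathrm{cone}(i_{\cS}\rho^*F_i \to i_{\cS}\rho^*F_{i-1})\homotopic (A_i)_{E_i}$, which follows from the exactness of $i_{\cS}$ (Proposition/Definition~\ref{def:comb model to sheaf}) and of $\rho^*$, together with the computation $i_{\cS}\rho^*\delta'_{R_i}\homotopic \field_{E_{R_i}}$. Once these routine verifications are in place, assembling the filtration $0 = \cF_m \to \cdots \to \cF_0 = \cF$ by applying $i_{\cS}\circ\rho^*$ to the $F_i$ and using that $\rho^*$ is a quasi-equivalence onto $\Fun_{\tilde T^+}(\cS,\field)$ completes the proof.
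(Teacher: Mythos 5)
Your reduction to the legible model is circular: Proposition~\ref{prop:legible model} (the equivalence $\rho^*:\Fun(\cR(\cS),\field)\xrightarrow{\sim}\Fun_{\tilde T^+}(\cS,\field)$) is proved \emph{using} Lemma~\ref{lem:sheaf filtration} --- the filtration is applied inside that very proof to reduce essential surjectivity to the case of a sheaf supported on a single region. So you cannot invoke it to translate a general $\cF\in\Sh_{\tilde T^+}(M;\field)$ into a functor $F\in\Fun(\cR(\cS),\field)$. What is available at this stage is only Lemma~\ref{lem:comb model} ($\Gamma_\cS,i_\cS$). The paper therefore works entirely on the sheaf side: it peels off regions geometrically by the open sets $M_i=M_{f_i}$ obtained from the bounding functions $f_i\in G_\cS$, sets $\cF_i=\cF_{M_i}=R(j_{M_i})_!j_{M_i}^{-1}\cF$, and --- this is the substantive step you skip --- invokes Proposition 5.4.8(ii) of Kashiwara--Schapira to see that each $\cF_{M_i}$ has micro-support still contained in $0_M\cup\RR_{>0}\cdot \tilde T^+$, together with the triangular inequality to place the graded pieces in $\Sh_{\tilde T^+}(M;\field)$. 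That micro-support control is genuine analytic input with no purely combinatorial shortcut available before Proposition~\ref{prop:legible model} is established.

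Independently, the truncation you write down is internally inconsistent. Setting $F_i(R):=0$ when $R\geq R_j$ for some $j\leq i$ kills $F_i$ entirely already at $i=1$, since $R_1$ is the bottom region and every $R$ satisfies $R\geq R_1$; and more fundamentally, zeroing on an up-set of $\cR(\cS)$ produces a \emph{quotient} of $F$, not a subobject: the arrows $F(R\to R')$ from the complement into the up-set need not vanish, so there is no natural transformation $F_i\to F$, only $F\to F_i$, giving a filtration by quotients in the opposite direction from the one the lemma asserts. What one actually wants (and what $\cF_{M_i}$ produces in the quiver picture) is to zero on the \emph{down-set} $\{R_1,\dots,R_i\}$, which is a down-set precisely because the enumeration respects the partial order; this yields subobjects $\cF_i\hookrightarrow\cF$ with graded pieces concentrated on $R_i$. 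But fixing that direction does not repair the circularity, so the proof still needs to be carried out on the sheaf side as the paper does.
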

\begin{proof}
We use the notations at the beginning of Section \ref{subsubsec:legible model}. For each function $f$ in $G_\cS$, we define an open subset of $M$
\[
M_f\coloneqq \{(x,z)\in M\mid x\in(x_\Left,x_\Right),z>f(x)\}.
\]

Consider the maximal bounding pair $(l_{R_1},u_{R_1})$ for the bottom region $R_1$ in $\cS$. Then $f_0\coloneqq l_{R_1}\equiv-\infty$ and $f_1\coloneqq u_{R_1}$ is the unique minimum in $G_\cS\setminus \{f_0\}$. In particular, $M_{f_0}=M$. 

If $M_{f_1}$ is non-empty, then $\cR(\cS|_{M_{f_1}})$ is a finite non-empty partially ordered subset of $\cR(\cS)$ which contains a minimum, say $R_2$. In other words, $R_2$ is a minimum among those regions of $\cS$ which are above the graph of $f_1$. 
Then the lower boundary of $R_2$ is contained in the graph of $f_1$. Otherwise, the lower boundary contains a 1-dimensional stratum $s$ which is strictly above the graph of $f_1$. The region $S=\rho(s)$ below $s$ is less than $R_2$ and this is a contradiction to the minimality of $R_2$. 

Now we take $f_2\in G_{\cS}$ so that $(f_1,f_2)$ is a bounding pair of $R_2$.
Then by induction, we can repeat the procedure above to obtain a sequence $(f_i)$ in $G_{\cS}$ such that for each $i$, the graphs of $f_{i-1}$ and $f_i$ bound a single region $R_i$. Since $G_{\cS}$ is finite, there exists a $m\in\NN$ such that $f_m=\infty$ is the unique maximum in $G_{\cS}$. Hence, $M_{f_m}=\emptyset$ and the procedure stops. In fact, $m$ is the number of regions in $\cS$.

Let $M_i\coloneqq M_{f_i}$. We obtain a sequence of open inclusions 
\[
\emptyset=M_m\subset M_{m-1}\subset\dots\subset M_0=M
\]
and $\cF_i\coloneqq \cF_{M_i}=R(j_{M_i})_{!}j_{M_i}^{-1}\cF$, where $j_{M_i}:M_i\rightarrow M$ is the open inclusion. 
By \cite[Prop.5.4.8.(ii)]{KS1994}, we have $\cF_i\in\Sh_{T^+(\cS)}(M;\field)$ and obtain a filtration
\[
0=\cF_m\rightarrow\cF_{m-1}\rightarrow\dots\rightarrow \cF_0=\cF,
\]
whose associated $i$-th graded piece is $\Gr_i\cF_{\bullet}=\cF_{M_{i-1}\setminus M_i}$ and induced by the exact triangle
\[
\cF_{M_i}\rightarrow\cF_{M_{i-1}}\rightarrow\cF_{M_{i-1}\setminus M_i}\xrightarrow[]{+1}
\]

By definition, $\Gr_i\cF_{\bullet}$ is supported in a single region $R_i$. More precisely, it has possibly non-zero stalks only at points in the region $R_i$ and its upper boundary. Moreover, by the triangular inequality for micro-supports, we have $\Gr_i\cF_{\bullet}\in\Sh_{\tilde\cT^+}(M;\field)$. In fact, 
\[
\Gr_i\cF_{\bullet}\simeq \cF_{M_{i-1}\setminus M_i}\simeq (F(R_i))_{M_{i-1}\setminus M_i},
\]
where the complex $F(R_i)=R\Gamma(R_i;\cF)$ is regarded as a constant sheaf on $M$.
\end{proof}

\begin{proof}[Proof of Proposition \ref{prop:legible model}.]
The proof is similar to that of \cite[Prop.3.22]{STZ2017}, the only nontrivial part is to show the quasi-equivalence \eqref{eqn:legible model}. 

At first, let us show that $\rho^*$ is fully faithful. For any $F,G\in\Fun(\cR(\cS),\field)$, by the adjunction $(\rho^*,R\rho_*)$ in Proposition/Definition \ref{def:adjoint pair for legible model}, we have 
\[
\rhom^{\bullet}(\rho^*F,\rho^*G)\homotopic \rhom^{\bullet}(F,R\rho_*\rho^*G)\homotopic \rhom^{\bullet}(F,G),
\]
where the last quasi-isomorphism follows from the natural isomorphism $\beta:\identity\overset{\sim}\Rightarrow R\rho_*\circ\rho^*$ in Proposition/Definition \ref{def:adjoint pair for legible model}, which then implies the quasi-isomorphism $G\simeq R\rho_*\circ\rho^*(G)$. Thus, $\rho^*$ is fully faithful.
The natural isomorphism also shows that $R\rho_*$ is essentially surjective.
 
Now, it suffices to show that $\rho^*$ is essentially surjective.
For any functor $F\in\Fun(\cS;\field)$, let $\cF\coloneqq i_{\cS}(F)$. Then by Proposition/Definition \ref{def:comb model to sheaf} and Corollary \ref{cor:quasi-isomorphism}, we have 
\[
R\Gamma(\Star(w);\cF)\simeq \cF_x=F(w)=\Gamma(\Star(w);\cF)
\]
for all $x\in w\in\cS$.
The adjunction $(\rho^*,R\rho_*)$ gives us an adjunction map $\epsilon_F:\rho^*R\rho_*F\rightarrow F$, where for any $w\in\cS$, 
\[
\rho^*R\rho_*F(w)=R\rho_*F(\rho(w))=R\Gamma(M_{\rho(w)};\cF),
\]
and by definition of $M_{\rho(w)}$, we have $\Star(w)\subset M_{\rho(w)}$ and therefore
\[
\epsilon_F(w):\rho^*R\rho_*F(w)=R\Gamma(M_{\rho(w)};\cF)\rightarrow R\Gamma(\Star(w);\cF)\simeq F(w)
\]
defined by the restriction of sections. 

Hence it suffices to show that $\epsilon_F:\rho^*R\rho_*F\xrightarrow[]{\sim} F$ is a quasi-isomorphism for all $F\in\Fun_{T^+}(\cS;\field)$, or equivalently, the restriction 
\begin{equation}\label{eqn:restriction}
R\Gamma(M_{\rho(w)};\cF)\xrightarrow[]{\sim} R\Gamma(\Star(w);\cF)
\end{equation}
is a quasi-isomorphism for any $w\in\cS$.

Now we apply Lemma \ref{lem:sheaf filtration} to $\cF$ to reduce the proof of the quasi-isomorphism \eqref{eqn:restriction} to the case when $m=1$, that is, when the sheaf $\cF$ is supported on a single region $R'$ and $\cF\homotopic A_{E_{R'}}$ for some perfect complex of $\field$-modules $A$ with $E_{R'}$ the union of $R'$ and its upper boundary.

However, for $\cF\simeq A_{E_{R'}}$, we see that $R\Gamma(\Star(w);\cF)\simeq \cF_x\simeq\cF_y$ for all $w\in\cS$ and $x\in w, y\in\rho(w)$ by Corollary \ref{cor:quasi-isomorphism}.
Therefore
\[
R\Gamma(\Star(w);A_{E_{R'}})\homotopic
\begin{cases}
A & R'=\rho(w),\\
0 & \textrm{otherwise}.
\end{cases}
\]

Finally, the same holds for $R\Gamma(M_{\rho(w)};A_{E_{R'}})$ as seen in \eqref{eqn:sheaf cohomology of M_R} after replacing $\field$ with $A$ in the same argument. This completes the proof of Proposition \ref{prop:legible model}.
\end{proof}

We can further simplify the legible model for sheaf categories by taking resolutions from the knowledge of quiver representations \cite{ASS2006}. 
Indeed, the category $\Fun(\cR(\cS),\field)$ is nothing but a representation category of the quiver with relations $\cR(\cS)$, i.e., the representation category of $\mathrm{A}(\cR(\cS))$ with values on perfect complexes (i.e. complexes which are quasi-isomorphic to a bounded complex of finite projective $\field$-modules).
Now, for any $F\in\Fun(\cR(\cS),\field)$, we can replace $F$ by either a projective or injective resolution.

Let us review a key fact about $\mathrm{A}(\cR(\cS))$ which we will use later on. 
Recall some terminology in quiver representation theory.
Let us denote by $\cR(\cS)_0$ and $\cR(\cS)_1$ the sets of objects and arrows respectively. 
For any object $a\in \cR(\cS)_0$, the path of length $0$ at $a$ will be denote by $\lambda_a$. Then $\mathrm{A}(\cR(\cS))$ is a free $\field$-module with the basis the set of all paths (of lengths $\geq 0$) in $\cR(\cS)$ modulo the relation in Definition \ref{def:poset for legible model}. 

\begin{proposition}\label{prop:(co)fibrant replacement}
Let $\cS$ be a regular cell complex refining the stratification induced by $\cT$ and satisfying Assumption \ref{ass:legible model}.
Then for $\mathrm{A}\coloneqq \mathrm{A}(\cR(\cS))$, we have the following characterzations for indecomposable (left) projective and injective modules:
\begin{enumerate}
\item The indecomposable (left) projective modules of $\mathrm{A}$ are $P_a\coloneqq\mathrm{A}\lambda_a$ with $a\in\cR(\cS)_0$. In particular, $P_a(s)$ is injective for all arrows $s$ in $\cR(\cS)_1$. 
\item The indecomposable (left) injective modules of $\mathrm{A}$ are $I_a\coloneqq\hom_\field(\lambda_a\mathrm{A},\field)$ with $a\in\cR(\cS)_0$. In particular, $I_a(s)$ is surjective for all arrows $s$ in $\cR(\cS)_1$. 
\item The (left) simple modules of $\mathrm{A}$ are $S_a$ with $a\in\cR(\cS)_0$, where $S_a$ consists of $\field$ at the object $a$ and $0$ otherwsie, and $S_a$ sends all the arrows in $\cR(\cS)_1$ to zero.
\end{enumerate}
\end{proposition}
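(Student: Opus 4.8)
The plan is to recognize Proposition \ref{prop:(co)fibrant replacement} as a collection of completely standard facts from the representation theory of finite-dimensional (or, more precisely, finite-dimensional over $\field$) algebras presented by a quiver with relations, specialized to the algebra $\mathrm{A}=\mathrm{A}(\cR(\cS))$. The key structural input is that $\cR(\cS)$ is a \emph{finite acyclic} quiver (this follows from Proposition \ref{prop:poset for legible model}, which established that $\cR(\cS)$ is a genuine poset, so there are no oriented cycles), and the relations $\gamma_\Left(v)\relation\gamma_\Right(v)$ are \emph{admissible} (each is a difference of parallel paths of length $\geq 2$, so they lie in the square of the arrow ideal). Under these hypotheses $\mathrm{A}$ is a basic, split, finite-dimensional $\field$-algebra, and the orthogonal primitive idempotents are exactly the length-zero paths $\lambda_a$, $a\in\cR(\cS)_0$, with $\mathrm{A}=\bigoplus_a \mathrm{A}\lambda_a$ as a left module. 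The citation \cite{ASS2006} (Assem--Simson--Skowro\'nski) covers all of this material in Chapters I--III.

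First I would assemble (1): the decomposition $\mathrm{A}=\bigoplus_{a\in\cR(\cS)_0}\mathrm{A}\lambda_a$ into indecomposable left projectives is immediate from the idempotent decomposition $1=\sum_a\lambda_a$ together with the fact that each $\lambda_a$ is primitive (because $\lambda_a\mathrm{A}\lambda_a$ is local, being spanned by $\lambda_a$ alone since the quiver is acyclic and has no loops), so that $P_a\coloneqq\mathrm{A}\lambda_a$ is indecomposable with local endomorphism ring; these exhaust the indecomposable projectives up to isomorphism by Krull--Schmidt. The extra assertion that $P_a(s)$ is injective for every arrow $s\colon S\to R$ in $\cR(\cS)_1$ amounts to the statement that, as a representation, the map $P_a(S)=\lambda_S\mathrm{A}\lambda_a\to\lambda_R\mathrm{A}\lambda_a=P_a(R)$ given by left multiplication by $s$ is injective on the $\field$-span of paths from $a$ to $S$; this follows because in an algebra presented by a quiver with \emph{admissible} relations, prepending a single arrow to a $\field$-basis of paths modulo relations still yields a $\field$-linearly independent set — concretely, because the relations only identify parallel paths of length $\geq 2$, composing on the left with $s$ cannot create a new relation, so no cancellation occurs. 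I would spell this out one line at a time using the normal form of paths modulo the two-term relations. Statement (2) is then obtained by applying the standard duality $D=\hom_\field(-,\field)\colon \Mod\mathrm{A}^{\op}\to\Mod\mathrm{A}$, which sends indecomposable projective right modules $\lambda_a\mathrm{A}$ to indecomposable injective left modules $I_a=\hom_\field(\lambda_a\mathrm{A},\field)$; the surjectivity of $I_a(s)$ is dual to the injectivity of $P_a(s)$ established in (1), applied to the opposite quiver. Statement (3) is the elementary observation that the semisimple quotient $\mathrm{A}/\mathrm{rad}\,\mathrm{A}\isomorphic\prod_a\field$ (since $\field$ is the endomorphism field at each vertex and $\mathrm{A}$ is basic and split), so the simple modules are the one-dimensional $S_a=P_a/\mathrm{rad}\,P_a$ supported at the single vertex $a$, annihilating all arrows.

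The main obstacle — and really the only non-boilerplate point — is the precise verification of the injectivity/surjectivity refinements in (1) and (2): these are \emph{not} automatic for an arbitrary quiver with relations, and they genuinely use the special shape of the relations $\gamma_\Left(v)-\gamma_\Right(v)$ (a difference of exactly two parallel paths, each of length $\geq 2$). The clean way to phrase this is: fix a $\field$-basis $\cB$ of $\mathrm{A}$ consisting of a chosen representative path for each residue class of paths modulo the relations, chosen compatibly so that $\cB\cap\lambda_R\mathrm{A}\lambda_a$ is a basis of $P_a(R)$; one checks that for an arrow $s\colon S\to R$ the map $s\cdot(-)$ sends distinct basis paths $p,q\in\cB\cap\lambda_S\mathrm{A}\lambda_a$ to paths $sp,sq$ whose residue classes are still distinct, because any relation equating $sp$ and $sq$ would, by the combinatorial structure of the ideal generated by admissible two-term relations, already equate $p$ and $q$. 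I would present this as a short lemma about the path algebra modulo the specific relations, invoking \cite{ASS2006} for the background on admissible ideals and normal forms, and otherwise keep the proof at the level of ``this is standard; we indicate the points specific to $\mathrm{A}(\cR(\cS))$.''

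\begin{proof}
Since $\cR(\cS)$ is a finite acyclic quiver by Proposition \ref{prop:poset for legible model} and the relations $\gamma_\Left(v)-\gamma_\Right(v)$ are differences of parallel paths of length $\geq 2$, the ideal they generate is admissible, so $\mathrm{A}=\mathrm{A}(\cR(\cS))$ is a basic split finite-dimensional $\field$-algebra with complete set of primitive orthogonal idempotents $\{\lambda_a\mid a\in\cR(\cS)_0\}$. All three statements are then the standard structure theory of such algebras, see \cite{ASS2006}.

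$(1)$ From $1=\sum_{a}\lambda_a$ we get $\mathrm{A}=\bigoplus_a\mathrm{A}\lambda_a$ as left $\mathrm{A}$-modules. Each $\lambda_a\mathrm{A}\lambda_a$ is spanned over $\field$ by $\lambda_a$ alone, since the quiver is acyclic with no loops; hence it is local, $\lambda_a$ is primitive, and $P_a=\mathrm{A}\lambda_a$ is indecomposable with local endomorphism ring. By Krull--Schmidt these exhaust the indecomposable projectives. For an arrow $s:S\rightarrow R$ in $\cR(\cS)_1$, the structure map $P_a(s):P_a(S)=\lambda_S\mathrm{A}\lambda_a\rightarrow \lambda_R\mathrm{A}\lambda_a=P_a(R)$ is left multiplication by $s$. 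Fix representatives for the residue classes of paths from $a$, chosen so that they form $\field$-bases of the $P_a(w)$. If $p\neq q$ are two such representatives ending at $S$ and $sp\relation sq$ in $\mathrm{A}$, then the difference $sp-sq$ lies in the admissible ideal generated by the two-term relations; since prepending the single arrow $s$ to a path cannot produce a new relation among the $(\gamma_\Left(v)-\gamma_\Right(v))$'s, already $p\relation q$, a contradiction. Hence $P_a(s)$ is injective.

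$(2)$ Apply the duality $D=\hom_\field(-,\field)$ between right and left $\mathrm{A}$-modules. It carries the indecomposable projective right modules $\lambda_a\mathrm{A}$ to the indecomposable injective left modules $I_a=\hom_\field(\lambda_a\mathrm{A},\field)$, and these exhaust the indecomposable injectives up to isomorphism. For an arrow $s$ in $\cR(\cS)_1$, surjectivity of $I_a(s)$ is, under $D$, exactly injectivity of the corresponding structure map of $\lambda_a\mathrm{A}$, which holds by the argument of $(1)$ applied to the opposite quiver $\cR(\cS)^{\op}$.

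$(3)$ Since $\mathrm{A}$ is basic and split, $\mathrm{A}/\mathrm{rad}\,\mathrm{A}\isomorphic\prod_{a\in\cR(\cS)_0}\field$, so the simple modules are $S_a\coloneqq P_a/\mathrm{rad}\,P_a$, each one-dimensional, supported at the vertex $a$ and annihilating every arrow. This completes the proof.
\end{proof}
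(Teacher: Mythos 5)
Your proof matches the paper's in overall structure: the paper's own proof is a one-line reference to \cite[II.2.Lem.2.4, III.2.Lem.2.1]{ASS2006}, which covers the standard descriptions of the indecomposable projectives $P_a$, injectives $I_a$, and simples $S_a$ for a basic bound quiver algebra — exactly the parts of your proof that are genuinely routine. You correctly identify that the ``in particular'' refinements, namely that $P_a(s)$ is injective and $I_a(s)$ is surjective for every arrow $s$, are the only content that goes beyond the citation. The gap is in your justification of precisely those refinements.

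The line ``since prepending the single arrow $s$ to a path cannot produce a new relation among the $(\gamma_\Left(v)-\gamma_\Right(v))$'s, already $p\relation q$'' is not a valid deduction of the left-cancellation property $s\,x\in I\Rightarrow x\in I$ for $x\in\lambda_S\mathrm{A}\lambda_a$. The assertion that this follows because the generators are admissible differences of parallel paths of length $\geq 2$ is false in general: for the quiver $a\rightrightarrows b\xrightarrow{\,\gamma\,} c$ with arrows $\alpha,\beta\colon a\to b$ and the single admissible length-$2$ relation $\gamma\alpha-\gamma\beta$, the structure map $P_a(\gamma)\colon\field^2\to\field$ of the indecomposable projective at $a$ is visibly not injective. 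What makes the claim true for $\mathrm{A}(\cR(\cS))$ is the \emph{specific} shape of its relations — $\gamma_\Left(v)$ and $\gamma_\Right(v)$ wind around the $0$-stratum $v$ from opposite sides, so they begin and end with \emph{distinct} arrows, and together with the planarity of the regions this is what forces all parallel paths in $\cR(\cS)$ to be identified (equivalently, that $\mathrm{A}$ is the incidence algebra of the poset $\cR(\cS)$, from which the injectivity/surjectivity refinements are immediate). Your argument never invokes this feature of the relations; it is stated as though admissibility alone suffices, which is the step that would fail. A corrected blind proof would either establish cancellation directly by exploiting that each side $\gamma_\Left(v),\gamma_\Right(v)$ ends (resp.\ begins) with a distinct arrow and tracing the induced cancellations in the planar region poset, or would first prove $\mathrm{A}\cong$ incidence algebra and deduce the refinements from there.
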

\begin{proof}
This is a standard fact in quiver representation theory. See \cite[II.2.Lem.2.4, III.2.Lem.2.1]{ASS2006}.
\end{proof}

\subsection{A combinatorial proof of invariance of sheaf categories}
We can now give an alternative proof of Theorem \ref{thm:inv of sh cat} via the combinatorial descriptions of sheaf categories, i.e. Lemma \ref{lem:Gamma_S}, Lemma \ref{lem:comb model}, Proposition \ref{prop:legible model}, and Proposition \ref{prop:(co)fibrant replacement}.

\begin{proof}[Proof of Theorem \ref{thm:inv of sh cat}]
It suffices to show the invariance of $\Sh(\cT;\field)$ under the 6 types of Legendrian Reidemeister moves in the front projection. First observe that the diagram $\Sh(\cT;\field)$ is unchanged if we regard each cusp in $\pi_{xz}(T)$ as a 2-valency vertex. Then the Legendrian Reidemeister moves $\RM{I}$ and $\RM{IV}$ are special cases of move $\RM{VI}$, and move $\RM{II}$ is a special case of move $\RM{V}$. So it suffices to show the invariance under moves $\RM{III}$, $\RM{V}$, and $\RM{VI}$.
Let $T,T'$ be any pair of bordered Legendrian graphs in $T^{\infty,-}M$, which differ by any one of the above three moves. We can take a pair of regular cell complexes $\cS,\cS'$ refining the stratifications of $M$ induced by $T,T'$ respectively, such that:
\begin{enumerate}
\item
$\cS,\cS'$ satisfy Assumption \ref{ass:legible model} for $T,T'$ respectively.
\item
$\cS,\cS'$ coincide outside the local bordered Legendrian graphs involving the Legendrian Reidemeister move. 
\end{enumerate}
Denote by $\cS_\Left=\cS_\Left'$, $\cS_\Right=\cS_\Right'$ the induced stratifications of $\cS$ (equivalently, $\cS'$) near the left and right boundary of $M=I_x\times\RR_z$ respectively. In other words, say $T_\Left$ (resp. $T_\Right$) lives in over the interval $I_\Left=(x_\Left,x_\Left+\epsilon)\subset I_x$ (resp. $I_\Right=(x_\Right-\epsilon,x_\Right)\subset I_x$), then $\cS_\Left=\cS|_{I_\Left}$ (resp. $\cS_\Right=\cS|_{I_\Right}$) is the stratification with strata the connected components of $S\cap I_\Left\times\RR_z$ (resp. $S\cap I_\Right\times\RR_z$) for all $S\in\cS$, as defined at the beginning of Section \ref{subsubsec:comb model}.

Apply Lemma \ref{lem:Gamma_S}, Lemma \ref{lem:comb model} and Proposition \ref{prop:legible model}, it suffices to show, for each of three moves above, the equivalence between the 2 diagrams of DG categories: $\Fun_{\cT}(\cR(\cS),\field)\coloneqq (\Fun_{T_\Left}(\cR(\cS_\Left),\field)\leftarrow \Fun_T(\cR(\cS),\field)\rightarrow \Fun_{T_\Right}(\cR(\cS_\Right),\field))$, and 
$\Fun_{\cT'}(\cR(\cS'),\field)=(\Fun_{T_\Left'}(\cR(\cS_\Left'),\field)\leftarrow \Fun_{T'}(\cR(\cS),\field)\rightarrow \Fun_{T_\Right'}(\cR(\cS_\Right'),\field))$.
This can essentially be shown by the diagrams in Figures \ref{fig:Sheaf invariance_move III}, \ref{fig:Sheaf invariance_move V}, \ref{fig:Sheaf invariance_move VI}.

\subsubsection{Move $\RM{III}$}

If $T,T'$ differ by a move $\RM{III}$, can take a pair of regular cell complexes $\cS,\cS'$ as above so that, the local bordered Legendrian graphs involving the move $\RM{III}$ are as in Figure \ref{fig:Sheaf invariance_move III}.(1),(4) respectively. In the figure, denote by $T_i, \cS_i$ the induced bordered Legendrian graph and  regular cell complexes refining $\cS_{T_i}$ in (i), for $1\leq i\leq 4$. In particular, we have $T_1\coloneqq T, T_4\coloneqq T', T_2=T_3$, and $\cS_1=\cS, \cS_4=\cS', \cS_2=\cS_3$, and all the $(T_i,\cS_i)$'s coincide outside the local bordered Legendrian graphs in the figure. The letters in each picture label the regions.

\begin{figure}[ht]
\[
\begin{tikzcd}
(1)=\vcenter{\hbox{\scriptsize\input{invariance_sheaf_R3_1_input.tex}}}\ar[r,"\RM{III}",leftrightarrow] \ar[d, "\sim",leftrightarrow]& 
\vcenter{\hbox{\scriptsize\input{invariance_sheaf_R3_4_input.tex}}}=(4)\\
(2)=\vcenter{\hbox{\scriptsize\input{invariance_sheaf_R3_2_input.tex}}}\ar[r,"\sim",leftrightarrow]& 
\vcenter{\hbox{\scriptsize\input{invariance_sheaf_R3_3_input.tex}}}=(3)\ar[u,"\sim",leftrightarrow]
\end{tikzcd}
\]
\caption{Invariance of sheaf categories: Legendrian Reidemeister move $\RM{III}$.}
\label{fig:Sheaf invariance_move III}
\end{figure}

\begin{proposition/definition}\label{def:adjunction 1 for move III}
There is an adjunction $(i,\pi)$ of functors between two abelian categories
\[
i:Fun(\cR(\cS_1),\field)\rightleftarrows Fun(\cR(\cS_2),\field):\pi
\]
defined as follows:
\begin{enumerate}
\item
$i(F_1)$ forgets $F_1(C)$ in (1);
\item
$\pi(F_2)$ keeps the same data as $F_2$ on other regions, and $\pi(F_2)(C)\coloneqq F_2(X_1)\times_{F_2(Z)}F_2(X_2)=\ker(F_2(X_1)\oplus F_2(X_2)\xrightarrow[]{(+,-)}F_2(Z))$. Then the map $\pi(F_2)(A\rightarrow C)$ is uniquely determined by the universal property of $\pi(F_2)(C)$ as a kernel. 
\end{enumerate}
\noindent{}Moreover, both of $i$ and $\pi$ are exact, and $i\circ\pi=id$. As a consequence, we obtain an adjunction $(i,\pi)$ of DG functors in the DG lifting:
\[
i:\Fun(\cR(\cS_1),\field)\rightleftarrows \Fun(\cR(\cS_2),\field):\pi
\]
Moreover, we get a natural isomorphism $\beta:i\circ\pi\overset{\sim}\Rightarrow \identity$, and $\pi:\Fun(\cR(\cS_2),\field)\rightarrow \Fun(\cR(\cS_1),\field)$ is fully faithful.
\end{proposition/definition}

\begin{proof}
Everything is done by a direct check except the last statement, that is, $\pi$ is fully faithful, which can be shown as follows. For any $F,G\in\Fun(\cR(\cS_2),\field)$, by the adjunction $(i,\pi)$, we have
\[
\rhom^{\bullet}(\pi(F),\pi(G))\simeq \rhom^{\bullet}(i\circ\pi(F),G)\simeq \rhom^{\bullet}(F,G)
\]
where the last quasi-isomorphism follows from the natural isomorphism $\beta:i\circ\pi\overset{\sim}\Rightarrow \identity$, which gives the quasi-isomorphism $\beta_F:i\circ\pi(F)\xrightarrow[]{\sim}F$.
\end{proof}

\begin{lemma}\label{lem:equiv 1 for move III}
Let $\cD_1\coloneqq \Fun_{T_1}(\cR(\cS_1),\field)$ be the DG category in (1), and $\cD_2$ be the DG full subcategory of $\Fun_{T_2}(\cR(\cS_2),\field)$ whose objects are functors $F_2$ such that two additional crossing conditions induced by the two red squares in (2) hold, i.e. both of the total complexes $\Tot(F_2(A)\rightarrow F_2(B_1)\oplus F_2(X_1)\xrightarrow[]{(+,-)}F_2(Z))$ and $\Tot(F_2(A)\rightarrow F_2(B_2)\oplus F_2(X_2)\xrightarrow[]{(+,-)}F_2(Z))$ are acyclic. Then the adjunction $(i,\pi)$ of DG functors in Proposition/Definition \ref{def:adjunction 1 for move III} induces equivalences
\[
i:\cD_1\overset{\sim}\rightleftarrows \cD_2:\pi
\]
which are quasi-inverses to each other.
\end{lemma}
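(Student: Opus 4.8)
The plan is to promote the adjunction $(i,\pi)$ of Proposition/Definition~\ref{def:adjunction 1 for move III} to an equivalence by verifying three things: that $i$ carries $\cD_1$ into $\cD_2$, that $\pi$ carries $\cD_2$ into $\cD_1$, and that the unit $\eta\colon\identity\Rightarrow\pi\circ i$ restricts to a natural isomorphism on $\cD_1$. The counit $\beta\colon i\circ\pi\xRightarrow{\sim}\identity$ of Proposition/Definition~\ref{def:adjunction 1 for move III} is already an isomorphism on all of $\Fun(\cR(\cS_2),\field)$ (since $i$ merely forgets the value on the region $C$), so it restricts to $\cD_2$ for free. Granting the three points, the restricted functors $i\colon\cD_1\rightleftarrows\cD_2\colon\pi$ become mutually quasi-inverse, which is the assertion.

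All three points are \emph{local}: outside the regions $A,B_1,B_2,X_1,X_2,Z,C$ of Figure~\ref{fig:Sheaf invariance_move III}, the cell complexes $\cS_1,\cS_2$ — hence the quivers $\cR(\cS_1),\cR(\cS_2)$ — coincide, so the additional-$1$-stratum quasi-isomorphism conditions, the crossing conditions at crossings away from the move, and (for the variants with subscript $0$) bottom-region acyclicity are manifestly preserved by $i$ and $\pi$. It remains to match the conditions concentrated near the move. For $F_1\in\cD_1$, the crossing condition at the crossing lying immediately above $C$ says $\Tot\bigl(F_1(C)\to F_1(X_1)\oplus F_1(X_2)\xrightarrow[]{(+,-)}F_1(Z)\bigr)\simeq 0$, i.e. $F_1(C)$ is a homotopy fiber product of $F_1(X_1),F_1(X_2)$ over $F_1(Z)$, fitting in an exact triangle $F_1(C)\to F_1(X_1)\oplus F_1(X_2)\to F_1(Z)\xrightarrow{+1}$. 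Substituting this into the crossing conditions at the remaining crossings near the move and recombining the total complexes by the nine lemma for triangulated categories \cite{May2001} (exactly as in the proof of Lemma~\ref{lem:comb model}) yields precisely the acyclicity of $\Tot\bigl(F_1(A)\to F_1(B_1)\oplus F_1(X_1)\xrightarrow[]{(+,-)}F_1(Z)\bigr)$ and $\Tot\bigl(F_1(A)\to F_1(B_2)\oplus F_1(X_2)\xrightarrow[]{(+,-)}F_1(Z)\bigr)$, together with the surviving crossing conditions of $T_2$; hence $i(F_1)\in\cD_2$. Conversely, for $F_2\in\cD_2$, the functor $\pi(F_2)$ agrees with $F_2$ off $C$ and sets $\pi(F_2)(C)$ to be the homotopy fiber product of $F_2(X_1),F_2(X_2)$ over $F_2(Z)$, so the crossing condition above $C$ holds by construction; running the same nine-lemma recombination in reverse, the two red-square conditions of $\cD_2$ together with the shared crossing conditions give back all the crossing conditions of $T_1$ near the move, so $\pi(F_2)\in\cD_1$.

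Finally, the unit $\eta_{F_1}\colon F_1\to\pi(i(F_1))$ is the identity on every region other than $C$, and on $C$ it is the canonical comparison map from $F_1(C)$ to the homotopy fiber product of $F_1(X_1),F_1(X_2)$ over $F_1(Z)$, built from the arrows $C\to X_1$, $C\to X_2$ and the crossing relation $\gamma_\Left\relation\gamma_\Right$; its cone is $\Tot\bigl(F_1(C)\to F_1(X_1)\oplus F_1(X_2)\xrightarrow[]{(+,-)}F_1(Z)\bigr)$, which is acyclic precisely when $F_1$ satisfies the crossing condition above $C$, that is, for every $F_1\in\cD_1$. Thus $\eta$ restricts to a natural isomorphism $\identity_{\cD_1}\xRightarrow{\sim}\pi\circ i$, and combined with the restriction of $\beta$ to $\cD_2$ this completes the proof. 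The main obstacle is the second paragraph: one must fix the local quivers $\cR(\cS_1),\cR(\cS_2)$ near the move explicitly, track which crossings and arrows persist under the move, and check that the iterated octahedral (nine-lemma) manipulation genuinely converts the crossing conditions of configuration $(1)$ into the crossing conditions plus the two red-square conditions of configuration $(2)$ and back — everything else is formal adjunction bookkeeping already prepared in Proposition/Definition~\ref{def:adjunction 1 for move III}.
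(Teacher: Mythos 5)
Your high-level strategy — restrict the adjunction, check both inclusions, and verify the unit and counit become isomorphisms — is the right one and is essentially how the paper proceeds (though the paper phrases the last step as essential surjectivity plus fully faithfulness). However, there is a genuine gap: you repeatedly identify $\pi(F_2)(C)$ with the \emph{homotopy} fiber product of $F_2(X_1),F_2(X_2)$ over $F_2(Z)$, but Proposition/Definition~\ref{def:adjunction 1 for move III} defines $\pi(F_2)(C)$ as the \emph{strict} kernel $\ker\bigl(F_2(X_1)\oplus F_2(X_2)\xrightarrow{(+,-)}F_2(Z)\bigr)$ in the abelian category of cochain complexes. These agree only when the map $F_2(X_1)\oplus F_2(X_2)\to F_2(Z)$ is degreewise surjective. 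Consequently your claims that ``the crossing condition above $C$ holds by construction'' for $\pi(F_2)$, and that the cone of the unit $\eta_{F_1}$ is $\Tot\bigl(F_1(C)\to F_1(X_1)\oplus F_1(X_2)\to F_1(Z)\bigr)$, are false in general: with a strict kernel the three-term total complex is acyclic only if the outgoing map is surjective, and the cone of $F_1(C)\to\ker(\cdots)$ is not the crossing total complex.

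The paper fixes exactly this by invoking Proposition~\ref{prop:(co)fibrant replacement} first: since the quotient path algebra $\mathrm{A}(\cR(\cS))$ has enough injectives/projectives of the described form, every $F_2\in\cD_2$ (resp.\ $F_1\in\cD_1$) is quasi-isomorphic to a functor in which every arrow is sent to a surjection. Only after this replacement do the strict kernel and the homotopy fiber product agree, the crossing condition above $C$ become automatic for $\pi(F_2)$, the red-square conditions become equivalent to the crossing conditions at the flanking crossings, and the comparison $F_1\to\pi\circ i(F_1)$ become a quasi-isomorphism. Your argument needs this preparatory replacement — without it the local analysis near the move does not go through, even though the ``nine-lemma recombination'' away from $C$ is fine.
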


\begin{proof}
Firstly, the adjunction of DG functors $(i,\pi)$ in Proposition/Definition \ref{def:adjunction 1 for move III} induces an adjunction of DG functors 
$
i:\cD_1\rightarrow \cD_2:\pi
$.

\noindent{}To show this, it suffices to show that the essential image of $\cD_1$ under $i$ is contained in $\cD_2$, and the essential image of $\cD_2$ under $\pi$ is contained in $\cD_1$. The former is clear, as for example, for any $F_1\in\cD_1$, the crossing conditions for $F_1$ at the crossing above and the crossing to the left of $C$ in (1) implies the first crossing condition for $i(F_2)$, i.e. the total complex $\Tot(F_2(A)\rightarrow F_2(B_1)\oplus F_2(X_1)\xrightarrow[]{(+,-)}F_2(Z))$ is acyclic. The latter essentially follows from Proposition \ref{prop:(co)fibrant replacement}. More precisely, for any $F_2\in\cD_2\subset\Fun_{T_2}(\cR(\cS),\field)$, by the proposition, $F_2$ is quasi-isomorphic to an object in which all the arrows are sent to surjections. So, we can assume $F_2$ itself has this property. Now, the crossing condition for $\pi(F_2)$ at the crossing above $C$ is automatic, and
the crossing conditions for $F_2$ is equivalent to the natural cochain maps $F_2(A)\xrightarrow[]{\sim}\ker(F_2(B_1)\oplus F_2(X_1)\xrightarrow[]{(+,-)}F_2(Z))$, and $F_2(A)\xrightarrow[]{\sim}\ker(F(B_2)\oplus F_2(X_2)\xrightarrow[]{(+,-)}F_2(Z))$ being quasi-isomorphisms, which are equivalent to the crossing conditions for $\pi(F_2)$ at the crossings to the left and to the right of $C$. Hence, $\pi(F_2)\in\cD_2$.

Now, the natural isomorphism $\beta:i\circ\pi\overset{\sim}\Rightarrow \identity: \cD_2\rightarrow \cD_2$ implies that $i:\cD_1\rightarrow\cD_2$ is essentially surjective.  
By Proposition/Definition \ref{def:adjunction 1 for move III}, $\pi:\cD_2\rightarrow\cD_1$   is fully faithful. It then suffices to show $\pi$ is essentially surjective.

For any $F_1\in\Fun(\cR(\cS_1),\field)$, by Proposition \ref{prop:(co)fibrant replacement}, $F_1$ is quasi-isomorphic to an object in which all arrows are sent to surjections. We can then assume $F_1$ itself has this property. Now, the crossing conditions for $F_1$ in the local bordered Legendrian graph are equivalent to the natural quasi-isomorphisms $F_1(C)\xrightarrow[]{\sim}\ker(F_1(X_1)\oplus F_1(X_2)\xrightarrow[]{(+,-)}F_1(Z))$, $F_1(A)\xrightarrow[]{\sim}\ker(F_1(B_1)\oplus F_1(C)\xrightarrow[]{(+,-)}F_1(X_2))$, and $F_1(A)\xrightarrow[]{\sim}\ker(F_1(B_2)\oplus F_1(C)\xrightarrow[]{(+,-)}F_1(X_1))$, equivalently, $F_1(C)\xrightarrow[]{\sim}\pi\circ i(F_1)(C)$, and the two crossing conditions induced by the two red squares in (2) defining $i(F_1)\in\cD_2$. Hence, we get a natural quasi-isomorphism $\alpha_{F_1}:F_1\xrightarrow[]{\sim}\pi\circ i(F_1)$. In particular, $\pi:\cD_2\rightarrow\cD_1$ is essentially surjective. 
\end{proof}

Similar to $\cD_2$, let $\cD_3$ be the corresponding DG category defined by (3). Then (2) and (3) are identical, that is, $\cD_3=\cD_2$. We just have changed the letters labelling the regions from `$X$' to `$X'$', to make it convenient for us to compare with (4). 

Let $\cD_4\coloneqq \Fun_{T_4}(\cR(\cS_4),\field)$ be the DG category in (4). By a dual argument to that in proving the equivalence between $\cD_1$ and $\cD_2$, we immediately obtain equivalences
\[
p:\cD_3\overset{\sim}\rightleftarrows\cD_4:j
\]
which are quasi-inverses to each other. Here $j$ is induced from the forgetful functor $i:Fun(\cR(\cS_4),\field)\rightarrow Fun(\cR(\cS_3),\field)$ with $i(F_4)$ forgets $F_4(Y')$, and $p$ is induced from the functor $p:Fun(\cR(\cS_3),\field)\rightarrow Fun(\cR(\cS_4),\field)$ with $p(F_3)(Y')=\coker(F_3(A')\xrightarrow[]{(+,-)^t}F_3(B_1')\oplus F_3(B_2'))$.

Now, by composition, we then get an equivalence $\Fun_{T}(\cR(\cS),\field)\simeq \Fun_{T'}(\cR(\cS),\field)$. Notice that the composition sends the object $F|_{T_\Left}$, $F$ restricted to $T_\Left$, to $F'|_{T_\Left'}$ which is quasi-isomorphic to $F_{T_\Left'=T_\Left}$. Hence, the equivalence commutes with $\identity:\Fun_{T_\Left}(\cR(\cS_\Left),\field)\xrightarrow[]{\sim}\Fun_{T_\Left'}(\cR(\cS_\Left'),\field)$ up to a specified natural isomorphism.
The same holds for $T_\Right=T_\Right'$. Therefore, we get an equivalence of diagrams of DG categories: $\Fun_{\cT}(\cR(\cS),\field)\simeq \Fun_{\cT'}(\cR(\cS),\field)$, as desired.

\subsubsection{Move $\RM{V}$}

The proof proceeds similarly as above. If $T,T'$ differ by a move $\RM{V}$, can take the pair of regular cell complexes $\cS,\cS'$ so that the local bordered Legendrian graphs involving the move are as in Figure \ref{fig:Sheaf invariance_move V}.(1),(4) respectively. In the picture, $T$ and $T'$ both have $l$ left and $r$ right half-edges at the vertex, labelled by $1,\ldots,l$ and $l+1,\ldots,l+r$ from top to bottom respectively. In addition, the bordered Legendrian graph $T(\cS)$ (resp. $T(\cS')$) underlying $\cS$ (resp. $\cS'$) is $T$ (resp. $T'$) plus the additional dashed arcs. We have added the dashed arcs to ensure that $S,S'$ satisfy Assumption \ref{ass:legible model}. As before, let $T_i,\cS_i$ be the induced bordered Legendrian graphs and corresponding regular cell complexes in (i), for $1\leq i\leq 4$. In particular, $(T_1,\cS_1)=(T,\cS), (T_4,\cS_4)=(T',\cS_4)$, and $(T_2,\cS_2)=(T_3,\cS_3)$. 

\begin{figure}[ht]
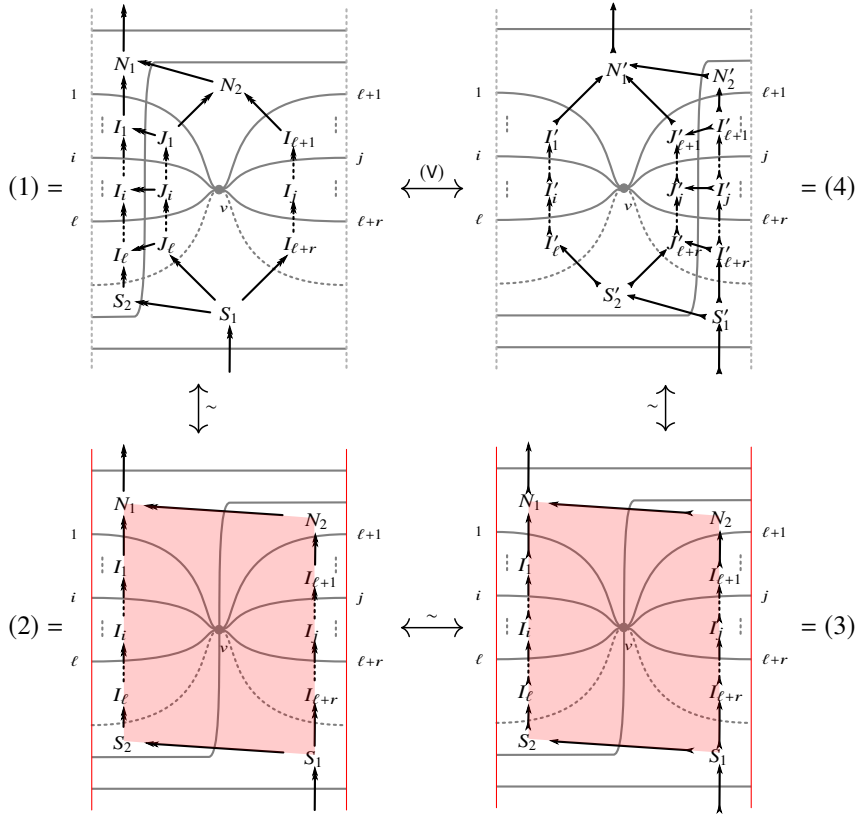

\[
\begin{tikzcd}
(1)=\vcenter{\hbox{\scriptsize\input{invariance_sheaf_R5_1_input.tex}}}\ar[r,"\RM{V}",leftrightarrow] \ar[d, "\sim",leftrightarrow]& 
\vcenter{\hbox{\scriptsize\input{invariance_sheaf_R5_4_input.tex}}}=(4)\\
(2)=\vcenter{\hbox{\scriptsize\input{invariance_sheaf_R5_2_input.tex}}}\ar[r,"\sim",leftrightarrow]& 
\vcenter{\hbox{\scriptsize\input{invariance_sheaf_R5_3_input.tex}}}=(3)\ar[u,"\sim",leftrightarrow]
\end{tikzcd}
\]
\caption{Invariance of sheaf categories: Legendrian Reidemeister move $\RM{V}$.}
\label{fig:Sheaf invariance_move V}
\end{figure}

Similar to Proposition/Definition \ref{def:adjunction 1 for move III}, we have
\begin{proposition/definition}\label{def:adjunction 1 for move V}
There is an adjunction $(i,\pi)$ of functors between two abelian categories
\[
i:Fun(\cR(\cS_1),\field)\rightleftarrows Fun(\cR(\cS_2),\field):\pi
\]
defined as follows:
\begin{enumerate}
\item
$i(F_1)$ forgets $F_1(J_k)$'s in (1) for $1\leq k\leq l$;
\item
$\pi(F_2)$ keeps the same data as $F_2$ on other regions, and $\pi(F_2)(J_k)\coloneqq F_2(I_k)\times_{F_2(N_1)}F_2(N_2)=\ker(F_2(I_k)\oplus F_2(N_2)\xrightarrow[]{(+,-)}F_2(N_1))$ for $1\leq k\leq l$. Then all the additional maps for $\pi(F_2)$ are uniquely determined by the universal properties of $\pi(F_2)(J_k)$'s as kernels. 
\end{enumerate}
\noindent{}Moreover, both of $i$ and $\pi$ are exact, and $i\circ\pi=id$. As a consequence, we obtain an adjunction $(i,\pi)$ of DG functors in the DG lifting:
\[
i:\Fun(\cR(\cS_1),\field)\rightleftarrows \Fun(\cR(\cS_2),\field):\pi
\]
Moreover, we get a natural isomorphism $\beta:i\circ\pi\overset{\sim}\Rightarrow \identity$, and $\pi:\Fun(\cR(\cS_2),\field)\rightarrow \Fun(\cR(\cS_1),\field)$ is fully faithful.
\end{proposition/definition}

\begin{proof}
The proof is identical to that of Proposition/Definition \ref{def:adjunction 1 for move III}.
\end{proof}

Similar to Lemma \ref{lem:equiv 1 for move III}, we have
\begin{lemma}\label{lem:equiv 1 for move V}
Let $\cD_1\coloneqq \Fun_{T_1}(\cR(\cS_1),\field)$ be the DG category in (1), and $\cD_2$ be the DG full subcategory of $\Fun_{T_2}(\cR(\cS_2),\field)$ whose objects are functors $F_2$ such that the additional crossing condition induced by the red square in (2) holds, i.e. the total complex $\Tot(F_2(S_1)\rightarrow F_2(S_2)\oplus F_2(N_2)\xrightarrow[]{(+,-)}F_2(N_1))$ is acyclic. Then the adjunction $(i,\pi)$ of DG functors in Proposition/Definition \ref{def:adjunction 1 for move V} induces equivalences
\[
i:\cD_1\overset{\sim}\rightleftarrows \cD_2:\pi
\]
which are quasi-inverses to each other.
\end{lemma}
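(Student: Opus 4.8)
The plan is to run the argument of Lemma~\ref{lem:equiv 1 for move III} verbatim, adapted to the local picture of move $\RM{V}$ in Figure~\ref{fig:Sheaf invariance_move V}.(1),(2). The first step is to verify that the adjunction $(i,\pi)$ of Proposition/Definition~\ref{def:adjunction 1 for move V} restricts to an adjunction between $\cD_1$ and $\cD_2$, i.e. that $i(\cD_1)\subset\cD_2$ and $\pi(\cD_2)\subset\cD_1$. The inclusion $i(\cD_1)\subset\cD_2$ is the easy direction: the conditions defining membership in $\cD_1=\Fun_{T_1}(\cR(\cS_1),\field)$ near the vertex $v$ — which, under the legible model $\rho^*$ and the local combinatorial model at a vertex established in the proof of Lemma~\ref{lem:comb model}, amount to quasi-isomorphisms along the arrows $\{k\to I_k\}$ together with $v\to\ell$ and $v\to\ell+r$ — force the total complex $\Tot(F_1(S_1)\to F_1(S_2)\oplus F_1(N_2)\to F_1(N_1))$ to be acyclic, which is exactly the red-square condition cutting out $\cD_2$.

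For the opposite inclusion $\pi(\cD_2)\subset\cD_1$ I would first invoke Proposition~\ref{prop:(co)fibrant replacement}: up to quasi-isomorphism we may assume $F_2$ sends every arrow of $\cR(\cS_2)$ to a surjection, so that the fibre product $\pi(F_2)(J_k)=F_2(I_k)\times_{F_2(N_1)}F_2(N_2)$ computes an honest kernel. Then the defining conditions for $\pi(F_2)$ at the relocated vertex of $T_1$ split into the acyclicity already assumed for $F_2$ (the red-square condition) and the quasi-isomorphisms built into the universal property of the kernels $\pi(F_2)(J_k)$; all remaining arrow conditions are inherited verbatim from $F_2$. This establishes the restricted adjunction.

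With the restricted adjunction in hand the remainder is formal and mirrors Lemma~\ref{lem:equiv 1 for move III}. The natural isomorphism $\beta:i\circ\pi\overset{\sim}\Rightarrow\identity$ of Proposition/Definition~\ref{def:adjunction 1 for move V} shows $i:\cD_1\to\cD_2$ is essentially surjective, and the same proposition gives that $\pi:\cD_2\to\cD_1$ is fully faithful. It then suffices to prove $\pi$ essentially surjective. Given $F_1\in\cD_1$, replace it (Proposition~\ref{prop:(co)fibrant replacement}) by a quasi-isomorphic object with all arrows surjective; the conditions defining $\cD_1$ in the local chart become equivalent to the family of quasi-isomorphisms $F_1(J_k)\xrightarrow[]{\sim}\ker\bigl(F_1(I_k)\oplus F_1(N_2)\to F_1(N_1)\bigr)=\pi\circ i(F_1)(J_k)$ for $1\le k\le\ell$, together with the red-square acyclicity that places $i(F_1)$ in $\cD_2$. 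Assembling these yields a natural quasi-isomorphism $\alpha_{F_1}:F_1\xrightarrow[]{\sim}\pi\circ i(F_1)$, so $\pi$ is essentially surjective and $i,\pi$ are mutually quasi-inverse equivalences $\cD_1\simeq\cD_2$.

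The step requiring the most care is the bookkeeping at the vertex: one must be precise about exactly which arrows of $\cR(\cS_1)$ (resp. of $\cR(\cS_2)$) emanating from $v$ must be sent to quasi-isomorphisms for membership in $\cD_1$ (resp. which fibre products cut out the image of $\pi$), and then check that after the surjective-arrow reduction of Proposition~\ref{prop:(co)fibrant replacement} these coincide with the kernel descriptions and the red-square condition. This identification rests on the local model at a vertex from the proof of Lemma~\ref{lem:comb model} and on the fact that indecomposable injective $\mathrm{A}(\cR(\cS))$-modules send every arrow to a surjection (Proposition~\ref{prop:(co)fibrant replacement}). Once this matching is in place, the fully faithfulness and essential surjectivity are immediate consequences of the adjunction, just as in the move $\RM{III}$ case.
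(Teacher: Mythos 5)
Your proposal is correct and takes essentially the same approach as the paper: the paper's own proof of this lemma is the one-line remark "Similar to that of Lemma~\ref{lem:equiv 1 for move III}," and you have carried out precisely that adaptation — verifying the adjunction restricts in both directions (using Proposition~\ref{prop:(co)fibrant replacement} to reduce to the surjective-arrow case for the non-trivial direction), then deducing essential surjectivity of $i$ from the natural isomorphism $\beta$, fully faithfulness of $\pi$ from Proposition/Definition~\ref{def:adjunction 1 for move V}, and essential surjectivity of $\pi$ from the explicit natural quasi-isomorphism $\alpha_{F_1}:F_1\xrightarrow[]{\sim}\pi\circ i(F_1)$.
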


\begin{proof}
Similar to that of Lemma \ref{lem:equiv 1 for move III}.
\end{proof}

Similar to the case of move III, let $\cD_3$ be the DG category defined by (3). Then $\cD_3=\cD_2$. Let $\cD_4\coloneqq \Fun_{T_4}(\cR(\cS_4),\field)$ be the DG category in (4). By a dual argument to that in proving the equivalence between $\cD_1$ and $\cD_2$, we immediately obtain equivalences
\[
p:\cD_3\overset{\sim}\rightleftarrows\cD_4:j
\]
which are quasi-inverses to each other. Here $j$ is induced from the forgetful functor $i:Fun(\cR(\cS_4),\field)\rightarrow Fun(\cR(\cS_3),\field)$ with $i(F_4)$ forgets $F_4(J_k')$'s for $l+1\leq k\leq l+r$, and $p$ is induced from the functor $p:Fun(\cR(\cS_3),\field)\rightarrow Fun(\cR(\cS_4),\field)$ with $p(F_3)(J_k')=\coker(F_3(S_1')\xrightarrow[]{(+,-)^t}F_3(I_k')\oplus F_3(S_2'))$.

Again by composition, we get an equivalence of diagrams of DG categories: $\Fun_{\cT}(\cR(\cS),\field)\simeq \Fun_{\cT'}(\cR(\cS),\field)$, as desired.

\subsubsection{Move $\RM{VI}$}

If $T,T'$ differ by a move $\RM{VI}$, can take the pair of regular cell complexes $\cS,\cS'$ so that the local bordered Legendrian graphs involving the move are as in Figure \ref{fig:Sheaf invariance_move VI}.(1),(2) respectively. 
In the picture, $T$ has $l$ left and $r\geq 1$ right half-edges, labelled by $1,\ldots, l$ and $l+1,\ldots,l+r$ from top to bottom respectively. 
$T'$ has $l+1$ left and $r-1$ right half-edges at the vertex, with the labelling inherited from that of $T$. In addition, the bordered Legendrian graph $T(\cS)$ (resp. $T(\cS')$) underlying $\cS$ (resp. $\cS'$) is $T$ (resp. $T'$) plus the additional dashed arcs. We have added the dashed arcs to ensure that $S,S'$ satisfy Assumption \ref{ass:legible model}. 

\begin{figure}[ht]
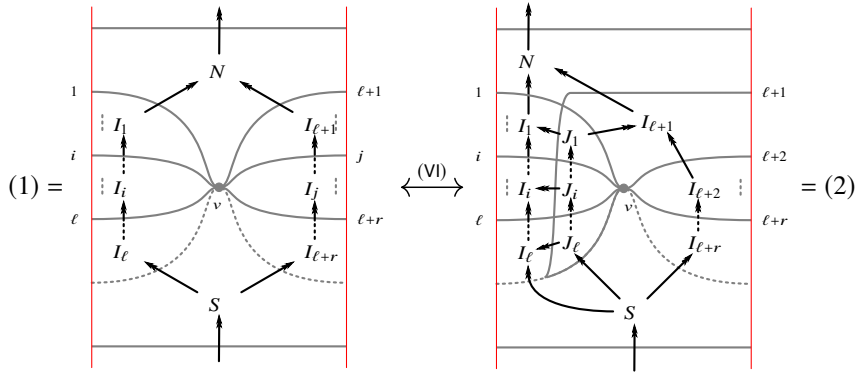

\[
\begin{tikzcd}
(1)=\vcenter{\hbox{\scriptsize\input{invariance_sheaf_R6_1_input.tex}}}\ar[r,"\RM{VI}",leftrightarrow] &
\vcenter{\hbox{\scriptsize\input{invariance_sheaf_R6_2_input.tex}}}=(2)
\end{tikzcd}
\]
\caption{Invariance of sheaf categories: Legendrian Reidemeister move $\RM{VI}$.}
\label{fig:Sheaf invariance_move VI}
\end{figure}

Exactly the same as before, we have
\begin{proposition/definition}\label{def:adjunction for move VI}
There is an adjunction $(i,\pi)$ of functors between two abelian categories
\[
i:Fun(\cR(\cS'),\field)\rightleftarrows Fun(\cR(\cS),\field):\pi
\]
defined as follows:
\begin{enumerate}
\item
$i(F')$ forgets $F'(J_k)$'s in (2) for $1\leq k\leq l$;
\item
$\pi(F)$ keeps the same data as $F$ on other regions, and $\pi(F)(J_k)\coloneqq F(I_k)\times_{F(N)}F(I_{l+1})=\ker(F(I_k)\oplus F(I_{l+1})\xrightarrow[]{(+,-)}F(N))$ for $1\leq k\leq l$. Then all the additional maps for $\pi(F)$ are uniquely determined by the universal properties of $\pi(F)(J_k)$'s as kernels. 
\end{enumerate}
\noindent{}Moreover, both of $i$ and $\pi$ are exact, and $i\circ\pi=id$. As a consequence, we obtain an adjunction $(i,\pi)$ of DG functors in the DG lifting:
\[
i:\Fun(\cR(\cS'),\field)\rightleftarrows \Fun(\cR(\cS),\field):\pi
\]
Moreover, we get a natural isomorphism $\beta:i\circ\pi\overset{\sim}\Rightarrow \identity$, and $\pi:\Fun(\cR(\cS),\field)\rightarrow \Fun(\cR(\cS'),\field)$ is fully faithful.
\end{proposition/definition}

\begin{proof}
The proof is identical to that of Proposition/Definition \ref{def:adjunction 1 for move III}.
\end{proof}

Also, we have
\begin{lemma}\label{lem:equiv for move VI}
Let $\cD\coloneqq \Fun_{T}(\cR(\cS),\field)$ and $\cD'\coloneqq \Fun_{T'}(\cR(\cS'),\field)$ be the DG categories in (1) and (2) respectively. Then the adjunction $(i,\pi)$ of DG functors in Proposition/Definition \ref{def:adjunction 1 for move V} induces equivalences
\[
i:\cD'\overset{\sim}\rightleftarrows \cD:\pi
\]
which are quasi-inverses to each other.
\end{lemma}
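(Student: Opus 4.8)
The plan is to mimic, essentially verbatim, the arguments in the proofs of Lemma~\ref{lem:equiv 1 for move III} and Lemma~\ref{lem:equiv 1 for move V}; the present situation is in fact slightly simpler, since move~$\RM{VI}$ produces a direct equivalence $\cD'\simeq\cD$ rather than a zig-zag through intermediate stratifications. The first step is to check that the adjunction $(i,\pi)$ of DG functors from Proposition/Definition~\ref{def:adjunction for move VI} restricts to an adjunction $i:\cD'\rightleftarrows\cD:\pi$, that is, that the essential image of $\cD'$ under $i$ lies in $\cD$ and the essential image of $\cD$ under $\pi$ lies in $\cD'$. For the forgetful functor $i$ this is immediate: for $F'\in\cD'$, the crossing conditions and the conditions on the additional one-dimensional strata of $F'$, together with the vertex condition at $v$ (now of type $(\ell+1,r-1)$), force the vertex condition at $v$ (of type $(\ell,r)$) for $i(F')$ after forgetting the values $F'(J_k)$. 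For $\pi$ I would invoke Proposition~\ref{prop:(co)fibrant replacement}: up to quasi-isomorphism we may assume every arrow of $F\in\cD$ is sent to a surjection, so that each fiber product $\pi(F)(J_k)=F(I_k)\times_{F(N)}F(I_{\ell+1})=\ker\big(F(I_k)\oplus F(I_{\ell+1})\xrightarrow{(+,-)}F(N)\big)$ computes the correct homotopy fiber product; then the vertex condition for $F$ at $v$ translates into the vertex condition for $\pi(F)$ at $v$ of type $(\ell+1,r-1)$, and the crossing conditions are unaffected, so $\pi(F)\in\cD'$.

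With this in place, the natural isomorphism $\beta:i\circ\pi\overset{\sim}\Rightarrow\identity$ of Proposition/Definition~\ref{def:adjunction for move VI}, restricted to $\cD$, shows $i:\cD'\to\cD$ is essentially surjective, and the same proposition (using the adjunction $(i,\pi)$ together with $\beta$, exactly as in Proposition/Definition~\ref{def:adjunction for move VI}) gives that $\pi:\cD\to\cD'$ is fully faithful. It remains to show $\pi$ is essentially surjective. Given $F'\in\cD'\subset\Fun(\cR(\cS'),\field)$, apply Proposition~\ref{prop:(co)fibrant replacement} once more to reduce to the case where every arrow of $F'$ is a surjection; then the vertex condition for $F'$ at $v$ becomes equivalent to the family of natural quasi-isomorphisms $F'(J_k)\xrightarrow{\sim}\ker\big(F'(I_k)\oplus F'(I_{\ell+1})\xrightarrow{(+,-)}F'(N)\big)$ for $1\le k\le\ell$, which is precisely a natural quasi-isomorphism $\alpha_{F'}:F'\xrightarrow{\sim}\pi\circ i(F')$. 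This proves $\pi$ essentially surjective, so $(i,\pi)$ is a pair of quasi-inverse equivalences between $\cD'$ and $\cD$, which is the assertion of the lemma. As in the proofs for moves~$\RM{III}$ and~$\RM{V}$, one finally records that this equivalence commutes, up to specified natural isomorphism, with the identity functors on the boundary pieces $T_\Left=T_\Left'$ and $T_\Right=T_\Right'$, so that it upgrades to an equivalence of the full diagrams $\Fun_{\cT}(\cR(\cS),\field)\simeq\Fun_{\cT'}(\cR(\cS'),\field)$.

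I expect the main obstacle to be the bookkeeping that identifies the vertex condition of a type $(\ell,r)$ vertex in $\cD$ with that of a type $(\ell+1,r-1)$ vertex in $\cD'$, under the dictionary inserting the $\ell$ new regions $J_1,\dots,J_\ell$ as homotopy fiber products of $F(I_k)$ and $F(I_{\ell+1})$ over $F(N)$. Concretely, for objects with surjective structure maps one must verify that the acyclicity of the total complexes encoding the local vertex condition near $v$—in the sense of the local combinatorial model established inside the proof of Lemma~\ref{lem:comb model}—is equivalent before and after the move. This is a purely homological-algebra statement about iterated kernels, of the same flavour as, but a bit more involved than, the crossing-condition manipulations already carried out for moves~$\RM{III}$ and~$\RM{V}$, and it is where the hypothesis that the relevant arrows be surjective (supplied by Proposition~\ref{prop:(co)fibrant replacement}) is genuinely used.
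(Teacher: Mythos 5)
Your proposal is correct and follows exactly the same route as the paper's proof, which simply reads ``Similar to that of Lemma~\ref{lem:equiv 1 for move III}'': restrict the adjunction $(i,\pi)$ of Proposition/Definition~\ref{def:adjunction for move VI}, use the natural isomorphism $\beta$ for essential surjectivity of $i$ and the proposition's fully-faithfulness of $\pi$, and reduce via Proposition~\ref{prop:(co)fibrant replacement} to surjective structure maps to get $F'\xrightarrow{\sim}\pi\circ i(F')$. The only stylistic caveat is that what you call the ``vertex condition'' for $F'$ is not a separate constraint in Definition~\ref{def:legible model}; the needed quasi-isomorphisms $F'(J_k)\xrightarrow{\sim}\ker(F'(I_k)\oplus F'(I_{\ell+1})\to F'(N))$ come from condition~(1) of that definition (arrows across the added dashed strata near $v$ are quasi-isomorphisms) together with the quiver relation at $v$, but this is a naming issue, not a gap.
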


\begin{proof}
Similar to that of Lemma \ref{lem:equiv 1 for move III}.
\end{proof}
Again as before, we get an equivalence of diagrams of DG categories: $\Fun_{\cT}(\cR(\cS),\field)\simeq \Fun_{\cT'}(\cR(\cS),\field)$, as desired.
Now, We've finished the proof of Theorem \ref{thm:inv of sh cat}.
\end{proof}

\subsection{Microlocal monodromy}
Given a bordered Legendrian graph $\cT=(T_\Left\rightarrow T\leftarrow T_\Right)$, equipped with a $\ZZ$-valued Maslov potential $\mu$, let $\cS$ be a regular cell complex refining the stratification $\cS_\cT$ induced by $\cT$. Denote by $\Loc(T\setminus V_T)$ the category of local systems of cochain complexes of $\field$-modules on the complement $T\setminus V_T$ of vertices.

As in \cite[Def.5.4]{STZ2017}, we define the \emph{microlocal monodromy}.
\begin{definition}[Microlocal monodromy]\label{def:microlocal monodromy}
There is a natural functor $\muhom:\Sh(T;\field)\rightarrow \Loc(T\setminus V_T)$, called \emph{microlocal monodromy}, such that for each arc $a$ of $\cT$, the region above $a$, denote by $N$, and the arrow $a\rightarrow N$ in $\cS$, we define
\[
\muhom(\cF)(a)\coloneqq \cone(\cF(\Star(a))\rightarrow \cF(\Star(N)))[-\mu(a)].
\]
\end{definition}

\begin{proposition}\label{prop:microlocal monodromy}
Then the microlocal monodromy $\muhom$ is invariant under Legendrian isotopy of $\cT$.
\end{proposition}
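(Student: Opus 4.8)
The plan is to reduce the statement about microlocal monodromy to the invariance of the sheaf categories already established in Theorem~\ref{thm:inv of sh cat}, by showing that $\muhom$ is compatible with the equivalences constructed there. Concretely, let $\RM{M}:T'\to T$ be one of the generating front Reidemeister moves $\RM{III}$, $\RM{V}$, $\RM{VI}$ (as noted in the proof of Theorem~\ref{thm:inv of sh cat}, the moves $\RM{I}$, $\RM{II}$, $\RM{IV}$ are special cases of these once cusps are regarded as bivalent vertices). By Theorem~\ref{thm:inv of sh cat} there is a DG equivalence $\frK:\Sh(T;\field)\xrightarrow{\sim}\Sh(T';\field)$; I must produce a natural isomorphism of functors $\muhom_{T}\xRightarrow{\sim}\muhom_{T'}\circ\frK$ over the part of the Legendrian away from the local move, and check that the arcs entering or leaving the region of the move are matched correctly, with the Maslov potential shift $[-\mu(a)]$ respected (this is where Lemma~\ref{lemma:Reidemeister move preserves potential}, which lifts moves to potentials, is used).

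First I would fix a pair of regular cell complexes $\cS,\cS'$ refining $\cS_{T},\cS_{T'}$ that coincide outside the support of the move, exactly as in the proof of Theorem~\ref{thm:inv of sh cat}. Away from the local picture, $\muhom$ is manifestly unchanged: for an arc $a$ disjoint from the move, $\muhom(\cF)(a)=\cone(\cF(\Star(a))\to\cF(\Star(N)))[-\mu(a)]$ and both $\Star(a)$ and $\Star(N)$ are unaffected by $\frK$ up to the specified natural isomorphisms $\frh_\Left,\frh_\Right$ of Theorem~\ref{thm:inv of sh cat}, so the cone is preserved. The work is entirely local: for each of the three moves I would, using the legible model $\Fun_T(\cR(\cS),\field)$ of Proposition~\ref{prop:legible model} and the explicit adjunctions $(i,\pi)$ of Proposition/Definition~\ref{def:adjunction 1 for move III}, \ref{def:adjunction 1 for move V}, \ref{def:adjunction for move VI}, compute $\muhom$ on the arcs of the local tangle in terms of the functor values $F(R)$ on regions, and verify that under the forgetting/kernel-completion functors the microlocal stalk along each arc is carried isomorphically to the microlocal stalk along the corresponding arc of the new diagram. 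In the legible model $\muhom(\cF)(a)$ becomes $\cone(F(\rho(a))\to F(N_a))[-\mu(a)]$ where $N_a$ is the region immediately above $a$, which reduces each verification to a small diagram chase with the crossing/vertex conditions (the total-complex acyclicity conditions) of Definition~\ref{def:comb model} and Definition~\ref{def:legible model}.

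The main obstacle I expect is the vertex-adjacent arcs in moves $\RM{V}$ and $\RM{VI}$: there the arcs entering the vertex are reindexed (a right half-edge becomes a left half-edge under $\RM{VI}$, and the vertex is translated past a strand under $\RM{V}$), so one must check not only that the cones $\muhom(\cF)(a)$ are preserved as objects of $\Loc(T\setminus V_T)$ but that the identification is compatible with the local system structure, i.e.\ with the gluing maps across the singular points and with the Maslov potential bookkeeping near the vertex. This is exactly the analogue of the computation underlying the local combinatorial model near a vertex in the proof of Lemma~\ref{lem:comb model}, and I would model it on that argument: use the explicit poset subcategory of $\cS$ near the vertex, express each $\muhom(\cF)(i)=\cone(F(i)\to F(I_{i-1}))[-\mu(i)]$, and check that the adjunction $\pi$ sends this isomorphically to the corresponding cone in the moved diagram, the key point being that $\pi$ only adds regions (the $J_k$'s) via kernels over the top region $N$ and leaves the microlocal data along the existing arcs intact. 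For move $\RM{III}$ the only arcs affected are the three strands participating in the triple point and the verification is the standard one already implicit in \cite{STZ2017}. Assembling the three local checks with the global statement and composing along a zig-zag of moves then gives the invariance of $\muhom$, proving Proposition~\ref{prop:microlocal monodromy}.
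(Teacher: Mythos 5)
Your plan is workable, but it takes a genuinely different — and much longer — route than the paper. The paper's proof is essentially two lines: it cites the argument of \cite[\S5.1]{STZ2017} verbatim, and then gives an alternative ``intrinsic'' argument via Kashiwara--Schapira \cite[Prop.~7.5.3]{KS1994}: at a smooth Legendrian point $p\in T\setminus V_T$, the quantity $\muhom(\cF)(p)$ defined combinatorially in Definition~\ref{def:microlocal monodromy} coincides with the microlocal stalk of $\cF$ at $p$, which is a geometric invariant of the microlocal sheaf $\cF$ and is therefore automatically preserved by the GKS-type equivalences underlying Theorem~\ref{thm:inv of sh cat}. In contrast, you propose to run the combinatorial proof of Theorem~\ref{thm:inv of sh cat} move by move through the legible model and check by hand that the adjunctions $(i,\pi)$ of Propositions/Definitions~\ref{def:adjunction 1 for move III}, \ref{def:adjunction 1 for move V}, \ref{def:adjunction for move VI} carry each cone $\cone(F(\rho(a))\to F(N_a))[-\mu(a)]$ to the corresponding cone across the move. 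This is plausible and your analysis of where the work lies (arcs adjacent to the vertex in moves $\RM{V}$ and $\RM{VI}$, relabelling of half-edges, Maslov potential bookkeeping via Lemma~\ref{lemma:Reidemeister move preserves potential}) is accurate, but it requires three nontrivial local diagram chases and careful tracking of the identifications near the vertex, all of which the paper's intrinsic argument sidesteps entirely. What your approach buys is independence from the GKS machinery: everything is done inside the legible model. What the paper's approach buys is economy — and, importantly, a reassurance that the combinatorial $\muhom$ really is the microlocal stalk, which is what makes the rest of the augmentation--sheaf comparison geometrically meaningful. If you want a self-contained combinatorial proof, your outline should work, but you should be explicit that $\pi$'s kernel-completion only adds representation data over regions not adjacent to the surviving arcs and does not modify the stalk maps $F(\rho(a))\to F(N_a)$ along arcs of $T$ itself; this is the crux of each local check.
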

\begin{proof}
The proof is entirely the same to that in \cite[\S5.1]{STZ2017}. In another perspective, for any sheaf $F\in\Sh(\cT;\field)$ and any point $p\in \cT\setminus\cV_\cT$ which is a smooth Legendrian point of $\cT$, by Proposition 7.5.3 in \cite{KS1994}, the microlocal monodromy $\muhom(\cF)(p)$ is nothing but the microlocal stalk of $\cF$ at $p$, which is intrinsic.
\end{proof}

\begin{definition}[Subcategory of microlocal rank 1]
We define $\cC_1(T,\mu;\field)$ to be the full DG subcategory of $\Sh(T;\field)$ whose objects are $\cF$ such that $\muhom(\cF)$ is a local system of rank $1$ $\field$-modules in cohomological degree $0$, and define the induced diagram of constructible sheaf categories
\[
\cC_1(\cT,\bfmu;\field)\coloneqq (\cC_1(T_\Left,\mu_\Left;\field)\leftarrow\cC_1(T,\mu;\field)\rightarrow\cC_1(T_\Right,\mu_\Right;\field)).
\]
\end{definition}
This DG category $\cC_1(\cT,\bfmu;\field)$ will be the sheaf side of the augmentation-sheaf correspondence in the next section.
As a consequence of Theorem \ref{thm:inv of sh cat}, we obtain
\begin{corollary}\label{cor:invariance of C_1}
The category $\cC_1(\cT,\bfmu;\field)$ is a Legendrian isotopy invariant up to DG equivalence.
\end{corollary}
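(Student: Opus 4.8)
The plan is to deduce the corollary directly from Theorem~\ref{thm:inv of sh cat} together with the invariance of the microlocal monodromy, Proposition~\ref{prop:microlocal monodromy}. Fix a Legendrian isotopy between two bordered Legendrian graphs $(\cT,\bfmu)$ and $(\cT',\bfmu')$; by Lemma~\ref{lemma:Reidemeister move preserves potential} we may assume that the Maslov potentials correspond under the isotopy, and by the Reidemeister calculus it suffices to treat a single front Reidemeister or basepoint move. Theorem~\ref{thm:inv of sh cat} supplies a DG equivalence $\frK:\Sh(T;\field)\xrightarrow{\sim}\Sh(T';\field)$ which, under the restriction functors $\mathrm{r}_\Left,\mathrm{r}_\Right$, commutes up to the natural isomorphisms $\frh_\Left,\frh_\Right$ with the identity functors on the two border sheaf categories. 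Since $\cC_1(T,\mu;\field)$ is by Definition~\ref{def:microlocal monodromy} a \emph{full} subcategory of $\Sh(T;\field)$ cut out by a condition on $\muhom(\cF)$, it suffices to show that $\frK$ carries this subcategory onto $\cC_1(T',\mu';\field)$.

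The core step is precisely this compatibility of $\frK$ with the microlocal monodromy. The equivalence $\frK$ is (up to the combinatorial dictionary of Lemma~\ref{lem:Gamma_S}, Lemma~\ref{lem:comb model} and Proposition~\ref{prop:legible model}) induced by the Guillermou--Kashiwara--Schapira quantization of the ambient contact isotopy, hence preserves microlocal stalks; as noted in the proof of Proposition~\ref{prop:microlocal monodromy}, for a smooth Legendrian point $p\in T\setminus V_T$ the microlocal monodromy $\muhom(\cF)(p)$ is the microlocal stalk of $\cF$ at $p$ (Proposition~7.5.3 in \cite{KS1994}), which is intrinsic. Writing $\Psi:T\setminus V_T\xrightarrow{\sim}T'\setminus V_{T'}$ for the diffeomorphism induced by the isotopy, one obtains a natural isomorphism $\muhom_{T'}\circ\frK\cong\Psi_*\circ\muhom_T$ of functors $\Sh(T;\field)\to\Loc(T'\setminus V_{T'})$; here the Maslov shifts $[-\mu(a)]$ and $[-\mu'(\Psi(a))]$ in Definition~\ref{def:microlocal monodromy} agree because $\bfmu$ and $\bfmu'$ are matched via Lemma~\ref{lemma:Reidemeister move preserves potential}. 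Since $\Psi_*$ is an equivalence of local system categories, it preserves both the property of being a rank-$1$ local system and that of being concentrated in cohomological degree $0$. Hence $\frK\cF\in\cC_1(T',\mu';\field)$ whenever $\cF\in\cC_1(T,\mu;\field)$, and applying the same reasoning to a quasi-inverse of $\frK$ shows that $\frK$ restricts to a DG equivalence $\cC_1(T,\mu;\field)\xrightarrow{\sim}\cC_1(T',\mu';\field)$.

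Finally I would assemble the diagram. The border categories $\cC_1(T_\Left,\mu_\Left;\field)$ and $\cC_1(T_\Right,\mu_\Right;\field)$ are defined by exactly the same microlocal rank-$1$ condition, and $\mathrm{r}_\Left,\mathrm{r}_\Right$ are compatible with $\muhom$ by construction, so the restrictions of $\frK$ to the borders are the identity equivalences; combined with $\frh_\Left,\frh_\Right$ from Theorem~\ref{thm:inv of sh cat} this produces an equivalence of diagrams $\cC_1(\cT,\bfmu;\field)\simeq\cC_1(\cT',\bfmu';\field)$ of the required form. The main obstacle is the middle step: one must know not merely that $\muhom$ is invariant as an abstract functor (Proposition~\ref{prop:microlocal monodromy}) but that the \emph{specific} equivalence $\frK$ produced in the proof of Theorem~\ref{thm:inv of sh cat} intertwines the two microlocal monodromy functors along $\Psi$, with the Maslov-potential shifts matched --- which requires either tracing through the GKS kernel or, in the combinatorial model, checking move-by-move that the equivalences of Lemmas~\ref{lem:equiv 1 for move III}, \ref{lem:equiv 1 for move V} and \ref{lem:equiv for move VI} respect the cone defining $\muhom$ in Definition~\ref{def:microlocal monodromy}.
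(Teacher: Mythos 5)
Your argument is, in expanded form, exactly the paper's one-line proof: Theorem~\ref{thm:inv of sh cat} supplies the equivalence $\frK$, Proposition~\ref{prop:microlocal monodromy} (i.e.\ the fact that $\muhom$ agrees with the intrinsic microlocal stalk at smooth Legendrian points) shows $\frK$ preserves the rank-$1$ condition, and Lemma~\ref{lemma:Reidemeister move preserves potential} matches the Maslov shifts. The subtlety you flag in the last paragraph is precisely what the word ``intrinsic'' in the paper's proof is invoking, so there is no gap.
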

\begin{proof}
This follows from Theorem \ref{thm:inv of sh cat} and the fact that the notion of the microlocal monodromy is intrinsic as seen in Proposition \ref{prop:microlocal monodromy}.
\end{proof}

Let $\cS$ be a regular cell complex which refines $\cS_T$ and satisfies Assumption \ref{ass:legible model}. We use the notations in Definitions \ref{def:poset for legible model} and \ref{def:legible model}.
\begin{definition}\label{def:legible model for C_1}
We define $\Fun_{(T,\mu),1}(\cR(\cS),\field)$ to be the full DG subcategory of $\Fun_T(\cR(\cS),\field)_0$ consisting of functors $F$ such that $\cone(F(e_s))[-\mu(s)]\homotopic \field$ for all arcs $s$ contained in $T$. 

By restriction, we then obtain a diagram of DG categories 
\[
\Fun_{(\cT,\bfmu),1}(\cR(\cS),\field)\coloneqq(\Fun_{(T_\Left,\mu_\Left),1}(\cR(\cS|_{T_\Left}),\field)\leftarrow\Fun_{(T,\mu),1}(\cR(\cS),\field)\rightarrow\Fun_{(T_\Right,\mu_\Right),1}(\cR(\cS|_{T_\Right}),\field))
\]
\end{definition}

\begin{corollary}\label{cor:legible model for C_1}
There is an $A_{\infty}$-equivalence:
\begin{eqnarray*}
\Fun_{(\cT,\bfmu),1}(\cR(\cS),\field)\homotopic\cC_1(\cT,\bfmu;\field).
\end{eqnarray*}
\end{corollary}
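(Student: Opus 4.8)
<br>

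The plan is to package together several equivalences already established in the excerpt. The target statement, Corollary~\ref{cor:legible model for C_1}, asserts an $A_\infty$-equivalence of diagrams $\Fun_{(\cT,\bfmu),1}(\cR(\cS),\field)\homotopic\cC_1(\cT,\bfmu;\field)$. First I would recall the chain of quasi-equivalences produced by Lemma~\ref{lem:Gamma_S}, Lemma~\ref{lem:comb model}, and Proposition~\ref{prop:legible model}: for a regular cell complex $\cS$ refining $\cS_T$ and satisfying Assumption~\ref{ass:legible model}, we have quasi-equivalences
\[
\Sh(T;\field)\xrightarrow[]{\Gamma_{\cS}}\Fun_T(\cS;\field)\xleftarrow[]{\rho^*}\Fun_T(\cR(\cS),\field),
\]
with quasi-inverses $i_{\cS}$ and $R\rho_*$ respectively; moreover both $i_{\cS}$ and $\rho^*$ commute with restriction to open sub-intervals $J_x$. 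Restricting attention to the subcategory with acyclic stalks for $z\ll 0$ gives the same statement for $\Sh(T;\field)_0$ and $\Fun_T(\cR(\cS),\field)_0$. So the composite $\Gamma\coloneqq\rho^*{}^{-1}\circ\Gamma_\cS:\Sh(T;\field)_0\xrightarrow[]{\sim}\Fun_T(\cR(\cS),\field)_0$ is a quasi-equivalence compatible with the restrictions to $T_\Left$ and $T_\Right$, hence an equivalence of the whole diagram.

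Next I would identify the two full subcategories. On the sheaf side, $\cC_1(T,\mu;\field)\subset\Sh(T;\field)_0$ is cut out by the condition that $\muhom(\cF)$ be a rank-one local system concentrated in degree $0$. By Definition~\ref{def:microlocal monodromy}, for an arc $a$ with region $N$ immediately above, $\muhom(\cF)(a)=\cone(\cF(\Star(a))\to\cF(\Star(N)))[-\mu(a)]$. Under $\Gamma_\cS$ this is exactly $\cone(\Gamma_\cS(\cF)(a)\to\Gamma_\cS(\cF)(\rho(a)))[-\mu(a)]$, and then under the legible model $\rho^*$ (using $\rho(a)=$ the region below $a$ and the simple arrow $e_s$ corresponding to $a$) this matches $\cone(F(e_s))[-\mu(s)]$ where $F$ is the corresponding object of $\Fun_T(\cR(\cS),\field)$. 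I would spell out this identification carefully—this is the one genuinely new computation—checking that ``rank one in degree zero'' on the local system side corresponds precisely to $\cone(F(e_s))[-\mu(s)]\homotopic\field$ for every arc $s$ in $T$, which is exactly the defining condition of $\Fun_{(T,\mu),1}(\cR(\cS),\field)$ in Definition~\ref{def:legible model for C_1}. Since a fully faithful quasi-equivalence restricts to a quasi-equivalence between full subcategories defined by conditions that correspond under the equivalence, this gives $\cC_1(T,\mu;\field)\homotopic\Fun_{(T,\mu),1}(\cR(\cS),\field)$, compatibly with restrictions, hence an equivalence of diagrams.

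The main obstacle, and the step I would be most careful about, is matching the microlocal monodromy condition through both functors $\Gamma_\cS$ and $\rho^*$ simultaneously: one must verify that $\Star(a)$ and $\Star(N)$ behave correctly under $\Gamma_\cS$ (using Corollary~\ref{cor:quasi-isomorphism} and the adjunction of Proposition/Definition~\ref{def:comb model to sheaf}), and that the downward arrows discarded when passing to the legible model do not affect the cone at an arc of $T$ (here one uses that such arrows are quasi-isomorphisms for objects in $\Fun_{\tilde T^+}$, together with Lemma~\ref{lem:poset for legible model}). One small point is that $\muhom$ is valued in $\Loc(T\setminus V_T)$, so after transporting to the combinatorial side I would note that the various arcs making up an edge of $T$ between vertices must carry compatible (mutually quasi-isomorphic) $\cone(F(e_s))[-\mu(s)]$, which is automatic from the Maslov potential constraint and the quasi-isomorphisms imposed on the downward/auxiliary arrows. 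Having checked all of this, the corollary follows formally; it is essentially bookkeeping on top of Proposition~\ref{prop:legible model} and Proposition~\ref{prop:microlocal monodromy}.
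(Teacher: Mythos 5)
Your proposal is correct and takes essentially the same route as the paper, which simply cites Lemma~\ref{lem:comb model} and Proposition~\ref{prop:legible model} and calls the result a "direct corollary"; you have merely spelled out the identification of the microlocal rank-one condition with the condition $\cone(F(e_s))[-\mu(s)]\simeq\field$, which is the bookkeeping the paper leaves implicit.
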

\begin{proof}
This is a direct corollary of Lemma \ref{lem:comb model} and Proposition \ref{prop:legible model}.
\end{proof}

\section{Augmentations are sheaves for Legendrian graphs}\label{section:augs are sheaves}

\subsection{Local calculation for augmentation categories}
In this section, we will compute the $A_\infty$-structures completely for the trivial bordered Legendrian graphs and the bordered Legendrian graphs containing a vertex of type $(0,n_\Right)$. 

\subsubsection{Augmentation category for a trivial bordered Legendrian graph}
\label{sec:aug var for trivial tangle}
Let $(I_n,\mu)$ be a trivial bordered Legendrian graph of $n$ parallel strands, equipped with a $\ZZ$-valued Maslov potential $\mu$.
We will describe the augmentation category $\Aug_+(I_n;\field)$, which has been already seen in Corollary~\ref{corollary:unitality of border DGAs} for the unitality.

\begin{notation}
In this section, we denote $\Aug_+(I_n;\field)$ by $\Aug_+$ for simplicity.
\end{notation}

As seen in Example~\ref{example:augmentation category for border DGAs}, the Chekanov-Eliashberg DGA $A^\CE\left(I_n^{\p m}\right)$ is generated by the set
\begin{align*}
\left\{k_{ab}^{ij}~\middle|~ a<b, 1\le i,j\le m\right\}\coprod
\left\{y_a^{ij}~\middle|~1\le a\le n, 1\le i<j\le m\right\},
\end{align*}
where the grading is given as
\begin{align*}
|k_{ab}^{ij}|&\coloneqq \mu(a)-\mu(b)-1,&
|y_a^{ij}|&\coloneqq -1.
\end{align*}

\begin{assumption}\label{assumption:indices ranges}
From now on, we denote $y_a^{ij}$ by $k_{aa}^{ij}$.
We regard $k_{ab}^{ij}$ as zero unless it is well-defined.
\end{assumption}

Then under the above assumption, the differential $\differential^{\p m}$ is simply given as
\begin{align*}
\differential^{\p m}k_{ab}^{ij}&=\sum_{\substack{a\leq c\leq b\\1\le k\le m}} (-1)^{|k_{ac}^{ik}|-1}k_{ac}^{ik}k_{cb}^{kj}.
\end{align*}

Recall from Example~\ref{example:augmentation category for border DGAs}. $\Aug_+$ is a DG category such that:
\begin{enumerate}
\item The objects are the augmentations for $A_n(\mu)$:
\begin{equation}
\Ob\left(\Aug_+\right)=\aug(A_n(\mu);\field).
\end{equation}

\item
In $A^{\p2}(I_n)$, we have 
\[
\sfM^{12}\coloneqq \field\left\langle k_{ab}^{12}~\middle|~1\leq a\le b\leq n \right\rangle.
\]
Then, for any two objects $\epsilon_1,\epsilon_2$, the set of morphisms is 
\begin{equation}
\hom_{\Aug_+}(\epsilon_1,\epsilon_2)=\sfM^{12\vee}=\field\left\langle k_{ab}^{12\vee}~\middle|~1\leq a\le b\leq n\right\rangle.
\end{equation}

\item For $\epsilon_1,\epsilon_2\in\Aug_+$, the map 
\[
m_1:\hom_{\Aug_+}(\epsilon_1,\epsilon_2)\to\hom_{\Aug_+}(\epsilon_1,\epsilon_2)
\]
is defined as
\begin{align}\label{eqn:m_1 for trivial tangle}
m_1\left(k_{ab}^{12\vee}\right)&=-\sum_{c<a}\epsilon_1(k_{ca})k_{cb}^{12\vee} +\sum_{b<d}(-1)^{|k_{ab}^{12\vee}|}k_{ad}^{12\vee}\epsilon_2(k_{bd})
\end{align}
\item For $\epsilon_1,\epsilon_2,\epsilon_3\in\Aug_+$, the map 
\[
m_2:\hom_{\Aug_+}(\epsilon_2,\epsilon_3)\otimes \hom_{\Aug_+}(\epsilon_1,\epsilon_2)\to \hom_{\Aug_+}(\epsilon_1,\epsilon_3)
\]
is defined as
\begin{align}\label{eqn:m_2 for trivial tangle}
m_2\left(k_{cd}^{12\vee}\otimes k_{ab}^{12\vee}\right)&=\delta_{bc}(-1)^{\sigma_2}(-1)^{|k_{ab}^{12\vee}|}k_{ad}^{12\vee}
\nonumber\\
&=\delta_{bc}(-1)^{|k_{ab}^{12\vee}||k_{cd}^{12\vee}|+1}k_{ad}^{12\vee},
\end{align}
where 
\[
\sigma_2=1+|k_{ab}^{12\vee}||k_{cd}^{12\vee}|+|k_{ab}^{12\vee}|.
\]
\end{enumerate}

\begin{definition}[Morse complex]\label{def:Morse complex}
Consider a free graded $\field$-module $C=C(I_n)$, 
\begin{align*}
C&\coloneqq\bigoplus_{1\le a\le n}\field e_a,&
|e_a|&\coloneqq-\mu(a).
\end{align*}
equipped with a decreasing filtration $F^{\bullet}$ via 
\[
F^iC \coloneqq \bigoplus_{k>i}\field \cdot e_k,
\]
for $0\leq i\leq n$. 
\begin{itemize}
\item A \emph{Morse complex} for $(C,F^{\bullet})$ is a complex $(C,d)$ with a $\field$-linear filtration-preserving differential $d$ of degree $1$.
\item Let $\MC=\MC(C;\field)$ be the set of Morse complexes of $(C,F^{\bullet})$.
\end{itemize}
\end{definition}

\begin{lemma}[{\cite[Def.4.4]{Su2017}}]\label{lem:aug and Morse complex for trivial tangle}
There is a canonical identification 
\begin{align*}
\Xi:\aug(A_n(\mu);\field) &\stackrel{\isomorphic}{\to} \MC\\
\epsilon &\mapsto d=d(\epsilon),
\end{align*}
where
\[
de_i\coloneqq\sum_{j>i}(-1)^{\mu(i)}\epsilon(a_{ij})e_j.
\]
\end{lemma}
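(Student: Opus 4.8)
The plan is to verify directly that the assignment $\Xi:\epsilon\mapsto d(\epsilon)$ given by the formula $de_i=\sum_{j>i}(-1)^{\mu(i)}\epsilon(a_{ij})e_j$ is a well-defined bijection from $\aug(A_n(\mu);\field)$ onto $\MC(C;\field)$. There are three things to check: that $d(\epsilon)$ is indeed a filtration-preserving degree $1$ differential (in particular $d(\epsilon)^2=0$), that the map is injective, and that it is surjective. First I would check well-definedness: the filtration-preserving property is immediate since $de_i$ only involves $e_j$ with $j>i$, and the degree computation uses $|e_i|=-\mu(i)$, $|e_j|=-\mu(j)$, and $|a_{ij}|=\mu(i)-\mu(j)-1$, so $(-1)^{\mu(i)}\epsilon(a_{ij})e_j$ has degree $-\mu(j)=-\mu(i)+(|a_{ij}|+1)=|e_i|+1$ as required (here $\epsilon(a_{ij})\neq 0$ forces $|a_{ij}|=0$, i.e. $\mu(i)-\mu(j)=1$, but one does not even need this for the degree bookkeeping once the convention $\epsilon$ vanishes off degree $0$ is in force).

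The heart of the matter is the identity $d(\epsilon)^2=0\Longleftrightarrow \epsilon\circ\partial_n=0$, i.e. that $d(\epsilon)$ squares to zero precisely because $\epsilon$ is a chain map (a DGA morphism to $(\field,0)$). I would compute $d(\epsilon)^2 e_i=\sum_{i<j<k}(-1)^{\mu(i)}\epsilon(a_{ij})(-1)^{\mu(j)}\epsilon(a_{jk})e_k$ and compare with the differential $\partial_n(a_{ik})=\sum_{i<j<k}(-1)^{|a_{ij}|-1}a_{ij}a_{jk}$ from Example/Definition~\ref{example:composable border DGAs} (specialized to the border DGA $A_n(\mu)$, where $k_{ab}=a_{ab}$). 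Applying $\epsilon$, which is multiplicative and has image in the commutative ring $\field$, gives $\epsilon(\partial_n(a_{ik}))=\sum_{i<j<k}(-1)^{|a_{ij}|-1}\epsilon(a_{ij})\epsilon(a_{jk})$; since $\epsilon$ nonzero on $a_{ij}$ forces $|a_{ij}|=0$, the sign $(-1)^{|a_{ij}|-1}=-1$ on the supported terms, and one matches this against the coefficient $(-1)^{\mu(i)+\mu(j)}$ appearing in $d(\epsilon)^2$ after using $\mu(j)=\mu(i)-1$ on supported terms. The signs then reconcile up to the global sign convention, so $d(\epsilon)^2=0$ iff $\epsilon(\partial_n a_{ik})=0$ for all $i<k$, which is exactly the condition that $\epsilon$ extends to a DGA morphism (the differential of $A_n(\mu)$ is determined by its values on the $a_{ij}$, and all of those must be sent to $0$). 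The main obstacle I anticipate is precisely this sign chase: one must be careful that the sign $(-1)^{\mu(i)}$ in the definition of $de_i$ is chosen exactly so that the Leibniz-type signs in $\partial_n$ and the Koszul signs in composing $de_j\circ de_i$ cancel; getting this to work may require either adjusting the sign in the statement or invoking the specific sign conventions fixed earlier in the paper.

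Finally, bijectivity is formal once well-definedness is established. Injectivity: if $d(\epsilon)=d(\epsilon')$ then comparing coefficients of $e_j$ in $de_i$ gives $(-1)^{\mu(i)}\epsilon(a_{ij})=(-1)^{\mu(i)}\epsilon'(a_{ij})$, hence $\epsilon(a_{ij})=\epsilon'(a_{ij})$ for all generators, so $\epsilon=\epsilon'$ as both are the unique multiplicative extensions. Surjectivity: given any Morse differential $d\in\MC$, write $de_i=\sum_{j>i}c_{ij}e_j$ with $c_{ij}\in\field$; define $\epsilon(a_{ij})\coloneqq(-1)^{\mu(i)}c_{ij}$ (and $\epsilon=0$ on higher-degree generators, $\epsilon(\unit)=\unit$), extend multiplicatively, and then the degree-$1$ filtration-preserving hypothesis on $d$ plus $d^2=0$ translate back, by the computation above, into the statement that $\epsilon$ is a DGA morphism; thus $\Xi(\epsilon)=d$. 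This gives the claimed canonical identification $\Xi:\aug(A_n(\mu);\field)\xrightarrow{\isomorphic}\MC$.
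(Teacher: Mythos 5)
Your proof is correct and follows the natural (and essentially forced) route. The paper itself gives no proof of this lemma, merely citing \cite[Def.\,4.4]{Su2017}; your reconstruction fills in exactly the argument one would expect behind that citation. The three items you check — degree/filtration bookkeeping, the translation $d(\epsilon)^2=0 \Leftrightarrow \epsilon\circ\differential_n=0$, and formal bijectivity — are the whole content. One small sharpening: the sign "reconciliation" you hedge on is in fact exact, with no residual ambiguity. On supported terms $\epsilon(a_{ij})\neq 0$ forces $\mu(j)=\mu(i)-1$, so $(-1)^{\mu(i)+\mu(j)}=(-1)^{2\mu(i)-1}=-1$, while $(-1)^{|a_{ij}|-1}=(-1)^{-1}=-1$ as well; hence $d(\epsilon)^2 e_i=\sum_k \epsilon(\differential_n a_{ik})\,e_k$ term-for-term, and the equivalence $d(\epsilon)^2=0\Leftrightarrow\epsilon$ is a DGA morphism holds without any "global sign" caveat. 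Also note that no Koszul sign appears when pushing $d$ past the scalar coefficients $(-1)^{\mu(i)}\epsilon(a_{ij})\in\field$, which you implicitly use and which is correct.
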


The set of Morse complexes $\MC$ can be lifted to an obvious DG category $\cMC=\cMC(I_n;\field)$: 
\begin{enumerate}
\item The set of objects is $\MC$.
\item The morphism space $\left(\hom_{\cMC}(d_1,d_2),D\right)$ is 
\[
\hom_{\cMC}(d_1,d_2)\coloneqq\End(C,F^\bullet),
\]
the complex of endomorphisms of $(C,F^{\bullet})$ whose differential is given by 
\[
Df\coloneqq d_2\circ f-(-1)^{|f|}f\circ d_1.
\]
\item The composition $\cdot$ is the usual composition of endomorphisms of $C$.
\end{enumerate}

\begin{notation}
We use the identification $\End(C)\isomorphic C\otimes C^*$ and so for $(e\otimes f^*), (e'\otimes f'^*)\in \End(C)$,
\begin{align*}
(e\otimes f^*)(g)&\coloneqq \langle f^*,g\rangle e\in C,\\
(e\otimes f^*)\cdot(e'\otimes f'^*)&\coloneqq \langle f^*,e'\rangle(e\otimes f'^*)\in\End(C).
\end{align*}
\end{notation}

\begin{lemma}[{\cite[Theorem~7.25]{NRSSZ2015}}]\label{lem:aug cat for trivial tangle}
There is a (strict) isomorphism of DG categories
\[
\frh:\Aug_+(I_n;\field)\longrightarrow\cMC
\]
which is given on objects by
\[
\epsilon\mapsto d(\epsilon)\coloneqq \sum_{a<b} \left((-1)^{\mu(a)}\epsilon(k_{ab})\right)(e_b\otimes e_a^*),
\]
as in Lemma \ref{lem:aug and Morse complex for trivial tangle}, and on morphisms 
$\hom_{\Aug_+}(\epsilon_1,\epsilon_2)\rightarrow\hom_{\cMC}(d(\epsilon_1),d(\epsilon_2))$ by 
\begin{align*}
k_{ab}^{12\vee}&\mapsto (-1)^{s(a,b)}(e_b\otimes e_a^*),
\end{align*}
where the sign $s(a,b)$ is given by:
\[
s(a,b)\coloneqq \mu(a)(\mu(b)+1)+1.
\]
\end{lemma}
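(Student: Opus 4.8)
The plan is to verify directly that the assignment $\frh$ on objects and morphisms described in the statement is a well-defined strict isomorphism of DG categories, by matching the structure maps on both sides term-by-term. Since both $\Aug_+(I_n;\field)$ and $\cMC$ are genuine DG categories (not just $A_\infty$-categories) — the former by the computation in Example~\ref{example:augmentation category for border DGAs} showing $m_k=0$ for $k\geq 3$, the latter by construction — it suffices to check: (i) $\frh$ is a bijection on objects; (ii) $\frh$ is an isomorphism of chain complexes on each morphism space, i.e. it is a filtration-preserving linear bijection intertwining $m_1$ with $D$; and (iii) $\frh$ is compatible with composition, i.e. $\frh(m_2(\beta\otimes\alpha)) = \frh(\beta)\cdot\frh(\alpha)$.

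First I would handle the object level. By Lemma~\ref{lem:aug and Morse complex for trivial tangle} (the identification $\Xi$), the map $\epsilon\mapsto d(\epsilon)$ with $d(\epsilon) = \sum_{a<b}\bigl((-1)^{\mu(a)}\epsilon(k_{ab})\bigr)(e_b\otimes e_a^*)$ is exactly $\Xi$ rewritten in the $\End(C)\isomorphic C\otimes C^*$ notation, where one checks $d(\epsilon)e_i = \sum_{j>i}(-1)^{\mu(i)}\epsilon(k_{ij})e_j$; this is a bijection onto $\MC$. One must also confirm $d(\epsilon)$ has degree $1$: since $|e_b\otimes e_a^*| = |e_b| - |e_a| = \mu(a)-\mu(b)$ and $\epsilon(k_{ab})$ is nonzero only when $|k_{ab}| = \mu(a)-\mu(b)-1 = 0$, the surviving terms contribute degree $\mu(a)-\mu(b) = 1$, as required; filtration-preservation is immediate from $a<b$.

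Next, for morphism spaces: $\hom_{\Aug_+}(\epsilon_1,\epsilon_2)$ has basis $\{k_{ab}^{12\vee} : 1\leq a\leq b\leq n\}$ (with $k_{aa}^{12\vee} = y_a^{12\vee}$ under Assumption~\ref{assumption:indices ranges}), while $\hom_{\cMC}(d_1,d_2) = \End(C,F^\bullet)$ has basis $\{e_b\otimes e_a^* : a\leq b\}$; the map $k_{ab}^{12\vee}\mapsto (-1)^{s(a,b)}(e_b\otimes e_a^*)$ is thus a linear bijection, and one verifies it respects gradings since $|k_{ab}^{12\vee}| = |k_{ab}|+1 = \mu(a)-\mu(b) = |e_b\otimes e_a^*|$ (again using the shift-by-$(-1)$ convention in $\sfM^\vee$) — here I would also double-check the $a=b$ case where $|y_a^{12\vee}| = 0 = |e_a\otimes e_a^*|$. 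Then I would compute $D(\frh(k_{ab}^{12\vee})) = d_2\circ(e_b\otimes e_a^*) - (-1)^{|k_{ab}^{12\vee}|}(e_b\otimes e_a^*)\circ d_1$ using the composition rule $(e\otimes f^*)\cdot(e'\otimes f'^*) = \langle f^*,e'\rangle(e\otimes f'^*)$, expand $d_1,d_2$ in terms of their matrix entries $\epsilon_i(k_{**})$, and compare with $\frh$ applied to formula~\eqref{eqn:m_1 for trivial tangle} for $m_1(k_{ab}^{12\vee})$; the two agree precisely because of the definition of $s(a,b) = \mu(a)(\mu(b)+1)+1$, which is engineered so that the signs $(-1)^{\mu(a)}$ (from $d$), $(-1)^{|k_{ab}^{12\vee}|}$, and $(-1)^{s(\cdot,\cdot)}$ all cancel consistently. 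Similarly, comparing $\frh(m_2(k_{cd}^{12\vee}\otimes k_{ab}^{12\vee}))$ against $\frh(k_{cd}^{12\vee})\cdot\frh(k_{ab}^{12\vee}) = \delta_{bc}(-1)^{s(c,d)+s(a,b)}(e_d\otimes e_a^*)$ and matching with~\eqref{eqn:m_2 for trivial tangle} reduces to the sign identity $s(a,d) \equiv s(a,b) + s(b,d) + |k_{ab}^{12\vee}||k_{cd}^{12\vee}| + 1 \pmod 2$ when $b=c$, which one checks using $|k_{ab}^{12\vee}| = \mu(a)-\mu(b)$ and $|k_{bd}^{12\vee}| = \mu(b)-\mu(d)$.

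The main obstacle I expect is bookkeeping the signs correctly — in particular verifying the two modular sign identities for $s$ that make $\frh$ intertwine $m_1\leftrightarrow D$ and $m_2\leftrightarrow$ composition, being careful about the degree conventions (the $[-1]$ shift defining $\sfM^\vee$, the sign $\sigma_2 = 1 + |k_{ab}^{12\vee}||k_{cd}^{12\vee}| + |k_{ab}^{12\vee}|$ in $m_2$, and the extra $(-1)^{|k_{ab}^{12\vee}|}$ appearing in~\eqref{equation:composition}) and about the boundary/diagonal cases $a=b$ where $k_{aa}^{12\vee} = y_a^{12\vee}$ behaves slightly differently in the differential formula. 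Once these sign lemmas are in hand, strictness of the isomorphism (i.e. that $\frh$ is an actual DG functor with $\frh_k = 0$ for $k\geq 2$, not merely an $A_\infty$-functor) is automatic since $m_2$ is associative on the nose on both sides and there are no higher products. This is essentially a careful reorganization of the proof of \cite[Theorem~7.25]{NRSSZ2015} adapted to the bordered/composable conventions of the present paper, so no genuinely new idea is needed beyond the explicit sign normalization $s(a,b)$.
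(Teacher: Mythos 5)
Your proposal takes essentially the same approach as the paper's proof: reduce to the object-level bijection via Lemma~\ref{lem:aug and Morse complex for trivial tangle} and then verify by direct computation that $\frh$ intertwines $m_1$ with $D$ and $m_2$ with composition, each step coming down to a modular sign identity for $s(a,b)$ (your identity for $m_2$ is the paper's identity $1+|k_{ac}^{12\vee}||k_{cb}^{12\vee}|+s(a,b)\equiv s(a,c)+s(c,b)$ after relabeling indices, and your two cancellation observations for $m_1$ match the two sign manipulations in display~\eqref{eqn:sign manipulation}). This is a correct blueprint for the paper's argument.
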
 

\begin{remark}\label{rem:compare sign convention with NRSSZ}
Notice that the sign convention for $s(a,b)$ here is different from the original one, due to the fact that our definition for $d(\epsilon)$ differs by a sign from $\frh(\epsilon)$ in \cite[Theorem~7.25]{NRSSZ2015}. 
\end{remark}

\begin{proof}[Proof of Lemma~\ref{lem:aug cat for trivial tangle}]
For completeness, we give the proof.
By Lemma \ref{lem:aug and Morse complex for trivial tangle}, it suffices to show that $\frh$ commutes with the differential and composition, that is, $\frh\circ m_1=D\circ\frh$, and $\frh\circ m_2=\frh\cdot\frh$. 

Firstly, let us show $\frh\circ m_1=D\circ\frh$. 
For two objects $\epsilon_1,\epsilon_2\in\Aug_+$, let $x\in\hom_+(\epsilon_1,\epsilon_2)$ be a homogeneous element of the form
\[
x=\sum_{1\le a\le b\le n}x_{ab}k_{ab}^{12\vee}.
\]
In particular, $x_{ab}\neq 0$ implies that $|x|=|k_{ab}^{12\vee}|=\mu(a)-\mu(b)$ since $x$ is homogeneous.

By definition of $\frh$, we have
\[
\frh(x)=\sum_{1\le a\le b\le n}(-1)^{s(a,b)}x_{ab}(e_b\otimes e_a^*).
\]

On the other hand, apply the formula \eqref{eqn:m_1 for trivial tangle} for $m_1$, we have
\begin{align}\label{eqn:sign manipulation}
\frh\circ m_1(x)
&=-\sum_{c<a\leq b}\epsilon_1(k_{ca})x_{ab}\frh\left(k_{cb}^{12\vee}\right)+
\sum_{a\leq b<d}(-1)^{|k_{ab}^{12\vee}|}x_{ab}\epsilon_2(k_{bd})\frh\left(k_{ad}^{12\vee}\right)\nonumber\\ 
&=-\sum_{c<a\leq b}(-1)^{s(c,b)}\epsilon_1(k_{ca})x_{ab}(e_b\otimes e_c^*)\\
&\mathrel{\hphantom{=}}+\sum_{a\leq b<d}(-1)^{\mu(a)-\mu(b)+s(a,d)}x_{ab}\epsilon_2(k_{bd})(e_c\otimes e_a^*)\nonumber\\
&=-\sum_{c<a\leq b}(-1)^{|x|}\left((-1)^{\mu(c)}\epsilon_1(k_{ca})\right)\left((-1)^{s(a,b)}x_{ab}\right)(e_b\otimes e_a^*)\cdot(e_a\otimes e_c^*)\nonumber\\
&\mathrel{\hphantom{=}}+\sum_{a\leq b<d}\left((-1)^{s(a,b)}x_{ab}\right)\left((-1)^{\mu(b)}\epsilon_2(k_{bd})\right)(e_d\otimes e_b^*)\cdot(e_b\otimes e_a^*)\nonumber\\
&=-(-1)^{|x|}\frh(x)\circ d(\epsilon_1)+d(\epsilon_2)\circ\frh(x)\nonumber\\
&=D\circ\frh(x)
\end{align}
as desired. Here, the fourth equality follows directly from the formulas for $\frh(x)$ and $d(\epsilon_i)$ above. 

The only nontrivial part is the third equality involving two sign manipulations, which we check as follows: 
for the first sign manipulation, it suffices to show that, if $\epsilon_1(k_{ca})x_{ab}\neq 0$, then 
\[
s(c,b)\equiv |x|+\mu(c)+s(ab)\mod{2}.
\]
In fact, the condition implies that 
\begin{align*}
|k_{ca}|&=\mu(c)-\mu(a)-1=0,&
|x|&=|k_{ab}^{12\vee}|=\mu(a)-\mu(b)=\mu(c)-1-\mu(b)
\end{align*}
and therefore
\[
s(c,b)-s(a,b)-\mu(c)-|x|=\mu(b)+1-\mu(c)-(\mu(c)-1-\mu(b))\equiv 0\mod{2}
\]
as desired.

Similarly, for the second one, it suffices to show that, 
\begin{align*}
x_{ab}\epsilon_2(k_{bd})\neq 0\Longrightarrow \mu(a)-\mu(b)+s(a,d)\equiv s(a,b)+\mu(b)\mod{2},
\end{align*}
or equivalently, 
\[
s(a,d)-s(a,b)+\mu(a)\equiv 0\mod{2}.
\]
In fact, the condition implies that 
\begin{align*}
|k_{bd}|&=\mu(b)-\mu(d)-1=0,&
|x|&=|k_{ab}^{12\vee}|=\mu(a)-\mu(b)=\mu(a)-\mu(d)-1,
\end{align*}
and therefore we have the desired identity
\[
s(a,d)-s(a,b)+\mu(a)=-\mu(a)+\mu(a)\equiv 0\mod{2}.
\]

It remains to show $\frh\circ m_2=\frh\cdot\frh$. For any objects $\epsilon_1,\epsilon_2,\epsilon_3$ in $\Aug_+(T_n;\field)$, let 
\begin{align*}
x&\coloneqq\sum_{a\leq b}x_{ab}k_{ab}^{12\vee}\in\hom_+(\epsilon_2,\epsilon_3),&
y&\coloneqq\sum_{a\leq b}y_{ab}k_{ab}^{12\vee}\in\hom_+(\epsilon_1,\epsilon_2)
\end{align*}
be two homogeneous elements.
We want to show $\frh\circ m_2(x,y)=\frh(x)\cdot\frh(y)$. By definition of $\frh$, we have
\begin{align*}
\frh(x)\cdot\frh(y)
&=\sum_{a\leq c\leq b}\left((-1)^{s(c,b)}x_{cb}(e_q\otimes e_r^*)\right)\cdot\left((-1)^{s(a,c)}y_{ac}(e_c\otimes e_a^*)\right)\\
&=\sum_{a\leq c\leq b}(-1)^{s(c,b)+s(a,c)}y_{a,c}x_{c,b}(e_b\otimes e_a^*).
\end{align*}

On the other hand, apply the formula (\ref{eqn:m_2 for trivial tangle}), we have:
\begin{align*}
\frh\circ m_2(x,y)
&=\frh\left(\sum_{a\leq c\leq b}x_{cb}y_{ac}m_2\left(k_{cb}^{12\vee}\otimes k_{ac}^{12\vee}\right)\right)\\
&=\sum_{a\leq c\leq b}y_{ac}x_{cb}(-1)^{1+|k_{ac}^{12\vee}||k_{cb}^{12\vee}|}(-1)^{s(a,b)}(e_b\otimes e_a^*)\\
&=\frh(x)\circ\frh(y)
\end{align*}
as desired. Here, the last equality follows from the previous formula for $\frh(x)\circ\frh(y)$, and the following sign manipulation: 
\begin{align*}
&\mathrel{\hphantom{=}}1+|k_{ac}^{12\vee}||k_{cb}^{12\vee}|+s(a,b)\\
&=1+(\mu(a)-\mu(c))(\mu(c)-\mu(b))+\mu(a)(\mu(b)+1)+1\\
&\equiv\mu(a)\mu(c)+\mu(c)\mu(b)+\mu(c)+\mu(a)+2\\
&\equiv s(a,c)+s(c,b)\mod{2}
\end{align*}
This finishes the proof of the lemma.
\end{proof}

\subsubsection{Augmentation category for a vertex}
\label{ex:aug cat for a vertex}
Let $\cV_\front\in\BLT_\front$ be the bordered Legendrian graph in $J^1\bfU$ of type $(n_\Left,n_\Right)$, whose front projection is as the left picture in Figure \ref{fig:vertex_front projection m-copy}. In particular, $\cV_\front$ contains a vertex $v$ with $r$ right half-edges, labelled from top to bottom by $1,2,\ldots,r$. 
Then $n_\Right=n_\Left+r$.

As usual, we label the left (resp. right) ends from top to bottom by $1,2,\ldots, n_\Left$ (resp. $1,2,\ldots,n_\Right$). 
We assume the right half-edges of $v$ connect the right ends $k,k+1,\ldots,k+r-1$ from top to bottom. 
In other words, the right half-edge $p$ is connected to the right end $k+p-1$. For simplicity, we denote $a'\coloneqq a$ for $1\leq a<k$ and $b'\coloneqq b+r$ for $k\leq b\leq n_\Left$. 
We would like to compute the augmentation category
\[
\Aug_+\left(\cV_\front;\cK\right)=\left(\Aug_+(V_{\front,\Left};\field)\leftarrow\Aug_+(V_\front;\field)\rightarrow\Aug_+(V_{\front,\Right};\field)\right).
\]

\begin{figure}[ht]
\subfigure[$\cV$]{\makebox[0.25\textwidth]{$
\begin{tikzpicture}[baseline=-.5ex]
\begin{scope}[xshift=-3cm]
\begin{scope}[thick]
\draw (-1,1.8) node[left] {$\scriptscriptstyle i$} -- +(2,0) node[right] {$\scriptscriptstyle a=a'$};
\draw (-1,-2.2) node[left] {$\scriptscriptstyle j$} -- +(2,0) node[right] {$\scriptscriptstyle b+r=b'$};
\draw (-.5,0.1) to[out=0,in=180] (1,1) node[right] {$\scriptscriptstyle k$};
\draw (-.5,0.1) to[out=0,in=180] (1,.2);
\draw (-.5,0.1) to[out=0,in=180] (1,-.6);
\draw (-.5,0.1) to[out=0,in=180] (1,-1.4) node[right] {$\scriptscriptstyle k+r-1$};
\draw[fill] (-.5,0.1) circle (0.05);
\end{scope}
\draw[red] (-1,-2.8)--(-1,2) (1,-2.8)--(1,2);
\end{scope}
\end{tikzpicture}
$}}
\subfigure[$\cV^{\p2}$]{\makebox[0.25\textwidth]{$
\begin{tikzpicture}[baseline=-.5ex]
\begin{scope}
\begin{scope}[yshift=-.2cm,thick,red]
\draw (-1,1.8) -- +(2,0);
\draw (-1,-2.2) -- +(2,0);
\draw (-.5,0.1) to[out=0,in=180] (1,1);
\draw (-.5,0.1) to[out=0,in=180] (1,.2);
\draw (-.5,0.1) to[out=0,in=180] (1,-.6);
\draw (-.5,0.1) to[out=0,in=180] (1,-1.4);
\draw[fill] (-.5,0.1) circle (0.05);
\end{scope}
\begin{scope}[thick]
\draw (-1,1.8) node[left] {$\scriptscriptstyle i$} -- +(2,0) node[right] {$\scriptscriptstyle a=a'$};
\draw (-1,-2.2) node[left] {$\scriptscriptstyle j$} -- +(2,0) node[right] {$\scriptscriptstyle b+r=b'$};
\draw (-.5,0.1) to[out=0,in=180] (1,1) node[right] {$\scriptscriptstyle k$};
\draw (-.5,0.1) to[out=0,in=180] (1,.2);
\draw (-.5,0.1) to[out=0,in=180] (1,-.6);
\draw (-.5,0.1) to[out=0,in=180] (1,-1.4) node[right] {$\scriptscriptstyle k+r-1$};
\draw[fill] (-.5,0.1) circle (0.05);
\end{scope}
\draw[red] (-1,-2.8)--(-1,2) (1,-2.8)--(1,2);
\end{scope}
\end{tikzpicture}
$}}
\subfigure[$\cV^{\p3}$]{\makebox[0.25\textwidth]{$
\begin{tikzpicture}[baseline=-.5ex]
\begin{scope}[xshift=3cm]
\begin{scope}[yshift=-.4cm,thick,blue]
\draw (-1,1.8) -- +(2,0);
\draw (-1,-2.2) -- +(2,0);
\draw (-.5,-.9) to[out=0,in=180] (1,1);
\draw (-.5,-.9) to[out=0,in=180] (1,.2);
\draw (-.5,-.9) to[out=0,in=180] (1,-.6);
\draw (-.5,-.9) to[out=0,in=180] (1,-1.4);
\draw[fill] (-.5,-.9) circle (0.05);
\end{scope}
\begin{scope}[yshift=-.2cm,thick,red]
\draw (-1,1.8) -- +(2,0);
\draw (-1,-2.2) -- +(2,0);
\draw (-.5,0.1) to[out=0,in=180] (1,1);
\draw (-.5,0.1) to[out=0,in=180] (1,.2);
\draw (-.5,0.1) to[out=0,in=180] (1,-.6);
\draw (-.5,0.1) to[out=0,in=180] (1,-1.4);
\draw[fill] (-.5,0.1) circle (0.05);
\end{scope}
\begin{scope}[thick]
\draw (-1,1.8) node[left] {$\scriptscriptstyle i$} -- +(2,0) node[right] {$\scriptscriptstyle a=a'$};
\draw (-1,-2.2) node[left] {$\scriptscriptstyle j$} -- +(2,0) node[right] {$\scriptscriptstyle b+r=b'$};
\draw (-.5,0.1) to[out=0,in=180] (1,1) node[right] {$\scriptscriptstyle k$};
\draw (-.5,0.1) to[out=0,in=180] (1,.2);
\draw (-.5,0.1) to[out=0,in=180] (1,-.6);
\draw (-.5,0.1) to[out=0,in=180] (1,-1.4) node[right] {$\scriptscriptstyle k+r-1$};
\draw[fill] (-.5,0.1) circle (0.05);
\end{scope}
\draw[red] (-1,-2.8)--(-1,2) (1,-2.8)--(1,2);
\end{scope}\end{tikzpicture}
$}}

\caption{Front projection $m$-copy near a vertex.}
\label{fig:vertex_front projection m-copy}
\end{figure}
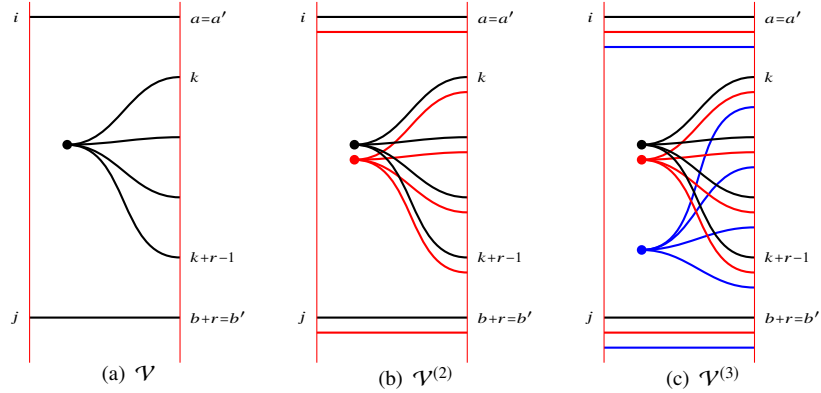

For $m\geq 1$, let $\cV^{\p m}_\front=\left(V^{\p m}_{\front,\Left}\to V^{\p m}_\front \leftarrow V^{\p m}_{\front,\Right}\right)\in\BLT_\front^{\p m}$ be the canonical front projection $m$-copy of $\cV_\front$. 
For examples, $\cV^{\p m}_\front$'s for $m=1,2,3$ are shown as in Figure~\ref{fig:vertex_front projection m-copy} from left to right.
\begin{remark}
Here we are using the convention of the canonical front parallel copies described in Section~\ref{section:canonical copies}.
\end{remark}

\begin{notation}
For simplicity, let us denote 
\begin{align*}
A^{\p m}&\coloneqq A^\CE\left(V_\front^{\p m}\right),&
A^{\p m}_{*}&\coloneqq A^\CE\left(V_{\front,*}^{\p m}\right),\\
\Aug_+&\coloneqq\Aug_+(V_\front;\field),&
\Aug_{+,*}&\coloneqq\Aug_+(V_{\front,*};\field),
\end{align*}
where $*=\Left,\Right$.
\end{notation}

As usual, for each copy $\cV^i_\front$ in $\cV^{\p m}_\front$, label the left ends and right ends from top to bottom by $1,2,\ldots,n_\Left$ and $1,2,\ldots,n_\Right$ respectively, label the vertex by $v^i$ and the right half-edges from top to bottom by $1,2,\ldots,r\in\ZZ/r$. 

In the bordered Legendrian graph $\cV^{\p m}_\front$, label the Reeb chords of $V^{\p m}_{\front,\Left}$ (resp. $V_{\front,\Right}^{\p m}$) corresponding to the pairs of strands by $k_{ab}^{ij}$ (resp. $k'^{ij}_{ab}$). 
That is, for $1\leq a<b\leq n_\Left, 1\leq i,j\leq m$ or $1\leq a=b\leq n_\Left, 1\leq i<j\leq m$ (resp. $1\leq a<b\leq n_\Right, 1\leq i,j\leq m$ or $1\leq a=b\leq n_\Right, 1\leq i<j\leq m$), we obtain a Reeb chord (or line segment) of $V_{\front,\Left}^{\p m}$ (resp. $V_{\front,\Right}^{\p m}$) going from the $b$-th strand of $V^j_{\front,\Left}$ (resp. $V^j_{\front,\Right}$) to the $a$-th strand of $V^i_{\front,\Left}$ (resp. $V^i_{\front,\Right}$).

Label the Reeb chords of $V^i_{\front}$ at the vertex $v^i$ by either $c_{ab}^{ii}$ if $1\le a<b\le r$ or $v_{a,\ell}^{ii}$ otherwise.
i.e. $c_{ab}^{ii}$ or $v_{a,\ell}^{ii}$ is the ``Reeb chord'' starting from the initial half-edge $a+\ell\in\Zmod{r}$, traveling around the vertex $v^i$ counterclockwise and covering $\ell$ minimal sectors, hence ending at the half-edge $a\in\Zmod{r}$ of $v^i$. 
\begin{remark}
This is the same as the construction for Legendrian graphs in a normal form described in Section~\ref{section:augmentation category of normal form} but the reflected manner. In other words, the generators $c_{ab}^{ii}$ correspond to vertex generators lying on the \emph{right side} of $v$.
\begin{align*}
c_{13}^{ii}&=\begin{tikzpicture}[baseline=-.5ex,xscale=-1, yscale=-1]
\draw[thick,fill] (-1,0) node[right] {$2$} -- (0,0) node[above] {$v^i$} circle (2pt);
\draw[thick] (-1,0.5) node[right] {$3$} -- (0,0);
\draw[thick] (-1,-0.5) node[right] {$1$} -- (0,0);
\draw[green,-latex',thick] (180+26.56:0.7) arc (180+26.56:180-26.56:0.7);
\end{tikzpicture}&
v_{2,2}^{ii}&=\begin{tikzpicture}[baseline=-.5ex,xscale=-1, yscale=-1]
\draw[thick,fill] (-1,0) node[right] {$2$} -- (0,0) node[above] {$v^i$} circle (2pt);
\draw[thick] (-1,0.5) node[right] {$3$} -- (0,0);
\draw[thick] (-1,-0.5) node[right] {$1$} -- (0,0);
\draw[green,-latex',thick] (180:0.7) arc (180:-180+26.56:0.7);
\end{tikzpicture}
\end{align*}
\end{remark}

There are additional Reeb chords of $V^{\p m}_\front$ corresponding to the crossings of the right half-edges of the vertices. 
Label the Reeb chord going from the $b$-th right half-edge of $v^j$ in $V^j_{\front}$ to the $a$-th right half-edge of $v^i$ in $V^i_{\front}$ by $c_{ab}^{ij}$ for  $1\leq b<a\leq r$ and $1\leq i<j\leq m$.
See Figure \ref{fig:vertex_front projection m-copy_label} for an example when $r=3, m=4$.

\begin{figure}[!htbp]
\[
\vcenter{\hbox{\def\svgscale{0.7}\input{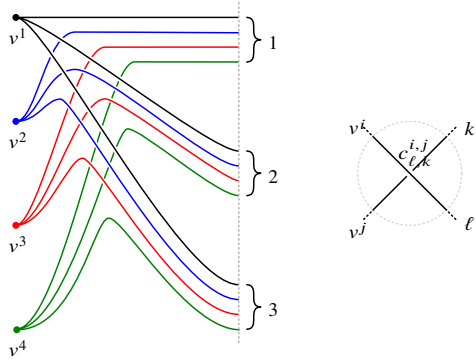}}}
\]
\caption{The crossings in the front projection $m$-copy near a vertex: $r=3, m=4$.}
\label{fig:vertex_front projection m-copy_label}
\end{figure}

We denote the set of Reeb chords by $R^{ij}$ starting at the $j$-th copy $V^j_\front$ and ending at the $i$-th copy $V^i_\front$
\[
R^{ij}\coloneqq \left\{k_{pq}^{ij}, v_{a,\ell}^{ij}, c_{ab}^{ij}\right\}.
\]
Then the LCH DGA $A^{\p m}=(\alg^{\p m},\differential^{\p m})\coloneqq A^\CE(V^{\p m}_\front)$ can be described as follows.
\[
\alg^{\p m}\coloneqq \ZZ\left\langle R^{ij}~\middle|~ 1\le i\le j\le m\right\rangle,
\]
the free associative algebra generated by the Reeb chord in $V^{\p m}_\front$.

The grading for each generator is defined as follows: let $\mu_\Left\coloneqq\mu|_{V_{\front,\Left}}$.
\begin{align*}
|k_{ab}^{ij}|&=\mu_\Left(a)-\mu_\Left(b)-1,\\
|v_{a,\ell}^{ii}|&=|v_{a,\ell}|=\mu(a)-\mu(a+\ell)+N(n,a,\ell)-1,\\
|c_{ab}^{ij}|&=
\begin{cases}
\mu(a)-\mu(b)-1& i=j;\\
\mu(a)-\mu(b) & i<j,
\end{cases}
\end{align*}
where $N(n,a,\ell)$ is the same as described in \eqref{equation:N}.

The differentials of $k_{ab}^{ij}$ and $v_{a,\ell}^{ii}$ are given by the differentials of border DGAs and internal DGAs defined in Example/Definition~\ref{example:composable border DGAs}. Indeed, under Assumption~\ref{assumption:indices ranges}, we have
\begin{align}\label{eqn:differential for m-copy of V_L}
\differential^{\p m}k_{ab}^{ij}&=\sum_{\substack{a\leq c\leq b\\1\le k\le m}} (-1)^{|k_{ac}^{ik}|+1}k_{ac}^{ik}k_{cb}^{kj},\\
\differential^{\p m} v_{a,\ell}^{ii}&=\delta_{\ell,r}+\sum_{\ell_1+\ell_2=\ell}(-1)^{|v_{a,\ell_1}^{ii}|-1}v_{a,\ell_1}^{ii}v_{a+\ell_1,\ell_2}^{ii}.
\end{align}

The differential of $c_{ab}^{ij}$ is obtained by counting certain admissible disks of degree 1 in $(J^1U, V^{\p m}_\front)$.

For simplicity, we denote $\tilde{a}\coloneqq (-1)^{|a|-1}a$ for any Reeb chord $a$. Then for $i<j$,
\begin{align}\label{eqn:differential_front projection m-copy at a vertex}
\differential^{\p m}c_{ab}^{ij}=\sum_{\bfa, \bfi} (-1)^\epsilon c_{\bfa}^{\bfi},
\end{align}
where for sequences $\bfa=(a_1,\dots, a_{\ell+1})$ and $\bfi=(i_1,\dots,i_{\ell+1})$ with $\ell\ge1$
\[
c_{\bfa}^{\bfi}\coloneqq c_{a_1a_2}^{i_1i_2}c_{a_2a_3}^{i_2i_3}\dots c_{a_{\ell}a_{\ell+1}}^{i_{\ell}i_{\ell+1}}
\]
and the summation is over all possible sequences $\bfa$ and $\bfi$ in $[r]$ and $[m]$ such that
\begin{align*}
a_1&=a,&
a_\ell&=b,&
i_1&=i,&
i_\ell&=j,
\end{align*}
and either 
\begin{align}\label{equation:vertex differential of type 1}
a_1&<a_2>\dots>a_\ell,& i_1&=i_2<\dots<i_\ell,&
\epsilon&=0,
\end{align}
or
\begin{align}\label{equation:vertex differential of type 2}
a_1&>a_2<a_3>\dots>a_\ell,& i_1&<i_2=i_3<\dots<i_\ell,&
\epsilon&=|c_{a_1a_2}^{i_1i_2}|-1.
\end{align}

In Figure~\ref{fig:vertex_front projection m-copy_label}, two shaded regions labelled by $A$ and $B$ correspond to differentials of two types in \eqref{equation:vertex differential of type 1} and \eqref{equation:vertex differential of type 2}, respectively.

In fact, we have a bordered DGAs 
\[
\dga^{\p m}=\left(A_\Left^{\p m}\stackrel{\phi_\Left^{\p m}}\longrightarrow A^{\p m}\stackrel{\phi_\Right^{\p m}}\longleftarrow A_\Right^{\p m}\right),
\] 
where $\phi_\Left^{\p m}$ is the inclusion of the sub-DGA $A_\Left^{\p m}$ generated by $\{k_{ab}^{ij}\}$, and 
\[
A_\Right^{\p m}=\ZZ\langle k'^{ij}_{ab}:1\leq p<q\leq n_\Right, 1\leq t,s\leq m \text{ or } 1\leq p=q\leq n_\Right, 1\leq t<s\leq m\rangle
\]
with the differential similar to $\differential_\Left^{\p m}$ as in (\ref{eqn:differential for m-copy of V_L}), that is:
\[
\differential^{\p m}b_{pq}^{ts}=\sum_{p\leq o\leq q, u} (-1)^{|b_{po}^{tu}|+1}b_{po}^{tu}b_{oq}^{us}
\]
where we sum over all $p\leq o\leq q, 1\leq u\leq m$ such that $b_{po}^{tu}$ and $b_{oq}^{us}$ are well-defined. 

Notice that in the case when $m=1$, we have the identification $\iota_\Left^{(1)}=\iota_\Left:\dga_\Left^{(1)}=\dga_\Left\rightarrow \dga^{(1)}=\dga(V)\leftarrow \dga(V_\Right)=\dga_\Right^{(1)}:\iota_\Right=\iota_\Right^{(1)}$, with $a_{pq}\coloneqq a_{pq}^{11}, v_{pq}\coloneqq v_{pq}^{11}$ and $b_{pq}\coloneqq b_{pq}^{11}$.

In addition, the DGA map $\iota_\Right^{\p m}$ is defined via counting certain admissible disks of index 0 in $(I_x\times \RR_z, V^{\p m})$. More precisely, we have
\begin{enumerate}
\item
If $1\leq p\leq q\leq r$ and $1\leq t, s\leq m$ such that $b_{k+p-1,k+q-1}^{ts}$ is well-defined, then:
\begin{align}\label{eqn:iota_R_front projection m-copy at a vertex}
\iota_\Right^{\p m}(b_{k+p-1,k+q-1}^{ts})=\sum v_{p,a-p}^{tt}c_{ab}^{tu}\ldots c_{cq}^{vs}+\sum \tilde{c_{pa}^{tu}}v_{a,b-a}^{uu}c_{bc}^{uv}\ldots c_{dq}^{ws}
\end{align}
where the summation is over all possible such \emph{composable} words so that the summand is well-defined. In particular, this implies that $\iota_\Right^{\p m}(b_{k+p-1,k+q-1}^{ts})=0$ if $t>s$, and if $t=s$, then $\iota_\Right^{\p m}(b_{k+p-1,k+q-1}^{tt})=v_{p,q-p}^{tt}$ with $p<q$.

\item
If $1\leq p\leq q\leq n_\Left, 1\leq t,s\leq m$ such that $b_{p'q'}^{ts}$ is well-defined, in particular, $1\leq p'\leq q'\leq n_\Right$ and $p',q'\notin\{k,k+1,\ldots,k+r-1\}$, then
\begin{align}
\iota_\Right^{\p m}(b_{p'q'}^{ts})=a_{pq}^{ts}
\end{align}

\item
Otherwise, that is, if $1\leq p\leq q\leq n_\Right, 1\leq t, s\leq m$ such that, exactly one of $p,q$'s belongs to $\{k,k+1,\ldots,k+r-1\}$ and $b_{pq}^{ts}$ is well-defined, then 
\begin{align}
\iota_\Right^{\p m}(b_{pq}^{ts})=0
\end{align}
\end{enumerate}
For example, in the case $r=3, m=4$ as in Figure \ref{fig:vertex_front projection m-copy_label}, we have
\[
\iota_\Right^{\p4}(b_{k+1,k+1}^{1,4})=v_{2,1}^{1,1}c_{3,2}^{1,4}+\tilde{c_{2,1}^{1,2}}v_{1,2}^{2,2}c_{3,2}^{2,4}+\tilde{c_{2,1}^{1,3}}v_{1,2}^{3,3}c_{3,2}^{3,4}+\tilde{c_{2,1}^{1,4}}v_{1,1}^{4,4}
\]
Notice that when $m=1$, we have 
\begin{align}\label{eqn:iota_R}
\begin{cases}
\iota_\Right(b_{k+p-1,k+q-1})=v_{p,q-p} & \textrm{if $1\leq p<q\leq r$};\\
\iota_\Right(b_{p'q'})=a_{pq} & \textrm{if $1\leq p<q\leq n_\Left$};\\
\iota_\Right(b_{pq})=0 & \textrm{else}.
\end{cases}
\end{align}

To describe the augmentation category $\Aug_+(V;\field)$, firstly observe that $\dga(V^{\p m})$ is the push-out of $\dga(V_\Left^{\p m})$ and $\cI_v^{\p m}$, where $\cI_v^{\p m}$ is the sub-DGA of $\dga^{\p m}$ generated by $\{v_{pq}^{ii}, c_{pq}^{ts}\}$. 
That is, for $m\geq 1$, we have the following push-out diagram of DGAs:
\[
\begin{tikzcd}
\ZZ\arrow[r]\arrow[d] & \cI_v^{\p m}\arrow[d]\\
\dga(V_\Left^{\p m})\arrow[r,"\iota_\Left"] & \dga^{\p m}
\end{tikzcd}
\]
Also, $\cI_v^{\p m}=\dga(v^{\p m})$ is nothing but the LCH DGA for the front projection $m$-copy of $v$, viewed as a bordered Legendrian graph, obtained from $V$ by removing the extra parallel strands.  
It follows that
\[
\Aug_+(V;\field)\isomorphic\Aug_+(V_\Left)\times\Aug_+(v;\field)
\]
is a strict product of two $A_{\infty}$-categories. By Section~\ref{sec:aug var for trivial tangle}, we've already seen the identification $\frh_\Left:\Aug_+(V_\Left;\field)\isomorphic\cMC(V_\Left;\field)$. It suffices to describe $\Aug_+(v;\field)$. 

Firstly, we introduce a Morse complex for a vertex as follows:
\begin{definition}\cite[Def.4.3.1]{ABS2019count}\label{def:Morse complex at a vertex}
Let $\field[Z]$ be the graded polynomial ring in one variable $Z$ with $|Z|=1$. 
We define a free graded left $\field[Z]$-module 
\begin{align*}
C(v)&\coloneqq \field[Z]\langle e_1,\dots,e_r\rangle,& |e_a|&\coloneqq -\mu(a)
\end{align*}
and a decreasing filtration 
\[
C(v)\supset F^1 C(v)\supset\cdots\supset F^r C(v)\supset Z^2\cdot F^{r+1} C(v) = Z^2\cdot F^1 C(v)
\]
of $C(v)$ by free graded left $\field$-submodules such that for each $a\in\Zmod{r}$,
\begin{align*}
F^aC(v)&\coloneqq \field\langle e_a\rangle\oplus\bigoplus_{i>0} \field\langle Z^{N(v,a,i)}e_{a+i} \rangle.
\end{align*}

We define $\MC(v;\field)$ to be the set of all $\field[Z]$-superlinear endomorphisms $d$ of $C(v)$ of degree $1$ which preserves $F^{\bullet}$ and satisfies $d^2+Z^2=0$.
\end{definition}

\begin{lemma}\cite[Lem.4.3.3]{ABS2019count}\label{lem:aug and Morse complex for a vertex}
There is a canonical identification
\begin{align*}
\Xi_v:\aug(v;\field)&\stackrel{\isomorphic}{\to} \MC(v;\field);\\
\epsilon &\mapsto d=d(\epsilon),
\end{align*}
where
\[
(-1)^{\mu(i)}de_i=\sum_{j>0}\epsilon(v_{i,j})Z^{N(v,i,j)}e_{i+j}.
\]
\end{lemma}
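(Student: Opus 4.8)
The plan is to build an explicit dictionary between the two sets of combinatorial data and then check that the translation matches the respective structural constraints: the augmentation equation $\epsilon\circ\differential=0$ on the one side, and the curvature equation $d^2+Z^2=0$ on the other. Recall that $A^\CE(v,\mu)$ is semi-freely generated by the vertex Reeb chords $v_{a,\ell}$ for $a\in\Zmod{r}$ and $\ell\geq 1$, with homological grading $|v_{a,\ell}|=\mu(a)-\mu(a+\ell)+N(v,a,\ell)-1$ and differential given by \eqref{equation:differential for vertices}. Thus an augmentation $\epsilon\colon A^\CE(v,\mu)\to K$ amounts to a choice of scalars $\epsilon(v_{a,\ell})\in\field$, automatically zero unless $|v_{a,\ell}|=0$, subject to the relations $\epsilon(\differential v_{a,\ell})=0$.

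First I would take $\epsilon\in\aug(v;\field)$, define $d=d(\epsilon)$ by the stated formula on the generators $e_a$, and extend it $\field[Z]$-superlinearly; then verify the three conditions of Definition \ref{def:Morse complex at a vertex}. \emph{Degree.} The vanishing condition $|v_{a,\ell}|=0$ is exactly $N(v,a,\ell)=1-\mu(a)+\mu(a+\ell)$, so each term $Z^{N(v,a,\ell)}e_{a+\ell}$ has degree $1-\mu(a)$ and hence $|de_a|=-\mu(a)+1=|e_a|+1$. \emph{Filtration.} By Definition \ref{def:Morse complex at a vertex}, $F^aC(v)$ is spanned by $e_a$ together with the elements $Z^{N(v,a,\ell)}e_{a+\ell}$, which is precisely where $de_a$ lives; using the additivity $N(v,a,\ell)=\sum_{j<\ell}N(v,a+j,1)$ from \eqref{equation:N} one checks that $d$ also sends $Z^{N(v,a,\ell)}e_{a+\ell}$ into $Z^{N(v,a,\ell)}F^{a+\ell}C(v)\subset F^aC(v)$, so $d$ preserves $F^{\bullet}$. \emph{Curvature.} Expanding $d^2e_a$ and pulling each power of $Z$ through $d$ (with the Koszul sign $d(Z^k x)=(-1)^k Z^k d(x)$ coming from $|Z|=|d|=1$), the coefficient of $Z^{N(v,a,\ell)}e_{a+\ell}$ in $(d^2+Z^2)e_a$ collects, up to an overall sign, the scalar $\epsilon(\differential v_{a,\ell})$; here the contribution $Z^2 e_a$ matches the term $\delta_{\ell,\val(v)}\unit_a$ of \eqref{equation:differential for vertices}, using $N(v,a,\val(v))=2$. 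Hence $d^2+Z^2=0$ if and only if $\epsilon\circ\differential=0$.

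For the inverse direction, given $d\in\MC(v;\field)$, the filtration condition and the strictly increasing $Z$-powers allow one to read off unique scalars $\epsilon(v_{a,\ell})$ from the expansion of $(-1)^{\mu(a)}de_a$ in the basis $\{e_a\}\cup\{Z^{N(v,a,\ell)}e_{a+\ell}\}_{\ell>0}$; extending these to a graded algebra map $\epsilon$, the same computation shows $\epsilon\circ\differential=0$. The two assignments are patently mutually inverse, yielding the bijection $\Xi_v$. This argument is the cyclic analogue of the proof of Lemma \ref{lem:aug and Morse complex for trivial tangle}, the difference being the appearance of the polynomial ring $\field[Z]$ and the curvature term $Z^2$ reflecting the wrap-around degree jump $N(v,a,\val(v))=2$.

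The main obstacle is the sign bookkeeping in the equivalence $d^2+Z^2=0\iff\epsilon\circ\differential=0$: one must reconcile the signs $(-1)^{|v_{a,\ell_1}|-1}$ in the DGA differential with those introduced by the normalization factor $(-1)^{\mu(a)}$ and by $\field[Z]$-superlinearity when composing $d$ with itself. A clean way to organize this is to present $d$ as a strictly upper-triangular matrix over $\field[Z]$ relative to the cyclic ordering of $\{e_1,\dots,e_r\}$, and to match term by term the monomials $Z^{N(v,a,\ell)}e_{a+\ell}$ appearing in $d^2e_a$ with the length-two words $v_{a,\ell_1}v_{a+\ell_1,\ell_2}$ in $\differential v_{a,\ell}$; the $Z$-grading together with $N(v,a,\ell_1)+N(v,a+\ell_1,\ell_2)=N(v,a,\ell)$ guarantees the combinatorics line up, leaving only the signs to be checked. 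Everything else is a direct unwinding of definitions, and the statement itself is already recorded in \cite[Lem.4.3.3]{ABS2019count}.
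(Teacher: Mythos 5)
The paper records this lemma as a citation from \cite{ABS2019count} and does not reproduce the proof, so there is no internal argument to compare against; judged on its own merits your argument is correct and is the natural one. The dictionary you set up is precise: the degree-zero condition forces $N(v,a,\ell)=1-\mu(a)+\mu(a+\ell)$; the additivity $N(v,a,\ell_1)+N(v,a+\ell_1,\ell_2)=N(v,a,\ell_1+\ell_2)$ aligns the $Z$-weights appearing in $d^2 e_a$ with the length-two words in $\differential v_{a,n}$; and the sign bookkeeping collapses to a global $(-1)$ on every nonzero term of $d^2 e_a$, because $\mu(a)+N(v,a,\ell)+\mu(a+\ell)\equiv 1\pmod 2$ whenever $\epsilon(v_{a,\ell})\neq 0$. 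Together with $Z^2 e_a=Z^{N(v,a,r)}e_{a+r}$ matching the Kronecker term $\delta_{n,r}$, this gives $d^2+Z^2=0$ if and only if $\epsilon\circ\differential=0$, coefficient by coefficient, which is exactly the claim. One phrase worth tightening: ``strictly increasing $Z$-powers'' is slightly off, since $N(v,a,\ell)$ is only nondecreasing in $\ell$ (its unit increments are $0$ or $2$). What you actually use, and what is true, is that $\ell\mapsto Z^{N(v,a,\ell)}e_{a+\ell}$ is injective: within a period the residue $a+\ell\bmod r$ changes, and across a full period $N$ rises by $2$, so the basis elements never collide and the scalars $\epsilon(v_{a,\ell})$ are read off uniquely from $de_a\in F^aC(v)$. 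Otherwise the argument is complete.
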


From now on, we will always use the identification above. 

\begin{lemma}\cite[Lem.~4.3.6]{ABS2019count}\label{lem:stratification of aug var at a vertex}
Let $v$ be a vertex as above.
There are decompositions of $\aug(v;\field)$ and $\MC(v;\field)$ over the finite set $\NR(v)$
\begin{align*}
\MC(v;\field)&=\coprod_{\ruling\in \NR(v)}B(v;\field)\cdot d_{\ruling}
\end{align*}
In particular, $\MC(v;\field)=\emptyset$ if $\val(v)$ is odd.
\end{lemma}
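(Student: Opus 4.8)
The statement to prove, adapting Lemma~\ref{lem:stratification of aug var at a vertex} from \cite{ABS2019count}, is a decomposition of the set $\MC(v;\field)$ (equivalently $\aug(v;\field)$ via $\Xi_v$) indexed by the finite set $\NR(v)$ of normal rulings of the vertex $v$, together with the parity statement that $\MC(v;\field)=\emptyset$ when $\val(v)=r$ is odd.

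\textbf{Plan of proof.} The plan is to work entirely on the Morse-complex side via the identification $\Xi_v:\aug(v;\field)\xrightarrow{\isomorphic}\MC(v;\field)$ of Lemma~\ref{lem:aug and Morse complex for a vertex}, so that an element is a $\field[Z]$-superlinear degree-$1$ endomorphism $d$ of $C(v)=\field[Z]\langle e_1,\dots,e_r\rangle$ preserving the filtration $F^{\bullet}C(v)$ and satisfying $d^2+Z^2=0$. First I would recall that such a $d$ is determined by its ``leading-order'' behaviour: writing $de_i=\sum_{j>0}(-1)^{\mu(i)}\epsilon(v_{i,j})Z^{N(v,i,j)}e_{i+j}$, the filtration condition forces the powers of $Z$ to be exactly the $N(v,i,j)$, and the relation $d^2+Z^2=0$ is an identity in the non-commutative Laurent-type algebra that, degree by degree, constrains the coefficients $\epsilon(v_{i,j})\in\field$. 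Second, I would extract from $d$ a combinatorial datum: the ``pairing'' of the half-edges $\{1,\dots,r\}\subset\Zmod r$ recording, for each $i$, the smallest $j$ with a nonzero leading coefficient, i.e.\ where $d$ first moves $e_i$. The content is that the relation $d^2+Z^2=0$ — in particular the term of lowest $Z$-degree — forces this to be a fixed-point-free involution on $\Zmod r$ compatible with the cyclic order in the way that defines a normal ruling $\ruling\in\NR(v)$; this is where the parity statement drops out, since a fixed-point-free involution on an $r$-element set forces $r$ even, hence $\NR(v)=\emptyset$ and $\MC(v;\field)=\emptyset$ when $r$ is odd.

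Third, having attached $\ruling=\ruling(d)\in\NR(v)$ to each $d$, I would define the stratum indexed by $\ruling$ to be $\{d\in\MC(v;\field)\mid \ruling(d)=\ruling\}$, giving the disjoint decomposition $\MC(v;\field)=\coprod_{\ruling\in\NR(v)}\MC(v;\field)_{\ruling}$ as sets; this part is formal once the map $d\mapsto\ruling(d)$ is well defined. The remaining, and genuinely computational, step is to identify each stratum with an affine space $B(v;\field)\cdot d_{\ruling}$: one fixes a standard representative $d_{\ruling}$ realizing the ruling $\ruling$ (the obvious ``nearest-neighbour'' differential pairing each $e_i$ with its $\ruling$-partner), and shows that any $d$ in the stratum is obtained from $d_{\ruling}$ by conjugation/action of a group $B(v;\field)$ of filtered, upper-triangular, $Z$-superlinear automorphisms of $C(v)$, with the action free. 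Concretely this means solving, inductively in the filtration degree, for the automorphism $g$ with $g^{-1}d_{\ruling}g=d$ and checking that the ``higher'' coefficients of $d$ are precisely free parameters while the constraints from $d^2+Z^2=0$ are automatically inherited. I would set this up by the same bookkeeping as in \cite[\S4.3]{ABS2019count}, only transported across the reflection (note the $c_{ab}^{ii}$ here correspond to vertex generators on the \emph{right} of $v$, as flagged in the remark before Lemma~\ref{lem:aug and Morse complex for a vertex}), so that one may literally cite \cite[Lem.~4.3.6]{ABS2019count} after matching conventions.

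\textbf{Main obstacle.} The hard part will be the last step: showing that each ruling-stratum is exactly a single $B(v;\field)$-orbit, freely, rather than a union of orbits or a non-reduced/empty locus. This requires a careful induction on the $Z$-filtration degree in which one simultaneously (i) solves for the conjugating automorphism $g$, (ii) verifies the solvability is unobstructed precisely because $\ruling$ was read off correctly from the leading term, and (iii) checks that the relation $d^2+Z^2=0$ imposes no further constraints beyond those already built into $d_{\ruling}$. All of the sign subtleties (the $(-1)^{\mu(i)}$ twists, the $\tilde c$ notation $\tilde a=(-1)^{|a|-1}a$, and the degree shifts $N(v,a,i)$) live in this step, and getting them consistent with Definition~\ref{def:Morse complex at a vertex} is where the bulk of the (routine but delicate) calculation sits; modulo that, the result follows by the same argument as in the cited lemma.
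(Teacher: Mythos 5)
The paper does not give an internal proof of this lemma: it is stated as a citation to \cite[Lem.~4.3.6]{ABS2019count}, so strictly speaking there is no ``paper's own proof'' to match your sketch against. Taking your proposal on its own merits as a reconstruction of the cited argument, the overall shape --- pass to the Morse-complex side via $\Xi_v$, attach to each $d\in\MC(v;\field)$ a ruling $\ruling(d)\in\NR(v)$, and show each $\ruling$-stratum is a single $B(v;\field)$-orbit of the canonical $d_{\ruling}$ --- is the right one, and indeed parallels the argument used later in the paper for $\MC(I_n;\field)$ (Lemma~\ref{lem:ruling stratification for trivial tangle}).

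There is, however, a concrete gap in the second step, where you claim that ``the term of lowest $Z$-degree'' in $d^2+Z^2=0$ forces the pairing to be a fixed-point-free involution. Since every $N(v,a,i)$ is even, we may write $d=d_0+Z^2d_2+\cdots$; the $Z^0$-coefficient of $d^2+Z^2=0$ gives only $d_0^2=0$, and a strictly upper-triangular square-zero matrix on $\field^r$ exists for any $r$, odd or even, and carries no involution. What actually forces the parity is the next coefficient, $d_0d_2+d_2d_0=-1$, which says $(\field^r,d_0)$ is contractible and hence $\dim\ker d_0=\dim\operatorname{im}d_0=r/2$, giving $r$ even. Furthermore, the pairing you define by ``the smallest $j$ with a nonzero leading coefficient'' is not a well-defined involution before a change of basis: distinct half-edges can have the same target, and the leading coefficient of $de_i$ is not a $B(v;\field)$-invariant. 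The ruling $\ruling(d)$ is obtained only after putting the filtered acyclic differential $d_0$ into Barannikov normal form, i.e.\ after running precisely the orbit-decomposition argument that you defer to step four. So (extract $\ruling$) and (each stratum is a single orbit) are not independent steps you can sequence; the former is a consequence of the latter applied to $d_0$, not something read off ``earlier.'' This is fixable --- reorganize so that contractibility is established first and the ruling is defined as the Barannikov invariant --- but as written the causal chain has a hole.
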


Now we can describe $\Aug_+(v;\field)$.

\noindent{}\emph{Objects.} Notice that $\dga(v^{\p1})\isomorphic\cI_v$ with $v_{p,q}=v_{p,q}^{11}$, where $\cI_v$ is the internal DGA of $v$ defined in Example/Definition~\ref{example:composable border DGAs}.
Hence, the set of objects of $\Aug_+(v;\field)$ is:
\[
\Ob~\Aug_+(v;\field)= \aug(v;\field),
\]
the variety of augmentations for $\cI_v$. Notice also that, by Lemma \ref{lem:stratification of aug var at a vertex}, $\aug(v;\field)=\emptyset$ if $r$ is odd. So, from now on, we will assume $r$ is even.

\noindent{}\emph{Morphisms.} As in Section~\ref{sec:aug var for trivial tangle}, define $C_{12}$ to be the free $\field$-module generated by $\{c_{pq}^{12}\}$ in $\cI_v^{(2)}$. Then, for any two objects $\epsilon_1,\epsilon_2$, the set of morphisms in $\Aug_+(v;\field)$ is 
\[
\hom_+(\epsilon_1,\epsilon_2)=C_{12}^{\vee}=\field\langle c_{pq}^+, 1\leq q<p\leq r\rangle
\]
as a free $\field$-module, where $C_{12}^{\vee}\coloneqq C_{12}^*[-1]$, and $x^+\coloneqq (x^{12})^{\vee}=(x^{12})^*[-1]$. In particular, $|c_{pq}^+|=|c_{pq}^{1,2}|+1$.

\noindent{}\emph{Compositions $m_K$.} For any $K\geq 1$, and objects $\epsilon_1,\epsilon_2,\ldots, \epsilon_{K+1}$, the composition map
\[
m_K:\hom_+(\epsilon_K,\epsilon_{K+1})\otimes\ldots\otimes\hom_+(\epsilon_2,\epsilon_3)\otimes\hom_+(\epsilon_1,\epsilon_2)\rightarrow \hom_+(\epsilon_1,\epsilon_{K+1})
\]
is defined as before.

\noindent{}\emph{$m_1$.} For $K=1$ and $\epsilon=(\epsilon_1,\epsilon_2)$, as in Section~\ref{sec:aug var for trivial tangle}, we compute
\[
\differential_{\epsilon}^{(2)}c_{pq}^{12}=\sum_{A>p}\phi_{\epsilon}(v_{p,A-p}^{11})c_{Aq}^{12}+\sum_{B<q}\tilde{c_{p,B}^{12}}\phi_{\epsilon}(v_{B,q-B}^{22}) 
\]
where $1\leq q<p\leq r$.
It follows that
\begin{align}\label{eqn:m_1 at a vertex}
m_1(c_{AB}^+)=\sum_{B<p<A}\epsilon_1(v_{p,A-p})c_{pB}^+ + (-1)^{|c_{AB}^+|}\sum_{B<q<A}\epsilon_2(v_{B,q-B})c_{Aq}^+ 
\end{align}
where $1\leq B<A\leq r$.

\noindent{}\emph{$m_K$ for $K\geq 2$.} In general, for $K\geq 2$ and given $\epsilon=(\epsilon_1,\epsilon_2,\ldots,\epsilon_{K+1})$ defining a diagonal augmentation for $\cI_v^{(K+1)}$, we need to compute $m_K(c_{i_K}^+,\ldots,c_{i_2}^+,c_{i_1}^+)$, where $c_{i_j}^+\in\{c_{pq}^+\}$ is a generator for $\hom_+(\epsilon_j,\epsilon_{j+1})$. To do that,  
we need to calculate $\differential_{\epsilon}^{(K+1)}c_{pq}^{1,K+1}$ and look at the the $\field$-coefficient of $c_{i_1}^{12}c_{i_2}^{23}\ldots c_{i_k}^{KK+1}$ in the result. By the formula (\ref{eqn:differential_front projection m-copy at a vertex}) for $\differential^{(K+1)}c_{pq}^{1K+1}$, with $1\leq q<p\leq r$, we have 
\begin{eqnarray*}
\differential_{\epsilon}^{(K+1)}(c_{pq}^{1K+1})&=&\sum\phi_{\epsilon}(v_{p,a_1-p}^{11})c_{a_1a_2}^{12}\ldots c_{a_Kq}^{KK+1}\\
&+&\sum\tilde{c_{pa_1}^{12}}\phi_{\epsilon}(v_{a_1,a_2-a_1}^{22})c_{a_2a_3}^{23}\ldots c_{a_Kq}^{KK+1}+\textrm{else} 
\end{eqnarray*}
in which only the first two terms contribute to $m_K$. In addition, the first summation is over all $1\leq a_1,\ldots,a_K\leq r$ such that $p<a_1>a_2>\ldots>a_K>q$; the second summation is over all $1\leq a_1,\ldots,a_K\leq r$ such that $p>a_1<a_2>a_3>\ldots>a_K>q$.

We can then divide the computation of $m_K(c_{i_K}^+,\ldots,c_{i_2}^+,c_{i_1}^+)$ into 4 cases:
\begin{enumerate}
\item
If $K\geq r+1$, then both of the two summations above are empty. Hence, we have
\begin{equation}
m_K=0, \textrm{ for all } K\geq r+1.
\end{equation}

\item
For $2\leq K\leq r$, and any $1\leq p=a_0, a_1, a_2,\ldots, a_K, q=a_{K+1}\leq r$ such that $a_0>a_1<a_2>a_3>\ldots>a_K>a_{K+1}$ and $a_0>a_{K+1}$,
have
\begin{eqnarray}
m_K(c_{a_Ka_{K+1}}^+,c_{a_{K-1}a_K}^+,\ldots,c_{a_2a_3}^+,c_{a_0a_1}^+)
=(-1)^{\sigma_K+|c_{a_0a_1}^+|}\epsilon_2(v_{a_1,a_2-a_1})c_{a_0a_{K+1}}^+
\end{eqnarray}  
where $\sigma_K=\sigma_K(c_{a_0a_1}^+,c_{a_2a_3}^+,\ldots, c_{a_Ka_{K+1}}^+)$ as in Definition \ref{def:sign for m_k}.

\item
For $2\leq K\leq r-1$, and any $1\leq a_1,a_2,\ldots,q=a_{K+1}\leq r$ such that $a_1>a_2>\ldots>a_{K+1}$, have
\begin{eqnarray}
m_K(c_{a_Ka_{K+1}}^+,\ldots, c_{a_2a_3}^+,c_{a_1a_2}^+)
=(-1)^{\sigma_K}\sum_{a_{K+1}<p<a_1}\epsilon_1(v_{p,a_1-p})c_{pa_{K+1}}^+
\end{eqnarray}
where $\sigma_K=\sigma_K(c_{a_1a_2}^+,c_{a_2a_3}^+,\ldots,c_{a_Ka_{K+1}}^+)$ as in Definition \ref{def:sign for m_k}.

\item
Else, $2\leq K\leq r$, and $(c_{i_1}^+,c_{i_2}^+,\ldots,c_{i_K}^+)$ is not of the form $(c_{a_0a_1}^+,c_{a_2a_3}^+,\ldots, c_{a_Ka_{K+1}}^+)$ in (2) or $(c_{a_1a_2}^+,c_{a_2a_3}^+,\ldots,c_{a_Ka_{K+1}}^+)$ in (3) above, then 
\begin{equation}
m_K(c_{i_K}^+,\ldots,c_{i_2}^+,c_{i_1}^+)=0.
\end{equation}
\end{enumerate}
This gives the description of $\Aug_+(v;\field)$.

Now we want to compute the whole diagram of augmentation categories: 
\begin{equation}
\Aug_+(\cV;\field)\coloneqq (\Aug_+(V_\Left;\field)\xleftarrow[]{\fr_\Left}\Aug_+(V;\field)\xrightarrow[]{\fr_\Right}\Aug_+(V_\Right;\field))
\end{equation}
The functor $\fr_\Left$ is clear, under the identification $\Aug_+(V;\field)\isomorphic\Aug_+(V_\Left;\field)\times\Aug_+(v;\field)$, it is the obvious restriction functor $\Aug_+(V_\Left;\field)\times\Aug_+(v;\field)\rightarrow \Aug_+(V_\Left;\field)$. Recall that by Lemma \ref{lem:aug cat for trivial tangle}, we have the identification $\frh_\Right:\Aug_+(V_\Right;\field)\isomorphic\cMC(V_\Right;\field)$. 
It suffices to describe $\fr_\Right$.

\noindent{}\emph{On objects.} 
Given any object $\epsilon$ in $\Aug_+(V;\field)$, denote $\epsilon_\Left\coloneqq \epsilon\circ \iota_\Left$, $\epsilon_\Right\coloneqq \epsilon\circ\iota_\Right$, and $\epsilon_v\coloneqq \epsilon\circ\iota_v$, where $\iota_v:\cI_v\hookrightarrow \dga(V)$ is the natural inclusion of DGAs. Then by definition, $\fr_\Right(\epsilon)=\epsilon_\Right$. 
We will use the identification $\Aug_+(V;\field)\isomorphic\Aug_+(V_\Left)\times\Aug_+(v;\field)$ and $\frh_\Right:\Aug_+(V_\Right;\field)\isomorphic\cMC(V_\Right;\field)$. Then $\epsilon=(d_\Left,d_v)$, where $(C(V_\Left),d_\Left\coloneqq d(\epsilon_\Left))$ and $(C(v),d_v\coloneqq d(\epsilon_v))$ are Morse complexes defined as in Lemma \ref{lem:aug and Morse complex for trivial tangle} and Lemma \ref{lem:aug and Morse complex for a vertex} respectively. Also, we have identification $\epsilon_\Right= d_\Right$, i.e.  $(C(V_\Right),d(\epsilon_\Right))$.
 
In fact,  $\fr_\Right$ is a quotient of cochain complexes $\fr_\Right:(C(V_\Left),d_\Left)\oplus (C(v),d_v)\twoheadrightarrow (C(V_\Left),d_\Left)\oplus (C(v_\Right),d_{v,R})\isomorphic (C(V_\Right),d_\Right)$ as follows. By definition, $C(V_\Left)=\oplus_{p=1}^{n_\Left}\field e_\Left^p$ with $e_\Left^p$ the basis element corresponding to the strand $p$ of $V_\Left$, $C(v)=\oplus_{p=1}^{r}\field[Z]e_v^p$ with $e_v^p$ corresponding to the right half-edge $p$ of $v$, $C(V_\Right)=\oplus_{p=1}^{n_\Right}\field e_\Right^p$ with $e_\Right^p$ corresponding to the strand $p$ of $V_\Right$.
Now, \emph{define} $\varphi_V:C(T_\Left)\oplus C(v)\twoheadrightarrow C(V_\Right)$ by $\varphi_V(e_p^L)=e_\Right^{p'}$, $\varphi_V(e_v^p)=e_\Right^{k+p-1}$, and $\varphi_V(Z^ne_v^p)=0$ for $n>0$. Then by the formula (\ref{eqn:iota_R}) for $\iota_\Right$, $\fr_\Right=\varphi_V:(C(V_\Left),d_\Left)\oplus (C(v),d_v)\twoheadrightarrow (C(V_\Left),d_\Left)\oplus (C(v_\Right),d_{v,R})\isomorphic (C(V_\Right),d_\Right)$ is indeed a quotient of complexes, where $(C(v_\Right),d_{v,R})$ is the subcomplex of $(C(V_\Right),d_\Right)$ with $C(v_\Right)=\oplus_{1\leq p\leq r}\field e_\Right^{k+p-1}$. 
Observe that $\varphi_V$ in fact defines naturally a DG functor $\cMC(V_\Left;\field)\times\cMC(v;\field)\rightarrow\cMC(V_\Left;\field)\times\cMC(v_\Right;\field)\hookrightarrow \cMC(V_\Right;\field)$ of DG categories.

\noindent{}\emph{On morphisms.} By definition, the $A_{\infty}$-functor $\fr_\Right$ on morphisms are given by a collection of assignments $\{\fr_{R,K}=(\fr_\Right)_K\}_{K\geq 1}$. That is, 
for any $K\geq 1$ and $K+1$ objects $\epsilon_1,\epsilon_2,\ldots,\epsilon_{K+1}$ in $\Aug_+(V;\field)$, we have a map
\[
\fr_{R,K}:\hom_+(\epsilon_K,\epsilon_{K+1})\otimes\ldots\hom_+(\epsilon_2,\epsilon_3)\otimes\hom_+(\epsilon_1,\epsilon_2)\rightarrow \hom_+(\epsilon_{1,R},\epsilon_{K+1,R})
\]
of degree $1-K$, where $\epsilon_{i,R}\coloneqq \fr_\Right(\epsilon_i)$. It suffices to determine $\fr_{R,K}(c_{i_K}^+,\ldots,c_{i_2}^+,c_{i_1}^+)$ for any collection of generators, where $c_{i_j}^+\in\{a_{pq}^+,c_{pq}^+\}$ is a generator for $\hom_+(\epsilon_j,\epsilon_{j+1})=\field\langle a_{pq}^+,c_{pq}^+\rangle$ as a free $\field$-module.

As in Section~\ref{sec:aug var for trivial tangle}, $\epsilon\coloneqq (\epsilon_1,\epsilon_2,\ldots,\epsilon_K)$ (resp. $\epsilon_\Right\coloneqq (\epsilon_{1,R},\epsilon_{2,R},\ldots,\epsilon_{K+1,R})$) defines a diagonal augmentation for $\dga^{(K+1)}$ (resp. $\dga_\Right^{(K+1)}$). \emph{Define} $\iota_{R,\epsilon}^{(K+1)}\coloneqq \phi_{\epsilon}\circ\iota_\Right^{(K+1)}\circ\phi_{\epsilon_\Right}^{-1}:\dga_\Right^{(K+1)}\otimes\field\rightarrow \dga^{(K+1)}\otimes\field$. Recall that we have
\begin{equation}\label{eqn:r_R,K}
\fr_{R,K}(c_{i_K}^+,\ldots,c_{i_2}^+,c_{i_1}^+)=(-1)^{\sigma_K}\sum b_{pq}^+\langle\iota_{R,\epsilon}^{(K+1)}b_{pq}^{1,K+1},c_{i_1}^{1,2}c_{i_2}^{2,3}\ldots c_{i_K}^{K,K+1} \rangle
\end{equation}
where $\langle\iota_{R,\epsilon}^{(K+1)}b_{pq}^{1,K+1},c_{i_1}^{1,2}c_{i_2}^{2,3}\ldots c_{i_K}^{K,K+1} \rangle$ denotes the $\field$-coefficient of $c_{i_1}^{1,2}c_{i_2}^{2,3}\ldots c_{i_K}^{K,K+1}$ in $\iota_{R,\epsilon}^{(K+1)}b_{pq}^+$, and $\sigma_K\coloneqq \sigma_K(c_{i_1}^+,c_{i_2}^+,\ldots,c_{i_K}^+)$ as in Definition \ref{def:sign for m_k}. Observe that, by the formula for $\iota_\Right^{(K+1)}$, the $A_{\infty}$-functor $\fr_\Right$ is simply the composition 
\[
\fr_\Right:\Aug_+(V_\Left;\field)\times\Aug_+(v;\field)\xrightarrow[]{\identity\times\fr_{v,R}}\Aug_+(V_\Left;\field)\times\Aug_+(v_\Right;\field)\hookrightarrow \Aug_+(V_\Right;\field)
\] 
Here, $\fr_{v,R}:\Aug_+(v;\field)\rightarrow\Aug_+(v_\Right;\field)$ is the right restriction functor for the bordered Legendrian graph $v$, by removing the extra parallel strands of $V$. And, the second arrow in the composition is the natural inclusion of DG categories $\cMC(V_\Left;\field)\times\cMC(v_\Right;\field)\hookrightarrow\cMC(V_\Right;\field)$, under the obvious identification. It suffices to describe $\fr_{v,R}=\fr_\Right|_{\Aug_+(v;\field)}$.

\noindent{}\emph{$(\fr_{v,R})_1$.} 
Let $K=1$ and $\epsilon=(\epsilon_1,\epsilon_2)$. Apply formula (\ref{eqn:iota_R_front projection m-copy at a vertex}), for $1\leq p\leq q\leq r$, we compute:
\[
\iota_{R,\epsilon}^{(2)}(b_{k+p-1,k+q-1}^{1,2})=\sum_{A>q} \phi_{\epsilon}(v_{p,A-p}^{1,1})c_{A,q}^{1,2}+\sum_{B<p}\tilde{c_{p,B}^{1,2}}\phi_{\epsilon}(v_{B,q-B}^{2,2})
\]
Then by (\ref{eqn:r_R,K}), for $1\leq B<A\leq r$, we have
\begin{eqnarray}\label{eqn:r_R,1 at a vertex}
(\fr_{v,R})_1(c_{A,B}^+)&=&\sum_{p\leq B}\epsilon_1(v_{p,A-p})b_{k+p-1,k+B-1}^+\nonumber\\ 
&+&(-1)^{|c_{A,B}^+|}\sum_{q\geq A}\epsilon_2(v_{B,q-B})b_{k+A-1,k+q-1}^+
\end{eqnarray}

\noindent{}\emph{$(\fr_{v,R})_K$ for $K\geq 2$.}
In general, let $K\geq 2$ and $\epsilon=(\epsilon_1,\epsilon_2,\ldots,\epsilon_{K+1})$.
Apply formula (\ref{eqn:iota_R_front projection m-copy at a vertex}), for $1\leq p\leq q\leq r$, we have
\begin{eqnarray*}
\iota_{R,\epsilon}^{(K+1)}(b_{k+p-1,K+q-1}^{1,K+1})
&=&\sum\phi_{\epsilon}(v_{p,a_1-p}^{1,1})c_{a_1a_2}^{1,2}\ldots c_{a_Kq}^{K,K+1}\\
&+&\sum\tilde{c_{pa_1}^{1,2}}\phi_{\epsilon}(v_{a_1,a_2-a_1}^{2,2})c_{a_2a_3}^{2,3}\ldots c_{a_Kq}^{K,K+1}+\textrm{else} 
\end{eqnarray*}
in which only the first two terms contribute to $(\fr_{v,R})_K$. In addition, the first summation is over all $1\leq a_1,\ldots,a_K\leq r$ such that $p<a_1>a_2>\ldots>a_K>q$; the second summation is over all $1\leq a_1,\ldots,a_K\leq r$ such that $p>a_1<a_2>a_3>\ldots>a_K>q$.

Similar to $m_K$, we can divide the computation of $(\fr_{v,R})_K(c_{i_K}^+,\ldots,c_{i_2}^+,c_{i_1}^+)$ into 4 cases:
\begin{enumerate}
\item
If $K\geq r+1$, then both of the two summations above are empty. Hence, we have
\begin{equation}
(\fr_{v,R})_K=0, \textrm{ for all } K\geq r+1.
\end{equation}

\item
For $2\leq K\leq r$, and any $1\leq p=a_0, a_1, a_2,\ldots, a_K, q=a_{K+1}\leq r$ such that $a_0>a_1<a_2>a_3>\ldots>a_K>a_{K+1}$ and $a_0\leq a_{K+1}$,
have
\begin{equation}
(\fr_{v,R})_K(c_{a_Ka_{K+1}}^+,\ldots,c_{a_2a_3}^+,c_{a_0a_1}^+)=(-1)^{\sigma_K+|c_{a_0a_1}^+|}\epsilon_2(v_{a_1,a_2-a_1})b_{k+a_0-1,k+a_{K+1}-1}^+
\end{equation}  
where $\sigma_K=\sigma_K(c_{a_0a_1}^+,c_{a_2a_3}^+,\ldots, c_{a_Ka_{K+1}}^+)$ as in Definition \ref{def:sign for m_k}.

\item
For $2\leq K\leq r-1$, and any $1\leq a_1,a_2,\ldots,q=a_{K+1}\leq r$ such that $a_1>a_2>\ldots>a_{K+1}$, have
\begin{equation}
(\fr_{v,R})_K(c_{a_Ka_{K+1}}^+,\ldots,c_{a_1a_2}^+)=(-1)^{\sigma_K}\sum_{p\leq a_{K+1}}\epsilon_1(v_{p,a_1-p})b_{k+p-1,k+a_{K+1}-1}^+
\end{equation}
where $\sigma_K=\sigma_K(c_{a_1a_2}^+,c_{a_2a_3}^+,\ldots,c_{a_Ka_{K+1}}^+)$ as in Definition \ref{def:sign for m_k}.

\item
Else, $2\leq K\leq r$, and $(c_{i_1}^+,c_{i_2}^+,\ldots,c_{i_K}^+)$ is not of the form $(c_{a_0a_1}^+,c_{a_2a_3}^+,\ldots, c_{a_Ka_{K+1}}^+)$ in (2) or $(c_{a_1a_2}^+,c_{a_2a_3}^+,\ldots,c_{a_Ka_{K+1}}^+)$ in (3) above, then 
\begin{equation}
(\fr_{v,R})_K(c_{i_K}^+,\ldots,c_{i_2}^+,c_{i_1}^+)=0.
\end{equation}
\end{enumerate}
This finishes the description of $\fr_{v,R}:\Aug_+(v;\field)\rightarrow \Aug_+(v_\Right;\field)$, hence that of $\fr_\Right$.

Furthermore, the following gives a simpler description of $\Aug_+(\cV;\field)$:
\begin{lemma}\label{lem:aug cat for a vertex} 
The $A_{\infty}$-functor
\[
\fr_{v,R}:\Aug_+(v;\field)\to\Aug_+(v_\Right;\field).
\]
is an $A_{\infty}$-equivalence. As a consequence, we obtain a commutative diagram of $A_{\infty}$-categories:
\[
\begin{tikzcd}[column sep=4pc, row sep=2pc]
\Aug_+(V_\Left;\field)\arrow[equal, d]& \Aug_+(V;\field)\arrow[l,"\fr_\Left"']\arrow[r,"\fr_\Right"]\arrow[d,"\simeq"',"\identity\times \fr_{v,R}"] & \Aug_+(V_\Right;\field)\arrow[equal,d]\\
\Aug_+(V_\Left;\field)\arrow[d,"\isomorphic"] & \Aug_+(V_\Left;\field)\times\Aug_+(v_\Right;\field)\arrow[l,"\fp_1"']\arrow[hook,r,"\fri"]\arrow[d,"\isomorphic"] & \Aug_+(V_\Right;\field)\arrow[d,"\isomorphic"]\\
\cMC(V_\Left;\field) & \cMC(V_\Left;\field)\times\cMC(v_\Right;\field)\arrow[l,"\fp_1"']\arrow[hook,r,"\fri"] &\cMC(V_\Right;\field)
\end{tikzcd}
\]
which induces equivalences between all the three rows, with each row viewed as a diagram of $A_{\infty}$-categories. 
Here, $\fp_1$'s are the projections to the first factors, and $\fri$'s are the natural inclusions, and the functor $\identity\times\fr_\Right$ is defined via 
\[
\Aug_+(V;\field)\isomorphic\Aug_+(V_\Left;\field)\times\Aug_+(v;\field)\xrightarrow[]{\identity\times\fr_{v,R}} \Aug_+(V_\Left;\field)\times\Aug_+(v_\Right;\field).
\]
\end{lemma}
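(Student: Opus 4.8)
The plan is to reduce the entire statement to the single assertion that $\fr_{v,R}\colon\Aug_+(v;\field)\to\Aug_+(v_\Right;\field)$ is an $A_\infty$-equivalence. Using the strict product decomposition $\Aug_+(V;\field)\isomorphic\Aug_+(V_\Left;\field)\times\Aug_+(v;\field)$ recorded above, the functor $\fr_\Left$ becomes the projection $\fp_1$, and the explicit form of $\iota_\Right$ in \eqref{eqn:iota_R}, together with the computation of $\fr_\Right$ already carried out, shows that $\fr_\Right$ factors on the nose as $\fri\circ(\identity\times\fr_{v,R})$, where $\fri\colon\cMC(V_\Left;\field)\times\cMC(v_\Right;\field)\to\cMC(V_\Right;\field)$ is the evident inclusion sending a pair of Morse complexes to their direct sum, viewed as a block--triangular Morse complex on $C(V_\Right)$. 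Since the identifications $\Aug_+(V_\Left;\field)\isomorphic\cMC(V_\Left;\field)$, $\Aug_+(v_\Right;\field)\isomorphic\cMC(v_\Right;\field)$ and $\Aug_+(V_\Right;\field)\isomorphic\cMC(V_\Right;\field)$ of Lemma~\ref{lem:aug cat for trivial tangle} are strict isomorphisms of DG categories that are compatible with $\fp_1$ and $\fri$ (this is built into the definitions of $\iota_\Right$ and $\frh$, cf. the $\varphi_V$ discussion above), the second and third rows of the displayed diagram are strictly isomorphic diagrams, and the first row maps to the second by the triple $(\identity,\identity\times\fr_{v,R},\identity)$ with strictly commuting squares. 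This triple is an equivalence of diagrams precisely when $\identity\times\fr_{v,R}$, hence $\fr_{v,R}$, is an $A_\infty$-equivalence; so it suffices to prove the latter.

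An $A_\infty$-functor is an $A_\infty$-equivalence if and only if it is essentially surjective and quasi--fully faithful. For essential surjectivity, note first that by Lemma~\ref{lem:stratification of aug var at a vertex} both $\aug(v;\field)$ and $\aug(v_\Right;\field)$ are empty when $r$ is odd, so we assume $r$ even as in the discussion above. Under the Morse--complex identifications $\Xi_v\colon\aug(v;\field)\isomorphic\MC(v;\field)$ (Lemma~\ref{lem:aug and Morse complex for a vertex}) and $\aug(v_\Right;\field)\isomorphic\MC(v_\Right;\field)$ (Lemma~\ref{lem:aug and Morse complex for trivial tangle}), the effect of $\fr_{v,R}$ on objects is the reduction $(C(v),d_v)\mapsto(C(v)\otimes_{\field[Z]}\field,\bar{d}_v)$, well defined because $d_v^2+Z^2=0$ becomes $\bar{d}_v^2=0$. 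I would argue this reduction is surjective on objects: by Lemma~\ref{lem:stratification of aug var at a vertex} and the classical analogue for the trivial tangle, $\MC(v;\field)$ and $\MC(v_\Right;\field)$ decompose into affine spaces indexed compatibly by normal rulings; the standard object $d_\ruling$ of a stratum of $\MC(v;\field)$ reduces to the standard object of the corresponding stratum of $\MC(v_\Right;\field)$, and on each stratum the reduction is an affine surjection because the wrapping components of $d_v$ are pinned down by $d_v^2+Z^2=0$, so the non-wrapping parameters of the two sides match. Hence $\fr_{v,R}$ is surjective, a fortiori essentially surjective, on objects.

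Quasi-full faithfulness is the technical core. Fix objects $\epsilon_1,\epsilon_2$; we must show $(\fr_{v,R})_1$ is a quasi-isomorphism from $(\hom_+(\epsilon_1,\epsilon_2),m_1)=(\field\langle c_{AB}^+\mid 1\le B<A\le r\rangle,m_1)$ of \eqref{eqn:m_1 at a vertex} to $(\hom_+(\fr_{v,R}\epsilon_1,\fr_{v,R}\epsilon_2),m_1)=(\field\langle b_{AB}^+\mid 1\le A\le B\le r\rangle,m_1)$ of \eqref{eqn:m_1 for trivial tangle}, where $(\fr_{v,R})_1$ is given by \eqref{eqn:r_R,1 at a vertex}. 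The plan has three steps. (i) Filter both complexes by the index spread $|A-B|$; on the associated graded, only the length--one terms $\epsilon_i(v_{a,1})$ survive in $m_1$ and in $(\fr_{v,R})_1$, so the differentials and the comparison map become locally constant in the augmentations. (ii) Conclude that the chain--homotopy type of $(\fr_{v,R})_1$ is constant on each stratum of $\aug(v;\field)$, reducing to the standard augmentation $\epsilon_\ruling$ attached to a normal ruling $\ruling$. (iii) For $\epsilon_1=\epsilon_2=\epsilon_\ruling$ compute directly: the target splits, compatibly with $(\fr_{v,R})_1$, into the image---isomorphic to the source---and a complementary summand of total dimension $r$ spanned by the classes of the diagonal generators $b_{aa}^+$, on which the induced $m_1$ pairs $b_{aa}^+$ with $b_{a'a'}^+$ whenever $\{a,a'\}$ is a ruling pair of $\ruling$, producing $r/2$ acyclic two--term complexes $\field\xrightarrow[]{\sim}\field$; hence $\coker(\fr_{v,R})_1$ is acyclic and $(\fr_{v,R})_1$ is a quasi-isomorphism. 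Step (iii) is naturally carried out by induction on $r$: the base case $r=2$ is the basepoint computation of \cite{NRSSZ2015}, and the inductive step removes one ruling pair of $v$ to obtain a vertex of valence $r-2$. I expect step (iii)---and in particular propagating the signs $\sigma_k$ and $s(a,b)$ through the reduction, and identifying the complementary summand with the direct sum of two--term complexes prescribed by $\ruling$---to be the main obstacle; the remaining steps are formal or a routine use of the ruling decompositions of \cite{ABS2019count}.

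Granting that $\fr_{v,R}$ is an $A_\infty$-equivalence, the commutative diagram of the lemma and the equivalences between its three rows follow from the first paragraph: the bottom two rows are strictly isomorphic, and the top row is equivalent to the middle one via $(\identity,\identity\times\fr_{v,R},\identity)$ with $\fr_\Left=\fp_1$ and $\fr_\Right=\fri\circ(\identity\times\fr_{v,R})$. In particular one obtains the chain of $A_\infty$-equivalences $\Aug_+(v;\field)\simeq\Aug_+(v_\Right;\field)\isomorphic\cMC(v_\Right;\field)$ asserted in the statement.
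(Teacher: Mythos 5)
Your plan to reduce everything to the single assertion that $\fr_{v,R}$ is an $A_{\infty}$-equivalence, and your treatment of the surrounding diagram via the strict product decomposition and Lemma~\ref{lem:aug cat for trivial tangle}, match the paper's strategy. The difference is in how you prove that $\fr_{v,R}$ is quasi-fully faithful, and there the proposal has a real gap.

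The paper proceeds by identifying $\cone(-\fr_{R,1})$ with the commutator complex $(\End^{\bullet}(C(V_\Right)),D)$, where $D(f)=d(\epsilon_{2,R})\circ f-(-1)^{|f|}f\circ d(\epsilon_{1,R})$, via an explicit sign-corrected isomorphism (its Claim 1), and then shows directly that this complex is chain-homotopic to zero whenever either $d_1$ or $d_2$ is acyclic, by conjugating a splitting homotopy $\delta_{\ruling_i}$ of a canonical differential $d_{\ruling_i}$ back by a group element $g_i$ (its Claim 2). This handles arbitrary pairs $(\epsilon_1,\epsilon_2)$ with no stratification argument, and the induction or perturbation you would need in steps (i)--(ii) is replaced by one algebraic claim.

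Your step (iii) as written is wrong: the target does \emph{not} split compatibly into the image plus a complementary summand ``spanned by the diagonal generators $b_{aa}^+$'' with the induced $m_1$ pairing $b_{aa}^+$ with $b_{a'a'}^+$ along ruling pairs. Two concrete problems. First, the diagonal generators do not span a subcomplex: by \eqref{eqn:m_1 for trivial tangle} (with the convention $y_a=k_{aa}$), $m_1(b_{aa}^+)=m_1(y_a^{12\vee})$ is a combination of \emph{off-diagonal} generators $k_{ca}^{12\vee}$ and $k_{ad}^{12\vee}$, not of diagonal ones. Second, the image of $(\fr_{v,R})_1$ is not disjoint from the diagonal: already for $r=2$ and $\epsilon_1=\epsilon_2=\epsilon_{\ruling}$ with $\ruling=\{\{1,2\}\}$, one has from \eqref{eqn:r_R,1 at a vertex} that
\[
(\fr_{v,R})_1(c_{21}^+)=\epsilon_1(v_{1,1})\,b_{11}^+ \;+\; (-1)^{|c_{21}^+|}\,\epsilon_2(v_{1,1})\,b_{22}^+,
\]
i.e.\ the image is the span of $b_{11}^+\pm b_{22}^+$, entirely inside the diagonal. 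The actual cokernel in this example is two-dimensional, spanned by $[b_{11}^+]$ and the off-diagonal $[b_{12}^+]$, with $m_1[b_{11}^+]=\pm[b_{12}^+]$; it is acyclic, but for a reason completely different from the one you describe. So the conclusion happens to be right, but the mechanism you write down is not, and for larger $r$ and non-diagonal pairs $(\epsilon_1,\epsilon_2)$ nothing in your sketch guarantees a splitting or even identifies the cokernel. Steps (i)--(ii) would also need more than you indicate: the index-spread filtration behaves differently on source and target (it decreases under $m_1$ on the source by \eqref{eqn:m_1 at a vertex} but increases on the target by \eqref{eqn:m_1 for trivial tangle}), and the claim that the chain-homotopy type of $(\fr_{v,R})_1$ is locally constant on strata is a statement that would itself require proof.

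Finally, note that the paper also gives a second, geometric proof (an alternative parallel copy $v'$ for which the right restriction is a strict chain isomorphism, together with a zig-zag of consistent Lagrangian Reidemeister moves identifying the augmentation categories of $v$ and $v'$); your proposal does not touch that route at all.
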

In particular, the result implies that $\Aug_+(\cV;\field)$ is equivalent to the third row of Morse complex categories, which involves only DG categories and DG functors. This will be one key step in showing our main result ``augmentations are sheave''.

\begin{proof}[An algebraic proof of Lemma \ref{lem:aug cat for a vertex}]
Notice that the equivalence between the second and third row is just a direct consequence of Lemma \ref{lem:aug cat for trivial tangle}. The only nontrivial part is to show $\fr_{v,R}$ is an $A_{\infty}$-equivalence. For simplicity, we can now \emph{assume} $V=v$, that is, $n_\Left=0$, $k=1$, and $\fr_{v,R}=\fr_\Right$.

Remember that on objects $\fr_\Right$ is just the map of augmentation varieties $r_\Right:\Aug(V;\field)\rightarrow\Aug(V_\Right;\field)$, which is surjective. In particular, $\fr_\Right$ is essentially surjective. It suffices to show that $\fr_\Right$ is fully faithful, i.e. for any two objects $\epsilon_1,\epsilon_2$ in $\Aug_+(V:\field)$, the co-chain map 
\begin{equation}
\fr_{R,1}=(\fr_\Right)_1:\hom_V(\epsilon_1,\epsilon_2)\rightarrow\hom_{V_\Right}(\epsilon_{1,R},\epsilon_{2,R})
\end{equation}
is a quasi-isomorphism.
Here $\hom_T(-,-)\coloneqq \hom_{\Aug_+(T;\field)}(-,-)$ for any bordered Legendrian graph $T$. 
Equivalently, it suffices to show that the cone of $-\fr_{R,1}$ is acyclic. 
By definition, the cone of $-\fr_{R,1}$ is the cochain complex
\[
\cone(-\fr_{R,1})\coloneqq (\hom_{V_\Right}(\epsilon_{1,R},\epsilon_{2,R})\oplus\hom_{V}(\epsilon_1,\epsilon_2)[1],\delta)
\]
with differential 
\[
\delta\coloneqq 
\left(\begin{matrix}
m_{1,R} & -\fr_{R,1}\\
0 & -m_1
\end{matrix}\right)
\]

Let $\End^{*}(C(V_\Right))$ be the free $\ZZ$-graded $\field$-module of $\field$-linear endomorphisms of $C(V_\Right)=\oplus_{1\leq p\leq r}\field\cdot e_p$ without filtration. For $i=1,2$, remember that $d(\epsilon_{i,R})$ is the image of $\epsilon_{i,R}$ under the DG equivalence $\frh_\Right:\Aug_+(V_\Right;\field)\isomorphic \cMC(V_\Right;\field)$ in Lemma \ref{lem:aug cat for trivial tangle} for $V_\Right$.
Then \emph{define} a cochain complex $(\cE,D)\coloneqq (\End^{*}(C(V_\Right)),D)$ with differential: 
\[
D(f)\coloneqq d(\epsilon_{2,R})\circ f-(-1)^{|f|}f\circ d(\epsilon_{1,R}).
\]

Notice that $\hom_V(\epsilon_1,\epsilon_2)[1]=\oplus_{1\leq q<p\leq r}\field\cdot c_{p,q}^+[1]$, and $\hom_{V_\Right}(\epsilon_{1,R},\epsilon_{2,R})=\oplus_{1\leq p\leq q\leq r}\field\cdot b_{p,q}^+$. In particular, $|c_{p,q}^+[1]|=|c_{p,q}^+|-1=\mu(p)-\mu(q)$. \emph{Define} a $\field$-linear map $\alpha:\cone(-\fr_{R,1})\rightarrow \cE$ with:
\begin{align*}
\alpha(b_{p,q}^+)&\coloneqq (-1)^{s(p,q)}|q\rangle\langle p|, & 1\leq p&\leq q\leq r;\\
\alpha(c_{p,q}^+[1])&\coloneqq (-1)^{s(p,q)}|q\rangle\langle p|, & 1\leq q&<p\leq r.
\end{align*}
where $s(p,q)=\mu(p)(\mu(q)+1)+1$ and $|q\rangle\langle p|$ are defined as in Lemma \ref{lem:aug cat for trivial tangle}. \newline

\noindent{}\emph{Claim 1:} The map $\alpha:\cone(-\fr_{R,1})\rightarrow \cE$ is an isomorphism of cochain complexes. 
\begin{proof}[Proof of Claim 1]
Clearly, $\alpha$ is a $\field$-linear isomorphism of degree 0. It suffices to show that $\alpha$ is a cochain map.
Firstly observe that, $\alpha|_{\hom_{V_\Right}(\epsilon_{1,R},\epsilon_{2,R})}$ is just the composition 
\[
\hom_{V_\Right}(\epsilon_{1,R},\epsilon_{2,R})\xrightarrow[]{(\frh_\Right)_1}\hom_{\cMC(V_\Right;\field)}(d(\epsilon_{1,R}),d(\epsilon_{2,R}))\hookrightarrow \cE,
\]
where $(\frh_{R})_1$ is induced from the DG equivalence $\frh_{R}:\Aug_+(V_\Right;\field)\isomorphic \cMC(V_\Right;\field)$, and the second arrow is the inclusion of complexes. Hence, it follows by Lemma \ref{lem:aug cat for trivial tangle} that $\alpha|_{\hom_{V_\Right}(\epsilon_{1,R},\epsilon_{2,R})}$
 is a cochain map. It remains to show that, for any homogeneous element $x\in\hom_V(\epsilon_1,\epsilon_2)[1]$, we have $\alpha\circ\delta(x)=D\circ\alpha (x)$.

Say, $x=\sum_{1\leq B<A\leq r}x_{A,B}c_{A,B}^+[1]$, and denote $x^+\coloneqq x[-1]$. In particular, $x_{A,B}\neq 0$ implies that $|x|=|c_{A,B}^+[1]|=\mu(A)-\mu(B)$. Firstly, we have
\begin{align*}
\alpha(x)&=\sum_{B<A}(-1)^{s(A,B)}x_{A,B}|B\rangle\langle A|\\
d(\epsilon_{1,R})&=\sum_{p<A}(-1)^{\mu(p)}\epsilon_{1,R}(b_{p,A})|A\rangle\langle p|\\
d(\epsilon_{2,R})&=\sum_{B<q}(-1)^{\mu(B)}\epsilon_{2,R}(b_{B,q})|q\rangle\langle B|
\end{align*}

On the other hand, by definition, $\delta(x)=-\fr_{R,1}(x^+)-m_1(x^+)[1]$.
Apply formula (\ref{eqn:m_1 at a vertex}), we obtain:
\begin{align*}
&\alpha(m_1(x^+)[1])\\
=&\sum_{B<p<A}\epsilon_1(v_{p,A-p})x_{A,B}\alpha(c_{p,B}^+[1]) +\sum_{B<q<A}(-1)^{|c_{A,B}^+|}x_{A,B}\epsilon_2(v_{B,q-B})\alpha(c_{A,q}^+[1])\\
=&\sum_{q<p<A}\epsilon_{1,R}(b_{p,A})x_{A,q}\alpha(c_{p,q}^+[1]) +\sum_{B<q<p}(-1)^{|c_{p,B}^+|}x_{p,B}\epsilon_{2,R}(b_{B,q})\alpha(c_{p,q}^+[1])\\
=&\sum_{q<p<A}(-1)^{s(p,q)}\epsilon_{1,R}(b_{p,A})x_{A,q}|q\rangle\langle p| +\sum_{B<q<p}(-1)^{|c_{p,B}^+|+s(p,q)}x_{p,B}\epsilon_{2,R}(b_{B,q})|q\rangle\langle p|
\end{align*}
Similarly, by formula (\ref{eqn:r_R,1 at a vertex}), we have:
\begin{align*}
&\alpha(\fr_{R,1}(x^+))\\
=&\sum_{p\leq B<A}\epsilon_1(v_{p,A-p})x_{A,B}\alpha(b_{p,B}^+) +\sum_{B<A\leq q}(-1)^{|c_{A,B}^+|}x_{A,B}\epsilon_2(v_{B,q-B})\alpha(b_{A,q}^+)\\
=&\sum_{p\leq q<A}\epsilon_{1,R}(b_{p,A})x_{A,q}\alpha(b_{p,q}^+) +\sum_{B<p\leq q}(-1)^{|c_{p,B}^+|}x_{p,B}\epsilon_{2,R}(b_{B,q})\alpha(b_{p,q}^+)\\
=&\sum_{p\leq q<A}(-1)^{s(p,q)}\epsilon_{1,R}(b_{p,A})x_{A,q}|q\rangle\langle p|+\sum_{B<p\leq q}(-1)^{|c_{p,B}^+|+s(p,q)}x_{p,B}\epsilon_{2,R}(b_{B,q})|q\rangle\langle p|
\end{align*}
Combine the two computations above, we then obtain:
\begin{align*}
&\alpha\circ\delta(x)\\
=&-\sum_{p,q<A}(-1)^{s(p,q)}\epsilon_{1,R}(b_{p,A})x_{A,q}|q\rangle\langle p| -\sum_{B<p,q}(-1)^{|c_{p,B}^+|+s(p,q)}x_{p,B}\epsilon_{2,R}(b_{B,q})|q\rangle\langle p|\\
=&-\sum_{p,q<A}(-1)^{s(p,q)}\epsilon_{1,R}(b_{p,A})x_{A,q}|q\rangle\langle p| +\sum_{B<p,q}(-1)^{\mu(p)-\mu(B)+s(p,q)}x_{p,B}\epsilon_{2,R}(b_{B,q})|q\rangle\langle p|\\
=&-\sum_{p,q<A}(-1)^{|x|}((-1)^{\mu(p)}\epsilon_{1,R}(b_{p,A}))((-1)^{s(A,q)}x_{A,q})|q\rangle\langle A|\circ|A\rangle\langle p|\\ 
\mathrel{\hphantom{=}}&+\sum_{B<p,q}((-1)^{s(p,B)}x_{p,B})((-1)^{\mu(B)}\epsilon_{2,R}(b_{B,q}))|q\rangle\langle B|\circ|B\rangle\langle p|\\
=&-(-1)^{|x|}\alpha(x)\circ d(\epsilon_{1,R})+d(\epsilon_{2,R})\circ\alpha(x)\\
=&D\circ\alpha(x)
\end{align*}
as desired.
In the computation above, the third equality follows directly from the formulas for $\alpha(x)$ and $d(\epsilon_{i,R})$ above. The only nontrivial part is the second equality involving two sign manipulations, which are of the same form as those in (\ref{eqn:sign manipulation}), hence follow by exactly the same argument.
This finishes the proof of Claim 1.
\end{proof}

Now, by Claim 1, it suffices to show that $(\cE,D)$ is acyclic, which is purely a homological algebra problem. In fact, we can prove a stronger statement:\newline

\noindent{}\emph{Claim 2:} For any free $\ZZ$-graded $\field$-module $C\coloneqq \oplus_{p=1}^r\field\cdot e_p$, let $(C,d_1),(C,d_2)\in\MC(C;\field)$ be two Morse complexes as in Definition \ref{def:Morse complex}, such that either $d_1$ or $d_2$ is acyclic.
Let $(\cE,D)\coloneqq (\End^{\bullet}(C),D)$ be the complex of $\field$-linear endomorphisms of $C$, with differential given by $D(f)\coloneqq d_2\circ f-(-1)^{|f|}f\circ d_1$. Then $(\cE,D)$ is chain homotopic to zero.

To prove the claim, we firstly make some preparation. 

\begin{definition}
Define $\GNR(C)$ to be the set of all involutions $\rho$ {\em possibly with fixed points} on $I(C)\coloneqq\{1,2,\ldots,n\}$ such that
\[
|e_i|=|e_j|-1, \qquad\text{ if }\rho(i)=j \text{ and }i<j.
\]
\end{definition}

Let $B(C;\field)$ be the automorphism group of $(C, F^{\bullet})$ in Definition \ref{def:Morse complex}. Then $B(C;\field)$ acts on $\MC(C;\field)$ via conjugation:  For any $g\in B(C;\field)$ and $d\in\MC(C;\field)$,
\[
g\cdot d=g\circ d\circ g^{-1}.
\]

\begin{definition}[Canonical differentials]\label{def:canonical differential}
For each involution $\rho\in \GNR(C)$, define a \emph{canonical differential} $d_{\rho}\in\MC(C;\field)$ as follows:
\begin{align*}
(-1)^{\mu(i)}d_{\rho}e_i=
\begin{cases}
e_j &\text{ if }\rho(i)=j, \text{ and } i<j;\\
0&\text{ otherwise.}
\end{cases}
\end{align*}
\end{definition}

Recall that, we have
\begin{lemma}[{\cite[Lem.4.5]{Su2017}}]\label{lem:ruling stratification for trivial tangle}
The group action of $B(C;\field)$ induces a decomposition of $\MC(C;\field)$ into finitely many orbits 
\begin{align*}
\MC(C;\field)&=\coprod_{\rho\in \GNR(C)}B(C;\field)\cdot d_{\rho}.
\end{align*}
\end{lemma}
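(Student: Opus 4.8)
The plan is to recognize this as a graded version of Barannikov's canonical form theorem for filtered complexes over a field (equivalently, the structure theorem for one--parameter persistence modules) and to prove it in three parts: checking that each $d_\rho$ actually lies in $\MC(C;\field)$, an \emph{exhaustiveness} step showing every $d\in\MC(C;\field)$ is $B(C;\field)$--conjugate to some $d_\rho$, and a \emph{disjointness} step showing that distinct involutions give distinct orbits; finiteness is then automatic because $\GNR(C)$ is a finite set. First I would observe that for $\rho\in\GNR(C)$ the map $d_\rho$ of Definition~\ref{def:canonical differential} is homogeneous of degree $1$ precisely because the condition $|e_i|=|e_j|-1$ (when $\rho(i)=j$, $i<j$) is built into $\GNR(C)$; it preserves $F^\bullet$ since $\rho(i)>i$ forces $d_\rho e_i\in F^iC$; and $d_\rho^2=0$ since $\rho(\rho(i))=i<\rho(i)$ gives $d_\rho e_{\rho(i)}=0$. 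So $d_\rho\in\MC(C;\field)$ and the right--hand side is a finite union of well--defined $B(C;\field)$--orbits.

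For exhaustiveness I would argue by induction on $n$, running a Gaussian--elimination (persistence matrix reduction) procedure using only moves that lie in $B(C;\field)$: the rescalings $e_i\mapsto\alpha e_i$ with $\alpha\in\field\setminus\{0\}$ and the transvections $e_i\mapsto e_i+\beta e_j$ with $j>i$ and $|e_i|=|e_j|$, which together generate $B(C;\field)$. The key structural fact is that the matrix of any $d\in\MC(C;\field)$ in the basis $(e_1,\dots,e_n)$ is strictly triangular and degree--homogeneous: $de_i\in F^iC=\langle e_{i+1},\dots,e_n\rangle$ and $de_i$ has degree $|e_i|+1$; in particular $e_1$ occurs in no differential image. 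Treating the basis vectors in the order in which the flag $0=F^nC\subset F^{n-1}C\subset\cdots\subset F^0C=C$ is built up (largest index to smallest), one reduces the column of each current $e_p$ against the columns of the already--treated $e_q$ with $q>p$; each such operation is a legal transvection, and since a move on $e_p$ only perturbs the columns of the $e_m$ with $m<p$, which are treated strictly later, the procedure is well--defined and terminates. A final normalization by further $B(C;\field)$--moves brings $d$ to the form $d_\rho$, where $\rho$ pairs each reduced column with its pivot (and fixes the indices of the zero columns); the grading constraint recorded above shows $\rho\in\GNR(C)$. Equivalently, this is the inductive splitting off, as a \emph{filtered} direct summand, of a copy of $\field$ (a fixed point of $\rho$) or of a two--term acyclic subcomplex $\langle v,e_n\rangle$ with $dv=e_n$ (a transposition of $\rho$), followed by the inductive hypothesis on the complement.

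For disjointness I would exhibit a complete $B(C;\field)$--conjugacy invariant. Since every $g\in B(C;\field)$ preserves each $F^iC$ and $\ker(gdg^{-1})=g(\ker d)$, the quantity
\begin{align*}
N_d(i,j)\;\coloneqq\;\dim\bigl(d(F^{i-1}C)\cap F^{j-1}C\bigr),\qquad 1\le i,j\le n,
\end{align*}
depends only on the orbit of $d$. A direct computation gives $N_{d_\rho}(i,j)=\#\{\,k\ge i:\rho(k)>k,\ \rho(k)\ge j\,\}$, and the second difference
\begin{align*}
N_{d_\rho}(i,j)-N_{d_\rho}(i{+}1,j)-N_{d_\rho}(i,j{+}1)+N_{d_\rho}(i{+}1,j{+}1)
\end{align*}
equals $1$ if $\rho(i)=j>i$ and $0$ otherwise; hence the set of transpositions of $\rho$, and therefore $\rho$ itself, is recovered from $N_{d_\rho}$. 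Thus $d_\rho$ and $d_{\rho'}$ lie in one orbit only when $\rho=\rho'$, the orbits $B(C;\field)\cdot d_\rho$ are pairwise disjoint, and with exhaustiveness this gives the claimed decomposition.

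The step I expect to be the main obstacle is exhaustiveness, specifically the bookkeeping that the reduction can be carried to the \emph{strict} normal form $d_\rho$ (not merely to a ``reduced'' matrix) within $B(C;\field)$ and terminates: conjugation by $B(C;\field)$, unlike the column operations in the bare persistence algorithm, also produces row side--effects, so one must choose the elimination order carefully---using that the lowest--index basis vector occurs in no image---to keep every operation a legal transvection; a greedy choice of pairing can get stuck. This is exactly the point addressed in Barannikov's canonical form theorem and in \cite[Lem.~4.5]{Su2017}, whose argument one can follow after tracking the $\ZZ$--grading, which the flag pieces respect automatically.
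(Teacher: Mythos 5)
The paper does not give a proof of this lemma at all—it simply cites \cite[Lem.4.5]{Su2017} and proceeds to use the conclusion inside the proof of Claim~2; so there is no in-paper argument against which to match yours. Your proposal is a complete and correct proof along the expected lines, namely the Barannikov canonical-form (or graded persistence-reduction) argument that the cited reference also uses. Concretely: the verification that $d_\rho\in\MC(C;\field)$ is correct; the invariant
\[
N_d(i,j)=\dim\bigl(d(F^{i-1}C)\cap F^{j-1}C\bigr)
\]
is indeed $B(C;\field)$-conjugation invariant because every $g\in B(C;\field)$ fixes each $F^kC$, and your second-difference computation correctly recovers $\{(i,j):\rho(i)=j>i\}$, so distinct $\rho$ give disjoint orbits. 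For exhaustiveness, your observation about the side-effects of a transvection $e_p\mapsto e_p+\beta e_q$ ($q>p$, $|e_p|=|e_q|$) is exactly right: because the matrix of $d$ is strictly lower triangular in the flag basis, conjugation by such a transvection changes only column~$p$ and entries $(q,i)$ with $i<p$, so a largest-to-smallest column sweep never disturbs already-normalized columns and terminates. One small precision worth adding for a clean write-up: after the column sweep, a further normalization is still needed to ensure no row contains two pivots (i.e.\ that the resulting pairing actually is an involution); this is handled by an analogous row-clearing pass using the same transvections, and the acyclicity of the local two-term pieces plus the grading constraint then forces the pairing to lie in $\GNR(C)$. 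With that sentence included, the argument is airtight and matches the standard proof of this classification over a field.
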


Now, we can prove Claim 2.

\begin{proof}[Proof of Claim 2]
By Lemma \ref{lem:ruling stratification for trivial tangle}, if $d_i$ is acyclic, then $d_i=g_i\cdot d_{k_i}=g_i\circ d_{k_i}\circ g_i^{-1}$ for some $g_i\in B(C;\field)$ and some involution $\ruling_i\in \NR(C)$. Here, $d_{\ruling_i}$ is the canonical differential associated to $\ruling_i$ as in Definition \ref{def:canonical differential}. By definition, $d_{{\ruling_i}}e_p=e_q$ if $p<q=\ruling_i(p)$, and $d_{\ruling_i}e_p=0$ otherwise. \emph{Define} $\delta_{\ruling_i}\in\cE^{-1}$ by: $\delta_{\ruling_i}e_q=e_p$ if $p<q=\ruling_i(p)$, and $\delta_{\ruling_i}e_q=0$ otherwise. Then clearly we have $\delta_{\ruling_i}^2=0$, and $d_{\ruling_i}\circ\delta_{\ruling_i}+\delta_{\ruling_i}\circ d_{\ruling_i}=id$. \emph{Define} $\delta_i\in\cE^{-1}$ as $\delta_i\coloneqq g_i\cdot \delta_{\ruling_i}=g_i\circ\delta_{\ruling_i}\circ g_i^{-1}$. It follows immediately that $\delta_i^2=0$ and $d_i\circ\delta_i+\delta_i\circ d_i=id$. 

Now, if $d_2$ is acyclic, \emph{define} a map $H_2:\cE\rightarrow \cE$ of degree $-1$ by $H_2(f)\coloneqq \delta_2\circ f$. We then compute:
\begin{align*}
[D,H_2](f)&=D\circ H_2(f)+H_2\circ D(f)\\
&=D(\delta_2\circ f)+\delta_2\circ D(f)\\
&=d_2\circ(\delta_2\circ f)-(-1)^{|f-1|}(\delta_2\circ f)\circ d_1
+\delta_2\circ (d_2\circ f-(-1)^{|f|}f\circ d_1)\\
&=f
\end{align*}
That is, $\identity=[D,H_2]:\cE\rightarrow\cE$ is chain homotopic to zero, as desired. If $d_1$ is acyclic, by a similar argument, one can show that, $\identity=[D,H_1]$ with $H_1(f)=(-1)^{|f|}f\circ\delta_1$, as desired. 
\end{proof}

This finishes the proof of the lemma.
\end{proof}

Now we give an alternative geometric proof of Lemma~\ref{lem:aug cat for a vertex}.

\begin{proof}[A geometric proof of Lemma~\ref{lem:aug cat for a vertex}]
Let us start by introducing an alternative parallel copies, let us say $v'^{\p\bullet}$, of the vertex $v$ of type $(0,r)$ as follows:
\begin{figure}[!htbp]
\[
\vcenter{\hbox{\def\svgscale{0.7}\input{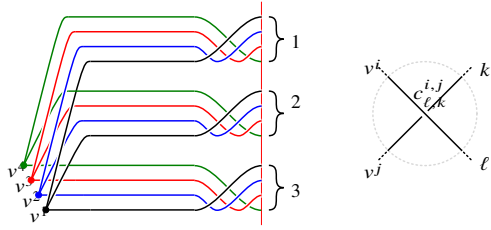}}}
\]
\caption{An alternative parallel $m$-copy near a vertex: $r=3, m=4$.}
\label{fig:vertex_front projection m-copy_alternative}
\end{figure}
Clearly $v'^{\p\bullet}$ is in $\BLT^{\p\bullet}_\lag$, and let us consider its consistent DGA $A^{\p\bullet}(v')$.
Let us use the same labeling convention of the crossing as in Figure~\ref{fig:vertex_front projection m-copy_label}, then $A^{\p\bullet}(v')$ is generated by the following sets of generators:
\begin{align*}
C_1 &\coloneqq \{c^{i,j}_{a,a} \mid 1\leq i < j \leq m,\  1\leq a \leq r\};\\
C_2 &\coloneqq \{c^{i,j}_{a,b} \mid 1\leq i < j \leq m,\  1\leq a < b \leq r\};\\
\widetilde V &\coloneqq \{v^{i}_{a,\ell} \mid 1\leq i \leq m,\  1\leq a \leq r, \ \ell\in \NN \}.
\end{align*}
Note that the generators in $C_1$ are the crossings near the right border while the ones in $C_2$ are crossings near the vertices $v^i$ in Figure~\ref{fig:vertex_front projection m-copy_alternative}.
The grading is given by
\[
|c^{i,j}_{a,b}|=\mu(a)-\mu(b)-1,\qquad |v^i_{a,\ell}|=\mu(a)-\mu(a+\ell)+(N-1),
\]
where $N=N(n,a,\ell)$ is the same as in \eqref{equation:N}.
The differential for the vertex generators $\widetilde V$ is the same as before, see \eqref{equation:differential for vertices} in Example/Definition~\ref{example:composable border DGAs}.
For the generators in $C_1$ and $C_2$, the differential is given by
\begin{align*}
\partial'^{\p m} c^{i,j}_{a,a}&=\sum_{i<k<j}c^{i,k}_{a,a}c^{k,j}_{a,a};\\
\partial'^{\p m} c^{i,j}_{a,b}&=(-1)^{|v^i_{a,b-a}|+1}v^i_{a,b-a}c^{i,j}_{b,b}+\sum_{a\leq c<b}(-1)^{|c^{i,j}_{a,c}|+1} c^{i,j}_{a,c} v^j_{c,b-c}\\
&+\sum_{\substack{i<k<j \\ a<c<b}}(-1)^{|c^{i,k}_{a,c}|+1} c^{i,k}_{a,c} c^{k,j}_{c,b}
\end{align*}

Now describe the corresponding augmentation category $\Aug_+(v')$.
The objects of $\Aug_+(v')$ is the augmentations of $A(v;\field)$:
\[
\Ob( \Aug_+(v') )=\aug(v;\field).
\]
For any two objects $\epsilon_1, \epsilon_2$, the set of morphism becomes
\begin{align*}
\hom_{\Aug_+(v')}(\epsilon_1,\epsilon_2)=\field\left\langle c_{a,b}^{12\vee} \mid  1\leq a\leq b\leq r \right\rangle.
\end{align*}
By dualizing the differential $\partial^{\p m}$, we have the following $A_\infty$ structure $\{m'_k\}_{k\geq 1}$ as follows:
\begin{itemize} 
\item For $\epsilon_1,\epsilon_2\in\aug(v;\field)$, the map 
\[
m'_1:\hom_{\Aug_+(v')}(\epsilon_1,\epsilon_2)\to\hom_{\Aug_+(v')}(\epsilon_1,\epsilon_2)
\]
is defined as
\begin{align}\label{eqn:m_1 in augmentation of v'}
m'_1\left(c_{ab}^{12\vee}\right)&=-\sum_{c<a}\epsilon_1(v_{c,a-c}) c_{cb}^{12\vee} +\sum_{b<d}(-1)^{|c_{ab}^{12\vee}|} c_{ad}^{12\vee}\epsilon_2(v_{b, d-b})
\end{align}
\item For $\epsilon_1,\epsilon_2,\epsilon_3\in\aug(v;\field)$, the map 
\[
m'_2:\hom_{\Aug_+(v')}(\epsilon_2,\epsilon_3)\otimes \hom_{\Aug_+(v')}(\epsilon_1,\epsilon_2)\to \hom_{\Aug_+(v')}(\epsilon_1,\epsilon_3)
\]
is defined as
\begin{align*}
m'_2\left(c_{cd}^{12\vee}\otimes c_{ab}^{12\vee}\right)
=\delta_{bc}(-1)^{|c_{ab}^{12\vee}|}c_{ad}^{12\vee}
\end{align*}
\item For $m'_k$ with $k\geq 3$, the higher composition $m'_k$ vanishes.
\end{itemize}
Note that 
\[
\unit' \coloneqq \sum_{1 \leq a \leq r}-c^{12\vee}_{a,a} \in \hom_{\Aug_+(v')}(\epsilon, \epsilon)
\] 
is the unit for all $\epsilon\in \aug(v')$, and hence $\Aug_+(v')$ is a (strictly) unital $A_\infty$-category. 

We show that $\fr'_{\Right}:\Aug_+(v')\to \Aug_+(v'_\Right)$ is an $A_\infty$-equivalence.
Similar as before, it suffices to check that the $A_\infty$-functor $\fr'_{\Right}$ is essentially surjective and fully-faithful. Indeed, $\fr'_{\Right}$ is surjective (and hence essentially surjective). The main issue again is to show the following
\begin{lemma}\label{lem:quasi isom geometric proof} As in the above setup,
\[
\fr'_{R,1}=(\fr'_\Right)_1:\hom_{\Aug_+(v')}(\epsilon_1,\epsilon_2)\rightarrow\hom_{\Aug_+(v'_\Right)}(\epsilon_{1,R},\epsilon_{2,R})
\]
is a quasi-isomorphism.
\end{lemma}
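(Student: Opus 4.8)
The plan is to compute the cochain map $\fr'_{R,1}$ explicitly and show it is an isomorphism of cochain complexes, hence \emph{a fortiori} a quasi-isomorphism. Everything hinges on making the structure map $\iota'_\Right\colon A(v'_\Right)\to A(v')$ transparent, which is exactly why the alternative parallel copy $v'$ was introduced.

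First I would write down $\iota'_\Right$ for the $m$-copies by counting admissible index-$0$ disks in $(I_x\times\RR_z, v'^{\p m})$, in the spirit of \eqref{eqn:iota_R_front projection m-copy at a vertex} and \eqref{eqn:iota_R}. In the alternative copy the right-border strands sit immediately to the right of the stacked vertices $v^1,\dots,v^m$, so each such disk breaks uniquely into a (possibly empty) portion near the vertices — producing only diagonal vertex generators $v^{ii}_{*,*}$ — glued to a single crossing $c^{ij}_{*,*}$. Twisting by the diagonal augmentation $\bfe=(\epsilon_1,\epsilon_2)$ and keeping only the length-one output, one gets, for $1\le a\le b\le r$,
\[
\iota'_{R,\bfe}(b_{ab}^{12}) = c_{ab}^{12} + \sum_{a<a'}(\pm)\,\epsilon_1(v_{a,a'-a})\,c_{a'b}^{12} + \sum_{b'<b}(\pm)\,c_{ab'}^{12}\,\epsilon_2(v_{b',b-b'}) + (\text{words of length}\ge 2),
\]
so that in the bases $\{c_{ab}^{12\vee}\}$ of $\hom_{\Aug_+(v')}(\epsilon_1,\epsilon_2)$ and $\{b_{ab}^{12\vee}\}$ of $\hom_{\Aug_+(v'_\Right)}(\epsilon_{1,R},\epsilon_{2,R})$ the matrix of $\fr'_{R,1}$ is unitriangular with respect to the partial order $(a,b)\preceq(a',b')\iff[a,b]\supseteq[a',b']$ — every off-diagonal correction strictly shrinks the interval $[a,b]$ — with $\pm 1$ on the diagonal. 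Hence $\fr'_{R,1}$ is a degree-$0$ isomorphism of graded $\field$-modules; being the linear component of the $A_\infty$-functor $\fr'_\Right$ it is also a cochain map, and an invertible cochain map is a quasi-isomorphism, which is the assertion of the lemma.

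A second route, parallel to the algebraic proof of Lemma~\ref{lem:aug cat for a vertex}, is to identify $\cone(-\fr'_{R,1})$ with the endomorphism complex $(\cE,D)=(\End^\bullet(C(v'_\Right)),D)$, $D(f)=d(\epsilon_{2,R})\circ f-(-1)^{|f|}f\circ d(\epsilon_{1,R})$, through the degree-$0$ linear isomorphism $b_{ab}^{12\vee}\mapsto(-1)^{s(a,b)}|b\rangle\langle a|$, $c_{ab}^{12\vee}[1]\mapsto(-1)^{s(a,b)}|b\rangle\langle a|$ with $s(a,b)=\mu(a)(\mu(b)+1)+1$ as in Lemma~\ref{lem:aug cat for trivial tangle}; checking that it intertwines the two differentials is a sign manipulation of exactly the shape of \eqref{eqn:sign manipulation}. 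Then $(\cE,D)$ is acyclic by the argument of Claim~2 in that proof, once one observes that for every augmentation $\epsilon$ of $\cI_v$ the restricted Morse complex $(C(v'_\Right),d(\epsilon_{v'_\Right}))$ is acyclic: by the orbit decomposition of Lemma~\ref{lem:ruling stratification for trivial tangle} its underlying involution in $\GNR(C(v'_\Right))$ is the restriction of a normal ruling of the type-$(0,r)$ vertex (Lemma~\ref{lem:stratification of aug var at a vertex}), which is fixed-point free because the vertex caps off all of its half-edges; equivalently this follows from the relations $\epsilon(\partial v_{a,\ell})=0$ together with $\epsilon(\partial v_{a,r})=0$.

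The main obstacle is the first step: carrying out the disk count for $v'$ and, above all, fixing every sign, so as to obtain the unitriangular form of $\fr'_{R,1}$ (or, in the second route, to verify $\alpha\circ\delta=D\circ\alpha$). Once the correct formula for $\iota'_\Right$ is in hand the homological input is immediate — unitriangularity gives invertibility, or a contracting homotopy for $(\cE,D)$ is built from a fixed-point-free ruling differential exactly as in Claim~2. A secondary point needing care is the acyclicity of $(C(v'_\Right),d(\epsilon_{v'_\Right}))$, which should be deduced cleanly from $\epsilon(\partial v_{a,r})=0$ and the classification of vertex Morse complexes rather than by a hands-on computation.
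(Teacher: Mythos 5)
Your first route is the paper's argument, modulo one simplification you should make: the disk count for $v'$ is cleaner than you anticipate. In the alternative copy the right border sits immediately adjacent to the staircase of stacked vertices and their fanning strands, and the index-$0$ disks giving $\phi_\Right^{\p m}$ are rigid bigons; the paper's computation yields exactly
\begin{align*}
\phi_\Right^{\p m}(k^{ii}_{ab})&=v^i_{a,b-a},&
\phi_\Right^{\p m}(k^{ij}_{ab})&=c^{i,j}_{a,b}\quad(i\neq j),
\end{align*}
with no length-$\ge 2$ tail and hence, after twisting by the diagonal augmentation (which fixes mixed chords), no off-diagonal correction terms at all. So $\fr'_{R,1}(c^{12\vee}_{ab})=k^{12\vee}_{ab}$ is literally the identity map of graded $\field$-modules, not merely unitriangular. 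Your correction terms $\epsilon_1(v_{a,a'-a})\,c_{a'b}^{12}$ and $c_{ab'}^{12}\,\epsilon_2(v_{b',b-b'})$ belong to the structure morphism of the \emph{original} copy $v$ (cf.\ \eqref{eqn:iota_R_front projection m-copy at a vertex}); eliminating them was precisely the reason for introducing $v'$. Since unitriangular with $\pm 1$ diagonal is still invertible, your conclusion is unharmed, but the cleaner statement (strict isomorphism) is what makes the geometric proof short: one then compares the dualized differential to \eqref{eqn:m_1 for trivial tangle} and \eqref{eqn:m_1 in augmentation of v'} and is done.

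Your second route is mathematically viable but defeats the purpose. It reproduces the algebraic proof of Lemma~\ref{lem:aug cat for a vertex} --- identify $\cone(-\fr'_{R,1})$ with $(\End^{\bullet}(C(v'_\Right)),D)$, then deduce acyclicity from a contracting homotopy built out of the fixed-point-free vertex ruling --- which is exactly the computation the geometric proof is designed to sidestep by moving to $v'$. If you follow the second route, do be careful that the acyclicity of $(C(v'_\Right),d(\epsilon_{v'_\Right}))$ really must be argued separately (via Lemma~\ref{lem:stratification of aug var at a vertex} and the fact that a normal ruling at a type-$(0,r)$ vertex is a fixed-point-free involution), since it is not automatic for a general Morse complex; you identified this correctly. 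The paper uses the first, strict-isomorphism argument.
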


\begin{proof}[Proof of Lemma~\ref{lem:quasi isom geometric proof}]
The DGA $A^{\p m}(v'_\Right)$ is nothing but a trivial bordered Legendrian described in Section~\ref{sec:aug var for trivial tangle}. The generators are
\begin{align*}
\left\{k_{aa}^{ij}~\middle|~1\leq a\leq r,\  1\le i<j\le m\right\}\coprod
\left\{k_{ab}^{ij}~\middle|~ 1\leq a<b\leq r,\  1\le i,j\le m\right\},
\end{align*}
with grading $|k_{ab}^{ij}| \coloneqq \mu(a)-\mu(b)-1$, 
and the differential
\begin{align*}
\differential_\Right^{\p m}k_{ab}^{ij}&=\sum_{\substack{a\leq c\leq b\\1\le k\le m}} (-1)^{|k_{ac}^{ik}|-1}k_{ac}^{ik}k_{cb}^{kj}.
\end{align*}
Here we use the convention on generators in Assumption~\ref{assumption:indices ranges}.

Now consider the DGA map
\begin{align*}
\phi_\Right^{\p m} : A^{\p m}(v'_\Right)\to A^{\p m}(v').
\end{align*}
By a direct counting of polygons in Figure~\ref{fig:vertex_front projection m-copy_alternative}, we have
\begin{align*}
\phi_\Right^{\p m}(k^{ii}_{ab})&=v^i_{a,b-a},&
\phi_\Right^{\p m}(k^{ij}_{ab})&=c^{i,j}_{a,b}.
\end{align*}
Then by the construction below Proposition~\ref{proposition:functoriality},
we obtain
\begin{align*}
\fr'_{R,1}=(\fr'_\Right)_1:\hom_{\Aug_+(v')}(\epsilon_1,\epsilon_2)&\rightarrow\hom_{\Aug_+(v'_\Right)}(\epsilon_{1,R},\epsilon_{2,R});\\
c^{12\vee}_{a,b}&\mapsto k^{12\vee}_{a,b}
\end{align*}
for $1\leq a \leq b \leq r$. By comparing (\ref{eqn:m_1 for trivial tangle}) and (\ref{eqn:m_1 in augmentation of v'}), we conclude that $\fr'_{R,1}$ is a (strict) isomorphism between two chain complexes, in particular, a quasi-isomorphism.
\end{proof}

As a second step, let us show
\begin{lemma}\label{lem:quasi equiv_geometric proof}
There is a $A_\infty$ quasi equivalence between $\Aug_+(v;\field)$ and $\Aug_+(v';\field)$.
\end{lemma}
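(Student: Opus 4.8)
The plan is to exhibit $v'^{\p\bullet}$ and $v^{\p\bullet}$ (the canonical front copy $\cV^{\p\bullet}_{\front}$ restricted to the vertex, i.e.\ with $n_\Left=0$) as two consistent sequences of Legendrian graphs that become Legendrian isotopic after forgetting basepoints, and then invoke the invariance results already established in Section~\ref{section:aug cats}. Concretely, both $v'^{\p m}$ and $v^{\p m}$ are $m$-copies of the same bordered Legendrian graph $v$ of type $(0,r)$; the two pictures differ only in how the right half-edges of the various copies $v^1,\dots,v^m$ are braided past one another near the right border. First I would observe that these two braidings are related by a sequence of elementary consistent Lagrangian Reidemeister moves of type $\RM{iii_a}$ and $\RM{iii_b}$ (reordering crossings among parallel strands) together with $\RM{ii}$ (sliding crossings), exactly in the spirit of Figure~\ref{figure:front and Lagrangian canonical copies}; none of these moves touches the vertices $v^i$ themselves, so the vertex generators are unaffected and only the $C_2$-type crossings get permuted. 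Each such elementary consistent move induces, by Theorem~\ref{theorem:tangle to CE algebra}, a zig-zag of consistent stabilizations between $A^\CE_\co(v'^{\p\bullet})$ and $A^\CE_\co(v^{\p\bullet})$.

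Given that zig-zag of consistent stabilizations, Proposition~\ref{proposition:natural equivalence of mapping cylinder} (equivalently Proposition~\ref{proposition:zig-zags of stabilizations} applied to the border-free case) immediately yields an $A_\infty$-quasi-equivalence $\Aug_+(v';\field)\xrightarrow{\sim}\Aug_+(v;\field)$, which is what is claimed. To be careful about the bordered structure: here both graphs have empty left border and right border $[r]$, and the right border DGA $A_r(\mu_\Right)$ is literally the same on both sides with the structure morphism $\phi_\Right$ matching up under the quasi-equivalence (this is the content of Remark~\ref{remark:basepoint move commutativity} and the discussion after Theorem~\ref{theorem:tangle to CE algebra}, since the moves used are interior and commute with the restriction to the right border). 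Hence the quasi-equivalence is one of bordered $A_\infty$-categories in the sense of $\BAlg_\infty$, and in particular $\fr'_\Right$ for $v'$ is intertwined with $\fr_\Right$ for $v$ up to the equivalence, so the geometric computation of $\Aug_+(v';\field)$ transports to the computation of $\Aug_+(v;\field)$ needed for the proof of Lemma~\ref{lem:aug cat for a vertex}.

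Alternatively — and this is probably the cleaner route to actually write down — I would build a direct $A_\infty$-functor $\funG:\Aug_+(v';\field)\to\Aug_+(v;\field)$ by hand from a DGA morphism $A^\CE_\co(v^{\p\bullet})\to A^\CE_\co(v'^{\p\bullet})$ realizing the disk-count comparison, and check it is essentially surjective and quasi-fully faithful. On objects both categories have $\aug(v;\field)$, and the functor is the identity; on $\hom$-spaces one dualizes the degree-$0$ part of the comparison morphism. The $m_1$-complexes on both sides are, as recorded in \eqref{eqn:m_1 at a vertex} and \eqref{eqn:m_1 in augmentation of v'}, built from the same ingredients $\epsilon_i(v_{p,\ell})$, and a short sign bookkeeping shows the induced chain map on $\hom(\epsilon_1,\epsilon_2)$ is an isomorphism of complexes; quasi-fully faithfulness is then immediate.

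The main obstacle I expect is bookkeeping rather than anything conceptual: verifying that the sequence of consistent Lagrangian Reidemeister moves connecting $v'^{\p\bullet}$ to $v^{\p\bullet}$ is genuinely \emph{consistent} (condition (3) of Definition~\ref{definition:consistent Reidemeister moves}) for all $m$ simultaneously, i.e.\ that the reorderings of the parallel right half-edges can be chosen compatibly with all restrictions $U\subset[m]$, and tracking the signs so that the resulting $A_\infty$-functor really intertwines $\{m'_k\}$ with $\{m_k\}$ including the higher $m_k$ for $2\le k\le r$. Because all the moves avoid the vertices and only shuffle the $C_2$-crossings, I anticipate this reduces to the analogous consistency statement for parallel copies of a trivial braid, which is already handled by the diagrams in Section~\ref{section:canonical copies}; so the obstacle is real but routine, and the geometric proof of Lemma~\ref{lem:aug cat for a vertex} then goes through verbatim with $v$ in place of $v'$.
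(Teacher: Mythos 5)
Your overall strategy is the same as the paper's: connect the two consistent sequences of Lagrangian parallel copies $v^{\p\bullet}$ and $v'^{\p\bullet}$ by consistent Reidemeister moves, obtain a zig-zag of consistent stabilizations of DGAs, and invoke the stabilization machinery of Section~\ref{section:aug cats} to conclude the quasi-equivalence. However, there are two genuine gaps.

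\textbf{The connecting moves.} You claim that the two parallel-copy pictures differ only in the braiding of the right half-edges near the border, so that a sequence of $\RM{ii}$, $\RM{iii_a}$, $\RM{iii_b}$ moves suffices ``none of which touches the vertices.'' This is incorrect. Compare the generating sets: $A^{\CE}(v^{\p m})$ has crossings $c_{ab}^{ij}$ with $1\le b<a\le r$ and $i<j$, while $A^{\CE}(v'^{\p m})$ has the crossings $C_1=\{c_{a,a}^{ij}\}$ near the border \emph{and} $C_2=\{c_{a,b}^{ij}: a<b\}$ near the vertices — a strictly larger set, off by $r\binom m2$ generators. Moves of type $\RM{iii}$ preserve the crossing count, and $\RM{ii}$ only creates or destroys cancelling pairs; the discrepancy between $v^{\p m}$ and $v'^{\p m}$ is of a different nature, and the paper's sequence in Figure~\ref{fig:a sequence of consistent Reidemeister moves_example} necessarily passes through a vertex move $\RM{IV}^{\p\bullet}$ (i.e.\ $\RM{iv}^{\p\bullet}$) which pushes crossings through the vertices and changes the local crossing structure near each $v^i$. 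You have not supplied an alternative route avoiding $\RM{iv}^{\p\bullet}$ and it is unclear one exists.

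\textbf{The unitality hypothesis.} You invoke Proposition~\ref{proposition:natural equivalence of mapping cylinder} ``equivalently'' Proposition~\ref{proposition:zig-zags of stabilizations}, but these are not equivalent. Proposition~\ref{proposition:natural equivalence of mapping cylinder} covers a \emph{single} stabilization (where the canonical inclusion $\iota$ always induces a quasi-equivalence, no unitality needed), whereas Proposition~\ref{proposition:zig-zags of stabilizations} handles a zig-zag of stabilizations and carries the explicit hypothesis that $\Aug_+(A_0^{\p\bullet};K)$ be homologically unital (without this, the ``wrong way'' step $\Pi^{\p\bullet}$ in Corollary~\ref{corollary:unitality along projection} fails to be essentially quasi-surjective). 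The paper's proof addresses this precisely: it first observes that $\Aug_+(v')$ is \emph{strictly} unital, because the element $\unit'=\sum_a -c_{a,a}^{12\vee}$ is an explicit unit, and only then applies Proposition~\ref{proposition:zig-zags of stabilizations} with $A_0^{\p\bullet}=A^{\p\bullet}(v')$. Your proposal omits this verification, and in the self-contained structure of the ``geometric proof of Lemma~\ref{lem:aug cat for a vertex}'' it should not be taken for granted. (Your alternative plan of constructing a direct $A_\infty$-functor by hand from a DGA morphism would sidestep the unitality issue, but as written it is only a sketch with no morphism exhibited and no verification of the higher $m_k$'s.)
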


\begin{figure}[!htbp]
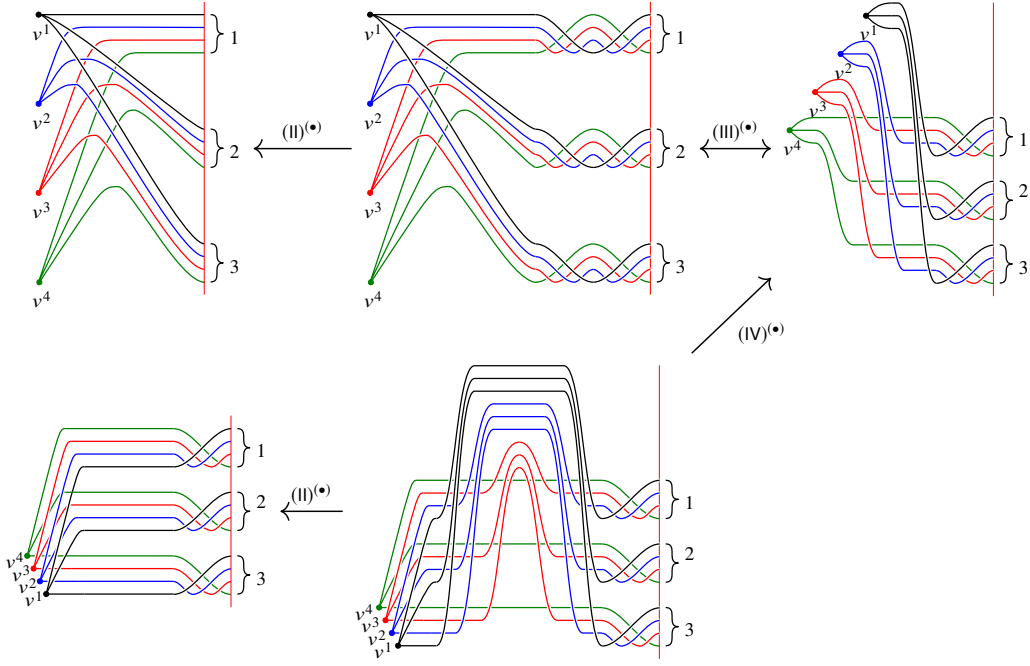

\begin{tikzcd}
\vcenter{\hbox{\def\svgscale{0.6}\input{vertex_r3_m4_v0_input.tex}}}&
\vcenter{\hbox{\def\svgscale{0.6}\input{vertex_r3_m4_v1_input.tex}}}
\ar[l,"\RM{II}^{\p\bullet}"'] \ar[r,"\RM{III}^{\p\bullet}",leftrightarrow]&
\vcenter{\hbox{\def\svgscale{0.6}\input{vertex_r3_m4_v2_input.tex}}}\\
\vcenter{\hbox{\def\svgscale{0.6}\input{vertex_r3_m4_v4_input.tex}}} &
\vcenter{\hbox{\def\svgscale{0.6}\input{vertex_r3_m4_v3_input.tex}}} \ar[l, "\RM{II}^{\p\bullet}"'] \ar[ur, "\RM{IV}^{\p\bullet}"']
\end{tikzcd}
\caption{A sequence of consistent Reidemeister moves between $v^{(m)}$ and $v'^{(m)}$.}
\label{fig:a sequence of consistent Reidemeister moves_example}
\end{figure}

\begin{proof}[Proof of Lemma~\ref{lem:quasi equiv_geometric proof}]
Let us denote the intermediate Lagrangian projections between $v^{\p m}$ and $v'^{\p m}$ as in Figure~\ref{fig:a sequence of consistent Reidemeister moves_example}. Even though the illustration is given when $r=3$ and $m=4$, the similar works for arbitrary $r$ and $m$. Note that each of them is in $\BLT^{\p\bullet}_\lag$, so it makes sense to consider the corresponding consistent DGAs.

Each pair of consecutive consistent Lagrangian projections is related by a sequence of consistent moves as depicted in Figure~\ref{fig:a sequence of consistent Reidemeister moves_example}.
This implies that there is a zig-zag of stabilizations between $A^{\p\bullet}(v)$ and $A^{\p\bullet}(v')$.
Moreover, we already argued that $\Aug_+(v')$ is strictly unital.
Then by Proposition~\ref{proposition:zig-zags of stabilizations}, we conclude that $\Aug_+(v;\field)$ and $\Aug_+(v';\field)$ are $A_\infty$ quasi equivalent to each other which proves Lemma~\ref{lem:quasi equiv_geometric proof}.
\end{proof}

Note that the consistent Reidemeister moves in Figure~\ref{fig:a sequence of consistent Reidemeister moves_example} fix the right border. So Lemma~\ref{lem:quasi equiv_geometric proof} and the fact that $\fr'_{\Right}:\Aug_+(v')\to \Aug_+(v'_\Right)$ is an $A_\infty$ equivalence imply that 
\[\fr_{\Right}:\Aug_+(v)\to \Aug_+(v_\Right)\] 
is also a $A_\infty$ quasi equivalence.
This gives a geometric proof of Lemma~\ref{lem:aug cat for a vertex}.
\end{proof}

\subsection{Sheaf property of augmentation categories}
Similar to \cite{NRSSZ2015}, for the purpose of proving ``augmentations are sheaves" for Legendrian graphs, or more generally, bordered Legendrian graphs, we will need a sheaf property of augmentation categories, which we now explain.
 
Let $\cC$ be a constructible sheaf of (homotopically) unital $A_{\infty}$-categories on an interval $(x_\Left,x_\Right)$, with respect to a stratification $\cX$ consisting of zero dimensional strata $x_i$
and one-dimensional strata $u_{i, i+1} = (x_i, x_{i+1})$.  Equivalently, the generization maps induce, for each $i$, a diagram of unital $A_{\infty}$-categories and (homotopically) unital $A_{\infty}$-functors:
\[
\cC(u_{i-1, i}) \xleftarrow{\fg_{L}} \cC_{x_{i}} \xrightarrow{\fg_{R}} \cC(u_{i, i+1}).
\]
It then follows from the sheaf axiom that, if $x_i <  x_{i+1} < \ldots < x_j$ are the zero dimensional strata in the interval $(a, b)\subset (x_\Left,x_\Right)$, then
\[
\cC( (a, b) ) \equiv \cC_{x_{i}} \times_{\cC(u_{i, i+1})}  \cC_{x_{i+1}} \times \ldots \times \cC_{x_{j}},
\]
is a fiber product. Here, the objects are tuples $(\xi_i, \xi_{i+1}, \ldots, \xi_j; f_{i, i+1}, \ldots, f_{j-1, j})$ where
$\xi_k \in \cC_{x_{k}}$ and $f_{k, k+1}: \fg_\Right(\xi_k) \to \fg_\Left(\xi_{k+1})$ is an isomorphism in $\cC$, i.e. a closed degree $0$ morphism in $\hom^*_{\cC(u_{k, k+1})}(\fg_\Right(\xi_k),\fg_\Left(\xi_{k+1}))$, whose cohomology is invertible by passing to the cohomological category of $\cC(u_{k, k+1})$.

On the other hand, there is a  full subcategory of this fiber product, called \emph{strict fiber product}:
\[
(\cC_{x_{i}} \times_{\cC(u_{i, i+1})}  \cC_{x_{i+1}} \times \ldots \times \cC_{x_{j}})_{strict},
\]
in which all the $f_{k, k+1}$'s are identity morphisms, that is, $\fg_{R}(\xi_k) = \fg_\Left(\xi_{k+1})$.
Recall that:
\begin{lemma}[{\cite[Lem.7.4]{NRSSZ2015}}]\label{lem:strict fiber product}
If all $\fg_\Left$'s satisfy the \emph{isomorphism lifting property}:
any isomorphism $\phi: \fg_\Left(\xi) \sim \eta'$ is the image under $\fg_\Left$ of some isomorphism
$\psi: \xi \sim \xi'$.  Then the inclusion of the strict fiber product in the actual fiber product is an $A_{\infty}$-equivalence.
\end{lemma}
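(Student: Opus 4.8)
\textbf{Proof proposal for Lemma~\ref{lem:strict fiber product}.}
The plan is to construct an explicit $A_\infty$-functor from the actual fiber product to the strict fiber product, inverse up to $A_\infty$-equivalence to the obvious inclusion $\fri$, and to show it is both essentially surjective and quasi-fully-faithful; the isomorphism lifting property is used precisely to get essential surjectivity and to rectify objects. First I would set up notation: write $\cC((a,b))$ for the fiber product, with objects $(\xi_i,\dots,\xi_j;f_{i,i+1},\dots,f_{j-1,j})$ as in the statement, and $\cC((a,b))_{\std}$ for the full subcategory where all $f_{k,k+1}=\identity$. Since $\cC((a,b))_{\std}\hookrightarrow\cC((a,b))$ is by definition a full and faithful $A_\infty$-inclusion, the only thing to check is that it is essentially surjective on cohomology, i.e. every object of the fiber product is isomorphic (in the cohomology category of $\cC((a,b))$) to an object of the strict fiber product.

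The main step is the rectification of objects by an induction on the number of zero-dimensional strata. Given $(\xi_i,\dots,\xi_j; f_{i,i+1},\dots,f_{j-1,j})$, I would proceed from left to right: keep $\xi_i$ as it is, and use the isomorphism $f_{i,i+1}\colon \fg_\Right(\xi_i)\xrightarrow{\sim}\fg_\Left(\xi_{i+1})$. Applying the isomorphism lifting property to $\fg_\Left$ at the stratum $x_{i+1}$, the isomorphism $\fg_\Right(\xi_i)\xrightarrow{\sim}\fg_\Left(\xi_{i+1})$ — viewed as an isomorphism with target $\fg_\Left(\xi_{i+1})$ — lifts to an isomorphism $\psi_{i+1}\colon \xi_{i+1}'\xrightarrow{\sim}\xi_{i+1}$ in $\cC_{x_{i+1}}$ with $\fg_\Left(\xi_{i+1}')=\fg_\Right(\xi_i)$; replacing $\xi_{i+1}$ by $\xi_{i+1}'$ makes the first gluing datum strictly the identity, at the cost of transporting $f_{i+1,i+2}$ along $\fg_\Right(\psi_{i+1})$. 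Iterating, one produces an object of $\cC((a,b))_{\std}$ together with a collection of isomorphisms $\{\psi_k\}$ which assemble (using that isomorphisms in an $A_\infty$-category can be organized into a degree-$0$ cocycle of the morphism complex, with $m_1$-closedness coming from the naturality of the generization functors $\fg_\Left,\fg_\Right$ and their unitality) into a closed degree-$0$ morphism in $\cC((a,b))$ between the original object and its strictification, whose image in cohomology is invertible componentwise, hence invertible. This shows $\fri$ is essentially surjective on cohomology.

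Finally, since $\fri$ is the inclusion of a full subcategory it is automatically quasi-fully-faithful, so $\fri$ is an $A_\infty$-equivalence (a fully faithful, essentially surjective $A_\infty$-functor between homotopically unital $A_\infty$-categories is a quasi-equivalence). The only delicate point — and the step I expect to be the main obstacle — is verifying that the transported data still defines a legitimate object of the fiber product at each stage, i.e. that the lifted isomorphisms $\psi_k$ are compatible with the $A_\infty$-structure maps so that conjugating the higher gluing morphisms $f_{k+1,k+2},\dots$ by $\fg_\Right(\psi_k)$ again yields isomorphisms and that the whole comparison morphism is $m_1$-closed; this is exactly where the (homotopical) unitality of the categories and the functoriality of the generization maps $\fg_\Left,\fg_\Right$ enter, and it should be handled by the same bookkeeping as in \cite[Lem.7.4]{NRSSZ2015}, to which I would refer for the routine sign and coherence computations.
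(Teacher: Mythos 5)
Your approach is essentially the one in \cite{NRSSZ2015}: the inclusion of a full $A_\infty$-subcategory is automatically quasi-fully-faithful, so the content is essential surjectivity on cohomology, which you establish by rectifying objects stratum-by-stratum using the isomorphism lifting property for $\fg_\Left$. One small imprecision: the lifting property as stated lifts isomorphisms \emph{out of} $\fg_\Left(\xi)$, so to straighten $f_{i,i+1}\colon \fg_\Right(\xi_i)\xrightarrow{\sim}\fg_\Left(\xi_{i+1})$ you should first pass to a chain-level representative of $f_{i,i+1}^{-1}\colon \fg_\Left(\xi_{i+1})\to\fg_\Right(\xi_i)$ and lift that; you acknowledge this only implicitly. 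With that caveat, your rectification loop (replace $\xi_{i+1}$ by $\xi_{i+1}'$ with $\fg_\Left(\xi_{i+1}')=\fg_\Right(\xi_i)$, conjugate the next gluing datum by $\fg_\Right(\psi_{i+1})$, iterate) is exactly the structure of the cited proof, and you correctly flag that the remaining work --- checking the conjugated data are still closed degree-$0$ cocycles and that the family $\{\psi_k\}$ assembles into an invertible cohomology-level morphism in the fiber product --- is where (homotopical) unitality and the $A_\infty$-functoriality of $\fg_\Left,\fg_\Right$ must be invoked. The proposal is correct in plan; it defers the chain-level bookkeeping to \cite{NRSSZ2015}, which is acceptable given the paper itself gives no independent proof of this lemma.
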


In our case for augmentation categories, we have
\begin{lemma}\label{lem:isomorphism lifting property for aug cat}
In the front projection picture, let $(T,\mu)$ be any of the following elementary bordered Legendrian graph: $n$ parallel strands, a single crossing, a single left cusp, a single marked right cusp, or a single vertex as in Example \ref{ex:aug cat for a vertex}. Then $\Aug_+(T_\Left;\field)\xleftarrow[]{\fr_\Left}\Aug_+(T;\field)$ satisfies the isomorphism lifting property as in Lemma \ref{lem:strict fiber product}. 
\end{lemma}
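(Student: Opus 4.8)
The plan is to verify the isomorphism lifting property case by case for each elementary bordered Legendrian graph, reducing everything to an explicit description of the left restriction functor $\fr_\Left$ and its behavior on isomorphisms. First I would recall what an isomorphism in an augmentation category means: by Lemma~\ref{lem:aug cat for trivial tangle}, the augmentation category of $n$ parallel strands is the Morse complex category $\cMC(I_n;\field)$, where a closed degree-$0$ morphism $\epsilon_1\to\epsilon_2$ is a filtration-preserving cochain map $(C,d(\epsilon_1))\to(C,d(\epsilon_2))$, and it is an isomorphism precisely when it is invertible as a $\field$-linear endomorphism of $C$ (equivalently, lies in the group $B(C;\field)$ of filtered automorphisms up to the cohomological relation). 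So in each case I must show: given an augmentation $\epsilon$ of $A^\CE(T,\mu)$ with left restriction $\fr_\Left(\epsilon)=\epsilon_\Left$, and a filtered isomorphism $\phi:\epsilon_\Left\xrightarrow{\sim}\eta'_\Left$ in $\Aug_+(T_\Left;\field)$, I can produce an isomorphism $\psi:\epsilon\xrightarrow{\sim}\epsilon'$ in $\Aug_+(T;\field)$ with $\fr_\Left(\psi)=\phi$.

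The key structural observation, available from the local computations in Section~\ref{section:augs are sheaves}, is that in every one of these elementary cases $\Aug_+(T;\field)$ splits (at least up to $A_\infty$-equivalence, and strictly in the relevant product factors) as $\Aug_+(T_\Left;\field)$ times an ``internal'' piece depending only on the local singularity: for $n$ parallel strands it is trivial; for a single crossing, $T_\Left\simeq T$ so $\fr_\Left$ is an equivalence and lifting is automatic; for a left cusp there is nothing on the left except fewer strands and the extra pair of strands is rigidly determined by the cusp relation; for a marked right cusp $T_\Left$ already contains all strands (the marking kills the new generator) so again $\fr_\Left$ is essentially an equivalence; and for a vertex of type $(0,n_\Right)$ we have the strict product $\Aug_+(\cV;\field)\isomorphic\Aug_+(V_\Left;\field)\times\Aug_+(v;\field)$ from the push-out diagram in Section~\ref{ex:aug cat for a vertex}, under which $\fr_\Left$ is literally the projection onto the first factor. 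So I would handle the cases in the order: (i) parallel strands (trivial/baseline), (ii) crossing and marked right cusp (where $\fr_\Left$ is a quasi-equivalence, using Lemma~\ref{lem:aug cat for a vertex}-style arguments or \cite{NRSSZ2015} directly), (iii) left cusp (extend $\phi$ by the identity on the two new strands, checking the cusp relation is preserved), and (iv) the vertex. For a projection functor $\fr_\Left:\cA_\Left\times\cB\to\cA_\Left$, lifting is immediate: given $\phi:\fg_\Left(\epsilon_\Left,\epsilon_v)=\epsilon_\Left\to\eta_\Left'$, set $\psi\coloneqq(\phi,\id_{\epsilon_v})$, and this is visibly an isomorphism mapping to $\phi$.

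The main obstacle I anticipate is the left cusp, and more subtly, making sure that in the crossing and marked-right-cusp cases the functor $\fr_\Left$ really does lift isomorphisms on the nose rather than only up to homotopy — here I would invoke that $\fr_\Left$ induces an isomorphism on the underlying graded vector spaces of morphism complexes (as established in the local calculations: the extra generators coming from the crossing or the marking contribute an acyclic summand or are simply absent in the left restriction), so a closed degree-$0$ preimage of $\phi$ exists and is automatically an isomorphism since its image in cohomology is. For the left cusp, the new generator $c$ (coming from the cusp) satisfies $\differential c = 1 + (\text{terms involving lower strands})$, and after twisting by the augmentation this becomes a relation forcing the value of $\psi$ on the two cusp strands; I would choose $\psi$ to restrict to $\phi$ on the old strands and to act on the cusp pair in the unique way compatible with this relation (essentially by $\phi$ transported through the cusp identification), then verify it is filtration-preserving and invertible. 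Throughout, the sign and filtration bookkeeping is the only genuinely computational part, and I would suppress it, citing the analogous verifications in \cite[\S7]{NRSSZ2015} for the smooth pieces and the formulas in Section~\ref{ex:aug cat for a vertex} for the vertex.
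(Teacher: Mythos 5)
Your approach matches the paper's exactly at the crucial new point: for the vertex case you use the strict product decomposition $\Aug_+(V;\field)\isomorphic\Aug_+(V_\Left;\field)\times\Aug_+(v;\field)$ from Lemma~\ref{lem:aug cat for a vertex}, under which $\fr_\Left$ is the projection $\fp_1$, and the lift $\psi=(\phi,\id_{\epsilon_v})$ is precisely the paper's argument; for the smooth elementary pieces you defer to \cite[\S 7.2]{NRSSZ2015}, as does the paper.

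However, two of your explanatory side remarks for the smooth cases are wrong and should be removed before they mislead anyone. First, for a single crossing it is \emph{not} true that $T_\Left\simeq T$ as bordered Legendrian graphs: the front of $T$ contains a crossing $c$ while $T_\Left$ is merely $n$ parallel strands, and correspondingly $A^{\CE}(T)$ has a generator not present in $A^{\CE}(T_\Left)$, so $\fr_\Left$ is not even bijective on objects. Second, and more importantly, even when a restriction functor is an $A_\infty$-quasi-equivalence this does \emph{not} automatically give the isomorphism lifting property of Lemma~\ref{lem:strict fiber product}: that property demands producing a new object $\xi'$ and isomorphism $\psi$ with $\fr_\Left(\xi')=\eta'$ and $\fr_\Left(\psi)=\phi$ \emph{on the nose}, whereas a quasi-equivalence only guarantees objects and morphisms up to quasi-isomorphism. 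The crossing and right-cusp lifts in \cite[\S 7.2]{NRSSZ2015} require a genuine DGA computation (roughly, extending $\phi$ to a filtered cochain isomorphism including the crossing or cusp generator and checking compatibility with the twisted differential), not an abstract categorical argument. Since you ultimately cite that reference for those cases, your proof stands, but the ``automatic'' justification should go.
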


\begin{proof}
The smooth cases are covered in \cite[\S7.2]{NRSSZ2015}. The only remaining case is when $T=V$ is a single vertex as in Example \ref{ex:aug cat for a vertex}. By Lemma \ref{lem:aug cat for a vertex}, the restriction functor $\fr_\Left$ is just the obvious projection $\fp_1:\Aug_+(V_\Left;\field)\times\Aug_+(v;\field)\rightarrow \Aug_+(V_\Left;\field)$, which clearly satisfies the isomorphism lifting property.
\end{proof}

As a consequence, we obtain:
\begin{proposition}\label{prop:sheaf property for aug cat}
Let $(T,\mu)$ be any bordered Legendrian graph in $J^1I_x$ with $I_x=(x_\Left,x_\Right)$, such that the $x$-coordinates of the singularities in its front projection are all distinct, denoted by $x_1<x_2<\ldots<x_N$. Assume each right cusp is marked, and each vertex has no left half-edges. 
Denote $x_0\coloneqq x_\Left$ and $x_{N+1}\coloneqq x_\Right$, and $u_{k,k+1}=(x_k,x_{k+1})$. 
\emph{Define} a constructible sheaf $\cC$ of unital $A_{\infty}$-categories on $I_x$ via:
\begin{eqnarray*}
&&(\cC(u_{x_{k-1,k}})\xleftarrow[]{\fg_\Left}\cC_{x_k}\xrightarrow[]{\fg_\Right}\cC(u_{k,k+1}))\\
&\coloneqq&(\Aug_+(T|_{u_{k-1,k}})\xleftarrow[]{\fr_\Left}\Aug_+(T|_{(x_{k-1},x_{k+1})};\field)\xrightarrow[]{\fg_\Right}\Aug_+(T|_{u_{k,k+1}}))
\end{eqnarray*}
Then we have the following equivalences between the rows of diagrams of unital $A_{\infty}$-categories: 
\[
\begin{tikzcd}[column sep=4pc, row sep=2pc]
\Aug_+(T_\Left;\field)\arrow[equal,d]  & \Aug_+(T;\field)\arrow[l,"\fr_\Left"']\arrow[r,"\fr_\Right"]\arrow[equal,d] & \Aug_+(T_\Right;\field)\arrow[equal,d]\\
\cC(u_{0,1})\arrow[equal,d]& (\cC_{x_1} \times_{\cC(u_{1, 2})}  \cC_{x_2} \times \ldots \times \cC_{x_N})_{strict}\ar[l,"\fg_\Left"']\arrow[r,"\fg_\Right"]\arrow[hook,d,"\simeq"] & \cC(u_{N,N+1})\arrow[equal,d]\\
\cC(u_{0,1})& \cC_{x_1} \times_{\cC(u_{1,2})}  \cC_{x_2} \times \ldots \times \cC_{x_N}\arrow[l,"\fg_\Left"']\arrow[r,"\fg_\Right"] & \cC(u_{N,N+1})
\end{tikzcd}
\]
In particular, the augmentation categories $\{\Aug_+(T|_{(a,b)};\field)\}_{(a,b)\subset I_x}$ form a sheaf.
\end{proposition}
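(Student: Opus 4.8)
The plan is to mirror the strategy of \cite[\S7]{NRSSZ2015}, isolating two inputs: a co-sheaf (push-out) property of the bordered Chekanov--Eliashberg DGAs under concatenation, which upon applying the contravariant functor $\Aug_+$ yields an honest fiber-product description of $\Aug_+(T;\field)$; and the comparison of the honest fiber product with the strict one, which is precisely Lemma \ref{lem:strict fiber product} fed by Lemma \ref{lem:isomorphism lifting property for aug cat}. The only ingredient beyond \cite{NRSSZ2015} is the behaviour at a vertex, for which Lemma \ref{lem:aug cat for a vertex} supplies the product splitting $\Aug_+(V;\field)\isomorphic\Aug_+(V_\Left;\field)\times\Aug_+(v;\field)$ and the associated isomorphism lifting property recorded in Lemma \ref{lem:isomorphism lifting property for aug cat}.

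First I would cut $T$ along vertical lines $x=s_k$ with $x_k\in(s_{k-1},s_k)\subset u_{k,k+1}$, chosen (which is possible since the $x$-coordinates of the singularities are distinct) so that each line meets $T$ transversally in parallel strands. This presents $(T,\bfmu)$ as an iterated concatenation $(S_1,\bfmu_1)\cdots(S_N,\bfmu_N)$ in which each slice $(S_k,\bfmu_k)$ contains exactly the singularity $x_k$. Because every right cusp is marked and every vertex has no left half-edges, each $S_k$ is --- up to a Legendrian isotopy fixing the borders --- one of the elementary bordered Legendrian graphs listed in Lemma \ref{lem:isomorphism lifting property for aug cat} ($n$ parallel strands, a single crossing, a single left cusp, a single marked right cusp, or a single type-$(0,r)$ vertex as in Example \ref{ex:aug cat for a vertex}); moreover $S_k^\Right=S_{k+1}^\Left$ is parallel strands, so by the invariance theorem (Theorem \ref{thm:invariance aug}) one gets $\Aug_+(S_k;\field)\simeq\cC_{x_k}$ and $\Aug_+(S_k^\Right;\field)\simeq\cC(u_{k,k+1})$, compatibly with restrictions.

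Next I would establish the honest fiber-product description. At the level of the consistent sequences of composable DGAs, the construction of Section \ref{section:CE DGA} shows that $A^{\CE}_\co(\cT^{\p\bullet},\bfmu^{\p\bullet})$ is the iterated push-out, in $\BDGA_\co^{\p\bullet}$, of the $A^{\CE}_\co(\cS_k^{\p\bullet},\bfmu_k^{\p\bullet})$ along the border structure morphisms $\phi_\Right,\phi_\Left$: the generators are those of the pieces together with $K_\Left$, and no word in any differential crosses a cutting line except through the border generators. As each $\phi_\Left$ is injective --- replacing the other leg by its cofibrant replacement of Definition \ref{definition:cofibrant replacement} if a genuine homotopy push-out is wanted, which changes nothing for $\Aug_+$ up to $A_\infty$-quasi-equivalence by Proposition \ref{proposition:natural equivalence of mapping cylinder} --- applying $\Aug_+(-;\cK)$ (Corollary \ref{corollary:well-definedness of bordered augmentation categories}), which turns stabilizations into $A_\infty$-quasi-equivalences (Propositions \ref{proposition:natural equivalence of mapping cylinder} and \ref{proposition:zig-zags of stabilizations}), converts the push-out into the honest iterated fiber product
\[
\Aug_+(T;\field)\ \simeq\ \cC_{x_1}\times_{\cC(u_{1,2})}\cC_{x_2}\times_{\cC(u_{2,3})}\cdots\times_{\cC(u_{N-1,N})}\cC_{x_N},
\]
compatibly with the restrictions $\fr_\Left,\fr_\Right$ to the border categories. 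I expect this step to be the main obstacle: one must verify directly, from the definition of $\Aug_+$ on consistent sequences, that the $(1,m)$-component $\sfM^{1m}$ of the push-out decomposes slice by slice and that the morphism complexes and the higher operations $m_k$ reassemble into the fiber-product $A_\infty$-structure --- this is essentially \cite[\S7]{NRSSZ2015} over the smooth slices, and the only new case is the type-$(0,r)$ vertex, handled via the product splitting of Lemma \ref{lem:aug cat for a vertex} together with the trivial-tangle identification of Lemma \ref{lem:aug cat for trivial tangle} on the parallel-strand borders.

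Finally I would invoke Lemma \ref{lem:isomorphism lifting property for aug cat}: each left restriction $\fg_\Left=\fr_\Left\colon\cC_{x_k}\to\cC(u_{k-1,k})$ has the isomorphism lifting property, so Lemma \ref{lem:strict fiber product}, applied inductively over the $N$ gluings, shows the inclusion of the strict iterated fiber product into the honest one is an $A_\infty$-equivalence respecting the outer restrictions $\fg_\Left$ to $\cC(u_{0,1})$ and $\fg_\Right$ to $\cC(u_{N,N+1})$; combined with the previous step this produces the displayed three-row commutative diagram with all rows $A_\infty$-equivalent. Running the same decomposition for an arbitrary subinterval $(a,b)\subset I_x$ then identifies $\Aug_+(T|_{(a,b)};\field)$ with the strict fiber product of the stalks it contains, which is the sheaf axiom; hence $\{\Aug_+(T|_{(a,b)};\field)\}_{(a,b)\subset I_x}$ is the constructible sheaf $\cC$.
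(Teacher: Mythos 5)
Your approach is the same as the paper's: co-sheaf property of LCH DGAs gives a fiber-product description of $\Aug_+(T;\field)$, the per-slice analysis reduces to the elementary pieces of Lemma \ref{lem:isomorphism lifting property for aug cat} (including the single new type-$(0,r)$ vertex piece handled by the product splitting of Lemma \ref{lem:aug cat for a vertex}), and Lemma \ref{lem:strict fiber product} handles the strict-versus-honest comparison. The one direction that needs fixing is in your middle step: applying the contravariant $\Aug_+(-;\cK)$ to an amalgamated free product of DGAs does not yield the \emph{honest} iterated fiber product --- it yields the \emph{strict} one, tautologically (an augmentation of $A_1 \ast_B A_2$ is a pair of augmentations agreeing \emph{literally} on $B$, not up to isomorphism, and similarly for the morphism complexes of the $m$-copies and the operations $m_k$). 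This is exactly what identifies Row 1 of the displayed diagram with Row 2, the strict fiber product. Your final paragraph then supplies the missing step (Row~2 $\simeq$ Row~3) via Lemmas \ref{lem:isomorphism lifting property for aug cat} and \ref{lem:strict fiber product}, so the logical structure matches the paper once you swap ``honest'' for ``strict'' in your second paragraph's conclusion.
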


\begin{proof}
Similar to \cite[\S7.2]{NRSSZ2015}, the identification between the first two rows follows directly from the definition of diagrams of augmentation categories, and the co-sheaf property of LCH DGAs associated to bordered Legendrian graphs. It remains to show the middle-lower inclusion is an equivalence, but this is done by Lemma \ref{lem:isomorphism lifting property for aug cat} and Lemma \ref{lem:strict fiber product} above.
\end{proof}

\subsection{Augmentations are sheaves}
Let us start by recalling the result and strategy from \cite{NRSSZ2015}.

\begin{theorem}\cite{NRSSZ2015}
Let $\Lambda\subset \RR^3$ be a Legendrian knot, and let $\field$ be a field. Then there is an  $A_\infty$ equivalence between two categories  
\[
\Aug_+(\Lambda; \field)\stackrel{\isomorphic}{\longrightarrow}\cC_1(\Lambda;\field).
\]
\end{theorem}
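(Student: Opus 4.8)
The plan is to run the strategy of \cite{NRSSZ2015}, re-expressed in the bordered-tangle framework developed above and specialized to the vertex-free case (since $\Lambda$ is a knot). \textbf{Step 1: reduce to a plat front.} By Theorem~\ref{thm:invariance aug} and Corollary~\ref{cor:invariance of C_1} both $\Aug_+(\Lambda;\field)$ and $\cC_1(\Lambda;\field)$ are Legendrian isotopy invariants, and neither side changes up to equivalence if we insert basepoints (the sheaf side is insensitive to basepoints; the augmentation side by Theorem~\ref{thm:invariance aug}). So after a Legendrian isotopy I may assume the front of $\Lambda$ is the closure $\Lambda=\hat\cT$ of a concatenation $\cT=\cT^1\cdot\cT^2\cdots\cT^m$ of elementary bordered Legendrian graphs, where each $\cT^i$ is either $n$ parallel strands containing a single crossing, a single left cusp, or a single marked right cusp, with all left cusps preceding all crossings and all crossings preceding all right cusps.

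\textbf{Step 2: sheaf property on both sides.} On the augmentation side, Proposition~\ref{prop:sheaf property for aug cat} (applicable because $\Lambda$ has no vertices) identifies $\Aug_+(\cT,\bfmu;\field)$, as a diagram of homotopically unital $A_\infty$-categories (unitality by Theorem~\ref{theorem:unitality}), with the strict iterated fiber product of the stalk categories $\Aug_+(\cT^i,\bfmu^i;\field)$ over the trivial-tangle border categories; Lemmas~\ref{lem:isomorphism lifting property for aug cat} and~\ref{lem:strict fiber product} ensure the strict fiber product represents the homotopy one. On the sheaf side, Lemma~\ref{lem:comb model}, Proposition~\ref{prop:legible model} and Corollary~\ref{cor:legible model for C_1} — together with the fact that the models $\Gamma_{\cS}$ and $\rho^*$ commute with restriction to sub-intervals — exhibit $\cC_1(\cT,\bfmu;\field)$ as the analogous iterated fiber product of the DG categories $\cC_1(\cT^i,\bfmu^i;\field)$ over the trivial-tangle DG categories. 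It therefore suffices to produce, for each elementary piece, an $A_\infty$-equivalence $\Aug_+(\cT^i,\bfmu^i;\field)\xrightarrow{\sim}\cC_1(\cT^i,\bfmu^i;\field)$ of diagrams that commutes, up to specified natural isomorphism, with the restriction functors to the left and right trivial-tangle borders; these then glue to an equivalence $\Aug_+(\cT,\bfmu;\field)\xrightarrow{\sim}\cC_1(\cT,\bfmu;\field)$.

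\textbf{Step 3: the elementary pieces.} The base case is the trivial tangle $\cT_n$: by Lemma~\ref{lem:aug cat for trivial tangle}, $\Aug_+(\cT_n,\mu;\field)$ is the DG category $\cMC(\cT_n;\field)$ of filtered Morse complexes, and the legible model of Corollary~\ref{cor:legible model for C_1}, with the microlocal-rank-$1$ and downward-arrow conditions of Definitions~\ref{def:legible model} and~\ref{def:legible model for C_1}, identifies $\cC_1(\cT_n,\mu;\field)$ with the same category (one $\field$-line per strand in degree $-\mu$, with upper-triangular maps), compatibly with restriction; this pins down the equivalence on all border categories. For a single left cusp and a single marked right cusp one writes both sides concretely, via $\cMC$ on the augmentation side and via $\Fun_{(T,\mu),1}(\cR(\cS),\field)$ on the sheaf side; these pieces are DG and the equivalence is essentially a bijection of objects and morphisms respecting the cusp condition. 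For a single crossing one writes $\Aug_+$ explicitly — objects, the morphism complex, and the products $m_k$, which for a crossing genuinely involve $m_2$ and possibly higher terms — and matches it with the legible model whose crossing condition is the acyclicity of the bicomplex $F(c)\to F(nw)\oplus F(ne)\to F(N)$.

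\textbf{Step 4: assemble and cap off.} Since the elementary equivalences commute with the border restrictions, the sheaf-gluing of Step~2 produces $\Aug_+(\cT,\bfmu;\field)\xrightarrow{\sim}\cC_1(\cT,\bfmu;\field)$; choosing $\cT$ with $\hat\cT=\Lambda$ (type $(0,0)$, so the ``diagram'' reduces to the middle category) yields the $A_\infty$-equivalence $\Aug_+(\Lambda;\field)\xrightarrow{\sim}\cC_1(\Lambda;\field)$. The main obstacle is Step~3 for the crossing: constructing the $A_\infty$-functor with the correct signs so that it intertwines every $m_k$, and — more delicately — so that it is strictly compatible with the restriction functors to the two border trivial tangles, since only then does the fiber-product gluing of Step~2 apply verbatim.
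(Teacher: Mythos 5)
Your proposal is correct and follows essentially the same strategy that the paper recalls from \cite{NRSSZ2015} (the paper cites this theorem rather than reproving it): cut the front into elementary bordered pieces, establish the local equivalences via the Morse complex category $\cMC(I_n;\field)$ and the legible models, and glue using the sheaf property on both sides. Your Steps 1--4 are a faithful, somewhat more detailed rendering of the two-sentence outline given in the paper, with the base case matching Lemma~\ref{lem:aug cat for trivial tangle} and Proposition~\ref{prop:equiv MC and C_1}.
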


The first step is to cut the front $\pi_{\front}(\Lambda)\subset \RR_{xz}$ diagram into elementary pieces, bordered Legendrians in $(0,1)\times \RR_z$ only having a cusp, a crossing, or a basepoint.
Next show the equivalence for each pieces, and then argue the sheaf property in both sides to conclude the global equivalence.

Among many ingredients in the proof, let us recall the equivalence when a bordered Legendrian is $I_n$, the trivial $n$-strands.

\begin{proposition}\cite{NRSSZ2015}\label{prop:equiv MC and C_1}
Let $I_n \subset (0,1)\times \RR_z$ be a trivial tangle with $n$-strands, and let $\field$ be a field. Then there are DG equivalences:
\[
\Aug_+(I_n; \field) \stackrel{\isomorphic}{\longrightarrow}  \cM\cC(I_n; \field),\qquad \cM\cC(I_n; \field)\stackrel{\homotopic}{\longrightarrow}\cC_1(I_n; \field).
\]
\end{proposition}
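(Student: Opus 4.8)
The plan is as follows. The first DG equivalence $\Aug_+(I_n;\field)\xrightarrow{\cong}\cMC(I_n;\field)$ is already available to us: it is precisely the strict isomorphism $\frh$ of Lemma~\ref{lem:aug cat for trivial tangle}, so no further argument is needed there. All the work goes into the second equivalence $\cMC(I_n;\field)\xrightarrow{\simeq}\cC_1(I_n;\field)$, and for this I would run everything through the combinatorial machinery of Section~\ref{section:sheaf cats}. Since the front $I_n$ has no singularities, the induced stratification $\cS_{I_n}$ already satisfies Assumption~\ref{ass:legible model}, so I may take $\cS=\cS_{I_n}$; then by Lemma~\ref{lem:comb model}, Proposition~\ref{prop:legible model} and Corollary~\ref{cor:legible model for C_1} there is an $A_\infty$-equivalence $\cC_1(I_n,\mu;\field)\simeq \Fun_{(I_n,\mu),1}(\cR(\cS),\field)$ compatible with restriction to subintervals. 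It therefore suffices to produce a DG quasi-equivalence $\cMC(I_n;\field)\simeq\Fun_{(I_n,\mu),1}(\cR(\cS),\field)$.

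Next I would unwind both sides explicitly. The poset $\cR(\cS)$ consists of the $n+1$ horizontal regions $R_0<R_1<\cdots<R_n$ (bottom to top), with a single simple arrow $e_i\colon R_{i-1}\to R_i$ crossing the $i$-th arc $a_i$; since there are no crossings or vertices there are no relations, so $\cR(\cS)$ is the linear $A_{n+1}$-quiver and $\mathrm{A}(\cR(\cS))$ is its path algebra. An object of $\Fun_{(I_n,\mu),1}(\cR(\cS),\field)$ is then a chain of perfect complexes $F(R_0)\to F(R_1)\to\cdots\to F(R_n)$ with $F(R_0)$ acyclic (the bottom-region condition) and $\cone(F(e_i))[-\mu(a_i)]\simeq\field$ concentrated in degree $0$ for each $i$. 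On the other side, a Morse complex $(C,d)\in\cMC(I_n;\field)$ is by Definition~\ref{def:Morse complex} a differential $d$ on $C=\bigoplus_a\field e_a$ with $|e_a|=-\mu(a)$, preserving the standard decreasing filtration $F^\bullet C$ by coordinate subcomplexes, and morphisms are all filtration-preserving endomorphisms. I would define the comparison DG functor by sending $(C,d)$ to the functor $R_i\mapsto F^{n-i}C$ with the inclusions $F^{n-i+1}C\hookrightarrow F^{n-i}C$ as the structure maps, and sending a filtered endomorphism $f$ to the collection of its restrictions; this is manifestly a DG functor compatible with restriction to subintervals, and the microlocal rank-$1$ condition is matched because the $i$-th inclusion has cone $\field e_{a_i}\simeq\field[-\mu(a_i)]$, so the $[-\mu(a_i)]$-shift lands it in degree $0$.

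To finish, I would check essential surjectivity and quasi-full faithfulness. For essential surjectivity, given $F\in\Fun_{(I_n,\mu),1}(\cR(\cS),\field)$, I would use Proposition~\ref{prop:(co)fibrant replacement} to replace $F$, up to quasi-isomorphism, by a functor in which every arrow is an injection of complexes, then pass to a minimal model over $\field$ so that $F(R_0)=0$ and each graded piece $F(R_i)/F(R_{i-1})$ is $\field[-\mu(a_i)]$ on the nose; this exhibits $F(R_n)$ as a filtered complex of exactly the standard shape, hence in the image of the functor. For quasi-full faithfulness, the $\hom$-complex in $\cMC$ is the complex of filtration-preserving endomorphisms with the usual differential, while the $\Fun$-side $\rhom$ is computed, after a projective or injective resolution as in Proposition~\ref{prop:(co)fibrant replacement}(1)--(2), by the complex of natural transformations; the acyclicity of $F(R_0)$ kills any spurious contributions, and one verifies that the two complexes agree up to quasi-isomorphism. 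The step I expect to be the main obstacle is exactly this last chain-level identification: keeping the homotopical replacements (injective/cofibrant resolutions on the linear quiver, and minimal models) compatible with both the DG structure and the filtration, together with the careful bookkeeping of the degree shifts $[-\mu(a_i)]$ and of the ordering conventions so that the Morse-theoretic filtration $F^\bullet C$ genuinely matches the geometric ordering of the regions $R_0<R_1<\cdots<R_n$ and the Maslov potentials of the strands.
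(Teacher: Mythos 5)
Your proposal follows the same route as the paper's proof: reduce via the legible model (Lemma~\ref{lem:comb model}, Proposition~\ref{prop:legible model}, Corollary~\ref{cor:legible model for C_1}) to a statement about representations of the linear $A_{n+1}$-quiver, define the comparison functor $\cR$ by sending a Morse complex $(C,d)$ to the chain of filtration inclusions $F^nC\hookrightarrow\cdots\hookrightarrow F^0C$, obtain essential surjectivity from a quasi-isomorphic replacement of an arbitrary $F$, and check full faithfulness on Hom complexes. The step you single out as the main worry --- the chain-level identification of Hom complexes compatibly with resolutions --- is actually a non-issue in the paper's argument: every object in the image of $\cR$ is already in the standard form of Lemma~\ref{lem:cofibrant repr} (each $F^{n-i}C$ is degreewise a direct sum of the indecomposable projective $A_{n+1}$-representations $P_j$, every arrow is an inclusion, and the induced differential on each graded quotient is zero), so the naive Hom complex in $\Fun_{I_n,1}(A_{n+1},\field)$ already computes $\rhom$ without any further resolution and is \emph{literally identical} to the complex of filtration-preserving endomorphisms in $\cMC$. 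Full faithfulness is thus an isomorphism of cochain complexes on the nose, not merely a quasi-isomorphism, and there is no resolution bookkeeping to perform. For essential surjectivity the paper invokes Lemma~\ref{lem:cofibrant repr} directly, which is precisely what your ``injective arrows plus minimal model'' step derives. One cosmetic convention check: with $|e_a|=-\mu(a)$, the graded quotient across the arc $a_i$ is $\field e_{a_i}\simeq\field[\mu(a_i)]$ (so that shifting by $[-\mu(a_i)]$ lands it in degree $0$), not $\field[-\mu(a_i)]$ as you wrote; this does not affect the argument.
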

The first equivalence has been described in Lemma \ref{lem:aug cat for trivial tangle}. 
The second equivalence can be described as follows: 

\noindent{}Recall that the strands of $I_n$ are labelled from top to bottom by $1,2,\ldots,n$, and $C=C(I_n)\coloneqq\oplus_{i=1}^n\field\cdot e_i$, with $|e_i|=-\mu(i)$. $C$ is equipped with a filtration $F^{\bullet}:F^i=F^iC\coloneqq\oplus_{j>i}\field\cdot e_j$. In particular, $F^0=C$ and $F^n=0$. In addition, $\cS_{I_n}$ is the regular cell complex induced by $I_n$, and by Definition \ref{def:poset for legible model}, $\cR(\cS)$ is the $A_{n+1}$-quiver:
\[
n~\rightarrow~n-1~\rightarrow~\ldots~\rightarrow~0
\]
where the node $i$ corresponds to the region immediately below the strand $i$ for $1\leq i\leq n$, and $0$ corresponds to the upper region. Recall that the indecomposable projectives of the quiver $A_{n+1}$ are:
\[
P_i\coloneqq(0\rightarrow\ldots\rightarrow\field\xrightarrow[]{id}\field\xrightarrow[]{id}\ldots\field\xrightarrow[]{id}\field)
\] 
i.e. $P_i$ consists of a copy of $\field$ at all nodes $k\leq i$. The indecomposables of $A_{n+1}$ are:
\[
S_{i,j}\coloneqq P_j/P_{i-1}=(0\rightarrow\ldots\rightarrow 0\rightarrow\field\xrightarrow[]{id}\field\xrightarrow[]{id}\ldots\xrightarrow[]{id}\field\rightarrow 0\rightarrow\ldots\rightarrow 0),
\] 
i.e. $S_{i,j}$ consists of a copy of $\field$ at all nodes $k$ for $i\leq k\leq j$. In particular, $S_{i,j}'\coloneqq(P_{i-1}\hookrightarrow P_j)\twoheadrightarrow S_{i,j}$ is a projective resolution of $S_{i,j}$. Moreover, the path algebra $\field\langle A_{n+1}\rangle$ of $A_{n+1}$ is of global cohomological dimension one, hence every sub-module of a projective for $\field\langle A_{n+1}\rangle$ is still projective. It then follows that objects in the DG category $\Fun(A_{n+1},\field)$ (Definition \ref{def:rep cat}) split, hence quasi-isomorphic to a direct sum of $S_{i,j}[s]$'s.
As an immediate consequence, we obtain: 

\begin{lemma}\cite[Lem.7.38]{NRSSZ2015}\label{lem:cofibrant repr}
Every representation $R$ in $\Fun(A_{n+1},\field)$ is quasi-isomorphic to a representation $R'$:
\[
R_n'\hookrightarrow R_{n-1}'\hookrightarrow\ldots\hookrightarrow R_0'
\]
such that, by denoting $R_{n+1}'\coloneqq 0$, we have:
\begin{enumerate}
\item
The maps $R_{i+1}'\hookrightarrow R_i'$ are injective;
\item
The quiver representation $R'$ in each cohomological degree is projective;
\item
The differential on each $R_i'/R_{i+1}'$ is zero.
\end{enumerate}
\end{lemma}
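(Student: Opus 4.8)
The plan is to reduce the statement to standard facts about representations of the $A_{n+1}$ quiver, following closely the discussion just above Lemma~\ref{lem:cofibrant repr} in the excerpt. The ambient category $\Fun(A_{n+1},\field)$ is the DG quotient of complexes of functors $A_{n+1}\to \field\text{-}\Mod$ by acyclics, so it suffices to produce, for any object $R$, a quasi-isomorphic replacement $R'$ satisfying (1)--(3). The main input is that the path algebra $\field\langle A_{n+1}\rangle$ has global dimension $1$: every submodule of a projective module is again projective (this is the sub-statement of Proposition~\ref{prop:(co)fibrant replacement}, applied to the linear quiver, together with the fact that $\mathrm{A}(\cR(\cS_{I_n}))=\field\langle A_{n+1}\rangle$ since there are no vertices). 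Hence in the abelian category $Fun(A_{n+1},\field)$ the indecomposable objects are the interval representations $S_{i,j}$, and they have length-$2$ projective resolutions $S'_{i,j}=(P_{i-1}\hookrightarrow P_j)$.

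First I would argue the splitting. Because $\field\langle A_{n+1}\rangle$ has global dimension $1$, every object of the DG category $\Fun_{\mathrm{naive}}(A_{n+1},\field)$ is formal: for a complex $R^{\bullet}$ with terms in $Fun(A_{n+1},\field)$ one may choose, degreewise, a decomposition of each cohomology $H^s(R^{\bullet})$ into indecomposables $S_{i,j}$, then lift these choices through the differential using that cocycles and coboundaries are submodules of the (projective after replacement) terms, hence projective, so the relevant short exact sequences split. This realizes $R^{\bullet}$ as quasi-isomorphic to $\bigoplus H^s(R^{\bullet})[-s]$, a direct sum of shifted interval representations $S_{i,j}[s]$ with zero differential. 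Then I would replace each summand $S_{i,j}[s]$ by its projective resolution $S'_{i,j}[s]=(P_{i-1}\hookrightarrow P_j)[s]$, which is a two-term complex of projective quiver representations and is quasi-isomorphic to $S_{i,j}[s]$. Taking the direct sum over all summands gives a complex $R'$ of quiver representations which is termwise projective (property (2)), since a direct sum of the $P_k$'s is projective.

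Next I would check properties (1) and (3). Set $R'_i$ to be the value of $R'$ at the node $i$ of $A_{n+1}$, and $R'_{n+1}\coloneqq 0$. For a single building block $P_k$, the structure maps $P_k(i+1)\to P_k(i)$ are either the identity $\field\to\field$ or the zero map $0\to\field$, both injective; for $S'_{i,j}=(P_{i-1}\hookrightarrow P_j)$, again all the node-to-node maps are injective. Direct sums of injections are injections, so the maps $R'_{i+1}\hookrightarrow R'_i$ are injective, giving (1). Property (3) is the statement that the differential on each subquotient $R'_i/R'_{i+1}$ vanishes. Here $R'_i/R'_{i+1}$ is the part of $R'$ ``supported exactly at the strand $i$'', and by construction $R'$ is assembled from the complexes $S'_{i,j}[s]$ whose internal differential $P_{i-1}\to P_j$ is the inclusion; one computes directly that the induced differential on the quotient by $R'_{i+1}$ is zero because the inclusion $P_{i-1}\hookrightarrow P_j$ becomes $0$ after passing to the cokernel at the relevant node, i.e. the only nonzero contribution of each $S'_{i,j}$ at node $i$ modulo node $i+1$ is a single copy of $\field$ with no differential landing in it or leaving it within that subquotient.

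I expect the main obstacle to be making the degreewise splitting argument precise and compatible with the DG (rather than merely triangulated) structure: one must choose the quasi-isomorphism $R^{\bullet}\xrightarrow{\sim}\bigoplus H^s(R^{\bullet})[-s]$ in a way that is an honest cochain map, not just an isomorphism in the homotopy category, and then compose it with the termwise projective resolutions $S_{i,j}\leftarrow S'_{i,j}$ to land in the desired model. This is routine homological algebra over a hereditary algebra, but it is the only place where one uses something beyond formal manipulation; everything else is the explicit combinatorics of the linear quiver $A_{n+1}$ recorded above. Once the replacement $R'$ is in hand, properties (1)--(3) are immediate from the explicit form of the indecomposable projectives $P_k$ and their resolutions, exactly as indicated in the text preceding the lemma.
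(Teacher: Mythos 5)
Your proposal is correct and follows the same route that the paper's surrounding discussion (and the cited NRSSZ Lemma~7.38) indicates: use heredity of $\field\langle A_{n+1}\rangle$ to split any object of the derived category into shifted interval representations $S_{i,j}[s]$, then replace each $S_{i,j}[s]$ by its minimal projective resolution $(P_{i-1}\hookrightarrow P_j)[s]$, and read off properties (1)--(3) from the explicit shape of the $P_k$. Two small remarks. First, your final worry about needing an ``honest cochain map'' to $\bigoplus H^s[-s]$ rather than a derived-category isomorphism is not actually an obstacle: the lemma asserts only that $R$ and $R'$ are \emph{quasi-isomorphic}, i.e.\ isomorphic in the homotopy/derived category, and the hereditary splitting gives exactly that. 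Second, it is worth emphasizing that one must use the \emph{minimal} resolutions $(P_{i-1}\hookrightarrow P_j)$: if instead one only splits a projective replacement $P^\bullet$ into the pieces $(C^{s-1}\hookrightarrow Z^s)$ coming from cocycles and coboundaries, those can contain extraneous acyclic summands $(P_k\xrightarrow{\;\cong\;}P_k)$ whose differential does \emph{not} die on $R'_k/R'_{k+1}$, so property (3) would fail; passing to minimal resolutions, as you do, removes precisely these summands.
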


There is a natural DG equivalence $\cR:\MC(I_n;\field)\xrightarrow[]{\sim}\Fun_{I_n,1}(\cR(\cS)=A_{n+1},\field)$, such that, on objects, we have:
\[
\cR(C,d)\coloneqq ((F^n,d|_{F^n})\hookrightarrow(F^{n-1},d|_{F^{n-1}})\hookrightarrow\ldots\hookrightarrow(F^0,d)),
\] 
and $\cR$ sends Hom complexes in $\MC(I_n;\field)$ literally to the corresponding identical Hom complexes in $\Fun_{I_n,1}(A_{n+1},\field)$. It follows immediately that $\cR$ is fully faithful. The essential surjectivity of $\cR$ is a direct corollary of Lemma \ref{lem:cofibrant repr} above.
Now, the second equivalence in Proposition \ref{prop:equiv MC and C_1} is just the composition of the equivalence in Corollary \ref{cor:legible model for C_1} with $\cR$.

Finally, we come to our main theorem:
\begin{theorem}[Augmentations are sheaves]\label{thm:augs are sheaves for Legendrian graphs}
Let $\cT$ be a bordered Legendrian graph, and let $\field$ be a field. Then there is an $A_\infty$-equivalence:
\[
\Aug_+(\cT;\field)\xrightarrow[]{\sim}\cC_1(\cT;\field)
\]
\end{theorem}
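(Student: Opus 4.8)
The plan is to reduce the global statement to the elementary pieces via the sheaf property on both sides, then invoke the previously established local equivalences. First I would use the invariance results: Theorem~\ref{thm:invariance aug} for the augmentation side and Corollary~\ref{cor:invariance of C_1} (via Theorem~\ref{thm:inv of sh cat}) for the sheaf side, together with Lemma~\ref{lemma:normal form representative} and its consistent analogue, to reduce to the case where the front projection $T$ is in a convenient position: all vertices are of type $(0,r)$ and lie to the right of all crossings, all left cusps and vertices lie to the left of all crossings, all right cusps lie to the right and are marked, and the $x$-coordinates of all singularities are distinct. This is exactly the hypothesis of Proposition~\ref{prop:sheaf property for aug cat}, so the diagram $\Aug_+(\cT;\field)$ is the (strict) fiber product, over the open strips $u_{k,k+1}$, of the stalk categories $\Aug_+(T|_{(x_{k-1},x_{k+1})};\field)$ attached to the singularities $x_1<\dots<x_N$.

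Next I would establish the analogous sheaf property on the constructible-sheaf side. Using the legible model (Proposition~\ref{prop:legible model}, Corollary~\ref{cor:legible model for C_1}) and the compatibility of $i_{\cS}$, $\rho^*$ with restriction to subintervals $J_x$ (Lemma~\ref{lem:comb model}, Proposition~\ref{prop:legible model}), the category $\cC_1(T|_{(a,b)};\field)\simeq \Fun_{(T|_{(a,b)},\mu),1}(\cR(\cS|_{(a,b)}),\field)$ is visibly a strict fiber product of the local categories over the strips, because a quiver representation on $\cR(\cS)$ is the gluing of its restrictions along the vertical slices, and the defining microlocal-monodromy/crossing conditions are all local. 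Thus $\cC_1(\cT;\field)$ is itself the strict fiber product of the stalks $\cC_1(T|_{(x_{k-1},x_{k+1})};\field)$ over $\cC_1(T|_{u_{k,k+1}};\field)$. Combining this with the previous paragraph, it suffices to produce an $A_\infty$-equivalence $\Aug_+\to\cC_1$ for each \emph{elementary} bordered Legendrian graph (trivial tangle, single crossing, single left cusp, single marked right cusp, single vertex) compatible with the restriction functors to the left and right borders, i.e. an equivalence of the three-term diagrams, so that the fiber products match up.

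For the elementary pieces themselves I would argue as follows. For $n$ parallel strands, a single crossing, a single left cusp, and a single marked right cusp, the equivalence of diagrams is exactly \cite{NRSSZ2015}: the relevant computations there are local and carry over verbatim; the trivial-tangle case is recorded as Proposition~\ref{prop:equiv MC and C_1}, with $\Aug_+(I_n;\field)\xrightarrow{\sim}\cMC(I_n;\field)$ by Lemma~\ref{lem:aug cat for trivial tangle} and $\cMC(I_n;\field)\xrightarrow{\sim}\cC_1(I_n;\field)$ via $\cR$ composed with Corollary~\ref{cor:legible model for C_1}. The genuinely new case is a single vertex $\cV$ of type $(0,r)$. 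On the augmentation side, Lemma~\ref{lem:aug cat for a vertex} identifies $\Aug_+(\cV;\field)$ with the diagram of Morse-complex categories $\cMC(V_\Left;\field)\xleftarrow{\fp_1}\cMC(V_\Left;\field)\times\cMC(v_\Right;\field)\hookrightarrow\cMC(V_\Right;\field)$, which in turn is equivalent (by the trivial-tangle legible model $\cR$) to a diagram of quiver-representation categories. On the sheaf side I would apply the legible model of Section~\ref{subsubsec:legible model} directly to $\cV$: after adding the dashed arcs so that Assumption~\ref{ass:legible model} holds, $\cC_1(\cV;\field)\simeq\Fun_{(\cV,\bfmu),1}(\cR(\cS),\field)$ is an explicit quiver-with-relations representation category around the vertex, and one checks that it agrees, as a diagram restricting to the left and right borders, with the one coming from the augmentation side; here the microlocal-monodromy condition at the vertex matches the Morse-complex-at-a-vertex structure of Definition~\ref{def:Morse complex at a vertex} (the $d^2+Z^2=0$ relation corresponds exactly to the relation $\gamma_\Left(v)\sim\gamma_\Right(v)$ going around the vertex). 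Matching these two explicit combinatorial models, compatibly with the left/right restriction functors, is the technical heart of the argument and the step I expect to be the main obstacle; everything else is assembly via the two sheaf properties.

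Assembling: having the diagram-equivalences for each elementary piece, and having expressed both $\Aug_+(\cT;\field)$ and $\cC_1(\cT;\field)$ as strict fiber products of the stalks over the strips with matching gluing data, the functoriality of strict fiber products in $\Alg_\infty$ (and $\DGCat$) yields the desired $A_\infty$-equivalence $\Aug_+(\cT;\field)\xrightarrow{\sim}\cC_1(\cT;\field)$, completing the proof.
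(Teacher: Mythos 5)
Your proposal matches the paper's proof essentially step for step: reduce via the invariance theorems and the sheaf property (Proposition~\ref{prop:sheaf property for aug cat} on the augmentation side, and the essentially tautological gluing of sheaf categories over $I_x$ on the other side) to elementary pieces, invoke \cite{NRSSZ2015} for the smooth pieces, and treat the vertex piece via Lemma~\ref{lem:aug cat for a vertex} and the legible model of Section~\ref{subsubsec:legible model}. One small correction to your parenthetical intuition for the vertex step: after applying Lemma~\ref{lem:aug cat for a vertex}, the augmentation side is already recast in terms of $\cMC(V_\Left;\field)\times\cMC(v_\Right;\field)$, so the $d^2+Z^2=0$ structure of Definition~\ref{def:Morse complex at a vertex} never enters the comparison; what actually matches the relation $\gamma_\Left(v)\sim\gamma_\Right(v)$ (together with condition (1) of Definition~\ref{def:legible model} on the added dashed arc) is the acyclicity of the subquotient $(C(v_\Right),d_{v_\Right})$, i.e.\ the condition that $F(k+m\to k)$ is a quasi-isomorphism, which is precisely how the paper identifies $\Fun_{V,1}(\cR(\cS),\field)$ inside $\Fun_{V_\Right,1}(A_{n+1},\field)\simeq\cMC(V_\Right;\field)$.
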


\begin{proof}
Let us recall the invariance property Theorem~\ref{thm:invariance aug}, Corollary~\ref{cor:invariance of C_1} of the both sides, and sheaf property Proposition~\ref{prop:sheaf property for aug cat} of $\Aug_+(\cT:\field)$. 
Thus it suffices to check the equivalence for a new type of elementary pieces $\cV=(V_\Left \stackrel{\iota_\Left}{\longrightarrow}V \stackrel{\iota_\Right}{\longleftarrow}V_\Right)$ in $T^{\infty,-}(I_x\times\RR_z)$, a bordered Legendrian with one vertex $v$ of type $(0,m)$ and trivial $k$-strands as follows:

\begin{align*}
\vcenter{\hbox{
\begingroup%
  \makeatletter%
  \providecommand\color[2][]{%
    \errmessage{(Inkscape) Color is used for the text in Inkscape, but the package 'color.sty' is not loaded}%
    \renewcommand\color[2][]{}%
  }%
  \providecommand\transparent[1]{%
    \errmessage{(Inkscape) Transparency is used (non-zero) for the text in Inkscape, but the package 'transparent.sty' is not loaded}%
    \renewcommand\transparent[1]{}%
  }%
  \providecommand\rotatebox[2]{#2}%
  \ifx\svgwidth\undefined%
    \setlength{\unitlength}{63.05535634bp}%
    \ifx\svgscale\undefined%
      \relax%
    \else%
      \setlength{\unitlength}{\unitlength * \real{\svgscale}}%
    \fi%
  \else%
    \setlength{\unitlength}{\svgwidth}%
  \fi%
  \global\let\svgwidth\undefined%
  \global\let\svgscale\undefined%
  \makeatother%
  \begin{picture}(1,1.79004296)%
    \put(0,0){\includegraphics[width=\unitlength,page=1]{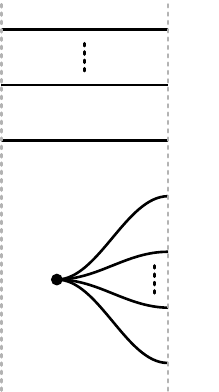}}%
    \put(0.83035619,1.68654629){\color[rgb]{0,0,0}\makebox(0,0)[lt]{\begin{minipage}{0.12687267\unitlength}\raggedright $_1$\end{minipage}}}%
    \put(0.83035618,1.17905566){\color[rgb]{0,0,0}\makebox(0,0)[lt]{\begin{minipage}{0.12687267\unitlength}\raggedright $_k$\end{minipage}}}%
    \put(0.83035618,0.92531034){\color[rgb]{0,0,0}\makebox(0,0)[lt]{\begin{minipage}{0.19096926\unitlength}\raggedright $_{k+1}$\end{minipage}}}%
    \put(0.83035618,0.16407447){\color[rgb]{0,0,0}\makebox(0,0)[lt]{\begin{minipage}{0.19096926\unitlength}\raggedright $_{k+m}$\end{minipage}}}%
    \put(0.19599296,0.41781975){\color[rgb]{0,0,0}\makebox(0,0)[lt]{\begin{minipage}{0.19096926\unitlength}\raggedright $_v$\end{minipage}}}%
  \end{picture}%
\endgroup%
}}
\end{align*}

As in Section \ref{subsubsec:legible model}, let $\cS_V$ be the Whitney stratification of $I_x\times\RR_z$ induced by $V$. Let $\cS$ be the regular cell complex for $I_x\times\RR_z$ refining $\cS_V$, by adding one left half-edge at $v$. Clearly, $\cS$ satisfies Assumption \ref{ass:legible model}. Then by Lemma \ref{lem:aug cat for a vertex} and Corollary \ref{cor:legible model for C_1}, it suffices to show the equivalence between the diagram of Morse complex categories
\[
\cMC\coloneqq(\cMC(V_\Left;\field)\xleftarrow[]{\fp_1}\cMC(V_\Left;\field)\times\cMC(v_\Right;\field)\xhookrightarrow[]{\fri}\cMC(V_\Right;\field))
\]
and $\Fun_{\cV,1}(\cR(\cS),\field)$. 

Notice that, $\cR(\cS|_{V_\Right})=A_{n+1}$ with $n=k+m$. And,
\[
\Fun_{V,1}(\cR(\cS),\field)\hookrightarrow\Fun_{V_\Right,1}(\cR(\cS|_{V_\Right}),\field)
\]
is just the fully faithful embedding into the full DG subcategory of $\Fun_{V_\Right,1}(A_{n+1},\field)$, consisting of functors $F$ such that $F(k+m\rightarrow k)$ is a quasi-isomorphism.

On the other hand, by the discussion above, we have the DG equivalence $\cR:\MC(V_\Right;\field)\xrightarrow[]{\sim}\Fun_{V_\Right,1}(A_{n+1},\field)$. Notice that the image of $\fri$ of $\MC(V_\Left;\field)\times\MC(v_\Right;\field)$ consists of Morse complexes $(C(V_\Right),d)=(C(V_\Left),d_\Left)\oplus(C(v_\Right),d_{v_\Right})$, such that each summand is acyclic. 
Then, $\cR\circ\fri$ induces a fully faithful DG functor 
\[
\cR_V:\MC(V_\Left;\field)\times\MC(v_\Right;\field)\hookrightarrow\Fun_{V,1}(\cR(\cS),\field).
\]
$\cR_V$ is also essentially surjective.
In fact, $\cR$ is essentially surjective implies that, any functor $F\in\Fun_{V,1}(\cR(\cS),\field)$, is quasi-isomorphic to $\cR((C(V_\Right),d))$, for some Morse complex $(C(V_\Right),d)$, which we can assume to be canonical (or Barannikov's normal form). 
In addition, the subquotient $(C(v_\Right),d_{v_\Right})$ of $(C(V_\Right),d)$ is acyclic, since $F(k+m\rightarrow k)$ is a quasi-isomorphism. It follows that $(C(V_\Right),d)=(C(V_\Left),d_\Left)\oplus(C(v_\Right),d_{v_\Right})$ for some canonical Morse complexes $(C(V_\Left),d_\Left)$ and $(C(v_\Right),d_{v_\Right})$. We are done.

Finally, it is direct to check that $\cR_v$ commutes with the DG equivalence $\cR_\Left:\MC(V_\Left;\field)\xrightarrow[]{\sim}\Fun_{V_\Left,1}(\cR(\cS|_{V_\Left}),\field)$ defined as in the discussion above, up to a specified natural isomorphism. 
Thus, we obtain a DG equivalence $\cR_V:\cMC\xrightarrow[]{\sim}\Fun_{\cV,1}(\cR(\cS),\field)$, as desired. This finishes the proof of Theorem \ref{thm:augs are sheaves for Legendrian graphs}.
\end{proof}

\subsection{Example}\label{sec:example augmentation category}

Let us consider a Legendrian graph $\Lambda$ having the following front with Maslov potential and Lagrangian projection.
Note that $\Lambda$ has one vertex $v$ and two (oriented) edges $e_1,e_2$:

\begin{figure}[ht]
\begin{align*}
\begin{tikzpicture}[baseline=-.5ex,scale=0.8]
\draw[thick] (-2.5,0) to[out=0,in=180] (0,1.5) node[above] {$_1$} node{$>$} to[out=0,in=180] (2.5,0) node{$_|$} node[right]{$_{e_2}$};
\draw[thick] (-2.5,0) to[out=0,in=180] (-1,-1) node[below] {$_0$} to[out=0,in=180] (0,-0.5) to[out=0,in=180] (1,-1) node[below] {$_0$} to[out=0,in=180] (2.5,0);
\draw[thick] (-1,0) node[below right] {$_0$} to[out=0,in=180] (0,0.5) node[below] {$_1$} node{$>$} to[out=0,in=180] (1,0) node[below left] {$_0$} node{$_|$} node[right]{$_{e_1}$};
\draw[thick] (-1,0) to[out=0,in=180] (0,-0.5) to[out=0,in=180] (1,0);
\draw[fill] (0,-0.5) node[below] {$_v$} circle (3pt);
\end{tikzpicture}
&&
\vcenter{\hbox{\scalebox{.9}{\input{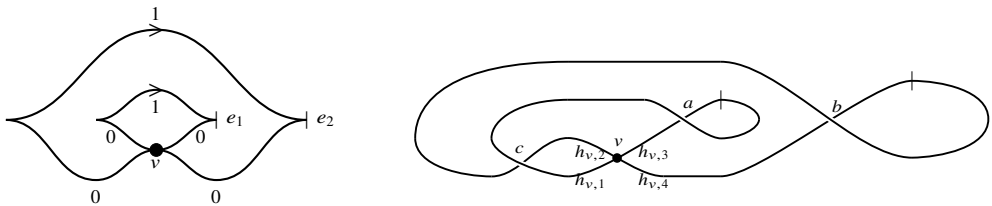}}}}
\end{align*}
\caption{The front and Lagrangian projection of $\Lambda$}
\label{fig:example front Lag}
\end{figure}

\subsubsection{Augmentation category}
Fix the base field $\field=\Zmod{2}$. Firstly, let us consider the augmentation category $\Aug_+=\Aug_+(\Lambda;\field)$.
We consider CE DGA $A(\Lambda)$ with $\Zmod{2}[t_1^{\pm1},t_2^{\pm1}]$-coefficient.
Here, $t_i$ corresponds to the basepoint on the edge $e_i$ for $i=1,2$ in Figure~\ref{fig:example front Lag}. 
Then the DGA $(\alg,|\cdot|,\differential)$ is
\begin{align*}
\alg&=\Zmod{2}[t_1^{\pm1},t_2^{\pm1}]\langle \sfG \rangle;\\ 
\sfG&=\{ a, b, c\} \sqcup \{ v_{i,\ell} \mid i\in \Zmod{4}, \ell\in \NN \},
\end{align*}
and the grading is given by
\begin{align*}
|a|&=|b|=1,& |c|&=0,& |v_{i,\ell}|&=\mu(i)-\mu(i+\ell)+N(v;i,\ell)-1.
\end{align*}
The differential on the crossing generators are as follows:
\begin{align*}
\differential a&=t_1 + v_{1,2} + c v_{2,1};\\
\differential b&=t_2 + v_{2,2} + v_{2,1} t_1^{-1}(v_{1,3} + c v_{2,2}+ a v_{3,1});\\
\differential c&=v_{1,1}.
\end{align*}
The following are the list of degree zero generators:
\[
c, v_{1,2}, v_{1,3}, v_{2,1}, v_{2,2}, v_{3,2}, v_{3,3}, v_{4,1}, v_{4,2}.
\]
Among infinitely many equation of differential on vertex generators, let us list relations coming from the differential of degree one generators that augmentation $\bar{g}$ of a generator $g$ should satisfy:
\begin{align*}
\bar{v_{1,2} v_{3,2}} + \bar{v_{1,3} v_{4,1}}&=1,&
\bar{v_{2,1} v_{3,3}} + \bar{v_{2,2} v_{4,2}}&=1,&
\bar{v_{3,2} v_{1,2}} + \bar{v_{3,3} v_{2,1}}&=1,&
\bar{v_{4,1} v_{1,3}} + \bar{v_{4,2} v_{2,2}}&=1;\\
\bar{v_{1,2} v_{3,3}} + \bar{v_{1,3} v_{4,2}}&=0,&
\bar{v_{3,2} v_{1,3}} + \bar{v_{3,3} v_{2,2}}&=0,&
\bar{v_{2,1} v_{3,2}} + \bar{v_{2,2} v_{4,1}}&=0,&
\bar{v_{4,1} v_{1,2}} + \bar{v_{4,2} v_{2,1}}&=0.\\
\end{align*}

Then by direct computation, we can check that there are eight possible ($\Zmod{2}$-valued) augmentations:
\begin{center}
 \begin{tabular}{c || c | c | c | c | c | c | c | c | c} 
$i$ & $\epsilon_i(c)$ & $\epsilon_i(v_{1,2})$ & $\epsilon_i(v_{1,3})$ & $\epsilon_i(v_{2,1})$ & $\epsilon_i(v_{2,2})$ & $\epsilon_i(v_{3,2})$ & $\epsilon_i(v_{3,3})$ & $\epsilon_i(v_{4,1})$ & $\epsilon_i(v_{4,2})$ \\ [0.5ex] 
 \hline\hline
1& 1&0&1&1&0&0&1&1&0 \\ 
 \hline
2&  0&1&0&0&1&1&0&0&1\\
 \hline
 3 & 0&1&1&1&0&0&1&1&1 \\
 \hline
 4 & 1&0&1&1&1&1&1&1&0 \\
 \hline
 5 & 0&1&1&0&1&1&1&0&1 \\
 \hline
 6& 1&1&0&0&1&1&0&0&1\\
 \hline
 7& 1&1&1&0&1&1&1&0&1\\
 \hline
 8& 0&1&0&1&1&1&0&1&1
\end{tabular}
\end{center}
Obviously $\epsilon(t_1)=\epsilon(t_2)=1$.

\begin{remark}
Note that there are only two equivalence classes of augmentations up to isomorphisms.
One can check that
\begin{align*}
\epsilon_1 &\sim \epsilon_3 \sim \epsilon _4 \sim \epsilon_8,&
\epsilon_2 &\sim \epsilon_5 \sim \epsilon _6 \sim \epsilon_7.
\end{align*}
In the rest of example, we mainly consider two non-equivalent augmentations $\epsilon_1$ and $\epsilon_2$.
Moreover, two (equivalence classes of) augmentations corresponds to two possible resolutions at vertex $v$
\begin{align*}
\epsilon_1 &\in \aug^{\ruling_v^1}(\Lambda),& \epsilon_2&\in \aug^{\ruling_v^2}(\Lambda),
\end{align*}
where $\ruling_v^1=\{\{1,4\},\{2,3\}\}$, and $\ruling_v^2=\{\{1,3\},\{2,4\}\}$. See \cite[\S5]{ABS2019count} for the details.
\end{remark}

\begin{figure}[ht]
\input{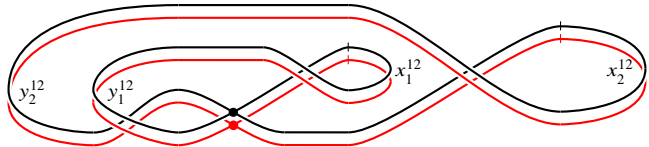}
\caption{Two copy of $\Lambda$ in the Lagrangian projection}
\end{figure}

Now consider two copies $\Lambda^{(2)}$. The labeling of the crossing is given as in Section~\ref{section:augmentation category of normal form}. Then the corresponding algebra $\alg^{(2)}$ is generated by vertex generators
\[
\{v^{m}_{k,\ell} \mid m=1,2, k\in \Zmod{4}, \ell\in \NN\},
\]
pure Reeb chords
\[
\{ a^{mm}, b^{mm}, c^{mm} \mid m=1,2\},
\]
and mixed Reeb chords
\[
\{ a^{12}, b^{12}, c^{12}, v_{1,1}^{12}, v_{4,-1}^{12}, x_1^{12}, x_2^{12}, y_1^{12}, y_2^{12} \}\sqcup \{a^{21}, b^{21}, c^{21}\}.
\]
Here $v_{1,1}^{12}, v_{4,-1}^{12}$ are the crossings near two-copies of vertex, see (\ref{eqn:m_copy_vertexlabel}) for the label convention near $m$-copy of the vertex, and the crossings $x_i^{12}, y_i^{12}$ arise from the right- and left cusps, respectively.
The grading for the generator is straight forward from the DGA $A(\Lambda)$ except the following
\begin{align*}
|x_i^{12}|&=0,& |y_i^{12}|&=-1 \quad\text{ for } i=a,b;\\
|v_{1,1}^{12}|&=-1,& |v_{4,-1}^{12}|&=0.
\end{align*}

Now let us consider the mixed chords in $\Lambda^{(2)}$, especially from the second to the first copy.
Note that there are such Reeb chords $a^{12},b^{12}$ of degree 1, $c^{12}, v_{4,-1}^{12}, x_1^{12}, x_2^{12}$ of degree 0, and $y_1^{12}, y_2^{12}, v_{1,1}^{12}$ of degree $-1$.

For convenience of the computation, let us restrict the differential to the following submodule
\[
\bfM_0^{(2)} \coloneqq (\sfM_0^{11})^{\otimes n_1}\otimes \sfM^{12} \otimes (\sfM_0^{22})^{\otimes n_2},
\]
where $n_i$ is a nonnegative integer, and $\sfM_0^{ii}$ denotes the submodule spanned by degree $0$ elements for $i=1,2$.
Then the restriction of the differential becomes
\begin{align*}
\differential a^{12}|_{\bfM_0^{(2)}} &= x_1^{12} t_1^{2} + c^{11}v_{2,2}^{11} v_{4,-1}^{12} + c^{12} v_{2,1}^{22} + v_{1,3}^{11}v_{4,-1}^{12};\\
\differential b^{12}|_{\bfM_0^{(2)}} &= x_2^{12} t_2^{2} + v_{2,1}^{11}(t_1^1)^{-1} c^{12} v_{2,2}^{22} +\left( v_{2,1}^{11} (t_1^1)^{-1} x_1^{12} + v_{2,2}^{11} v_{4,-1}^{12} (t_1^2)^{-1} \right) \left( c^{22} v_{2,2}^{22} + v_{1,3}^{22}  \right)\\
\differential c^{12}|_{\bfM_0^{(2)}} &= y_1^{12} c^{22} + c^{11} y_2^{12} + v_{1,1}^{12};\\
\differential v_{4,-1}^{12}|_{\bfM_0^{(2)}} &= v_{4,1}^{11} v_{1,1}^{12} v_{2,1}^{22} + v_{4,2}^{11} y_2^{12} v_{2,1}^{22} + v_{4,1}^{11} y_1^{12} v_{1,2}^{22};\\
\differential x_1^{12}|_{\bfM_0^{(2)}} &= t_1^1 \left( v_{3,2}^{11} v_{1,1}^{12} v_{2,1}^{22} + v_{3,2}^{11} y_1^{12} v_{1,2}^{22} + v_{3,3}^{11} y_2^{12} v_{2,1}^{22}  \right) (t_1^2)^{-1} + y_1^{12};\\
\differential x_2^{12}|_{\bfM_0^{(2)}} &= t_2^1 \left( v_{4,1}^{11} v_{1,1}^{12} v_{2,2}^{22} + v_{4,2}^{11} y_2^{12} v_{2,2}^{22} + v_{4,1}^{11} y_1^{12} v_{1,3}^{22}  \right) (t_2^2)^{-1} + y_2^{12};\\
\differential v_{1,1}^{12}|_{\bfM_0^{(2)}} &=0;\\
\differential y_1^{12}|_{\bfM_0^{(2)}} &=0;\\
\differential y_2^{12}|_{\bfM_0^{(2)}} &=0.
\end{align*}

Note that the above restriction only contribute the bi-linearized differentials. For example, we have the following bi-linearized differential with respect to a pair $(\epsilon_1,\epsilon_1)$:
\begin{align*}
\differential_{\epsilon_1,\epsilon_1}a^{12}&=x_1^{12}+c^{12}+v_{4,-1}^{12},& 
\differential_{\epsilon_1,\epsilon_1}b^{12}&=x_1^{12} + x_2^{12},&
\differential_{\epsilon_1,\epsilon_1}c^{12}&=y_1^{12} + y_2^{12} + v_{1,1}^{12};\\
\differential_{\epsilon_1,\epsilon_1}v_{4,-1}^{12}&=v_{1,1}^{12},&
\differential_{\epsilon_1,\epsilon_1}x_1^{12}&=y_1^{12} + y_2^{12},&
\differential_{\epsilon_1,\epsilon_1}x_2^{12}&=y_1^{12} + y_2^{12};\\
\differential_{\epsilon_1,\epsilon_1}y_1^{12}&=0,&
\differential_{\epsilon_1,\epsilon_1}y_2^{12}&=0,&
\differential_{\epsilon_1,\epsilon_1}v_{1,1}^{12}&=0.
\end{align*}
For a pair $(\epsilon_1,\epsilon_2)$, we have
\begin{align*}
\differential_{\epsilon_1,\epsilon_2}a^{12}&=x_1^{12}+v_{4,-1}^{12},& 
\differential_{\epsilon_1,\epsilon_2}b^{12}&=x_2^{12} + c^{12},&
\differential_{\epsilon_1,\epsilon_2}c^{12}&=y_2^{12} + v_{1,1}^{12};\\
\differential_{\epsilon_1,\epsilon_2}v_{4,-1}^{12}&=y_1^{12},&
\differential_{\epsilon_1,\epsilon_2}x_1^{12}&=y_1^{12},&
\differential_{\epsilon_1,\epsilon_2}x_2^{12}&=v_{1,1}^{12} + y_2^{12};\\
\differential_{\epsilon_1,\epsilon_2}y_1^{12}&=0,&
\differential_{\epsilon_1,\epsilon_2}y_2^{12}&=0,&
\differential_{\epsilon_1,\epsilon_2}v_{1,1}^{12}&=0.
\end{align*}
In a case of $(\epsilon_2, \epsilon_1)$, we deduce
\begin{align*}
\differential_{\epsilon_2,\epsilon_1}a^{12}&=x_1^{12} + c^{12},& 
\differential_{\epsilon_2,\epsilon_1}b^{12}&=x_2^{12} + v_{4,-1}^{12},&
\differential_{\epsilon_2,\epsilon_1}c^{12}&=y_1^{12} + v_{1,1}^{12};\\
\differential_{\epsilon_2,\epsilon_1}v_{4,-1}^{12}&=y_2^{12},&
\differential_{\epsilon_2,\epsilon_1}x_1^{12}&= v_{1,1}^{12} + y_1^{12},&
\differential_{\epsilon_2,\epsilon_1}x_2^{12}&= y_2^{12};\\
\differential_{\epsilon_2,\epsilon_1}y_1^{12}&= 0,&
\differential_{\epsilon_2,\epsilon_1}y_2^{12}&= 0,&
\differential_{\epsilon_2,\epsilon_1}v_{1,1}^{12}&=0.
\end{align*}

When a pair is $(\epsilon_2, \epsilon_2)$, then
\begin{align*}
\differential_{\epsilon_2,\epsilon_2}a^{12}&=x_1^{12},& 
\differential_{\epsilon_2,\epsilon_2}b^{12}&=x_2^{12},&
\differential_{\epsilon_2,\epsilon_2}c^{12}&=v_{1,1}^{12},
\end{align*}
with all other differentials are trivial.

For any pair of augmentation $(\epsilon_i,\epsilon_j)$, we have
\begin{align*}
\hom_{\Aug_+}(\epsilon_i,\epsilon_j)&=\hom_{\Aug_+}^0(\epsilon_i,\epsilon_j)\oplus\hom_{\Aug_+}^1(\epsilon_i,\epsilon_j)\oplus\hom_{\Aug_+}^2(\epsilon_i,\epsilon_j);\\
\hom_{\Aug_+}^0(\epsilon_i,\epsilon_j)&=\Zmod{2}\langle y_1^{12}, y_2^{12}, v_{1,1}^{12}  \rangle^\vee;\\
\hom_{\Aug_+}^1(\epsilon_i,\epsilon_j)&=\Zmod{2}\langle c^{12}, v_{4,-1}^{12}, x_1^{12}, x_2^{12}  \rangle^\vee;\\
\hom_{\Aug_+}^2(\epsilon_i,\epsilon_j)&=\Zmod{2}\langle a^{12}, b^{12}  \rangle^\vee.
\end{align*}

By dualizing the bi-linearized differential $\differential_{\epsilon_1,\epsilon_1}$, we have
\begin{align*}
m_1^{\epsilon_1,\epsilon_1}(y_{1}^{12})^\vee &= (c^{12})^\vee + (x_1^{12})^\vee + (x_2^{12})^\vee,&
m_1^{\epsilon_1,\epsilon_1}(x_1^{12})^\vee &= (a^{12})^\vee + (b^{12})^\vee,&
m_1^{\epsilon_1,\epsilon_1}(a^{12})^\vee &= 0;\\
m_1^{\epsilon_1,\epsilon_1}(y_{2}^{12})^\vee &= (c^{12})^\vee + (x_1^{12})^\vee + (x_2^{12})^\vee,&
m_1^{\epsilon_1,\epsilon_1}(x_2^{12})^\vee &= (b^{12})^\vee,&
m_1^{\epsilon_1,\epsilon_1}(b^{12})^\vee &= 0;\\
m_1^{\epsilon_1,\epsilon_1}(v_{1,1}^{12})^\vee &= (c^{12})^\vee + (v_{4,-1}^{12})^\vee,&
m_1^{\epsilon_1,\epsilon_1}(v_{4,-1}^{12})^\vee &= (a^{12})^\vee,&
m_1^{\epsilon_1,\epsilon_1}(c^{12})^\vee &= (a^{12})^\vee.
\end{align*}
The only non-trivial cohomology class with respect to $m_1^{\epsilon_1,\epsilon_1}$ is $\alpha\coloneqq[(y_1^{12}+y_2^{12})^\vee]\in H^0 \hom_{\Aug_+}(\epsilon_1,\epsilon_1)$.

By the similar computation for $(\epsilon_1,\epsilon_2)$, $(\epsilon_2,\epsilon_1)$, and $(\epsilon_2, \epsilon_2)$, we have the following non-trivial cohomology classes 
\begin{align}\label{eqn:aug_example_cohomology_class}
\beta \coloneqq [(y_2^{12} + v_{1,1}^{12})^\vee] & \in H^0 \hom_{\Aug_+}(\epsilon_1,\epsilon_2),& \gamma  \coloneqq [(y_1^{12} + v_{1,1}^{12})^\vee] &\in H^0 \hom_{\Aug_+}(\epsilon_2,\epsilon_1);\\
\delta_1  \coloneqq [(y_1^{12})^\vee], \delta_2 \coloneqq [(y_2^{12})^\vee] &\in H^0 \hom_{\Aug_+}(\epsilon_2,\epsilon_2),& \delta_3  \coloneqq [(v_{4,-1}^{12})^\vee] &\in H^1 \hom_{\Aug_+}(\epsilon_2,\epsilon_2),
\end{align}
respectively.

For the $A_\infty$ structure, especially $m_2$, we consider three copy of $\Lambda$ as in Figure~\ref{fig:three copy of example}.
\begin{figure}[ht]\label{fig:three copy of example}
\input{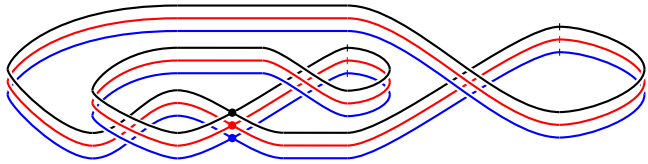}
\caption{Three copy of $\Lambda$}
\end{figure}
Again the labeling and grading convention for generators are as in Section~\ref{section:augmentation category of normal form}.
Since the DGA $A(\Lambda^{(3)})$ is complicated, for simplicity, let us list the terms in the differential which are relevant to our construction of $A_\infty$ structure. Especially, we only list term of type $\bfM_0^{(3)}$, where
\[
\bfM_0^{(k+1)} \coloneqq \bfM_0^{(k)} \otimes \sfM^{k\, k+1} \otimes (\sfM_0^{k+1\, k+1})^{\otimes n_{k+1}}.
\]
Here $n_{k+1}$ is a nonnegative integer, and $\sfM_0^{k+1\, k+1}$ denotes the submodule spanned by degree $0$ elements.

Then the restriction of the differential becomes
\begin{align*}
\differential a^{13}|_{\bfM_0^{(3)}} &= c^{12} v_{2,2}^{22} v_{4,-1}^{23};\\
\differential b^{13}|_{\bfM_0^{(3)}} &= 
\left( v_{2,2}^{11} v_{4,-1}^{12} (t_1^2)^{-1} + v_{2,1}^{11} (t_1^1)^{-1} x_1^{12} \right) \left(x_1^{23} c^{33} v_{2,2}^{33} + c^{23} v_{2,2}^{33} +  x_1^{23} v_{1,3}^{33}  \right);\\
\differential c^{13}|_{\bfM_0^{(3)}} &= y_1^{12} c^{23} + c^{12} y_1^{23} ;\\
\differential v_{4,-1}^{13}|_{\bfM_0^{(3)}} &= v_{4,2}^{11} y_2^{12} v_{2,2}^{22} v_{4,-1}^{23} + v_{4,1}^{11} \left( y_1^{12} v_{1,3}^{22} +{v_{1,1}^{12} v_{2,2}^{22} } \right) v_{4,-1}^{23} + v_{4,-1}^{12} v_{3,2}^{22} y_1^{23} v_{1,2}^{33}\\ 
&\mathrel{\hphantom{=}} + v_{4,-1}^{12} \left( v_{3,3}^{22} y_2^{23} + { v_{3,2}^{22} v_{1,1}^{23}} \right) v_{2,1}^{33};\\
\differential x_1^{13}|_{\bfM_0^{(3)}} &= y_1^{12} x_1^{23} + x_1^{12} t_1^2 \left( v_{3,2}^{22} y_1^{23} v_{1,2}^{33} + { v_{3,2}^{22} v_{1,1}^{23} v_{2,1}^{33}} + v_{3,3}^{22} y_2^{23} v_{2,1}^{33}\right) (t_1^3)^{-1} \\
&\mathrel{\hphantom{=}}+t_1^1 \left( v_{3,2}^{11} y_1^{12} v_{1,3}^{22} + { v_{3,2}^{11} v_{1,1}^{12} v_{2,2}^{22} } + v_{3,3}^{11} y_2^{12} v_{2,2}^{22} \right) v_{4,-1}^{23} (t_1^3)^{-1}\\
\differential x_2^{13}|_{\bfM_0^{(3)}} &= y_2^{12} x_2^{23} + x_2^{12} t_2^2 \left( v_{4,1}^{22} y_1^{23} v_{1,3}^{33} + {v_{4,1}^{22} v_{1,1}^{23} v_{2,2}^{33}} + v_{4,2}^{22} y_2^{23} v_{2,2}^{33} \right) (t_2^3)^{-1}\\
\differential v_{1,1}^{13}|_{\bfM_0^{(3)}} &= v_{1,1}^{12} y_2^{23} + y_1^{12} v_{1,1}^{23};\\
\differential y_1^{13}|_{\bfM_0^{(3)}} &= y_1^{12} y_1^{23};\\
\differential y_2^{13}|_{\bfM_0^{(3)}} &= y_2^{12} y_2^{23}.\\
\end{align*}

If we choose the triple $(\epsilon_1,\epsilon_1,\epsilon_1)$, then $\differential_{(\epsilon_1,\epsilon_1,\epsilon_1)}|_{\bfM_0^{(3)}}$ 
induces a multiplication on cohomology classes
\[
\bar{m}_2^{(\epsilon_1,\epsilon_1,\epsilon_1)}:H^*\hom_{\Aug_+}(\epsilon_1,\epsilon_1)\otimes H^*\hom_{\Aug_+}(\epsilon_1,\epsilon_1) \to H^*\hom_{\Aug_+}(\epsilon_1,\epsilon_1)
\]
is given by
\[
\bar{m}_2^{(\epsilon_1,\epsilon_1,\epsilon_1)}(\alpha \otimes \alpha)=\alpha.
\]
By the similar computation,
\begin{align*}
\bar{m}_2^{(\epsilon_1,\epsilon_1,\epsilon_2)}(\beta \otimes \alpha)&=\beta,&
\bar{m}_2^{(\epsilon_1,\epsilon_2,\epsilon_1)}(\gamma \otimes \beta)&=0,&
\bar{m}_2^{(\epsilon_2,\epsilon_1,\epsilon_1)}(\alpha \otimes \gamma)&=\gamma,&
\bar{m}_2^{(\epsilon_2,\epsilon_1,\epsilon_2)}(\beta \otimes \gamma)&=0,
\end{align*}
and
\begin{align*}
\bar{m}_2^{(\epsilon_1,\epsilon_2,\epsilon_2)}(\delta_1 \otimes \beta)&=0,&
\bar{m}_2^{(\epsilon_1,\epsilon_2,\epsilon_2)}(\delta_2 \otimes \beta)&=\beta,&
\bar{m}_2^{(\epsilon_1,\epsilon_2,\epsilon_2)}(\delta_3 \otimes \beta)&=0;\\
\bar{m}_2^{(\epsilon_2,\epsilon_2,\epsilon_1)}(\gamma \otimes \delta_1)&=\gamma,&
\bar{m}_2^{(\epsilon_2,\epsilon_2,\epsilon_1)}(\gamma \otimes \delta_2)&=0,&
\bar{m}_2^{(\epsilon_2,\epsilon_2,\epsilon_1)}(\gamma \otimes \delta_3)&=0.
\end{align*}
For the triple $(\epsilon_2, \epsilon_2,\epsilon_2)$, we deduce the following table of multiplication:
\begin{center}
 \begin{tabular}{c || c | c | c } 
$\bar{m}_2$ & $\delta_1$ & $\delta_2$ & $\delta_3$  \\ [0.7ex] 
 \hline\hline
$\delta_1$ & $\delta_1$ &0& $\delta_3$ \\[0.7ex] 
 \hline
$\delta_2$ & 0 & $\delta_2$ & 0 \\[0.7ex] 
 \hline
$\delta_3$ & 0 & $\delta_3$ & 0
\end{tabular}
\end{center}

For the higher multiplication $m_k,$ $k\geq 3$, let us consider  $(k+1)$-copy of $\Lambda$, the induced differential, and its restriction to $\bfM_0^{(k+1)}$. The followings are possible terms which may contribute to $m_k,$ $k\geq 3$:
\begin{align*}
\differential b^{14}|_{\bfM_0^{(4)}} &= \left( v_{2,2}^{11} v_{4,-1}^{12} (t_1^{2})^{-1}  + v_{2,1}^{11} (t_1^1)^{-1} x_1^{12} \right) x_1^{23} \left( c^{34} v_{2,2}^{44} + x_1^{34} v_{1,3}^{44} + x_1^{34} c^{44} v_{2,2}^{44} \right);\\
\differential v_{4,-1}^{14}|_{\bfM_0^{(4)}} &=  v_{4,-1}^{12} \left( v_{3,2}^{22} y_1^{23} v_{1,3}^{33} + v_{3,2}^{22} v_{1,1}^{23} v_{2,2}^{33} + v_{3,3}^{22} y_2^{23} v_{2,2}^{33} \right) v_{4,-1}^{34};\\
\differential x_1^{14}|_{\bfM_0^{(4)}} &= x_1^{12} t_1^2 \left( v_{3,2}^{22} y_1^{23} v_{1,3}^{33} + v_{3,2}^{22} v_{1,1}^{23} v_{2,2}^{33} + v_{3,3}^{22} y_2^{23} v_{2,2}^{33} \right) v_{4,-1}^{34} (t_1^4)^{-1};\\
\differential b^{1m}|_{\bfM_0^{(m)}} &=\left( v_{2,2}^{11} v_{4,-1}^{12} (t_1^{2})^{-1}  + v_{2,1}^{11} (t_1^1)^{-1} x_1^{12} \right) x_1^{23} x_1^{34} \cdots x_1^{m-2\, m-1} \\
&\mathrel{\hphantom{=}}\left( c^{m-1\, m} v_{2,2}^{m\, m} + x_1^{m-1\, m} v_{1,3}^{m\, m} + x_1^{m-1\, m} c^{m\, m} v_{2,2}^{m\, m} \right)
\end{align*}
For example, a term in $\differential b^{14}|_{\bfM_0^{(4)}}$ has a chain level contribution
\[
\langle m_3^{(\epsilon_2,\epsilon_1,\epsilon_1,\epsilon_2)}(c^{12} \otimes x_1^{12} \otimes v_{4,-1}^{12}), b^{12}\rangle=1.
\]

\subsubsection{Sheaf category}
On the other hand, let us consider the sheaf category $\cC_1(\Lambda;\field)$.
By the discussion in Section~\ref{subsubsec:comb model} and \ref{subsubsec:legible model},
a sheaf $\cF\in \cC_1(\Lambda;\field)\subset \Sh_\Lambda(\RR^2;\field)$ of micro-local rank 1 whose singular support lies in $\Lambda$ can be identified with a representation of the following quiver diagram:

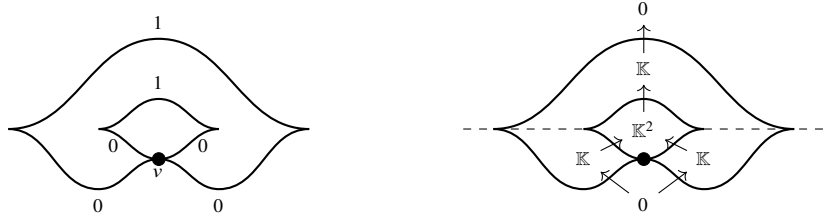
\begin{figure}[ht]
\begin{align*}
\begin{tikzpicture}[baseline=-.5ex,scale=0.8]
\draw[thick] (-2.5,0) to[out=0,in=180] (0,1.5) node[above] {$_1$} to[out=0,in=180] (2.5,0);
\draw[thick] (-2.5,0) to[out=0,in=180] (-1,-1) node[below] {$_0$} to[out=0,in=180] (0,-0.5) to[out=0,in=180] (1,-1) node[below] {$_0$} to[out=0,in=180] (2.5,0);
\draw[thick] (-1,0) node[below right] {$_0$} to[out=0,in=180] (0,0.5) node[above] {$_1$} to[out=0,in=180] (1,0) node[below left] {$_0$};
\draw[thick] (-1,0) to[out=0,in=180] (0,-0.5) to[out=0,in=180] (1,0);
\draw[fill] (0,-0.5) node[below] {$_v$} circle (3pt);
\end{tikzpicture}
&&
\begin{tikzpicture}[baseline=-.5ex,scale=0.8]
\draw[thick] (-2.5,0) to[out=0,in=180] (0,1.5)  to[out=0,in=180] (2.5,0);
\draw[thick] (-2.5,0) to[out=0,in=180] (-1,-1) to[out=0,in=180] (0,-0.5) to[out=0,in=180] (1,-1) to[out=0,in=180] (2.5,0);
\draw[thick] (-1,0) to[out=0,in=180] (0,0.5) to[out=0,in=180] (1,0);
\draw[thick] (-1,0) to[out=0,in=180] (0,-0.5) to[out=0,in=180] (1,0);
\draw[dashed] (-3,0) to (-1,0);
\draw[dashed] (1,0) to (3,0);
\draw[fill] (0,-0.5) circle (3pt);
\node (P0) at (0,-1.25) {$_0$};
\node (P1) at (-1,-0.5) {$_\field$};
\node (P2) at (1,-0.5) {$_\field$};
\node (P3) at (0,0) {$_{\field^2}$};
\node (P4) at (0,1) {$_\field$};
\node (P5) at (0,2) {$_0$};
\path
    (P0) edge[->] (P1)
    (P0) edge[->] (P2)
    (P1) edge[->] (P3)
    (P2) edge[->] (P3)
    (P3) edge[->] (P4)
    (P4) edge[->] (P5);
\end{tikzpicture}
\end{align*}
\caption{The front diagram of $\Lambda$, and the induced legible model.}
\end{figure}

More precisely, the representation of the above quiver diagram is determined by
\begin{figure}[ht]
\begin{tikzpicture}[baseline=-.5ex,scale=0.8]
\node (P1) at (-1,-1) {$\field$};
\node (P2) at (1,-1) {$\field$};
\node (P3) at (0,0) {$\field^2$};
\node (P4) at (0,1.25) {$\field$};
\path
    (P1) edge[->] node[above left] {$_{\ell_1}$} (P3)
    (P2) edge[->] node[above right] {$_{\ell_2}$} (P3)
    (P3) edge[->] node[right] {$_{\pi}$} (P4);
\end{tikzpicture}
\end{figure}
three lines 
\begin{align*}
[\ell_1] &\coloneqq \im\ell_1 \hookrightarrow \field^2,&
[\ell_2] &\coloneqq \im\ell_2 \hookrightarrow \field^2,&
[\ell_0] &\coloneqq \ker\pi \hookrightarrow \field^2,
\end{align*}
which are isomorphic to $\PP^1(\field)$ satisfying that $\pi\circ \ell_i$ is quasi-isomorphism for $i=1,2$, in other words,
\begin{align*}
[\ell_0] &\neq [\ell_1],& [\ell_0] &\neq [\ell_2].
\end{align*}
There are two possible cases, $[\ell_1]\neq[\ell_2]$ and $[\ell_1]= [\ell_2]$, and let us denote the corresponding representation of quiver by $Q_1$ and $Q_2$, respectively. Note that $Q_2\isomorphic Q_2^1 \oplus Q_2^2$, where
\begin{align*}
Q_2^1 &\coloneqq
\begin{tikzpicture}[baseline=-.5ex,scale=0.8]
\node (P1) at (-1,-1) {$\field$};
\node (P2) at (1,-1) {$\field$};
\node (P3) at (0,0) {$\field$};
\node (P4) at (0,1.25) {$\field$};
\path
    (P1) edge[->] (P3)
    (P2) edge[->] (P3)
    (P3) edge[->] (P4);
\end{tikzpicture}
&
Q_2^2 &\coloneqq
\begin{tikzpicture}[baseline=-.5ex,scale=0.8]
\node (P1) at (-1,-1) {$0$};
\node (P2) at (1,-1) {$0$};
\node (P3) at (0,0) {$\field$};
\node (P4) at (0,1.25) {$0$};
\path
    (P1) edge[->] (P3)
    (P2) edge[->] (P3)
    (P3) edge[->] (P4);
\end{tikzpicture}
\end{align*}

Firstly, let us list all indecomposable representations of the above quiver of type $D_4$ as follows:
\begin{align*}
P_1 &\coloneqq
\begin{tikzpicture}[baseline=-.5ex,scale=0.8]
\node (P1) at (-1,-1) {$0$};
\node (P2) at (1,-1) {$0$};
\node (P3) at (0,0) {$0$};
\node (P4) at (0,1.25) {$\field$};
\path
    (P1) edge[->] (P3)
    (P2) edge[->] (P3)
    (P3) edge[->] (P4);
\end{tikzpicture}
&
P_2 &\coloneqq
\begin{tikzpicture}[baseline=-.5ex,scale=0.8]
\node (P1) at (-1,-1) {$0$};
\node (P2) at (1,-1) {$0$};
\node (P3) at (0,0) {$\field$};
\node (P4) at (0,1.25) {$\field$};
\path
    (P1) edge[->] (P3)
    (P2) edge[->] (P3)
    (P3) edge[->] (P4);
\end{tikzpicture}
&
P_3 &\coloneqq
\begin{tikzpicture}[baseline=-.5ex,scale=0.8]
\node (P1) at (-1,-1) {$\field$};
\node (P2) at (1,-1) {$0$};
\node (P3) at (0,0) {$\field$};
\node (P4) at (0,1.25) {$\field$};
\path
    (P1) edge[->] (P3)
    (P2) edge[->] (P3)
    (P3) edge[->] (P4);
\end{tikzpicture}
&
P_4 &\coloneqq
\begin{tikzpicture}[baseline=-.5ex,scale=0.8]
\node (P1) at (-1,-1) {$0$};
\node (P2) at (1,-1) {$\field$};
\node (P3) at (0,0) {$\field$};
\node (P4) at (0,1.25) {$\field$};
\path
    (P1) edge[->] (P3)
    (P2) edge[->] (P3)
    (P3) edge[->] (P4);
\end{tikzpicture}
\end{align*}

\subsubsection{Computation of differential}
In the rest of example, we assume that our base field is $\Zmod{2}$.
Recall for a vector space $V$ that $V[i]$ means the degree shift by $-i$.\footnote{We omit the degree shift notation $[\ell]$ when it is clear.}

Note that
\[
\hom(P_i[k],P_j[\ell])\isomorphic
\begin{cases}
\field[\ell-k] &\text{ if } (i,j)=(1,2), (1,3), (1,4), (2,4);\\
0&\text{ otherwise}.
\end{cases}
\]
We sometimes use $\field_{P_i P_j}$ instead of $\hom(P_i,P_j)$ when it is non-trivial.

The quivers $Q_1$, $Q_2^1$ and $Q_2^2$ admit the following projective resolutions which will be denoted by $\tilde Q_1$, $\tilde Q_2^1$ and $\tilde Q_2^2$, respectively:
\[
\begin{tikzcd}[row sep=0.3pc]
0 \arrow[r]& P_1 \arrow[r, "f"] & P_3 \oplus P_4 \arrow[r]& Q_1 \arrow[r]& 0; \\
0 \arrow[r]& P_2 \arrow[r, "g"] & P_3 \oplus P_4 \arrow[r]& Q_2^1 \arrow[r]& 0; \\
0 \arrow[r]& P_1 \arrow[r, "h"] & P_2 \arrow[r]& Q_2^2 \arrow[r]& 0. \\
\end{tikzcd}
\]
Here a matrix form of $f, g$ and $h$ are given by $1 \choose 1$, $1 \choose 1$, and $(1)$ with respect to a canonical choice of basis of each ordered summand.
Then we compute that
\[
R\hom(Q_1, Q_1) = \hom(\tilde Q_1, Q_1) \isomorphic \left( 
\begin{tikzcd}
\field_{P_3 Q_1} \oplus \field_{P_4 Q_1} \ar[r, "(1\ 1)"] & \field_{P_1 Q_1}[-1]
\end{tikzcd}
 \right) \isomorphic \field.
\]
Let $b_1$ and $b_2$ be the non-zero element of $\field_{P_3 Q_1}$ and $\field_{P_4 Q_1}$, then
the only non-trivial homology class in $H^*\hom(\tilde Q_1, Q_1)$ is $[b_1+b_2]$.

Similarly we verify that
\begin{align*}
\hom(\tilde Q_2^1, Q_2^1) &\isomorphic \left( \field^2 \twoheadrightarrow \field[-1] \right) \isomorphic \field,&
\hom(\tilde Q_2^1, Q_2^2) &\isomorphic \left( 0 \to \field[-1] \right) \isomorphic \field[-1] ;\\
\hom(\tilde Q_2^2, Q_2^1) &\isomorphic \left( \field \stackrel{\isomorphic}{\to} \field[-1] \right) \isomorphic 0,&
\hom(\tilde Q_2^2, Q_2^1) &\isomorphic \left( \field \to 0[-1] \right) \isomorphic \field,
\end{align*}
which conclude
\[
R\hom(Q_2, Q_2) = \hom(\tilde Q_2, Q_2) \isomorphic \bigoplus_{i,j=1,2}\hom(\tilde Q_2^i,Q_2^j) \isomorphic \field^2 \oplus \field[-1].
\]
Also compute
\begin{align*}
\hom(\tilde Q_1,Q_2^1) &\isomorphic \left( \field^2 \twoheadrightarrow \field[-1] \right) \isomorphic \field,&
\hom(\tilde Q_1,Q_2^2) &\isomorphic \left( 0 \to 0[-1] \right) \isomorphic 0;\\
\hom(\tilde Q_2^1, Q_1) &\isomorphic \left( \field^2 \twoheadrightarrow \field[-1] \right) \isomorphic \field,&
\hom(\tilde Q_2^1, Q_1) &\isomorphic \left( \field \stackrel{\isomorphic}{\to} \field[-1] \right),
\end{align*}
which imply
\begin{align*}
R\hom(Q_1, Q_2) &\isomorphic \hom(\tilde Q_1,Q_2^1) \oplus \hom(\tilde Q_1,Q_2^2) \isomorphic \field;\\
R\hom(Q_2, Q_1) &\isomorphic \hom(\tilde Q_2^1, Q_1) \oplus \hom(\tilde Q_2^1, Q_1) \isomorphic \field.
\end{align*}
Note that the above computation coincide with the one in $\Aug_+(\Lambda)$, see (\ref{eqn:aug_example_cohomology_class}).

\subsubsection{Computation of multiplication}
Now consider the compositions between $\hom$ spaces. 
Especially look at 
\[
m : R\hom(Q_1, Q_1) \otimes R\hom(Q_1, Q_1) \to R\hom(Q_1, Q_1).
\]
The domain is isomorphic to $\hom(\tilde Q_1, Q_1) \otimes \hom(\tilde Q_1, \tilde Q_1)$ and hence becomes the following:
\[
\left(
\begin{tikzcd}
\field_{P_3 Q_1}\oplus \field_{P_4  Q_1} \ar[r, "(1\ 1)"]& \field_{P_1 Q_1}[-1]
\end{tikzcd}
\right)
\otimes
\left(
\begin{tikzcd}
\field_{P_1 P_1}\oplus \field_{P_3  P_3} \oplus \field_{P_4  P_4} \ar[r, "{1\ 1\ 0 \choose 1\ 0\ 1}"]& \field_{P_1 P_3}[-1] \oplus \field_{P_1 P_4}[-1]
\end{tikzcd}
\right).
\]
Let $c_1$, $c_2$, and $c_3$ be the non-zero element in $\field_{P_1 P_1}$, $\field_{P_3  P_3}$, and $\field_{P_4  P_4}$, respectively, then $[c_1+c_2+c_3]$ be the unique non-trivial cohomology class in $H^*\hom(\tilde Q_1, \tilde Q_1)$.

Let us denote the domain of $m_2$ by $D^\bullet$,
then the degree $i$-th part $D^i$ becomes as follows for $i=0,1,2$:
\begin{align*}
D^0
&\isomorphic
\left( \field_{P_3 Q_1} \otimes \field_{P_1 P_1} \right) \oplus
\left( \field_{P_3 Q_1} \otimes \field_{P_3 P_3} \right) \oplus
\left( \field_{P_3 Q_1} \otimes \field_{P_4 P_4} \right) \\
&\mathrel{\hphantom{\isomorphic}}\oplus
\left( \field_{P_4 Q_1} \otimes \field_{P_1 P_1} \right) \oplus
\left( \field_{P_4 Q_1} \otimes \field_{P_3 P_3} \right) \oplus
\left( \field_{P_4 Q_1} \otimes \field_{P_4 P_4} \right);\\
D^1
&\isomorphic
\left( \field_{P_3 Q_1} \otimes \field_{P_1 P_3} \right) \oplus
\left( \field_{P_3 Q_1} \otimes \field_{P_1 P_4} \right) \oplus
\left( \field_{P_4 Q_1} \otimes \field_{P_1 P_3} \right) \\
&\mathrel{\hphantom{\isomorphic}}\oplus
\left( \field_{P_4 Q_1} \otimes \field_{P_1 P_4} \right) \oplus
\left( \field_{P_1 Q_1} \otimes \field_{P_1 P_1} \right) \oplus
\left( \field_{P_1 Q_1} \otimes \field_{P_3 P_3} \right) \oplus
\left( \field_{P_1 Q_1} \otimes \field_{P_4 P_4} \right);\\
D^2
&\isomorphic
\left( \field_{P_1 Q_1} \otimes \field_{P_1 P_3} \right) \oplus
\left( \field_{P_1 Q_1} \otimes \field_{P_1 P_4} \right) 
\end{align*}
The differential $d_i:D^i\to D^{i+1}$ for $i=0,1$ can be expressed by the following matrix form
\begin{align*}
d_0&=
\begin{pmatrix}
1&1&0&0&0&0\\
1&0&1&0&0&0\\
0&0&0&1&1&0\\
0&0&0&1&0&1\\
1&0&0&1&0&0\\
0&1&0&0&1&0\\
0&0&1&0&0&1
\end{pmatrix},&
d_1&=
\begin{pmatrix}
1&0&1&0&1&1&0\\
0&1&0&1&1&0&1
\end{pmatrix},
\end{align*} 
with respect to basis $\{e_i^0\}_{i=1,\dots,6}$, $\{e_j^1\}_{j=1,\dots,7}$, $\{e_k^2\}_{k=1,2}$ for $D^0$, $D^1$, $D^2$. These are chosen by  a canonical element in each ordered summand.
Note that the only non-trivial cohomology class in the domain $D^\bullet$ is 
\[
[e_1^1+\dots+e_6^1] \isomorphic [b_1+b_2] \otimes [c_1 + c_2 + c_3].
\]
In the codomain of $m$, 
\begin{tikzcd}
\field_{P_3 Q_1} \oplus \field_{P_4 Q_1} \ar[r,"(1\ 1)"] & \field_{P_1 Q_1}[-1],
\end{tikzcd}
let $f_1$ and $f_2$ be the non-zero element in $\field_{P_3 Q_1}$ and $\field_{P_4 Q_1}$, respectively.
Then $[f_1+f_2]$ be the only non-trivial cohomology class.

The multiplication $m$ for each summand is defined by
\[
m\left( \field_{W R} \otimes \field_{S T} \right)=
\begin{cases}
\field_{S R} &\text{ if } T=W;\\
0 &\text{ otherwise}.
\end{cases}
\]
and the induced multiplication
\[
\bar m:H^*\hom(\tilde Q_1,Q_1) \otimes H^*\hom(\tilde Q_1, \tilde Q_1)\to H^*\hom(\tilde Q_1,Q_1)
\]
sends $[e_1^1+\dots+e_6^1]$ to $[f_1+f_2]$ which coincides with 
\[
\bar m_2: H^*\hom(\epsilon_1,\epsilon_1) \otimes H^*\hom(\epsilon_1,\epsilon_1) \to H^*\hom(\epsilon_1,\epsilon_1).
\]
One can also verify the other cases coincide with the computation of multiplication in $\Aug_+(\Lambda)$.

\bibliographystyle{abbrv}
\bibliography{references}

\end{document}